\newtheorem{thm}[subsubsection]{Theorem}
\newtheorem{lem}[subsubsection]{Lemma}
\newtheorem{prp}[subsubsection]{Proposition}
\newtheorem{crl}[subsubsection]{Corollary}
\newtheorem*{Thm}{Theorem}
\newtheorem*{Lem}{Lemma}
\newtheorem*{Prp}{Proposition}
\newtheorem*{Crl}{Corollary}
\theoremstyle{definition}
           \newtheorem{dfn}[subsubsection]{Definition}
           \newtheorem{exm}[subsubsection]{Example}
           \newtheorem{rem}[subsubsection]{Remark}
           \newtheorem{exe}[subsubsection]{Exercise}
           \newtheorem{EXE}[subsection]{Exercise}
           \newtheorem*{Dfn}{Definition}
           \newtheorem*{Exm}{Example}
           \newtheorem*{Rem}{Remark}
           \newtheorem*{Exe}{Exercise}
\newcommand{\Ab}{\mathtt{Ab}}
\newcommand{\bbullet}{{\bullet\bullet}}
\newcommand{\Cat}{\mathtt{Cat}}
\newcommand{\CAT}{\mathtt{CAT}}
\newcommand{\st}{\mathit{st}}
\newcommand{\Coc}{\mathtt{Coc}}
\newcommand{\coc}{\mathrm{coc}}
\newcommand{\colim}{\operatorname{colim}}
\newcommand{\Com}{\mathtt{Com}}
\newcommand{\const}{\mathrm{const}}
\newcommand{\conv}{\mathit{conv}}
\newcommand{\CS}{\mathtt{CS}}
\newcommand{\Cyl}{\mathrm{Cyl}}
\newcommand{\eq}{\mathit{eq}}
\newcommand{\ev}{\mathit{ev}}
\newcommand{\ex}{\mathit{ex}}
\newcommand{\Exc}{\mathrm{Exc}}
\newcommand{\fin}{\mathrm{fin}}
\newcommand{\Fun}{\operatorname{Fun}}
\newcommand{\Grp}{\mathtt{Grp}}
\newcommand{\Hom}{\mathrm{Hom}} 
\newcommand{\Ho}{\operatorname{Ho}}
\newcommand{\chom}{\mathcal{H}\mathit{om}} 
\newcommand{\shom}{\mathcal{H}\mathit{om}} 
\newcommand{\id}{\mathrm{id}}
\newcommand{\Ind}{\mathrm{Ind}}
\newcommand{\iso}{\mathrm{iso}}
\newcommand{\Kan}{\mathtt{Kan}}
\newcommand{\LM}{\mathtt{LMod}}
\newcommand{\loc}{\mathrm{loc}}
\newcommand{\Left}{\mathbf{L}}
\newcommand{\Lt}{\operatorname{Left}}
\newcommand{\Map}{\operatorname{Map}}
\newcommand{\Mor}{\mathit{Mor}}
\newcommand{\one}{\mathbf{1}}
\newcommand{\Ob}{\operatorname{Ob}}
\newcommand{\op}{\mathrm{op}}
\newcommand{\rlarrows}{\substack{\longrightarrow\\ \longleftarrow}}
\newcommand{\Right}{\mathbf{R}}
\newcommand{\St}{\mathrm{St}}
\newcommand{\Sp}{\mathtt{Sp}}
\newcommand{\Seg}{\operatorname{Seg}}
\newcommand{\Set}{\mathtt{Set}}
\newcommand{\Spine}{\mathrm{Sp}}
\newcommand{\sCat}{\mathtt{sCat}}
\newcommand{\ssSet}{\mathtt{ssSet}}
\newcommand{\sSet}{\mathtt{sSet}}
\newcommand{\Sing}{\operatorname{Sing}}
\newcommand{\sk}{\mathrm{sk}}
\newcommand{\Top}{\mathtt{Top}}
\newcommand{\Tw}{\mathrm{Tw}}
\newcommand{\wt}{\widetilde}
\newcommand{\Alg}{\mathtt{Alg}}
\newcommand{\Rep}{\mathtt{Rep}}
\newcommand{\Vect}{\mathtt{Vect}}
\newcommand{\bF}{\mathbb{F}}
\newcommand{\cA}{\mathcal{A}}
\newcommand{\cC}{\mathcal{C}}
\newcommand{\cD}{\mathcal{D}}
\newcommand{\cE}{\mathcal{E}}
\newcommand{\cJ}{\mathcal{J}}
\newcommand{\cM}{\mathcal{M}}
\newcommand{\cO}{\mathcal{O}}
\newcommand{\cP}{\mathcal{P}}
\newcommand{\cQ}{\mathcal{Q}}
\newcommand{\cS}{\mathcal{S}}
\newcommand{\cT}{\mathcal{T}}
\newcommand{\cU}{\mathcal{U}}
\newcommand{\cW}{\mathcal{W}}
\newcommand{\fC}{\mathfrak{C}}
\newcommand{\fF}{\mathfrak{F}}
\newcommand{\fL}{\mathfrak{L}}
\newcommand{\fN}{\mathfrak{N}}
\newcommand{\fm}{\mathfrak{m}}
\newcommand{\Z}{\mathbb{Z}}
\newcommand{\Q}{\mathbb{Q}}
\newcommand{\R}{\mathbb{R}}
\newcommand{\wh}{\widehat}
\begin{document}

\title[]{Lectures on infinity categories}
\author{Vladimir Hinich}
\address{Department of Mathematics, University of Haifa,
Mount Carmel, Haifa 3498838,  Israel}
\email{hinich@math.haifa.ac.il}
 
\begin{abstract}These are  lecture notes of the course in infinity categories given at Weizmann Institute in 2016--2017.
\end{abstract}
\dedicatory{To the memory of Michael Roytberg}
\maketitle

\setcounter{section}{-1}

\section{Introduction} 
\subsection{}
These are lecture notes of the course in infinity 
categories given at Weizmann institute in the fall semester 
of 2016--2017 year.

The aim of the course was to give some relevant 
background from homotopy theory, to present
different formal approaches to infinity categories, and to 
allow the audience to develop an understanding of the 
subject, which would not rely on a specific model of 
infinity categories.

Infinity categories have already proven an indispensable 
tool in derived algebraic geometry, factorization homology and 
geometric representation theory. There is no doubt that
the importance of the language of infinity categories
will continue growing in the near future. We hope these
lecture notes may serve a less technical introduction
to the subject, as compared to the remarkable (but quite voluminous) Lurie's treatise \cite{L.T}. 

\

The idea of infinity category owes, to a much extent, to a 
certain dissatisfaction with the classical language of 
derived categories developed by Grothendieck and Verdier in 
early 60-ies. \footnote{Here is what S.~Gelfand and 
Y.~Manin wrote in the introduction to \cite{GM} published 
in late 1980-ies.
``We worked on this book with the disquieting feeling that 
the development of homological algebra is currently in a 
state of flux, and that the basic definitions and 
constructions of the theory of triangulated categories,
despite their widespread use, are of only preliminary 
nature (this applies even more to homotopic algebra)''.}
An infinity-category should have, apart of objects and morphisms, a sort of ``higher morphisms'' between the morphisms, as well as morphisms between these ``higher morphisms'', and so on. Once it was realized that higher groupoids should correspond to homotopy types, and that 
for a wide range of applications it is sufficient to
assume that all ``higher morphisms'' are invertible, a number of different definitions of these so-called
$(\infty,1)$-categories was suggested.

\subsection{}
We will now present a few examples where infinity categories appear naturally as an ``upgrade'' of conventionally used categories.
 
\subsubsection{}
Category theory appeared in algebraic topology which 
studies algebraic invariants
of topological spaces. From the very beginning it was well 
understood that what is 
important is not just to assign, say, an abelian group to a 
topogical space, but to make 
sure that this assignment is functorial, that is, that it 
carries a continuous map of topological 
spaces to a homomorphism of groups. Thus, singular homology 
appears as a functor
$$ H:\Top\to\Ab$$
from the category of topological spaces to the category of 
abelian groups.
The next thing to do is to realize that the map 
$H(f):H(X)\to H(Y)$ does not really depend on $f:X\to Y$, 
but on the equivalence class of $f$ up to homotopy. To make 
our language as close as possible to the problems we are 
trying to solve, we may replace
the category $\Top$ of topological spaces, factoring the 
sets $\Hom_\Top(X,Y)$ by the homotopy relation. We will get 
another meaningful category which should better describe 
the object of study of algebraic topology --- but this 
category has some very unpleasant properties (lack of 
limits). Another approach, which is closer to the one 
advocated by infinity-category theory, is to think of the 
sets $\Hom_\Top(X,Y)$ as topological spaces,
so that information on homotopies between the maps is 
encoded in the topology of Hom-spaces.

This will lead us, in this course, to defining an infinity 
category of spaces describing, roughly speaking, 
topological spaces up to homotopy. This infinity category 
is a fundamental object in the theory, playing the role 
similar to that of the category of sets $\Set$ in the 
conventional category theory. Interestingly, this infinity 
category can be defined without mentioning a definition of 
topological space.

\subsubsection{}
We are still thinking about topology, but we will now look 
at a single topological space $X$ instead of the totality 
of all topological spaces.

Let $X$ be a topological space. Following Poincar\'e, we assign to $X$ a groupoid~\footnote{A groupoid 
is just a category whose arrows are invertible.} $\Pi_1(X)$ whose 
objects are the points of $X$ and arrows are the homotopy classes of 
paths. The groupoid $\Pi_1(X)$ has a very nice homotopic property: it retains information on $\pi_0(X)$ and $\pi_1(X)$ of 
$X$ but forgets all higher homotopy groups of $X$. And here is the reason: we took homotopy classes
of paths as arrows. Could we have taken instead topological 
spaces of paths, we would have chance to retain all 
information about the homotopy type of $X$. This is, 
however, easier said than done. It is easy to compose
homotopy classes of paths, but there is no canonical way
of composing the actual paths.

This sort of composition makes sense in infinity category 
theory. There spaces and infinity groupoids become 
just the same thing.

\subsubsection{} Here is a more algebraic example.

An important notion of homological algebra is the notion of derived 
functor. Study of derived functors led to the notion of derived category 
in which the standard ambiguity in the choice of resolutions used to 
calculate derived functors, disappears.

Here is, in two paragraphs, the construction of derived category of an 
abelian category $\cA$. As a first step, one constructs the category of 
complexes $C(\cA)$ where all projective resolutions, their images after 
application of functors, live. The second step is similar to the notion 
of localization of a ring: given a ring $R$ and a collection $S$ of its 
elements, one defines a ring homomorphism $R\to R[S^{-1}]$ such that
the image of each element in $S$ becomes invertible in $R[S^{-1}]$
and universal for this property. Localizing $C(\cA)$ with respect to
the collection of quasiisomorphisms, we get $D(\cA)$, the derived category of $\cA$.

The construction of derived category $D(\cA)$ is a close relative to the construction 
of the homotopy category of topological spaces, when we factor the set of continuous maps by an 
equivalence relation. The notion of derived category is not very convenient, approximately for the same reasons we already mentioned. Localizing a category, we destroy an important information, similarly to destroying information about higher homotopy groups of $X$ in the construction of the fundamental groupoid $\Pi_1(X)$. Here is another inconvenience.

\subsubsection{}
Let  $\cA$ be the category of abelian sheaves on a topological space $X$.
For an open subset $U\subset X$ let $\cA_U$ be the category of sheaves on $U$. There is a very precise procedure how one can glue, given sheaves $F_U\in\cA_U$ and some 
``gluing data'', a sheaf $F\in\cA$ whose restrictions to $U$ are $F_U$. The collection 
of $\cA_U$ is also a sort of a sheaf (of categories). It is still possible to glue
$\cA$ from the collection of $\cA_U$ (even though one needs ``2-gluing data''
for this~\footnote{The reason is that categories $\cA_U$ are the objects of a $2$-category, that of categories.}). We would be happy to be able to glue the derived categories $D(\cA_U)$ into the global $D(\cA)$. It turns out this is in fact possible,
but one needs to replace the derived category with a more refined infinity notion,
and of course, use all ``higher'' gluing data.

\subsubsection{}
There is a pleasant ``side effect'' in replacing the derived category $D(\cA)$
with an infinity category. As it is well-known, $D(\cA)$ is a triangulated category, that is an additive category
endowed with a shift endofunctor, with a chosen collection of diagrams called distinguished triangles, satisfying
a list of properties. The notion of triangulated category 
is a very important, but very unnatural one.
Fortunately, the respective  infinity categorical notion
is very natural: this is just a property of infinity category (called {\sl stability}, see Section~\ref{sec:stable}) 
rather than a collection of extra structures (shift functor, distinguished triangles, etc.)

\

\subsection{}

The abundance of definitions of $(\infty,1)$-category
is somewhat similar to the abundance of programming 
languages or of models of computation; all of them have in 
mind the same idea of
computability, but realize this idea differently. 
Some programming languages are more convenient in specific 
applications than others. The same holds for different
definitions of $(\infty,1)$-categories.

There is an easy way to compare expressive power of two programming 
languages: it is enough to write an interpreter for language I in language II and vice versa.

It is less obvious how to compare different formalizations
of infinity categories. In all existing approaches, 
$(\infty,1)$-categories are realized as fibrant-cofibrant
objects in a certain Quillen model category
(simpilicial sets with Joyal model structure, bisimplicial 
sets with Segal category or complete Segal model structure,
simplicial categories, etc.). The graph of Quillen 
equivalences between different model categories in the 
above list is connected, which implies that at least
the homotopy categories of different versions of
$(\infty,1)$-categories are equivalent. This, however, 
seems too weak at first sight. On the other hand, if we choose our favorite definition
of $\infty$-category,
we can see that, first of all, any model category gives 
rise to an $\infty$-category (we call it {\sl 
underlying $\infty$-category}), and, furthermore, Quillen 
equivalent model categories give rise to equivalent 
$\infty$-categories. 
In particular, $\infty$-categories underlying different 
models of infinity categories, are equivalent.

This
already sounds very similar to what happens when one
compares different models of computation, and supports
our belief that 
there exists a notion of $(\infty,1)$-category as 
Plato's  idea, so that the different models
are merely different (but equivalent) realizations of this 
idea.

\subsection{} 
Here is a detailed description of the course.

In the Section 1 we remind  some standard notions and 
constructions of conventional category theory, define 
simplicial sets and compare them to topological spaces.

In Section 2 we discuss quasicategories,  
simplicial categories and homotopy coherent nerve.

In Section 3 we define model categories and Quillen adjunctions; in Section 4 we  present 
a Quillen equivalence between the simplicial sets and the topological spaces.

In Section 5 we discuss a Quillen equivalence between 
the simplicial sets with Joyal model structure  
(quasicategories) and simplicial categories
with Bergner model structure; we also mention Dwyer-Kan 
localization which realizes the $\infty$-categorical notion  
of localization.

In Section 6 we study Rezk's complete Segal spaces (CSS)
and discuss equivalence of different models.

In Sections 7 and 8 we present the Grothendieck 
construction for left fibrations, using the CSS model
and deduce the infinity version of Yoneda lemma
(following Kazhdan-Varshavsky). This allows one to 
study limits, adjunction and other universal constructions.

Once we understood Yoneda lemma, we feel ready to 
gradually introduce the language of infinity categories 
without direct reference to a  concrete model
\footnote{instead of thinking, what is infinity category, 
we think what can be done with them.}. Starting Section 9 
(devoted to cocartesian fibrations), we use this 
language. We discuss stable infinity categories in Section 
10 and monoidal structures in Section 11.

\subsection{Acknowledgements.}I am grateful to Rami 
Aizenbud from the Weizmann institute
who persuaded me to give this course. I think I now 
understand  the subject much better than back in 2016.
I am grateful to everybody who attended the course,
but especially to Shachar Carmeli,  Gal Dor and Sasha 
Shamov who corrected some of the errors I made during the 
lectures. These notes are dedicated to Michael Roytberg 
who explained to me (some 40 years ago) how to prove 
equivalence of standard models of computation.

\newpage
\section{Categories and simplicial sets}

\subsection{Categories. Simplicial sets}

This short introduction  is not intended to teach those who never heard about categories; but to fix notation and 
remind the main points of the language. 

\subsubsection{First definitions}

A category $\cC$ has a set (sometimes big) of objects denoted $\Ob\cC$,
together with a set of morphisms $\Hom_\cC(x,y)$ for each pair of objects $x,y$ of 
$\cC$, an associative composition
$$ \Hom(y,z)\times\Hom(x,y)\to\Hom(x,z)$$
for each triple of objects, units $\id_x\in\Hom(x,x)$.

We will nor write down here the full list of axioms
(but you should know them).
For two categories $\cC,\ \cD$ a functor $f:\cC\to\cD$ is a map
$f:\Ob\cC\to\Ob\cD$, together with a collection of maps 
$\Hom_\cC(x,y)\to\Hom_\cD(fx,fy)$ compatible with the compositions. 

One can compose functors --- so that the categories form a category $\Cat$. However, this notion is not very useful.

 The reason for this
is that most of categorical constructions are defined ``up to'' canonical isomorphism. For instance,

\begin{dfn}Let $x,y\in\cC$. Their product is an object $p$ together 
with a pair of arrows $p\to x$ and $p\to y$, satisfying a universal property: for any $q,q\to x,q\to y$ there exists a unique arrow $q\to p$ such that the diagrams are commutative.
\end{dfn}
\begin{lem}
A product, if exists, is unique up to a unique isomorphism.
\end{lem}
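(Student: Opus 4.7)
The plan is to show that if $(p,\pi_x,\pi_y)$ and $(p',\pi'_x,\pi'_y)$ are both products of $x$ and $y$ in $\cC$, then there is a unique isomorphism $p\to p'$ compatible with the given projections. First I would unpack the data: each of $p, p'$ carries a pair of arrows to $x$ and $y$, satisfying the universal property in the definition.

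Next I would use the universal property twice to produce arrows in both directions. Applying the universal property of $p'$ to the pair $(\pi_x,\pi_y)$ yields a unique morphism $f:p\to p'$ with $\pi'_x\circ f=\pi_x$ and $\pi'_y\circ f=\pi_y$. Symmetrically, the universal property of $p$ applied to $(\pi'_x,\pi'_y)$ produces a unique $g:p'\to p$ with $\pi_x\circ g=\pi'_x$ and $\pi_y\circ g=\pi'_y$.

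To check that $f$ is an isomorphism, I would consider the composite $g\circ f:p\to p$, which clearly satisfies $\pi_x\circ(g\circ f)=\pi_x$ and $\pi_y\circ(g\circ f)=\pi_y$. But $\id_p$ also satisfies these equations, so applying the uniqueness clause of the universal property of $p$ — with candidate object $p$ itself and arrows $\pi_x,\pi_y$ — forces $g\circ f=\id_p$. The symmetric argument gives $f\circ g=\id_{p'}$, so $f$ is an isomorphism. Uniqueness of $f$ as a projection-compatible morphism was already built in by the universal property of $p'$.

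The argument is entirely formal and is the paradigmatic "uniqueness up to unique isomorphism" proof in category theory. The only point that deserves care — and in truth the main (mild) obstacle — is to notice that the uniqueness clause must be invoked in the self-application step, where $p$ is simultaneously the product and a test candidate; once this is made explicit, there is nothing further to do.
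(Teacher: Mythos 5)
Your argument is correct and is exactly the standard proof the paper has in mind (the paper simply omits it, leaving the lemma with a bare \qed): construct $f$ and $g$ from the two universal properties, then use the uniqueness clause applied to $p$ (respectively $p'$) as its own test object to conclude $g\circ f=\id_p$ and $f\circ g=\id_{p'}$. Nothing is missing.
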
\qed

This allows one to construct a functor $\cC\to \cC$
carrying an object $y\in\cC$ to $x\times y$ where $x$ is a fixed object of $\cC$ (we assume $\cC$ has products, for instance, $\cC=\Set$). The problem, however, is that such functor is not unique --- it is unique only up to a unique
isomorphism. This persuades us that the notion of morphism
(or, at least, of isomorphism) of functors is very important. We will define them now.

A morphism $u:f\to g$ assigns to each $x\in\Ob\cC$ an arrow $u(x)\in\Hom_\cD(f(x),g(x))$ such that for any arrow $a\in\Hom_\cC(x,y)$
the  diagram in $\cD$ presented below is commutative.
$$\begin{CD}
f(x) @>u(x)>> g(x) \\
@Vf(a)VV @Vg(a)VV \\
f(y) @>u(y)>> g(y)
\end{CD}.$$

The functors from $\cC$ to $\cD$ form a new category 
denoted $\Fun(\cC,\cD)$: its objects are the functors, and 
$\Hom(f,g)$ is defined as the collection of morphisms of functors.

The notion of isomorphism of functors allows us to 
formalize our feeling that it is not really important which 
model for the direct product of two objects to choose.

It allows as well to formulate that sometimes different, 
non-isomorphic categories should be seen as basically  the 
same. Here is the notion which more appropriate than the 
notion of isomorphism in the world of categories.

\begin{dfn}A functor $f:\cC\to \cD$ is an equivalence if there exists
a functor $g:\cD\to\cC$ and a pair of isomorphisms of functors
$$ g\circ f\stackrel{\sim}{\to}\id_\cC,\ f\circ g\stackrel{\sim}{\to}
\id_\cC.$$
\end{dfn}

If you believe in Axiom of choice (I do), here is an equivalent definition. 

\begin{dfn}A functor $f:\cC\to \cD$ is an equivalence if  
\begin{itemize}
\item It is essentially surjective, that is for any $y\in\cD$ there 
exists $x\in\cC$ and an isomorphism $f(x)\stackrel{\sim}{\to}y$.
\item It is fully faithful, that is for all $x,x'\in\cC$ the map
$$ \Hom_\cC(x,x')\to\Hom_\cD(fx,fx')$$
is an isomorphism.
\end{itemize}
\end{dfn}

\begin{Rem}For an equivalence $f$ the functor $g$, ``quasi-inverse'' to $f$, is not defined uniquely --- but uniquely up to unique isomorphism. We leave this as an exercise. 
\end{Rem}

\subsubsection{Yoneda lemma. Representable functors}

Probably the most important example of category is the category of sets, denoted $\Set$.

\

Sometimes functors do not preserve arrows, but invert them. This justifies the following definition.

\begin{Dfn} Let $\cC$ be a category. The opposite category 
$\cC^\op$ is defined as follows. It has the same objects and inverted morphisms:
$$ \Hom_{\cC^\op}(x,y)=\Hom_\cC(y,x).$$
\end{Dfn}

Functors $\cC^\op\to\cD$ are sometimes called the contravariant functors.
 
\begin{Dfn}
$P(\cC)=\Fun(\cC^\op,\Set)$ --- the category of presheaves.
\end{Dfn}

Here is the origin of the name.
Let $X$ be a topological space. A sheaf on $X$ (of, say, abelian groups) $F$ assigns to each open set $U\subset X$ an abelian group $F(U)$, and for $V\subset U$ a homomorphism
$F(U)\to F(V)$ (restriction of a section to an open subset) such that certain additional gluing properties are satisfied. Presheaf is a collection of abelian groups $F(U)$
with restruction maps without extra gluing properties. In our terms, this is just a contravariant functor from the category of open subsets of $X$ to the abelian groups.

\

We define Yoneda embedding as the functor
$$ Y:\cC\to P(\cC)$$
carrying $x\in\cC$ to the functor $Y(x)$, $Y(x)(y)=\Hom_\cC(y,x)$.

\begin{Lem}
Yoneda embedding is fully faithful.
\end{Lem}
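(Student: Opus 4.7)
The plan is to deduce fully faithfulness from the stronger Yoneda lemma: for every presheaf $F \in P(\cC)$ and every object $x \in \cC$, there is a bijection
\[ \Phi_{x,F}: \Hom_{P(\cC)}(Y(x), F) \xrightarrow{\sim} F(x), \qquad \Phi_{x,F}(u) = u(x)(\id_x), \]
natural in both $x$ and $F$. Fully faithfulness will then follow by specializing $F$ to the representable presheaf $Y(x')$, since $Y(x')(x) = \Hom_\cC(x, x')$.

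First I would construct the inverse $\Psi_{x,F}: F(x) \to \Hom_{P(\cC)}(Y(x), F)$ explicitly: given $s \in F(x)$, define the natural transformation $\Psi(s)$ componentwise by
\[ \Psi(s)(y): \Hom_\cC(y, x) \to F(y), \qquad g \mapsto F(g)(s). \]
The fact that $\Psi(s)$ is a genuine morphism of presheaves, i.e.\ that the naturality square for $a: y \to y'$ commutes, reduces to the identity $F(g \circ a) = F(a) \circ F(g)$, which is just the functoriality of $F$. Then $\Phi \circ \Psi = \id$ follows from $F(\id_x) = \id_{F(x)}$, and $\Psi \circ \Phi = \id$ is the key step: given $u: Y(x) \to F$ and $g: y \to x$, the naturality square of $u$ applied to $g$ (viewed as a morphism in $\cC^\op$) forces
\[ u(y)(g) = u(y)(Y(x)(g)(\id_x)) = F(g)(u(x)(\id_x)) = \Psi(\Phi(u))(y)(g). \]

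Finally, to deduce fully faithfulness, take $F = Y(x')$ and trace the composite
\[ \Hom_\cC(x, x') \xrightarrow{Y_{x,x'}} \Hom_{P(\cC)}(Y(x), Y(x')) \xrightarrow{\Phi_{x, Y(x')}} Y(x')(x) = \Hom_\cC(x, x'). \]
By the definition of $Y$ on morphisms, $Y(f)$ has component at $y$ given by post-composition with $f$, so $\Phi(Y(f)) = Y(f)(x)(\id_x) = f \circ \id_x = f$. Hence the composite is the identity, and since $\Phi_{x, Y(x')}$ is already shown bijective, $Y_{x,x'}$ is also bijective.

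The only nontrivial obstacle is the $\Psi \circ \Phi = \id$ step, which requires recognizing that any natural transformation out of a representable is determined by where it sends the universal element $\id_x$; the rest is bookkeeping with the definitions of functoriality and naturality.
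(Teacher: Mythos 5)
Your proof is correct and follows exactly the route the paper indicates: it establishes the stronger Yoneda lemma (the bijection $\Hom_{P(\cC)}(Y(x),F)\to F(x)$ via the universal element $\id_x$, left as an exercise in the text) and then specializes $F=Y(x')$ to obtain fully faithfulness. The verification of naturality, the two composite identities, and the final identification $\Phi(Y(f))=f$ are all carried out correctly.
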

Meaning: in order to describe an object $x\in \cC$ up to unique isomorphism, it suffices to describe the functor $Y(x)$ (called: the functor represented by $x$).

A presheaf isomorphic to $Y(x)$ for some $x$ is called representable presheaf.

Yoneda lemma is a direct consequence of yet stronger result which is also called Yoneda lemma.

\begin{Lem}
Let $\cC$ be a category and $F\in P(\cC)$. Then for any $x\in\cC$ the map
$$ \Hom_{P(\cC)}(Y(x),F)\to F(x)$$
carrying any morphism of functors $a:Y(x)\to F$ to $a(\id_x)\in F(x)$, is a bijection. 
\end{Lem}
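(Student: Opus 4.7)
The plan is to construct an explicit inverse to the evaluation map
\[
e:\Hom_{P(\cC)}(Y(x),F)\to F(x),\quad a\mapsto a(\id_x),
\]
and then verify that the two composites are identities by diagram chasing using only naturality and functoriality.

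First I would define, for each $\xi\in F(x)$, a candidate natural transformation $a_\xi:Y(x)\to F$ component-wise: on an object $y\in\cC$ and an element $f:y\to x$ of $Y(x)(y)=\Hom_\cC(y,x)$, set
\[
a_\xi(y)(f)=F(f)(\xi),
\]
which makes sense because $F:\cC^\op\to\Set$ gives $F(f):F(x)\to F(y)$. Naturality in $y$, i.e.\ commutativity of the square associated to any $g:z\to y$, reduces to $F(f\circ g)=F(g)\circ F(f)$, which holds since $F$ is a functor on $\cC^\op$.

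Next I would check that $\xi\mapsto a_\xi$ is a two-sided inverse to $e$. On the one hand,
$e(a_\xi)=a_\xi(x)(\id_x)=F(\id_x)(\xi)=\xi$, since $F$ preserves identities. On the other hand, given $a:Y(x)\to F$, set $\xi=a(x)(\id_x)$. For any $f:y\to x$, naturality of $a$ applied to $f$ yields a commutative square in which $Y(x)(f):\Hom_\cC(x,x)\to\Hom_\cC(y,x)$ is precomposition with $f$ and $F(f):F(x)\to F(y)$ sits on the right. Chasing $\id_x$ around this square gives
\[
a(y)(f)=a(y)(Y(x)(f)(\id_x))=F(f)(a(x)(\id_x))=F(f)(\xi)=a_\xi(y)(f),
\]
so $a=a_\xi$.

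The only mildly subtle point is keeping track of variance: both $Y(x)$ and $F$ are contravariant, so in the naturality square for $f:y\to x$ the horizontal arrows $a(x)$, $a(y)$ go rightward while $Y(x)(f)$ and $F(f)$ descend from the level of $x$ to the level of $y$. Once the square is written correctly, no further obstacle arises --- the argument is purely formal, and the earlier \emph{Lemma} (fully faithfulness of $Y$) is then the immediate special case $F=Y(x')$.
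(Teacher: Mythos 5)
Your proof is correct and complete: it is the standard Yoneda argument (inverse $\xi\mapsto a_\xi$ with $a_\xi(y)(f)=F(f)(\xi)$, naturality checked via contravariant functoriality, and the chase of $\id_x$ around the naturality square for $f:y\to x$), with the variance handled properly. The paper gives no proof of this lemma --- it is explicitly left as an exercise --- so there is nothing to compare beyond noting that this is exactly the intended argument, and your closing observation that fully faithfulness of $Y$ follows by taking $F=Y(x')$ matches the paper's remark that the weaker lemma is a direct consequence of this one.
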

We suggest to prove the lemma as an exercise. An important step in our course will be 
an infinity-categorical version of Yoneda lemma.

\subsubsection{Adjoint functors}

Here is a standard definition.

\begin{Dfn}
A pair of functors
$L:\cC\to\cD$ and $R:\cD\to\cC$, together with morphisms
$\alpha:L\circ R\to\id_D$, $\beta:\id_\cC\to R\circ L$, is called
{\sl an adjoint pair} if the compositions below give identity of $L$ and of $R$ respectively.
\begin{eqnarray}
L\stackrel{1\circ\beta}{\to} LRL\stackrel{\alpha\circ 1}{\to} L \\
R\stackrel{\beta\circ 1}\to RLR \stackrel{1\circ\alpha}{\to} R
\end{eqnarray}
\end{Dfn}

Here is a more digestible definition: this is a pair of functors
$L,R$, together with a natural isomorphism (=isomorphism of bi-functors)
$$ \Hom_\cD(Lx,y)=\Hom_\cC(x,Ry).$$

Thus, a primary datum is a bifunctor
\footnote{You have to define a product of categories!}
$$ F:\cC^\op\times\cD\to\Set.$$
This bifunctor can be equivalently rewritten as a functor $\cC\to 
P(\cD)$ or as a functor $\cD\to P(\cC)$. By Yoneda lemma, if the first 
functor has its essential image in $\cD\subset P(\cD)$
\footnote{That is, every object of the image is isomorphic to an object of $\cD$.} there is a unique functor $L$ up to unique isomorphism presenting $F$ as
$$ F(x,y)=\Hom_\cD(L(x),y).$$
Similarly, if the second functor has essential image in $\cC\subset P(\cC)$, there exists $R:\cD\to\cC$ so that $F(x,y)=\Hom_\cC(x,R(y))$.

\begin{Crl}
\begin{itemize}
\item[1.] If $L:\cC\to\cD$ admits a right adjoint functor, it is unique up to unique isomorphism.
\item[2.] $L$ admits a right adjoint iff for any $y\in\cD$ the functor
$x\mapsto \Hom_\cD(L(x),y)$ is representable.  
\end{itemize}
\end{Crl}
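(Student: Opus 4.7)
The plan is to read both parts of the corollary directly off the reformulation of adjunction that has just been developed, namely that an adjoint pair $L \dashv R$ is the same data as a bifunctor
$$F: \cC^\op \times \cD \to \Set, \qquad F(x,y) = \Hom_\cD(Lx,y) = \Hom_\cC(x,Ry),$$
and then invoke the Yoneda lemma twice: once to upgrade pointwise representability into a functor, and once to get uniqueness.

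For part 2, I would first note that if a right adjoint $R$ exists, then $F(x,y) = \Hom_\cC(x,Ry)$ exhibits the presheaf $x \mapsto \Hom_\cD(Lx,y)$ as represented by $Ry$; this is the ``only if'' direction. For the converse, suppose that for every $y \in \cD$ the presheaf $F_y := \Hom_\cD(L-,y) \in P(\cC)$ is representable. Using the axiom of choice, pick for each $y$ an object $R(y) \in \cC$ together with an isomorphism $\varphi_y: Y(R(y)) \xrightarrow{\sim} F_y$ in $P(\cC)$. A morphism $g: y \to y'$ in $\cD$ induces, by covariance of $F$ in the second variable, a natural transformation $F_y \to F_{y'}$; conjugating by the $\varphi$'s gives a natural transformation $Y(R(y)) \to Y(R(y'))$, which by fully faithfulness of the Yoneda embedding comes from a unique arrow $R(g): R(y) \to R(y')$ in $\cC$. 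Uniqueness immediately yields $R(\id) = \id$ and $R(g' \circ g) = R(g') \circ R(g)$, so $R$ is a functor, and the $\varphi_y$ assemble into a natural isomorphism $\Hom_\cC(-,R-) \cong \Hom_\cD(L-,-)$, i.e., an adjunction.

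Part 1 is then a direct corollary of the Yoneda lemma: if $R,R' : \cD \to \cC$ are both right adjoints to $L$, one obtains for each $y$ a natural isomorphism
$$\Hom_\cC(-,Ry) \cong \Hom_\cD(L-,y) \cong \Hom_\cC(-,R'y)$$
in $P(\cC)$, and fully faithfulness of Yoneda produces a unique isomorphism $R(y) \xrightarrow{\sim} R'(y)$ realizing it. Naturality in $y$ (which again one checks by the uniqueness clause of Yoneda applied to $Y(R(y)) \to Y(R'(y))$ for varying $y$) upgrades this to a unique natural isomorphism $R \xrightarrow{\sim} R'$.

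There is no real obstacle; the only place care is needed is in the functoriality step of part 2, where one must verify that the pointwise choices of $R(y)$ assemble coherently into a functor. But this is exactly the kind of bookkeeping that fully faithfulness of Yoneda is designed to trivialize: once the arrows $R(g)$ are forced by a uniqueness statement, all compatibilities (composition, identities, naturality of the adjunction iso) are forced too.
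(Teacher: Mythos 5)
Your argument is correct and is exactly the route the paper intends: the text preceding the corollary already reformulates the adjunction as a bifunctor $F(x,y)=\Hom_\cD(Lx,y)$, views it as a functor $\cD\to P(\cC)$, and invokes fully faithfulness of the Yoneda embedding to factor it through $\cC$ when the values are representable — the paper then leaves the bookkeeping (which you carry out) as an exercise. Your pointwise construction of $R$ plus the uniqueness clause of Yoneda for functoriality and for uniqueness of the right adjoint is precisely that intended argument.
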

\begin{proof}
Exercise.
\end{proof}

\subsection{Exercises}
\label{ss:ex}

Prove everything formulated above without proof.

\subsection{Category $\Delta$. Category $\sSet$}

\subsubsection{Category $\Delta$}
\label{sss:delta}
$\Delta$ is a very important category,  ``the category of combinatorial simplices''.

Its objects are $[n]=\{0,\ldots,n\}$, considered as ordered sets.
Morphisms are maps of ordered sets (preserving the order). In particular, $[0]$ consists of one element and so is the terminal object
in $\Delta$.

By the way, 
\begin{Dfn}
An object $x\in\cC$ is terminal if $\Hom_\cC(y,x)$ is a singleton
for all $y$.
An object $x$ is initial if $\Hom_\cC(x,y)$ is a singleton for all $y$.
\end{Dfn}

The category $\Delta$ has no nontrivial isomorphisms. This is sometimes convenient; otherwise I would prefer to define $\Delta$ as the category of totally ordered finite nonempty sets. It would be equivalent
to the one we defined, but would look more natural.

Here are some special arrows in $\Delta$.

{\sl Faces} $\delta^i:[n-1]\to[n]$, the injective map missing the value $i\in[0,n]$.

{\sl Degeneracies} $\sigma^i:[n]\to[n-1]$ the surjective map for which the value $i\in[0,n-1]$ is repeated twice.

Any map $[m]\to[n]$ can be uniquely presented as a surjective map
followed by an injective map. Any injective map is a composition of faces, and any surjective map is a composition of degeneracies.

The latter presentations are not unique. For instance,
$\delta^j\circ\delta^i=\delta^{i}\circ\delta^{j-1}$ for $i<j$.

{\bf Exercise.} Prove this. Try to find and prove all the identities.

\begin{Dfn}
A simplicial object in a category $\cC$ is a functor $\Delta^\op\to\cC$.
A simpicial object in sets is called a simplicial set. The category of simplicial sets will be denoted $\sSet$. In other words, $\sSet=P(\Delta)$.
\end{Dfn}

The category of simplicial sets is the one where most of the homotopy
 theory lives. Let us describe in more detail what is a simplicial set.

To each $[n]$ it assigns a set $X_n$ called ``the set of $n$-simplices of $X$''. Any map $\alpha:[m]\to[n]$ defines $\alpha^*:X_n\to X_m$.
In particular, we will usually denote $d_i=(\delta^i)^*$ and $s_i=(\sigma^i)^*$. Here how a simplicial set looks like (this is
only a small part of it):

\begin{equation}
\xymatrix{
&{X_0}\ar@{.>}@/^1pc/[r]&{X_1}\ar@<1ex>[l]\ar@<-1ex>[l] 
\ar@{.>}@/^1pc/[r]\ar@{.>}@/^2pc/[r]
&{X_2}\ar@<1ex>[l]\ar@<-1ex>[l]\ar[l]
},
\end{equation}
where the solid arrows denote the faces whereas the dotted 
arrows denote the degeneracies.
 
The first examples of simplicial sets  we can easily 
produce 
 are representable by the objects of $\Delta$: 
any object $[n]$ defines a simplicial set $\Delta^n$ whose 
$m$-simplices are maps $[m]\to[n]$. This simplicial set
is called {\sl the standard $n$-simplex}.

\subsection{Singular simplices. Nerve of a category}

Simplicial sets and, more generally, simplicial objects, are everywhere. Let us look around and find them.
 
\subsubsection{Singular simplices}

Let $X$ be a topological space. We assign to it a simplicial set $\Sing(X)$
as follows. The set of $n$-simplices $\Sing_n(X)$ is the set of continuous maps from the standard (topological) 
$n$-simplex
$$ \Delta[n]=\{(x_0,\ldots,x_n)\in\R^{n+1}|x_i\geq 0,\ \sum x_i=1\}$$ 
to $X$.

The faces and the degeneracies are defined via maps
$$\delta^i:\Delta[n-1]\to\Delta[n]$$
and
$$\sigma^i:\Delta[n]\to\Delta[n-1]$$
where $\delta^i$ inserts $0$ at the place $i$ and $\sigma^i$
puts $x_i+x_{i+1}$ at the place $i$. 

\subsubsection{Nerve of a category}

There is a very similar construction in the world of categories --- this is something that allows one to guess that categories and topological spaces are somehow connected.

Given a category $\cC$, we define a simplicial set 
$N(\cC)$, the nerve of $\cC$, as follows.
Its $n$-simplices are functors $[n]\to\cC$ where $[n]$ is now considered as the category defined by the corresponding ordered set (the objects are numbers $0,\ldots,n$, and there is a unique arrow $i\to j$ for $i\leq j$.)

\subsubsection{Example: $BG$}
Let $G$ be a discrete group. Denote $BG$ the category having one object 
and $G$ as its group of automorphisms. A functor $BG\to\Vect$
is the same as a representation of $G$. 

Nerve of $BG$ is the simplicial set whose $n$-simplices are
sequences of $n$ elements of $G$; degeneracies insert $1\in G$ and faces
$d_i$, $i=1,\ldots,n-1$ multiply two neghboring elements of $G$.

What do $d_0$ and $d_n$ do?

\subsubsection{Geometric realization}

The functor $\Sing:\Top\to\sSet$ admits a left adjoint
(called geometric realization and denoted $|X|$ for $X\in\sSet$.)

 We already know that it is sufficient to check that for each $X\in\sSet$ the functor $\Top\to\Set$ carrying
$T$ to  $\Hom(X,\Sing(T))$, is (co)representable. We definitely know this for $X=\Delta^n$ --- then by definition the functor is corepresented by $\Delta[n]$.

We will prove existence of left adjoint after a discussion of
colimits (=inductive limits). Meanwhile, we calculated $|\Delta^n|=\Delta[n]$ which is very nice.

\subsection{Topological spaces versus simplicial sets}

\subsubsection{Generalities: limits and colimits}
\label{sss:limcolim}

Numerous important operations in categories, for instance
\begin{itemize}
\item Intersection of a decreasing family of sets.
\item Union of an increasing family of sets.
\item Coproduct, direct product,  fiber product,
\end{itemize}
are special cases of the notion of limit (colimit).

Here is a general setup for a colimit: given a functor 
$F:I\to\cC$, we are looking for an
object $x\in\cC$ endowed with compatible 
collection of maps $F(i)\to x$ (explained below) 
and universal with respect to this property (also explained 
below). This 
object $x$ with all extra information 
(the maps $F(i)\to x$) is called the colimit of $F$.

Compatibility in the above definition means that for
any arrow $a:i\to j$ in $I$ the diagram
$$
\xymatrix{
&{F(i)}\ar[r]\ar@/^1pc/[rr]&{F(j)}\ar[r]&x
}
$$
is commutative.

Universality in the above definition means that for any $x'\in\cC$ with the same compatible collection of maps
$F(i)\to x'$, there exists a unique arrow $x\to x'$
such that the diagrams
$$
\xymatrix{
&{F(i)}\ar[r]\ar@/^1pc/[rr]&{x)}\ar[r]&x'
}
$$
commute.

The notion of limit is obtained by dualization:
Limit of a functor $F:I\to\cC$ is the same as a colimit
of $F^\op:I^\op\to\cC^\op$.

\subsubsection{(Co)limits via adjoint functors}One can describe the notion of (co)limit using the language of adjoint functors.

Functors from $I$ to $\cC$ live in $\Fun(I,\cC)$; one has an obvious functor 
\begin{equation}
\Fun(I,\cC)\leftarrow \cC:\const
\end{equation}
assigning to any object $x\in\cC$ the constant functor with value $x$.
Then colimit and limit appear as left and right adjoint functors to 
$\const$.

Of course, limits and colimits do not always exist.

Note that for $F\in\Fun(I,\cC)$, a canonical adjunction 
yields a map $F\to\const(\colim(F))$. This is to stress
that the compatible collection of maps $F(i)\to\colim F$ is 
a part of the data for $\colim F$.

\subsubsection{Colimits via adjoint functors: cont.}

Let $F:\cC\rlarrows\cD:G$ be an adjoint pair of functors.
Let $a:I\to \cC$ be a functor and let $A=\colim\ a$. Then
$F(a)$ is a colimit of $F\circ a$. We explained that
a left adjoint functor preserves all colimits (that exist in
$\cC$). Dually, a right adjoint functor preserves limits.

\subsubsection{
Adjoint pair of functors between topological spaces and simplicial sets}
The functor $\Sing:\Top\to\sSet$ admits a left adjoint called geometric realization.
By definition, geometric realization $|X|$ of a simplicial set $X$ satisfies the following universal property:

\begin{equation}
\Hom_\Top(|X|,Y)=\Hom_\sSet(X,\Sing(Y)).
\end{equation}
The right-hand side is a compatible collection of continuous maps 
$\tilde x:\Delta[n]\to Y$ given for each $x\in X_n$, the compatibility meaning that
for any $a:[m]\to[n]$ and $x\in X_n,\ y=a^*(x)\in X_m$ one has $\tilde y=\tilde x\circ a$.  This means that $|X|$ is a colimit 
of the functor we will now define.

The functor is defined on the category whose objects are pairs
$(n,x\in X_n)$ and whose arrows are pairs $(a,x)$ where $a:[m]\to[n]$
is an arrow in $\Delta$ and $x\in X_n$. We denote this category
$N_*(X)$ --- the category of simplices in $X$. The functor $F_X:N_*(X)\to\Top$ assigns to $(a,x)$ the standard (topological) 
$n$-simplex $\Delta[n]$.
It is easy to see that one has
$$ |X|=\colim F_X.$$

\subsection{First notions in homotopy theory}

\

{\sl Weak equivalences, Kan fibrations, Kan simplicial sets. Singular simplices of a topological space are Kan.
}

\

The adjoint pair of functors $(|\_|,\Sing)$ does not produce an equivalence between topological spaces and simplicial sets; but both categories are good to study 
homotopical properties of topological spaces. Thus, we 
should not be surprised to find out that these categories 
are equivalent in another, ``homotopic'' sense. There are 
different ways to express this. We will present one of them 
in some detail later --- they are Quillen equivalent.

Before formally presenting the necessary machinery, we will try 
to describe some of the  features of this equivalence.

In a few words, there is a notion of weak equivalence in both categories so that
the adjoint functors induce an equivalence of respective localizations.  
We will discuss localization later. We will now mention some standard notions of algebraic topology.

\subsubsection{Homotopy equivalence}

Two maps $f,g:X\to Y$ in $\Top$ are homotopic if there is a (continuous) map 
$F:X\times [0,1]\to Y$   which restricts to $f$ and $g$ at $0,1$. A map $f:X\to Y$
is a homotopy equivalence if there exists $g:Y\to X$ such that the two compositions
are homotopic to the respective identity.

\subsubsection{Homotopy groups}
Fix $n>0$. The $n$-th homotopy group $\pi_n(X,x)$ (of a pointed space $(X,x\in X)$)
is the set of homotopy classes of maps $(D^n,\partial D^n)\to (X,x)$\footnote{We work here with pairs (space, subspace). Maps $(X,A)\to (Y,B)$ are continuous maps
$X\to Y$ carrying $A$ to $B$. There is a similar notion of homotopy of such maps.}.

One defines $\pi_0(X)$ as the set of (path) connected components of $X$. 
One has
\begin{itemize}
\item $\pi_1$ has a group structure.
\item $\pi_n$ has a commutative group structure for $n>1$
\footnote{This is a very important point. Multiplication
in $\pi_1$ is defined by a concatenation of paths; it defines a group law on the homotopy classes of paths.
Commutativity of higher $\pi_n$ is a manifestation of {\sl Eckmann-Hilton argument}.}.
\end{itemize}

\subsubsection{Weak homotopy equivalence}

A map $f:X\to Y$ of topological spaces is called a weak homotopy equivalence if it induces a bijection of connected components, and
for each $x\in X$ it also induces an isomorphism of all homotopy groups
$\pi_n(X,x)\to\pi_n(Y,f(x))$. It is easy to verify that homotopy equivalences satisfy the above properties.

The notion of weak homotopy equivalence turns out to be
more convenient to work with.  

 Fortunately, the two notions of homotopy equivalence coincide for good topological spaces, see below.

\begin{Thm}(Whitehead)Let $f:X\to Y$ be a weak homotopy equivalence. If $X$ and $Y$ are CW complexes, then $f$
is a homotopy equivalence.
\end{Thm}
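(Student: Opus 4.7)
The plan is to reduce to the case where $f$ is an inclusion of CW complexes and then to show that any such inclusion which is a weak equivalence admits a strong deformation retraction. So the strategy has three stages: a mapping-cylinder reduction, a compression lemma for the relative homotopy groups, and an induction on the skeletal filtration.

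First I would replace $Y$ by the mapping cylinder $M_f=(X\times[0,1]\sqcup Y)/(x,1)\sim f(x)$. The natural inclusion $i\colon X\hookrightarrow M_f$ and the obvious deformation retraction $r\colon M_f\to Y$ satisfy $f=r\circ i$. Since $r$ is a homotopy equivalence it is a weak equivalence, hence so is $i$ by the 2-out-of-3 property applied to the induced maps on homotopy groups. After a cellular approximation of $f$, the cylinder $M_f$ carries a CW structure in which $X$ is a subcomplex. A homotopy inverse for $i$ composed with $r$ would give a homotopy inverse for $f$, so we are reduced to the case of a CW pair $(Y,X)$ with $X\hookrightarrow Y$ a weak equivalence.

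Next I would establish the compression statement: for every $n\geq 0$, every continuous map $\phi\colon(D^n,\partial D^n)\to(Y,X)$ is homotopic relative to $\partial D^n$ to a map with image in $X$. This is precisely the vanishing of the relative homotopy groups $\pi_n(Y,X,x)=0$ for all $x\in X$ and all $n\geq 1$, which follows from the long exact sequence of the pair together with the hypothesis that $X\to Y$ induces isomorphisms on all $\pi_n$ and on $\pi_0$.

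The final stage is an inductive construction of a retraction $r\colon Y\to X$ together with a homotopy $H\colon Y\times[0,1]\to Y$ with $H_0=\id_Y$, $H_1=r$, and $H_t|_X=\id_X$ for all $t$. I would build this cell by cell on $Y^{(n)}\cup X$. For each $n$-cell $e^n$ of $Y$ not lying in $X$, its attaching map sends $\partial D^n$ into the previously constructed $Y^{(n-1)}\cup X$, which is already homotoped into $X$ by inductive hypothesis; the compression lemma then allows us to homotope the characteristic map of $e^n$, rel boundary, into $X$. Using the homotopy extension property for the CW pair $(Y^{(n)}\cup X,\,Y^{(n-1)}\cup X)$, these cellwise homotopies assemble into the required homotopy on all of $Y^{(n)}\cup X$. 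Passing to the (possibly transfinite) union of the skeleta yields the global retraction $r$ and homotopy $H$. Composing with the original inclusion of $X$ into $Y$ gives the desired homotopy inverse.

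The main obstacle is the bookkeeping in the inductive step: one must ensure that each extension across an $n$-cell uses the compression lemma compatibly with the homotopy already constructed on the lower skeleton, and that the homotopies glue continuously under the weak topology on $Y$. This is exactly the content of the homotopy extension property for CW pairs, which is where the CW hypothesis on $Y$ is essential; without it the compression on individual cells might fail to assemble into a global deformation.
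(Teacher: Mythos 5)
Your argument is correct: it is the standard proof of Whitehead's theorem (mapping-cylinder reduction via cellular approximation, vanishing of the relative homotopy groups $\pi_n(Y,X)$ from the long exact sequence, and a skeleton-by-skeleton compression assembled by the homotopy extension property). The paper states this classical theorem as background without giving any proof, so there is nothing to compare against; the only points you gloss --- the compression criterion identifying triviality in $\pi_n(Y,X,x)$ with compressibility rel $\partial D^n$, and the time-reparametrization needed so that the infinite concatenation of stage-wise homotopies is continuous in the weak topology --- are routine and standard.
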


\subsubsection{Homotopy groups of simplicial sets}

Let $X\in\sSet$. We define $\pi_n(X,x)$ as $\pi_n(|X|,x)$.
In general, there is no easy combinatorial way to define $\pi_n(X,x)$ (homotopy groups of spheres are not easily calculated). 

The homotopy groups are easily calculated for Kan simplicial sets (see definition below).
One has
\begin{Thm}
Let $X$ be a Kan simplicial set. Then $\pi_n(X,x)$ is the set 
of equivalence classes of maps $(\Delta^n,\partial\Delta^n)\to (X,x)$,
with equivalence given by homotopies.
\end{Thm}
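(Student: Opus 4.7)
The plan is to identify the set of simplicial-homotopy classes of pointed maps $(\Delta^n,\partial\Delta^n)\to(X,x)$ --- call it $\pi_n^{\mathrm{comb}}(X,x)$ --- with the topological $\pi_n(|X|,x)$, by first rephrasing the topological side combinatorially via the adjunction $|\cdot|\dashv\Sing$ and then using the Kan property to compare $X$ to $\Sing|X|$.

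For the first stage, since $|\Delta^n|=\Delta[n]$ is a topological model of $D^n$ with $|\partial\Delta^n|$ corresponding to $\partial D^n$, the group $\pi_n(|X|,x)$ is the set of homotopy classes rel boundary of maps $(\Delta[n],\partial\Delta[n])\to(|X|,x)$. By the $|\cdot|\dashv\Sing$ adjunction such continuous maps correspond bijectively to simplicial maps $(\Delta^n,\partial\Delta^n)\to(\Sing|X|,x)$, and by applying the same adjunction to $\Delta^n\times\Delta^1$ together with $|\Delta^1|=[0,1]$ the correspondence carries topological homotopies rel $\partial$ to simplicial homotopies. Consequently $\pi_n(|X|,x)=\pi_n^{\mathrm{comb}}(\Sing|X|,x)$, and the statement reduces to showing that the unit $\eta:X\to\Sing|X|$ induces a bijection $\eta_*:\pi_n^{\mathrm{comb}}(X,x)\to\pi_n^{\mathrm{comb}}(\Sing|X|,x)$.

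For the second stage I would first verify that in any Kan simplicial set the simplicial-homotopy relation on pointed $n$-simplices with collapsed boundary is genuinely an equivalence relation --- reflexivity by degeneracy, symmetry and transitivity by filling the horns $\Lambda^{n+1}_i\hookrightarrow\Delta^{n+1}$ against the diagrams of two homotopies; this is the standard place where the Kan condition enters. Both $X$ (by hypothesis) and $\Sing|X|$ (always) are Kan. For surjectivity of $\eta_*$, given a continuous $\phi:(\Delta[n],\partial\Delta[n])\to(|X|,x)$, I would use that $|X|$ is a CW complex whose cells are indexed by the non-degenerate simplices of $X$ to simplicially approximate $\phi$ after sufficiently many iterated subdivisions of $\Delta^n$; this produces a compatible family of simplices in $X$ (constant $=x$ on the boundary) which a skeleton-by-skeleton Kan filling rectifies into an honest simplicial $\Delta^n\to X$ whose realization is homotopic to $\phi$. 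For injectivity, a topological homotopy $H$ between $|f|$ and $|g|$ rel $\partial$ adjoint-transposes to a simplicial map $\tilde H:\Delta^n\times\Delta^1\to\Sing|X|$ with prescribed values $f\sqcup g$ on $\Delta^n\times\partial\Delta^1$ and constantly $x$ on $\partial\Delta^n\times\Delta^1$; lifting $\tilde H$ to a simplicial map into $X$ with the same boundary conditions is again an inductive horn-filling argument, which succeeds precisely because $X$ is Kan.

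The main obstacle is the simplicial-approximation-plus-lifting step appearing in both surjectivity and injectivity: a continuous map into $|X|$ only produces a diagram of partial simplices indexed by a fine enough subdivision, and promoting such a diagram to an actual simplex of $X$ is exactly what the Kan hypothesis exists to do. Once the basic lemmas about Kan complexes (homotopy being an equivalence relation, existence of the group structure via horn-filling, and $\eta$ behaving like a trivial fibration on Kan complexes) are in place, the two arguments are parallel and reduce to routine inductions on the skeletal filtration of $\Delta^n\times\Delta^1$.
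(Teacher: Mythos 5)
Your first reduction is fine: by the adjunction $|\cdot|\dashv\Sing$ and the fact that realization preserves products, pointed maps and homotopies rel boundary into $|X|$ correspond exactly to simplicial ones into $\Sing|X|$, so the theorem does reduce to showing that the unit $\eta\colon X\to\Sing|X|$ induces a bijection on combinatorial homotopy classes. (This part is also how the paper treats $\Sing$: it observes that the combinatorial homotopy groups of $\Sing(Y)$ are the topological ones of $Y$.) The gap is in the second stage, where all the real content of the theorem sits, and your two key steps there do not go through as described.

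For injectivity you propose to lift $\tilde H\colon\Delta^n\times\Delta^1\to\Sing|X|$ along $\eta$, rel the prescribed boundary, ``by an inductive horn-filling argument, which succeeds precisely because $X$ is Kan.'' This is not a horn-filling problem in $X$: the Kan condition lets you extend maps defined on horns \emph{of $X$}, whereas what you need is the right lifting property of $\eta$ against the boundary inclusions $\partial\Delta^k\to\Delta^k$, i.e.\ that $\eta$ is (up to homotopy) a trivial fibration. A simplex of $\Sing|X|$ is an arbitrary continuous map $\Delta[k]\to|X|$ and there is no reason it lifts, even rel a simplicially-given boundary, to a simplex of $X$; indeed the parenthetical ``basic lemma'' that $\eta$ behaves like a trivial fibration on Kan complexes is essentially the statement being proven, so the argument is circular. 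Similarly, for surjectivity, simplicial approximation (which itself must be adapted from simplicial complexes to simplicial sets) produces a map out of an iterated subdivision $\mathrm{Sd}^k\Delta^n$, and converting that into a genuine map $\Delta^n\to X$ in the same topological class is not a routine skeleton-by-skeleton filling: it is the content of Kan's $\mathrm{Ex}^\infty$/last-vertex-map machinery (or an equivalent anodyne-extension argument), which is of the same order of difficulty as the route the paper actually takes. The paper avoids all of this by Quillen's theory of minimal fibrations: every Kan fibration retracts onto a minimal one, minimal fibrations are locally trivial, hence geometric realization preserves fibrations; one then compares the path--loop fibration of $X$ with that of $|X|$ and inducts via the long exact sequence, reducing to the evident statement for $\pi_0$. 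If you want to keep your simplicial-approximation route, you must either import that approximation/$\mathrm{Ex}^\infty$ theory explicitly or replace the lifting-along-$\eta$ step by an argument that only ever fills horns of $X$ itself.
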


\subsubsection{Degenerate and nondegenerate simplices}

Even the smallest (nonempty) simplicial set has infinite
number of simplices. This is because degeneracy maps
$s_i:X_n\to X_{n+1}$ are injective. Therefore, it is
interesting to look at the simplices $x\in X_n$ which are 
not degenerations of any simplex. Such simplices are called 
nondegenerate. The collection of all simplices of dimension
$\leq n$ and of all their degenerations is a simplicial subset of $X$ called $n$-th skeleton of $X$, $\sk_n(X)$.

One has $\sk_{n-1}(\Delta^n)$ contains all non-degenerate
simplices of $\Delta^n$ except for the one of dimension $n$
(corresponding to $\id_{[n]}$). We will denote this 
simplicial set $\partial\Delta^n$; this is, by definition, 
the boundary of $\Delta^n$. The following simplicial subsets
will be also very important. These are $\Lambda^n_i$ ---simplicial subsets of $\Delta^n$ spanned by all nondegenerate
simplices of $\partial\Delta^n$ but one --- 
$d_i:[n-1]\to[n]$.
($\Lambda^n_i$ is called {\sl $i$-th horn of $\Delta^n$}).

\subsubsection{Kan simplicial sets}

A simplicial set $X$ is Kan  if any map 
$\Lambda^n_i\to X$ extends to a map $\Delta^n\to X$.

\begin{Exe}
Prove that $\Sing(X)$ is Kan for any topological space $X$. Prove that the nerve $N(\cC)$ of a category 
$\cC$ is Kan if and only if the category $\cC$ is a groupoid.
\end{Exe}

\subsubsection{} Here is an expression of the fact that
simplicial sets and topological spaces are \lq\lq{}practically equivalent\rq\rq{}.
\begin{Thm}Let $S$ be a simplicial set and let $X$ be a topological space. A map $f:S\to\Sing(X)$ is a weak equivalence of simplicial sets iff the corresponding map
$|S|\to X$ is a weak homotopy equivalence.
\end{Thm}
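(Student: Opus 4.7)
The plan is to reduce the statement to the classical fact that the counit $\epsilon_X : |\Sing(X)| \to X$ of the adjunction $(|\cdot|, \Sing)$ is a weak homotopy equivalence for every topological space $X$. First I recall that with the definition adopted just above, where $\pi_n$ of a simplicial set $Y$ is declared to be $\pi_n(|Y|)$, a map $g : A \to B$ of simplicial sets is a weak equivalence exactly when $|g| : |A| \to |B|$ is a weak homotopy equivalence. In other words, geometric realization creates weak equivalences.

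Given $f : S \to \Sing(X)$, its adjoint $\tilde f : |S| \to X$ factors canonically through the counit:
$$ \tilde f \;:\; |S| \xrightarrow{|f|} |\Sing(X)| \xrightarrow{\;\epsilon_X\;} X. $$
By the previous paragraph, $f$ is a weak equivalence of simplicial sets iff $|f|$ is a weak homotopy equivalence. Granted that $\epsilon_X$ is a weak homotopy equivalence, the two-out-of-three property for weak homotopy equivalences then gives that $|f|$ is a weak homotopy equivalence iff $\tilde f$ is, which is exactly the theorem.

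The real substance, and the principal obstacle, is thus the assertion that $\epsilon_X$ is a weak homotopy equivalence for every $X$. I would prove this by exploiting that $\Sing(X)$ is a Kan complex (from the preceding exercise), so that the combinatorial description of $\pi_n$ of a Kan complex applies: $\pi_n(\Sing(X), x)$ is the set of equivalence classes of maps $(\Delta^n, \partial \Delta^n) \to (\Sing(X), x)$ modulo simplicial homotopy. By the defining adjunction for $\Sing$, such pointed $n$-simplices correspond bijectively to continuous maps $(\Delta[n], \partial \Delta[n]) \to (X, x)$, which is precisely the data computing the topological $\pi_n(X, x)$. One must then verify that simplicial homotopies, i.e.\ maps $\Delta^n \times \Delta^1 \to \Sing(X)$ with appropriate boundary behaviour, correspond to genuine topological homotopies; this requires identifying $|\Delta^n \times \Delta^1|$ with $\Delta[n] \times [0,1]$, that is, knowing that geometric realization commutes with finite products (which holds in compactly generated spaces). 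Having matched $\pi_n(\Sing(X), x)$ with $\pi_n(X, x)$ in this way, one checks that the bijection is induced by $\epsilon_X$; since $\pi_n(|\Sing(X)|, x) = \pi_n(\Sing(X), x)$ by the paper's definition, $\epsilon_X$ induces an isomorphism on all homotopy groups, and the case of $\pi_0$ is handled by an even simpler variant of the same argument.

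The technical nuisance to watch for is precisely the compatibility of geometric realization with products, together with the correct bookkeeping for the boundary inclusion $\partial \Delta^n \hookrightarrow \Delta^n$; once these combinatorial-topological facts are in place, everything else is a formal consequence of the adjunction triangle above.
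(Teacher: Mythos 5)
Your proposal is correct and follows essentially the same route as the paper: the paper's proof (in the section on the Quillen equivalence of $\Top$ and $\sSet$) likewise reduces the statement, via the factorization of the adjoint map through the counit and two-out-of-three, to the assertion that $|\Sing(X)|\to X$ is a weak homotopy equivalence, which it obtains from the identification $\pi_n(\Sing(X))=\pi_n(X)$ together with the comparison of combinatorial homotopy groups of Kan simplicial sets with those of their realizations (Proposition~\ref{prp:piispi}). Your extra detail on the counit (adjunction plus realization commuting with products) just fleshes out what the paper calls ``quite obvious,'' resting on the same nontrivial input.
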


\subsubsection{}
There is an important property of geometric realizations which does not
follow from the adjunction. 

\begin{Thm}The functor of geometric realization preserves products.
\end{Thm}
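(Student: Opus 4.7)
The plan is to construct a natural continuous map $\phi_{X,Y}\colon|X\times Y|\to |X|\times|Y|$ and show it is a homeomorphism. The map is essentially forced on us: the projections $X\times Y\to X$ and $X\times Y\to Y$ in $\sSet$ yield, after applying $|-|$ and invoking the universal property of $|X|\times|Y|$, a unique continuous map $\phi$. I should flag at the outset that, read literally, the theorem is not quite true in $\Top$: one must interpret the product on the right in a convenient category of spaces, for instance compactly generated (weakly Hausdorff) spaces or $k$-spaces. I will assume this convention; the standard counterexamples involving non-locally-finite CW complexes make clear that it is necessary.

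The first step is a reduction to standard simplices. Every simplicial set is canonically a colimit over its category of simplices, so $X=\colim_{N_*(X)}\Delta^{m}$ and $Y=\colim_{N_*(Y)}\Delta^{n}$. Because $|-|$ is a left adjoint it commutes with colimits, and because $\sSet$ is cartesian closed (it is a presheaf category, hence has an internal Hom) the functor $-\times Y$ commutes with colimits in each variable; hence
\[
|X\times Y|\;\cong\;\colim_{N_*(X)\times N_*(Y)}|\Delta^{m}\times\Delta^{n}|.
\]
In the category of compactly generated spaces the topological product is likewise a left adjoint in each variable, so the parallel colimit formula holds:
\[
|X|\times|Y|\;\cong\;\colim_{N_*(X)\times N_*(Y)}\Delta[m]\times\Delta[n].
\]
It therefore suffices to prove that $\phi$ is a homeomorphism when $X=\Delta^{m}$ and $Y=\Delta^{n}$.

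For this last case I would exhibit the classical shuffle triangulation of $\Delta^{m}\times\Delta^{n}$: its non-degenerate $(m+n)$-simplices correspond bijectively to $(m,n)$-shuffles, equivalently to monotone lattice paths from $(0,0)$ to $(m,n)$ in the grid, and the face and degeneracy maps are the evident ones. A direct inspection, observing that the interior of each shuffle simplex maps bijectively onto a corresponding open cell of $\Delta[m]\times\Delta[n]$, shows that $\phi$ is a continuous bijection in this case. To upgrade it to a homeomorphism I would invoke the familiar fact that any continuous bijection from a compact space onto a Hausdorff space is automatically a homeomorphism: the source $|\Delta^m\times\Delta^n|$ is compact, being the image under finitely many continuous maps out of the compact space $\Delta[m+n]$, and the target $\Delta[m]\times\Delta[n]$ is manifestly Hausdorff.

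The main obstacle is not any single calculation but rather the global topological subtlety that the product in $\Top$ does not in general commute with colimits; the theorem only has genuine content once one commits to a convenient category of spaces. Once that commitment is made, the combinatorics of shuffles and the compactness argument for the simplex case are essentially routine, and the colimit step reducing to that case is pure bookkeeping.
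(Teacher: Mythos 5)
Your proposal is correct and follows essentially the same route as the paper: reduce to the case $X=\Delta^m$, $Y=\Delta^n$ via the shuffle decomposition (the nerve of the poset $[m]\times[n]$, glued from $\binom{m+n}{n}$ top-dimensional simplices) and then pass to general $X,Y$ by commuting everything with colimits. Your explicit caveat that the colimit step only works in a convenient category of spaces (or under a local compactness hypothesis), together with the compact-to-Hausdorff bijection argument in the simplex case, is a welcome sharpening of the paper's terse ``everything commutes with the colimits.''
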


Direct product of simplicial sets is given pointwise:
$(X\times Y)_n=X_n\times Y_n$.
 
First of all, one has a
canonical map $|X\times Y|\to|X|\times|Y|$. So it remains to verify the given map is a homeomorphism.

{\sl Step 1.} Prove the claim when $X=\Delta^n$ and $Y=\Delta^m$. 

It is worthwhile to explicitly describe $\Delta^n\times\Delta^n$. One can use the following trick: the nerve functor from categories to simplicial sets obviously preserves limits. Since $\Delta^n$ is a nerve of the category $[n]$ corresponding to the totally ordered set
$\{0,\ldots,n\}$, $\Delta^n\times\Delta^m$ is the nerve of the poset $[n]\times[m]$. In particular, it is glued of 
$n+m\choose n$ $n+m$-simplices glued along the boundary. 
See the case $n=m=1$ --- the square is glued of two triangles.
 
{\sl Step 2.} Everything commutes with the colimits.   

\begin{rem}(Adjoint pairs and simplicial objects)
 
Let $L:\cC\rlarrows\cD:R$ be an adjoint pair of functors.
Given an object $d\in\cD$, one defines a simplicial object
$B_\bullet(d)$ together with a map of simplicial objects
$$B_\bullet(d)\to d$$
(where $d$ is considered as a constant simplicial object)
as follows. One defines $B_0(d)=LR(d)$, and, more generally,
$B_n(d)=(LR)^{n+1}(d)$. We define $s_0:B_0(d)\to B_1(d)$
as induced by the map $\id\to RL$ applied to $R(d)$.
We define $d_i:B_1\to B_0$, $i=0,1$ as induced by the map
$LR\to\id$ applied to the first (resp., the second) pair of $L,R$. This easily generalizes to all face and degeneracy maps. A lot of resolutions in homological algebra (Bar-resolutions) come from this construction. The same origin has a cosimplicial object known as Cech complex connected 
to a covering of a topological space.
\end{rem}

\subsection{Exercises}
\subsubsection{} See \ref{ss:ex}.
\subsubsection{} See Exercise in \ref{sss:delta}.
\subsubsection{} Forgetful functor $\Top\to\Set$ has both left and right adjoints. Describe them.
\subsubsection{} Construct a functor left adjoint to 
the forgetful functor from commutative algebras over a
field $k$ to the category of vector spaces over $k$. 
\subsubsection{ }Let $\cC$ be a category having finite
products. For $x,y\in\cC$ we define $\shom(x,y)\in\cC$
by the property 
$$ \Hom_\cC(z,\shom(x,y))=\Hom_\cC(z\times x,y).$$
Prove existence of $\shom$ for $\cC=\sSet$.

\subsubsection{}The same for $\cC=\Cat$. Compare to the above.

\subsubsection{}
Let $\cC$ has colimits. A colimit preserving functor 
$F:\sSet\to \cC$ is uniquely given by a functor 
$\Delta\to\cC$ 
(a cosimplicial object in $C$).

\newpage
\section{Quasicategories. Simplicial categories}

\subsection{Conventional categories as simplicial sets}

The functor $N:\Cat\to\sSet$ assigning to a category $C$
its nerve $N(C)$ whose $n$-simplices are the functors
$[n]\to C$ (considered as a set), is fully faithful.

In fact, let $C$ and $D$ be two categories. A map of simplical sets $F:N(C)\to N(D)$ is a collection of compatible maps $F_n:N_n(C)\to N_n(D)$. Let us study more carefully this compatibility. First of all, $F_0$ is precisely a map $\Ob(C)\to\Ob(D)$. Then $F_1$ assigns to an arrow in $C$ an arrow in $D$; compatibility with respect to 
$\delta^0,\delta^1:[0]\to[1]$ means that the assignment of arrows respects the source and the target.
Compatibility with respect to $\sigma^0$ means that $F_1$
carries identity to identity. Let us compare the rest of the data. We claim that, first of all, the rest of $F_n:N_n(C)\to N_n(D)$ are (at most) uniquely defined, and, second, that in order to define the whole $F$, one has to require
that $F_1$ carries composition of arrows to a composition
of arrows in $D$. To do so, let us mention the following
property of simplicial sets $N(C)$.

Denote $\Spine(n)$ (the spine of $\Delta^n$) the simplicial 
subset of $\Delta^n$ spanned by nondegenerate one-simplices
$[1]\to[n]$ carrying $0$ to $i$ and $1$ to $i+1$. One can easily see that 
$$ \Spine(n)=\Delta^1\sqcup^{\Delta^0}\Delta^1\sqcup^{\Delta^0}
\ldots\sqcup^{\Delta^0}\Delta^1,$$
$n$ copies of $\Delta^1$ with identified consecutive ends.
\begin{lem}The embedding $\Spine(n)\to\Delta^n$
induces a bijection
$$N_n(C)=\Hom(\Delta^n,N(C))\to\Hom(\Spine(n),N(C)).$$
\end{lem}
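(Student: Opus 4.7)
My plan is to unpack what each side of the putative bijection parametrizes and then match the data.

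First I would describe the right-hand side. Since $\Spine(n)=\Delta^1\sqcup^{\Delta^0}\Delta^1\sqcup^{\Delta^0}\ldots\sqcup^{\Delta^0}\Delta^1$ is a pushout (a colimit), and $\Hom(-,N(C))$ turns colimits into limits, a map $\Spine(n)\to N(C)$ is precisely the datum of $n$ arrows $f_1,\ldots,f_n$ in $C$ (i.e.\ elements of $N_1(C)$) such that the target of $f_i$ equals the source of $f_{i+1}$ for $1\le i<n$ (this is the compatibility at the glued $\Delta^0$'s, using that $N_0(C)=\Ob(C)$ and that $d_0,d_1$ on $N_1(C)$ pick out target and source). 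Equivalently, it is a chain of composable arrows $x_0\xrightarrow{f_1}x_1\to\cdots\xrightarrow{f_n}x_n$ in $C$.

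Next I would describe the left-hand side. By definition $N_n(C)$ is the set of functors $F\colon[n]\to C$, where $[n]$ is the category associated to the poset $\{0<\cdots<n\}$. Such a functor is given by objects $F(0),\ldots,F(n)$ and arrows $F(i\to j)\in\Hom_C(F(i),F(j))$ for $i\le j$, subject to the functoriality constraints $F(i\to i)=\id_{F(i)}$ and $F(j\to k)\circ F(i\to j)=F(i\to k)$.

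The restriction map $\Hom(\Delta^n,N(C))\to\Hom(\Spine(n),N(C))$ sends $F$ to the chain $F(0)\xrightarrow{F(0\to 1)}F(1)\to\cdots\to F(n)$. To finish I would prove bijectivity directly. For injectivity: any functor $F\colon[n]\to C$ is determined by its values on objects and on the elementary arrows $i\to i+1$, because $F(i\to j)=F(j-1\to j)\circ\cdots\circ F(i\to i+1)$ by functoriality (and $F(i\to i)=\id$). For surjectivity: given a spine $x_0\xrightarrow{f_1}x_1\to\cdots\xrightarrow{f_n}x_n$, define $F$ by $F(i)=x_i$, $F(i\to i)=\id_{x_i}$, and $F(i\to j)=f_j\circ\cdots\circ f_{i+1}$ for $i<j$. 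Associativity of composition in $C$ yields the compatibility $F(j\to k)\circ F(i\to j)=F(i\to k)$, so $F$ is a well-defined functor extending the given spine.

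There is no real obstacle; the only thing to watch is the bookkeeping of source/target conventions when identifying the gluing condition in the pushout with the condition that the $f_i$ be composable. Everything else is a direct unwinding of definitions plus the observation that a functor out of a totally ordered poset is freely generated by its value on cover relations, modulo no relations other than associativity (which holds automatically in $C$).
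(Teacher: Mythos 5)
Your proof is correct and is exactly the argument the paper intends (the lemma is left there with only a \qed): a map from the spine is a chain of $n$ composable arrows, and a functor $[n]\to C$ is uniquely determined by, and freely reconstructed from, such a chain via composition, with functoriality following from associativity in $C$.
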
 
\qed

Actually, the converse is also true (prove this!).

Thus, once we have a compatible pair of maps $F_0,\ F_1$,
this uniquely determines a map $F_n$. It remains to verify 
when is this map compatible with all faces and degeneracies. One can immediately see that it is always compatible with the degeneracies; and that the only condition to verify is the compatibility with 
$\delta^1:[1]\to[2]$. This is the map carrying $0$ to $0$ and $1$ to $2$.
Compatibility with $\delta^1$ precisely means that the assignment of arrows is compatible with compositions.

This was a detailed explanation of the following
\begin{prp}The functor $N:\Cat\to\sSet$ is fully faithful.
\end{prp}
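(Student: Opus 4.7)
The plan is to construct an inverse to the map $N_{C,D}:\Hom_\Cat(C,D)\to\Hom_\sSet(N(C),N(D))$ and verify that it is two-sided. The map $N_{C,D}$ itself is defined by postcomposition: a functor $G:C\to D$ sends an $n$-simplex $[n]\to C$ of $N(C)$ to the $n$-simplex $[n]\to C\to D$ of $N(D)$. Thus the statement will follow once we show this is both injective and surjective.

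First I would settle faithfulness. Given $F=N(G)$, I observe that $F_0:N_0(C)\to N_0(D)$ is exactly $G$ on objects (since $N_0(C)=\Ob(C)$) and $F_1:N_1(C)\to N_1(D)$ is exactly $G$ on morphisms. Since any functor is determined by its action on objects and morphisms, $N_{C,D}$ is injective.

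For fullness, given an arbitrary $F:N(C)\to N(D)$, I would define $G:C\to D$ on objects by $G=F_0$ and on morphisms by $G=F_1$, then read off the functor axioms from the simplicial identities $F\circ\alpha^*=\alpha^*\circ F$. Compatibility with $\delta^0,\delta^1:[0]\to[1]$ applied to $F_1$ gives $s(Gf)=G(sf)$ and $t(Gf)=G(tf)$, so source and target are preserved. Compatibility with $\sigma^0:[1]\to[0]$ applied to $F_0$ yields $G(\id_x)=\id_{Gx}$. For composability and preservation of composition, I would use that a pair of composable arrows $(f,g)$ in $C$ is the datum of a $2$-simplex $\sigma\in N_2(C)$ (by the Spine lemma $N_2(C)\cong\Hom(\Spine(2),N(C))$), whose three edges are $f$, $g$, and $g\circ f=d_1\sigma$. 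Applying $F$ to $\sigma$ and using naturality with respect to $d_0,d_1,d_2$ at level $2$, the $2$-simplex $F_2(\sigma)\in N_2(D)$ has edges $F_1(f)$, $F_1(g)$ and $F_1(g\circ f)$; but inside $N_2(D)$ the last edge is forced to be $F_1(g)\circ F_1(f)$. This gives $G(g\circ f)=Gg\circ Gf$, so $G$ is a functor.

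It remains to confirm $N(G)=F$. Both are maps of simplicial sets agreeing on $0$- and $1$-simplices, so they agree on every spine $\Spine(n)\subset\Delta^n$; by the lemma the restriction map $\Hom(\Delta^n,N(D))\to\Hom(\Spine(n),N(D))$ is a bijection, hence $N(G)_n=F_n$ for every $n$, and thus $N(G)=F$. The only step requiring genuine attention is the argument that the preservation of composition is truly equivalent to compatibility with $d_1:[1]\to[2]$ at the level of $2$-simplices — this hinges on the identification of $N_2(C)$ with composable pairs via the Spine lemma, which we have already granted; all the remaining simplicial identities in higher degrees follow automatically once $F_0$ and $F_1$ are compatible, again by the Spine lemma. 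Thus I do not expect a serious obstacle; the whole content is a careful bookkeeping of the simplicial identities on the $0$-, $1$-, and $2$-simplex levels.
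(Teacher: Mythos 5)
Your proof is correct and follows essentially the same route as the paper: both arguments rest on the spine lemma (the Segal condition for nerves) to reduce a simplicial map $N(C)\to N(D)$ to its $0$- and $1$-simplex data, with the only nontrivial compatibility being the one at the $2$-simplex level, i.e.\ preservation of composition. Your bookkeeping (define $G$ from $F_0,F_1$ and then verify $N(G)=F$ by injectivity of restriction to spines) is just a slightly reorganized version of the paper's argument that $F_n$ is uniquely determined by $F_0,F_1$.
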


We actually achieved much more. We identified the essential image of $N$: a simplicial set $X$ is a nerve of a category
iff the canonical maps 
$$X_n\to \Hom(\Spine(n),X)=X_1\times_{X_0}\ldots\times_{X_0}
X_1$$
is a bijection. This characterization is worth remembering
as it is the basis of at least two different models of infinity categories.

\subsection{Advertisement}

Infinity categories in this course are what is sometimes
called  $(\infty,1)$-categories. Approximately, this means 
that all higher arrows (arrows between arrows) are 
invertible, in a certain homotopy sense. Otherwise, this can 
be expressed by saying that, for any pair of objects $x$ and 
$y$, the category of all arrows $x\to y$ is an infinity 
groupoid.

In this course we will discuss three models for such 
categories: quasicategories (simplicial sets satisfying 
certain properties), simplicial categories (that is, 
categories enriched over simplicial sets), and complete 
Segal spaces (CSS) which are bisimplicial sets satisfying 
certain properties. 

The logic in all existing models is as follows. What is 
really described is a certain category with a model 
structure in the sense of Quillen (this will be addressed
later on). One can prove (we do not intend to give complete 
proofs) that the model categories corresponding to different 
models are Quillen equivalent (the notion will be made 
precise later; an example of Quillen equivalence is the 
adjoint pair of functors between topological spaces and 
simplicial sets). In each approach a construction is 
provided, assigning an infinity category to a model 
category, so that Quillen equivalent model categories give 
rise to equivalent infinity categories (of course, the 
notion of equivalence between infinity categories is 
provided as well). All this put together should persuade us 
that there is an idea of infinity category of which the
concrete descriptions are but realizations.

Our aim will not be to study as many 
models of infinity categories as possible;  but to try to understand this idea of infinity category.

\subsection{Quasicategory and its homotopy category}

Combinatorics of simplicial sets is well-developed and well-known. Thus, the idea of quasicategory is just to slightly weaken the property which characterizes nerves of categories
among the simplicial sets.

Quasicategories were first introduced by Boardman and Vogt
\cite{BV} under the name of {\sl weak Kan complexes}, 
then re-introduced by A.~Joyal as models for infinity categories, and vastly developed by Jacob Lurie
(see \cite{L.T}, Chapter 1).

\subsubsection{Nerve of a category. Reformulation}\label{sss:nerve}
Recall some standard simplicial sets and maps in $\sSet$. Fix a pair of numbers
$0\leq k\leq n$. We define $k$-th inner horn $j_k^n:\Lambda^n_k\to\Delta^n$ as the embedding of $\Lambda^n_k$, the simplicial subset of $\Delta^n$
containing all simplices but two, $\id:[n]\to[n]$ and 
$\partial^k:[n-1]\to[n]$, into $\Delta^n$.

Recall that a simplicial set $X$ is called Kan if it has
lifting property with respect to all $j^k_n$, that is,
if any map $\Lambda^n_k\to X$ extends to a map 
$\Delta^n\to X$.

\begin{Lem} A simplicial set
$X$ is a nerve of a category iff it satisfies the following property.

{\sl Any horn $j^n_k:\Lambda^n_k\to\Delta^n$ with 
$k\ne 0,n$, gives rise to a bijection}
\begin{equation}\label{eq:liftingjkn}
X_n=\Hom(\Delta^n,X)\to\Hom(\Lambda^n_k,X).
\end{equation}

\end{Lem}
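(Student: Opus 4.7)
The plan is to deduce the inner horn bijection from the spine bijection $N_n(\cC)\cong\Hom(\Spine(n),N(\cC))$ already established, and conversely to recover the spine (Segal) condition from the inner horn bijections. The key geometric input in both directions is that for $n\ge 2$ and $0<k<n$, the spine $\Spine(n)$ is contained in the inner horn $\Lambda^n_k$: any spine edge $[i,i+1]$ lies in the face $d_m$ for any $m\in\{0,\ldots,n\}\setminus\{i,i+1,k\}$, and this set is nonempty since $n+1\ge 3$ (with $\Lambda^2_1=\Spine(2)$ as the degenerate base case).

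For the forward direction, assume $X=N(\cC)$ and factor the restriction $X_n=\Hom(\Delta^n,X)\to\Hom(\Lambda^n_k,X)\to\Hom(\Spine(n),X)$. The composite is a bijection by the spine lemma, so the first arrow is at least injective. For surjectivity I would take $\phi:\Lambda^n_k\to N(\cC)$, form the unique $n$-simplex $\tilde\phi:\Delta^n\to N(\cC)$ with $\tilde\phi|_{\Spine(n)}=\phi|_{\Spine(n)}$, and verify $\tilde\phi\circ d_j=\phi\circ d_j$ for each $j\ne k$. Both sides are functors $[n-1]\to\cC$, so by the spine lemma applied to $\Delta^{n-1}$ it suffices to check equality on the spine of the face $d_j$. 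The only edge of that spine which is not a spine edge of $\Delta^n$ is the ``gap edge'' $[j-1,j+1]$ (relevant when $0<j<n$), and on that edge both values equal $\phi([j,j+1])\circ\phi([j-1,j])$: for $\tilde\phi$ by the composition law in $\cC$, and for $\phi$ because the 2-simplex $[j-1,j,j+1]$ lies in $\Lambda^n_k$ and its image under $\phi$ is a commutative triangle in $N(\cC)$. A short counting check confirms this 2-simplex sits in $\Lambda^n_k$; the tightest case is $n=3$, where the constraints $0<j<n$ and $j\ne k$ force $k\in\{j-1,j,j+1\}$.

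For the converse, assume the bijections $X_n\cong\Hom(\Lambda^n_k,X)$ for all $0<k<n$. I would build a category $\cC$ with $\Ob\cC=X_0$, $\Hom_\cC(x,y)=\{f\in X_1\mid d_1 f=x,\ d_0 f=y\}$, $\id_x=s_0(x)$, and composition $g\circ f:=d_1(\sigma)$, where $\sigma\in X_2$ is the unique lift of the pair $(f,g)$ under $X_2\cong\Hom(\Lambda^2_1,X)$. Unit axioms are immediate from the simplicial identities. For associativity I would apply $X_3\cong\Hom(\Lambda^3_1,X)$ to the horn assembled from the three 2-simplices witnessing $g\circ f$, $h\circ g$, and $(h\circ g)\circ f$; the simplicial identities $d_id_j=d_{j-1}d_i$ for $i<j$ force the missing face $d_1\tau$ of the resulting 3-simplex $\tau$ to have edges $g\circ f$, $h$, and $(h\circ g)\circ f$, so by uniqueness of composition its middle edge also equals $h\circ(g\circ f)$. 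Finally I would deduce $X\cong N(\cC)$ from the Segal condition $X_n\cong X_1\times_{X_0}\cdots\times_{X_0}X_1$, proved by induction on $n$: for $n\ge 3$ the horn $\Lambda^n_1$ decomposes as a pushout of an $(n-1)$-simplex face with a 2-simplex face glued along a common edge, which together with the inductive hypothesis and the horn bijection yield the Segal bijection.

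The expected main obstacle is the final inductive verification of the Segal condition: one must argue that the collective inner horn bijections pin down higher simplices through lower ones, which requires a careful bookkeeping of how the faces of each $\Lambda^n_k$ decompose under the inductive hypothesis and the categorical structure already extracted from the cases $n\le 3$. Once the Segal condition is in hand, the identification $X\simeq N(\cC)$ and the whole lemma follow formally from the earlier characterization of the essential image of $N:\Cat\to\sSet$.
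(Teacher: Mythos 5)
The paper offers no proof of this lemma (it is explicitly left as an exercise), so the only question is whether your argument stands on its own. Your forward direction does: the containment $\Spine(n)\subset\Lambda^n_k$ for inner horns gives injectivity via the spine lemma, and your surjectivity argument --- checking $\tilde\phi\circ d_j=\phi\circ d_j$ on the spine of each face $d_j$, where the only edge not already in $\Spine(n)$ is the gap edge $[j-1,j+1]$, handled because the $2$-face $[j-1,j,j+1]$ lies in $\Lambda^n_k$ (your count in the tight case $n=3$ is correct) --- is complete. Likewise the construction of $\cC$, the unit axioms via the degeneracies $s_0g,\ s_1g$, and the associativity proof via a filler of $\Lambda^3_1$ are all correct.

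The gap is in the final step. The claim that for $n\ge 3$ the horn $\Lambda^n_1$ ``decomposes as a pushout of an $(n-1)$-simplex face with a $2$-simplex face glued along a common edge'' is false as a statement about simplicial subsets: $\Lambda^n_1$ contains $n$ nondegenerate faces of dimension $n-1$ (already $\Lambda^3_1$ is a union of three triangles), so it is not $\Delta^{n-1}\sqcup^{\Delta^1}\Delta^2$. What you actually need, and what is true, is the Hom-level statement: assuming the Segal bijections in degrees $<n$, the restriction $\Hom(\Lambda^n_1,X)\to\Hom(\Spine(n),X)$ is a bijection. Injectivity is easy (the $d_0$-face is determined by its spine, which lies in $\Spine(n)$, and then every other face of the horn is determined by its own spine, whose gap edge lies inside the $d_0$-face); surjectivity requires reconstructing each face $d_j$, $j\ge 2$, from its spine via the inductive hypothesis and then verifying the compatibilities $d_id_j=d_{j-1}d_i$, which in turn needs the auxiliary fact that a simplex reconstructed from a composable string has every edge equal to the corresponding composite in $\cC$ --- this is exactly where your associativity statement must be fed back in. (Alternatively one can prove that spine inclusions are inner anodyne and that unique extension is stable under pushout and composition of monomorphisms.) This bookkeeping, which you explicitly defer, is the real content of the converse; as written, the proposal is a sound and standard strategy, complete in one direction, but with its decisive inductive step unproven and its one precise assertion about $\Lambda^n_1$ stated incorrectly.
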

\begin{proof}
Exercise.
\end{proof}

\subsubsection{Quasicategory}

A simplicial set $X$ is called {\sl quasicategory} if the maps $j^n_k$, $k\ne 0,n$, give rise to a surjective map~(\ref{eq:liftingjkn}).

This notion obviously generalizes both categories and
Kan simplicial sets.

\

\begin{Rem} Let $X$ be a quasicategory. We  interpret the elements of $X_0$ as the 
objects and the elements of $X_1$ as the arrows with source 
obtained by application of $d_1$ and target by $d_0$.

What about the composition? Commutative triangles are given 
by the elements of $X_2$.
Each one has three $1$-faces: two arrows and their 
composition. The axiom of quasicategory described by the 
embedding $j^2_1$, ensures that any composable pair of 
arrows can be composed (but in a non-unique way). The rest 
of the axioms should mean that composition is unique "up to 
higher homotopy".

Non-uniqueness of composition in quasicategory faithfully 
reflects our intuition we acquired trying to define
an infinity version of Poincar\'e groupoid. Composition
of the paths essentially depends of the reparametrization of the length two segment, and there is no preferred
way of reparametrization  --- none of
them leads to a strictly associative multiplication.
\end{Rem}

\subsubsection{Mapping spaces}

Let $\cC$ be a quasicategory. Define for $x,y\in\cC$ a space $\Map(x,y)$. There are different definitions yielding the same object up to homotopy. We will present one of
possible definition, denoted $\Hom^R(x,y)$ ($R$ stands for 
{\sl right}\footnote{as opposed to {\sl left}, not to {\sl wrong}!}).

We define $\Hom^R(x,y)$ as a simplicial set whose 
$n$-simplices are $n+1$-simplices $h$
in $\cC$ such that $d_{n+1}h=x$ (more precisely, the 
degenerate $n$-simplex obtained from $x$), and $d_0\ldots 
d_n(h)=y$. Faces and degeneracies are defined in an obvious 
way. Otherwise, we construct a cosimplicial object in 
$\sSet$ $H_R$ whose $n$-th component is the colimit
$H_R^n=\Delta^{n+1}\coprod^{\Delta^n}\Delta^0$, where the
map $\Delta^n\to\Delta^{n+1}$ is given by $\delta^{n+1}$.
The simplicial set $H_R^n$ has two objects coming from $0$ and $n+1$ of $\Delta^{n+1}$.

Now $\Hom_\cC^R(x,y)_n$ is the set of maps $H_R^n\to\cC$
carrying $0$ to $x$ and $n+1$ to $y$.

One can define $\Hom^L_\cC(x,y)$ using the cosimplicial
object  $H_L$ in $\sSet$, instead of $H_R$, defined 
via $\delta^0$ instead of $\delta^{n+1}$.

Later on we will see that one can define a composition
\begin{equation}\label{eq:comp-homr}
\Hom^R(y,z)\times\Hom^R(x,y)\to\Hom^R(x,z),
\end{equation}
defined uniquely up to homotopy. We will also see that 
$\Hom^L(x,y)$ is homotopically equivalent to $\Hom^R(x,y)$.

\subsubsection{}
Homotopy category $\Ho(\cC)$ of a quasicategory $\cC$ can be defined as follows. The ojects are just the elements of $\cC_0$. Morphisms are defined by the formula
$$ \Hom_{\Ho(\cC)}(x,y)=\pi_0(\Hom_\cC^R(x,y)).$$
The composition is defined by the composition of $\Hom^R$.
Since we give no explicit formula for ~(\ref{eq:comp-homr}),
it is worthwhile to give another construction of $\Ho\cC$.

Here it is. We define $\Ob\Ho(\cC)=\cC_0$ and we define
$\Hom_{\Ho(\cC)}(x,y)$ as the set of equivalence classes
of elements $f\in\cC_1$ with $d_1f=x,\ d_0f=y$. Two elements
$f,g\in\cC_1$ are equivalent if there exists $u\in\cC_2$
such that $d_0u=f,\ d_1u=g,\ d_2u=s_0(d_1d_2(u))$, see the picture below.

\begin{equation}\label{eq:equivalence}
\xymatrix{
& &{1} \ar[dr]^f&   \\
& 0\ar[rr]^{g} \ar[ur]^{\id} & & 2
}
\end{equation}

One can verify that the above formula defines an equivalence relation if $\cC$ is a quasicategory. Now, if $f$ and $g$ is a composable pair of arrows, there exists $u\in\cC_2$ 
such that $f=d_2u,\ g=d_0u$. Then we define the composition
$gf$ as $d_1(u)$. One can verify that the equivalence class 
of $gf$ so defined depends only on equivalence classes
of $f$ and $g$.

\

The notion of homotopy category can be defined for a general simplicial set; but in general the description is less explicit.

\subsubsection{Homotopy category}

One has  an adjoint pair $h:\sSet\rlarrows\Cat:N$, where $N$ is the nerve functor and $h$ is defined by universal property. Let us study a map from a simplicial set
$X$ to the nerve of a category $C$. It consists of
\begin{itemize}
\item a map $f:X_0\to\Ob(C)$.
\item an assignment for $a\in X_1$ of $f(a)\in\Hom_C(f(d_1a,d_0a)$.
\item such that for any $x\in X_2$ a cocycle condition is satisfied. 
\end{itemize}
This description gives immediately a definition of the functor $h$: $h(X)$ is defined
as the category with the set of objects $X_0$, morphisms given by generators in $X_1$
and relations in $X_2$.

\begin{Thm} If $X$ is a quasicategory, $h(X)$ is canonically isomorphic to $\Ho(X)$.
\end{Thm}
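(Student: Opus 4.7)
The plan is to construct a canonical functor $\phi\colon h(X)\to \Ho(X)$ by invoking the adjunction $h\dashv N$ and then prove separately that $\phi$ is bijective on objects and on morphisms. The identity on $X_0$ together with the assignment $a\mapsto[a]$ on $X_1$ should extend to a simplicial map $\iota\colon X\to N(\Ho(X))$: indeed, for each $u\in X_2$ the triangle $(\iota d_2u,\iota d_1u,\iota d_0u)$ is commutative in $\Ho(X)$ because the composition in $\Ho(X)$ is defined precisely by the rule $[d_0u]\circ[d_2u]=[d_1u]$ for any filler $u$. Higher compatibility is automatic since $N(\Ho(X))$ is the nerve of a category, so its $n$-simplices are determined by the $1$- and $2$-skeletal data together with the face identities among them. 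By the adjunction, $\iota$ corresponds to a functor $\phi\colon h(X)\to\Ho(X)$ which is the identity on objects and carries the generator corresponding to $a\in X_1$ to the class $[a]$.

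Next I would show $\phi$ is surjective on hom-sets. Every morphism in $h(X)$ is, by construction, a finite composite of generators coming from $X_1$, so it suffices to show that any two composable generators $f,g\in X_1$ already compose to something in the image of $X_1$. But the pair $(f,g)$ defines a map $\Lambda^2_1\to X$, and the inner horn filling property of the quasicategory produces $u\in X_2$ with $d_2u=f$, $d_0u=g$. Then in $h(X)$ the relation imposed by $u$ reads $g\circ f=d_1u$, which is a single element of $X_1$. Induction on the length of a composite now shows that every morphism of $h(X)$ is represented by some $a\in X_1$, so $\phi$ hits every class.

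For injectivity, I would show: if $a,b\in X_1$ are equivalent in the sense defining $\Ho(X)$, they already represent the same morphism in $h(X)$. By definition, equivalence gives $u\in X_2$ with $d_0u=a$, $d_1u=b$, and $d_2u=s_0d_1d_2u$ a degenerate $1$-simplex. The degeneracy relations imposed by simplicial identities force degenerate $1$-simplices to become identities in $h(X)$, so the relation from $u$ reads $a\circ\id=b$, i.e.\ $a=b$ in $h(X)$. Since the equivalence relation $\sim$ on $X_1$ (which uses the quasicategory property) is precisely what is killed when passing from generators-with-relations to $\Ho(X)$, we conclude $\phi$ is injective on hom-sets. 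Combined with surjectivity and bijectivity on objects, $\phi$ is an isomorphism of categories.

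The main obstacle is verifying the clean compatibility statements that make the construction of $\iota$ functorial: specifically that $\Ho(X)$ really is a category (associativity of composition of equivalence classes, well-definedness independent of the chosen filler $u$), which in turn relies on the quasicategory axiom for the horns $\Lambda^n_k$ with $n\ge 2$ and $0<k<n$. Once these properties of $\Ho$ are in hand — as stated without proof in the excerpt — the universal-property argument above is essentially formal.
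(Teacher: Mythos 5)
Your proposal is correct and takes essentially the same route as the paper: build the simplicial map $X\to N(\Ho(X))$ (identity on objects, $a\mapsto[a]$, the $2$-simplex compatibility resting on the soundness of the $\Ho$-construction), obtain $h(X)\to\Ho(X)$ by the adjunction $h\dashv N$, and use inner-horn filling along $j^2_1$ plus induction to collapse composites of generators to single elements of $X_1$. Your explicit injectivity step (an equivalence witness $u$ with degenerate $d_2u$ yields $a=b$ already in $h(X)$) merely spells out what the paper compresses into ``this easily implies the claim,'' and like the paper you defer the verification that the relation and composition defining $\Ho(X)$ are well behaved.
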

\begin{proof}
We will construct a map from $\cC$ to the nerve of 
$\Ho(\cC)$. This will define by adjunction a functor
$h\cC\to\Ho(\cC)$.

Our map is identity on objects and it carries each 
$f\in\cC_1$ to the respective equivalence class. It remains to prove that any $u\in\cC_2$ gives rise to an equality
$$  d_0(u)\circ d_2(u)=d_1(u).$$
this is obvious, once we know that the construction of 
$\Ho(\cC)$ was sound.
Let us show the constructed map is an isomorphism of categories. First, the arrows in $h(\cC)$ are generated by the elements of $\cC_1$. Any composition of elements of 
$\cC_1$ can be clearly presented by a single arrow --- this follows by induction and by lifting with respect to $j^2_1$.
This easily implies the claim.
\end{proof}

{\sl Note that we have not really verified that the relation 
defining $\Ho(\cC)$ is an equivalence and the composition 
is properly defined.}

\begin{dfn}
An arrow in a quasicategory is called an equivalence if its image in the homotopy category is an isomorphism.
\end{dfn}

In a conventional category composition of two isomorphism is an isomorphism. The quasicategorical version reads:
if two edges of $u\in\cC_2$ are equivalences, then so is the third.

\subsubsection{Kan simplicial sets as infinity groupoids}

\

\begin{Thm}
\label{thm:kan}
 The following properties of a quasicategory $X$ are equivalent.
\begin{itemize}
\item[1.] All arrows of $X$ are equivalences.
\item[2.] $\Ho(X)$ is a groupoid.
\item[3.] $X$ satisfies RLP with respect to $i$-horns, $i<n$.
\item[4.] $X$ satisfies RLP with respect to $i$-horns, $i>0$. 
\item[5.] $X$ is Kan simplicial set.
\end{itemize}
\end{Thm}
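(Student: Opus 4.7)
The plan is to dispatch the easy equivalences first, isolate the genuinely hard implication, and name the one technical input that drives it.

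First, $(1)\iff(2)$: by the definition of equivalence, an arrow of $X$ is an equivalence iff its image in $\Ho(X)$ is an isomorphism. Hence every arrow is an equivalence iff every morphism in $\Ho(X)$ is invertible, i.e., iff $\Ho(X)$ is a groupoid. Next, $(5)\Rightarrow(3)$ and $(5)\Rightarrow(4)$ are immediate from the definition of Kan simplicial set.

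For $(3)\Rightarrow(2)$, given $f\in X_1$ from $x$ to $y$, I would build a horn $\sigma\colon\Lambda^2_0\to X$ with vertices $0\mapsto x$, $1\mapsto y$, $2\mapsto x$, edges $d_2\sigma=f$ and $d_1\sigma=s_0x$. Filling via (3) yields a $2$-simplex whose missing face $d_0$ is an arrow $g\colon y\to x$, and the filler records the relation $g\circ f=\id_x$ in $\Ho(X)$. So every morphism in $\Ho(X)$ has a left inverse. Applying this once more to $g$ produces $h$ with $hg=\id_y$; then $h=h(gf)=(hg)f=f$, so $fg=\id_y$ as well, and $\Ho(X)$ is a groupoid. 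The implication $(4)\Rightarrow(2)$ is dual, using $\Lambda^2_2$-fillers to produce right inverses and the same two-line argument.

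The only nontrivial step is $(2)\Rightarrow(5)$. Here I would invoke the following special outer horn principle (due to Joyal): in a quasicategory $X$, any map $\sigma\colon\Lambda^n_0\to X$ with $n\geq 2$ whose restriction to the edge $\Delta^{\{0,1\}}$ is an equivalence extends to a map $\Delta^n\to X$, and dually for $\Lambda^n_n$ and the edge $\Delta^{\{n-1,n\}}$. Once this principle is in hand, $(1)\Rightarrow(5)$ is formal: any outer horn in $X$ has its distinguished edge automatically an equivalence by (1), and the inner horns fill because $X$ is a quasicategory, so $X$ is Kan.

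The main obstacle, therefore, is proving the special outer horn principle. The natural strategy is induction on $n$. The base $n=2$ uses that an equivalence $f\colon x\to y$ admits a homotopy inverse $g$ together with witnessing $2$-simplices for $gf\sim\id_x$ and $fg\sim\id_y$; these simplices, combined with a few inner horn fillings, produce the required $\Lambda^2_0$-filler. The inductive step assembles the desired $\Delta^n$-filler from a carefully chosen sequence of inner horn fillings applied to auxiliary simplices extracted from $\sigma$, using the inductive hypothesis together with the two-out-of-three property for equivalences to ensure that each intermediate edge produced along the way is itself an equivalence (so no lower-dimensional outer horn is ever required). The combinatorics is intricate but follows a standard template; a complete treatment is in Lurie, \emph{Higher Topos Theory}, Proposition~1.2.4.3.
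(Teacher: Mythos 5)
Your proposal is correct and takes essentially the same route as the paper: the easy implications are handled directly, and all the genuine content is delegated to Joyal's special outer-horn theorem (the paper's $2\Rightarrow 3$, which it attributes to Joyal via the theory of left fibrations; your citation of Lurie, HTT 1.2.4.3, is the same result). The only caveat is that your closing sketch of a proof by ``induction on $n$ with inner horn fillings'' should not be mistaken for an argument --- the known proofs require left fibrations or considerably more delicate combinatorics --- but since you defer to Lurie for the complete treatment, your proposal sits at the same level of completeness as the paper's own proof.
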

\begin{proof}
One has very easy implications
$$ 3,4\Rightarrow 5\Rightarrow 2\Leftrightarrow 1.$$
The remaining implication $2\Rightarrow 3$ is less trivial, requires a theory of left fibrations. 
It was proven by A.~Joyal.
 
\end{proof}

\begin{crl} Let $\cC$ be a quasicategory. Then 
$\Hom^R_\cC(x,y)$ is Kan for all $x,y$.
\end{crl}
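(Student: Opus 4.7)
The plan is to translate every horn-filling problem in $Y := \Hom^R_\cC(x,y)$ directly into a horn-filling problem in $\cC$. By definition, an $n$-simplex of $Y$ is an $(n+1)$-simplex $h\colon\Delta^{n+1}\to\cC$ with $d_{n+1}h = s_0^n(x)$ (the constant $n$-simplex at $x$) and vertex $n+1$ equal to $y$. Under this identification, a horn $\Lambda^n_i\to Y$ (for $0\le i\le n$) amounts to a map $\Lambda^{n+1}_i\to\cC$ whose face $d_{n+1}$ is the constant $x$-simplex, and filling it in $Y$ is the same as extending to $\Delta^{n+1}\to\cC$; since $i\le n<n+1$, the prescribed face $d_{n+1}$ is automatically preserved by any filler.

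If $0<i\le n$, then $\Lambda^{n+1}_i$ is an \emph{inner} horn of $\Delta^{n+1}$ (since $0<i<n+1$), so the quasicategory property of $\cC$ produces the required filler. This already settles the inner horns of $Y$ (making $Y$ a quasicategory) and the right-outer horns $\Lambda^n_n\to Y$ in one stroke.

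The remaining, genuinely non-formal case is that of left-outer horns $\Lambda^n_0\to Y$, which correspond to left-outer horns $\Lambda^{n+1}_0\to\cC$. Here I would invoke Joyal's ``special outer horn'' theorem: in a quasicategory, a left-outer horn whose initial edge $\Delta^{\{0,1\}}$ is an equivalence admits a filler (this is essentially the local content used in the implication $2\Rightarrow 3$ of Theorem~\ref{thm:kan}). The hypothesis is verified by the following observation: since $\{0,1\}\subset\{0,\ldots,n\}$, the edge $\Delta^{\{0,1\}}$ of the horn lies inside the face $d_{n+1}$, which is the constant $x$-simplex; hence this edge equals $s_0(x)=\id_x$, manifestly an equivalence. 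Putting everything together, $Y$ has fillers for all horns and is therefore Kan. The single substantive input is Joyal's theorem above — this is the main obstacle; the rest is bookkeeping about the effect of the defining constraint $d_{n+1}=s_0^n(x)$ on the horn-filling problems.
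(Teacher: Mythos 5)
Your proof is correct, and its first half --- translating a horn $\Lambda^n_i\to\Hom^R_\cC(x,y)$ into the horn $\Lambda^{n+1}_i\to\cC$ obtained by adjoining the constant face $d_{n+1}$ (the vertex condition at $n+1$ being preserved automatically as well), and filling it by an inner horn of $\cC$ when $0<i\le n$ --- is exactly the content of the step the paper declares ``clear'', namely that $\Hom^R_\cC(x,y)$ has the lifting property with respect to $j^n_i$ for $i\ne 0$. Where you diverge is in how the leftover case $i=0$ is dispatched. The paper stops at that point and invokes Theorem~\ref{thm:kan}: a quasicategory satisfying RLP against all horns $\Lambda^n_i$ with $i>0$ is Kan (the hard implication there being Joyal's $2\Rightarrow 3$, not proven in these notes). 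You instead stay inside $\cC$ and fill the remaining horns $\Lambda^{n+1}_0\to\cC$ directly by Joyal's special-outer-horn theorem, after observing that the initial edge $\Delta^{\{0,1\}}$ lies in the prescribed constant face and hence equals $\id_x$, manifestly an equivalence. The two routes lean on Joyal-type inputs of essentially the same depth, so neither is more elementary; the paper's version has the advantage of quoting only a statement already recorded in the notes (Theorem~\ref{thm:kan}), while yours isolates precisely where invertibility enters (the degenerate initial edge) and avoids needing the full equivalence of conditions in Theorem~\ref{thm:kan}, at the price of importing the special-outer-horn lifting statement, which the notes never formulate explicitly.
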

\begin{proof}
It is clear that $\Hom^R_\cC(x,y)$ satisfies the lifting property with respect to $j^n_i$, for $i\ne 0$. This already implies the claim.
\end{proof}

\subsubsection{Maximal subspace}

Another consequence of Theorem~\ref{thm:kan} is the existence of a maximal Kan subset of any quasicategory.

\begin{Crl}For a quasicategory $\cC$ let $\cC^\eq$ denote
the simplicial subset consisting of simplices whose all
$1$-faces are equivalences. Then $\cC^\eq$ is the maximal Kan subset in $\cC$.
\end{Crl}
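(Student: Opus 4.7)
The plan is to show three things in turn: that $\cC^\eq$ is a sub-quasicategory of $\cC$, that every edge of $\cC^\eq$ is an equivalence in $\cC^\eq$ (so that Theorem~\ref{thm:kan} forces it to be Kan), and that any Kan simplicial subset of $\cC$ is automatically contained in $\cC^\eq$.

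First I would check that $\cC^\eq$ is indeed a simplicial subset: a $1$-face of a face of $\sigma$ is a $1$-face of $\sigma$, and a $1$-face of $s_i(\sigma)$ is either a $1$-face of $\sigma$ or a degenerate edge $s_0(x)$, which represents an identity in $\Ho(\cC)$ and is therefore an equivalence. To see that $\cC^\eq$ is a quasicategory, I would take an inner horn $\Lambda^n_k\to \cC^\eq$ with $0<k<n$ and fill it in $\cC$; the question is whether the resulting $n$-simplex has all of its $1$-faces equivalences. For $n\ge 3$ every $1$-simplex of $\Delta^n$ already lies in $\Lambda^n_k$ (any two vertices $i<j$ are omitted by some face $d_l$ with $l\ne k$), so the filler automatically sits in $\cC^\eq$. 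For $n=2$ the only new $1$-simplex is $d_1$, and since the $2$-simplex exhibits $d_1$ as the composition of $d_0$ and $d_2$ in $\Ho(\cC)$, the two-out-of-three property for isomorphisms gives that $d_1$ is also an equivalence.

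Next, every edge $f\in(\cC^\eq)_1$ is an equivalence in $\cC^\eq$. Indeed, as an equivalence in $\cC$ it admits a homotopy inverse $g$ together with $2$-simplices $u,v\in\cC_2$ witnessing $gf\sim \id$ and $fg\sim \id$ in the sense of~(\ref{eq:equivalence}); the three $1$-faces of each of $u$ and $v$ are $f$, $g$, and an identity, all of which are equivalences, so $g,u,v$ all lie in $\cC^\eq$. Hence $f$ is already an equivalence in $\cC^\eq$. Since $\cC^\eq$ is a quasicategory all of whose arrows are equivalences, Theorem~\ref{thm:kan} applies and shows that $\cC^\eq$ is Kan.

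Finally, for maximality, let $K\subseteq\cC$ be any Kan simplicial subset. Every edge of $K$ is an equivalence in $K$ by Theorem~\ref{thm:kan}, and the inclusion $K\hookrightarrow\cC$ induces a functor $\Ho(K)\to\Ho(\cC)$, which sends isomorphisms to isomorphisms; therefore every $1$-face of any simplex of $K$ is an equivalence in $\cC$, so $K\subseteq\cC^\eq$. The main point requiring care is the inner horn argument for $\cC^\eq$ in the case $n=2$, where one genuinely needs the two-out-of-three property established just after the definition of equivalence; everything else is bookkeeping.
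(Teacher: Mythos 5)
Your proof is correct and takes essentially the route the paper intends: the corollary is stated there without proof as a direct consequence of Theorem~\ref{thm:kan}, and your argument (closure of $\cC^\eq$ under faces and degeneracies, inner-horn filling with the two-out-of-three observation handling the $n=2$ case, the check that every edge of $\cC^\eq$ is still an equivalence in $\cC^\eq$ because all witnessing $2$-simplices have equivalence edges, then maximality via the implication $5\Rightarrow 1$ of the theorem) is exactly the standard filling-in of those details. No gaps.
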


\begin{Rem}In the quasicategorical approach to infinity categories Kan simplicial sets are seen as \lq\lq{}infinity
groupoids\rq\rq{} or $(\infty,0)$-categories. Maximal Kan subset is, therefore, the $(\infty,0)$-category obtained from $(\infty,1)$ category $\cC$ by discarding all non-invertible $1$-arrows.
\end{Rem}

\subsection{Simplicial categories}

A simplest model for $(\infty,1)$ categories is the one based on categories enriched over topological spaces
(or, what is more or less the same), over Kan simplicial sets.

A pleasant property of this model is that composition 
in it is strictly associative. Another pleasant property
is that a lot of interesting infinity categories can be 
constructed in this way --- giving an explicit construction of a simplicial category. Inconvenience of this model 
is due to difficulty of describing functors $\cC\to\cD$ ---
they can be presented in general by a diagram 
$\cC\leftarrow \wt\cC\to\cD$.

\subsubsection{Enriched categories}
We will define them in the most simple case. Let $M$ be a category with finite products. An $M$-enriched category $C$ consists of a class of objects $\Ob(C)$; of the objects
$\Map_\cC(x,y)\in M$ for each pair $x,y\in\Ob(C)$; of morphisms $\id_x:*\to\Map_C(x,x)$ from the terminal object $*\in M$ to the endomorphism object of $x$ for all $x$;
of strictly associative compositions $\Map_C(y,z)\times\Map_C(x,y)\to\Map_C(x,z)$, so that $\id_x$ is both left and right unit.

We are especially interested in simplicially enriched categories which we will call just {\sl simplicial categories}. They form the category which we denote $\sCat$.

\subsubsection{Homotopy category}

Given $\cC\in\sCat$, we define $\Ho(\cC)$ in a way similar 
(but easier) to the definition for quasicategories.

$\Ho(C)$ has the same objects as $\cC$; 
$\Hom_{\Ho(\cC)}(x,y)=\pi_0(\Map_\cC(x,y))$.

\subsubsection{Simplicial groupoids}
Recall that any quasicategory contains a maximal Kan simplicial subset.

We would like to better understand what is the analog of this claim in the context of simplicial categories.

The first step is clear:  an arrow 
$f\in\Map_\cC(x,y)_0$ should be called equivalence if its image in the homotopy category is invertible. In other words, this means that there exists an arrow $g\in\Map_\cC(y,x)_0$ such that $fg$ belongs to the same connected component of $\Map_\cC(y,y)$ as $\id_y$, and similarly 
for $gf$. In case $\cC$ has Kan $\Map$-spaces, the condition
on the compositions just means that there are $u\in\Map_\cC(x,x)$ and $v\in\Map_\cC(y,y)$ such that
$$
d_1u=\id_x,\ d_0u=gf,\ d_1v=\id_y,\ d_0v=fg.
$$

The simplicial category satisfying the above property is
called {\sl the simplicial groupoid}. The maximal subspace of $\cC\in\sCat$ is therefore
the simplicial groupoid with the same objects as $\cC$ 
and including the components of $\Map_\cC(x,y)$ 
consisting of equivalences. It is not immediately obvious
why such creatures deserve to be called 
\lq\lq{}spaces\lq\lq{}. 

First of all, let us restrict ourselves to the connected 
spaces on one hand, and to the connected simplicial 
groupoids on the other. Any connected simplicial groupoid 
is equivalent to a simplicial groupoid having one object, 
that is, to a simplicial monoid with homotopy invertible 
multiplication. This is slightly more general than a 
simplicial group, and the difference turns out inessential. 
The following classical theorem of J.P. May now
concludes the argument.

\begin{Thm}
The loop space functor establishes an equivalence between
connected topological spaces and simplicial groups.
\end{Thm}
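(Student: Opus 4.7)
The plan is to reduce the statement to its simplicial incarnation via the singular/realisation Quillen equivalence $|\cdot|\dashv\Sing$ and then establish it through Kan's loop group construction. Replacing a pointed connected space $X$ with the geometric realisation of a reduced Kan model of $\Sing(X)$ (collapse a maximal tree to make the resulting simplicial set reduced), it suffices to exhibit an equivalence between the homotopy category of reduced Kan simplicial sets and the homotopy category of simplicial groups whose forward functor, after geometric realisation, is naturally equivalent to the based loop functor $\Omega$.

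The adjoint pair I would use is Kan's loop group functor $G\colon\sSet_0\to s\Grp$ together with its right adjoint, the classifying complex $\overline{W}\colon s\Grp\to\sSet_0$. Explicitly, $G(X)_n$ is the free group on $X_{n+1}$ modulo the relation $s_0(x)=1$, with $d_0(x)=d_0x\cdot(d_1x)^{-1}$ and the obvious $d_i$, $s_i$ for $i\ge 1$; and $\overline{W}(H)_n=H_{n-1}\times\cdots\times H_0$ with the standard bar-type face and degeneracy operators. The key geometric ingredient is the principal $H$-bundle $H\to W(H)\to\overline{W}(H)$ whose total space $W(H)$ is a contractible Kan complex, witnessing $\overline{W}$ as a delooping.

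The proof then splits into two verifications. First, $\overline{W}(H)$ is a reduced Kan complex: horns lift by pulling back to $W(H)$, filling there using contractibility, and pushing the filling back down via the free $H$-action. Second, the unit $\eta_X\colon X\to\overline{W}G(X)$ and counit $\varepsilon_H\colon G\overline{W}(H)\to H$ are weak equivalences. For the counit, the long exact sequence of the fibration $H\to W(H)\to\overline{W}(H)$ gives $\pi_n H\cong\pi_{n+1}\overline{W}(H)$, and combining this with the analogous identification $\pi_n G(Y)\cong\pi_{n+1}Y$ for reduced Kan $Y$ yields $\pi_n G\overline{W}(H)\cong\pi_n H$; the simplicial Whitehead theorem then upgrades this to a weak equivalence. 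The unit $\eta_X$ is treated symmetrically, using that $G$ preserves weak equivalences of reduced Kan complexes.

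The main obstacle is the isomorphism $\pi_n G(Y)\cong\pi_{n+1}Y$, which is the precise statement that the combinatorial gadget $G(Y)$ models based loops on $Y$. I would prove it by constructing an explicit twisted cartesian product exhibiting a path fibration $PY\to Y$ with fiber $G(Y)$ and $PY$ a contractible Kan complex; the associated long exact sequence then yields the required isomorphism. Once this is established, transporting back through $(|\cdot|,\Sing)$ gives the statement for connected topological spaces, with the composite functor naturally equivalent to $\Omega$.
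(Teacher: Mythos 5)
The paper does not prove this statement at all: it is quoted as a classical theorem (attributed there to J.P.~May; the result is essentially due to Kan, with the $\overline{W}$-construction treated in May's book), so your proposal is being compared against a citation rather than an argument. Your route is the standard one and its outline is correct: reduce via the Quillen equivalence $(|\cdot|,\Sing)$ to pointed connected simplicial sets, pass to a reduced model, and establish the adjunction $G\colon\sSet_0\rightleftarrows s\Grp\colon\overline{W}$ as an equivalence of homotopy theories, using the principal fibration $H\to W(H)\to\overline{W}(H)$ with $W(H)$ contractible on one side and the twisted cartesian product $Y\times_\tau G(Y)$ with contractible total space on the other. What your sketch buys, compared with the paper's bare citation, is an explicit identification of where the content lies; and indeed almost all of it is concentrated in the step you defer, namely the contractibility of the total space $Y\times_\tau G(Y)$ (equivalently, that $G(Y)$ really is a loop group of $Y$), which is a genuinely nontrivial combinatorial argument of Kan and not a formal consequence of the rest. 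Two smaller points deserve care if you write this out: collapsing a maximal tree of a Kan complex produces a reduced but in general no longer Kan simplicial set, so you should either take a fibrant replacement inside reduced simplicial sets or use the first Eilenberg subcomplex; and the unit is not literally ``symmetric'' to the counit --- the clean statement is the Quillen-equivalence criterion that for reduced $X$ and a simplicial group $H$ a map $X\to\overline{W}H$ is a weak equivalence iff its adjoint $G(X)\to H$ is, proved by comparing the two fibration sequences over $X$ and $\overline{W}H$ (using that simplicial groups are Kan and that all reduced simplicial sets are cofibrant, so $G$ preserves weak equivalences). With those caveats your plan assembles into the standard complete proof, and it also yields the natural equivalence $|G(X)|\simeq\Omega|X|$ needed to match the paper's formulation in terms of the loop space functor.
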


\subsubsection{Standard simplices in $\sCat$}

We are going to present a remarkable adjoint pair of functors connecting simplicial sets with simplicial categories, the one which will yield  later on an equivalence between quasicategories and categories enriched over Kan simplicial sets.

As usual, in order to construct a colimit-preserving 
functor from simplicial sets to $\sCat$, we have to present 
a cosimplicial object in $\sCat$.

To each finite nonempty totally ordered set $J$ we assign a
simplicial category $\fC^J$ as follows.

The objects of $\fC^J$ are just the elements of $J$.

For $i,j\in J$ we define $\Map_{\fC^J}(i,j)$ as the nerve
of the category $P_{i,j}$ defined as follows.
\begin{itemize}
\item $P_{i,j}=\emptyset$ if $i>j$.
\item $P_{i,j}$ is the poset of subsets of $\{i,i+1,\ldots,j\}$ containing both $i$ and $j$ --- if $i\leq j$.
\end{itemize}

It remains to define the composition in $\fC^J$.
Given $I\in P_{i,j}$ and $K\in P_{j,k}$, the composition
is defined by the union $I\cup K\in P_{i,k}$.

We will write  $\fC^n$ instead of $\fC^{[n]}$ for $J=[n]$.

Let us present what we got for small $J$.
\begin{itemize}
\item $J=[0]$. Then $\fC^0=*$, the terminal category.
\item $J=[1]$. $\fC^1=[1]$ is the category with two
objects and one arrow.
\item $J=[2]$. $\fC^2$ has the objects $0,1,2$ and morphisms
$f:0\to 1$ and $g:1\to 2$. The space $\Map(0,2)$ is a segment: it has two $0$-simplices, $gf$ and $h:0\to 2$,
and a one-simplex $h\to gf$.
\end{itemize}

\subsubsection{Adjoint pair of functors}
\label{sss:adj-hcn}

As always, a cosimplicial object $n\mapsto \fC^n$ in $\sCat$
determines an adjoint pair of functors
\begin{equation}
\fC:\sSet\rlarrows:\sCat:\fN.
\end{equation}
Here $\fN(\cC)_n=\Hom_{\sCat}(\fC^n,\cC)$ --- is an analog
of the nerve functor (the official name --- {\sl homotopy coherent nerve}), and the functor $\fC$ is uniquely defined
by the property that $\fC(\Delta^n)=\fC^n$ and by the requirement that $\fC$ preserves colimits.

\begin{Rem}To make sure the functor $\fC$ exist, we use existence of colimits in $\sCat$. Colimits in $\sCat$
can be easily expressed via colimits in $\Cat$. Given 
a functor $F:I\to\Cat$, its colimit $C=\colim F(i)$ can
be explicitly described as follows. One has $\Ob(C)=
\colim \Ob(F(i))$. The set of arrows $\Mor(C)$ is defined
in two steps. First of all, let $X=\colim\Mor(F_i)$.
One has $s,t:X\to\Ob(C)$ --- source and target maps coming from the source and target maps in $F(i)$. We define further
$Y$ as the set of composable paths built from the elements 
of $X$. Finally, $\Mor(C)$ is the quotient of $Y$ modulo the equivalence relation generated by commutative triangles
in all $F(i)$.
\end{Rem}

\subsubsection{DK equivalences}

Dwyer and Kan suggest the following, very natural, notion
of equivalence of simplicial categories.

\begin{Dfn}A map $f:\cC\to\cD$ (that is, a simplicial functor) is called an equivalence (or Dwyer-Kan, DK, equivalence) if
\begin{itemize}
\item[1.] (fully faithful) For $x,y\in\cC$ the map 
$\Map_\cC(x,y)\to\Map_\cD(fx,fy)$ is a weak equivalence.
\item[2.] (essentially surjective) For any $z\in\cD$ there exists and equivalence $f(x)\to z$ for some $x\in\cC$.
\end{itemize}
\end{Dfn}
Note that, in case Condition 1 is fulfilled, Condition 2
is equivalent to the following

\begin{itemize} 
\item[3.] The functor $\Ho(f):\Ho(\cC)\to\Ho(\cD)$ is an equivalence of categories.
\end{itemize}

We will mimic the above definition to quasicategories.
\begin{Dfn}A map $f:\cC\to\cD$ of quasicategories is called a DK equivalence  if
\begin{itemize}
\item[1.] (fully faithful) For $x,y\in\cC$ the map 
$\Hom^R_\cC(x,y)\to\Hom^R_\cD(fx,fy)$ is a weak equivalence.
\item[2.] (essentially surjective) For any $z\in\cD$ there exists and equivalence $f(x)\to z$ for some $x\in\cC$.
\end{itemize}
\end{Dfn}

\

The following theorem is quite difficult. We do not intend to prove it.

\begin{thm}
\begin{itemize}
\item[1.] The functor $\fN$ carries  simplicial categories
having Kan map spaces, to quasicategories.
\item[2.] Given a simplicial category $\cC$ with Kan map spaces, the canonical map
$$\fC\circ \fN(\cC)\to\cC$$
is a DK equivalence.
\item[3.] Given a quasicategory $X$ and an equivalence $\fC(X)\to \cC$ where the simplicial category $\cC$ has fibrant map spaces, the map $X\to \fN(Y)$ is an equivalence of quasicategories.
\end{itemize}
\end{thm}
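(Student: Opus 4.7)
The three parts are tightly linked, so I would prove them in the order (1), (2), (3), with (1) carrying the main combinatorial weight and (3) being a formal consequence once a Quillen equivalence is in place.

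For part (1), the adjunction $\fC \dashv \fN$ reduces the problem to the following: given a simplicial functor $F \colon \fC(\Lambda^n_k) \to \cC$ with $0 < k < n$, extend it to $\fC^n = \fC(\Delta^n)$. Since $\fC$ preserves colimits and the generating cosimplicial diagram in $\sCat$ is levelwise identity on objects, the inclusion $\fC(\Lambda^n_k) \hookrightarrow \fC^n$ is the identity on objects; moreover, one checks directly that it is already an isomorphism on $\Map(i,j)$ unless $(i,j)=(0,n)$. The whole problem thus collapses to a single extension question for the mapping space $\Map_{\fC^n}(0,n)$, which is the cube $(\Delta^1)^{n-1}$ (subsets of $\{1,\ldots,n-1\}$ ordered by inclusion). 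The plan is to show combinatorially that for an inner horn $\Lambda^n_k$ (i.e.\ $0 < k < n$), the inclusion
\[
\Map_{\fC(\Lambda^n_k)}(0,n) \hookrightarrow \Map_{\fC^n}(0,n)
\]
is anodyne --- built by iterated pushout-products of the form $\partial\Delta^p \hookrightarrow \Delta^p$ with a horn, with one of the horn inclusions corresponding to the "missing face" $\partial^k$. Given anodyness, the Kan hypothesis on $\Map_\cC(F(0),F(n))$ produces the required extension. This step is the main obstacle: keeping track of exactly which cubical subcomplex survives when the $k$-th face is removed, and identifying the filtration that exhibits the inclusion as anodyne, is a subtle combinatorial exercise (it is essentially the content of Lurie's HTT Proposition 1.1.5.10).

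For part (2), essential surjectivity of $\varepsilon_\cC \colon \fC \fN(\cC) \to \cC$ is immediate since $\varepsilon_\cC$ is the identity on objects. The substance is showing that each induced map
\[
\Map_{\fC \fN(\cC)}(x,y) \longrightarrow \Map_\cC(x,y)
\]
is a weak equivalence of simplicial sets. The plan is to use the explicit "necklace" model of Dugger--Spivak for the mapping spaces of $\fC(X)$: for any simplicial set $X$, $\Map_{\fC(X)}(a,b)$ is a homotopy colimit over necklaces $\Delta^{n_1}\vee\cdots\vee\Delta^{n_r} \to X$ from $a$ to $b$. When $X = \fN(\cC)$, every such necklace admits a canonical "totalization" into $\Map_\cC(a,b)$; one shows that on each necklace summand the map is a weak equivalence (using Kanness of the mapping spaces of $\cC$ to contract the cubical Map spaces of each $\fC^n$), and then that the colimit assembles into a weak equivalence. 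An alternative, more hands-on induction on simplicial skeleta of $\fN(\cC)$ is also feasible, but bookkeeping of the gluing data makes the necklace approach cleaner.

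For part (3), I would argue formally: (1) and (2), together with the Joyal and Bergner model structures, upgrade $(\fC,\fN)$ to a Quillen equivalence --- $\fN$ sends fibrant objects (simplicial categories with Kan map spaces) to fibrant objects (quasicategories), and part (2) gives that the derived counit is a weak equivalence on fibrant objects. Hence the derived unit $\eta_X \colon X \to \fN\fC(X)$ is a Joyal equivalence for every $X$ (every simplicial set is Joyal-cofibrant). Given $X$ and an equivalence $\fC(X)\to\cC$ with $\cC$ having Kan mapping spaces, applying $\fN$ yields a DK (hence Joyal) equivalence $\fN\fC(X)\to\fN(\cC)$ since $\fN$ preserves equivalences between fibrant objects; composing with $\eta_X$ exhibits $X\to\fN(\cC)$ as a composition of equivalences of quasicategories. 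The only non-formal input here is the Joyal model structure, which we would cite rather than develop.
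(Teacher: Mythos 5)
For claim (1) --- the only part of this theorem the notes actually prove (the text explicitly says it does not intend to prove the rest) --- your argument is essentially the paper's: reduce by adjunction to lifting $\cC$ against $\fC(\Lambda^n_k)\to\fC^n$, observe that the two simplicial categories agree on objects and on all mapping spaces except $\Map(0,n)$, identify the inclusion of mapping spaces inside the cube $(\Delta^1)^{n-1}$ as a monomorphism and weak equivalence (the paper describes the subcomplex explicitly as the boundary of the cube with one face removed; you phrase the same thing as an anodyne map built from pushout-products), and finish using Kanness of $\Map_\cC(F(0),F(n))$. Same proof; both you and the paper leave the cube combinatorics at the level of an assertion.

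For claims (2) and (3) the paper offers no proof, so your sketches are supplements rather than alternatives. Your necklace plan for (2) is the standard Dugger--Spivak route and is reasonable as a plan, though the step ``the colimit assembles into a weak equivalence'' is where essentially all the work lives. In (3) there are two concrete soft spots as written. First, for a general Quillen adjunction, ``the derived counit is an equivalence on fibrant objects, hence the derived unit is an equivalence'' is not a valid implication; it works here only because Joyal weak equivalences are by definition created by $\fC$, so one can apply $\fC$ to the unit and use the triangle identity together with two-out-of-three --- this should be said. Second, your factorization $X\to\fN\fC(X)\to\fN(\cC)$ invokes ``$\fN$ preserves equivalences between fibrant objects,'' but $\fC(X)$ is not Bergner-fibrant in general, so that step (and the identification of the plain unit with the derived unit, and the claim that $\fN\fC(X)$ is a quasicategory) fails as stated. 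The clean argument avoids the unit entirely: the adjunct of $X\to\fN(\cC)$ factors as $\fC(X)\to\fC\fN(\cC)\stackrel{\varepsilon}{\to}\cC$; by (2) the counit $\varepsilon$ is a DK equivalence and by hypothesis the composite is, so $\fC(X)\to\fC\fN(\cC)$ is a DK equivalence, i.e. $X\to\fN(\cC)$ is a Joyal weak equivalence between quasicategories, hence an equivalence of quasicategories.
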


\subsubsection{Proof of the first claim}

This is an easy part, so it is worthwhile to prove it.
Let $\cC$ be a simplicial category with Kan map spaces.

We have to verify the lifting property of $\fN(\cC)$ 
with respect to $j^n_i$, $i\ne 0,n$. By adjunction, this is 
equivalent to lifting property of $\cC$ with respect to
$\fC(j^n_i)$. Thus, we have to better understand how the
canonical map $\fC(\Lambda^n_i)\to\fC^n$ looks like.

These simplicial categories have the same objects. The 
maps spaces $\Map(i,j)$, with the only exception $(i,j)=(0,n)$, are the same. Let us compare 
$\Map_{\fC^n}(0,n)$ with $\Map_{\fC(\Lambda^n_i)}(0,n)$.
The former simplicial set is $(\Delta^1)^{n-1}$, whereas
the latter is the subspace $\Pi^{n-1}_{i,1}$ we will
now define. The $n-1$-dimensional cube is the nerve of the poset $\{0,1\}^{n-1}$ whose elements are the vectors
$(x_1,\ldots,x_{n-1})$ of numbers $0,1$. This cube has
$2n-2$ faces of maximal dimension, given by the equations
$x_j=0$, $x_j=1$, $j=1,\ldots,n-1$.
The subspace $\Pi^{n-1}_{i,1}$ is
obtained from the boundary of the cube by removing the face
given by the equation $x_i=1$.

The map of simplicial sets 
$\Pi^{n-1}_{i,1}\to(\Delta^1)^{n-1}$ is definitely a weak homotopy equivalence and an injection. According the standard homotopy theory of simplicial sets which we intend to address later, any Kan simplicial set satisfies the lifting property with respect to such map.
\subsection{Examples}

A lot of interesting quasicategories appear as nerves of simplicial categories. We will give three such examples.

\subsubsection{Quasicategory of spaces}

The category of Kan simplicial sets is a simplicial category via internal Hom.

\begin{Exe} Prove that $\Fun(X,Y)$ is Kan if $Y$ is Kan.
\end{Exe}

Thus, we have a simplicial category whose objects correspond to Kan simplicial sets and whose map spaces are Kan.
Its nerve is {\sl the quasicategory of spaces}. It is usually denoted $\cS$. 

\

Homotopy coherent nerve was invented in 80-ies in the following context. We want to define diagrams of topological spaces up to a coherent collection of homotopies.

For instance, a functor from $\Delta^2$ to spaces should be defined as a collection of three spaces $X_i$, $i=0,1,2$, 
with maps $X_i\to X_j$ for $i<j$, and with a homotopy connecting the composition $X_0\to X_1\to X_2$ with the 
map $X_0\to X_2$. In this way one arrives to the definition
which is equivalent, in our language, to the following.

\begin{Dfn}A homotopy coherent functor from a category $I$
to the  simplicial sets is a map
of simplicial sets $N(I)\to\cS$, where $\cS$ is defined as
the (homotopy coherent) nerve of the simplicial category of Kan simplicial sets.
\end{Dfn}

\subsubsection{Quasicategory of quasicategories}

The objects of $\Cat_\infty$ are quasicategories.
Note that $\Fun(X,Y)$ is not Kan if $X,Y$ are \lq\lq{}only\rq\rq{} quasicategories: take, for instance, $X=*$.

One has however the following 
\begin{Lem}$\Fun(X,Y)$ is a quasicategory of $Y$ is a quasicategory.
\end{Lem}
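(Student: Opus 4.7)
The plan is to reformulate the lifting property for $\Fun(X,Y)$ using the standard internal-hom adjunction $\Hom_\sSet(A,\Fun(X,Y))=\Hom_\sSet(X\times A,Y)$, which in particular gives $\Fun(X,Y)_n=\Hom_\sSet(X\times\Delta^n,Y)$. Under this adjunction a map $\Lambda^n_k\to\Fun(X,Y)$ corresponds to a map $X\times\Lambda^n_k\to Y$, and an extension to $\Delta^n\to\Fun(X,Y)$ is the same datum as an extension to $X\times\Delta^n\to Y$. So the proposition reduces to the claim: for every simplicial set $X$ and every $0<k<n$, the inclusion $X\times\Lambda^n_k\hookrightarrow X\times\Delta^n$ lies in the saturated class generated by the inner horn inclusions $\Lambda^m_j\hookrightarrow\Delta^m$, $0<j<m$ (call such maps \emph{inner anodyne}); since $Y$ has the right lifting property with respect to all generators and this class is stable under retracts, pushouts and transfinite composition, it has the RLP with respect to every inner anodyne map.

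I would prove the inner anodyne claim by induction on the skeleta $X^{(m)}$ of $X$. The functors $-\times\Delta^n$ and $-\times\Lambda^n_k$ preserve colimits, and inner anodyne maps are closed under pushout and transfinite composition, so it suffices to handle the step from $X^{(m-1)}$ to $X^{(m)}$. Since $X^{(m)}$ is obtained from $X^{(m-1)}$ by attaching the nondegenerate $m$-simplices along $\coprod\partial\Delta^m\hookrightarrow\coprod\Delta^m$, the inclusion
\[
X^{(m-1)}\times\Delta^n\,\cup\,X^{(m)}\times\Lambda^n_k\,\hookrightarrow\,X^{(m)}\times\Delta^n
\]
is a pushout of a coproduct of copies of the single map
\[
P\;:\;(\partial\Delta^m\times\Delta^n)\cup_{\partial\Delta^m\times\Lambda^n_k}(\Delta^m\times\Lambda^n_k)\,\hookrightarrow\,\Delta^m\times\Delta^n,
\]
one copy for each nondegenerate $m$-simplex of $X$. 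The whole problem therefore reduces to verifying that $P$ is inner anodyne.

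Finally, $P$ is handled by a direct combinatorial filtration of $\Delta^m\times\Delta^n$. The nondegenerate $(m+n)$-simplices of $\Delta^m\times\Delta^n$ are in bijection with the $\binom{m+n}{m}$ shuffles, i.e.\ the maximal chains in the poset $[m]\times[n]$. I would order these shuffles lexicographically and attach them, one at a time, to the subcomplex on the left of $P$. A direct check shows that at each stage one is filling a single horn $\Lambda^{m+n}_j\hookrightarrow\Delta^{m+n}$, and that the missing face index $j$ satisfies $0<j<m+n$ precisely because $0<k<n$: the hypothesis on $k$ guarantees that neither the initial nor the terminal vertex of the newly attached simplex is ever the one to be filled. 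Stringing these inner horn fillings together exhibits $P$ as a composition of inner horn extensions, finishing the proof.

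The main obstacle is this last combinatorial step: although each individual horn filling is elementary, one must keep careful track, for every shuffle in the chosen order, of which face of the corresponding $(m+n)$-simplex is missing from the previously constructed subcomplex, and verify that it is always an inner face under the assumption $0<k<n$. The skeletal induction and the adjunction reformulation are formal; the bookkeeping of the shuffle filtration is the only substantial ingredient.
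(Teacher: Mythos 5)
Your overall strategy is exactly the one the notes sketch: transpose the lifting problem through the internal-hom adjunction, use that the right lifting property passes to the weak saturation of the inner horns, and reduce by induction over the skeleta of $X$ to a combinatorial statement about products of two simplices. In fact your reduction is more careful than the sketch in the text, since the cell-attachment step genuinely requires the relative pushout-product map $P\colon(\partial\Delta^m\times\Delta^n)\cup_{\partial\Delta^m\times\Lambda^n_k}(\Delta^m\times\Lambda^n_k)\to\Delta^m\times\Delta^n$ and not merely the case $X=\Delta^m$ mentioned in the notes.

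The gap is in the final combinatorial step, the only place where you go beyond the sketch. It is not true that one can pass from the subcomplex on the left of $P$ to $\Delta^m\times\Delta^n$ by attaching the nondegenerate $(m+n)$-simplices (the shuffles) one at a time along horns $\Lambda^{m+n}_j$: whatever shuffle you attach first meets the given subcomplex in strictly less than a horn. Concretely, take $m=1$, $n=2$, $k=1$. Each of the three $3$-dimensional shuffles of $\Delta^1\times\Delta^2$ has exactly two codimension-one faces lying in $(\partial\Delta^1\times\Delta^2)\cup(\Delta^1\times\Lambda^2_1)$ and two that do not; for instance, for the shuffle with vertices $(0,0),(0,1),(0,2),(1,2)$ the faces $\{(0,0),(0,2),(1,2)\}$ and $\{(0,0),(0,1),(1,2)\}$ have surjective projection to $[1]$ and second projection containing $\{0,2\}$, hence lie in neither piece. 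So no ordering of the shuffles makes even the first attachment a pushout of a single horn inclusion of dimension $m+n$. The lemma you need (that $P$ is inner anodyne for $0<k<n$) is true, but its proof requires a finer filtration: one must also handle the lower-dimensional nondegenerate simplices missing from the subcomplex (those surjecting onto $[m]$ and whose image in $[n]$ contains $[n]\setminus\{k\}$), attaching them along inner horns of varying dimension — typically in pairs, a missing simplex together with the one obtained from it by inserting a distinguished vertex in the $k$-th column — or one changes generators as Lurie does. With such a filtration in place of your shuffle-only scheme the argument is complete; note that the notes themselves only assert this combinatorial fact ("mild combinatorics") without proof, so this is precisely the point that needed the care.
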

\begin{proof}Here is the idea of the proof. Lifting property
of $\Fun(X,Y)$ with respect to the maps $j^n_i$ can be rewritten as the lifting property of $Y$ with respect to the maps $j^n_i\times X:\Lambda^n_i\times X\to\Delta^n\times X$.

We define {\sl weak saturation} of the set of arrows
$\{j^n_i\}$ as the collection of arrows which can be obtained by cobase change, composition and retraction.

It is almost immediate that the lifting property with 
respect to a collection of morphisms extends to its weak 
saturation. So, it remains to verify that $j^n_i\times X$ 
belong to the weak saturation. Verification immediately 
reduces to the case $X=\Delta^m$, whose proof requires some 
mild combinatoric of simplicial sets.
\end{proof}
The lemma allows one to define $\Cat_\infty$ as the nerve 
of the following simplicial category.
\begin{itemize}
\item Its objects are quasicategories.
\item $\Map(X,Y)$ is defined as the maximal Kan subcomplex
of $\Fun(X,Y)$.
\end{itemize}

\begin{Rem}The above definition is a curious one. Its 
construction seems artificial (though it is not). 
Theoretically, one could suggest something different 
instead of maximal Kan subset, for instance, a functorial 
Kan replacement (see later). But it turns out that the 
definition presented above is the correct one. For 
conventional categories, it corresponds to the 2-category
of categories, whose 1-arrows are the functors, and 
2-arrows are isomorphisms of functors. 

Moreover, looking on the conventional categories teaches us
that if we want to retain all of $\Fun(X,Y)$ as morphisms,
we have first to come out with a definition of $(\infty,2)$ category.
\end{Rem}

\subsubsection{Functor quasicategories}

The above lemma, especially, when put together with the
definition of homotopy coherent diagram of spaces, gives a hope that the assignment $I,X\mapsto\Fun(I,X)$ is all we need to describe the correct notion of quasicategory of functors. This is really so, and this is a huge benefit
as compared to $\sCat$ which is too rigid to have enough 
simplicial functors $I\to\cC$ (unless $I$ is {\sl cofibrant}, see later).

\subsubsection{Quasicategory of complexes}

Let $\cA$ be an abelian category, $C(\cA)$ the category of complexes. 
There are (at least) two triangulated categories associated to $C(\cA)$: the one is denoted $K(\cA)$ (it was originally called ``the homotopy category of $\cA$ but we now find this name very misleading), and the other $D(\cA)$ --- the derived category.

Both will have infinity categorical counterparts which we will construct as nerves of respective simplicial categories. 

The category $C(\cA)$ is enriched over complexes of abelian groups. We will denote
$\chom(X,Y)\in C(\Z)$ the respective complex. 

Let us now define the simplicial set $\Map(X,Y)$.

\begin{itemize}
\item $0$-simplices are the $a\in\chom(X,Y)^0$ such that $da=0$.
\item $1$-simplices are given by triples $a,b\in\chom(X,Y)^0$, $c\in\chom(X,Y)^{-1}$, such that $da=db=0,
dc=b-a$. In general,
\item $n$-simplices are morphisms of complexes $C_*(\Delta^n)\to\chom(X,Y)$, where $C_*$ is the functor
of {\sl normalized chains}. 
\end{itemize}

Now we can define $K_\infty(\cA)$ as the nerve of the simplical category described above. $D^-_\infty(\cA)$ is the nerve of the full simplicial subcategory spanned by the
complexes bounded above consisting of projective objects
\footnote{One has to be slightly more careful to describe the unbounded derived category, see later.}

\begin{Rem} The passage from $\chom(X,Y)\in C(\Z)$ to
$\Map(X,Y)\in\sSet$ retains an essential part of information
about the complex. In fact, $\Map(X,Y)$ is  a simplicial abelian group, and  Dold-Kan equivalence says
that the functor of normalized chains establishes an
equivalence between $\Fun(\Delta^\op,\Ab)$ and $C^{\leq 0}(\Z)$. Thus, simplicial abelian group $\Map(X,Y)$ determines
the nonpositive part of $\chom(X,Y)$, more precisely,
the complex $\tau^{\leq 0}(\chom(X,Y)$ (one retains the negative components  and zero-cocycles).
\end{Rem}

\subsection{Exercises}
\begin{itemize}
\item[1.] Let $\cC$ be a quasicategory. Prove that the
relation on the set of arrows from $x$ to $y$ defined by
(\ref{eq:equivalence}) is an equivalence.
\item[2.] Define the functor $\op:\Delta\to\Delta$ as the one carrying a totally ordered set $I$ to its opposite
$I^\op$. This functor carries $[n]$ to $[n]$ but $d_i$ to $d_{n-i}$ and $s_i$ to $s_{n-i}$. Verify that $\cC$ is a quasicategory iff $\cC^\op$ is.
\item[3.] Prove that the equivalence relations defined by
(\ref{eq:equivalence}) on $\cC$ and on $\cC^\op$ are the same, so that $\Ho(\cC^\op)=\Ho(\cC)^\op.$

\item[4.] Prove that any simplicial group  (in particular, simplicial abelian group) is Kan. Here {\sl simplicial
 group} means just a simplicial object in the category of 
groups.

\end{itemize}

\newpage
\section{Model categories, 1}

One of more classical approaches to deal with homotopical information is via model 
categories suggested by Quillen in late 60-ies. 

One remains in the scope of conventional categories, but requires a choice of three 
types of arrows, weak equivalences, fibrations and cofibrations, satisfying some 
list of axioms. Importance of choosing special classes of morphisms is a result of 
experience coming from homological algebra and homotopy theory. Thus, in homological 
algebra we use resolutions. If we talk in the
language of complexes, a projective
resolution $P_\bullet\to M$ of a module $M$ is a {\sl quasiisomorphism of complexes}.
Quasiisomorphisms are weak equivalences in complexes (see below). In homotopy theory, 
homotopy equivalences (and even better, weak homotopy equivalences) play this role. So, 
weak equivalences in a model category are arrows which we would like to think of as 
isomorphisms.

The role of fibrations and cofibrations is more subtle (and less crucial). 
Morally, fibrations are ``good surjections'' and cofibrations are ``good injections''.

Details are below.

\subsection{Definitions}
\begin{dfn}\label{dfn:model}
A model category $\cC$ is a category having small limits and small colimits (see~\ref{sss:limcolim})\footnote{Quillen requires existence of finite limits and colimits
only. But existence of all (co)limits is very convenient.} with three collections of arrows, $W,C,F$ satisfying a list of axioms presented below.
\begin{itemize}
\item[1.] Let $f,g$ be two composable arrows in $\cC$. If two
arrows among $f,g,gf$ are in $W$, that the third is in $W$.
\item[2.] Retract of any arrow in $W$ (resp., $F$, $C$) is in
$W$ (resp., $F$, $C$).
\item[3.] Let
$$
\xymatrix{
&A \ar[r]^{ } \ar[d]^i&X  \ar[d]^{p}  \\
& B \ar[r]\ar@{.>}[ru] &Y 
}
$$
be a commutative diagram of solid arrows,  so that $i\in C$ and $p\in F$. Then there exists a dotted arrow if
\begin{itemize}
\item[(a)] if $i$ is in $W$.
\item[(b)] if $p$ is in $W$.
\end{itemize}
\item[4.] Any map $f:X\to Y$ can be decomposed as
\begin{itemize}
\item[(a)] $f=pi$, where $p\in F,\ i\in C\cap W$.
\item[(b)] $f=pi$, where $p\in F\cap W,\ i\in C$.
\end{itemize}
\end{itemize}
\end{dfn}

It takes time to grasp this notion. 

The maps in $W\cap C$ are called trivial cofibrations, 
the maps in $W\cap F$ are trivial fibrations.

Very approximately, the last axiom allows one to construct 
resolutions, the previous expresses lifting properties
of different types of arrows one with respect to the other.

Model category $\cC$ has an initial object $\emptyset$ and a terminal object $*$. If the  map $\emptyset\to X$ is
a cofibration, $X$ is called cofibrant; if $Y\to *$ is a fibration, $Y$ is called fibrant.

\subsubsection{}
In the commutative square as in Definition~\ref{dfn:model} we say that 
$i$ satisfies LLP (left lifting property) with respect to $p$, or that $p$ satisfies RLP wrt $i$.

\begin{rem}
One can easily prove that an arrow in $\cC$ is a cofibration (resp., trivial cofibration) iff it has the LLP with respect to trivial fibrations (resp., all fibrations). Dually, fibrations and trivial fibrations
are defined by the RLP property. This is shown as follows. Let, for instance, $i:A\to B$ satisfy LLP with
respect to all trivial fibrations. Factor $i=qj$ where
$j$ is a cofibration and $q$ is a trivial fibration.
As $i$ has LLP wrt $q$, $i$ is a retract of $j$. Thus, 
$i$ is a cofibration.
\end{rem}

We will discuss later the notion of homotopy category of
$\cC$, $\Ho(\cC)$ defined either as a localization of $\cC$
with respect to $W$, or as the category whose objects are fibrant cofibrant objects of $\cC$ and morphisms equivalence classes of morphisms of $\cC$. But we prefer to start with an easy and very meaningful example.

\subsection{Example: category of complexes of modules}
\subsubsection{Projective model structure}
\label{sss:pro}

Fix $k$ a ring. Let $C(k)$ be the category of unbounded
complexes of $k$-modules. We will define an interesting model category structure on $C(k)$. 

\begin{itemize}
\item Weak equivalences are quasiisomorphisms of complexes.
\item
Fibrations are the surjective maps of complexes.
\item
Cofibrations are defined by the LLP with respect to the trivial fibrations.
\end{itemize}

\subsubsection{Joining a variable}
\label{sss:joining}

Given a complex $X$ and $x\in X^n$ such that $dx=0$, we define $X\langle u; du=x\rangle$ as graded $k$-module
defined as $X\oplus k\cdot u$ with the differential given by $du=x$. 

\begin{Exe} $X\to X\langle u\rangle$ is a cofibration.
\end{Exe}

\subsubsection{Standard cofibrations}
\label{sss:standard}

They are defined as  direct sequential limits of maps obtained
by adding a set of variables. 

\begin{Exe}
Verify that direct sequential limit of cofibrations a cofibration.
\end{Exe}

\subsubsection{Proof}

We have to prove axioms 3 and 4. Note that 3(b) is immediate.

It is also easy to prove 4(a). Given $f:X\to Y$, we should factor $f$ as trivial cofibration followed by a surjective map. This is really easy: We define 
$X'=X\oplus\bigoplus T_y$
where $T_y$ is a complex $k\stackrel{\id}{\to} k$ concentrated in degrees $|y|$ and $|y|+1$.
  
Let us prove 4(b). This is a generalization of the way we construct a free resolution of a module.

Given a map $f:X\to Y$, we want to decompose it into a
standard cofibration followed by a trivial fibration.

Step 0. Using 4(a), we can assume $f$ is already surjective.

Step 1. Adding a set of cycles, make sure the map is surjective on the set of cycles.

Step 2. Recursively, having $f_n:X_n\to Y$ such that  
$f_n$ is surjective and $Z(f_n):Z(X_n)\to Z(Y)$ is surjective, construct a decomposition of $f_n$ as
$$ X_n\to X_{n+1}\to Y,$$
such that any cycle in $X_n$ whose image in $Y$ is a boundary, becomes a boundary in $X_{n+1}$.

It is easy (but instructive) to understand that the direct limit $\colim X_n\to Y$ is a quasiisomorphism.

Thus, we presented any map of complexes as a composition of 
standard cofibration and trivial fibration.

It remains to prove condition 3(a), that trivial cofibrations have RLP with respect to fibrations. By the proof of
4(a) any weak equivalence $f$ decomposes $f=pi$ where
$p$ is a trivial fibration and $i$ is a standard trivial cofibration. If $f$ is also a cofibration, condition 3(b) gives the existence of $j$ such that $pj=\id,\ i=jf$.
This proves that any trivial cofibration is a retract of a standard one; since standard trivial cofibrations satisfy
LLP wrt fibrations, we got what we needed.

\begin{exe}\label{exe:proj}
\begin{itemize}
\item[1.]Prove that in the above model structure any cofibration is a retract of a standard cofibration.
\item[2.] Prove that if $X$ is cofibrant, $X^n$ are projective $k$-modules.
\item[3.] For $k=\Z/4\Z$ let $X$ be defined as follows.
$$ X^n=k,\quad d(x)=2x\  \forall x\in X^n.$$
Prove $X$ is not cofibrant (even though all $X^n$ are free).

\end{itemize}
\end{exe} 
\

\subsubsection{Injective model structure}
We see that the model category structure presented above
generalizes very nicely everything in homological algebra
that requires projective resolution. But what about injective resolutions? They appear in another model category structure called {\sl injective model structure}.

We will not present the full details; here are
the definition. 
\begin{itemize}
\item A map of complexes is a weak equivalence iff it is a quasiisomorphism.
\item
A map is a cofibration if it is componentwise injective.
\item A map is a fibration if it satisfies RLP wrt all trivial cofibrations.
\end{itemize}
It is a good exercise to verify this is a model structure.
See \cite{H}, 2.3.13, for the detailed proof.

I would like to stress that the two model structures described above are different, but ``morally'' define the same infinity categorical object: they have the same weak equivalences. We will discuss this later.

\subsubsection{Projective model structure: generalizations}
Model category structure described in \ref{sss:pro} can be extended to some other contexts.

Let $k$ be a commutative ring. Let $\cC$ be one of the following categories.
\begin{itemize}
\item The category of associative dg algebras over $k$.
\item The category of commutative or Lie (or any other operad) dg algebras over $k$, when $k\supset\Q$.
\end{itemize}
Then $\cC$ has a model structure, with quasisomorphisms as weak equivalences and 
componentwise surjective maps as fibrations.

The proof is basically the same as for the complexes.

\subsection{Homotopy category, derived functors}

Homotopy category of a model category is defined by localization, the procedure  well-known in commutative algebra but making sense for categories as well.
 Similarly to topological spaces, one has, apart of the notion of weak equivalence, a notion of homotopy.

\subsubsection{Homotopy between arrows}
 
First of all, the notion of ``cylinder''. For $X\in\cC$
its cylinder is a diagram
$$ X\sqcup X\to C_X\to X$$
where the composition is the obvious one, the first map is a cofibration and the second map is a trivial fibration.

Given a pair of maps $f_0,f_1:X\to Y$, we can convert them
into one map  
$f:X\sqcup X\to Y$. We will say that $f_0$ and $f_1$ are left homotopic if $f$ can be factored through $C_X$.

These notions are not very convenient to work with;
for instance, if $f_i$ are left homotopic then the compositions $gf_i$ are also left homotopic, but not obviously $f_ig$ are.

Also, one can (dually) define another type of homotopy,
using path objects instead of cylinders: for $Y\in\cC$
the diagram
$$ Y\to P_Y\to Y\times Y$$
decomposing the diagonal map to trivial cofibration
followed by a fibration, is called a path object for $Y$.

In general, left and right homotopy do not even define equivalence relation. Left homotopy is an equivalence relation for arrows $X\to Y$ with cofibrant $X$; right homotopy is equivalence for $X\to Y$ with fibrant $Y$.

If both $X$ is cofibrant and $Y$ fibrant,
left and right homotopy relations on $\Hom_\cC(X,Y)$ 
coincide. 

\subsubsection{Localization of a category}
Let $\cC$ be a category, $W$ a collection of arrows in $\cC$.
One defines a localization $\cC[W^{-1}]$ by a universal property.

There are two versions, a naive one, in terms of the ``naive'' category of  categories (the one consisting of categories and functors, without morphisms of functors), and a more sophisticated one, taking into account isomorphisms of functors. Here is the second definition.

\begin{Dfn}Let $\cC$ be a category and $W$ a collection of arrows in $\cC$. A localization of $\cC$ with respect to $W$ is a functor
$f:\cC\to\cD$ carrying all arrows of $W$ to isomorphisms, satisfying 
the following universal property
\begin{itemize}
\item For any functor $f':\cC\to\cD'$ carrying $W$ to isomorphisms,
there exists a pair $(g,\theta)$ with a functor $g:\cD\to\cD'$ and an isomorphism of functors
$$ \theta:f'\to g\circ f.$$
\item If $(g',\theta')$ is another pair as above, there is a unique
isomorphism $\alpha:g\to g'$ intertwining $\theta$ and $\theta'$.
\end{itemize}
\end{Dfn}

\subsubsection{Facts to think about. Exercises}
\label{sss:facts}

\begin{itemize}
\item Localization in the first sense, if exists, is unique up to unique isomorphism.
\item Localization in the second case, if exists, is unique up to equivalence that is unique up to unique isomorphism.
\item Localization in the first sense, if exists, is also 
localization in the second sense.
\item Localization in the first sense exists, at least 
if $\cC$ is small. 
\end{itemize}

Thus, we see that localization of categories always exist, up to 
set-theoretical subtleties ($\Hom$-sets may be big if we localize a category which is not small). It is worthwhile to understand that even for small categories the second notion of localization is ``more correct''.

Localization of categories always exists, but it does not 
always have an meaningful explicit description. 
Similarly to localization of associative rings, it is 
useful to have Ore condition which allows to rewrite 
$s^{-1}x$ as $yt^{-1}$ for some $y,t$. One has a similar 
Ore condition for categories~\footnote{Google ``calculus 
of fractions'' which is synonymous to ``Ore 
condition''.}, but, unfortunately, it is fulfilled less 
often than we would expect.

\subsubsection{Homotopy category of a model category}

Let $\cC$ be a model category, with the collections $W,C,F$ of weak 
equivalenceses, cofibrations and fibrations.

Homotopy category of $\cC$, $\Ho\cC$, is defined as the localization
$\cC[W^{-1}]$.

The same localization has other presentations, up to equivalence.

\begin{Thm}
Let $\cC$ be as above, $\cC^c$ the full subcategory of cofibrant 
objects in $\cC$, $\cC^f$ the category of fibrant object, 
$\cC^{cf}=\cC^c\cap\cC^f$.
\begin{itemize}
\item The embeddings $\cC^*\to\cC$, for $*=c,f,cf$, induce an equivalence 
of localizations $\cC^*[(W\cap\cC^*)^{-1}]\to\cC[W^{-1}]$.
\item Denote $\bar\cC^*$ the category with the same objects as $\cC^*$
and with homotopy classes of arrows as morphisms (with respective notion of homotopy, right for $\cC^c$ and left for $\cC^f$). Denote $\bar W^*$ the image of the weak equivalences in $\bar\cC^*$. Then $\bar\cC^c$ satisfies left Ore condition,
$\bar\cC^f$ satisfies right Ore condition, and $\bar W^{cf}$ consists of isomorphisms. 
\end{itemize}
\end{Thm}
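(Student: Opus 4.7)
The strategy has three parts: a Whitehead-type theorem that $\bar W^{cf}$ consists of isomorphisms; the construction of cofibrant and fibrant replacement functors witnessing the equivalences of localizations; and a derivation of the Ore conditions from the factorization and lifting axioms. As preliminaries, I would first verify that the chosen homotopy relation on $\cC^c$ (resp.\ $\cC^f$) is an equivalence relation compatible with composition, so that $\bar\cC^c$ and $\bar\cC^f$ are honest categories, and that on $\cC^{cf}$ the left and right homotopy relations coincide.

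For the Whitehead statement, let $f\colon X\to Y$ be a weak equivalence with $X,Y\in\cC^{cf}$. Factor $f=p\circ i$ with $i\in W\cap C$ and $p\in F$; two-out-of-three forces $p\in W\cap F$. Axiom 3(b) applied to the square formed by $i$ and $Z\to\ast$ (where $Z$ is the intermediate object) yields a retraction $r$ of $i$; a second lift against a cylinder $Z\sqcup Z\to\Cyl(Z)\to Z$ produces a left homotopy $ir\sim\id_Z$. Dually $p$ admits a section $s$ with $sp$ right homotopic to $\id$. Consequently $f$ has a homotopy inverse, so its image in $\bar\cC^{cf}$ is an isomorphism.

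For the equivalence of localizations, choose for each $X\in\cC$ a factorization $\emptyset\to QX\xrightarrow{p_X}X$ with $QX$ cofibrant and $p_X\in W\cap F$ (axiom 4(b)). For $f\colon X\to Y$ lift $f\circ p_X$ along $p_Y$ to obtain $Qf$; two such lifts differ by a left homotopy (since $QX$ is cofibrant and $p_Y$ is a trivial fibration), so the construction descends to a functor $\bar Q\colon\cC\to\bar\cC^c$, which carries weak equivalences to weak equivalences. As $p_X\in W$, localization turns $\bar Q$ into a quasi-inverse of the embedding $\cC^c[(W\cap\cC^c)^{-1}]\to\cC[W^{-1}]$. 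The case $\cC^f$ is dual, and $\cC^{cf}$ follows by composing $Q$ with a fibrant replacement $R$, using that the two replacements commute up to weak equivalence.

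For the Ore conditions on $\bar\cC^c$: given a span $Y\xleftarrow{f}X\xrightarrow{s}Z$ in $\cC^c$ with $s\in W$, factor $s=q\circ j$ with $j\in W\cap C$ and $q\in W\cap F$; the intermediate object is cofibrant. Form the pushout of $j$ along $f$: the base change of a trivial cofibration is again a trivial cofibration (a formal consequence of the LLP characterization), and since $X,Y$ are cofibrant the pushout remains in $\cC^c$, giving the required completion of the span. Right Ore on $\bar\cC^f$ is formally dual, using pullbacks of trivial fibrations. The \emph{main obstacle} is the Whitehead step: producing explicit homotopy inverses requires iterated applications of the lifting axioms against carefully chosen cylinders and path objects, and one must verify that the resulting homotopies genuinely compose to the required identities (rather than only up to a higher homotopy invisible in $\bar\cC^{cf}$). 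Once Whitehead's theorem is in place, the remaining assertions are formal consequences of factorization and lifting.
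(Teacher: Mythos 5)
The paper itself states this theorem without proof (it defers to \cite{H,Q}, Chapter 1), so your proposal has to be judged against the standard Quillen--Hovey argument, which it follows in outline (replacement functors, a Whitehead-type statement, calculus of fractions). There is, however, a genuine gap in the Whitehead step as you wrote it. The retraction $r$ with $r\circ i=\id_X$ is fine: it comes from lifting in the square whose left leg is the trivial cofibration $i:X\to Z$ and whose right leg is the fibration $X\to *$ (axiom 3(a), not 3(b)). But the next claim --- that ``a second lift against a cylinder $Z\sqcup Z\to\Cyl(Z)\to Z$'' produces a left homotopy $ir\sim\id_Z$ --- does not follow from the axioms: in the square with left leg the cylinder cofibration $Z\sqcup Z\to\Cyl(Z)$ and right leg $Z\to *$ neither leg is a weak equivalence, so axiom 3 gives no lift; and replacing the right leg by $p:Z\to Y$ does not help, since $(ir,\id_Z)$ does not lie over $(p,p)$ (one only knows $pir\circ i=pi$, not $pir=p$). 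The standard repair uses a path object: since $Z$ is fibrant, take $Z\xrightarrow{w}P(Z)\xrightarrow{(p_0,p_1)}Z\times Z$ and lift the trivial cofibration $i$ in the square with top $w\circ i$ and bottom $(ir,\id_Z)$; it commutes precisely because $ri=\id_X$, and the lift is a \emph{right} homotopy $ir\sim\id_Z$, after which one uses that left and right homotopy coincide on fibrant--cofibrant objects. The cylinder argument is the one that works for the dual half: the section $s$ of the trivial fibration $p$ satisfies $sp\sim\id_Z$ by lifting the cylinder cofibration against $p$ itself (a \emph{left} homotopy). So your assignment of cylinder versus path object to the two halves is exactly reversed, and as written the key homotopy is not actually produced.

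The Ore step is also incomplete in a concrete way. The pushout of $j$ along $f$ completes the span $Y\leftarrow X\to Z'$ over the intermediate object $Z'$, not over $Z$: to finish you must still produce a map out of $Z$, e.g.\ a section $u:Z\to Z'$ of the trivial fibration $q$ (it exists because $Z$ is cofibrant), together with $uq\sim\id_{Z'}$, and then check the square commutes in $\bar\cC^c$, i.e.\ up to the homotopy relation. Moreover, the calculus-of-fractions condition the theorem refers to has a second axiom (maps equalized after composition with a weak equivalence on one side can be equalized by a weak equivalence on the other side), which your sketch does not address and which requires its own cylinder/path-object argument. Two smaller points: the pushout of a trivial cofibration is a \emph{cobase} change, not a base change; and your well-definedness argument for $Qf$ uses left homotopy on $\cC^c$, whereas the theorem prescribes right homotopy there, so you must either pass between the two relations (which on merely cofibrant objects is exactly the delicate point) or argue directly that the resulting class in the localization is independent of the chosen lift.
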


Note that the above result is very similar to the one describing the derived category
(example of derived category was one of Quillen's motivations). One can describe
the derived category $D(\cA)$ of an abelian category $\cA$ as the localization 
of the category of complexes $C(\cA)$ with respect to quasiisomorphisms; but
the collection of quasiisomorphisms does not satisfy Ore conditions. One can 
do the construction in two steps: identifying homotopic arrows (the resulting category is traditionally denoted $K(\cA)$) and then localizing.
Once one  identifies the homotopic arrows, Ore condition holds. Finally, 
derived category $D^-(\cA)$ can be also realized as a full subcategory of $K(\cA)$.

\subsection{Quillen adjunction, Quillen equivalence}

One does homological algebra to study derived functors.
Derived functors are functors between localized categories
approximating functors between the original categories.

\subsubsection{Generalities} Here is, probably, the most general idea of derived functor. 
Let $(C,W)$, $(C',W')$ be categories with weak equivalences, and let $F:C\to D$ be a functor. It is seldom possible to complete the diagram
\begin{equation}
\xymatrix{
&C \ar[r]^{ F} \ar[d]^Q&C'\ar[d]^{ Q}  \\
& C[W^{-1}] \ar[r]^{\Left F} & C'[W^{\prime-1}]
}
\end{equation} 
to be commutative. One can expect, however, to find a universal pair $(\Left F,\alpha)$, where $\alpha: \Left F\circ Q\to Q\circ F$ is a natural transformation (not necessarily an isomorphism). Such pair is called {\sl left derived functor of $F$}. Universality means that
for any pair $(G,\beta)$ consisting of a functor $G:C[W^{-1}]\to C'[W^{\prime-1}]$ and a natural transformation $\beta:G\circ Q\to Q\circ F$ there exists a unique morphism of functors $u:G\to\Left F$ such that
$\beta=\alpha\circ (u\circ Q)$.

Right derived functors are defined similarly, as pairs 
$(\Right F,\alpha)$, where $\alpha:Q\circ F\to\Right F\circ Q$ satisfies a universal property.

\begin{Rem}
 Mathematics is not a deductive science. There exist different ideas of what derived functor should be. My favorite example of a derived functor, assigning to
a dg algebra $A$ the dg Lie algebra of derivations of its cofibrant replacement, 
is not even a functor.
\end{Rem}

\subsubsection{Quillen adjunction}
Quillen suggests the following context in which derived functors make sense and can be effectively calculated.

\begin{Dfn}
Left $F:\cC\rlarrows\cD:G$ be an adjoint pair of functors
between two model categories. It is called a Quillen pair
(or a Quillen adjunction) if the following equivalent
conditions are fulfilled.
\begin{itemize}
\item $F$ preserves cofibrations and trivial cofibrations.
\item $G$ preserves fibrations and trivial fibrations.
\end{itemize}
\end{Dfn}

Note that for adjoint pair of model categories $F(f)$
has a LLP with respect to $g$ iff $f$ has a LLP with respect to $G(g)$. This proves the equivalence of the above conditions.

\begin{thm}A Quillen pair 
$$F:\cC\rlarrows\cD:G$$
induces an adjoint pair of the homotopy categories
$$\Left F:\Ho(\cC)\rlarrows\Ho(\cD):\Right G.$$
\end{thm}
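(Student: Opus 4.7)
The plan is to construct $\Left F$ and $\Right G$ via the auxiliary cofibrant/fibrant replacement functors. Using the axiom of functorial factorizations (which follows from the small-object construction in our examples; otherwise we may fix a choice of factorization), let $Q:\cC\to\cC$ carry $X$ to the output of the factorization $\emptyset\to QX\twoheadrightarrow X$ of $\emptyset\to X$ as cofibration followed by trivial fibration, and dually let $R:\cD\to\cD$ carry $Y$ to $Y\rightarrowtail RY\to *$. I would define $\Left F(X)=F(QX)$ and $\Right G(Y)=G(RY)$ on objects, and use the same formulas on morphisms.

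The key technical step is \emph{Ken Brown's lemma}: a left Quillen functor $F$ preserves weak equivalences between cofibrant objects. The proof is short and worth including: given a weak equivalence $f:A\to B$ between cofibrants, factor $(f,\id_B):A\sqcup B\to B$ as a cofibration $A\sqcup B\rightarrowtail C$ followed by a trivial fibration $C\twoheadrightarrow B$; both inclusions $A\to C$ and $B\to C$ are cofibrations (being compositions of cofibrations with the coproduct injections from cofibrant objects), and they are weak equivalences by two-out-of-three, hence trivial cofibrations. Since $F$ is left Quillen, $F$ sends them to trivial cofibrations, and two-out-of-three applied to $F(C)\twoheadrightarrow F(B)$, $F(A)\to F(C)$, $F(A)\to F(B)$ shows $F(f)$ is a weak equivalence. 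Dually $G$ preserves weak equivalences between fibrant objects. Combined with the theorem of the previous subsection (which identifies $\Ho(\cC)$ with $\bar\cC^c[\bar W^{c,-1}]$, etc.), this shows $\Left F$ and $\Right G$ descend to functors between the homotopy categories.

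For the adjunction, let $X\in\cC$ be cofibrant and $Y\in\cD$ be fibrant. By the identification from the previous theorem,
$$\Hom_{\Ho(\cD)}(\Left F(X),Y)=\Hom_{\Ho(\cD)}(F(X),Y)=\Hom_\cD(F(X),Y)/\!\sim,$$
and similarly $\Hom_{\Ho(\cC)}(X,\Right G(Y))=\Hom_\cC(X,G(Y))/\!\sim$. The classical adjunction bijection $\Hom_\cD(F(X),Y)\cong\Hom_\cC(X,G(Y))$ is induced by natural transformations $F\circ G\to\id_\cD$ and $\id_\cC\to G\circ F$; I must check it descends to the quotients. This follows because $F$, being a left adjoint, preserves colimits; together with the cofibration-preserving property, applying $F$ to a cylinder $X\sqcup X\rightarrowtail C_X\twoheadrightarrow X$ yields a cylinder $F(X)\sqcup F(X)\rightarrowtail F(C_X)\twoheadrightarrow F(X)$ for $F(X)$. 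Thus a left homotopy $C_X\to G(Y)$ transports under the adjunction to a left homotopy $F(C_X)\to Y$. The general case where $X,Y$ are arbitrary is then reduced to this one by replacing $X$ with $QX$ and $Y$ with $RY$.

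The main obstacle I foresee is purely bookkeeping: ensuring that the derived adjunction is independent of choices of $Q$ and $R$ and is genuinely natural in the homotopy category, which requires invoking the functoriality-up-to-homotopy of replacements (and the fact, asserted in the previous subsection, that left and right homotopy agree on bifibrant objects so that the quotient by either homotopy relation gives the same thing). Once these compatibility checks are in place, the triangle identities for the unit and counit of the derived adjunction follow by applying those of the original adjunction to bifibrant objects.
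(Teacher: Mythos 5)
Your proposal is correct and follows essentially the same route as the paper: you prove that $F$ preserves weak equivalences between cofibrant objects (Ken Brown's lemma) by exactly the same factorization of $(f,\id):A\sqcup B\to B$, and then realize $\Ho(\cC)$ and $\Ho(\cD)$ via the (co)fibrant subcategories modulo homotopy and check the adjunction isomorphism on pairs with cofibrant source and fibrant target. The only difference is presentational: you package the derived functors through functorial replacements $Q,R$ and spell out (via $F$ preserving cylinders) the descent of the adjunction bijection that the paper dismisses with ``one easily sees.''
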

\begin{proof}
Let us first of all verify that $F$ preserves weak equivalence of cofibrant objects.
If $f:A\to B$ is such, decompose the map $(f,\id):A\sqcup B\to B$ as
$$ A\sqcup B\stackrel{p}{\to} C\stackrel{q}{\to} B,$$
a cofibration followed by a trivial fibration. The maps $i:A\to A\sqcup B$ and $j:B\to A\sqcup B$ are cofibrations, and the compositions $pi$ and $pj$ are weak equivalences
and cofibrations, therefore, trivial cofibrations. Thus, $F$ carries them to weak equivalences. $F(q)$ is a weak equivalence since the composition
$F(qpj)=\id$ is. Therefore, $F(f)=F(qpi)$ is. 

Dually, $G$ carries weak equivalences of fibrant objects in $\cD$ to weak equivalences.

The now idea is to realize $\Ho(\cC)$ as the localization $\bar\cC^c[W^{-1}]$,
and $\Ho(\cD)$ as $\bar\cD^f[W^{-1}]$. One easily seees that for $c\in\cC^c$ and $d\in\cD^f$ one has an isomorphism
$$ \Hom_{\bar\cC}(c,G(d))=\Hom_{\bar\cD}(F(c),d)$$
which implies the assertion.
\end{proof}
\begin{dfn}
A Quillen pair as above is called Quillen equivalence if
one of the following equivalent conditions is fulfilled.
\begin{itemize}
\item For any cofibrant $X$ in $\cC$ and fibrant $Y$ in $\cD$ a map $a:X\to G(Y)$ is a weak equivalence iff $a':F(x)\to Y$ is a weaak equivalence.
\item The induced adjunction of the homotopy categories is an equivalence.
\end{itemize}
\end{dfn}

Note that even though Quillen adjunction is an adjunction,
Quillen equivalence is NOT an equivalence!

\subsection{Exercises}
\begin{itemize}
\item[1.] See \ref{sss:joining}.
\item[2.] See \ref{sss:standard}.
\item[3.] See \ref{exe:proj}.
\item[4.] See \ref{sss:facts}.
\item[5.] Prove that two mentioned above model category structures on complexes  are Quillen equivalent.
\end{itemize}

The topic of the present lecture is discussed in much
more detail in the books \cite{H,Q} (both --- Chapter 1).
  
\newpage
\section{Model categories, 2: Topological spaces and simplicial sets}

\subsection{Topological spaces.}

The most famous example of two Quillen equivalent model structures are provided by simplicial sets versus topological spaces. Once we describe these structures, and once
we prove they are Quillen equivalent, we will be able to make sense of Kan's idea that 
as far as homotopy theory is concerned, topological spaces are the same as simplicial sets.

Recall the adjunction
$$ |\quad |:\sSet\rlarrows\Top:\Sing.$$

\subsubsection{Simplicial enrichment of $\sSet$ and $\Top$}

For $X,Y$ simplicial sets, one defines $X^Y$ as the simplicial
set representing the functor
$$ Z\mapsto\Hom(Z\times Y,X).$$
Existence of $X^Y$ is quite obvious, this is just the simplicial set whose $n$-simplices are the maps
$\Delta^n\times Y,X$, with the faces and the degeneracies
that can be easily described.
Similarly, for a simplicial set $S$ and for a topological space $X$ we define $X^S$
as the topological space representing the functor
$$ Z\mapsto\Hom(|S|\times Z,X).$$
The above functor is representable, though this fact is less obvious than the previous one. Representability
follows from local compactness of the geometric realization of a simplicial set, see details in
 \ 
\url{https://ncatlab.org/nlab/show/exponential+law+for+spaces}.

The latter formula allows one to define simplicial enrichment 
for topological spaces, by the formula
$$ \Fun(X,Y)_n=\Hom(X,Y^{\Delta^n}).$$

\subsubsection{Some standard maps in $\sSet$}

Let us remind that $\Delta^n\in\sSet$ is the simplicial
set represented by $[n]\in\Delta$. Recall that the boundary $\partial\Delta^n$ is the simplicial subset  of 
$\Delta^n$ whose $k$-simplices $s:\Delta^k\to\Delta^n$ are
not surjective maps. In other words, $\partial\Delta^n$ is 
the union of the proper faces of $\Delta^n$.

{\sl

Exercise: Present $\partial\Delta^n$ as the colimit of 
a diagram consisting of $(n-1)$ and $(n-2)$-dimensional simplices.

}

Another important simplicial subset of $\Delta^n$ is its 
$k$-th horn denoted
$\Lambda^n_k$, $k\in\{0,\ldots,n\}$. It consists of all nondegenerate simplices of $\Delta^n$ apart of $\id_{[n]}$
and of $\delta^k$, see 1.7.6.

Thus, we have the maps $i^n:\partial\Delta^n\to\Delta^n$ and $j^n_k:\Lambda^n_k\to
\Delta^n$. 

\subsubsection{Fibrations in $\sSet$}
A map in $\sSet$ is called fibration if it satisfies RLP with respect to all $j^n_k$.
A map is called trivial fibration if it satisfies RLP with
respect to all $i^n$.

Note that any trivial fibration is a fibration as any $j^n_k$ can be presented as a composition of two maps obtained by cobase change from $i^m$.

Note that we have not yet defined weak equivalences for $\sSet$, so that we are not obliged to verify that trivial fibrations are precisely fibrations that are weak equivalences.

\subsubsection{Fibrations in $\Top$} 

They are called Serre fibrations: these are maps satisfying RLP with respect to
all maps $D^n\to D^n\times I$, where $D^n$ is the standard
$n$-disc and $I=[0,1]$.

\subsubsection{Model structure on $\Top$}
Weak equivalences in $\Top$ are, by definition, weak homotopy 
equivalences. Trivial fibrations are, by definition, fibrations that
are weak equivalences.

Our aim is to prove
\begin{Thm}
$Top$ has a model structure defined by weak homotopy equivalences and Serre fibrations.
\end{Thm}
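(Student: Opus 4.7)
The plan is to verify the axioms by means of Quillen's small object argument, applied to two explicit sets of generating arrows. Define
\[
I=\{S^{n-1}\hookrightarrow D^n\mid n\geq 0\},\qquad
J=\{D^n\times\{0\}\hookrightarrow D^n\times I\mid n\geq 0\},
\]
where $S^{-1}=\emptyset$. By construction, $J$ is the set defining Serre fibrations via RLP, and I will take the class of cofibrations to be those maps with LLP with respect to trivial fibrations (which is the class generated under retract and transfinite composition of pushouts by $I$, once the axioms are verified). The 2-out-of-3 axiom for weak homotopy equivalences is standard and follows from functoriality of $\pi_n$; the retract axiom for each of $W$, $F$, $C$ is immediate from the respective LLP/RLP reformulations (for $W$, from the fact that a retract of an isomorphism of sets/groups is an isomorphism).

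The core work is Axiom 4 (factorizations). Since every topological space is small with respect to the relevant transfinite sequences of closed inclusions (the inclusions in $I$ and $J$ are closed $T_1$-inclusions, so any map out of a compact space like $S^{n-1}$ or $D^n$ factors through a finite stage), the small object argument applies to both $I$ and $J$. This produces, for any $f\colon X\to Y$, decompositions $f=pi$ where $i$ is a transfinite composition of pushouts of maps in $I$ (resp.\ $J$) and $p$ has RLP with respect to $I$ (resp.\ $J$). The first gives a (cofibration, trivial fibration) factorization; the second gives an $(i,p)$ where $p$ is a Serre fibration by definition of $F$, so what remains is to check that $i$ is a trivial cofibration.

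The key geometric input, and what I expect to be the main obstacle, is the following: every relative $J$-cell complex $i\colon X\to X'$ is both a cofibration and a weak equivalence. That $i$ is a cofibration follows from the observation that each generating map $D^n\times\{0\}\hookrightarrow D^n\times I$ has LLP with respect to all trivial fibrations (check this by hand using a retraction of $D^n\times I$ onto $D^n\times\{0\}$), and that this LLP is preserved by pushout, transfinite composition, and retract. That $i$ is a weak equivalence is the subtler point: each pushout along $D^n\times\{0\}\hookrightarrow D^n\times I$ produces a deformation retract (since $D^n\times\{0\}$ is a strong deformation retract of $D^n\times I$, and this property is stable under pushout along a closed inclusion), so each stage of the transfinite composition is a homotopy equivalence, hence a weak equivalence; the transfinite composition of a sequence of closed inclusions that are weak equivalences is a weak equivalence, because homotopy groups commute with such filtered colimits (this is where one invokes the compactness of spheres).

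With the $J$-factorization in hand, I recover the remaining axioms as follows. Axiom 3(a) is immediate from the construction: trivial cofibrations produced by the small object argument have LLP with respect to $F$ by design, and an arbitrary trivial cofibration is a retract of one so produced by the standard retract trick (factor a trivial cofibration $j$ as $j=pi$ with $p$ a Serre fibration and $i$ a relative $J$-cell complex; then $p$ is a trivial fibration by 2-out-of-3, and the lifting in the square with $j$ and $p$ exhibits $j$ as a retract of $i$). Axiom 3(b) follows symmetrically using the $I$-factorization, since trivial fibrations are exactly the maps with RLP with respect to $I$ (for the nontrivial direction, apply the same retract argument to a trivial fibration $p$ using the $I$-factorization $p=qj$ with $q$ having RLP wrt $I$). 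Thus the four axioms hold, and the equivalence of the two characterizations of cofibrations and trivial cofibrations follows from the standard retract argument recalled in the excerpt.
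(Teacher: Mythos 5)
The $J$-half of your argument is essentially sound: maps in $J$ have LLP against every Serre fibration by the very definition of Serre fibration, relative $J$-cell complexes are weak equivalences by your deformation-retract-plus-compactness argument, and the retract trick then yields the lifting of trivial cofibrations against fibrations together with the (trivial cofibration, fibration) factorization. This is a legitimate alternative to the route taken in the text, which obtains that lifting from the characterization of trivial cofibrations as strong deformation retracts (Lemma \ref{lem:Q4}) and produces the second factorization via the path object $X\times_Y Y^I$ rather than a second small object argument.

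The gap is on the $I$-side, and it is precisely the technical heart of the theorem: you never prove that the trivial fibrations (Serre fibrations which are weak equivalences) coincide with the maps having RLP with respect to $I=\{S^{n-1}\hookrightarrow D^n\}$, and this identification is what makes your $I$-factorization a (cofibration, trivial fibration) factorization and makes relative $I$-cell complexes cofibrations in your sense. One inclusion --- every map with RLP w.r.t.\ $I$ is a weak equivalence (and a Serre fibration, which needs the unstated fact that each map of $J$ lies in the saturation of $I$) --- you assert without any argument; this is the implication $(3)\Rightarrow(1)$ of Lemma \ref{lem:Q2}, proved there from the long exact sequence of a fibration and triviality of the homotopy groups of the fibers. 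For the other inclusion your retract argument is circular: to exhibit a trivial fibration $p$ as a retract of the $\mathrm{RLP}(I)$-part $q$ of its factorization $p=qj$, you must solve a lifting problem of the relative $I$-cell complex $j$ against $p$ itself, which is exactly the lifting property you are trying to establish. The retract trick only reduces lifting against arbitrary ($I$-cell) cofibrations to lifting against the generating maps $S^{n-1}\hookrightarrow D^n$; the statement that a Serre fibration which is a weak homotopy equivalence has RLP against these generators is a genuine topological theorem, proved in the text as $(1)\Rightarrow(3)$ of Lemma \ref{lem:Q2} by means of the simplicial enrichment (Lemma \ref{lem:Phi}, the stability of trivial fibrations under the construction $\Phi(f,i^n)$, and reduction of the lifting problem to surjectivity of $\Phi(f,i^n)$). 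Some argument of this kind must be supplied; without it your verification of the factorization axiom for (cofibration, trivial fibration), and hence of the model structure, is incomplete.
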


The proof is presented below.

\begin{lem}\label{lem:Q1}
The following properties of $f$ in $\Top$ are equivalent.
\begin{itemize}
\item[1.] $f$ is a fibration.
\item[2.] $\Sing(f)$ is a fibration.
\item[3.] $f$ has RLP with respect to $|j^n_k|$.
\end{itemize}
\end{lem}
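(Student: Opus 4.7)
The equivalence $(2) \Leftrightarrow (3)$ is immediate from the $(|\cdot|, \Sing)$ adjunction: for any simplicial map $g$, a continuous map $f$ has RLP with respect to $|g|$ if and only if $\Sing(f)$ has RLP with respect to $g$. Applied to $g = j^n_k$ for all admissible $n,k$, this gives $(2) \Leftrightarrow (3)$ at once, so the substance of the lemma is $(1) \Leftrightarrow (3)$.

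My strategy for $(1) \Leftrightarrow (3)$ is to exhibit, for each $n \geq 1$ and $k \in \{0,\ldots,n\}$, a homeomorphism of pairs
\[ (|\Delta^n|,\, |\Lambda^n_k|) \;\cong\; (D^{n-1} \times I,\; D^{n-1} \times \{0\}), \]
which identifies the arrow $|j^n_k|$ with the standard Serre-fibration test arrow $D^{n-1} \times \{0\} \hookrightarrow D^{n-1} \times I$. As $n$ ranges over integers $\geq 1$, the right-hand sides exhaust all test arrows $D^m \hookrightarrow D^m \times I$ in the definition of Serre fibration (taking $m = n-1 \geq 0$), so (1) and (3) will amount to RLP against the same class of arrows up to homeomorphism, and the equivalence follows.

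To produce the homeomorphism of pairs, I would use the cone structure: $|\Delta^n|$ is the cone from vertex $k$ over the opposite face $|d_k| \cong D^{n-1}$, and under this identification $|\Lambda^n_k|$ is the cone from $k$ over $|\partial d_k| \cong S^{n-2}$, hence a closed $(n-1)$-disc embedded in $\partial|\Delta^n| \cong S^{n-1}$ as a ``hemisphere''. One then invokes the standard fact that any tame embedded $(n-1)$-disc in $\partial D^n$ can be carried to a fixed reference one by a self-homeomorphism of $D^n$; taking the reference disc to be $D^{n-1} \times \{0\} \subset \partial(D^{n-1} \times I) \cong S^{n-1}$ yields the desired homeomorphism of pairs.

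The main obstacle is this geometric construction of the homeomorphism of pairs; everything else is formal. An alternative route would avoid the explicit homeomorphism by showing that $|j^n_k|$ and $D^m \hookrightarrow D^m \times I$ are each retracts of one another in the arrow category using explicit deformation retractions of $|\Delta^n|$ onto $|\Lambda^n_k|$, but the homeomorphism-of-pairs approach is cleaner and dispatches both implications simultaneously.
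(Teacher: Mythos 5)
Your proposal is correct and follows essentially the same route as the paper: $(2)\Leftrightarrow(3)$ by the $(|\cdot|,\Sing)$ adjunction, and $(1)\Leftrightarrow(3)$ because $|j^n_k|$ is homeomorphic to $D^{n-1}\to D^{n-1}\times I$. The paper simply asserts this homeomorphism, whereas you sketch its construction (cone from the vertex $k$ over the opposite face), which is a fine elaboration of the same argument.
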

\begin{proof}
(2) is equivalent to (3) by adjunction.

(1) is equivalent to (3) as $|j^n_k|$ is homeomorphic to $D^{n-1}\to D^{n-1}\times I$.
\end{proof}
Note also
\begin{lem}\label{crl:fib}
In $\Top$ all objects are fibrant.
\end{lem}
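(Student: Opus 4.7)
The plan is straightforward. By definition a Serre fibration is a map with RLP with respect to the inclusions $D^n \hookrightarrow D^n \times I$, so I need to show that for any topological space $X$, the unique map $X \to *$ has this lifting property.

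Given any commutative square
$$
\xymatrix{
D^n \ar[r]^{f} \ar[d] & X \ar[d] \\
D^n \times I \ar[r] & *
}
$$
the bottom and right arrows are forced (everything lands in the point), so commutativity imposes no constraint, and the only datum is the continuous map $f: D^n \to X$. I need to produce a continuous map $\tilde f : D^n \times I \to X$ whose restriction to $D^n \cong D^n \times \{0\}$ is $f$. Take $\tilde f = f \circ \pi$ where $\pi : D^n \times I \to D^n$ is the projection to the first factor; this is continuous as a composition of continuous maps, and clearly $\tilde f(x,0) = f(x)$.

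There is nothing really to obstruct here: the only \emph{potential} subtlety is a sign convention about which inclusion $D^n \hookrightarrow D^n \times I$ we take (at $t=0$ versus $t=1$), but the same constant-in-$t$ extension works in either case. Thus $X \to *$ satisfies RLP against all the generating Serre fibration test maps, so $X$ is fibrant. \qed
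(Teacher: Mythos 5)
Your proof is correct and is essentially the paper's argument: the paper simply notes that the projection $D^n\times I\to D^n$ retracts the inclusion, and composing $f$ with this projection is exactly your constant-in-$t$ lift $\tilde f=f\circ\pi$.
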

\begin{proof}
This is because the projection $ D^n\times I\to D^n$ has a section.  
\end{proof}
We will now show how useful is the simplicial enrichment.

First of all, let us introduce a useful notation.
For $f:X\to Y$ in $\Top$ and $a:A\to B$ in $\sSet$ we define
$$\Phi(f,a): X^B\to X^A\times_{Y^A}Y^B,$$
the canonical map deduced from the commutative diagram
$$
\xymatrix{
&{X^B}\ar[r]\ar[d] &{X^A}\ar[d]\\
&{Y^B}\ar[r]&{Y^A}
}.
$$

\begin{lem}\label{lem:Phi}
Let $f:X\to Y$ be a fibration in $\Top$. Then $\Phi(f,i^m)$ is a fibration.
\end{lem}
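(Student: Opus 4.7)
My plan is to verify the fibration property by reducing, through two adjunctions, to a combinatorial statement about pushout-products in $\sSet$.

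First, by Lemma~\ref{lem:Q1}, a map in $\Top$ is a fibration iff it has the right lifting property with respect to $|j^n_k|$ for every $n, k$. So I need to check that $\Phi(f, i^m)$ has RLP against $|j^n_k|$ for all $n,k$. By the defining adjunction of the simplicial enrichment $(-)^{(-)}$, together with the formula for the fiber product target of $\Phi(f, i^m)$, a lifting problem
$$
\xymatrix{
|\Lambda^n_k| \ar[r]\ar[d]_{|j^n_k|} & X^{\Delta^m} \ar[d]^{\Phi(f,i^m)} \\
|\Delta^n| \ar[r] & X^{\partial\Delta^m}\times_{Y^{\partial\Delta^m}}Y^{\Delta^m}
}
$$
transposes to a lifting problem
$$
\xymatrix{
|(\Lambda^n_k\times\Delta^m)\cup_{\Lambda^n_k\times\partial\Delta^m}(\Delta^n\times\partial\Delta^m)| \ar[r]\ar[d] & X \ar[d]^{f} \\
|\Delta^n\times\Delta^m| \ar[r] & Y
}.
$$
Denote the left vertical arrow by $|j^n_k\,\hat\times\,i^m|$, the geometric realization of the pushout-product.

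Second, $f$ is assumed to be a Serre fibration, which by Lemma~\ref{lem:Q1} again is equivalent to $f$ having RLP with respect to every $|j^{N}_{K}|$. Since geometric realization preserves colimits and since the class of maps against which $f$ has RLP is closed under cobase change, transfinite composition and retracts (i.e.\ is closed under weak saturation), it suffices to prove the following combinatorial fact in $\sSet$: the pushout-product $j^n_k\,\hat\times\,i^m$ belongs to the weak saturation of the set of horn inclusions $\{j^N_K\mid 0\le K\le N\}$.

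This last assertion is the heart of the matter and the step I expect to be the main obstacle. It is the standard fact that the pushout-product of an anodyne map with a monomorphism is anodyne, applied to the anodyne map $j^n_k$ and the monomorphism $i^m$. The proof proceeds by filtering $\Delta^n\times\Delta^m$ by its nondegenerate $(n{+}m)$-simplices (the shuffles, in bijection with lattice paths from $(0,0)$ to $(n,m)$), ordered so that each successive attachment is of the form ``glue a standard simplex along one of its horns''. Concretely, one chooses an ordering of the shuffles such that at each stage the attaching map identifies precisely the faces already present in the subcomplex built so far and in $\Delta^n\times\partial\Delta^m \cup \Lambda^n_k\times\Delta^m$; a case analysis on the position of $k$ in the shuffle shows that the missing face is a $K$-horn of the appropriate dimension. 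Once this filtration is established, the pushout-product $j^n_k\,\hat\times\,i^m$ is exhibited as a finite composite of cobase changes of horn inclusions, completing the proof.
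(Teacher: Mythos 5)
Your argument is correct, but it runs along a genuinely different track than the paper's. The paper never leaves $\Top$: it transposes the lifting problem against the generating maps $D^n\to D^n\times I$ of Serre fibrations and then observes that the resulting pushout-product $(D^n\times|\Delta^m|)\coprod^{D^n\times|\partial\Delta^m|}(D^n\times I\times|\partial\Delta^m|)\to D^n\times I\times|\Delta^m|$ is simply homeomorphic to $D^{m+n-1}\to D^{m+n-1}\times I$ (the ``empty bucket into full bucket'' picture), so the lift exists by the very definition of Serre fibration --- no simplicial combinatorics at all. You instead use Lemma~\ref{lem:Q1} to replace the generators by $|j^n_k|$, transpose correctly (using that realization preserves products and colimits, and that the class of maps with LLP against $f$ is weakly saturated), and reduce to the statement that the pushout-product $j^n_k\,\hat\times\,i^m$ is anodyne in $\sSet$. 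That reduction is sound, and the fact you invoke is the standard anodyne-times-monomorphism theorem of Gabriel--Zisman; indeed the paper itself invokes exactly this combinatorial fact later (in the lemma of Section 4.5 on $X^L\to X^K\times_{Y^K}Y^L$) and declines to prove it. What your route buys is generality --- the same argument gives the full statement for arbitrary monomorphisms $a:A\to B$, which the paper only asserts afterwards in one line --- at the cost of importing a nontrivial combinatorial theorem; the paper's route is more elementary and self-contained but specific to $i^m$. One caveat: your sketched proof of the combinatorial fact by ordering the shuffles of $\Delta^n\times\Delta^m$ so that each is attached along a horn is more delicate than you suggest for a general horn $\Lambda^n_k$; the classical proof instead first shows that the anodyne maps are generated by the prism inclusions $(\{e\}\times\Delta^m)\cup(\Delta^1\times\partial\Delta^m)\to\Delta^1\times\Delta^m$ and proves the pushout-product property from that generating set, so if you want a complete argument you should either carry out that reduction or be prepared to justify the horn-attachment ordering carefully.
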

\begin{proof}We have  to verify RLP with respect to $D^n\times I$.
This is equivalent to the RLP property of $f:X\to Y$ with respect to the map
$$
(D^n\times|B|)\coprod^{D^n\times |A|}(D^n\times I\times |A|) \to D^n\times I\times |B|.
$$
Let us try to imagine this map. Recall that $|B|$ is 
$m$-dimensional ball and $|A|$ is its boundary. In the 
special case  $n=0$ we have an embedding of ``empty bucket''
into ``full bucket'' which is homeomorphic to 
$D^{m-1}\to D^{m-1}\times I$. In general, direct product 
with $D^n$ preserves the coproduct diagram, so the map
is homeomorphic to $D^{m+n-1}\to D^{m+n-1}\times I$.
 
\end{proof}
One can easily see that if $f$ is a fibration, $\Phi(f,a)$ is a fibration for any injective $a:A\to B$.

Let $Y$ be path connected, $f:X\to Y$ be a fibration, 
$y\in Y$ and $F=f^{-1}(y)$.
Choose $x\in F\subset X$. One has a long exact sequence
\begin{equation}
\pi_n(F,x)\to\pi_n(X,x)\to\pi_n(Y,y)\to\pi_{n-1}(F,x)\to\ldots
\to\pi_0(F)\to\pi_0(X)\to 0.
\end{equation}
This immediately implies that a fibration $f:X\to Y$ is a 
trivial fibration (that is, a fibration and a weak homotopy 
equivalence) iff for any $y\in Y$ the homotopy groups
of the fiber $F_y=f^{-1}(y)$ are all trivial.

As a result, we immediately get
\begin{lem}\label{lem:basechange-TF}
Base change of a trivial fibration is a trivial fibration.
\end{lem}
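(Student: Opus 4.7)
The plan is to combine two standard facts: lifting properties are preserved by base change, and the characterization of trivial fibrations given by the long exact sequence argument just above.

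Let $f:X\to Y$ be a trivial fibration and $g:Y'\to Y$ any continuous map. Form the pullback
$$
\xymatrix{
X'\ar[r]\ar[d]_{f'} & X \ar[d]^f \\
Y'\ar[r]_g & Y,
}
$$
so $X'=X\times_Y Y'$ and $f'$ is the base change of $f$. First I would verify that $f'$ is a fibration. By Lemma~\ref{lem:Q1}(3), $f$ has RLP with respect to each $|j^n_k|$; a purely formal diagram chase shows that RLP with respect to a fixed map is stable under base change, so $f'$ inherits this lifting property and is therefore a Serre fibration.

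Next I would check that $f'$ is a weak homotopy equivalence using the criterion extracted from the long exact sequence just above: a fibration is a trivial fibration precisely when all of its fibers have vanishing homotopy groups (and are nonempty, so as to induce a bijection on $\pi_0$). For any $y'\in Y'$ the fiber of $f'$ is canonically homeomorphic to the fiber of $f$ over $g(y')$:
$$
(f')^{-1}(y') \;=\; X\times_Y\{y'\} \;\cong\; f^{-1}(g(y')).
$$
Since $f$ is a trivial fibration, this fiber is nonempty with vanishing $\pi_n$ for all $n\geq 0$. Thus every fiber of $f'$ satisfies the criterion, and $f'$ is a trivial fibration.

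I expect no real obstacle here: the only mildly subtle point is the handling of $\pi_0$ and path-components (the long-exact-sequence argument was phrased with $Y$ path-connected), but this is immediate once one notes that the criterion applies component-by-component and that nonemptiness of each fiber of $f'$ forces surjectivity onto $\pi_0(Y')$. The argument is ultimately just a combination of two \emph{stability under base change} properties: lifting properties (giving the fibration half) and the fiber (giving the weak-equivalence half via the already-established criterion).
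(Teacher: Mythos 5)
Your argument is correct and is exactly the paper's proof, just written out in more detail: the paper also notes that base change preserves fibrations (via the lifting-property characterization) and has the same fibers, then invokes the criterion that a fibration is trivial iff all its fibers have vanishing homotopy groups. Your extra remark about handling $\pi_0$ component-by-component is a reasonable fleshing out of the same idea, not a new route.
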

\begin{proof}Base change preserves fibrations and has the same fibers.
\end{proof}

\begin{lem}In the notation of \ref{lem:Phi} Let $f$ be a trivial fibration. Then $\Phi(f,i^m)$ is also a trivial fibration.
\end{lem}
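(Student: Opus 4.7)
The map $\Phi(f,i^m)$ is already a Serre fibration by the injective extension of Lemma~\ref{lem:Phi}, so the content is to show it is a weak equivalence. The plan is to prove by induction on $\dim B$ the more general statement
\[
(*)\qquad \Phi(f,A\to B)\colon X^B\to X^A\times_{Y^A}Y^B\ \text{is a trivial fibration}
\]
for every inclusion $A\subseteq B$ of finite simplicial sets and every trivial fibration $f$; the lemma is the case $A=\partial\Delta^m,\ B=\Delta^m$. The base case $\dim B=0$ is immediate, since $\Phi(f,A\to B)$ is then a finite product of copies of $f$.

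For the inductive step, the key bookkeeping observation is that a chain $A\subseteq A'\subseteq B$ gives a factorization
\[
X^B\xrightarrow{\Phi(f,A'\to B)}X^{A'}\times_{Y^{A'}}Y^B\longrightarrow X^A\times_{Y^A}Y^B
\]
whose second arrow is the base change of $\Phi(f,A\to A')$ along $Y^B\to Y^{A'}$. By Lemma~\ref{lem:basechange-TF} and composition, $(*)$ for both $A\subseteq A'$ and $A'\subseteq B$ implies $(*)$ for $A\subseteq B$. Applying this with $A'=A\cup\sk_{n-1}B$ (where $n=\dim B$), the inductive hypothesis reduces us to the case $A\supseteq\sk_{n-1}B$, in which $B$ is obtained from $A$ by attaching top-dimensional simplices along their boundaries. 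Decomposing this attachment one simplex at a time and using $A'\cup\sigma\cong A'\sqcup_{\partial\Delta^n}\Delta^n$, one checks that $\Phi(f,A'\to A'\cup\sigma)$ is a base change of $\Phi(f,i^n)$, so by Lemma~\ref{lem:basechange-TF} it is enough to establish the single case $\Phi(f,i^n)$.

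For this core case, consider
\[
X^{\Delta^n}\xrightarrow{\Phi(f,i^n)}X^{\partial\Delta^n}\times_{Y^{\partial\Delta^n}}Y^{\Delta^n}\xrightarrow{\pi}Y^{\Delta^n}.
\]
The composite $X^{\Delta^n}\to Y^{\Delta^n}$ is weakly equivalent to $f$ itself, since the contractibility of $|\Delta^n|$ yields $X^{\Delta^n}\simeq X$ and $Y^{\Delta^n}\simeq Y$; hence the composite is a weak equivalence. The arrow $\pi$ is the base change of $X^{\partial\Delta^n}\to Y^{\partial\Delta^n}=\Phi(f,\emptyset\to\partial\Delta^n)$, and since $\dim\partial\Delta^n=n-1$ the inductive hypothesis applies to make this a trivial fibration; Lemma~\ref{lem:basechange-TF} then makes $\pi$ a trivial fibration. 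By two-out-of-three, $\Phi(f,i^n)$ is a weak equivalence, completing the induction.

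The main obstacle is purely organizational: identifying $\Phi(f,A'\to A'\cup\sigma)$ as a base change of $\Phi(f,i^n)$, and identifying the second arrow of the $A\subseteq A'\subseteq B$ factorization as a base change of $\Phi(f,A\to A')$, both amount to rewriting iterated pullbacks carefully. The substantive homotopical input enters only in one place: the contractibility of $|\Delta^n|$, which powers the two-out-of-three argument in the core case.
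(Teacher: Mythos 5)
Your proof is correct in substance, but it reaches the conclusion by a different route than the paper. The paper first proves the absolute statement that $f^S:X^S\to Y^S$ is a trivial fibration for every finite simplicial set $S$, and does so by a fiberwise computation: the fiber of $f^S$ is $F^S$ with $F$ the fiber of $f$, and attaching an $n$-simplex to $S$ produces a fibration $F^T\to F^S$ with fiber $\Omega^n F$, so triviality of the homotopy groups of $F$ propagates to $F^S$; it then applies two-out-of-three exactly once, in the same triangle $X^B\to X^A\times_{Y^A}Y^B\to Y^B$ that you use in your core case, the second arrow being a base change of $f^A$. You avoid the fiber/loop-space analysis altogether: you prove the relative statement for all inclusions $A\subseteq B$ by induction on $\dim B$, reduce by base-change bookkeeping to the single attachment $\partial\Delta^n\subseteq\Delta^n$, and feed the two-out-of-three argument only the contractibility of $|\Delta^n|$ together with the inductive hypothesis applied to $\emptyset\subseteq\partial\Delta^n$. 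The paper's route is shorter because the characterization of trivial fibrations via triviality of the homotopy groups of the fibers has just been set up from the long exact sequence; yours is more formal and self-contained, the only homotopical input being $|\Delta^n|\simeq *$, everything else resting on stability of trivial fibrations under base change (Lemma~\ref{lem:basechange-TF}) and composition. One step to tighten: when you invoke the inductive hypothesis for $A\subseteq A'=A\cup\sk_{n-1}B$, note that $\dim A'$ may equal $n$ (if $A$ contains nondegenerate $n$-simplices), so the hypothesis as indexed by $\dim$ of the target does not literally apply; but since $A'=A\sqcup^{A\cap\sk_{n-1}B}\sk_{n-1}B$, the map $\Phi(f,A\to A')$ is a base change of $\Phi(f,A\cap\sk_{n-1}B\to\sk_{n-1}B)$, which the inductive hypothesis does cover --- the same pullback rewriting you already carry out for a single-simplex attachment closes this small indexing gap.
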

\begin{proof}Let us prove by induction that if $f$ is a trivial fibration, $f^S$ is a trivial fibration for finite
$S\in\sSet$. The fiber of $f^S$ is $F^S$ when $F$ is the fiber of $f$; thus, we have to verify that if $F$ has trivial homotopy groups, $F^S$ has also trivial homotopy 
groups. This is easily proven by induction ($S$ is finite!): if $T$ is obtained from $S$ by gluing an $n$-dimensional simplex,
the embedding $S\to T$ induces a fibration $F^T\to F^S$
whose fiber $\Omega^n(F)$ has trivial homotopy groups.

Now the claim of the lemma follows by 2 out of 3  property of weak equivalences from the diagram
\begin{equation}
\xymatrix{
&{X^B}\ar[r]\ar[dr] &{X^A\times_{Y^A}Y^B}\ar[d] \ar[r] &{X^A}\ar[d] \\
&{} &{Y^B} \ar[r] &{Y^A}
}
\end{equation}

\end{proof}

\begin{lem}\label{lem:Q2}
The following properties of $f$ in $\Top$ are equivalent.
\begin{itemize}
\item[1.] $f$ is a trivial fibration.
\item[2.] $\Sing(f)$ is a trivial fibration.
\item[3.] $f$ has RLP with respect to $|i^n|$.
\end{itemize}
\end{lem}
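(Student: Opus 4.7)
My plan is to establish $(2)\Leftrightarrow(3)$ formally, $(1)\Rightarrow(3)$ via the immediately preceding lemma, and $(3)\Rightarrow(1)$ by a direct argument splitting into a fibration check and a weak equivalence check.

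The equivalence $(2)\Leftrightarrow(3)$ is a consequence of the $|\cdot|\dashv\Sing$ adjunction together with the definition of trivial fibration in $\sSet$: by definition $\Sing(f)$ is a trivial fibration iff it has RLP against every $i^n$, and by adjunction this is equivalent to $f$ having RLP against every $|i^n|$ in $\Top$.

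For $(1)\Rightarrow(3)$, I observe that a lifting problem for $f$ against $|i^n|$ is precisely the data of a point in $X^{\partial\Delta^n}\times_{Y^{\partial\Delta^n}}Y^{\Delta^n}$, and a solution is a preimage in $X^{\Delta^n}$ under $\Phi(f,i^n)$. The preceding lemma asserts that $\Phi(f,i^n)$ is a trivial fibration in $\Top$ whenever $f$ is. Any trivial Serre fibration is surjective on the underlying set: it is bijective on $\pi_0$ as a weak equivalence, and Serre path-lifting upgrades component-surjectivity to set-surjectivity. Hence the lift exists, yielding (3).

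For $(3)\Rightarrow(1)$ I treat the two properties separately. To see $f$ is a Serre fibration, I note that the horn inclusion $\Lambda^n_k\hookrightarrow\Delta^n$ factors as two successive pushouts along boundary inclusions: first attach the missing $(n{-}1)$-face via $\partial\Delta^{n-1}\hookrightarrow\Delta^{n-1}$ to build $\partial\Delta^n$, then attach the top simplex via $\partial\Delta^n\hookrightarrow\Delta^n$. Since geometric realization preserves pushouts, RLP against all $|i^m|$ forces RLP against every $|j^n_k|$, and Lemma~\ref{lem:Q1} gives the fibration property. To see that $f$ is a weak equivalence, I use (3) to trivialize the homotopy of all fibers: RLP against $|i^0|:\emptyset\to*$ gives surjectivity of $f$, and for each $y\in Y$ and $n\geq 0$, RLP against $|i^{n+1}|:S^n\to D^{n+1}$ applied with bottom map constant at $y$ extends any $S^n\to F_y=f^{-1}(y)$ to a map $D^{n+1}\to F_y$ (the lift is forced into $F_y$ since its composition with $f$ is constant). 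Hence $\pi_n(F_y)$ vanishes for every $y$ and $n\geq 0$, and the long exact sequence of the fibration $F_y\to X\to Y$ shows that $\pi_n(f)$ is an isomorphism for all $n$, so $f$ is a weak homotopy equivalence.

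The main obstacle is the implication $(3)\Rightarrow(1)$, which combines the cellular decomposition of horns into pushouts of boundaries with the fiber homotopy calculation via the long exact sequence. Neither step is deep, but together they carry the substantive content of the lemma; the other implications are either formal or immediate from the preceding lemma about $\Phi(f,i^n)$.
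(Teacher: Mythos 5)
Your proof is correct and follows essentially the same route as the paper: $(2)\Leftrightarrow(3)$ by adjunction, $(1)\Rightarrow(3)$ by reducing the lifting problem to surjectivity of $\Phi(f,i^n)$ (which is a trivial fibration by the preceding lemma, hence surjective via $\pi_0$-bijectivity plus path lifting), and $(3)\Rightarrow(1)$ by getting the fibration property from the factorization of horns through boundary inclusions and then killing the homotopy groups of all fibers to conclude via the long exact sequence. You merely spell out a couple of steps (the horn decomposition and the $\pi_0$/surjectivity bookkeeping) that the paper delegates to earlier remarks.
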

\begin{proof}
Conditions (2) and (3) are equivalent by adjunction.
Let us prove  that (1) implies (3).

Right lifting property of $f$ with respect to $|i^n|$
can be equivalently expressed by the right lifting property
of $\Phi(f,i^n)$ with respect to the map $\emptyset\to *$, 
that is, simply surjectivity of $\Phi(f,i^n)$. It remains to 
prove that a trivial fibration is surjective. It is
bijective on path connected components; to prove 
surjectivity it is sufficient to find a point $x\in X$ whose 
image belongs to the same component as the chosen $y\in Y$; 
and then to lift the path between $f(x)$ and $y$ to $X$.

It remains to verify that (3) implies (1). If (3) is 
satisfied, then $f$ is a fibration. Now, for any $y\in Y$
the fiber $F=f^{-1}(y)$ has trivial homotopy groups as any
map $|\partial\Delta^n|\to F$ extends to a map 
$|\Delta^n|\to F$. This completes the proof.
\end{proof}

\subsubsection{Factorization}
We define cofibrations in $\Top$ as morphisms satisfying LLP with respect to trivial fibrations.
\begin{Lem}
Any map $f$ in $\Top$ can be factored $f=p\circ i$ where $i$ is a cofibration and $p$ is a trivial fibration.
\end{Lem}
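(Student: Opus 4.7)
The plan is to apply the small object argument to the set $I = \{|i^n|\}_{n \geq 0}$. By Lemma~\ref{lem:Q2}, a map in $\Top$ is a trivial fibration if and only if it has RLP with respect to every $|i^n|$; consequently each $|i^n|$ is itself a cofibration, and the class of cofibrations, being defined by a LLP, is closed under pushouts, transfinite compositions, coproducts, and retracts.

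Given $f : X \to Y$, I would build inductively a sequence of maps $f_n : X_n \to Y$ starting with $f_0 = f$. At stage $n$, let $S_n$ be the set of all commutative squares of the form
$$
\xymatrix{
|\partial\Delta^{m}| \ar[r]^-{\alpha} \ar[d]_{|i^{m}|} & X_n \ar[d]^{f_n} \\
|\Delta^{m}| \ar[r]^-{\beta} & Y
}
$$
for some $m \geq 0$, and define $X_{n+1}$ by the pushout
$$
\xymatrix{
\coprod_{s \in S_n} |\partial\Delta^{m(s)}| \ar[r] \ar[d] & X_n \ar[d] \\
\coprod_{s \in S_n} |\Delta^{m(s)}| \ar[r] & X_{n+1}
}
$$
with $f_{n+1} : X_{n+1} \to Y$ induced by the $\beta$'s. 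Put $X_\infty = \colim_n X_n$, with induced map $p : X_\infty \to Y$, and let $i : X = X_0 \to X_\infty$ be the canonical map.

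The map $i$ is a countable transfinite composition of cobase changes of coproducts of the generating maps $|i^m|$, each of which is a cofibration; hence $i$ is a cofibration. To see that $p$ is a trivial fibration, by Lemma~\ref{lem:Q2} it suffices to check the RLP against every $|i^n|$. Given a square as above with $X_n$ replaced by $X_\infty$, one uses compactness of $|\partial\Delta^n|$ together with the fact that each inclusion $X_k \hookrightarrow X_{k+1}$ is a closed relative CW-inclusion to conclude that the upper map factors through some $X_k$. This produces an element $s \in S_k$, and by construction of $X_{k+1}$ the map $|\Delta^n| \to X_{k+1} \to X_\infty$ obtained from the tautological attaching data supplies the required diagonal lift.

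The main obstacle is the compactness step at the end: one must know that a map from $|\partial\Delta^n|$ into the sequential colimit $X_\infty$ necessarily factors through a finite stage. This is the one place where the argument is not purely formal and requires care about the point-set topology of $X_\infty$, namely that it carries the weak topology with respect to the filtration $X_0 \subset X_1 \subset \cdots$ by closed inclusions. Once that is confirmed, the rest of the proof is the formal small-object machinery.
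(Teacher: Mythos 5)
Your proposal is correct and is essentially the paper's own argument: the same cell-by-cell small object argument with the generating maps $|\partial\Delta^n|\to|\Delta^n|$, using Lemma~\ref{lem:Q2} to identify trivial fibrations via RLP and compactness of $|\partial\Delta^n|$ to factor maps into the colimit through a finite stage. The point-set caveat you flag at the end is exactly what the paper addresses in its footnote (the stages are closed $T_1$-embeddings, cf.\ \cite{H}, 2.4.2), so nothing is missing.
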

\begin{proof}
Step-by-step, joining cells.  

We start with a map $f:X\to Y$ and we construct a sequence of decompositions
$$ X\to Z_n\to Z_{n+1}\ldots \to Y$$
recursively. Look at the set of all diagrams
\begin{equation}
\xymatrix{
&{|\partial\Delta^k|}\ar[r]\ar[d]&Z_n\ar[d]^{ }  \\
& |\Delta^k|\ar[r]^{} & Y
}
\end{equation} 
and define $Z_{n+1}$ by gluing to $Z_n$ all simplices $|\Delta^k|$ along the boundary.
We get $Z=\colim Z_n$ and the map $Z\to Y$ is defined. The map $Z_n\to Z_{n+1}$
satisfies LLP with respect to the trivial fibrations by Lemma~\ref{lem:Q2}, so these are cofibrations.
To prove the map $Z\to Y$ is a trivial fibration, we use the following argument suggested by Quillen and now called {\sl the small object argument}:
Given a diagram as above, with $Z$ instead of $Z_n$, we take into account that 
$|\partial\Delta^k|$ is compact, so
\footnote{Here one uses that the maps $Z_k\to Z_{k+1}$
is ``closed $T^1$-embedding'', see details in \cite{H}, 2.4.2}
its map to $Z$ factors through a certain $Z_n$.
The rest is clear.

\end{proof}

\begin{lem}\label{lem:Q4}
The following properties of $f$ in $\Top$ are equivalent.
\begin{itemize}
\item[1.] $f$ is a trivial cofibration.
\item[2.] $f$ has LLP with respect to fibrations. 
\item[3.] $f$ is a cofibration and a strong deformation retract.
\end{itemize}
\end{lem}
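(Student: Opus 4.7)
My plan is to follow the standard retract-argument template that characterizes trivial cofibrations in a model category, introducing one new ingredient: a second factorization lemma asserting that any map $f$ in $\Top$ can be written as $f = p \circ i$, where $p$ is a Serre fibration and $i$ is a cofibration which is moreover a strong deformation retract of its target. I would obtain this by Quillen's small object argument applied to the set $\{|j^n_k|\}$, attaching cells at each stage exactly as in the previous factorization lemma; local compactness of $|\Lambda^n_k|$ makes the small-object step work and shows $i$ has LLP against every Serre fibration. Each horn inclusion $|j^n_k|$ is itself a strong deformation retract of $|\Delta^n|$, and this property is preserved by cobase change and by transfinite composition, so $i$ inherits it and is in particular a weak homotopy equivalence.

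Given this factorization, $(1) \Rightarrow (2)$ is the usual retract argument: decompose a trivial cofibration $f$ as $f = p \circ i$, observe that $p$ is a trivial fibration by two-out-of-three, use the cofibration property of $f$ to lift $\id$ across $p$, and conclude $f$ is a retract of $i$. LLP against fibrations is then inherited by the retract.

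For $(2) \Rightarrow (3)$, LLP against fibrations in particular gives LLP against trivial fibrations, so $f \colon A \to X$ is a cofibration. To produce a retraction $r \colon X \to A$ with $r f = \id_A$, I would lift $\id_A$ along $f$ against the fibration $A \to *$, which is a fibration by Lemma \ref{crl:fib}. The deformation is then obtained by lifting against the Serre fibration $X^I \to X \times X$ given by evaluation at the two endpoints, with upper edge the map $A \to X^I$ sending $a$ to the constant path at $f(a)$ and lower edge $(\id_X, f \circ r) \colon X \to X \times X$. The square commutes precisely because $r f = \id_A$, and the adjoint of the lift is the required homotopy $X \times I \to X$, stationary on $A$, from $\id_X$ to $f \circ r$.

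Finally $(3) \Rightarrow (1)$ is essentially immediate: a strong deformation retract is a homotopy equivalence and therefore a weak homotopy equivalence, so together with the cofibration hypothesis we get a trivial cofibration. I expect the main technical obstacle to be the preliminary factorization: verifying that the transfinite composition of cobase changes of the $|j^n_k|$ actually yields a strong deformation retract on the nose, which in practice requires careful bookkeeping of the deformations produced at each cell-attachment stage and a mild topological hypothesis of the "closed $T_1$-embedding" type noted earlier for the small object argument.
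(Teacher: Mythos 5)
Your proof is correct, but it takes a genuinely different route from the paper's for the key implication. The paper never performs a second small object argument: it proves $(1)\Rightarrow(3)$ by factoring a trivial cofibration $f:A\to B$ through the mapping path space, $A\to A\times_B B^I\to B$, where the first map is a strong deformation retract and the second is a fibration (trivial by two-out-of-three); the section supplied by the cofibration property exhibits $f$ as a retract of an SDR, hence an SDR (Exercise 0). It then proves $(3)\Rightarrow(2)$ directly, by observing that for a fibration $p:X\to Y$ the map $X^I\to X\times_Y Y^I$ is a trivial fibration (Lemma~\ref{lem:Phi} plus two-out-of-three) and using the retraction and deformation data of $f$ to build the lift. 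You instead run the classical Quillen--Hovey argument: a second factorization $f=p\circ i$ with $i$ a relative cell complex on the $|j^n_k|$ (hence with LLP against all Serre fibrations, and an SDR since horn inclusions are SDRs and this survives cobase change and sequential composition), followed by the retract argument to get $(1)\Rightarrow(2)$, then closing the cycle with $(2)\Rightarrow(3)$ and $(3)\Rightarrow(1)$, which coincide with the paper's. What each buys: your route also re-proves the factorization axiom ``trivial cofibration followed by fibration'' for free, but it puts all the weight on the point-set step you flag yourself -- that pushouts and countable compositions of these SDRs are again SDRs (continuity of the concatenated homotopy uses the colimit topology, local compactness of $[0,1]$, and the closed $T_1$-embedding condition); the paper's route deliberately avoids this bookkeeping by reusing only the already established cofibration/trivial-fibration factorization, the path-space fibration lemmas, and the fact that a retract of a strong deformation retract is one (its Exercise 0). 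Your $(2)\Rightarrow(3)$ is sound as written: $A\to *$ is a fibration by Lemma~\ref{crl:fib}, $X^I\to X\times X$ is a fibration by Lemma~\ref{lem:Phi}, and your square commutes because $rf=\id_A$, so the lift is a homotopy rel $A$ from $\id_X$ to $f\circ r$.
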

\begin{proof}
Recall that an embedding $f:A\to B$ is a strong deformation
retract if there exist the maps $r$ and $h$ making the diagrams below commutative (we denote $I=[0,1]$ and $s:B\to B^I$ is the obvious map).

\begin{equation}
\xymatrix{
&A \ar[r]^\id\ar[d]_f&A\ar[d] &A\ar[r]^{sf}\ar[d]_f &{B^I}\ar[d] \\
&B\ar@{.>}[ru]^r\ar[r] &{*} &B \ar@{.>}[ru]^h\ar[r]_{(fr,\id)} &{B\times B}
}.
\end{equation}
 The property (3) implies (1) as strong deformation retract 
is a homotopy equivalence, therefore, a weak homotopy 
equivalence. Let us show (1) implies (3). Thus, $f:A\to B$ 
is a cofibration and a weak equivalence. Present it as a composition
\begin{equation}
A\stackrel{g}{\to} A\times_BB^I\stackrel{q}{\to} B,
\end{equation}
in a standard way, where $I=[0,1]$. Here $q$ is a fibration
and $g$ is a strong deformation retract, so weak equivalence. Therefore, $q$ is a trivial fibration.
As $f$ is a cofibration, there is a section $u:B\to A\times_BB^I$ of $q$. Thus $f$ is a retract of $g$, therefore, a strong deformation retract.

The property (2) implies (3): $f$ is cofibration as any trivial fibration is a fibration. The dotted arrows
in the above diagrams exist by the lifting property
and Corollary \ref{lem:basechange-TF}.

The property (3) implies (2). Let $p:X\to Y$ be a fibration.
Then $P:X^I\to X\times_YY^I$ is a  fibration
by \ref{lem:Phi} which is trivial by the 2 out of 3 property. A commutative diagram below (on the left)

\begin{equation}
\xymatrix{
&A \ar[r]^a\ar[d]_f&X\ar[d]^p &A\ar[r]^{sa}\ar[d]_f &{X^I}\ar[d]^P \\
&B\ar@{.>}[ru]^u\ar[r]^b &Y &B \ar@{.>}[ru]^H\ar[r]_{(ar,b^Ih)} &{X\times_YY^I}
}
\end{equation}

gives rise to a commutative diagram on the right, for which 
there exists a dotted arrow $H$ as $f$ is a cofibration.
The one can define $u=e_1H$ where $e_1:X^I\to X$ is the evaluation at $1$.

\end{proof}
We are now ready to prove 
\begin{thm}The category of topological spaces has a model structure with
\begin{itemize}
\item Weak homotopy equivalences as weak equivalences.
\item Serre fibrations as fibrations.
\item Cofibrations defined by the LLP with respect to
trivial fibrations.
\end{itemize}
\end{thm}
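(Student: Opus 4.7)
The plan is to assemble the already-established lemmas into verification of Quillen's four axioms, and to supply the one missing piece, namely the factorization of an arbitrary map as a trivial cofibration followed by a fibration. First I would note that $\Top$ has all small limits and colimits, so the underlying size hypothesis of Definition \ref{dfn:model} holds. The 2-out-of-3 axiom for $W$ is immediate from the functoriality of $\pi_n$ and the fact that a map inducing bijections on $\pi_0$ and on all $\pi_n(-,x)$ for every basepoint is a weak equivalence. Retract closure for $W$ is the corresponding elementary statement in $\Grp$ and $\Set$; retract closure for $F$ and for $C$ is formal from their lifting-property definitions, and retract closure for trivial cofibrations then follows from Lemma \ref{lem:Q4}(2), which characterizes them by a lifting property.

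For axiom 3, condition (b) holds by the definition of cofibrations as the maps satisfying LLP against trivial fibrations, and condition (a) is precisely Lemma \ref{lem:Q4}(2). For axiom 4, decomposition (b)---cofibration followed by trivial fibration---has already been produced by the small object argument applied to the boundary inclusions $|i^n|$, using the characterization of trivial fibrations in Lemma \ref{lem:Q2}. So the one remaining step is decomposition (a), and this is the step I expect to be the main obstacle.

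The plan for (a) is to run the small object argument with the set of horn inclusions $\{|j^n_k| : |\Lambda^n_k|\to|\Delta^n|\mid 0\le k\le n,\ n\ge 1\}$ in place of the boundary inclusions. Given $f\colon X\to Y$, one constructs inductively $X=Z_0\to Z_1\to\cdots$ by gluing along every commutative square $|\Lambda^n_k|\to Z_m,\ |\Delta^n|\to Y$, and sets $i\colon X\to Z=\colim Z_m$ with the evident $p\colon Z\to Y$. Compactness of $|\Lambda^n_k|$ (same argument as in Lemma above) shows that $p$ has RLP with respect to every $|j^n_k|$, so by Lemma \ref{lem:Q1} it is a fibration. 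It remains to show that $i$ is a trivial cofibration. Each $|j^n_k|$ is a strong deformation retract, and also a cofibration (since $|\partial\Delta^n|\to|\Delta^n|$ is a cofibration and $|j^n_k|$ decomposes as a pushout of such), hence by Lemma \ref{lem:Q4} has LLP with respect to all fibrations. This lifting property is preserved under cobase change, transfinite composition, and retract, so $i$ has LLP with respect to all fibrations, and then the equivalence (2)$\Leftrightarrow$(1) of Lemma \ref{lem:Q4} upgrades $i$ to a trivial cofibration.

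With both factorizations and both lifting axioms in hand, all four axioms of Definition \ref{dfn:model} are satisfied, completing the proof. The subtle point I would flag is the use of compactness of $|\Lambda^n_k|$ in the small object argument: one needs the maps $Z_m\to Z_{m+1}$ to be closed $T_1$-inclusions so that maps out of compact sources factor through some finite stage, exactly the point referenced in the footnote to the previous lemma; this is where the argument genuinely uses point-set topology rather than purely formal lifting manipulations.
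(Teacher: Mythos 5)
Your proof is correct, but it takes a different route from the paper at the one step you correctly identified as the crux, namely the (trivial cofibration, fibration) factorization 4(a). You obtain it by running the small object argument a second time, now with the horn inclusions $\{|j^n_k|\}$ as generating maps, and then upgrading the resulting relative cell map to a trivial cofibration via Lemma~\ref{lem:Q4} (each $|j^n_k|$ is a cofibration and a strong deformation retract, hence has LLP against all fibrations, and this LLP class is closed under cobase change, coproducts, sequential composition and retracts); the fibration half again uses compactness of the realized horns and the closed-inclusion point you flag. The paper instead avoids a second small object argument entirely: it factors $f$ through the mapping path space, $X\to X\times_Y Y^I\to Y$, where the second map is a Serre fibration and the first is a strong deformation retract (hence a weak equivalence), and then applies the already-proven factorization 4(b) to the first map, using 2-out-of-3 to see that the resulting cofibration is trivial and that the composite of the trivial fibration with the path-space fibration is a fibration. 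Your approach costs an extra transfinite construction and one more point-set verification, but it buys the explicit statement that $\Top$ is cofibrantly generated with $\{|j^n_k|\}\cong\{D^{n-1}\to D^{n-1}\times I\}$ as generating trivial cofibrations, which the paper only obtains implicitly; the paper's route is shorter because it reuses the single small object argument already performed and the simplicial/path-object machinery of Lemmas~\ref{lem:Phi} and~\ref{lem:Q4}. Both arguments are complete modulo the same standard background facts (2-out-of-3 and retract closure for weak homotopy equivalences), which you and the paper treat at the same level of detail.
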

\begin{proof}
Existence of limits and colimits is standard.
Two-out-of-three property for weak equivalences is obvious.
Every property defined via LLP or RLP is closed under retracts. Fibrations and cofibrations are defined by RPL and LLP respectively, so are closed under retraction.

Factorization into cofibration followed by a trivial fibration is proven.
To get the other factorization, we can factor $f:X\to Y$ as
$$ X\stackrel{j}{\to} X\times_YY^I\stackrel{p}{\to} Y$$
with $j$ weak equivalence and $p$ fibration. Then factor 
$j$ into cofibration  +
trivial fibration. This will give what we need.

It remains to prove axiom 3(a),(b) (lifting properties). Part (b) is definition of cofibration, and part (a) follows from Lemma~\ref{lem:Q4}.
\end{proof}

\subsection{Cofibrantly generated model categories}

As we have already seen, smallness of certain objects was instrumental in proving existence of factorizations. For topological spaces this was compactness of 
$|\partial\Delta^n|$ which allowed one to factor a map 
$|\partial\Delta^n|\to Z=\colim Z_n$ through some 
$Z_n\to Z$.

We will present the simplest set of the definitions
which will be sufficient for our applications.

Let $I$ be a collection of arrows in $C$. A diagram
$$Z_0\to Z_1\to\ldots$$
will be called $I$-diagram if all maps $Z_k\to Z_{k+1}$
are pushouts of elements of $I$. We assume that $C$ has sequential colimits, so that $Z=\colim Z_k$ is defined.

An object $X$ is called $I$-small if for any $I$-diagram
as above the map
$$ \colim_k\Hom(X,Z_k)\to\Hom(X,Z)$$
is bijective. 

A model category $\cC$ is {\sl cofibrantly generated}
if there are two sets $I$ and $J$ of morphisms in $\cC$
such that
\begin{itemize}
\item Fibrations in $\cC$ are precisely the arrows satisfying RLP wrt $J$. Trivial fibrations are those
satisfying RLP wrt $I$.
\item For any $\alpha:X\to Y$ in $I$ the domain $X$ is small wrt $I$-diagrams.
\item The same for morphisms in $J$ and $J$-diagrams.
\end{itemize}  

The general definition of cofibrantly generated category 
uses a more generous definition of smallness ($\kappa$-smallness for an arbitrary cardinal $\kappa$).

In case a model category is cofibrantly generated, one can
construct decomposition of a morphism as in two cases
we already know --- complexes and topological spaces ---
as a sequential colimit of a diagram which is constructed 
recursively. For more general cofibrantly generated categories (when domains are $\kappa$-small) decomposition
is constructed as more sophisticated colimit ($\kappa$-filtered colimit). 

If we hope our model category structure is cofibrantly generated, it is much easier to prove this.

One has
\begin{Thm}\label{thm:cofgen}(see \cite{H}, 2.1.19)
Let $\cC$ has small colimits and small limits. Let $W$ be a subcategory and $I,\ J$ two sets of arrows. Assume
\begin{itemize}
\item $W$ contains all isomorphisms, satisfies two-out of three property and is closed under retracts.
\item Domains of $I$ and $J$ are small wrt $I$  and $J$ diagrams respectively.
\item Colimits of $J$-diagrams are in $W\cap LLP(RLP(I))$.
\item $RLP(I)= RLP(J)\cap W$.
\end{itemize}
Then $\cC$ has a model structure with weak equivalences
defined as $W$ and fibrations defined as $RLP(J)$.
\end{Thm}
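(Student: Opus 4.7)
My plan is to follow the standard recognition-principle proof, organized around the small object argument (SOA) and the retract argument. Set $\Cof = LLP(RLP(I))$ as candidate cofibrations and $\Fib = RLP(J)$ as candidate fibrations, with $W$ as weak equivalences. Trivial fibrations then coincide with $RLP(I)$ by the last hypothesis, and we define trivial cofibrations as $W\cap \Cof$. The 2-out-of-3 axiom for $W$ is given. Closure of $\Fib$, $\Cof$, and $RLP(I)$ under retracts is automatic since each is defined by a lifting property; closure of $W\cap\Cof$ follows from closure of $W$ and of $\Cof$.

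Next I would establish the two factorizations via Quillen's SOA. Given $f:X\to Y$, build a transfinite (here just $\omega$-indexed, since domains are $I$-small) tower $X=Z_0\to Z_1\to\cdots$ by, at each stage, taking the pushout over the coproduct of all squares with left leg in $I$ whose bottom edge lands in $Y$; let $Z=\colim Z_n$ and factor $f$ as $X\to Z\to Y$. The first map is in $\Cof$ since pushouts and transfinite compositions of elements of $I$ lie in $LLP(RLP(I))$; the second is in $RLP(I)$ by smallness of the domains of $I$, exactly as in the proof of the topological case. Running the same argument with $J$ in place of $I$ gives a factorization $X\to Z'\to Y$ where $Z'\to Y\in\Fib=RLP(J)$ and $X\to Z'$ is a transfinite colimit of a $J$-diagram, hence lies in $W\cap LLP(RLP(I))=W\cap\Cof$ by hypothesis (3). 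This yields factorization (a) and (b) respectively.

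For the lifting axioms: part (b), that cofibrations lift against trivial fibrations, holds by definition since $RLP(I)$ coincides with trivial fibrations and $\Cof = LLP(RLP(I))$. Part (a), that trivial cofibrations lift against fibrations, is the only genuinely subtle step. I would prove it via the retract argument: given a trivial cofibration $i:A\to B$, factor it through the $J$-based SOA as $i = p\circ j$ with $j$ a transfinite composition of pushouts of $J$ (hence in $LLP(\Fib)$) and $p\in\Fib$. By 2-out-of-3, $p$ is also in $W$, so $p\in \Fib\cap W = RLP(I)$ by hypothesis (4). Since $i\in\Cof$, the square with $j$ on top and $p$ on the right admits a diagonal filler, exhibiting $i$ as a retract of $j$; since $LLP(\Fib)$ is closed under retracts, $i\in LLP(\Fib)$, as required.

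The main obstacle, and the only place where hypotheses (3) and (4) do real work, is this last step: hypothesis (3) is needed to certify that the $J$-based factorization produces a \emph{trivial} cofibration (not merely an arrow in $LLP(\Fib)$), and hypothesis (4) is needed in the retract argument to identify $\Fib\cap W$ with $RLP(I)$ so that the cofibration $i$ really does lift against the fibration $p$. Everything else is set-theoretic bookkeeping: existence of (co)limits is assumed, smallness of the domains of $I$ and $J$ powers the SOA, and closure under retracts and pushouts for classes defined by lifting properties is a formal consequence of the Yoneda-style adjunction between left and right lifting.
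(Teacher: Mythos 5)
Your proof is correct and is essentially the argument the paper itself delegates to its citation of Hovey 2.1.19: the small object argument run on $I$ and on $J$ to produce the two factorizations, plus the retract argument in which hypothesis (3) makes the $J$-cell map a trivial cofibration and hypothesis (4) identifies $RLP(J)\cap W$ with $RLP(I)$ so that the given trivial cofibration lifts and becomes a retract of the $J$-cell map. The only blemishes are cosmetic: the labels are swapped (the $I$-based factorization is 4(b), cofibration followed by trivial fibration, and the $J$-based one is 4(a)), and, just as in the paper's own treatment of $\Top$, attaching all cells at once makes each stage a pushout of a coproduct of generating maps rather than of a single one, a standard point that does not affect the argument.
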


\subsection{Model structure for $\sSet$}
We will construct a cofibrantly generated model structure
on $\sSet$ as follows.
\begin{itemize}
\item A map $f:X\to Y$ is a weak equivalence iff $|f|$ is
a weak homotopy equivalence.
\item The set $I$ consists of 
$\partial\Delta^n\to\Delta^n$.
\item The set $J$ consists of $\Lambda^n_k\to\Delta^n$.
\end{itemize}

Let us verify the conditions of the theorem.
Properties of $W$ are obvious. Smallness conditions are
also obvious. It remains to verify the two last conditions.
\begin{itemize}
\item[1.] Geometric realization preserves colimits, 
so colimit of a $J$-diagram is in $W$. Since any element of $J$ is a  colimit of an $I$-diagram, it is in $LLP(RLP(I))$.
\item[2.] $f:X\to Y$ is $\sSet$ is a trivial fibration
(that is, is in $RLP(I)$)
iff it is a fibration and a weak equivalence. This statement is nontrivial; it will take some time to verify
 it.
\end{itemize}

\begin{lem}\label{lem:realization-lim}
Geometric realization preserves finite limits.
\end{lem}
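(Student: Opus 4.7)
The plan is to reduce the claim to two pieces: preservation of finite products and preservation of equalizers. A finite limit in any category with finite products and equalizers can be built out of these two (the limit of $F : I \to \cC$ with $I$ finite is the equalizer of the two obvious maps $\prod_{i \in \Ob I} F(i) \rightrightarrows \prod_{\alpha : i \to j} F(j)$). So once the geometric realization preserves both products and equalizers, it preserves finite limits. Note also that $|\Delta^0|$ is a single point, so the terminal object is preserved.

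The finite product case is the theorem already stated and sketched: $|X \times Y| \to |X| \times |Y|$ is a homeomorphism, proved by verifying the claim for standard simplices $X = \Delta^n$, $Y = \Delta^m$ (using the triangulation of $[n] \times [m]$ given by the nerve construction) and then extending by the fact that both sides preserve colimits in $X$ and $Y$ separately. So the work concentrates on equalizers.

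For equalizers, given $f, g : X \rightrightarrows Y$ in $\sSet$, form the levelwise equalizer $E \subset X$ with $E_n = \{x \in X_n : f_n(x) = g_n(x)\}$; this is manifestly a simplicial subset, and the universal property produces a canonical continuous map $\varphi : |E| \to \Eq(|f|, |g|)$ into the topological equalizer, viewed as a subspace of $|X|$. I would first check that $\varphi$ is a bijection. The concrete model $|X| = \coprod_n X_n^{nd} \times \mathring\Delta[n] / \!\sim$ expresses each point uniquely as $(x, t)$ with $x$ a nondegenerate $n$-simplex and $t$ in the open simplex; if $|f|(x,t) = |g|(x,t)$, comparing the nondegenerate decompositions of $f_n(x)$ and $g_n(x)$ forces $f_n(x) = g_n(x)$, so $x \in E_n$. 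This shows that the image of $\varphi$ is exactly $\Eq(|f|, |g|)$ and that $\varphi$ is injective.

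The genuine obstacle is to promote this continuous bijection to a homeomorphism. The idea is to use the CW structure: $|E| \to |X|$ is induced by a monomorphism of simplicial sets and hence realizes as the inclusion of a subcomplex, so it is a closed embedding carrying $|E|$ to a closed subspace of $|X|$. Since $\Eq(|f|, |g|)$ is also a closed subspace of $|X|$ (being the preimage of the diagonal under $(|f|, |g|) : |X| \to |Y| \times |Y|$, which uses the already-established product preservation to identify $|Y \times Y|$ with $|Y| \times |Y|$), and the two closed subspaces have the same underlying point set, the map $\varphi$ is a homeomorphism. Alternatively, one may observe that the cellular filtration of $|X|$ restricts to one on $|E|$, and the inductive step at each cell reduces to verifying that the preimage of a standard closed simplex in $|X|$ under the inclusion $|E| \hookrightarrow |X|$ is again a closed simplex, which is immediate from the definition of $E$.
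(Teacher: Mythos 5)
Your reduction to finite products plus equalizers is exactly the paper's route: the paper's proof consists of precisely this reduction, citing the earlier product-preservation theorem and leaving the equalizer case as an exercise. Your proposal is correct and simply supplies that exercise in full --- the normal-form argument (unique representation by a nondegenerate simplex with interior barycentric coordinates) correctly identifies the point set of $|E|$ with the topological equalizer, and the observation that $|E|\to|X|$ is the inclusion of a subcomplex, hence a closed embedding, legitimately upgrades the bijection to a homeomorphism.
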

\begin{proof}
We know it preserves finite products. It remains to verify
it preserves equalizers (kernels of a pair of maps).
This is an exercise.
\end{proof}

\begin{lem}Let $f\in RLP(I)$ then $|f|$ is a fibration.
\end{lem}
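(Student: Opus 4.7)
My plan is to verify that $|f|$ has the right lifting property against each realized horn inclusion $|\Lambda^n_k|\hookrightarrow|\Delta^n|$; by Lemma~\ref{lem:Q1} this is equivalent to $|f|$ being a Serre fibration. I would proceed in two steps.

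The first step is a purely formal strengthening of the hypothesis. Every simplicial monomorphism $A\hookrightarrow B$ arises as a transfinite composition of pushouts of boundary inclusions $\partial\Delta^n\to\Delta^n$: one attaches the non-degenerate simplices of $B\setminus A$ one at a time along their boundaries, using the skeletal filtration of $B$ relative to $A$. Since right lifting classes are closed under pushouts, retracts, and transfinite composition, $f\in RLP(I)$ implies that $f$ has RLP against every simplicial monomorphism. In particular, $f$ has RLP against each horn inclusion $\Lambda^n_k\hookrightarrow\Delta^n$, and against every pushout--product of a horn with a boundary inclusion.

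The second step is the technical core: passing from the simplicial RLP of $f$ to the topological RLP of $|f|$. The difficulty is that a continuous lifting problem
\[
\xymatrix{
|\Lambda^n_k|\ar[r]^{\alpha}\ar[d] & |X|\ar[d]^{|f|}\\
|\Delta^n|\ar[r]_{\beta} & |Y|
}
\]
need not be the realization of any simplicial one, since the mapping set $\Hom_\Top(|\Delta^n|,|Y|)$ is far larger than $Y_n$. To bridge the gap I would use compactness of $|\Delta^n|$ together with a subdivision argument: $\beta$ factors through a finite sub-CW-complex of $|Y|$, arising from a finite sub-simplicial set $Y'\subseteq Y$, and after sufficiently many iterated barycentric subdivisions of $\Delta^n$ both $\alpha$ and $\beta$ become homotopic (rel $|\Lambda^n_k|$) to realizations of simplicial maps into $X$ and $Y'$ respectively. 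The simplicial RLP established in the first step produces a simplicial lift at the subdivided level, and the strong-deformation-retract structure of $|\Lambda^n_k|\hookrightarrow|\Delta^n|$ (homeomorphic to $D^{n-1}\hookrightarrow D^{n-1}\times I$, as used in the proof of Lemma~\ref{lem:Q1}) converts this into the desired continuous lift of the original data.

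The main obstacle lies precisely in this second step: any route from the simplicial to the topological lifting property is substantially more delicate than the formal first step, and all known arguments require either iterated subdivision, Kan's functor $\mathrm{Ex}^\infty$, or minimal-fibration theory. Lemma~\ref{lem:realization-lim} (preservation of finite limits by $|{-}|$) enters as an essential compatibility input, ensuring that the relevant comparison morphisms between simplicial and topological pullbacks behave correctly.
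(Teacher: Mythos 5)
Your first step is fine (and is exactly the leverage the statement provides): since every monomorphism of simplicial sets is built from the boundary inclusions by pushout, composition and retract, $f\in RLP(I)$ gives RLP against \emph{all} injections. But the proposal stalls at what you yourself call the technical core. The second step is not an argument, it is a description of an obstacle: simplicial approximation rel $|\Lambda^n_k|$ is not available as stated (the map $\alpha$ is not simplicial on the horn to begin with), a lift produced at the subdivided level solves only an approximated lifting problem, and the proposed conversion back to the original problem "using the strong-deformation-retract structure" would need precisely the homotopy lifting property of $|f|$ that you are trying to establish. If one really pushes this route, one is driven to Quillen's theory of minimal fibrations -- which this text develops only later, and uses for the genuinely hard statement that realization preserves \emph{all} Kan fibrations. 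For the present lemma that machinery is unnecessary.

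The paper's proof avoids the simplicial-to-topological comparison entirely. Since $f:X\to Y$ has RLP against all injections, apply it to the graph $\Gamma_f=(\id,f):X\to X\times Y$ (a monomorphism), lifting in the square with top map $\id_X$, right map $f$, bottom map $pr_2:X\times Y\to Y$. The lift $g:X\times Y\to X$ satisfies $g\circ\Gamma_f=\id_X$ and $f\circ g=pr_2$, exhibiting $f$ as a retract of the projection $pr_2$ in the arrow category. Realization is a functor and preserves finite products, so $|f|$ is a retract of the topological projection $|X|\times|Y|\to|Y|$, which is a Serre fibration; retracts of Serre fibrations are Serre fibrations. This one-line retract trick is what you should look for before reaching for subdivision or $\mathrm{Ex}^\infty$.
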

\begin{proof}
$f:X\to Y$ has RLP with respect to all injections, in particular, with respect to $\Gamma_f:X\to X\times Y$. 
The lift
$g:X\times Y\to X$ presents $f$ as a retract of $pr_2$.
Thus, $|f|$ is a fibration.
\end{proof}
\begin{lem}If $f\in RLP(I)$ then $|f|$ is a trivial fibration.
\end{lem}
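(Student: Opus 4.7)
The plan is to show directly that $f$ itself is a strong simplicial homotopy equivalence, and then use that $|-|$ preserves products (Lemma~\ref{lem:realization-lim} together with the theorem on products preceding it) to deduce that $|f|$ is a topological homotopy equivalence, hence a weak homotopy equivalence. Combined with the previous lemma, which provides that $|f|$ is a Serre fibration, this yields the claim.

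The first and main step is to enlarge the lifting class: I claim that $f\in RLP(I)$ implies $f$ has the right lifting property against every monomorphism in $\sSet$. For this, given a mono $A\hookrightarrow B$, I would use the skeletal filtration $B_n := A\cup \sk_n(B)$. Each inclusion $B_{n-1}\hookrightarrow B_n$ is obtained by attaching, along their boundaries, the non-degenerate $n$-simplices of $B$ not already in $A$; this exhibits $B_{n-1}\to B_n$ as a pushout of a coproduct of maps in $I$. The class of arrows against which $f$ lifts is closed under pushouts and (transfinite) composition in the source, so it contains every mono.

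With this in hand, two lifting problems finish the job. Lifting $\id_Y$ against $\emptyset\hookrightarrow Y$ yields a section $s:Y\to X$ with $fs=\id_Y$. Next, using that $X\times\partial\Delta^1\hookrightarrow X\times\Delta^1$ is a monomorphism, the square
$$
\xymatrix{
X\sqcup X \ar[r]^-{(sf,\,\id_X)} \ar[d] & X \ar[d]^f \\
X\times\Delta^1 \ar[r]_-{f\circ\mathrm{pr}_1} & Y
}
$$
(which commutes since $f\cdot sf = f = f\cdot \id_X$) admits a lift $H : X\times\Delta^1\to X$, providing a simplicial homotopy from $sf$ to $\id_X$ over the identity on $Y$.

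Applying $|-|$ and using preservation of products gives $|s|\circ|f| \simeq \id_{|X|}$ via $|H|:|X|\times[0,1]\to|X|$, while $|f|\circ|s|=\id_{|Y|}$, so $|f|$ is a topological homotopy equivalence and in particular a weak equivalence. The main obstacle is Step~1: converting the rather weak-looking hypothesis ``lifts against $\partial\Delta^n\to\Delta^n$'' into the much stronger ``lifts against every mono'' via the skeletal decomposition into non-degenerate cell attachments. Once that is in place, the rest of the argument is mechanical.
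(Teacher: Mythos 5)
Your proof is correct, and it takes a genuinely different route from the one in the notes. The notes argue fiberwise: using that geometric realization preserves finite limits (Lemma~\ref{lem:realization-lim}), the fiber of $|f|$ over a vertex is the realization of the simplicial fiber $F$, which again lies in $RLP(I)$; one then checks $F$ is nonempty and that $F\to *\to F$ is homotopic to the identity (by exactly the kind of lifting square you write down, applied to $F$), and concludes via the long-exact-sequence criterion that a Serre fibration with homotopically trivial fibers is a trivial fibration. You instead work globally: you first upgrade $RLP(I)$ to the right lifting property against \emph{all} monomorphisms via the skeletal filtration (this is the ``weak saturation'' argument already used in Section 2, and it needs the standard fact that $B_{n-1}\to B_n$ is a pushout of a coproduct of boundary inclusions indexed by the non-degenerate simplices not in $A$), then extract a section $s$ and a fiberwise homotopy $sf\simeq\id_X$, and realize. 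What your approach buys: you only need that realization preserves products (not finite limits), you avoid the fiberwise homotopy-group criterion entirely, and you get the stronger conclusion that $|f|$ is an honest (even fiberwise) deformation retraction; both arguments still lean on the preceding lemma that $|f|$ is a Serre fibration. It is worth noting that the saturation step you make explicit is implicitly used in the notes as well, since the lifting of $F\times\partial\Delta^1\to F\times\Delta^1$ against $F\to *$ is not a lifting against a member of $I$; so your write-up in fact fills in a point the notes leave to the reader.
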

We have to verify that the fibers of $|f|$ at any point
are contractible. It is enough to check points coming from
$\Delta^0\to Y$. By Lemma~\ref{lem:realization-lim} the fiber is the realization of the fiber of $f$. This is a simplicial set satisfying RLP with respect to $I$.
First, this means that the fiber $F$ is nonempty.

Let us prove that the composition $F\to *\to F$ is homotopic to identity.
This follows from the following diagram
\begin{equation}
\xymatrix{
&F\times\partial\Delta^1 \ar[r]^{ } \ar[d]&F  \ar[d]^{ }  \\
&F\times\Delta^1 \ar[r] &\Delta^0
}.
\end{equation}
Thus, $F$ is contractible, therefore, $|F|$ is as well contractible.

\

To conclude the proof of the theorem, it remains to verify
that $W\cap RLP(J)\subset RLP(I)$. This will follow from Proposition~\ref{prp:piispi}
below whose proof is based on Quillen's theory of minimal fibrations.

\subsection{Homotopy groups of Kan simplicial sets}
 
\begin{lem}Let $a:K\to L$ be injective and $f:X\to Y$
be a fibration of simplicial sets. Then the induced map
$$ 
X^L\to (X^K)\times_{Y^K}(Y^L)
$$
is a fibration.
\end{lem}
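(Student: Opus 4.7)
The plan is to reduce the statement via adjunction to a combinatorial claim about pushout-products in $\sSet$. By definition, $\Phi(f,a) : X^L \to X^K \times_{Y^K} Y^L$ is a fibration iff it has the right lifting property against every horn inclusion $j^n_k : \Lambda^n_k \to \Delta^n$. Two applications of the exponential-product adjunction translate such a lifting problem into a lifting problem for $f : X \to Y$ against the pushout-product
\begin{equation*}
a \Box j^n_k : (\Lambda^n_k \times L) \cup_{\Lambda^n_k \times K} (\Delta^n \times K) \to \Delta^n \times L.
\end{equation*}
Since $f$ is a Kan fibration, it suffices to show that $a \Box j^n_k$ lies in the weak saturation of the class of horn inclusions, i.e., is an anodyne extension.

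Next I would reduce to the case where $a$ is a boundary inclusion $\partial \Delta^m \to \Delta^m$. The class of monomorphisms $a$ for which $a \Box b$ is anodyne for every fixed horn inclusion $b$ is closed under cobase change, transfinite composition, and retracts: this is formal, since the pushout-product distributes over colimits in each variable and the class of anodyne maps is itself weakly saturated. Every monomorphism of simplicial sets lies in the weak saturation of $\{\partial \Delta^m \to \Delta^m\}_{m \geq 0}$ (via the skeletal filtration of the target), so it is enough to treat $a : \partial \Delta^m \to \Delta^m$.

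The combinatorial core is then to verify that
\begin{equation*}
(\Delta^m \times \Lambda^n_k) \cup_{\partial \Delta^m \times \Lambda^n_k} (\partial \Delta^m \times \Delta^n) \to \Delta^m \times \Delta^n
\end{equation*}
is anodyne. I would do this by building an explicit filtration of $\Delta^m \times \Delta^n$ starting from this subcomplex and attaching the nondegenerate $(m+n)$-simplices of the prism one at a time. The top-dimensional nondegenerate simplices are in bijection with the $\binom{m+n}{n}$ monotone lattice paths from $(0,0)$ to $(m,n)$ in $[m] \times [n]$; upon choosing a suitable linear order on these shuffles (depending on $k$), at each stage the intersection of the simplex being attached with the already-constructed subcomplex is exactly a horn $\Lambda^{m+n}_j$, so the attachment is a pushout of a horn inclusion.

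The main obstacle is precisely the bookkeeping in this last filtration: one must choose the ordering of the shuffles so that at each stage the unique missing face of the attached simplex has an index $j$ making the inclusion a genuine horn, with extra care when $k=0$ or $k=n$ so that the appropriate outer horns appear. Once a correct ordering is written down and one verifies inductively that the previously attached faces together with the initial subcomplex cover exactly $\Lambda^{m+n}_j$ of the new simplex, the filtration exhibits $a \Box j^n_k$ as a transfinite composition of pushouts of horn inclusions, whence it is anodyne, completing the proof.
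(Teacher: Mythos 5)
Your proposal takes essentially the same route as the paper: by adjunction the lifting problem for $X^L\to X^K\times_{Y^K}Y^L$ against the horns becomes a lifting problem for $f$ against the pushout-product $(K\times D)\sqcup^{K\times C}(L\times C)\to L\times D$, which is reduced to the generating case $a=i^m$, $j=j^n_k$; at that point the paper simply declares the anodyne-ness of this map a standard result and skips it. Your extra sketch of the shuffle filtration is just the standard argument for that skipped step (with the bookkeeping of the ordering being, as you acknowledge, the only delicate point), so the two proofs agree in substance.
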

\begin{proof}
We have to verify that the above arrow satisfies RLP
with respect to the elements of $J$. By adjunction, this
amounts to proving that if $a:K\to L$ is injective and 
$j:C\to D$ is in $J$, then the map
$$ (K\times D)\sqcup^{K\times C}(L\times C)\to L\times D$$
is a colimit of a $J$-diagram. The claim is reduced to the case $a=i^m$, $j=j^n_k$. This is a standard result which we
skip.
\end{proof}

Now we are ready to present an intrinsic definition
of homotopy groups for fibrant (=Kan) simplicial sets.  

If $X$ is fibrant, $x,y\in X_0$ will be called equivalent
if there is $h\in X_1$ such that $x=d_1(h),\ y=d_0(h)$.
This is an equivalence relation and we define $\pi_0(X)$
as the set of equivalence classes.

One easily sees $\pi_0(X)=\pi_0(|X|)$. One defines
$\pi_n(X,x)$ as $\pi_0$ of the fiber of
$$ X^{\Delta^n}\to X^{\partial\Delta^n}
$$
at $\partial\Delta^n\to *\stackrel{x}{\to}X$. The fiber is fibrant by the above lemma.

\subsubsection{Fiber sequence}
\label{sss:fiber} Let $f:X\to Y$ be a fibration
of Kan simplicial sets, $x\in X,\ y=f(x)$ and let 
$Z=X\times_Y\{y\}$ be the fiber. Then the usual long exact sequence of homotopy groups is defined. Let 
$a:\Delta^n\to Y$ represent a class in $\pi_n(Y,y)$. Then
there is a dotted arrow in the commutative diagram
\begin{equation}
\xymatrix{
&\Lambda^n_n \ar[r]^x \ar[d]&X  \ar[d]^{ }  \\
&\Delta^n \ar[r]^a\ar@{.>}^c[ur] &Y
}
\end{equation}
with the upper horizontal arrow carrying everything to $x\in X$. The map $cd_n:\Delta^{n-1}\to\Delta^n\to X$ has an image in $Z$
and determines a class in $\pi_{n-1}(Z)$.

We will now study Kan simplicial sets having trivial homotopy groups. 

\begin{crl}\label{crl:map}
Let $X$ be a Kan simplicial set having trivial homotopy groups. Then
$\Map(K,X)$ has also trivial homotopy groups.
\end{crl}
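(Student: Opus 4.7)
The plan is to reduce the statement to the case of finite $K$ and then induct on the non-degenerate simplices, using the fiber sequence from \ref{sss:fiber} together with the preceding lemma about mapping spaces.

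First I would observe that $\Map(K,X) = X^K$ is itself Kan, so that its homotopy groups are defined by the intrinsic formula introduced just above. This follows from the preceding lemma applied to the injection $\emptyset \hookrightarrow K$ and the fibration $X \to *$, which yields that $X^K \to *$ is a Kan fibration.

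Next, reduce to finite $K$. Writing $K = \colim_\alpha K_\alpha$ as the filtered colimit of its finite sub-simplicial sets, we obtain $X^K = \lim X^{K_\alpha}$ as the limit of a cofiltered diagram in which each transition map $X^{K_\beta} \to X^{K_\alpha}$ (for $K_\alpha \subset K_\beta$) is a Kan fibration by the same lemma. A standard Mittag--Leffler / tower-of-fibrations argument then reduces the claim to the case of finite $K$.

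For finite $K$, I would induct on the number of non-degenerate simplices. The base case $K = \emptyset$ is trivial since $X^\emptyset = *$. For the inductive step, pick a top-dimensional non-degenerate $m$-simplex of $K$ and write $K = K' \cup_{\partial\Delta^m} \Delta^m$, where $K'$ has one fewer non-degenerate simplex. Since $X^{(-)}$ converts colimits to limits, we obtain a pullback square
\begin{equation*}
\xymatrix{
X^K \ar[r] \ar[d] & X^{\Delta^m} \ar[d] \\
X^{K'} \ar[r] & X^{\partial\Delta^m}
}
\end{equation*}
in which the right vertical map, and hence (by base change) the left vertical map, is a Kan fibration by the preceding lemma. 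By induction, $X^{K'}$ has trivial homotopy groups, so it is path-connected; by homotopy invariance of fibers of a Kan fibration over a connected base, it then suffices to compute the fiber $F$ of $X^K \to X^{K'}$ over the constant map $K' \to X$ at a chosen basepoint $x\in X$. This fiber is canonically isomorphic to the fiber of $X^{\Delta^m} \to X^{\partial\Delta^m}$ over the constant map $\partial\Delta^m \to X$ at $x$. Vertices of $F$ are precisely based maps $(\Delta^m, \partial\Delta^m) \to (X, x)$, so by the theorem identifying $\pi_n$ of a Kan complex with such classes, $\pi_0(F) = \pi_m(X, x) = 0$. More generally, $F$ is a simplicial model for the $m$-fold based loop space $\Omega^m(X, x)$, so $\pi_k(F) = \pi_{k+m}(X, x) = 0$ for every $k \ge 0$. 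Feeding this into the long exact sequence of homotopy groups \ref{sss:fiber} attached to the fibration $X^K \to X^{K'}$, together with the inductive hypothesis $\pi_*(X^{K'}) = 0$, yields $\pi_n(X^K) = 0$ for all $n$, completing the induction.

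The main obstacle is the identification $\pi_k(F) = \pi_{k+m}(X)$ in the last step, i.e., recognizing $F$ as an intrinsic simplicial model for $\Omega^m(X,x)$. This is a purely combinatorial verification: one either iterates the exponential adjunction and the fiber-sequence construction $m$ times, or exhibits $\Delta^m/\partial\Delta^m$ as a simplicial $m$-sphere and invokes the standard loop/suspension adjunction. Either way it rests only on tools introduced earlier in the lecture.
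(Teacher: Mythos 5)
Your argument is essentially the proof the notes intend (and actually spell out for the topological analogue in the lemma preceding Lemma~\ref{lem:Q2}): attach one simplex at a time, note that $X^{K}\to X^{K'}$ is a fibration whose fiber over the constant map is a model of $\Omega^m(X,x)$, and feed this into the long exact sequence of \ref{sss:fiber}; the corollary itself is left unproved in the text, and in its only application (Lemma~\ref{lem:triv}, with $K=\partial\Delta^n$) the simplicial set $K$ is finite, so your finite case already covers what is needed. The only loose step is your reduction to general $K$: the finite subcomplexes form a filtered poset rather than a tower, so the Mittag--Leffler/$\lim^1$ argument does not apply verbatim for uncountable $K$; it is cleaner to induct along the skeletal tower $\sk_{m-1}K\subset\sk_{m}K$, attaching all $m$-cells simultaneously, so that each stage is a fibration with fiber a product of copies of $\Omega^m(X,x)$ (still with trivial homotopy groups), and then the Milnor sequence for the tower of fibrations $X^K=\lim_m X^{\sk_m K}$ finishes the proof.
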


Finally, we have
\begin{lem}
\label{lem:triv}
A fibrant simplicial set with trivial homotopy groups satisfies RLP with respect to 
$I$. 
\end{lem}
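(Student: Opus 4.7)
The plan is to extend any given map $f\colon\partial\Delta^n\to X$ by first connecting it, inside the mapping complex $X^{\partial\Delta^n}$, to a map that trivially extends to $\Delta^n$ (namely a constant map), and then lifting that connecting $1$-simplex along the restriction fibration $p\colon X^{\Delta^n}\to X^{\partial\Delta^n}$ back up to $X^{\Delta^n}$.

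First the case $n=0$ is immediate: $\partial\Delta^0=\emptyset$, so an extension of any $f$ exists as soon as $X$ is non-empty, which follows from $\pi_0(X)$ being a singleton. For $n\geq 1$ I would use two ingredients. Ingredient~(i): the restriction $p\colon X^{\Delta^n}\to X^{\partial\Delta^n}$ is a Kan fibration, by applying the mapping-space fibration lemma above with $Y=*$, using that $X\to*$ is a fibration because $X$ is Kan and that $\partial\Delta^n\hookrightarrow\Delta^n$ is injective. Ingredient~(ii): by Corollary~\ref{crl:map} the Kan complex $X^{\partial\Delta^n}$ again has trivial homotopy groups, so its $\pi_0$ is a point; in a Kan complex this means that any two vertices are connected by a single $1$-simplex.

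Given $f$, I would then pick a vertex $x\in X$ and let $c_x\colon\partial\Delta^n\to X$ be the constant map with value $x$. By~(ii) there is a $1$-simplex $H$ in $X^{\partial\Delta^n}$ with $d_1H=f$ and $d_0H=c_x$. The constant map $\bar c_x\colon\Delta^n\to X$ is a vertex of $X^{\Delta^n}$ with $p(\bar c_x)=c_x=d_0H$, so $\bar c_x$ and $H$ together form a horn $\Lambda^1_1\to X^{\Delta^n}$ sitting over $H\colon\Delta^1\to X^{\partial\Delta^n}$. By~(i) this horn can be filled to a $1$-simplex $\tilde H\colon\Delta^1\to X^{\Delta^n}$ with $p\tilde H=H$, and then the $0$-simplex $d_1\tilde H\colon\Delta^n\to X$ satisfies $p(d_1\tilde H)=d_1H=f$, which unwinds to say precisely that $d_1\tilde H$ restricts to $f$ along $\partial\Delta^n\hookrightarrow\Delta^n$. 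This is the extension we want.

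I do not foresee a genuine obstacle; every step is either a direct application of a lemma already available in the text or standard Kan-complex manipulation. The only item that requires a little attention is aligning the orientation of $H$ with the horn $\Lambda^1_1$ so that the face of $\tilde H$ extracted at the end is the one lying over $f$ rather than over $c_x$.
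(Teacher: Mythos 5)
Your argument is correct, and every ingredient you invoke is indeed available at this point of the text: the unlabelled lemma opening the subsection on homotopy groups of Kan simplicial sets gives (with $Y=*$) that $p\colon X^{\Delta^n}\to X^{\partial\Delta^n}$ is a Kan fibration, and Corollary~\ref{crl:map} gives connectedness of $X^{\partial\Delta^n}$, so your edge $H$ from $f$ to the constant map exists and the $\Lambda^1_1$-lift finishes the job. The paper's own proof is the adjoint form of the same idea: it also homotopes $a$ to a constant map inside $\Map(\partial\Delta^n,X)$, but then extends the resulting map directly along an explicit cofibration $K\to L$, where $K$ and $L$ are the cylinders on $\partial\Delta^n$ and $\Delta^n$ with the time-$1$ end collapsed to a point, the key input being that $K\to L$ is anodyne (left as an exercise). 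Unwinding your horn-filling problem through the exponential law gives exactly the lifting of $X$ against the pushout-product $(\Delta^n\times\{1\})\sqcup^{\partial\Delta^n\times\{1\}}(\partial\Delta^n\times\Delta^1)\to\Delta^n\times\Delta^1$, which is the paper's $K\to L$ before collapsing; so the combinatorial content the two proofs defer is the same, yours being hidden in the proof of the fibration lemma (whose pushout-product step the paper also skips as ``standard''), the paper's being stated as an explicit exercise. What your formulation buys is that no new anodyne map needs to be exhibited and the orientation bookkeeping is reduced to choosing the horn $\Lambda^1_1$ correctly, which you do; what the paper's buys is that the needed anodyne extension is displayed explicitly, and the argument never leaves the category of maps into $X$ itself. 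One small point worth keeping, as you did, is the $n=0$ case: ``trivial homotopy groups'' must be read as including $\pi_0(X)=*$, which is what guarantees $X\neq\emptyset$ and the lift against $\emptyset\to\Delta^0$.
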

\begin{proof} 
We have to extend a given map $a:\partial\Delta^n\to X$ to 
$\Delta^n\to X$.  According to \ref{crl:map}, 
$\Map(\partial\Delta^n,X)$ is connected and Kan. Thus, 
$a$ can be extended to a map 
$A:\partial\Delta^n\times\Delta^1\to X$ such that $A_0=a$ and $A_1$ maps 
$\partial\Delta^n$ to a point. 
Put
\begin{equation}
\begin{array}{lll}
K&=&(\partial\Delta^n\times\Delta^1)\sqcup^{(\partial\Delta^n\times\{1\})}*,\\
L&=&(\Delta^n\times\Delta^1)\sqcup^{(\Delta^n\times\{1\})}*,
\end{array}
\end{equation}
 so that $A$ factors through $\bar A:K\to X$.
The map $K\to L$ is a colimit of a $J$-diagram (this is an exercise), so $\bar A$ can be extended to a map $L\to X$.
The composition $\Delta^n=\Delta^n\times\{0\}\to L\to X$
yields the required lifting. 
\end{proof}

\subsection{Minimal fibrations}

We will now prove two results. The first generalizes \ref{lem:triv} as follows.
\begin{prp}\label{prp:triv}
Let $f:X\to Y$ be a fibration, such that all its fibers have trivial homotopy groups. 
Then $f$ satisfies RLP with respect to $I$.
\end{prp}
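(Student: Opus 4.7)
The plan is to invoke Quillen's theory of minimal fibrations, to be developed in this subsection. The key structural result states that every fibration $f : X \to Y$ of Kan simplicial sets contains a subcomplex $X' \subset X$ such that $f' = f|_{X'} : X' \to Y$ is a \emph{minimal} fibration (two $n$-simplices of $X'$ with the same boundary, the same projection to $Y$, and which are fiberwise homotopic rel $\partial\Delta^n$ must agree), and such that the inclusion $i : X' \hookrightarrow X$ admits a retraction $r : X \to X'$ together with a fiberwise deformation $H : X \times \Delta^1 \to X$ over $Y$ from $\id_X$ to $i \circ r$, stationary on $X'$. This is established by a Zorn's lemma argument choosing one representative per equivalence class of fiberwise homotopy rel $\partial\Delta^n$.

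The second ingredient is a rigidity lemma: a minimal fibration $f' : X' \to Y$ whose fibers over every vertex have trivial homotopy groups is an isomorphism. Its absolute analogue is that a minimal Kan complex with trivial homotopy groups is $\Delta^0$, which follows by dimension induction: $\pi_0 = 0$ plus minimality collapses the set of $0$-simplices to a singleton, and at level $n$ any non-degenerate simplex with degenerate boundary would represent zero in $\pi_n$ and hence, by minimality, coincide with its degenerate counterpart. For the relative version one lifts any $b : \Delta^n \to Y$ uniquely, inductively on dimension, using the already unique lift of $b|_{\partial\Delta^n}$, existence of some extension via the fibration property, and minimality for uniqueness.

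Given these results, the proposition follows. The fibers of $f'$ are fiberwise deformation retracts of those of $f$ (via $r$ and $H$), so they inherit trivial homotopy groups, and by rigidity $f'$ is an isomorphism. For the lifting problem with $a : \partial\Delta^n \to X$ and $b : \Delta^n \to Y$, let $b' : \Delta^n \to X'$ be the unique lift of $b$; both $r \circ a$ and $b'|_{\partial\Delta^n}$ are lifts of $b|_{\partial\Delta^n}$ across the isomorphism $f'$ and therefore coincide. Applying $H$ to $a$ produces a homotopy $K : \partial\Delta^n \times \Delta^1 \to X$ from $a$ at $t=0$ to $i \circ b'|_{\partial\Delta^n}$ at $t=1$, all over $b|_{\partial\Delta^n}$. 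Glueing $K$ with $i \circ b' : \Delta^n \times \{1\} \to X$ yields a map from the anodyne subcomplex $(\Delta^n \times \{1\}) \cup (\partial\Delta^n \times \Delta^1) \hookrightarrow \Delta^n \times \Delta^1$ into $X$, compatible with $b \circ \pi : \Delta^n \times \Delta^1 \to Y$; extending along this anodyne inclusion (using that $f$ is a fibration) and restricting to $\Delta^n \times \{0\}$ gives the required lift.

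The principal difficulty lies in the minimal fibration machinery itself: both the Zorn's lemma construction of $X'$ with its fiberwise deformation $H$, and the inductive rigidity argument establishing uniqueness of lifts for minimal fibrations with contractible fibers, require non-trivial combinatorial care, though both are classical in simplicial homotopy theory.
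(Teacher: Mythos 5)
Your overall architecture is the same as the paper's: pass to the minimal fibration $f'\colon X'\to Y$ sitting inside $f$ (the paper's Theorem~\ref{thm:f-mf}), settle the minimal case, and transfer the lifting property back to $f$. Your final rectification step --- gluing the fiberwise homotopy $H\circ(a\times\id)$ with $i\circ b'$ and lifting along the anodyne inclusion $(\Delta^n\times\{1\})\cup(\partial\Delta^n\times\Delta^1)\hookrightarrow\Delta^n\times\Delta^1$ --- is correct, and it substitutes for the paper's use of the fact that the retraction $r$ itself has the RLP with respect to $I$.

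However, your rigidity lemma is where the real content of the proposition lives, and its sketch has a genuine gap: in the dimension induction you justify the existence of an extension of the unique lift from $\partial\Delta^n$ to $\Delta^n$ ``via the fibration property''. The Kan fibration property only provides lifts against anodyne maps, and $\partial\Delta^n\hookrightarrow\Delta^n$ is not anodyne; extending a section of $f'$ along a boundary inclusion is exactly the RLP with respect to $I$ that Proposition~\ref{prp:triv} asserts (over the base $\Delta^n$), so as written this step is circular. The missing input is the triviality of the homotopy groups of the fibers, and the standard way to bring it to bear is the paper's route: minimal fibrations are locally trivial (Proposition~\ref{prp:min-ltr}), so the pullback of $f'$ over any simplex is a product $\Delta^n\times F$ with $F$ fibrant with trivial homotopy groups, and Lemma~\ref{lem:triv} then yields the extension. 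A similar remark applies to uniqueness: before invoking minimality you must know that two lifts agreeing on $\partial\Delta^n$ are fiberwise homotopic rel $\partial\Delta^n$, which again uses contractibility of the fibers. Once local triviality is granted, your stronger conclusion that $f'$ is an isomorphism does follow --- the fiber is then a minimal Kan complex with trivial homotopy groups, hence a point, by your absolute argument --- so your route can be repaired, but only by adding precisely the ingredients (Proposition~\ref{prp:min-ltr} together with Lemma~\ref{lem:triv}) that constitute the paper's own two-line proof.
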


The second claim concludes the verification of conditions of
Theorem \ref{thm:cofgen}.
\begin{prp}\label{prp:piispi}
 Let $X$ be a Kan simplicial set. Then
 the natural map $\pi_n(X)\to\pi_n(|X|)$
is an isomorphism.
\end{prp}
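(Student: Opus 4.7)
The plan is to reduce the statement to showing that the unit of the adjunction $\eta_X\colon X \to \Sing|X|$ induces a bijection on the combinatorially-defined $\pi_n$ when $X$ is Kan. Indeed, applied to $Y = \Sing T$, the intrinsic definition of $\pi_n$ reproduces the classical topological homotopy groups of $T$: a $0$-simplex of the fibre of $(\Sing T)^{\Delta^n} \to (\Sing T)^{\partial\Delta^n}$ at the constant map $t$ is, via the adjunction $|\cdot| \dashv \Sing$, precisely a continuous map $(|\Delta^n|, |\partial\Delta^n|) \to (T, t)$, and the $\pi_0$-equivalence relation translates (using $|\Delta^1 \times \Delta^n|$ and that $|\cdot|$ preserves products) to ordinary homotopy rel boundary. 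Hence the map in the proposition factors as $\pi_n(X) \to \pi_n(\Sing|X|) = \pi_n(|X|)$, the first arrow induced by $\eta_X$; it suffices to prove that $\eta_X$ is a bijection on combinatorial $\pi_n$.

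To handle this, I would invoke Quillen's theory of minimal Kan fibrations. Applied to $X \to *$, the theory produces a minimal subcomplex $X' \subseteq X$ whose inclusion is a simplicial deformation retract. The inclusion therefore induces bijections on combinatorial $\pi_n$; after geometric realization the same homotopy data (via $|\cdot|$ commuting with products, hence with the cylinder $\Delta^1 \times {-}$) gives a topological deformation retract $|X| \to |X'|$, so the inclusion also induces bijections on topological $\pi_n$. By naturality of $\eta$, we are reduced to proving the isomorphism $\pi_n(X') \to \pi_n(|X'|)$ when $X'$ is a minimal Kan complex.

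For a minimal Kan complex, minimality translates into the statement that distinct $n$-simplices with identical boundary are never combinatorially homotopic rel boundary. The key technical input is the realization theorem for minimal Kan fibrations: if $E \to B$ is minimal Kan, then $|E| \to |B|$ is a Serre fibration. Applied to $X' \to *$, combined with the CW structure of $|X'|$ (whose cells correspond to non-degenerate simplices), this forces every continuous map $(|\Delta^n|, |\partial\Delta^n|) \to (|X'|, x)$ to be homotopic rel boundary to the realization of some combinatorial $n$-simplex of $X'$, and guarantees that realizations of distinct such simplices are not homotopic rel boundary. This yields the desired bijection.

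The main obstacle is precisely the geometric realization theorem for minimal Kan fibrations. Its proof proceeds by an inductive ``prism decomposition'' argument over the skeleta of the base, establishing local triviality of $|E| \to |B|$ cell by cell; this is the genuinely non-trivial ingredient, and it is equally the heart of the proof of Proposition~\ref{prp:triv} (for which one passes to a minimal model of the given Kan fibration and uses that a minimal Kan fibration with trivial-homotopy fibres has one-point fibres, hence is an isomorphism). Once the realization theorem is in place, Proposition~\ref{prp:piispi} follows from the reductions above.
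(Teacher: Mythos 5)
Your reductions at the start are fine: identifying the combinatorial $\pi_n$ of $\Sing T$ with the classical $\pi_n(T)$, factoring the comparison map through the unit $\eta_X\colon X\to\Sing|X|$, and passing to a minimal subcomplex $X'\subseteq X$ (the retraction $r\colon X\to X'$ from Theorem~\ref{thm:f-mf} applied to $X\to *$ is a trivial fibration, so both the combinatorial and the realized homotopy groups are unchanged) are all legitimate. The problem is that the step that carries all the weight is asserted rather than proved. Your declared key input --- that the realization of a minimal Kan fibration is a Serre fibration (indeed locally trivial) --- is applied to $X'\to *$, where it is vacuous: every map to a point is a fibration, so this theorem contributes nothing in the absolute case. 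The remaining claim, that minimality plus the CW structure of $|X'|$ ``forces'' every map $(|\Delta^n|,|\partial\Delta^n|)\to(|X'|,x)$ to be homotopic rel boundary to the realization of a single $n$-simplex and forces realizations of distinct such simplices to be non-homotopic, is exactly the bijectivity of $\pi_n(X')\to\pi_n(|X'|)$ restated; neither surjectivity nor injectivity follows from cellular approximation or from minimality alone, since both require converting a topological homotopy rel boundary into a simplicial one --- which is the content of the proposition.

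The paper closes this gap differently, and this is where the fibration-realization theorem does genuine work: one uses the path fibration $P(X)\to X$ (fiber at $x$ the loop space $\Omega X$), which is a Kan fibration over a non-trivial base. Since $P(X)$ is simplicially contractible and realization preserves fibrations (proved via Theorem~\ref{thm:f-mf} and local triviality of minimal fibrations), $|P(X)|\to|X|$ is a Serre fibration with contractible total space and fiber $|\Omega X|$. The two long exact sequences then give compatible degree shifts $\pi_n(X)\cong\pi_{n-1}(\Omega X)$ and $\pi_n(|X|)\cong\pi_{n-1}(|\Omega X|)$, and one inducts down to the easy case $\pi_0(X)=\pi_0(|X|)$. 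To repair your argument you should either adopt this degree-shifting induction, or genuinely carry out the minimal-complex route (e.g.\ via Postnikov towers built from minimal fibrations with Eilenberg--MacLane fibers, comparing stage by stage), which is substantially more work than the paragraph you wrote.
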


Proposition~\ref{prp:piispi} is easy once one knows that the geometric realization preserves fibrations. The latter
result, as well as \ref{prp:triv}, requires a theory of
minimal fibrations (apparently, due to Quillen).

Minimal fibrations are Kan fibrations with an extra
uniqueness lifting property which we will now formulate.

\begin{dfn}Let $f:X\to Y$ be a Kan fibration. 
\begin{itemize}
\item[1.]Two $n$-simplices $x,\ y$ in $X$ are $f$-related if they belong to
the same connected component of the fiber
$$\Map(\Delta^n,X)\to\Map(\Delta^n,Y)\times_{\Map(\partial\Delta^n,Y)}\Map(\partial\Delta^n,X).
$$
\item[2.] $f$ is called a minimal fibration if for any $n$ any two $f$-related $n$-simplices coincide.
\end{itemize}
\end{dfn}

Minimal fibrations, on one hand, enjoy some very nice properties. One has
\begin{prp}\label{prp:min-ltr}
Any minimal fibration $X\to Y$ is locally trivial, that is,
for any $y:\Delta^n\to Y$ the base change $X_y\to\Delta^n$
is isomorphic to the product $X_y=\Delta^n\times F$ with fibrant $F$.
\end{prp}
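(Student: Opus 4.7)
The plan has three steps.

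Step 1 (Reduction to $Y=\Delta^n$). First I would observe that the class of minimal fibrations is stable under base change. Being a Kan fibration is stable under pullback, so $X_y\to\Delta^n$ is at least a Kan fibration. For minimality: two $k$-simplices of $X_y$ that are fiberwise related, when composed with the canonical map $X_y\to X$, yield two $k$-simplices of $X$ that are $\pi$-related (the connecting path in the relevant mapping space fiber pushes forward), hence equal by minimality of $X\to Y$; combined with equality of their images in $\Delta^n$ (forced by being in the same fiber of $X_y\to\Delta^n$), this forces the two simplices in $X_y$ to coincide. So we reduce to a minimal fibration $\pi:X\to\Delta^n$.

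Step 2 (Constructing the trivialization). Set $F:=\pi^{-1}(0)\subset X$, the fiber over the vertex $0$; since fibers of Kan fibrations are Kan, $F$ is fibrant. The vertex inclusion $\{0\}\hookrightarrow\Delta^n$ is a deformation retract via the contracting homotopy
\[
H:\Delta^n\times\Delta^1\to\Delta^n
\]
coming from the poset map $[n]\times[1]\to[n]$, $(i,0)\mapsto 0$, $(i,1)\mapsto i$, and hence is a trivial cofibration. Taking product with $F$, the inclusion $F\times\{0\}\hookrightarrow F\times\Delta^n$ remains a trivial cofibration. Lifting in the square whose top arrow is the inclusion $F\hookrightarrow X$, left arrow is $F\times\{0\}\hookrightarrow F\times\Delta^n$, bottom arrow is the projection to $\Delta^n$, and right arrow is $\pi$, produces a map
\[
\Phi:F\times\Delta^n\to X
\]
over $\Delta^n$ extending the inclusion of $F$.

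Step 3 (Bijectivity via minimality). In the opposite direction, I would apply path lifting along $H\circ(\pi\times\id):X\times\Delta^1\to\Delta^n$, starting from $\id_X$ at the endpoint $1$, to obtain $\tilde H:X\times\Delta^1\to X$ with $\tilde H_1=\id_X$ and $\pi\circ\tilde H_0$ constant at $0$. Then $r:=\tilde H_0:X\to F$ is a retraction, and $\Psi:=(r,\pi):X\to F\times\Delta^n$ is a morphism over $\Delta^n$. By construction $\Phi\Psi$ and $\Psi\Phi$ are endomorphisms over $\Delta^n$ that are fiber-homotopic to the identity and restrict to the identity on the fiber over $0$. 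The conclusion now rests on a rigidity lemma for minimal fibrations: any self-map of a minimal fibration that is fiberwise homotopic to the identity is equal to the identity. Applied to $\Phi\Psi$ on $X\to\Delta^n$ and to $\Psi\Phi$ on $F\times\Delta^n\to\Delta^n$ (the latter inheriting minimality by transport through the bijective-on-fibers map $\Phi$, which by the same lemma will in fact be shown to be iso on simplices), this gives $\Phi\Psi=\id$ and $\Psi\Phi=\id$.

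The main obstacle is Step 3, specifically the rigidity lemma. Its proof is an induction on simplex dimension: for an $n$-simplex $x\in X_n$ with $\phi(x)$ fiberwise homotopic to $x$ and $\phi$ already known to be the identity on $\partial\Delta^n$, one promotes the hypothesized homotopy to one that is constant on the boundary by a horn-filling argument, thereby exhibiting $x$ and $\phi(x)$ as $\pi$-related in the sense of the definition; minimality then forces $\phi(x)=x$. The delicate point is arranging the boundary to be constant, which reduces to a lifting of Kan fibrations against an inclusion of the form
\[
(\partial\Delta^n\times\Delta^1)\sqcup^{\partial\Delta^n\times\partial\Delta^1}(\Delta^n\times\partial\Delta^1)\hookrightarrow\Delta^n\times\Delta^1,
\]
which is anodyne by standard combinatorics of simplicial sets.
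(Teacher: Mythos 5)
The paper itself states this proposition without proof (it is quoted from Quillen's theory of minimal fibrations), so your argument has to stand on its own. Steps 1 and 2 are fine, but the rigidity lemma underlying Step 3 is false as you state it: a self-map of a minimal fibration that is fiberwise homotopic to the identity need not \emph{equal} the identity; it is only an isomorphism. A counterexample already over a point: the nerve $BG$ of a non-abelian group $G$ is a minimal Kan complex, and conjugation $c_a$ by a non-central element $a\in G$ is homotopic to $\id$ (the homotopy is the natural transformation given by $a$), yet $c_a\neq\id$ on $1$-simplices. This also pinpoints where your proposed induction breaks: the homotopy from $\phi$ to $\id$ cannot in general be improved to one that is constant on the boundary, and the map
$(\partial\Delta^n\times\Delta^1)\sqcup^{\partial\Delta^n\times\partial\Delta^1}(\Delta^n\times\partial\Delta^1)\to\Delta^n\times\Delta^1$
against which you want to lift is the pushout-product of two cofibrations neither of which is anodyne (for $n=0$ it is $\partial\Delta^1\to\Delta^1$ itself), so it is not anodyne and the lifting fails; in the example the obstruction is exactly the non-degenerate loop $a$ at the base vertex.

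The strategy is repairable: the true statement (Quillen, Barratt--Gugenheim--Moore; see e.g.\ Goerss--Jardine, Ch.~I, \S 10) is that a self-map over the base which is fiberwise homotopic to the identity is an \emph{isomorphism}, and its proof is a different induction (injectivity and surjectivity on simplices, using minimality, with Zorn's lemma over a general base). With that lemma, $\Phi\Psi$ and $\Psi\Phi$ being isomorphisms still forces $\Phi$ to be an isomorphism, which is all you need. Two further gaps should be addressed. First, ``by construction $\Phi\Psi$ and $\Psi\Phi$ are fiber-homotopic to the identity'' is not by construction: your $\wt H$ covers the contraction $H$ and so is not a fiberwise homotopy; what is needed is the standard covering-homotopy argument that pullbacks of a Kan fibration along homotopic maps are fiberwise homotopy equivalent, applied to $\id_{\Delta^n}\simeq\const_0$ --- this is where the real remaining work sits. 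Second, minimality of $F\times\Delta^n\to\Delta^n$ should not be ``transported through $\Phi$'' (that is circular, since $\Phi$ being an isomorphism is the goal); it follows directly from your Step 1: $F\to\Delta^0$ is minimal as a base change along a vertex, and $F\times\Delta^n\to\Delta^n$ is its base change along $\Delta^n\to\Delta^0$.
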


One the other hand, minimal fibrations can be used to describe general fibrations. One has
\begin{thm}\label{thm:f-mf}
Let $f:X\to Y$ be a Kan fibration. There exists
a simplicial subset $X'\subset X$ such that
\begin{itemize}
\item[1.] The restriction $f'=f|_{X'}$ is a minimal fibration.
\item[2.] $f=f'r$ where $r:X\to X'$ is a retraction.
\item[3.] $r$ satisfies RLP wrt $I$.
\end{itemize}
\end{thm}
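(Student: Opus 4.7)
The plan is to build $X'$ by choosing a single representative from each equivalence class of the $f$-relation, and then to produce the retraction $r$ together with a homotopy over $Y$ from the identity to $ir$ (where $i:X'\hookrightarrow X$), from which the lifting property against $I$ follows.

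\textbf{Step 1: the $f$-relation on simplices.} First I would verify that the $f$-relation introduced in the definition of minimal fibration is an equivalence relation that is respected by the simplicial structure. Two $n$-simplices $x,y$ that are $f$-related automatically have the same boundary (as the fiber in the definition is taken over a fixed element of $\Map(\partial\Delta^n,X)$) and the same image in $Y$. Using the Kan fibration property for the induced fibration $\Map(\Delta^n,X)\to\Map(\Delta^n,Y)\times_{\Map(\partial\Delta^n,Y)}\Map(\partial\Delta^n,X)$ (obtained from an earlier lemma), one checks symmetry and transitivity. One also checks that $x\sim_f y$ implies $s_i x\sim_f s_i y$ and $d_i x = d_i y$ tautologically, so the relation is compatible with faces and degeneracies.

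\textbf{Step 2: construction of $X'$ and minimality.} Using Zorn's lemma, choose a simplicial subset $X'\subset X$ which is maximal among subcomplexes containing at most one representative in each $f$-equivalence class, and which contains every degenerate simplex whose non-degenerate support lies in $X'$. Equivalently, build $X'$ by induction on dimension: once $X'_{<n}$ is chosen, for each $f$-equivalence class of $n$-simplices whose boundary already lies in $X'$, include exactly one representative, preferring degenerate simplices so that $X'$ is closed under degeneracies. Maximality ensures that every simplex of $X$ with boundary in $X'$ is $f$-related to one already in $X'$. Then $f'=f|_{X'}$ is a Kan fibration: given a horn $\Lambda^n_k\to X'$ over a simplex of $Y$, first fill it in $X$ (using that $f$ is Kan), producing $\tilde x:\Delta^n\to X$ whose boundary lies in $X'$; by construction $\tilde x$ is $f$-related to a unique simplex $x'\in X'_n$ with the same boundary, providing the required filler. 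Minimality of $f'$ is immediate from the construction.

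\textbf{Step 3: the retraction and homotopy.} The core of the proof is to produce $r:X\to X'$ with $ri=\id_{X'}$ and $f=f'r$, together with a homotopy $H:X\times\Delta^1\to X$ such that $H_0=\id_X$, $H_1=ir$, the homotopy is stationary on $X'$ (i.e.\ $H|_{X'\times\Delta^1}$ is the projection), and $f\circ H$ equals $f\circ\mathrm{pr}_1$. I would construct $r$ and $H$ simultaneously by induction on the non-degenerate simplices of $X$, ordered by dimension. For a non-degenerate $n$-simplex $x:\Delta^n\to X$ already lying in $X'$, set $H$ to be constant on $\Delta^n\times\Delta^1$. Otherwise, $H$ is already defined on the subcomplex $K_x=(\Delta^n\times\{0\})\cup(\partial\Delta^n\times\Delta^1)$ by the inductive hypothesis and the identity, and a compatible map $\Delta^n\times\Delta^1\to Y$ is given by $f(x)\circ\mathrm{pr}_1$. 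The inclusion $K_x\hookrightarrow\Delta^n\times\Delta^1$ is a trivial cofibration (it is anodyne: $\Delta^n\times\Delta^1$ retracts onto $K_x$ as a prismatic decomposition shows), so an extension $\widetilde H$ exists by the Kan fibration property of $f$. The top face $\widetilde H|_{\Delta^n\times\{1\}}$ has its boundary in $X'$ (by the inductive stationarity on $X'$) and is $f$-related to a unique simplex $x'\in X'_n$; finally, a further lift (against a horn inclusion) adjusts $\widetilde H$ so that its top face equals $x'$. Set $r(x)=x'$.

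\textbf{Step 4: the RLP.} Once $H$ is built, given a lifting problem consisting of $a:\partial\Delta^n\to X$ and $b:\Delta^n\to X'$ with $r\circ a=b|_{\partial\Delta^n}$, the homotopy $H$ supplies a filler: combine $a$ on $\partial\Delta^n\times\Delta^1$ (via $H$) with $b$ on $\Delta^n\times\{1\}$ (via the inclusion into $X$) to obtain a compatible map out of a horn-like subcomplex of $\Delta^n\times\Delta^1$, and extend using that $f$ is Kan. The $0$-face of the result is the desired simplex $\Delta^n\to X$. The main obstacle in the whole argument is the inductive construction in Step 3: one must verify carefully that the various choices (Kan fillers, representatives in $X'$) can be made compatibly across all non-degenerate simplices, and that the successive adjustments indeed preserve the conditions $f\circ H=f\circ\mathrm{pr}_1$ and stationarity on $X'$. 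This is where the combinatorial work concentrates.
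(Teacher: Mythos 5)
The paper itself does not prove this theorem (it explicitly declines, remarking only that Zorn's lemma is heavily used), so there is no proof of record to compare with; your outline is the classical argument (Quillen's, as in Goerss--Jardine I.10 or Hovey 3.5), and as a strategy it is the intended one. Two steps, however, are genuinely incomplete as written. In Step 2, your direct verification that $f'=f|_{X'}$ is a Kan fibration fails: filling a horn $\Lambda^n_k\to X'$ inside $X$ produces $\tilde x$ whose restriction to the \emph{horn} lies in $X'$, but the missing face $d_k\tilde x$ need not, so $\partial\tilde x$ is not in $X'$ and the maximality property (``every simplex with boundary in $X'$ is $f$-related to a simplex of $X'$'') cannot be invoked. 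The standard repair is to postpone this point: construct the fiberwise strong deformation retraction of Step 3 first; then $f'$ is a retract of $f$ over $Y$ (via $i$ and $r$), hence a Kan fibration, and minimality is immediate from the defining property of $X'$. Relatedly, the Zorn/skeletal construction of $X'$ needs the lemma that two $f$-related degenerate simplices with a common boundary are equal: ``preferring degenerate representatives'' does not by itself guarantee that adding a simplex together with all its degeneracies preserves the one-representative-per-class condition, and this lemma (which you do not state) is exactly what makes the maximal subcomplex argument go through.

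In Step 4 there is a second gap: the prism extension yields $c=G_0$ with $c|_{\partial\Delta^n}=a$ and $f\circ c=f'\circ b$, but the required identity $r\circ c=b$ is not automatic from ``take the $0$-face.'' You still have to show that $rc$ and $b$ --- two simplices of $X'$ with the same boundary and the same image in $Y$ --- are $f$-related \emph{rel boundary}, for instance by gluing $G$ with $H\circ(c\times\mathrm{id})$ along $c$ and performing one more lift over $Y$ (a prism of one dimension higher) to straighten the boundary homotopy; only then does the fact that distinct simplices of $X'$ are never $f$-related give $rc=b$. This correction is of the same nature as the top-face adjustment you make in Step 3, but it is a separate and necessary step: with an arbitrary anodyne extension $G$ nothing forces $r(G_0)=b$, so as stated the lifting problem for $r$ is not yet solved. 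Both gaps are repairable within your strategy, but they are precisely where the real combinatorial content of the theorem sits.
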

We will not prove the theorem. Zorn lemma is extensively used in the proof.
\subsubsection{Proof of \ref{prp:triv}}
Theorem~\ref{thm:f-mf} reduces the claim to the case the fibration is minimal.
The latter is locally trivial, so Lemma~\ref{lem:triv} concludes the proof.

\subsubsection{}
It remains to prove that homotopy groups of fibrant $X$
are isomorphic to homotopy groups of $|X|$. 
We can use the path space fibration to shift homotopy groups: For a simplicial set $X$ and $x\in X$ we define
the path space as the fiber of $\Map(\Delta^1,X)\to 
\Map(\{0\},X)$ at $x$. One has a fibration $P(X)\to X$
whose fiber at $x$ is the loop space whose homotopy groups
are the homotopy groups of $X$ shifted by $1$. This
allows one to deduce the assertion about homotopy groups
from the following Quillen's result.
\begin{prp}The functor of geometric realization preserves
fibrations.
\end{prp}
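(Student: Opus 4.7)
The plan is to reduce to the locally trivial case via Theorem~\ref{thm:f-mf} and Proposition~\ref{prp:min-ltr}, and then pass from ``trivial over every simplex'' to ``Serre fibration'' by a skeletal induction.

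First, using Theorem~\ref{thm:f-mf}, factor the given Kan fibration $f\colon X\to Y$ as $f=f'\circ r$, where $f'\colon X'\to Y$ is a minimal fibration and $r\colon X\to X'$ is a retraction with RLP with respect to $I$. By adjunction, $|r|$ has RLP with respect to every $|i^n|$, so by Lemma~\ref{lem:Q2} $|r|$ is a trivial Serre fibration. Since Serre fibrations are closed under composition, it suffices to prove that $|f'|$ is a Serre fibration.

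Next, apply Proposition~\ref{prp:min-ltr}: for each simplex $y\colon\Delta^n\to Y$ the pullback $X'\times_Y\Delta^n$ is isomorphic to $\Delta^n\times F_y$ for some Kan $F_y$. Because geometric realization preserves finite limits (Lemma~\ref{lem:realization-lim}) and products, one obtains a compatible system of trivializations
\begin{equation*}
|X'|\times_{|Y|}|\Delta^n|\ \cong\ |\Delta^n|\times|F_y|.
\end{equation*}
Thus $|f'|$ restricts to a genuine product bundle over each closed cell of the CW structure of $|Y|$.

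The heart of the proof is the third step: showing that a map $p\colon E\to B$ which becomes a trivial bundle after pullback along each simplex of a CW decomposition of $B$ is a Serre fibration. I would argue by induction on the skeleton filtration $|Y^{(n)}|$. The base case ($n=0$) is clear since the restriction of $|f'|$ to a point is trivial. For the inductive step, use that $|Y^{(n)}|$ is the pushout of $|Y^{(n-1)}|$ and a disjoint union of $|\Delta^n|$'s along $\bigsqcup|\partial\Delta^n|$, and that this pushout is along a closed cofibration. Given a lifting problem
\begin{equation*}
\xymatrix{
D^k\ar[r]\ar[d] & |X'|\ar[d]^{|f'|}\\
D^k\times I\ar[r] & |Y|,
}
\end{equation*}
compactness places the image in some finite $|Y^{(n)}|$; then extend the lift inductively, using the local product structure over each newly attached $n$-cell (which carries an obvious strong deformation retraction onto its boundary, transported through the trivialization to produce the required lift). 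Compatibility on the overlap with $|Y^{(n-1)}|$ is arranged using the chosen trivialization and the existence of lifts inside the trivial bundle $|\Delta^n|\times|F_y|$.

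The main obstacle is precisely this last step: the passage from ``trivial over each closed simplex'' to a global lifting property. The subtlety is that trivializations over different simplices need only agree on their overlaps up to the data of the face maps of $Y$, not in any preferred way, so the cell-by-cell extension of lifts must be carried out carefully using the fact that each cell retracts strongly onto its attaching region. Once this local-to-global fibration lemma for CW-decompositions is in hand, the proof is complete.
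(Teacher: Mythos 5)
Your first two steps are exactly the paper's argument: factor $f$ through a minimal fibration by Theorem~\ref{thm:f-mf} (the retraction $r$ lies in $RLP(I)$, so $|r|$ is a trivial Serre fibration by the lemma ``if $f\in RLP(I)$ then $|f|$ is a trivial fibration'' --- that, rather than an adjunction argument with Lemma~\ref{lem:Q2}, is the clean citation), and then invoke Proposition~\ref{prp:min-ltr} together with preservation of finite limits to trivialize $|f'|$ over each closed cell. The paper then simply asserts that realization preserves locally trivial fibrations; what you call the heart of the proof is precisely the step the paper leaves unproved, so everything hinges on your third step.

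That step, as written, has a genuine gap. Knowing that the pullback of $|f'|$ along each characteristic map $|\Delta^n|\to|Y|$ is a product does not let you solve a lifting problem cell by cell: the test map $D^k\times I\to |Y|$ does not respect the cell structure, so the preimages of the closed cells give no usable decomposition of $D^k\times I$, and ``extending the lift over each newly attached $n$-cell'' is not a defined operation on this lifting problem. Nor can you appeal to a formal gluing principle: Serre fibrations are not local with respect to closed covers, so triviality over every closed cell does not directly yield the lifting property --- that implication is essentially the theorem itself. The standard repair (Gabriel--Zisman, Quillen, May) is to upgrade the simplicial local triviality to topological local triviality over an \emph{open} cover of $|Y|$, constructed by induction on skeleta by extending a trivialization from an open neighborhood of $|\partial\Delta^n|$ into the cell via the radial retraction, and then to use the genuine locality of Serre fibrations with respect to open covers: subdivide $D^k\times I$ into small cubes (Lebesgue number) each landing in a trivializing open set and lift cube by cube, in an order compatible with the filtration. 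With that open-cover form of your local-to-global lemma the proof closes; the skeletal induction on closed cells, as you state it, does not.
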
 
\begin{proof}
The claim is proven as follows. Using 
Theorem~\ref{thm:f-mf}, one deduces the claim to the case of minimial fibrations. Minimal fibrations are locally trivial, and realization preserves locally trivial fibrations.
\end{proof}

\subsection{Quillen equivalence of $\Top$ and $\sSet$}

This is already easy. Geometric realization carries $I$ to cofibrations and $J$ to trivial cofibrations. So, one has a Quillen pair.

It remains to prove this is a Quillen equivalence.
Let $S$ be a simplicial set and $X$ a topological space
($S$ is automatically cofibrant and $X$ is automatically fibrant). We have to prove that $|S|\to X$ is weak equivalence iff $S\to\Sing X$ is. The latter in turn
also means that $|S|\to|\Sing X|$ is a weak equivalence.  
Therefore, it remains to verify that the natural map
$|\Sing(X)|\to X$ is a weak homotopy equivalence.

It is quite obvious that $\pi_n(\Sing(X))=\pi_n(X)$. Taking into account our comparison of homotopy groups
of Kan simplicial sets and their realizations, we get 
the result.

\begin{EXE}
\begin{itemize}
\item[0.] Verify that a retract of a strong deformation retract is itself a strong deformation retract.
\item[1.] Let $f:X\to Y$ be a map of topological spaces. Prove that the composition
$X\times_YY^I\to Y^I\stackrel{ev_1}{\to}Y$ is a Serre fibration.
\item[2.] Prove~\ref{lem:realization-lim} (showing that
the realization preserves equalizers).
\item[3.] Minimal fibrations (of simplicial sets) are closed under base change.
\item[4.]Verify that the connecting homomorphism
$\pi_n(Y)\to\pi_{n-1}(Z)$ of the exact sequence of 
fibration is correctly defined.
\item[5.] Prove that $f:X\to Y$ is a minimal fibration iff
for any $\Delta^n\to Y$ the base change is a trivial fibration whose fiber is a minimal Kan simplicial set.
\end{itemize}
\end{EXE}

\newpage
\section{Models for $\infty$-categories and Dwyer-Kan localization.}  

First of all, we discuss Quillen equivalence between two model categories: simplicial sets with the Joyal model structure, and simplicial categories with Bergner model structure. Then we present Dwyer-Kan localization which
is a derived version of localization of categories.
Dwyer-Kan localization of model categories produces a
simplicial category which is the ``underlying
$\infty$-category'' of a model category.

\subsection{Simplicial categories}

Simplicial categories, that is, simplicially enriched categories,
form a meaningful approach to the theory of $\infty$-categories. 
Thus, it is pleasant to know that the category $\sCat$ of (small) simplicial categories has a nice cofibrantly
generated model structure.

\subsubsection{Weak equivalences}
Recall that any simplicial category $\cC$ defines a conventional
category $\pi_0(\cC)$.

A map $f:\cC\to\cD$ (a simplicial functor) in $\sCat$ is a weak equivalence (often
called DK equivalence) if 
\begin{itemize}
\item[1.] $f$ induces a weak homotopy equivalence $\Map_\cC(x,y)\to\Map_\cD(fx,fy)$ for any $x,y\in\cC$.
\item[2.]$f$ is essentially surjective, that is, any object
of $\cD$ is equivalent to an object in the image of $f$. 
\end{itemize} 
Note that if Condition 1 is satisfied, Condition 2 is equivalent to the following.
\begin{itemize}
\item[2$'$] $\pi_0(f)$ is an equivalence of (conventional) categories. 
\end{itemize}

\subsubsection{Fibrations}

A map $f:\cC\to\cD$ is called a fibration if
\begin{itemize}
\item[1.] $f$ induces a Kan fibration 
$\Map_\cC(x,y)\to\Map_\cD(fx,fy)$ for any $x,y\in\cC$.
\item[2.]  Any equivalence $\alpha:f(c)\to d$ in $\cD$ can be lifted to an equivalence $a:c\to c'$ in $\cC$
(that is, so that $\alpha=f(a)$).
\end{itemize}

\subsubsection{Simplicial categories with a fixed set of objects}
Let $\sCat_\cO$ denote the category whose objects are
simplicial categories with a fixed set of objects $\cO$.

This category has a  model structure defined by Dwyer-Kan. Weak equivalences and fibrations are defined as above
(only the first property is important).

It is very easy to prove these definitions define a model category structure on $\sCat_\cO$.

Note that if $\cO$ is one-element set, this is the category of simplicial monoids. The general case is only slightly more general than that for simplicial monoids.

The reason for fixing $\cO$ is that, on one hand, the
model structure is much easier to deduce, and, on the other
hand, this is already good enough in some applications
(DK localization).

Note that cofibrant objects in $\sCat_\cO$ are obtained by 
gluing to the discrete category $\cO$ of a sequence of 
arrows (of various simplicial dimensions).  Let 
$x,y\in\cO$. To glue an $n$-arrow from $x$ to $y$ to a 
simplicial category $\cC$ we have to choose a map
$\phi:\partial\Delta^n\to\Map_\cC(x,y)$, to consider $\cC$ 
as a (simplicially enriched) quiver, glue an $n$-simplex
to $\Map_\cC(x,y)$ along the boundary given by $\phi$, 
produce a free simplicial category generated by the obtained quiver, and mod out by some obvious relations. 

\subsubsection{Generating cofibrations for $\sCat$}
\label{sss:gcscat}
 
To define generating cofibrations, we will use the following notation.
For a simplicial set $S$ we denote $\one_S$ the simplicial category having two objects $0$ and $1$, with the morphisms given by
$$ \Map(0,0)=\{\id\}=\Map(1,1),\ \Map(1,0)=\emptyset,\ 
\Map(0,1)=S.$$

Here is the list of generating cofibrations for $\sCat$.
\begin{itemize}
\item $\one_{\partial\Delta^n}\to\one_{\Delta^n}$.
\item $\emptyset\to *$, the embedding of the empty category into the discrete category consisting of one object.
\end{itemize}

And here is a list of generating trivial cofibrations.
\begin{itemize}
\item The maps $\one_{\Lambda^n_k}\to\one_{\Delta^n}$,
$0\leq l\leq n$. 
\item $*\to \cJ$ where $\cJ$ belongs to a set of representatives of isomorphism classes
of simplicial categories with two objects $0$ and $1$, and countably many simplices in
each map simplicial set which are all required to be weakly contractible. Additioinally, the map $\{0,1\}\to\cJ$
is assumed to be a cofibration in $\sCat_{\{0,1\}}$.
\end{itemize} 

The following result was proven by Bergner
\begin{thm}
The category $\sCat$ has a model structure with weak equivalences
defined as above and cofibrations generated by the above list.
\end{thm}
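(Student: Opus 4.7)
The plan is to invoke the recognition theorem for cofibrantly generated model structures (Theorem \ref{thm:cofgen}) with $W$ the class of DK equivalences, $I$ the listed pair of generating cofibrations, and $J$ the listed set of generating trivial cofibrations. This reduces the proof to verifying: (a) $\sCat$ has small limits and colimits; (b) $W$ contains all isomorphisms, has 2-out-of-3, and is closed under retracts; (c) the domains of $I$ and $J$ are small with respect to $I$-diagrams and $J$-diagrams; (d) every transfinite composition of pushouts of arrows in $J$ lies in $W\cap LLP(RLP(I))$; and (e) $RLP(I)=RLP(J)\cap W$.

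For (a), limits in $\sCat$ are computed objectwise on mapping spaces, while colimits admit the two-step description from the remark following \ref{sss:adj-hcn}. For (b), the 2-out-of-3 property and retract closure transfer from $\sSet$ along the mapping-space functor $\Map_\cC(x,y)$ and from $\Cat$ along $\pi_0$. Smallness in (c) reduces to smallness of the underlying simplicial-set domains ($\partial\Delta^n$, $\Lambda^n_k$, and $*$): since the only nontrivial mapping space in $\one_S$ is $\Map(0,1)=S$, a sequential colimit computation in $\sCat$ built out of pushouts of these generators behaves componentwise in $\sSet$ on each $\Map(x,y)$, and smallness follows.

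The substance of the proof is (d) and (e). First characterize $RLP(I)$: by the $\one_{(-)}$-adjunction, a functor $f:\cC\to\cD$ lifts against $\one_{\partial\Delta^n}\to \one_{\Delta^n}$ for all $n$ iff each $\Map_\cC(x,y)\to\Map_\cD(fx,fy)$ is a trivial Kan fibration, and lifts against $\emptyset\to *$ iff $f$ is surjective on objects. Next identify $RLP(J)$: the horn-generators $\one_{\Lambda^n_k}\to\one_{\Delta^n}$ force $f$ to be a Kan fibration on each $\Map$, while lifts against $*\to\cJ$ encode the equivalence-lifting clause in the definition of fibration — any equivalence in $\cD$ originating at $f(c)$ extends to an equivalence in $\cC$ starting at $c$, because $\cJ$ is a contractible \textquotedblleft interval groupoid\textquotedblright{} with two objects. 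Given these descriptions, (e) is straightforward: an $f\in RLP(J)\cap W$ is a Kan fibration on mapping spaces which is also a weak equivalence on mapping spaces, hence trivial Kan fibration; and DK essential surjectivity together with the equivalence-lifting property yield surjectivity on objects.

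The main obstacle is (d), in particular showing that pushouts along $*\to\cJ$ produce cofibrations and DK equivalences. Pushouts of $\one_{\Lambda^n_k}\to\one_{\Delta^n}$ along a fixed set of objects are handled by the already-established Dwyer--Kan model structure on $\sCat_\cO$, where these are trivial cofibrations and the Map-spaces are manipulated componentwise. The delicate point is that a pushout along $*\to\cJ$ can create genuinely new composites, and thereby modify $\Map_\cC(x,y)$ for $x,y$ distinct from the attachment point: one shows, by a free-composition analysis together with the weak contractibility of each mapping space of $\cJ$ and the assumption that $\{0,1\}\to\cJ$ is a cofibration in $\sCat_{\{0,1\}}$, that the newly introduced composites alter each relevant $\Map$ only up to a weak homotopy equivalence, while forcing $\pi_0$ to invert the image of $0\to 1$. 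A transfinite small-object argument then yields (d) in full, and Theorem \ref{thm:cofgen} applies.
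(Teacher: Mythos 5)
Your overall strategy is the same as the paper's: feed the given sets $I$ and $J$ into the recognition Theorem~\ref{thm:cofgen}, characterize $RLP(I)$ and $RLP(J)$ via the $\one_S$-adjunction (trivial Kan fibrations, resp.\ Kan fibrations, on mapping spaces, plus the object/equivalence-lifting conditions), and use the interval objects $\cJ$ of \ref{sss:gcscat} to pass from essential surjectivity to surjectivity on objects. In fact the paper only proves this fragment --- essentially your condition (e) --- and explicitly defers the rest, so where your proposal goes beyond the paper is exactly where it stops being a proof.

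There are two genuine gaps. First, your claim that lifting against $*\to\cJ$ ``encodes the equivalence-lifting clause'' (used both in your description of $RLP(J)$ and in deducing object-surjectivity for $RLP(J)\cap W\subset RLP(I)$) silently assumes that every equivalence $f(c)\to d$ in $\cD$ is the image of the generating arrow of some $\cJ$ under a functor $\cJ\to\cD$ sending $0$ to $f(c)$. This is precisely the Proposition the paper states and declares it will not prove; it does not follow from $\cJ$ being a ``contractible interval groupoid,'' and without it the lifting square against $*\to\cJ$ cannot even be set up. Second, condition (d) --- that (transfinite compositions of) pushouts along $*\to\cJ$ are DK equivalences lying in $LLP(RLP(I))$ --- is the technical heart of Bergner's theorem, and ``a free-composition analysis together with weak contractibility of the mapping spaces of $\cJ$'' names the problem rather than solves it: attaching $\cJ$ at an object changes \emph{every} mapping space of $\cC$ by freely adjoining composites through the (infinite) mapping spaces of $\cJ$, and controlling the homotopy type of the result requires the filtration/strictification arguments that occupy most of Bergner's paper; weak contractibility of each $\Map_\cJ$ alone does not give it. (A smaller point: your smallness argument is off as stated --- pushouts in $\sCat$ are \emph{not} computed componentwise on mapping spaces, for the same free-composite reason; what one actually uses is that sequential colimits are computed objectwise on mapping spaces and that $\partial\Delta^n$, $\Lambda^n_k$ are finite.) So the proposal reproduces the part of the argument the paper gives, but the two ingredients that make the theorem hard are asserted, not proven.
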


We will prove only a part of the theorem.  

\subsection{Parts of the proof}

Obviously $RLP(I)\subset RLP(J)$.
Let us show $RLP(I)\subset W$. Let $f:C\to D$
satisfies RLP with respect to $I$. We immediately deduce that for $x,y\in C$ the map $\Map_C(x,y)\to\Map_D(fx,fy)$ is a trivial fibration. Thus, it remains to verify essential surjectivity. This follows from the lifting property with respect to 
$\emptyset\to *$.

Let us verify $RLP(J)\cap W\subset RLP(I)$. If 
$f:\cC\to \cD$
satisfies RLP with respect to $J$ and is a weak equivalence, for any pair $x,y\in \cC$ the map $\Map_\cC(x,y)\to\Map\cD(fx,fy)$
is a trivial fibration, so we have RLP with respect to
$\one_{\partial\Delta^n}\to\one_{\Delta^n}$. It remains to verify RLP with respect to
 $\emptyset\to *$, that is, surjectivity on 
objects. Since $f$ is weak equivalence, it is essentially
surjective, that is, for any $d\in \cD$ there exists $c\in \cC$
and an equivalence $f(c)\to d$. The following result which we do not intend to prove, 
completes the proof.

\begin{prp}Let $\cC$ be a simplicial category and $a:x\to y$
an equivalence in $\cC$. Then there exists a simplicial category $\cJ$ as defined in~\ref{sss:gcscat} with an arrow
$j:0\to 1$ and a functor $\cJ\to \cC$ carrying $j$ to $a$.
\end{prp}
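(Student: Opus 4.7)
The plan is to construct $\cJ$ by a small-object-style construction inside $\sCat_{\{0,1\}}$, building it as an increasing union $\cJ=\bigcup_n\cJ_n$ together with simplicial functors $F_n\colon\cJ_n\to\cC$, so that in the colimit the mapping spaces become weakly contractible. Start with $\cJ_0=\one_{\Delta^0}$ (free on a single $0$-simplex $j\colon 0\to 1$) and $F_0$ sending $j$ to $a$. At each inductive step, enumerate the ``unfilled spheres'': triples $(k,(p,q),\phi)$ consisting of a dimension $k\ge 0$, an ordered pair $(p,q)\in\{0,1\}^2$, and a map $\phi\colon\partial\Delta^k\to\Map_{\cJ_n}(p,q)$ not yet extending to a $k$-simplex of $\cJ_n$ (for $k=0$ this means $\Map_{\cJ_n}(p,q)$ is missing a vertex). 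Form $\cJ_{n+1}$ by pushing out along a generating cofibration $\one_{\partial\Delta^k}\to\one_{\Delta^k}$ for each such triple, suitably modified when $p=q$ so as to fix the identity. If each pushout can be accompanied by a lift of $F_n$, then the colimit $\cJ$ has weakly contractible mapping spaces, countably many simplices in each (countably many cells were attached), and the inclusion $\{0,1\}\to\cJ$ is a cofibration in $\sCat_{\{0,1\}}$ as a transfinite composition of pushouts of generating cofibrations.

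To handle the $F_n$-lifts, first reduce to the case $\cC$ has only two objects $\{x,y\}$ by restricting to the full simplicial subcategory on them, and then take a fibrant replacement $\cC\hookrightarrow\widehat{\cC}$ in $\sCat_{\{x,y\}}$, so that every mapping space of $\widehat{\cC}$ is Kan and $a$ maps to an equivalence $\widehat{a}$. In $\widehat{\cC}$ all higher coherences needed to fill the cells of $\cJ$ exist: for $k=0$ one chooses a quasi-inverse $\widehat{b}$ (which exists by the equivalence hypothesis), and for $k\ge 1$ a Kan mapping space admits a filler for any null-homotopic sphere, where null-homotopy is ensured inductively by having attached the appropriate lower-dimensional coherence cells in prior steps. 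This yields $\widetilde{F}\colon\cJ\to\widehat{\cC}$ sending $j$ to $\widehat{a}$.

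The main obstacle is then rectifying $\widetilde{F}$ to a functor with values in $\cC$ rather than $\widehat{\cC}$: since $\cC\hookrightarrow\widehat{\cC}$ is only a trivial cofibration, not a trivial fibration, there is no direct lifting. The remedy is to interleave the small-object argument with a ``correction'' procedure: at each stage, if $\widetilde{F}_n$ selects a simplex of $\widehat{\cC}$ outside $\cC$, use the fact that the map $\Map_\cC(u,v)\to\Map_{\widehat{\cC}}(u,v)$ is a weak equivalence of simplicial sets to find, inside $\widehat{\cC}$, a homotopy from the chosen simplex to one lying in $\cC$, and record this homotopy as an additional cell to be attached to $\cJ$ at a subsequent stage. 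Since at each stage only countably many such corrections are needed and each correction introduces only finitely many new cells, the bookkeeping closes in countably many iterations. In the colimit we obtain $\cJ$ with countably many simplices in each mapping space, all weakly contractible, with $\{0,1\}\to\cJ$ a cofibration in $\sCat_{\{0,1\}}$, equipped with the desired functor $F\colon\cJ\to\cC$ sending $j$ to $a$.
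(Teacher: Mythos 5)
Note first that the paper does not actually prove this proposition (it is stated with ``we do not intend to prove it''; the result is Bergner's), so your argument has to stand on its own, and as written it has a genuine gap at its crucial step: the return from the fibrant replacement $\widehat{\cC}$ to $\cC$. The proposition demands a functor $\cJ\to\cC$, i.e.\ all the coherence data must be found inside the possibly non-Kan mapping spaces of $\cC$ itself; your construction only ever chooses fillers in $\widehat{\cC}$. The proposed correction mechanism does not repair this: (i) the fact that $\Map_\cC(u,v)\to\Map_{\widehat{\cC}}(u,v)$ is a weak equivalence does not let you deform a simplex of $\widehat{\cC}$ into $\cC$ \emph{relative to its boundary} (which is what you need, since the attaching sphere has already been given an image); for that you would need this inclusion to be a trivial fibration, or $\Map_\cC(u,v)$ to be Kan --- exactly what fails in general; and (ii) even granting such homotopies, they are themselves simplices of $\widehat{\cC}$, so attaching them as new cells of $\cJ$ only produces more cells whose images lie outside $\cC$, and the resulting functor still lands in $\widehat{\cC}$. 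Nothing in the procedure forces later choices into $\cC$, so the assertion that ``the bookkeeping closes in countably many iterations'' is unsupported; the fibrant replacement discards precisely the difficulty the proposition is about, and there is no lifting argument back.

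There is a second gap even in the construction of $\widetilde{F}\colon\cJ\to\widehat{\cC}$. To make the mapping spaces of the colimit weakly contractible you must fill \emph{every} sphere $\partial\Delta^k\to\Map_{\cJ_n}(p,q)$, but the image of such a sphere in the Kan complex $\Map_{\widehat{\cC}}(u,v)$ need not be null-homotopic: for instance, three previously attached $1$-cells may map to a loop representing a nontrivial class in $\pi_1\Map_{\widehat{\cC}}(x,x)$, and then no filler exists. Your clause ``null-homotopy is ensured inductively by having attached the appropriate lower-dimensional coherence cells'' is exactly the statement requiring proof; a blind small-object enumeration does not arrange it. The standard way to make the fibrant case work is to build $\cJ$ together with a comparison to an honestly contractible target rather than cell-by-cell in the dark --- e.g.\ use Joyal's characterization of equivalences in the quasicategory $\fN(\widehat{\cC})$ to extend $a$ over the nerve $N(E)$ of the contractible groupoid $E$ on two objects, and then take $\cJ=\fC(N(E))$ with the counit $\fC(\fN(\widehat{\cC}))\to\widehat{\cC}$ --- but this again only treats fibrant targets, whereas the application in the lifting argument of Section 5 needs the statement for an arbitrary simplicial category. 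So both halves of your argument (coherent choice of fillers, and landing in $\cC$ rather than $\widehat{\cC}$) are missing, and the second one is the essential content of the proposition.
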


\begin{exe}
Verify that the fibrations in Bergner model structure
are precisely defined by RLP with respect to trivial cofibrations.
\end{exe}

\subsection{Quillen equivalence between quasicategories and simplicial categories}

Recall \ref{sss:adj-hcn} that the adjoint pair of functors
\begin{equation}
\fC: \sSet\rlarrows\sCat:\fN
\end{equation}
was constructed, based on a cosimplicial object 
$n\mapsto\fC^n$ in $\sCat$.

Recall that $\fC^n$ is a cofibrant simplicial category
whose homotopy category is $[n]$.

The adjoint pair $(\fC,\fN)$ can be 
upgraded to a Quillen equivalence.
 
One has  
\begin{thm}
\label{thm:joyal-bergner}
There exists a model category structure on $\sSet$ (Joyal model structure) defined by the following properties.
\begin{itemize}
\item Cofibrations are the injective maps.
\item Weak equivalences are the maps of simplicial sets carried by 
$\fC$ to DK equivalences of simplicial categories.
\item Fibrant objects are precisely the quasicategories,
that is simplicial sets satisfying RLP with respect to the inner horns.
\end{itemize}
Furthermore, the adjoint pair $(\fC,\fN)$ is a Quillen equivalence.
\end{thm}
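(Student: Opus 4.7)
The plan is to produce the Joyal model structure by applying the cofibrantly generated criterion of Theorem~\ref{thm:cofgen}, with the set $I=\{\partial\Delta^n\to\Delta^n\}_{n\geq 0}$ as generating cofibrations, and with the class $W$ defined as the set of maps $f$ such that $\fC(f)$ is a DK equivalence in $\sCat$. The two-out-of-three property and closure under retracts for $W$ follow at once from the corresponding properties in Bergner's structure, and closure of $\Cof=\text{monomorphisms}$ under retracts is trivial. For $J$ I would take the union of the inner horn inclusions $\Lambda^n_k\to\Delta^n$, $0<k<n$, together with the inclusion $\{0\}\hookrightarrow\cJ$ (where $\cJ$ is the nerve of the walking isomorphism) and its pushout-products with boundary inclusions; the first family forces fibrant objects to be quasicategories, the second forces every equivalence between fibrant objects to have an inverse up to homotopy.

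Verifying the hypotheses of Theorem~\ref{thm:cofgen}: domains of $I$ and $J$ are finite simplicial sets and hence small with respect to sequential colimits. The easier direction is $RLP(I)\subseteq RLP(J)\cap W$: a map with RLP against all monomorphisms is a simplicial trivial fibration, and by producing a section and a $\Delta^1$-homotopy one shows $\fC$ carries it to a DK equivalence. The harder direction $RLP(J)\cap W\subseteq RLP(I)$ is the main technical obstacle; it amounts to Joyal's theorem that a naive fibration between quasicategories which is also a categorical equivalence has the RLP against every $\partial\Delta^n\to\Delta^n$. The proof strategy I would follow is parallel to Section~4: reduce to minimal inner fibrations, show these are locally constant along categorical equivalences in the base, and conclude by a cell-by-cell lifting argument.

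Once the model structure exists, the identification of fibrant objects is immediate: RLP against the inner-horn part of $J$ is precisely the definition of quasicategory, and in a quasicategory the equivalence-inverting generators are already filled by the theorem of Joyal identifying Kan subcomplexes via equivalences (Theorem~\ref{thm:kan} gives the key input that $\Hom^R(x,y)$ is Kan, so the required lifts against $\{0\}\hookrightarrow\cJ$ exist).

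For the Quillen equivalence, first check that $(\fC,\fN)$ is a Quillen pair: $\fC$ sends the generating cofibration $\partial\Delta^n\to\Delta^n$ to a cofibration in $\sCat$, since the only nontrivial mapping-space component is the monomorphism $\partial((\Delta^1)^{n-1})\to(\Delta^1)^{n-1}$, and $\fC$ sends inner horn inclusions to trivial cofibrations by exactly the cube-with-one-open-face argument already given at the end of Section~2.4. To upgrade to a Quillen equivalence, I would invoke parts (2) and (3) of the theorem stated (without proof) in Section~2.5: the counit $\fC\fN(\cC)\to\cC$ is a DK equivalence for every simplicial category with Kan mapping spaces, and the unit $X\to\fN\fC(X)$ is a categorical equivalence of quasicategories. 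Given these, the derived unit and counit of the adjunction are weak equivalences, so for any cofibrant $X\in\sSet$ and fibrant $\cC\in\sCat$ the map $\fC(X)\to\cC$ is a DK equivalence iff its adjoint $X\to\fN(\cC)$ is a Joyal equivalence, which is the definition of Quillen equivalence. The genuinely hard step throughout is the lifting theorem in paragraph two; the rest is formal assembly from material already established earlier in the notes.
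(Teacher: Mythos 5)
Note first that the lecture notes themselves do not prove Theorem~\ref{thm:joyal-bergner}: it is explicitly flagged as ``a difficult theorem'' and delegated to the literature (Joyal, Joyal--Tierney, Lurie), so your proposal has to stand on its own. Its central step does not. You propose to apply Theorem~\ref{thm:cofgen} with the explicit set $J$ consisting of the inner horns together with $\{0\}\hookrightarrow\cJ$ and its pushout-products with boundary inclusions, and you assert that the required containment $RLP(J)\cap W\subseteq RLP(I)$ ``amounts to Joyal's theorem'' about maps between quasicategories. It does not: the hypothesis of Theorem~\ref{thm:cofgen} quantifies over \emph{all} maps of simplicial sets, with arbitrary (in general non-fibrant) codomain, whereas Joyal's theorem concerns inner fibrations with equivalence-lifting \emph{between quasicategories}. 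If your containment held, Theorem~\ref{thm:cofgen} would exhibit the Joyal structure as cofibrantly generated with this $J$, i.e.\ the categorical fibrations would be exactly the ``naive'' (iso-)fibrations $RLP(J)$; but over a non-fibrant base the naive fibrations form a strictly larger class than the Joyal fibrations, which is precisely why the fibrations of this model structure admit no lifting-theoretic description (as the notes themselves remark right after the theorem) and why no explicit set of generating trivial cofibrations is known. This is also why the actual proofs (Lurie's HTT~2.2.5.1, Joyal, Cisinski) do not go through an explicit $J$: they invoke Smith's recognition theorem, the key verification being that $W=\fC^{-1}(\mathrm{DK\ equivalences})$ is an accessible/perfect class stable under pushout along cofibrations, with the generating trivial cofibrations existing only abstractly.

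The fallback you sketch for this step --- ``parallel to Section~4: reduce to minimal inner fibrations, show these are locally constant along categorical equivalences in the base, conclude by cell-by-cell lifting'' --- would not go through either, because every ingredient that makes Section~4 work fails here: categorical equivalences are not detected fiberwise (there is no long exact sequence and no statement ``fibration with categorically trivial fibers $\Rightarrow$ trivial fibration''), the Joyal structure is not right proper, and base change along a categorical equivalence of non-fibrant bases does not preserve the relevant information; minimal inner fibration theory exists (HTT~2.3.3) but does not yield local triviality in this sense. The remaining parts of your outline are sound as formal assembly: granted the model structure with $W$ created by $\fC$ and fibrations $RLP(J)$, the identification of fibrant objects uses exactly Joyal's equivalence-lifting input behind Theorem~\ref{thm:kan}, and the Quillen-equivalence statement follows from the (also unproven in these notes) theorem of Section~2.5 on the unit and counit of $(\fC,\fN)$ together with the exercise that $\fC$ preserves cofibrations. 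But the existence of the model structure, which is where the real content of the theorem lies, is not established by your argument.
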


Note that this is a difficult theorem. Not only because the proof is longer than we saw earlier, but also because the notion of fibration
and the notion of categorical equivalence in $\sSet$ are not easily expressible.

Note once more that fibrant objects in Joyal model structure are quasicategories, that is simplicial sets
with RLP with respect to the inner horns. All categorical fibrations 
satisfy RLP with respect to the inner horns; but not vice versa.

\begin{exe}
Prove that $\fC$ preserves cofibrations.
Note that it is enough to verify that 
$\fC(\partial\Delta^n)\to\fC(\Delta^n)$ is a cofibration in $\sCat$.
\end{exe}

\subsubsection{Remarks}

Cofibrations are just injective maps of simplicial sets.
Two other types of maps have no easy description. However,
if we are talking about quasicategories, the situation is much better. One has

\begin{Thm}  A map $f:X\to Y$ of quasicategories is a weak equivalence iff it is a DK equivalence (that is, is essentially surjective and fully faithful).

A map of quasicategories   is a categorical  fibration iff it is an inner fibration (that is satisfies RLP wrt inner horns) and admits a lifting of equivalences.
\end{Thm}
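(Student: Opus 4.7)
Both statements are characterizations of Joyal-theoretic notions (weak equivalence, fibration) for maps between fibrant objects, so both pivot on Theorem~\ref{thm:joyal-bergner} (the Quillen equivalence with $\sCat$) together with a comparison of mapping-space models. The first part is a fairly direct consequence; the second requires an additional weak-saturation argument whose setup constitutes the main technical obstacle.

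For the first statement, I would argue as follows. Every simplicial set is cofibrant in the Joyal structure, and quasicategories are the fibrant objects, so $X$ and $Y$ are fibrant-cofibrant. By the Quillen equivalence, $f$ is a Joyal weak equivalence if and only if $\fC(f)$ is a DK equivalence in $\sCat$. It then remains to reinterpret the DK-equivalence conditions intrinsically in terms of $f$. For essential surjectivity, one uses the canonical identification $\pi_0\fC(X)\cong\Ho(X)$ (arising from the adjunction $\fC\dashv\fN$), which translates essential surjectivity of $\fC(f)$ into essential surjectivity of $f$ as defined in the quasicategorical sense. For the fully-faithful condition, the key input is a natural weak equivalence $\Map_{\fC(X)}(x,y)\simeq\Hom^R_X(x,y)$ valid for every quasicategory $X$; comparing both sides for $X$ and $Y$ converts the hypothesis that $\fC(f)$ induces weak equivalences of mapping spaces into the hypothesis that $f$ induces weak equivalences of $\Hom^R$-spaces.

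For the easy direction of the second statement, I would first check that the inner horn inclusions $j^n_k:\Lambda^n_k\to\Delta^n$, $0<k<n$, are Joyal trivial cofibrations: they are clearly injective, and $\fC(j^n_k)$ is a trivial cofibration of simplicial categories, so they are Joyal weak equivalences. Therefore any categorical fibration has RLP with respect to them, hence is an inner fibration. To see that categorical fibrations lift equivalences, let $K$ be the nerve of the free-living isomorphism (the contractible groupoid on $\{0,1\}$); then the inclusion $\{0\}\hookrightarrow K$ is injective, and $\fC(K)$ is a contractible simplicial groupoid, so $\{0\}\hookrightarrow K$ is a trivial cofibration in Joyal. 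An equivalence $\alpha:f(c)\to d$ in $Y$ extends to a map $K\to Y$, and the RLP for $f$ against $\{0\}\hookrightarrow K$, applied with $\{0\}\mapsto c$, delivers a lift $a:c\to c'$ in $X$ with $f(a)=\alpha$, an equivalence since its class in $\Ho(X)$ maps to an isomorphism in $\Ho(Y)$ and $X$ is a quasicategory.

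The converse direction is the principal obstacle. Given an inner fibration $f:X\to Y$ with the equivalence-lifting property, I need RLP against every Joyal trivial cofibration. The strategy is to prove that the weak saturation (closure under cobase change, transfinite composition, and retract) of the set $\{j^n_k:0<k<n\}\cup\{\{0\}\hookrightarrow K\}$ contains every Joyal trivial cofibration; equivalently, that a map of simplicial sets has RLP against this set if and only if it is a Joyal fibration. Given such an identification, a small-object-argument factorization $i=pj$ of an arbitrary trivial cofibration $i$ through a map $j$ in the saturation yields that $p$ has RLP against our generators and is a weak equivalence, hence is a Joyal trivial fibration; the retract argument then expresses $i$ as a retract of $j$, so $f$ inherits the required RLP from $j$. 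The hard part, and what I would devote the bulk of effort to, is proving that those two families really generate all Joyal trivial cofibrations — the standard route uses either Lurie's marked-simplicial-set formalism or Joyal's direct combinatorial analysis of the join/slice constructions, and one ultimately reduces to showing that every injective Joyal weak equivalence can be decomposed into pushouts of inner horns and pushouts of $\{0\}\hookrightarrow K$.
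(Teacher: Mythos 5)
The paper states this theorem without proof (it is a remark appended to Theorem~\ref{thm:joyal-bergner}, itself left unproven), so your proposal has to stand on its own. Your outline of the first assertion is the standard one and is fine as far as it goes: weak equivalence is by definition the condition that $\fC(f)$ be a DK equivalence, the identification $\pi_0\fC(X)\cong h(X)\cong\Ho(X)$ takes care of essential surjectivity, and full faithfulness is transported along a natural comparison $\Map_{\fC(X)}(x,y)\simeq\Hom^R_X(x,y)$. Be aware, though, that this comparison is not a routine step but a theorem of essentially the same depth as what you are proving (Dugger--Spivak; Lurie, HTT \S 2.2); citing it is acceptable at the level of these notes, but that is where all the content of part one sits.

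The genuine gap is in your strategy for the hard direction of the second assertion. You propose to show that the weak saturation of $\{j^n_k:\Lambda^n_k\to\Delta^n,\ 0<k<n\}\cup\{\{0\}\hookrightarrow K\}$ contains every Joyal trivial cofibration, ``equivalently, that a map of simplicial sets has RLP against this set if and only if it is a Joyal fibration.'' That reformulation is not available and, in the form you state it (for arbitrary simplicial sets), is not true: maps with RLP against inner horns and $\{0\}\hookrightarrow K$ (``naive'' or ``pseudo-'' fibrations) agree with categorical fibrations only when the codomain is a quasicategory --- which is precisely the statement you are trying to prove --- and it is a well-known point that the Joyal model structure has no known explicit set of generating trivial cofibrations; in particular one cannot simply declare that your two families generate them. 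So the proposed reduction is circular at best. The fallback inside your retract argument suffers from the same defect: after factoring a trivial cofibration $i:A\to B$ as $p\circ j$ with $j$ built from the generators, you need ``$p$ has RLP against the generators and is a weak equivalence, hence is a Joyal trivial fibration'' for an arbitrary codomain $B$, which is again the unavailable general statement. The known proofs (Joyal; Lurie, HTT 2.4.6.5) keep quasicategory codomains throughout and hinge on a different key lemma: lifting against \emph{special} outer horns $\Lambda^n_0\to\Delta^n$ whose initial edge is an equivalence (equivalently, the marked-anodyne machinery of HTT \S 3.1), together with the statement that a naive fibration between quasicategories which is a categorical equivalence is a trivial fibration. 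Your argument should be rebuilt around that lemma rather than around a generation statement for Joyal trivial cofibrations.
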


\begin{exe}
Categorical fibration of conventional categories is a functor with lifting of isomorphisms.
\end{exe}

It is probably worthwhile to define weak equivalence in Joyal model structure independently of the functor $\fC$.

One has
\begin{prp}Let $f:X\to Y$ be an arrow in $\sSet$. The following conditions on $f$ are equivalent.
\begin{itemize}
\item[1.] $f$ is an equivalence in Joyal model structure,
that is, $\fC(f)$ is a DK equivalence.
\item[2.] For any quasicategory $\cC$ the induced map
$$ \Fun(Y,\cC)\to\Fun(X,\cC)$$
is a DK equivalence of quasicategories.
\item[3.] For any quasicategory $\cC$ the induced map
$$ \Map(Y,\cC)\to\Map(X,\cC),$$
with  $\Map(X,\cC)$ defined as the maximal Kan subspace
of $\Fun(X,\cC)$, is a homotopy equivalence.
\end{itemize}
\end{prp}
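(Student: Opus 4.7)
The plan is the cyclic chain $1\Rightarrow 2\Rightarrow 3\Rightarrow 1$, resting on three ingredients established or quoted earlier in the notes: (i) Joyal's theorem that the Joyal model structure on $\sSet$ is Cartesian closed in the model-categorical sense; (ii) the characterization immediately above that a map of quasicategories is a Joyal weak equivalence iff it is a DK equivalence; (iii) the functor $(-)^\eq$ sends DK equivalences of quasicategories to weak equivalences of Kan complexes. For (iii), if $g:\cA\to\cB$ is a DK equivalence of quasicategories, then $g^\eq$ is essentially surjective (any $b\in\cB^\eq$ is the target of an equivalence $g(a)\to b$, and equivalences are invertible in the homotopy category, hence lift to $\cA^\eq$), while on $\Hom^R$-spaces the connected components of equivalences are preserved and reflected by $g$ because equivalences satisfy two-out-of-three.

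For $1\Rightarrow 2$: every simplicial set is Joyal-cofibrant since Joyal cofibrations are the monomorphisms, and the Cartesian-closed pushout--product axiom applied with the fibration $\cC\to *$ gives that $\Fun(-,\cC)$ carries Joyal equivalences to Joyal equivalences between quasicategories; by (ii) these are DK equivalences. For $2\Rightarrow 3$: apply (iii), remembering that $\Map(X,\cC)=\Fun(X,\cC)^\eq$ by definition. For $3\Rightarrow 1$ I use Yoneda in the Joyal homotopy category. The key sub-step is to identify, for any quasicategory $\cC$, the Kan complex $\Fun(X,\cC)^\eq$ with a model of the derived mapping space of the Joyal structure, so that $\pi_0\Fun(X,\cC)^\eq\cong[X,\cC]_{\Ho(\sSet)}$, where $\Ho$ refers to the Joyal homotopy category. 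Granted this, condition 3 produces a natural bijection $f^*:[Y,\cC]\to[X,\cC]$ for every fibrant $\cC$, and Yoneda in $\Ho(\sSet)$ forces $f$ to be an isomorphism there, i.e., a Joyal weak equivalence. By Theorem~\ref{thm:joyal-bergner} this means $\fC(f)$ is a DK equivalence, which is condition 1.

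The main obstacle is the mapping-space identification used in $3\Rightarrow 1$: that $\Fun(X,\cC)^\eq$ computes the derived mapping space in the Joyal model structure. I would handle it by transport along the Quillen equivalence $(\fC,\fN)$. Bergner's structure on $\sCat$ is enriched over Kan complexes, so for a cofibrant replacement $\wt X\to\fC(X)$ in $\sCat$ and a Bergner-fibrant replacement $R$ of $\fC(\cC)$, the Kan complex $\Map_{\sCat}(\wt X,R)$ is a standard derived mapping space. The adjunction, combined with the fact that $\fN$ of a Bergner-fibrant simplicial category is a quasicategory, yields a natural map from this Kan complex to $\Fun(X,\cC)^\eq$ which, by the Quillen equivalence of Theorem~\ref{thm:joyal-bergner} applied levelwise, is a weak homotopy equivalence. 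This is the only point requiring genuine work; the remainder of the argument is formal manipulation of ingredients (i)--(iii) above.
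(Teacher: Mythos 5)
The paper states this proposition without proof, so your argument can only be judged on its own terms. The cycle $1\Rightarrow 2\Rightarrow 3\Rightarrow 1$ is the standard architecture, and the first two steps are essentially fine: cartesianness of the Joyal structure plus cofibrancy of all objects gives $1\Rightarrow 2$ (via Ken Brown's lemma and the theorem that Joyal equivalences between quasicategories are DK equivalences), and your ingredient (iii) — that $(-)^\eq$ carries DK equivalences of quasicategories to homotopy equivalences of Kan complexes — is true and your sketch of it is the right argument, modulo the small unstated point that the mapping spaces of $\cC^\eq$ are the equivalence components of the mapping spaces of $\cC$.

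The genuine gap is in your justification of the key identification in $3\Rightarrow 1$. You assert that ``Bergner's structure on $\sCat$ is enriched over Kan complexes, so $\Map_{\sCat}(\wt X,R)$ is a standard derived mapping space.'' The Bergner model structure is \emph{not} a simplicial model category: with the natural cotensoring $\cD^K$ given by $\Map_{\cD^K}(x,y)=\Map_\cD(x,y)^K$, the map $R^{\Delta^1}\to R\times R$ is the diagonal on objects, hence cannot lift equivalences and is not a Bergner fibration, so $R^{\Delta^1}$ is not a path object and the simplicial hom has no a priori claim to compute derived mapping spaces; likewise ``the Quillen equivalence applied levelwise'' is not a defined operation on these simplicial homs. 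As written, the crucial step $\pi_0\Fun(X,\cC)^\eq\cong\Hom_{\Ho(\sSet)}(X,\cC)$ is therefore unproved. Fortunately you need much less than the full derived mapping space: for the Yoneda argument only $\pi_0$ is required, and this can be established entirely inside $\sSet$. Let $J$ be the nerve of the contractible groupoid on two objects; Joyal's theory provides that, for a quasicategory $\cC$, the factorization $\cC\to\Fun(J,\cC)\to\cC\times\cC$ is a path object in the Joyal structure, and that every equivalence in a quasicategory extends along $\Delta^1\to J$. Hence two maps $X\to\cC$ are $J$-homotopic if and only if they lie in the same component of $\Fun(X,\cC)^\eq$, giving the natural bijection $\pi_0\Map(X,\cC)\cong\Hom_{\Ho(\sSet)}(X,\cC)$; with this substitution your Yoneda argument (and the saturation property that a map becoming an isomorphism in the homotopy category is a weak equivalence) completes $3\Rightarrow 1$ correctly.
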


\subsection{DK localization}
\subsubsection{Introduction}

In the first lecture of the course we discussed that 
usage of localization in the construction of the  derived (or homotopic) category results in losing an important information about the category.

Here is a very simple manifestation of what happens when we localize a category. 
Let $C$ be a category. If we invert all arrows in $C$,
we get a groupoid $G$ whose nerve $N(G)$ is an Eilenberg-Maclane space. It is easy to 
prove that $N(G)$ describes the first floor of the Postnikov tower for the nerve 
$N(C)$.

Dwyer-Kan localization is the procedure that allows one
a more  clever, ``up to homotopy'', way of inverting arrows. Of course, this is just a version of derived functor.

The most important homotopical information of model category is contained in the collection of its weak equivalences. This has been quite early understood by Dan Kan 
and he spent a lot of time \footnote{Successfully!} in trying to express this. One of the most important constructions connected to this is a construction of simplicial category from a pair (a category, a collection of arrows in it). This is called DK localization.
\subsubsection{As derived functor}
Let $C$ be a simplicial category and $W$ a subcategory
(one could consider any map $W\to C$ of simplicial categories). We describe DK localization of the pair
as the left derived functor of the ``usual'' localization
functor assigning to the arrow $W\to C$ the localization
$C[W^{-1}]$ whose category of $n$-simplices is, by definition, the localization $C_n[W_n^{-1}]$.

This derived functor is calculated as follows. First
of all, we construct a commutative square (see below)
with $p$ and $q$ trivial fibrations, $\wt W$ cofibrant
and $\wt f$ cofibration. Then we define $L(C,W)$ as
$\wt C[\wt W^{-1}]$.
 
\begin{equation}
\xymatrix{
&\wt W  \ar[r]^{\wt f} \ar[d]^p&\wt C  \ar[d]^q\ar[r] &L(C,W)  \\
& W \ar[r]^f &C
}.
\end{equation}

Note that in the most interesting case, when $W$ and $C$ have the same objects,
everything can be done in $\sCat_\cO$, $\cO=\Ob(C)$.

\subsubsection{Explicit resolution}

Given a category $C$, one can forget a composition and get a quiver $GC$; then generate
a free category $FGC$. We will have a functor $FG(C)\to C$. Applying once more
the composition $FG$, we get two possible ways to map $(FG)^2C$ to $FG(C)$, as well as
a degeneracy $GF(C)\to (FG)^2(C)$ induced by the unit $\id\to GF$. This leads to a 
simplicial object, in this case in the category of categories; they have all the same set of objects, so we end up with a canonical cofibrant replacement of a category $C$
as an object of $\sCat$ (this is a bar resolution). In case $W$ is a subcategory of $C$ (having the same objects), we get a cofibration of their bar resolutions. Thus, $L(C,W)$ can be defined
as localization of the resolution $\wt C$ with respect to the arrows coming from $W$.

\subsubsection{DK localization (hammock version)}

Even more explicit construction ($W\subset C$): One defines hammock localization
$L^H(C,W)$ as simplicial category having the same objects as $C$, such that
$n$-simplices of $\Map_{L^H}(x,y)$ are the following diagrams in $C$,
$$
\xymatrix{
&x \ar@{=}[d]\ar@{-}[r]&\bullet\ar[d]\ar@{-}[r]&\ldots\ar@{-}[r]&\bullet\ar[d]\ar@{-}[r]
&y\ar@{=}[d]\\
&x\ar@{-}[r]&\bullet\ar@{-}[r]&\ldots\ar@{-}[r]&\bullet\ar@{-}[r]&y\\
&\vdots\ar@{}[r]&\vdots\ar@{}[r]&{\ }\ar@{}[r]&\vdots\ar@{}[r]&\vdots\\
&x \ar@{=}[d]\ar@{-}[r]&\bullet\ar[d]\ar@{-}[r]&\ldots\ar@{-}[r]&\bullet\ar[d]\ar@{-}[r]
&y\ar@{=}[d]\\
&x\ar@{-}[r]&\bullet\ar@{-}[r]&\ldots\ar@{-}[r]&\bullet\ar@{-}[r]&y
}
$$
with the vertical arrows in $W$, having $n+1$ rows and an arbitrary number of columns, so that at each column all horizontal arrows have the same direction, and all arrows going leftward are in $W$.
It is allowed to compose two colums if the arrows in it go to the same direction.
functoriality in the simplicial direction is clear.
\subsubsection{Properties}
First of all, one has an obvious functor $C\to L^H(C,W)$.

\begin{Exe}
\begin{itemize}
\item[1.] Any arrow from $W$ becomes an equivalence in $L^H(C,W)$. 
\item[2.] One has $\pi_0(L^H(C,W))=C[W^{-1}]$.
\end{itemize}
\end{Exe}

The primary aim of Dwyer-Kan localization is to studying
model categories. Remind that homotopy category of a model
category has various equivalent descriptions. Similarly,
DK localization of a model category with respect to
weak equivalences has different equivalent descriptions,
see below.  

\subsubsection{DK localization of a model category}

First of all, one has
\begin{Thm}
The following  maps of simplicial categories are DK equivalences.
$$ L^H(\cC^c)\to L^H(\cC)\leftarrow L^H(\cC^f).$$
\end{Thm}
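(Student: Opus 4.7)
The plan is to treat the two maps separately; since the opposite of a model category is a model category with fibrations and cofibrations swapped, and hammock localization respects opposites (left/right hammocks correspond under the reversal $\op$), it suffices to prove the claim for $L^H(\cC^c)\to L^H(\cC)$. I must check the two conditions defining a DK equivalence: essential surjectivity and weak equivalence on mapping spaces.

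Essential surjectivity is immediate from axiom 4(b). Given $X\in\cC$, factor $\emptyset\to X$ as a cofibration $\emptyset\to QX$ followed by a trivial fibration $p:QX\to X$, so $QX$ is cofibrant and $p\in W$. The one-column hammock $X\xleftarrow{p} QX$ defines a morphism in $L^H(\cC)$ whose image in $\pi_0 L^H(\cC)$ is invertible (any hammock with a single $W$-arrow admits an evident inverse hammock, and hammock equivalence identifies their composites with the appropriate identities). Hence $X$ is equivalent in $L^H(\cC)$ to the cofibrant object $QX$.

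For the mapping-space condition, fix cofibrant $X,Y$. The heart of the argument is to construct a deformation retraction of $\Map_{L^H(\cC)}(X,Y)$ onto $\Map_{L^H(\cC^c)}(X,Y)$. Using 4(b) (functorial factorizations may be assumed, or the choices may be organized coherently by induction on hammock size) choose a cofibrant replacement assignment $Q$ together with a natural weak equivalence $\eta:Q\Rightarrow\id$, normalized so that $Q=\id$ on cofibrant objects, in particular $QX=X$ and $QY=Y$. Given an $n$-simplex of $\Map_{L^H(\cC)}(X,Y)$, i.e.\ a hammock with $n+1$ rows from $X$ to $Y$ whose objects lie in $\cC$, apply $Q$ column by column to produce a hammock of the same shape in $\cC^c$ with unchanged outer columns $X$ and $Y$. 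The components of $\eta$ at each internal object of the original hammock assemble into a two-dimensional array: horizontally the original hammock and its $Q$-replacement, vertically the $\eta$-arrows (all in $W$). Interpreting this array as a hammock in the simplicial direction gives a simplicial homotopy from the original $n$-simplex to its $Q$-image, and the resulting section/retraction pair exhibits the inclusion as a weak equivalence.

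The main obstacle is the combinatorial bookkeeping: one must verify that the construction respects the hammock equivalence relation (collapsing adjacent columns going in the same direction, inserting/removing identity arrows), is compatible with the simplicial faces and degeneracies of the mapping complex, and that the normalization $QX=X$, $QY=Y$ ensures the homotopy fixes the endpoints. This is a direct but tedious calculation in the style of the original Dwyer--Kan work, analogous to the proof that $\pi_0 L^H(C,W)=C[W^{-1}]$. The dual argument, replacing cylinder data by path objects and cofibrant replacement by fibrant replacement, handles $L^H(\cC^f)\to L^H(\cC)$ and completes the theorem.
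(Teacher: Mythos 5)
Your reduction to the cofibrant half via opposite categories and your essential-surjectivity argument are fine. The gap is in the mapping-space step: the whole deformation retraction hinges on a cofibrant replacement functor $Q$ with a natural weak equivalence $\eta:Q\Rightarrow\id$ that is \emph{strictly} the identity on $\cC^c$, because the outer columns of a hammock in $\Map_{L^H(\cC)}(X,Y)$ are constant equal to $X$ and $Y$, so your retraction and your homotopy must fix them on the nose. Such a normalized $(Q,\eta)$ does not exist in general: if you redefine $Q$ to be the identity on cofibrant objects, then for $f:X\to Y$ with $X$ cofibrant and $Y$ not you must choose a lift $X\to QY$ of $f$ through the trivial fibration $\eta_Y$, and these lifts cannot in general be made compatible with composition, so functoriality of $Q$ and naturality of $\eta$ break down; "organizing the choices by induction on hammock size" runs into exactly the same coherence problem, since the assignment must be compatible with all faces, degeneracies and column-composition reductions simultaneously. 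Note also that the definition of model category used in these notes (3.1.1) does not include functorial factorizations, so even an un-normalized functorial $Q$ is an additional hypothesis. Without the normalization, applying $Q$ columnwise sends a hammock from $X$ to $Y$ to one from $QX$ to $QY$, which is not a self-map of $\Map_{L^H(\cC)}(X,Y)$, and your array of $\eta$'s is no longer an endpoint-preserving simplicial homotopy.

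The standard repair, which is essentially what Dwyer--Kan do, is to give up on a strict retraction: a natural transformation between weak-equivalence-preserving functors whose components are weak equivalences induces a homotopy after hammock localization (\cite{DK2}); applied to $\eta: iQ\to\id_\cC$ and to its restriction to $\cC^c$, this shows that $L^H(i)$ and $L^H(Q)$ are mutually inverse up to composition with the equivalences $\eta_X$, and one then concludes that the maps of mapping spaces are weak equivalences because composition with an equivalence in a simplicial category induces a weak equivalence of mapping spaces (a two-out-of-six/retract argument), with no need for $Q$ to fix $\cC^c$. The notes themselves take yet another route: the theorem is not proved there at all; instead a key lemma is stated (a functor $f:\cC\to\cD$ whose ``clever fibers'' over the simplices of $N(\cD)$ have weakly contractible nerves induces a DK equivalence out of $L^H(\cC,W)$, $W$ the preimage of the isomorphisms), to be applied to the category of cofibrant resolutions $\wt X\to X$, with details delegated to \cite{H.L}. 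Either mechanism replaces your normalization step; the rest of your combinatorial bookkeeping (columnwise functoriality, compatibility with the simplicial structure) is sound.
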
 

One gets an especially nice result in case the model category $\cC$ has an appropriate simplicial structure.

\begin{dfn}A model category $\cC$ with a simplicial structure is said to have a {\sl weak path functor} if for any 
$S\in\sSet$ and $X\in\cC$ the functor
$$ Y\mapsto\Hom_\sSet(S,\Map_\cC(Y,X))$$
is representable (by the object denoted $X^S$).
\end{dfn} 
\begin{exe}It is enough to require representability
of $X^{\Delta^n}$. Then a general $X^S$ can be expressed
as a certain limit. Which one?
\end{exe}
\begin{dfn}
Let $\cC$ be a model category having a simplicial structure. We call it 
{\sl a weak simplicial model category} if it admits weak path functors and satisfies the condition
\begin{itemize}
\item[]
If $i:A\to B$ is a cofibration in $\cC$ and $p:X\to Y$ is a fibration in $\cC$, then the
map of simplicial sets
$$ \Map(B,X)\to\Map(A,X)\times_{\Map(A,Y)}\Map(B,Y)$$
is a fibration which is a trivial fibration if either $i$ or $p$ is a weak equivalence.
\end{itemize}
\end{dfn}
 
\

\begin{thm}
\label{thm:hammock-weak-simpl}
Assume $\cC$ is a weak simplicial model category.
 
Then one has equivalences
$$ 
\cC^{cf}_*\to L^H(\cC^{cf}_*)\to L^H(\cC_*)\leftarrow
L^H(\cC).
$$
\end{thm}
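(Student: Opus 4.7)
The plan is to verify each of the three arrows is a Dwyer--Kan equivalence; since each is the identity on objects, the content reduces to a weak equivalence of mapping spaces. I would handle them from right to left: first $L^H(\cC)\to L^H(\cC_*)$, then $L^H(\cC^{cf}_*)\to L^H(\cC_*)$, and finally the leftmost $\cC^{cf}_*\to L^H(\cC^{cf}_*)$, where the core difficulty lies.

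For $L^H(\cC)\to L^H(\cC_*)$, the key observation is that in a weak simplicial model category the cotensor $Y^{\Delta^n}$ is connected to $Y$ by a weak equivalence when $Y$ is fibrant: apply the weak simplicial axiom to the trivial cofibration $\{0\}\hookrightarrow\Delta^n$ and the fibration $Y\to *$ to see that $\Map(X',Y^{\Delta^n})\to\Map(X',Y)$ is a trivial fibration for cofibrant $X'$. Consequently any $n$-simplex of $\Map_{\cC_*}(X,Y)$, corresponding via the cotensor adjunction to an arrow $X\to Y^{\Delta^n}$, is hammock-equivalent in $L^H(\cC)$ to a zigzag $X\to Y^{\Delta^n}\stackrel{\sim}{\to}Y$; the extra flexibility of $\cC_*$ is thus absorbed by the hammocks of $\cC$, and a compatibility check on the bisimplicial structure of hammocks completes the argument. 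For the middle arrow $L^H(\cC^{cf}_*)\to L^H(\cC_*)$, the proof of the previously stated theorem ($L^H(\cC^c)\to L^H(\cC)\leftarrow L^H(\cC^f)$ are DK equivalences) applies verbatim once one has functorial cofibrant and fibrant replacements compatible with the simplicial enrichment, which exist by the usual small object argument.

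The leftmost arrow is the essential claim: for fibrant-cofibrant $X,Y$ the canonical map $\Map_\cC(X,Y)\to L^H(\cC^{cf}_*)(X,Y)$ must be a weak equivalence. Both sides are Kan complexes --- the left by the weak simplicial axiom applied with $\emptyset\hookrightarrow X$ a cofibration and $Y\to *$ a fibration. Surjectivity on $\pi_0$ proceeds by straightening a hammock $X\stackrel{\sim}{\leftarrow}Z\to Y$: factor the backward arrow as a trivial cofibration $Z\to Z'$ followed by a trivial fibration $Z'\stackrel{\sim}{\to}X$, lift $\id_X$ against the latter to get a section $s:X\to Z'$, and form the direct map $X\to Z'\to Y$, which is homotopic in the hammock category to the original zigzag. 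Longer hammocks reduce to this case by induction on length, and higher homotopy groups are handled by applying the same construction with $Y^{\Delta^n}$ playing the role of a path object for $Y$.

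The main obstacle will be upgrading these pointwise arguments to a coherent comparison between the simplicial set $\Map_\cC(X,Y)$ and the full bisimplicial set of hammocks underlying $L^H(\cC^{cf}_*)(X,Y)$. This amounts to a Quillen Theorem~A type cofinality statement asserting that, for each direct arrow $X\to Y$, the category of hammocks refining it is weakly contractible --- the technical heart of Dwyer--Kan's theory, and where I expect the bulk of the effort to be spent.
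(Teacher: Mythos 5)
Your plan correctly isolates the three arrows, but at the decisive points it is a programme rather than a proof. The leftmost arrow is the actual content of the theorem: one must show that for bifibrant $X,Y$ the canonical map $\Map_\cC(X,Y)\to \Map_{L^H(\cC^{cf}_*)}(X,Y)$ is a weak equivalence. Your straightening argument only addresses surjectivity on $\pi_0$ (and even there, connecting the straightened direct map $X\to Z'\to Y$ to the original zigzag requires exhibiting an explicit hammock homotopy, which you do not construct); injectivity on $\pi_0$ and the isomorphism on all higher homotopy groups are waved at via ``$Y^{\Delta^n}$ as a path object'', but the simplices of the hammock mapping space are reduced hammocks, not maps into cotensors, and relating the two is exactly the content of the Dwyer--Kan reduction/fraction-calculus theorems comparing $L^H$ with function complexes \cite{DK3}. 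You concede this in your last paragraph: the ``Quillen Theorem A type cofinality statement'' you defer is not a technicality to be filled in later --- it is the theorem. Similarly, the rightmost arrow $L^H(\cC)\to L^H(\cC_*)$, comparing the hammock localization of the underlying discrete category with that of the simplicially enriched one, is itself a nontrivial result (it is where the weak simplicial axioms enter, through a (bi)simplicial or diagonal argument), and ``a compatibility check on the bisimplicial structure of hammocks'' does not supply it. A small further slip: the middle arrow is not the identity on objects, so its essential surjectivity needs bifibrant replacement, as you implicitly use.

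For comparison, the notes do not prove the theorem directly either: they attribute it to Dwyer--Kan \cite{DK1,DK2,DK3} for simplicial model categories and refer to \cite{H.L} for the general case, and the tool they single out for ``all claims of this type'' is the lemma stated immediately afterwards --- if a functor $f:\cC\to\cD$ has weakly contractible clever fibers over all simplices of $N(\cD)$, then $L^H(\cC,W)\to\cD$ is a DK equivalence, where $W$ is the class of arrows inverted by $f$. A complete write-up along your pointwise lines would essentially amount to re-proving that criterion or the DK calculus of fractions; if you want a self-contained argument, it is more efficient to organize the proof around that lemma than around direct manipulation of hammocks.
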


The theorem is proven by Dwyer-Kan \cite{DK1,DK2,DK3} 
for simplicial model categories.

Here is the key result which allows one to prove all claims 
of this type.

Let $\cC,\cD$ be categories, $f:\cC\to\cD$ be a functor. For $x\in\cD$ we denote as
$\cC_x$ the ``clever'' fiber 
$\{(c,\theta)|c\in\cC,\theta:f(c)\stackrel{\sim}{\to}x\}$. 

More generally, for $n$-simplex $\sigma\in N_n(\cD)$ we denote as $\cC_\sigma$
the fiber of the functor $f^{[n]}:\cC^{[n]}\to\cD^{[n]}$ at $\sigma$.

Here we denote $\cC^{[n]}$ the category of functors $[n]\to\cC$
where $[n]$ is the category consisting of $n$ consecutive arrows.
 
\begin{lem}Let $f:\cC\to\cD$ be a functor. Assume that
for any $\sigma\in N(\cD)$ the fiber $\cC_\sigma$ 
has a weakly contractible nerve.
Then the functor $f$ induces a DK equivalence
$$ \hat f:L^H(\cC,W)\to \cD,$$
where $W=\{a| f(a)\textrm{ is an isomorphism} \}$.
\end{lem}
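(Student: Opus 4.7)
The plan is to verify directly the two defining conditions of a DK equivalence for $\hat f: L^H(\cC,W) \to \cD$, where $\cD$ is viewed as a simplicial category with discrete mapping spaces. Essential surjectivity falls out of the hypothesis applied to $0$-simplices: for each object $x \in \cD$, the fiber $\cC_x$ has weakly contractible, hence nonempty, nerve, so some $c \in \cC$ satisfies $f(c) = x$. This gives condition (2) on the nose, not merely up to equivalence.

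The heart of the argument is showing full faithfulness: for every $c, c' \in \cC$ the induced map
$$ \alpha_{c,c'}: \Map_{L^H(\cC,W)}(c,c') \longrightarrow \Hom_\cD(f(c), f(c')) $$
is a weak equivalence. Since the target is a discrete set, this amounts to showing (a) $\alpha_{c,c'}$ is surjective on $\pi_0$, and (b) each fiber $M_\phi := \alpha_{c,c'}^{-1}(\phi)$ is weakly contractible. Surjectivity on $\pi_0$ follows from the $n=1$ case of the hypothesis: given $\phi: f(c) \to f(c')$, the category $\cC_\phi$ of lifts of $\phi$ to a morphism in $\cC$ has nonempty (in fact contractible) nerve, and any such lift provides a length-one hammock representing a preimage of $\phi$.

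For the contractibility of $M_\phi$, the plan is to unwind the hammock definition: an $n$-simplex of $M_\phi$ is an $(n{+}1)$-row zig-zag in $\cC$ from $c$ to $c'$ whose leftward arrows lie in $W$ and whose image in $\cD$, after composing each row, equals $\phi$. Such a zig-zag is determined by (i) a combinatorial shape $\tau$ (a zig-zag word in ``$\rightarrow$'' and ``$\leftarrow_W$''), and (ii) for each row, a simplex in the fiber of $\cC^\tau \to \cD^\tau$ over the corresponding lift of $\phi$. One organizes $M_\phi$ as a homotopy colimit, indexed over zig-zag shapes and over simplices of $N(\cD_{f(c)/\!/f(c')})$ refining $\phi$, of pieces of the form $N(\cC_\sigma)$ where $\sigma$ ranges over simplices of $N(\cD)$ lying over $\phi$. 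By hypothesis each such $N(\cC_\sigma)$ is contractible, and the indexing diagram (a zig-zag version of the Čech/bar category of $\phi$) is itself contractible, so $M_\phi$ is contractible.

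The main obstacle I expect is making the decomposition of $M_\phi$ into pieces $N(\cC_\sigma)$ rigorous and homotopically correct, since the hammock model is combinatorially heavy. A cleaner route, which I would actually carry out, is to replace $L^H$ with the DK simplicial (bar-resolution) model $L^s$ discussed earlier: for the standardly resolved $\widetilde\cC \to \cC$, a morphism in $L^s(\cC,W)$ is an alternating zig-zag of formal arrows and formal inverses, and the mapping space is literally a nerve of a category of such zig-zags, to which Thomason's homotopy colimit theorem (or Quillen's Theorem A applied twice) directly converts the hypothesis $N(\cC_\sigma) \simeq *$ into the required contractibility of $M_\phi$. The equivalence $L^H \simeq L^s$ then transports the result.
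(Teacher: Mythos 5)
Your plan breaks down at its central step: the decomposition of the fiber $M_\phi$ of $\Map_{L^H(\cC,W)}(c,c')\to\Hom_\cD(f(c),f(c'))$ into pieces ``of the form $N(\cC_\sigma)$''. The simplices of a hammock mapping space are zig-zags with \emph{pinned} endpoints $c$ and $c'$, while the categories $\cC_\sigma$ in the hypothesis are fibers of $\cC^{[n]}\to\cD^{[n]}$ with free endpoints; the pinned zig-zag categories are not instances of the hypothesis and are in general not contractible. A minimal example: let $\cC$ be the span $a\leftarrow b\to c$, $\cD=[0]$, so $W$ is everything; the hypothesis holds because $N(\Fun([n],\cC))\simeq N(\cC)\simeq *$, yet the piece of $\Map_{L^H}(a,c)$ corresponding to the single-forward-arrow shape is the discrete set $\Hom_\cC(a,c)=\emptyset$. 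So only the colimit over all zig-zag types is contractible, and no argument of the form ``each piece is contractible and the index category is contractible'' can exist; one needs a global mechanism (a Theorem~B-type statement, or a comparison of $L^H$ with the classification diagram), which is exactly what your sketch omits. The same pinned-versus-free issue makes your $\pi_0$-surjectivity one-liner incorrect as stated: an object of $\cC_\phi$ is an arrow $\gamma:c_1\to c_1'$ whose image is only \emph{isomorphic} to $\phi$, hence is not a length-one hammock from $c$ to $c'$; one must splice in $W$-zig-zags from $c$ to $c_1$ and from $c_1'$ to $c'$, obtainable from connectedness of the fibers over the objects $f(c)$ and $f(c')$. Finally, the proposed ``cleaner route'' misdescribes the bar-resolution model: the mapping spaces of the standard simplicial localization are not literally nerves of zig-zag categories (only the hammock mapping spaces are colimits of such nerves), and the difficulty above persists in either model.

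For comparison: the paper does not prove the lemma at this point at all; it remarks that the lemma ``is easily proven in yet another model'' (complete Segal spaces) and refers to \cite{H.L}. In that route the hypothesis is used precisely where your argument needs it but cannot reach it: one compares classification diagrams, and the level-$n$ map $N(\Fun([n],\cC),W\textrm{-transformations})\to N(\Fun([n],\cD),\textrm{iso-transformations})$ is a weak equivalence by Quillen's Theorem~A, because the target is the nerve of a groupoid and hence the relevant comma categories are equivalent to the \emph{unpinned} fibers $\cC_\sigma$ of the hypothesis. Completeness of the Rezk nerve of $\cD$, together with the identification of the (completed) classification diagram of $(\cC,W)$ with the localization, then yields the DK equivalence, with no analysis of pinned zig-zag categories anywhere. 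If you insist on a hammock-model proof, you must import a statement of comparable strength (the equivalence of $L^H(\cC,W)$ with the classification diagram, or a Theorem~B-style analysis of the associated bisimplicial set), not Theorem~A applied to individual zig-zag categories.
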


The lemma is easily proven in yet another model for infinity categories which we will discuss next time.

In a typical application $\cD$ is a model category and
$\cC$ is the category of weak equivalences $\wt X\to X$ with cofibrant $\wt X$, with the functor $f$ forgetting $\wt X$.
 
The lemma conceptualizes the idea that, since resolutions
are ``homotopically unique'', we can replace the model 
category with the category consisting of cofibrant objects 
only.

Details of the proof, as well as the proof of 
Theorem~\ref{thm:hammock-weak-simpl},  can be found in 
\cite{H.L}.

Dwyer-Kan localization of a model category provides
a simplicial category whose homotopy category is what we 
called the homotopy category of the model category. Thus, 
DK localization can be thought of as the infinity category 
underlying a model category --- at least if one accepts 
simplicial categories as models for infinity categories. We 
prefer working with quasicategories --- so we will define 
the quasicategory underlying a model category as 
$\fN(L^H(\cC)^f)$ where superscript $f$ denotes a (Bergner) 
fibrant replacement and $\fN$ denotes the homotopy coherent 
nerve.

\

Important result of Dwyer-Kan: one has
\begin{thm}Quillen equivalence of model categories gives rise to an equivalence of DK localizations.
\end{thm}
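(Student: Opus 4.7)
The plan is to reduce the claim to a comparison of mapping spaces of cofibrant-fibrant objects, where the hypothesis of Quillen equivalence applies directly. Throughout let $F : \cC \rlarrows \cD : G$ denote the given Quillen equivalence.

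First I would use the theorem recalled just above the statement: the embeddings $\cC^{cf} \hookrightarrow \cC$ and $\cD^{cf} \hookrightarrow \cD$ induce DK equivalences on hammock localizations. Since $F$ is a left Quillen functor it preserves cofibrations and initial objects and, by the argument recorded in the proof of the derived adjunction theorem, sends weak equivalences between cofibrant objects to weak equivalences. Composing with a fibrant replacement functor $R_\cD$ in $\cD$ I obtain
\[ \tilde F := R_\cD \circ F : \cC^{cf} \longrightarrow \cD^{cf}, \]
which preserves all weak equivalences; dually $\tilde G := Q_\cC \circ G : \cD^{cf} \to \cC^{cf}$ does the same in the opposite direction. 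These induce simplicial functors $L^H(\tilde F)$ and $L^H(\tilde G)$ which, composed with the DK equivalences $L^H(\cC^{cf}) \simeq L^H(\cC)$ and $L^H(\cD^{cf}) \simeq L^H(\cD)$, furnish the candidate simplicial functors between the two DK localizations.

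Next I would verify that $L^H(\tilde F)$ is a DK equivalence. For essential surjectivity, apply $\pi_0$: the induced functor $\Ho(\cC) \to \Ho(\cD)$ is precisely $\Left F$, which is an equivalence of ordinary categories by the definition of Quillen equivalence. For fully faithfulness, one must show that for $X, Y \in \cC^{cf}$ the map
\[ \Map_{L^H(\cC^{cf})}(X, Y) \longrightarrow \Map_{L^H(\cD^{cf})}(\tilde F X, \tilde F Y) \]
is a weak equivalence of simplicial sets. Assuming temporarily that $\cC$ and $\cD$ carry compatible weak simplicial model structures, Theorem~\ref{thm:hammock-weak-simpl} identifies both sides with the honest simplicial mapping spaces $\Map_\cC(X, Y)$ and $\Map_\cD(FX, R_\cD F Y)$. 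The simplicial Quillen adjunction rewrites the latter as $\Map_\cC(X, G R_\cD F Y)$, and the Quillen-equivalence hypothesis applied to the fibrant object $Z = R_\cD F Y$ says exactly that the unit $Y \to G R_\cD F Y$ is a weak equivalence. Since $Y$ is cofibrant and $G R_\cD F Y$ is fibrant (because $G$ is right Quillen), the weak simplicial model axiom promotes this to a weak equivalence $\Map_\cC(X, Y) \to \Map_\cC(X, G R_\cD F Y)$. The symmetric argument for $\tilde G$ supplies a quasi-inverse, completing the DK equivalence.

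The main obstacle is that a general model category need not be (even weakly) simplicial, so Theorem~\ref{thm:hammock-weak-simpl} does not apply directly. I would address this in the Dwyer-Kan manner: replace $\cC$ and $\cD$ by their categories of simplicial resolutions, whose hammock localizations are canonically DK-equivalent to $L^H(\cC)$ and $L^H(\cD)$ by the key lemma on forgetful functors whose strict fibers have weakly contractible nerves. Since a left Quillen functor prolongs levelwise to simplicial resolutions and the Quillen-equivalence condition is preserved by this prolongation, the adjunction-based comparison of mapping spaces from the previous paragraph carries over componentwise in the simplicial direction, yielding the desired weak equivalence in full generality.
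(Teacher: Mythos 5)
The paper itself gives no proof of this theorem (it is quoted as a result of Dwyer--Kan, with references), so your proposal has to stand on its own. Its skeleton is indeed the standard one and is sound as far as it goes: restrict to $\cC^{cf}$ and $\cD^{cf}$, use Brown's lemma plus a (functorial) fibrant replacement to get a weak-equivalence-preserving functor $\tilde F=R_\cD\circ F$, note that the induced functor on $\pi_0$ of the hammock localizations is $\Left F$, whence essential surjectivity, and reduce everything to showing that $L^H(\tilde F)$ is fully faithful, i.e.\ induces weak equivalences on hammock mapping spaces, using the derived unit $Y\to GR_\cD FY$. The problem is that this last step --- the only genuinely hard one --- is not actually carried out.

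Concretely: (i) in your ``weak simplicial'' warm-up you silently assume the adjunction $(F,G)$ is compatible with the simplicial enrichments, so that $\Map_\cD(FX,Z)\cong\Map_\cC(X,GZ)$ as simplicial sets; a Quillen equivalence between (weakly) simplicial model categories need not be a simplicial adjunction, so this step is unjustified even in the warm-up. (ii) The reduction of the general case via ``categories of simplicial resolutions'' is a gesture, not an argument: the contractible-fiber lemma of the paper applies to the category of cofibrant replacements over $\cC$ and does not produce the identification you assert, and ``carries over componentwise'' does not address the actual difficulty. The correct tool is the one the paper quotes later (Section 8, from \cite{DK3}): $\Map_{L^H(\cC)}(X,Y)$ is computed by the function complex $\Hom_\cC(X^\bullet,Y)$ for a cosimplicial resolution $X^\bullet$ of the cofibrant $X$ and fibrant $Y$. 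One then checks that $F$ carries cosimplicial resolutions of cofibrant objects to cosimplicial resolutions (it preserves cofibrations, trivial cofibrations and the relevant colimits), that the \emph{unenriched} adjunction gives a levelwise bijection $\Hom_\cD(FX^\bullet,Z)\cong\Hom_\cC(X^\bullet,GZ)$, and that the derived unit is a weak equivalence. Even granting all this, you must still verify that under these identifications the resulting equivalence of mapping spaces agrees, naturally up to homotopy, with the map that $L^H(\tilde F)$ actually induces --- proving the two spaces are abstractly equivalent does not show that your specific functor is a DK equivalence. That compatibility check is where the real content of the Dwyer--Kan theorem lies (see \cite{DK3} and \cite{H.L}), and it is missing from your proposal.
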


\subsection{Examples: quasicategory of spaces, of 
quasicategories, of complexes}
We have already discussed examples of quasicategories
which can be assigned to spaces, quasicategories, complexes.

We will see now that all these examples are actually quasicategories underlying certain model categories.

We will also present some more model categories
and respective quasicategories.

\subsubsection{Spaces}
DK localization of the category $\sSet$ with respect to 
weak homotopy equivalences is equivalent, according to the 
general theorem above, to the simplicial category whose 
objects are Kan simplicial sets and morphism spaces are 
inner Hom's. This is precisely what we denoted $\cS$ in 
Section 2.

\subsubsection{Infinity categories}

We have already two Quillen equivalent model categories
($\sSet$ with the Joyal model structure and $\sCat$ with the Bergner model structure) to apply DK localization. It is, however, not easy to understand the answer, as none of them is weak simplicial model category.

Fortunately, there are simplicial model categories
Quillen equivalent to already mentioned ones. One of them,
the model category of marked simplicial sets (see \cite{L.T}, 3.1) yields the answer described in Part 2.

Recall that the respective simplicial category is
the following. Its objects are quasicategories.
For $X,Y$ we define $\Map(X,Y)$ to be the maximal Kan subset 
(=maximal subspace) of the quasicategory 
$\Fun(X,Y)$.  Then the quasicategory $\Cat_\infty$ is 
defined as the nerve $\fN$ of the fibrant simplicial 
category described above.

\subsubsection{Complexes}
\label{sss:complexes}

The category $C(k)$ of complexes over a ring $k$ has
internal Hom: for a pair of complexes $X,Y\in C(k)$
one defines a complex $\shom(X,Y)$ of abelian groups
as follows.

$$\shom(X,Y)^n=\prod\Hom(X^k,Y^{n+k}); (df)(x)=df(x)-(-1)^{|f|}f(dx),$$
where $|y|$ denotes the degree of the element $y$.
This leads to the structure of simplicial category on 
$C(k)$: for a pair of complexes $X$ and $Y$ we define the simplicial space $\Map(X,Y)$ by the formulas
$$ \Map(X,Y)_n=\Hom_{C(\Z)}(C_*(\Delta^n),\shom(X,Y)),$$
where $C_*(\Delta^n)$ is the complex of normalized chains 
of the $n$-simplex.

\begin{Exe}Verify that a canonical composition
$$\Map(Y,Z)\times\Map(X,Y)\to\Map(X,Z)$$
is defined.
\end{Exe}

One can see that the simplicial enrichment defined above
is compatible with the projective model structure ---
they form a weak simplicial model category. This implies

\begin{Prp}DK localization of the category of complexes
with respect to the quasiisomorphisms is equivalent to
the simplicial category whose objects are cofibrant complexes, and simplicial Hom sets are defined as above.
\end{Prp}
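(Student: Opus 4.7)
The plan is to apply Theorem~\ref{thm:hammock-weak-simpl} to $\cC := C(k)$ equipped with the projective model structure of \ref{sss:pro} and the simplicial enrichment described above. Since fibrations in the projective structure are the componentwise surjections, every terminal map $X \to 0$ is a fibration, so all complexes are fibrant and $\cC^{cf}$ coincides with the simplicial subcategory $\cC^c$ of cofibrant complexes with mapping spaces $\Map(X,Y)$ as in the proposition. Theorem~\ref{thm:hammock-weak-simpl} will then yield an equivalence $\cC^c \simeq L^H(\cC)$, provided $C(k)$ is a \emph{weak simplicial model category} with respect to this enrichment.

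For the weak path functor, I define $X^S := \shom_\Z(C_*(S), X)$, with the $k$-action inherited from $X$. The identity
$$\shom_k(Y, \shom_\Z(C_*(S), X)) \cong \shom_\Z(C_*(S), \shom_k(Y, X))$$
of complexes, followed by passage to $0$-cocycles, yields $\Hom_{C(k)}(Y, X^S) \cong \Hom_{\sSet}(S, \Map(Y,X))$, the desired representability.

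The substantive point is to show that for a cofibration $i: A \to B$ and a fibration $p: X \to Y$ in $C(k)$, the map
$$\Map(B, X) \to \Map(A, X) \times_{\Map(A,Y)} \Map(B, Y)$$
is a Kan fibration, trivial if either $i$ or $p$ is a weak equivalence. By the same adjunction as above, the lifting property in $\sSet$ against $\Lambda^n_k \to \Delta^n$ (resp.\ $\partial\Delta^n \to \Delta^n$) unfolds into the lifting property in $C(\Z)$ of
$$q: \shom(B, X) \to \shom(A, X) \times_{\shom(A,Y)} \shom(B,Y)$$
against $C_*(\Lambda^n_k) \to C_*(\Delta^n)$ (resp.\ $C_*(\partial\Delta^n) \to C_*(\Delta^n)$). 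These maps of normalized chains are both inclusions with componentwise free cokernel, hence cofibrations in $C(\Z)$; the first is moreover a quasi-isomorphism, since it comes from a weak equivalence between the contractible simplicial sets $\Lambda^n_k$ and $\Delta^n$. Hence the whole argument reduces to showing that $q$ is a fibration in $C(\Z)$, which is acyclic when either $i$ or $p$ is. This is exactly the pushout--product axiom for the projective model structure on $C(k)$, i.e.\ the statement that $(\otimes_k, \shom)$ is a Quillen adjunction of two variables.

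The main obstacle is this pushout--product axiom, a classical result (cf.\ \cite{H}, \S 4.2) proved by reduction to generating (trivial) cofibrations. Once granted, the hypotheses of Theorem~\ref{thm:hammock-weak-simpl} are met, and the proposition follows.
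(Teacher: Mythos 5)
Your proposal is correct and follows essentially the paper's own (very terse) argument: the paper likewise just asserts that the normalized-chains enrichment makes $C(k)$ with the projective structure a weak simplicial model category and then invokes Theorem~\ref{thm:hammock-weak-simpl}, all objects being fibrant, so you are simply supplying the verification the paper leaves to the reader. One small correction: since $k$ is an arbitrary ring and $\shom_k(X,Y)$ lies in $C(\Z)$, the pushout--product statement you need is the one for the two-variable Quillen adjunction $\otimes_\Z\colon C(\Z)\times C(k)\to C(k)$ (the $C(\Z)$-module structure on $C(k)$), not for $\otimes_k$; the reduction to generating (trivial) cofibrations goes through verbatim.
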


\subsubsection{DG algebras}Let $k$ be commutative and 
$k\supset\Q$. The category $\cC=\Com(k)$ of commutative
DG algebras over $k$ has a model structure with 
quasiisomorphisms as weak equivalences and surjective maps as fibrations. It has a simplicial structure defined as follows.

Define 
$$\Omega_n=k[x_0,\ldots,x_n,d_0,\ldots,d_n]/
(\sum x_i-1,\sum d_i),$$
with the variable $x_i$ having degree $0$ and $d_i$ having degree $1$. With the differential given by $dx_i=d_i$, 
$\Omega_n$  becomes a commutative DG algebra (this is
the algebra of polynomial differential forms on the standard $n$-simplex).

For a commutative DG algebra $A$ we can now define
$A^{\Delta^n}$ to be just $\Omega_n\otimes A$. This is 
a weak simplicial structure on the model category of commutative DG algebras. Therefore, the quasicategory 
underlying $\cC$ can be described as the nerve $\fN$ of the simplicial category of cofibrant commutative DG algebras.

The same construction makes sense for algebras of ``any type'' (that is, algebras over any operad).

\subsubsection{Complexes of sheaves}

A common wisdom says that the category of sheaves has no projective objects, so one should look for injectives.

Here is another approach. 

Let $(X,\tau)$ be a site ($X$ a category, $\tau$ a topology), $k$ a ring, and let $C(\wh X_k)$
be the category of complexes of presheaves of $k$-modules
in $X$. 

We say that a map $M\to N$ is a weak equivalence if it induces a quasiisomorphism of the respective sheafifications.

Cofibrations are generated by maps $M\to M\langle x;dx=z\rangle$ where $z\in M(U)$ is a cycle.

Fibrations are defined by RLP with respect to trivial cofibrations.

This is a model category; cofibrations are described
as retractions of colimits of $I$-diagrams where $I$ is
described by joining a variable to kill a cycle as above.

Generating acyclic cofibrations can be explicitly described in terms of hypercoverings. Let us remind this notion.

Let, as above, $(X,\tau)$ be a site and let $\hat X$ denote the category of presheaves (of sets) on $X$.
\begin{Dfn}
\begin{itemize}
\item[1.] A presheaf $P\in\wh X$ is called semirepresentable 
if it is isomorphic to a coproduct of representable 
presheaves.
\item[2.] A map of presheaves $V\to U$ is called a covering
of presheaves if its sheafification is surjective.
\item[3.] A simplicial object $V_\bullet$ in $\wh X$
is called a hypercovering if all $V_n$ are semirepresentable
and for any map the map $V_n\to V(\partial\Delta^n)$ is a
covering. (For $n=0$ we assume $V(\emptyset)=*$).
\item[4.] An augmented simplicial presheaf $V_\bullet\to U$  
is a hypercovering if it defined a hypercovering in 
$\wh X_{/U}$.
\end{itemize}
\end{Dfn}

Here is the meaning of definition in case $X$ is the category of open subsets. A hypercovering of an open set $U$
is the following. $V_0$ is a collection of open subsets covering $U$; $V_1$ is a collection of open subsets 
covering the pairwise intersections of the components of 
$V_0$, etc.

The generating trivial cofibration corresponds to a pair
$(\phi,n)$ where $\phi:V_\bullet\to U$ is a hypercovering 
and $n$ is an integer. It has form $K\to L$ where $L$ is the cone of $\id_K$, and $K$ is the cone of the map
$C_*(V_\bullet)\to k\cdot U$, $C_*$ being the complex of normalized chains, shifted by $n$.

If $M$ is an abelian presheaf and $V_\bullet\to U$ is a hypercovering, one has a map $M(U)\to\check{C}(V_\bullet,M)$, where $\check{C}(V_\bullet,M)$ is the \v{C}ech complex
defined as the total complex of the cosimplicial object
$\Hom(V_\bullet,M)$.

In these terms fibrations are described very easily. 

A map $f:M\to N$ is a fibration iff 
\begin{itemize}
\item for any $U$ the map $M(U)\to N(U)$ is surjective.
\item for any hypercovering $V_\bullet\to U$ the diagram
\begin{equation}
\xymatrix{
&M(U) \ar[r]\ar[d] & \check{C}(V_\bullet,M)\ar[d] \\
&N(U) \ar[r] & \check{C}(V_\bullet,N) 
}
\end{equation}
is homotopy cartesian. 
\end{itemize}
The model structure described above is weakly simplicial.
Thus, we can define infinity-version of the derived category of sheaves applying the functor $\fN$ to the simplicial
category spanned by fibrant cofibrant objects of 
$C(\wh X_k,\tau)$.

\subsubsection{Topology as localization}
Note that the category $C(\wh X_k,\tau)$ does not depend on the topology on $X$. Cofibrations in $C(\wh X_k,\tau)$ also know nothing about the topology. Weak equivalence is the datum depending on $\tau$. For the coarse topology $\tau_0$(all presheaves are sheaves) weak equivalence is just a map $f:M\to N$ inducing a quasiisomorphisms $f(U):M(U)\to N(U)$
for all $U$. In general one has more weak equivalences and,
respectively, less fibrations.

Thus, identity functor yields a Quillen adjunction
$$ C(\wh X_k,\tau_0)\rlarrows C(\wh X_k,\tau).$$
The right adjoint functor identifies  $\Ho(C(\wh X_k,\tau))$
with a full subcategory of $\Ho(C(\wh X_k,\tau_0))$.

This is a general phenomenon called {\sl left Bousfield
localization}. Given a model category, one can sometimes
enlarge the collection of weak equivalences retaining the same cofibrations. We will see later more examples of 
Bousfield localization.
 
\newpage
\section{Simplicial spaces. Segal spaces. Complete Segal spaces}
\label{sect:SS}

Among various models of infinity categories, that of complete Segal spaces is especially  pleasant. 

\subsection{Why Segal spaces?}

Recall that the nerve functor $N:\Cat\to\sSet$ is fully faithful, and its essential image can be described by any of
two equivalent properties of a simplicial set $X$ (see Part 2).
\begin{itemize}
\item[1.] The maps 
$X_n\to X_1\times_{X_0}\ldots\times_{X_0}X_1$,
induced by the embeddings $\Spine(n)\to\Delta^n$, are 
bijections. Here $\Spine(n)$ is the simplicial subset of $\Delta^n$ spanned by the $1$-simplices $\{i,i+1\}$, $i=0,\ldots,n-1$ (the ``spine'' of $\Delta^n$).
\item[2.] The maps $X_n\to\Hom(\Lambda^n_i,X)$ are bijective for
all $1\leq i\leq n-1$.
\end{itemize}
A slight generalization of  condition 2 gave us the 
notion of quasicategory. The notion of Segal space is based 
on a version of the first condition. The idea is that 
an infinity category should have a space of objects, a space
of morphisms, a space of commutative triangles, and so on.
Thus, we have to replace simplicial sets with simplicial
spaces; then the condition 1 can be replaced with a 
meaningful weakening --- saying that the respective map is a 
weak equivalence. 
 
By definition, a CSS is a simplicial space (more precisely, 
simplicial object in $\sSet$, that is bisimplicial set) 
satisfying some special properties which we will
describe later.

\subsubsection{Some history} Back 1960-ies topologists were  interested in describing a condition on topological space 
$X$ which would ensure it is homotopy equivalent to 
a loop space. Loop space has a (sort of) associative composition, so it turned out that the problem reduces to describing composition laws, associative up to (higher and higher) homotopies. Here is G. Segal's suggestion.

A Segal monoid is a simplicial object $X_\bullet$ such that 
\begin{itemize}
\item The map $X_n\to (X_1)^n$ (induced by $n$ embeddings
$\Delta^1\to\Delta^n$ carrying $0$ and $1$ to $i$ and $i+1$ respectively,  is a weak equivalence.
\item $X_0$ is a point.
\end{itemize} 
Segal monoid defines a loose structure of associative monoid on $X_1$: if $X_n$ were actually  $X_1^n$, the associative operation would be given by $d_1:X_1^2\to X_1$.

We know that associative monoids are just categories with one object. So the following
modification seems to be very natural.

Let $\cT$ be a category with products ($\cT=\Top$ or $\sSet$ especially important for us) and with a notion of weak equivalence. A Segal object in $\cT$ is a simplicial object $X_\bullet$ such that the canonical map 
\begin{equation}
X_n\to X_1\times_{X_0}\ldots\times_{X_0}X_1.
\end{equation}
is a weak equivalence.

Complete Segal spaces are Segal objects in the category of spaces (technically: simplicial sets) satisfying an extra {\sl completeness} condition which will be explained later.

\subsection{Bisimplicial sets}

Our new way to model infinity categories will be via 
bisimplicial sets satisfying certain properties. Let us introduce an appropriate notation.

 $\ssSet$ is the category of bisimplicial sets $X_\bbullet\in\Fun(\Delta^\op\times\Delta^\op,\Set)$.
We have to get accustomed to look at them as at model for higher categories; in particular, the roles of the indices will be very different.

A bisimplicial set  should be seen as a simplicial object in ``spaces''.
So, if $X\in\ssSet$, $X_n=X_{n\bullet}$ denotes the ``space'' of $n$-simplices.

Correspondingly, there are two functors from simplicial sets to bisimplicial sets, corresponding to two projections $\Delta\times\Delta\to\Delta$.  

The first one, $c:\sSet\to\ssSet$ is defined by the formula $c(X)_n=X$. (``c'' stands for ``constant''). The second one, $d:\sSet\to\ssSet$, is defined as $d(X)_n=X_n$, ``d''
meaning ``discrete''. The latter means that the space of $n$-simplices in $d(X)$ a discrete simplicial space $X_n$.

Denote $\Delta^{m,n}=d(\Delta^m)\times c(\Delta^n)$. This is a presheaf on $\Delta\times\Delta$
represented by the pair $([m],[n])$. Thus, one has
$$ \Hom(\Delta^{m,n},X)=X_{m,n}.$$

\subsubsection{Internal Hom}

Direct product in $\ssSet$ has a right adjoint, so that $\Fun(X,Y)\in\ssSet$ is defined
for $X,Y\in\ssSet$. This is always so since $\ssSet$ is a category of presheaves
(of sets, in the conventional sense). Explicitly,
$$\Fun(X,Y)_{m,n}=\Hom(X\times\Delta^{m,n},Y).$$

Forgetting a part of the structure, we can get simplicial enrichment. Actually, we can get two of them, but we are now more interested in this one:
$$ \Map(X,Y)_n=\Fun(X,Y)_{0,n}=\Hom(X\times c(\Delta^n),Y).
$$
This simplicial structure will be compatible with  the model structure.

\subsubsection{}
One has $X_n=\Map(d(\Delta^n),X)$. Taking this into account,
we use the notation $X(S)=\Map(d(S),X)$ for a simplicial
set $S$. Thus, $X(\Delta^n)=X_n$. 
 
\subsection{Review: Model structure on functor categories}

Let $I$ be a category and $M$ a model category. 
Does the category $\Fun(I,M)$ admit a canonical, or ``standard'' model structure?

Let us try to imagine what we would like. Here is an obvious choice.
\begin{dfn}A map of functors $f:F\to G$ is a weak equivalence if for all $i\in I$ $f(i)$ is a weak equivalence in $M$.
\end{dfn}

The next step is less obvious. We could copy the  above definition for cofibrations and for fibrations. Unfortunately, we cannot do this simultaneously.

So, we have to make a choice. 
\begin{dfn}A map of functors $f:F\to G$ is a {\sl projective fibration} if for all $i\in I$ $f(i)$ is a 
fibration  in $M$.
\end{dfn}

Then we will have to say
\begin{dfn}A map of functors $f:F\to G$ is a {\sl projective cofibration} if it satisfies LLP with respect to trivial projective fibrations.
\end{dfn}

Alternatively, we can define
\begin{dfn}A map of functors $f:F\to G$ is an {\sl injective cofibration} if for all $i\in I$ $f(i)$ is a 
cofibration  in $M$.
\end{dfn}

Then we have to define
\begin{dfn}A map of functors $f:F\to G$ is an {\sl injective fibration} if it satisfies RLP with respect to trivial injective cofibrations.
\end{dfn}

This is only beginning of the story. It turns out that both model structures exist only if one makes some additional assumptions. These are requirements on $M$. One has
\begin{thm}
\begin{itemize}
\item[1.] Assume $M$ is cofibrantly generated. Then 
$\Fun(I,M)$ admits a projective model structure.
\item[2.] Assume $M$ is combinatorial~\footnote{That is, cofibrantly generated and (locally) presentable, a property that we do not explain in this course.}
 model category. Then
$\Fun(I,M)$ admits an injective model structure.
\end{itemize}
\end{thm}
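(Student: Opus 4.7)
For both parts the natural strategy is to apply the recognition criterion of Theorem~\ref{thm:cofgen}: identify sets $I_{\Fun}$ and $J_{\Fun}$, take $W$ to be the pointwise weak equivalences, and check the four conditions. The class $W$ automatically inherits the two-out-of-three property and closure under retracts from $M$, so the real work is to choose $I_{\Fun}$ and $J_{\Fun}$ and to verify smallness plus the compatibility conditions $\colim J_{\Fun}\text{-diagrams}\subset W\cap \LLP(\RLP(I_{\Fun}))$ and $\RLP(I_{\Fun})=\RLP(J_{\Fun})\cap W$. Colimits in $\Fun(I,M)$ exist and are computed pointwise, so all such verifications reduce to pointwise statements in $M$.

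\textbf{The projective case.} First I would use, for each $i\in I$, the evaluation $\ev_i:\Fun(I,M)\to M$ and construct its left adjoint $F_i:M\to\Fun(I,M)$ by $F_i(X)(j)=\coprod_{\Hom_I(i,j)}X$. If $I_M,J_M$ generate the cofibrations and trivial cofibrations of $M$, put
\[
 I_{\Fun}=\{F_i(f):i\in I,\ f\in I_M\},\qquad J_{\Fun}=\{F_i(g):i\in I,\ g\in J_M\}.
\]
By the adjunction $F_i\dashv\ev_i$, a map $\alpha:F\to G$ has the RLP with respect to $F_i(u)$ iff $\ev_i(\alpha)$ has the RLP with respect to $u$; so $\RLP(I_{\Fun})$ is exactly the class of pointwise trivial fibrations and $\RLP(J_{\Fun})$ is the class of pointwise fibrations. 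This immediately yields $\RLP(I_{\Fun})=\RLP(J_{\Fun})\cap W$. Since colimits are pointwise, $\ev_i$ preserves filtered colimits, hence $F_i$ preserves smallness, and the domains of $I_{\Fun},J_{\Fun}$ inherit smallness from the domains in $M$. Finally, a pushout of some $F_i(g)$ with $g\in J_M$ is pointwise a pushout of $g$ (possibly coproducted with itself), hence a pointwise trivial cofibration; a sequential colimit of such maps remains a pointwise weak equivalence because each $\ev_j$ preserves colimits and weak equivalences are closed under sequential colimits of trivial cofibrations in $M$ (a standard consequence of cofibrant generation). So the $J_{\Fun}$-cell complexes lie in $W\cap \LLP(\RLP(I_{\Fun}))$. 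Theorem~\ref{thm:cofgen} then supplies the model structure.

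\textbf{The injective case.} Now weak equivalences and cofibrations are defined pointwise, and fibrations are determined by the RLP against trivial injective cofibrations. The obvious pointwise generators do not work, because being a pointwise cofibration is not detected by a single object, and there is no evident adjoint machinery to transfer cofibrations from $M$. The main obstacle is therefore to produce a \emph{set} (rather than a proper class) of generating trivial cofibrations; this is where the hypothesis of combinatoriality (i.e.\ local presentability) is essential. My plan is to invoke the version of Jeff Smith's theorem for combinatorial model categories: choose a regular cardinal $\kappa$ large enough that (i) $M$ is locally $\kappa$-presentable, (ii) the generators of $M$ are $\kappa$-presentable, and (iii) the class $W$ of pointwise weak equivalences is accessible and accessibly embedded in the arrow category (accessibility of $W$ follows from cofibrant generation of $M$ together with the small object argument). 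Then one shows a \emph{bounded cofibration lemma}: any trivial injective cofibration $f:F\to G$ in $\Fun(I,M)$ is a $\kappa$-filtered colimit of subobjects $F\hookrightarrow G_\alpha$ which are themselves $\kappa$-presentable (in the arrow category) and trivial injective cofibrations. Taking a set of representatives for the isomorphism classes of such $\kappa$-presentable trivial injective cofibrations produces $J_{\Fun}$. For $I_{\Fun}$ one does the analogous argument with "injective cofibration" in place of "trivial injective cofibration", or alternatively takes all $\kappa$-presentable injective cofibrations. With these sets in hand, smallness is automatic from local presentability, and the identities $\RLP(I_{\Fun})=W\cap \RLP(J_{\Fun})$ and the cell-complex condition follow from the fact that pushouts and transfinite compositions of pointwise (trivial) cofibrations are again pointwise (trivial) cofibrations, together with the bounded cofibration lemma reducing general maps to generators. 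Applying Theorem~\ref{thm:cofgen} (in its full $\kappa$-small form) concludes the proof.
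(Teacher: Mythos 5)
The paper itself gives no proof of this theorem: it is stated as a review of standard results (the projective case going back to Quillen/Hirschhorn, the injective case to Jeff Smith, written up e.g. in Lurie's appendix), so there is no in-paper argument to measure you against; I can only assess your outline on its own terms. Your projective half is the standard and correct argument: transfer along the adjunctions $F_i\dashv \ev_i$ with $F_i(X)(j)=\coprod_{\Hom_I(i,j)}X$, so that by adjunction the right lifting property against $F_i(u)$ is detected by $\ev_i$, giving at once that $\mathrm{RLP}(I_{\Fun})$ and $\mathrm{RLP}(J_{\Fun})$ are the pointwise trivial fibrations and pointwise fibrations, and the recognition criterion applies. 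Two small touch-ups: closure of trivial cofibrations under pushout, coproduct and transfinite composition holds in \emph{any} model category (they are characterized by the left lifting property against fibrations), so you do not need to invoke cofibrant generation for that step; and since $F_i(f)$ evaluated at $j$ is a coproduct of copies of $f$ indexed by $\Hom_I(i,j)$, the smallness bookkeeping really wants the $\kappa$-small form of the recognition theorem rather than the purely sequential version stated in the course (the paper itself flags this generalization), which you do acknowledge at the end.

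The injective half correctly identifies where the difficulty lies (no evaluation-adjunction trick; one must manufacture a \emph{set} of generating (trivial) cofibrations) and names the right tools: local presentability, accessibility of the class of pointwise weak equivalences, and a bounded-cofibration / Smith-type argument producing $\kappa$-presentable generators. But be aware that this part is a plan rather than a proof: the two genuinely hard points are (i) accessibility of the weak equivalences, which is a nontrivial theorem requiring combinatoriality of $M$ (not, as you write, merely cofibrant generation plus the small object argument), and (ii) the verification that the set of $\kappa$-presentable trivial injective cofibrations really generates, i.e.\ that a map with RLP against them has RLP against \emph{all} trivial injective cofibrations — equivalently $\mathrm{RLP}(I_{\Fun})=\mathrm{RLP}(J_{\Fun})\cap W$. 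Your sentence deriving this "from closure of pointwise cofibrations under pushout and transfinite composition together with the bounded cofibration lemma" compresses exactly the part of Smith's argument where all the work happens. As a roadmap it is the correct one; as a proof it defers the substance to the cited machinery, which is a reasonable thing to do for a statement the paper itself leaves unproved, but you should present it as such.
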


For a special class of categories $I$ (Reedy categories) there is one more model category structure --- this one
for arbitrary model category $M$. This model structure has the same weak equivalences, but (in general) less fibrations than the projective model structure, and (in general) less cofibrations than the injective model structure.  It is convenient that both Reedy fibrations and Reedy cofibrations
have an explicit description. The example we are especially interested in, is when $I=\Delta^\op$ and $M=\sSet$ with the standard (Kan) model structure. In this case Reedy model structure coincides with the injective one.
 
\subsection{Reedy model structure}

\begin{thm}The category $\ssSet=\Fun(\Delta^\op,\sSet)$
has a model structure with the classes of arrows defined
as follows.
\begin{itemize}
\item A map $f:X\to Y$ is a weak equivalence iff 
$f_n:X_n\to Y_n$ is a weak homotopy equivalence of simplicial sets.
\item A map $f:X\to Y$ is a cofibration iff $f_n:X_n\to Y_n$ are cofibrations, that is, are injective maps of simplicial
sets.
\item A map $f:X\to Y$ is a (trivial) (Reedy) fibration iff
$X_0\to Y_0$ is a (trivial) Kan fibration and
for each $n>0$ the map
\begin{equation}
X_n\to Y_n\times_{Y(\partial\Delta^n)}X(\partial\Delta^n)
\end{equation}
is a (trivial) Kan fibration of simplicial sets. 
\end{itemize}
\end{thm}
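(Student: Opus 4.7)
Plan: I would recognize the asserted structure as the Reedy model structure on $\Fun(\Delta^{\op},\sSet)$, with $\Delta^{\op}$ carrying its standard Reedy grading by dimension (face maps lower the degree, degeneracies raise it). For $X\in\ssSet$ the matching object at level $n$, defined as the limit of $X$ over non-identity face maps $[k]\hookrightarrow[n]$, is naturally identified with $X(\partial\Delta^n)$ in the paper's notation, since $\partial\Delta^n$ is the colimit of its proper faces; the condition on fibrations in the theorem is therefore the usual Reedy matching condition. Dually one has the latching object $L_nX$, computed as a colimit over non-identity degeneracies, which by the Eilenberg--Zilber lemma injects into $X_n$ as the subobject of degenerate $n$-simplices.

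The easy axioms are immediate. The category $\ssSet$ has all small (co)limits, computed pointwise; the 2-out-of-3 axiom for weak equivalences is inherited levelwise from $\sSet$; and since cofibrations and fibrations are each defined by a lifting property (the former directly, the latter via the Reedy matching map), both classes are automatically closed under retracts.

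The substance of the proof is an induction on simplicial degree. To factor $f\colon X\to Y$ as $X\to Z\to Y$ of the required type, one constructs $Z$ one degree at a time: assuming $Z$ has been built through level $n-1$ together with $X\to Z\to Y$, the matching object $M_nZ=Z(\partial\Delta^n)$ and the canonical maps $X_n\to M_nZ$ and $M_nZ\to Y_n\times_{M_nY}M_nX$ are already determined; one then applies the chosen factorization in $\sSet$ to the composite $X_n\to Y_n\times_{M_nY}M_nZ$, producing $Z_n$ with its face maps. Lifting problems for a cofibration $i\colon A\to B$ against a (trivial) fibration $p\colon X\to Y$ reduce in the same way, at each level $n$, to a $\sSet$-lifting problem of the form
\[
A_n\sqcup_{A(\partial\Delta^n)}B(\partial\Delta^n)\to B_n\quad\text{against}\quad X_n\to Y_n\times_{M_nY}M_nX,
\]
which the Kan model structure on $\sSet$ solves, given the inductive hypothesis.

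The main obstacle is to reconcile the theorem's cofibrations (levelwise injections) with the Reedy cofibrations (those for which the relative latching map $L_nY\sqcup_{L_nX}X_n\to Y_n$ is a monomorphism), since it is the latter that is actually needed as the left arrow in the lifting step. The equivalence rests on the Eilenberg--Zilber decomposition: each simplex writes uniquely as a degeneracy of a nondegenerate one, so if $X\to Y$ is a levelwise injection then $L_nX=L_nY\cap X_n$ inside $Y_n$, whence the pushout-product map is injective; conversely, an inductive argument on $n$ shows that injectivity of every relative latching map forces $X_n\to Y_n$ injective. Combined with analogous bookkeeping for the trivial cases (checking that the relative latching map of a levelwise trivial cofibration is a trivial cofibration in $\sSet$, via the Reedy matching criterion applied to a factorization), this identifies the present description with the Reedy model structure and closes the proof.
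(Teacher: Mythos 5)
Your proposal follows essentially the same route as the paper's proof: the paper also verifies the lifting axioms by induction on simplicial degree, reducing at level $n$ to a lifting problem in $\sSet$ against the relative matching map $X_n\to Y_n\times_{Y(\partial\Delta^n)}X(\partial\Delta^n)$, and leaves the factorizations and the easy axioms to the reader. You in fact go further than the paper by addressing the degeneracy constraint through latching objects and the Eilenberg--Zilber lemma (needed both to build $Z_n$ with its degeneracies in the factorization step and to identify levelwise injections with Reedy cofibrations), a point the paper's sketch silently ignores. One slip to fix: in your displayed lifting problem the pushout must be taken over the latching objects, so the left map is the relative latching map $A_n\sqcup_{L_nA}L_nB\to B_n$ rather than $A_n\sqcup_{A(\partial\Delta^n)}B(\partial\Delta^n)\to B_n$ --- there is no map $B(\partial\Delta^n)\to B_n$, since matching objects receive maps from level $n$ rather than mapping into it; your closing paragraph shows you intended the former.
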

\begin{proof}This is a relatively easy result. We will only 
verify the lifting properties.

Let, for instance, $i:A\to B$ be componentwise trivial
cofibration, and $p:X\to Y$ be a Reedy fibration. We will construct a lifting $g:B\to X$ step by step. First of
all, we construct $g_0$ as $f_0$ satisfies RLP with respect to $i_0$. Let, by induction, $g_i$ be constructed for 
$i<n$.

The collection of $g_i$ extends to a map 
$B(\partial\Delta^n)\to X(\partial\Delta^n)$ which allows
one to construct a commutative diagram
\begin{equation}
\xymatrix{
&{A_n} \ar[r]\ar[d] &{X_n} \ar[d] \ar[rd] &{}\\
&{B_n} \ar[r]\ar[d] &{Y_n} \ar[rd] &{X(\partial\Delta^n)}\ar[d] \\
&{B(\partial\Delta^n)}\ar@{.>}[rru]\ar[rr] 
&{} &{Y(\partial\Delta^n)}
}
\end{equation}
which yields a commutative diagram 
\begin{equation}
\xymatrix{
&{A_n} \ar[r]\ar[d] &{X_n} \ar[d]  \\
&{B_n} \ar[r]\ar@{.>}[ru] & {Y_n\times_{Y(\partial\Delta^n)}X(\partial\Delta^n)} 
}
\end{equation}
having a lifting by the assumption on $f:X\to Y$.

The other lifting property is verified in essentially the same manner.
\end{proof}

\subsubsection{Properness}
\label{sss:properness}

A model category $M$ is said to be {\sl left proper} if all pushouts of
weak equivalences along cofibrations are weak equivalences.

It is right proper if all pullbacks of weak equivalences
along fibrations are weak equivalences. It is proper if it is both left and right proper.

\begin{Exe}
A pushout of a weak equivalence between cofibrant objects
along a cofibration is always a weak equivalence. 
Thus, any model category whose all objects are cofibrant
is left proper.\footnote{This is \cite{H}, 13.1.2.}
\end{Exe}

Dually, if all objects are fibrant, the model category is right proper.

\begin{lem}Reedy model structure on $\ssSet$
is proper.
\end{lem}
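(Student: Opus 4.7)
The plan is to treat left and right properness separately, reducing each to the corresponding property of the Kan model structure on $\sSet$.

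For left properness, I would first observe that every object of $\ssSet$ is Reedy cofibrant. Indeed, Reedy cofibrations are by definition the componentwise injections of simplicial sets, and for any $X\in\ssSet$ the map $\emptyset\to X$ is componentwise injective. The Exercise recorded just above (pushouts of weak equivalences between cofibrant objects along cofibrations are weak equivalences) then gives left properness for free.

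For right properness, I would show that pulling back a Reedy weak equivalence $f:X\to Y$ along a Reedy fibration $p:Z\to Y$ yields a Reedy weak equivalence $X\times_Y Z\to Z$. Limits and weak equivalences in $\ssSet$ are both detected componentwise, so the statement reduces to verifying, for each $n\geq 0$, that $X_n\times_{Y_n}Z_n\to Z_n$ is a weak homotopy equivalence in $\sSet$. Using right properness of the Kan model structure on $\sSet$ (which is standard), this follows once I know that each $p_n:Z_n\to Y_n$ is a Kan fibration.

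The key intermediate step is therefore the claim that a Reedy fibration is componentwise a Kan fibration. I would prove this by induction on $n$. The case $n=0$ is by definition of Reedy fibration. For $n\geq 1$ the Reedy condition provides a Kan fibration
\[
Z_n \longrightarrow Y_n\times_{Y(\partial\Delta^n)} Z(\partial\Delta^n),
\]
so it suffices to check that the projection from $Y_n\times_{Y(\partial\Delta^n)}Z(\partial\Delta^n)$ to $Y_n$ is a Kan fibration, i.e.\ that $Z(\partial\Delta^n)\to Y(\partial\Delta^n)$ is one. But $X(\partial\Delta^n)=\Map(d(\partial\Delta^n),X)$ is a finite limit of the spaces $X_k$ with $k<n$ taken along face maps; by the inductive hypothesis each $p_k$ with $k<n$ is a Kan fibration, and this class is stable under such matching-object limits, which closes the induction. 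The main (mild) obstacle is precisely this bookkeeping: one must make sure the iterated matching construction for $\partial\Delta^n$ really assembles into a Kan fibration, which is a direct but slightly tedious computation one can also extract from general Reedy theory. Once that is in hand, both parts of the lemma are proved.
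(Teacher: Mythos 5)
Your overall strategy is the same as the paper's: weak equivalences, limits and the Reedy (co)fibration conditions are all checked componentwise, every object is Reedy cofibrant so left properness is essentially free, and right properness reduces to right properness of $\sSet$ once one knows that a Reedy fibration is a levelwise Kan fibration. The paper simply asserts this componentwise fact (and, unlike you, it does not quote right properness of $\sSet$ but derives it from right properness of $\Top$, using that geometric realization preserves finite limits and fibrations).

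The genuine problem is the justification of your key intermediate step. Knowing only that $p_k:Z_k\to Y_k$ is a Kan fibration for $k<n$ does \emph{not} imply that $Z(\partial\Delta^n)\to Y(\partial\Delta^n)$ is a Kan fibration: Kan fibrations are not stable under the limits computing these matching objects. Already for $n=2$, $Z(\partial\Delta^2)$ is a limit of three copies of $Z_1$ and three copies of $Z_0$ along face maps, and (taking $Y=*$) a strict limit of Kan complexes along maps which are not fibrations need not be Kan; what is actually needed is that $(d_1,d_0):Z_1\to Y_1\times_{Y_0\times Y_0}(Z_0\times Z_0)$ is a fibration, i.e.\ the Reedy condition in degree $1$, not mere levelwise fibrancy there. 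So the correct induction is not on levelwise fibrancy at all: one shows, by attaching the nondegenerate cells of $\partial\Delta^n$ one at a time, that for every monomorphism $L\to K$ of finite simplicial sets the map $Z(K)\to Y(K)\times_{Y(L)}Z(L)$ is a Kan fibration, each cell attachment contributing a pullback of a relative matching map $Z_m\to Y_m\times_{Y(\partial\Delta^m)}Z(\partial\Delta^m)$ with $m<n$, which is a fibration precisely because $p$ is a Reedy fibration; taking $L=\emptyset$ gives that Reedy fibrations are levelwise fibrations. With that repair the rest of your reduction goes through as written.
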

\begin{proof}
Note first that $\sSet$ (with the standard model structure) is left proper and $\Top$ is right proper.
Let us show that $\sSet$ is also right proper. The functor
of geometric realization preserves finite limits. It also 
preserves fibrations, see 4.5.8.  This allows one to deduce 
right properness of $\sSet$ from the right properness of 
$\Top$. 

In the Reedy model structure any cofibration is a componentwise cofibration and any fibration is a componentwise fibration. This implies that Reedy model structure on $\ssSet$ is proper.
\end{proof}

\subsection{Rezk nerve of a (conventional) category}
The following easy construction is in the base of intuition 
about CSS.

Let $C$ be a category. As zeroth approximation to $C$
we can study ``moduli space of objects'' of $C$. This is
the maximal subgroupoid of $C$, denoted $C^\iso$. This
gives a full information about the objects (including 
automorophisms) but no information about non-invertible 
morphisms. To catch it, consider $C^{[1]}$, the category of 
arrows of $C$, and once more take the maximal subgroupoid.

In this way we end up with a simplicial object 
$$n\mapsto (C^{[n]})^\iso$$
in groupoids. Applying the nerve functor $N$, we get a simplicial simplicial set, that is, a bisimplicial set
which we denote $B(C)$. This is the Rezk nerve of a (conventional) category  $C$.
 
Note that two simplicial dimensions play a very different role here!

\subsection{Segal spaces}

Segal condition says that the space of $n$-simplices can
be reconstructed, up to homotopy, from its $1$-simplex components. Here is a formal definition we will use.

\begin{dfn}A bisimplicial set $X\in\ssSet$ is called {\sl a Segal space} if
\begin{itemize}
\item[1.] $X$ is Reedy fibrant.
\item[2.] The map $X_n\to X_1\times_{X_0}\ldots_{X_0}\times 
X_1$
is a weak equivalence.
\end{itemize}
\end{dfn}
Taking into account Reedy fibrantness, the second condition
actually means that the map is a trivial Kan fibration.

\subsubsection{Simplicial categories}
\label{sss:scat}

Note that any simplicial category $\cC$ gives rise to a
simplicial object $\cC_*=\{\cC_n\}$ in categories; all 
$\cC_n$ have the same objects, and $\Hom_{\cC_n}(x,y)$ is just the set of $n$-simplices of $\Map_\cC(x,y)$.

This defines a bisimplicial set by the formula
$$\wt\cC_{m,n}=N_m(\cC_n),$$
the nerve of the corresponding category.

One can easily see (exercise) that for any $\cC\in\sCat$  
the Reedy fibrant replacement $\wt\cC^f$ of $\wt\cC$ is Segal. Furthermore, one can assume that 
$\wt\cC^f_0=\Ob(\cC)$.

\begin{rem}
The bisimplicial set $\wt\cC$ defined above is fibrant in
{\sl projective model structure} on $\ssSet$. It satisfies
the (second) Segal condition and has discrete zero 
component. Such bisimplicial spaces are called {\sl Segal
categories}. One can think of Segal categories as a weakened 
version of the notion of
simplicial category. There should be no doubt: there is an 
approach to infinity categories based on Segal categories. 
\end{rem}

\

\
Some standard infinity-categorical notions make sense for general Segal spaces.

\subsubsection{Objects}

The set of objects of a Segal space $X$ is $X_{00}$. 

\subsubsection{Space of arrows} By definition, the map
\begin{equation}\label{eq:onetozero}
X_1\to X_0\times X_0
\end{equation}
is a fibration. In particular, for any $x,y\in X_0$
the fiber of (\ref{eq:onetozero}) at $(x,y)$ denoted as $\Map(x,y)$ is a Kan simplicial set.

\subsubsection{Composition}
The map $X_2\to X_1\times_{X_0}X_1$ is a trivial fibration,
so admits a section. Composing it with $d_1:X_2\to X_1$,
we get a (non-unique) composition.

\subsubsection{Homotopy category}
\label{sss:Ho}
The set of objects of the homotopy category $\Ho(X)$ of a 
Segal space $X$ is $X_{00}$. For $x,y\in X_{00}$ we define
$\Hom_{\Ho(X)}(x,y)$ as $\pi_0(\Map_X(x,y))$. Non-unique composition defined above induces a unique (and associative)
composition in the homotopy category (exercise).

An arrow $f\in\Map_X(x,y)$ is an equivalence if its image
in $\Ho(X)$ is invertible. By definition, an arrow $f$
is a point ($0$-simplex) in $X_1$. We will show below 
(see \ref{lem:eq-ss}) that the property of being equivalence, as defined above, depends 
on the connected component of $X_1$ only.

The fundamental groupoid $\Pi_1(X_0)$ has the same objects
as $\Ho(X)$. Actually, one has a canonical functor
$\Pi_1(X_0)\to\Ho(X)$ which is identity on objects.
In fact, the sequence of maps
$$ X_0\stackrel{s_0}{\to}X_1\stackrel{(d_1,d_0)}
{\longrightarrow}X_0\times X_0$$
induces for any pair $(x,y)\in X_0\times X_0$ a map
of homotopy fibers
$$ \Map_{X_0}(x,y)\to\Map_X(x,y).$$

\subsubsection{DK equivalence}

A map $f:X\to Y$ of Segal spaces is a DK equivalence if it is fully faithful and essentially surjective, that is
\begin{itemize}
\item[1.] For any pair $x,y\in X$ the map $\Map_X(x,y)\to\Map_Y(fx,fy)$ is an equivalence.
\item[2.] For any $y\in Y$ there exists $x\in X$ and an equivalence $f(x)\to y$.
\end{itemize}

It is easy to see that any Reedy equivalence of Segal spaces is a DK equivalence.
The converse is not true. 

\begin{Exm}
Take, for example, a conventional category $C$. Let $NC$
be the nerve of $C$ (this is a simplicial set) and let us compare it to the Rezk nerve
$B(C)$. One has an obvious  map $d(NC)\to B(C)$. It is a DK equivalence but is not a Reedy equivalence.
\end{Exm}

This is the reason Segal spaces do not fit to be models for infinity categories: they have nonequivalent presentations
to the same object.

There are two ways to correct this: one (probably, less 
interesting) is to work with Segal categories. The other 
one is to work with complete Segal spaces, CSS for short, see Definition~\ref{dfn:css} below.

\subsubsection{Equivalences}

Recall that an arrow $f$ in a Segal space $X$ is an equivalence if its image in
$\Ho(X)$ is invertible. 
 
Let us reformulate this condition in terms of a lifting property. Since $X$ is Reedy fibrant, $f\in\Map_X(x,y)$ is an equivalence iff
there exist $g,h\in\Map_X(y,x)$ and homotopies $\id_x\to hf$, $\id_y\to fg$.

This can be encoded as follows. Let $Z\subset\Delta^3$ be the simplicial subset
spanned by one-simplices $\{0,2\},\ \{1,2\},\ \{1,3\}$. Any arrow $f\in\Map_X(x,y)$
defines a map $z_f:d(Z)\to X$ carrying $\{1,2\}$ to $f$, 
$\{1,3\}$ to $\id_x$ and $\{0,2\}$ to $\id_y$. We claim that 
the arrow $f$ is an equivalence iff the map $z_f:d(Z)\to X$ 
lifts to a map $d(\Delta^3)\to X$. The ``if'' implication is 
clear; let $f$ be equivalence. This means $z_f$ lifts to a 
map $y_f:d(D)\to X$ where $D$ is the union of two faces of 
$\Delta^3$ adjacent to the edge $\{1,2\}$. In particular,
left and right inverts $g$ and $h$ exist. Since $X$ is Segal 
space, the triple $(g,f,h)$ can be lifted to some $u\in 
X_3$. The image of $u$ in $X(d(D))$ being homotopic to
$y_f$, the homotopy can be lifted to $X_3$.

\begin{lem}\label{lem:eq-ss}
If $f$ and $g$ belong to the same connected component of 
$X_1$ and if $f$ is an equivalence, then $g$ is also 
equivalence.
\end{lem}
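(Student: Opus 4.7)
The plan is to use the lifting characterization of equivalences established immediately before the lemma: an arrow $k \in X_1$ is an equivalence iff the canonical map $z_k : d(Z) \to X$ (sending $\{1,2\}$ to $k$, $\{0,2\}$ to $\mathrm{id}_{d_0 k}$, $\{1,3\}$ to $\mathrm{id}_{d_1 k}$) extends along $d(Z) \hookrightarrow d(\Delta^3)$. Since $X$ is Reedy fibrant, each simplicial set $X_n = X_{n,\bullet}$ is Kan; in particular, ``$f$ and $g$ lie in the same connected component of $X_1$'' is witnessed by a $1$-simplex $h \in X_{1,1}$ with $d_1^v h = f$ and $d_0^v h = g$ (where $d_i^v$ denotes the face maps in the second, space-level direction; $d_i^h$, $s_i^h$ will denote the horizontal operations).

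Next I promote $h$ to a $1$-simplex $K$ of $\Map(d(Z), X)$ joining $z_f$ to $z_g$, i.e.\ a bisimplicial map $K : d(Z) \times c(\Delta^1) \to X$. Applying the horizontal face maps to $h$ yields paths $p := d_1^h h$ and $q := d_0^h h$ in $X_0$, running from $d_1^h f$ to $d_1^h g$ and from $d_0^h f$ to $d_0^h g$ respectively. I define $K$ on the (non-degenerate) simplices of $Z$ by: vertices $1$ and $3$ go to $p$, vertices $0$ and $2$ go to $q$, edge $\{1,2\}$ goes to $h$, and edges $\{0,2\}$, $\{1,3\}$ go to the horizontal degeneracies $s_0^h q$ and $s_0^h p$ respectively. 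The simplicial identities $d_i^h s_0^h = \mathrm{id}$ together with $d_1^h h = p$ and $d_0^h h = q$ ensure compatibility at every shared vertex, so $K$ is well defined, and by construction $d_1^v K = z_f$, $d_0^v K = z_g$.

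Finally I transfer the extension of $z_f$ along the path $K$. The inclusion $d(Z) \hookrightarrow d(\Delta^3)$ is a (componentwise injective, hence Reedy) cofibration, and each $c(\Lambda^n_k) \hookrightarrow c(\Delta^n)$ is a trivial Reedy cofibration (componentwise a trivial cofibration in $\sSet$). The pushout--product compatibility of the Reedy model structure with the simplicial enrichment
$\Map(A,X)_n = \Hom(A \times c(\Delta^n), X)$ then implies that, for $X$ Reedy fibrant, the restriction map
\[
r \,:\, \Map(d(\Delta^3), X) \longrightarrow \Map(d(Z), X)
\]
is a Kan fibration. The hypothesis gives an extension $\wt z_f$ of $z_f$, which is a lift of the $0$-vertex of $K$; by the homotopy lifting property for $r$ I lift $K$ to a $1$-simplex $\wt K$ of $\Map(d(\Delta^3), X)$ starting at $\wt z_f$. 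Its terminal vertex $\wt z_g$ extends $z_g$ to $d(\Delta^3)$, so by the lifting characterization $g$ is an equivalence.

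The routine but delicate part is the vertex-by-vertex consistency check in the construction of $K$; the only non-bookkeeping input is the pushout--product argument identifying $r$ as a Kan fibration, which I expect to be the main obstacle only insofar as one must carefully separate the two simplicial directions of $\ssSet$.
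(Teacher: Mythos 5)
Your proof is correct and follows essentially the same route as the paper: characterize equivalences by the extension property along $d(Z)\hookrightarrow d(\Delta^3)$, note that $\Map(d(\Delta^3),X)\to\Map(d(Z),X)$ is a Kan fibration, and lift a path from $z_f$ to $z_g$ along the trivial cofibration $[0]\to[1]$. The only difference is that you spell out explicitly the construction of the connecting path and the pushout--product justification of the fibration, which the paper leaves implicit.
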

\begin{proof}
The map $X_3=\Map(d(\Delta^3),X)\to\Map(d(Z),X)$ is a fibration. If $f$ and $g$ belong to the same connected 
component of $X_1$, there is a map $\Delta^1\to \Map(d(Z),X)$ connecting $z_f$ with $z_g$. If $z_f$ lifts to $X_3$,
we get a lifting for $z_g$ from the diagram
\begin{equation}
\xymatrix{
&{[0]}\ar[d] \ar[r] &{X_3}\ar[d]\\
&{[1]}\ar@{.>}[ru]\ar[r]&{\Map(d(Z),X)}
}
\end{equation}
as the map $[0]\to[1]$ is a trivial cofibration.
\end{proof}

\subsubsection{The space of equivalences}

Taking into account Lemma~\ref{lem:eq-ss}, we define
the space of equivalences $X^\eq$ of a Segal space $X$
as the subspace of $X_1$ spanned by the equivalences.

The degeneracy $s_0:X_0\to X_1$ carries any object to an equivalence. Thus, $s_0$ factors through $s:X_0\to X^\eq$.

\subsection{Completeness}

\begin{dfn}\label{dfn:css}A Segal space $X$ is {\sl complete} if $s:X_0\to X^\eq$ is an equivalence.
\end{dfn}

Complete Segal spaces (CSS) will be now our models  for 
infinity categories. If a Segal space $X$ is complete, 
the fundamental groupoid $\Pi_1(X_0)$ identifies with the 
maximal subgroupoid of $\Ho(X)$. 

Note that, for  $X$   complete, the space of equivalences
$\Map^\eq_X(x,y)$ defined as the fiber of 
$X^\eq\to X_1\to X_0\times X_0$ at $(x,y)$, is equivalent 
to the space of paths from $x$ to $y$ in $X_0$.
In fact, since $X_0\to X^\eq$ is an equivalence, the space 
$\Map^\eq_X(x,y)$ can be also described as the homotopy 
fiber of the diagonal $X_0\to X^\eq\to X_0\times X_0$.

\subsubsection{DK equivalence}

If $f:X\to Y$ is a Reedy equivalence of Segal spaces,
then it is obviously a DK equivalence. We will now show 
that the converse holds if $X$ and $Y$ are CSS.

Let us, first of all, prove that $f_0:X_0\to Y_0$ is a 
homotopy equivalence. Since $f$ induces an 
equivalence of the homotopy categories, one has a bijection
$\pi_0(X_0)\to\pi_0(Y_0)$.  
 
Furthermore, for any pair $x,x'$ one has 
an homotopy equivalence $\Map_X(x,x')\to \Map_Y(fx,fx')$  
which has to preserve the components describing 
equivalences. By completeness, these are precisely the 
spaces of paths from $x$ to $x'$ in $X_0$ and from $fx$ to 
$fx'$ in $Y_0$ respectively. This proves that $f_0$ is an 
equivalence. 
 
Finally, in the commutative diagram
$$
\xymatrix{
&X_n\ar[r]^{f_n}\ar[d]&Y_n\ar[d]\\
&{X_0^{n+1}}\ar[r]^{f_0^{n+1}}&{Y_0^{n+1}}
}
$$
the lower horizontal arrow, as well as the fibers are weakly 
homotopy equivalent. This implies that $f_n$ is also a weak 
equivalence.

\subsection{Classification diagram of a relative category}
DK localization assigns to a category $\cC$ endowed with a subcategory $W$ of ``weak equivalences'', a simplicially 
endriched category denoted $L^H(\cC,W)$. There is a similar construction in the framework of complete Segal spaces. We will now describe it.

\subsubsection{}
\label{sss:constr-B}

Here is the construction. Recall that for a category $C$
we assign its Rezk nerve $BC$ whose $n$-simplices
form the Kan simplicial space $N(\Fun([n],C)^\iso)$.

More generally, for a pair $(C,W)$, with $W$ subcategory of $C$ containing all isomorphisms, we define $B(C,W)$ as the bisimplicial space defined by the formula $B(C,W)_n=N(\Fun([n],(C,W))$.
Here $\Fun([n],(C,W))$ is the  category whose
objects are the functors  $[n]\to C$ and whose arrows
are the pointwise weak equivalences of the functors.
 Thus, $B(C,W)_{n,m}$ is the set of commutative diagrams of rectangular shape $n\times m$, with
vertical arrows belonging to $W$.

We cannot expect $B(C,W)$ to be a Segal space as 
the $0$-th component is $N(W)$ which is not Kan, and so
$B(C,W)$ is not even Reedy fibrant. On the other hand, it is 
easy to see (exercise) that $X=B(C,W)$ satisfies Condition 2
of the definition of Segal space. \footnote{It is, however, not necessarily true that the property will persist if one
makes a Reedy fibrant replacement.}
\begin{Exe}Verify condition 2.
\end{Exe}

One has (see \cite{R}, Theorem 8.3, \cite{B2}, 6.2).
\begin{thm}Let $(\cC,W)$ be a  model category
with weak equivalences $W$. Then $B(\cC,W)^f$, a Reedy
fibrant replacement of $B(\cC,W)$, is a CSS.
\end{thm}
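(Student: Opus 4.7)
The plan is to check the three defining properties of a CSS for $X := B(\cC,W)^f$: Reedy fibrancy (automatic by construction of $(-)^f$), the Segal condition, and completeness. The levelwise weak equivalence $B(\cC,W) \to X$ will let us pull each property back to a more concrete statement about the unreplaced classification diagram.

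For the Segal condition, I would first establish a \emph{strict} Segal isomorphism on $B(\cC,W)$ itself. As $1$-categories, $\Fun([n],(\cC,W)) \cong \Fun([1],(\cC,W)) \times_{\Fun([0],(\cC,W))} \cdots \times_{\Fun([0],(\cC,W))} \Fun([1],(\cC,W))$, since an object of the left side is a chain of $n$ composable arrows together with a $W$-morphism to another such chain, and this data is equivalent to gluing $n$ compatible arrow data. The nerve functor preserves limits, so $B(\cC,W)_n \cong B(\cC,W)_1 \times_{B(\cC,W)_0} \cdots \times_{B(\cC,W)_0} B(\cC,W)_1$ as simplicial sets (this is the exercise left in \ref{sss:constr-B}). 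Now, because $X$ is Reedy fibrant, the face maps $X_1 \to X_0$ are Kan fibrations, so the strict iterated pullback on $X$ coincides with the homotopy pullback. Homotopy pullbacks are invariant under levelwise weak equivalence, so the homotopy pullback of the diagram for $X$ is weakly equivalent to that for $B(\cC,W)$; combining this with the strict Segal isomorphism for $B(\cC,W)$ and the levelwise equivalence $B(\cC,W)_n \xrightarrow{\sim} X_n$ yields the Segal weak equivalence for $X$.

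The main obstacle is completeness: we must show that the degeneracy $s : X_0 \to X^\eq$ is a weak equivalence. Equivalently (via the levelwise equivalence $B(\cC,W) \to X$ and a comparison at the level of homotopy types), $N(W) \simeq X_0$ maps by weak equivalence onto the subspace of $X_1 \simeq N(\Fun([1],(\cC,W)))$ consisting of those arrows of $\cC$ whose image in $\Ho(X)$ is invertible. The key point is the identification
\begin{equation*}
\{f \in X_1 \mid [f] \text{ invertible in } \Ho(X)\} \;=\; N(W) \subset N(\Fun([1],(\cC,W))).
\end{equation*}
One direction is straightforward: a weak equivalence in $\cC$ admits, in the hammock/Dwyer--Kan sense, a two-sided inverse witnessed inside the Segal $3$-simplices of $X$, so $[f]$ is invertible in $\Ho(X)$. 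For the converse, if $[f]$ is invertible in $\Ho(X)$ one uses the reformulation of equivalences via the subcomplex $Z \subset \Delta^3$ (as in the paragraph preceding Lemma~\ref{lem:eq-ss}) to lift the data of an inverse and two compatible identities to a $3$-simplex of $X$; unravelling what such a $3$-simplex is in $B(\cC,W)$ (a commuting $3$-by-$m$ diagram with $W$-verticals), and applying the two-out-of-three property of $W$ together with the fact that identities lie in $W$, forces $f$ itself to lie in $W$.

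Finally, one checks that the inclusion $N(W) \hookrightarrow X^\eq$ is a \emph{levelwise} weak equivalence and not merely a bijection on components: in each simplicial degree the fiber over a point of $X_0 \times X_0$ corresponds to a path space in $N(W)$ on one side and to the space of equivalences in $\Map_X(x,y)$ on the other, and these agree by the same argument applied in families. Together with Reedy fibrancy and the Segal property already established, this proves $X = B(\cC,W)^f$ is a CSS. The detailed verification of both the Segal comparison and the completeness identification follows Rezk \cite{R} (Theorem 8.3) and Bergner \cite{B2} (Section 6); I would defer to those for the bookkeeping.
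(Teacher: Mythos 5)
There is a genuine gap, and it sits exactly where the paper's own footnote in \ref{sss:constr-B} warns you: the Segal property of $B(\cC,W)$ need not survive Reedy fibrant replacement. Your comparison chain gives, on one side, $X_n\simeq B_n\cong B_1\times_{B_0}\cdots\times_{B_0}B_1$ (the \emph{strict} pullback, via the strict Segal isomorphism for the classification diagram), and on the other side, (strict pullback for $X$) $=$ (homotopy pullback for $X$) $\simeq$ (homotopy pullback for $B$). To splice these into the Segal equivalence for $X$ you need the strict pullback of the cospan $B_1\to B_0\leftarrow B_1$ to compute the homotopy pullback. Nothing you have said justifies this: the maps $B_1=N(\Fun([1],(\cC,W)))\to B_0=N(W)$ are nerves of evaluation functors and are not Kan fibrations, and a levelwise weak equivalence of cospans only identifies homotopy pullbacks, not strict ones. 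A decisive sanity check: your Segal argument never uses the model structure, so if it worked it would show that the classification diagram of an \emph{arbitrary} relative category becomes a Segal space after Reedy fibrant replacement, which is false --- that is precisely why the theorem is stated for model categories. The genuine content of Rezk's Theorem 8.3 and of Bergner's extension is to prove, using the factorization axioms and Dwyer--Kan function-complex technology, that these strict pullbacks are homotopy pullbacks; this is the theorem, not bookkeeping. (The paper itself gives no argument here; it only records the citation, so deferring to Rezk and Bergner is fine --- but then the bridging argument you add in between is the part that does not stand.)

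The completeness step has a similar, smaller problem. The identification ``$[f]$ invertible in $\Ho(X)$ iff $f\in W$'' is the saturation property of model categories ($W$ is the preimage of the isomorphisms under $\cC\to\Ho(\cC)$). This is a real theorem that uses the model axioms; it does not follow from the two-out-of-three property together with identities lying in $W$ (there are relative categories satisfying two-out-of-three that are not saturated), and your ``unravelling a $3$-simplex of $X$ as a strict diagram in $\cC$ with $W$-verticals'' is not available, since the simplices of the Reedy fibrant replacement have no such description. Moreover, even formulating the statement requires first knowing $\Ho(X)\simeq\Ho(\cC)=\cC[W^{-1}]$, which itself depends on the Segal comparison above. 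So both halves of your sketch bottom out exactly at the points where the model structure must enter, and those are the points you have deferred.
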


The result was proven by Rezk in the case of simplicial
model category, and by Bergner for general model categories.

\subsection{CSS model structure.}
\label{ss:CSSmodels}

We defined CSS as Reedy fibrant bisimplicial sets satisfying
some extra properties. It is worth mentioning that there is
a model category structure on $\ssSet$ (called CSS model structure) for which complete Segal spaces are precisely 
the fibrant objects. 

Here is the result (see \cite{R}, Thm.~7.2).
\begin{thm}There exists a simplicial cartesian (see below) 
model category structure on $\ssSet$ such that
\begin{itemize}
\item[1.] Cofibrations are monomorphisms.
\item[2.] Fibrant objects are precisely the complete Segal spaces.
\item[3.] A map $f$ is a weak equivalence iff $\Map(f,X)$
is a weak equivalence of simplicial sets for any complete Segal space $X$.
\item[4.] Reedy weak equivalence in $\ssSet$ is a weak equivalence in CSS structure; the converse holds for a map between two CSS.
\end{itemize}
\end{thm}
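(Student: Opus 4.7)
The plan is to obtain the CSS model structure as a left Bousfield localization of the Reedy model structure on $\ssSet$, using Hirschhorn's general existence theorem for left Bousfield localizations of combinatorial left proper simplicial model categories.

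First I would verify the prerequisites for localization. The Reedy model structure on $\ssSet$ was constructed in the previous subsection, and it was shown in the lemma of \ref{sss:properness} to be proper. It is cofibrantly generated (with generators obtainable from the generators of $\sSet$), and presheaf categories are locally presentable, so it is combinatorial. The simplicial enrichment $\Map(X,Y)_n = \Hom(X \times c(\Delta^n), Y)$ together with the componentwise cofibration/fibration description makes it a simplicial model category. Thus all hypotheses of Hirschhorn's localization theorem are satisfied.

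Next I would specify the set $S$ of maps at which to localize. It should consist of two families:
\begin{itemize}
\item the Segal maps $\sigma_n : d(\Spine(n)) \to d(\Delta^n)$ for $n \geq 2$, whose locality enforces condition~2 of the definition of Segal space;
\item the completeness map $\epsilon : d(\Delta^0) \to d(E)$, where $E$ is the nerve of the free-standing isomorphism groupoid (two objects with a unique isomorphism between them), whose locality enforces $X_0 \simeq X^\eq$.
\end{itemize}
Hirschhorn's theorem then produces a left Bousfield localization $\ssSet_{CSS}$ with the same cofibrations as the Reedy structure (monomorphisms, giving part~1), weak equivalences given by the $S$-local equivalences, i.e.\ maps $f$ such that $\Map(f,X)$ is a weak equivalence for every $S$-local fibrant object $X$ (which will give part~3 once fibrant objects are identified), and with fibrant objects being exactly the $S$-local Reedy fibrant objects.

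The main step, and expected obstacle, is to show that the $S$-local Reedy fibrant objects coincide with the complete Segal spaces. One direction is essentially tautological: locality with respect to $\sigma_n$ says that $\Map(d(\Delta^n), X) \to \Map(d(\Spine(n)), X)$, i.e.\ $X_n \to X_1 \times^h_{X_0} \cdots \times^h_{X_0} X_1$, is a weak equivalence, and for Reedy fibrant $X$ this is the homotopy pullback, so $X$ is a Segal space; locality with respect to $\epsilon$ says that $X_0 = \Map(d(\Delta^0), X) \to \Map(d(E), X)$ is an equivalence, and one identifies the right-hand side with $X^\eq$ using that $E$ classifies equivalences in a Segal space (this latter identification is the technical heart of the argument and needs the spine/horn combinatorics of $E$). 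Conversely, given a CSS $X$, the locality with respect to $\sigma_n$ and $\epsilon$ follows by definition.

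Finally, for part~4, the identity functor $\ssSet_{\text{Reedy}} \to \ssSet_{CSS}$ is a left Quillen functor, so Reedy weak equivalences are CSS weak equivalences. Conversely, a CSS weak equivalence between two complete Segal spaces is a weak equivalence between fibrant objects in the localized structure, hence by the standard fact about Bousfield localizations (CSS weak equivalences between fibrant objects are exactly the original Reedy weak equivalences) it is a Reedy weak equivalence. The cartesian property (compatibility of the monoidal structure given by direct product with the model structure) is inherited from the Reedy structure once one verifies that direct product with a fixed object preserves $S$-local equivalences, which reduces to showing that the maps in $S$ remain local equivalences after product with an arbitrary bisimplicial set; this is a standard manipulation using cofibrant generation and the fact that products with $\sigma_n$ and $\epsilon$ are already local equivalences.
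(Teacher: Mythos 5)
Your route---left Bousfield localization of the Reedy structure at the spine inclusions $d(\Spine(n))\to d(\Delta^n)$ together with a single completeness map---is exactly the one the paper takes, following Rezk; using $d(E)$ rather than the paper's $d(\bar\Delta^3)$ is immaterial (the paper's footnote notes Rezk's original choice is $E$). Your identification of the $S$-local Reedy fibrant objects with complete Segal spaces leans on the equivalence $\Map(d(E),X)\simeq X^{\eq}$ for a Segal space $X$, which you flag but do not prove; the paper does the same, citing the literature, so that omission is acceptable at this level of detail. Parts 1--4 are handled correctly by the standard properties of the localization (same cofibrations; fibrant objects are the local Reedy fibrant ones; local equivalences detected by $\Map(-,X)$ into local objects; local equivalences between local objects are Reedy equivalences).

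The genuine gap is the cartesian property. You present it as inherited from the Reedy structure by ``a standard manipulation using cofibrant generation and the fact that products with $\sigma_n$ and $\epsilon$ are already local equivalences.'' That ``fact'' is precisely what must be proved, and it is not formal: a left Bousfield localization of a cartesian simplicial combinatorial left proper model category need not be cartesian, because $S$-local equivalences are in general not stable under product with an arbitrary object. Your reduction to showing that $\sigma_n\times\id_K$ and $\epsilon\times\id_K$ are CSS-equivalences for every bisimplicial set $K$ is the correct reduction, but establishing it is the substantive content of Rezk's argument (his Theorems 7.2, 9.2, 12.1, which in effect show that $X^K$ is again a Segal space, respectively complete, whenever $X$ is a CSS); the paper explicitly calls this ``not at all obvious'' and a ``separate story,'' delegating it to Rezk. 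As written, your treatment of the cartesian clause of the theorem is circular, so either carry out that verification or cite it as the paper does.
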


A model category $M$ is called {\sl cartesian} if it admits
internal Hom (that is, direct product has right adjoint),
and the following axiom is fulfilled.

{\sl Let $i:A\to B$ be a cofibration and $f:X\to Y$ be a 
fibration. Then the map
$$
\Phi(i,f): X^B\to X^A\times_{Y^A}Y^B
$$
is a fibration, trivial if one of the maps $i$, $f$ is a 
weak equivalence.}

The model structure described in the theorem is constructed  
as {\sl Bousfield localization} of the Reedy model 
structure. We present below some details about Bousfield 
localization. Cartesian structure is important and its proof 
is a separate story (see below).

\subsubsection{Bousfield localization --- generalities}

Let $M$ be a model category. Its left Bousfield 
localization is
another model structure on $M$, denoted $M_\loc$, such that
one has a Quillen pair
$$\id:M\rlarrows M_\loc:\id.$$
This immediately implies that $M$ and $M_\loc$ have the same
cofibrations (Exercise). Furthermore, $M_\loc$ has more weak equivalences and less fibrations. 

We present a very general setup where such localization
exists. The theorem below is usually attributed to Jeff Smith. A proof can be found in~\cite{L.T}, A.3.7.
An earlier version for cellular model categories is
in \cite{Hir}.

\begin{Thm}
Let $\cC$ be a left proper simplicial combinatorial model 
category.  
Let $S$ be a {\sl set} of cofibrations with cofibrant source.
The following sequence of definitions determines a new simplicial model category structure on $\cC$ (denoted $L_S(\cC)$), in which 
$S$ is added to a collection of trivial cofibrations.
\begin{itemize}
\item[1.] Fibrant objects in $L_S(\cC)$ are the fibrant objects of $\cC$ satisfying the RLP with respect to $S$.
\item[2.] A map $f:X\to Y$ between cofibrant objects is a weak equivalence in $L_S(\cC)$ if $\Map(Y,Z)\to \Map(X,Z)$
is a homotopy equivalence for all $Z$ fibrant in $L_S(\cC)$.
\end{itemize}
\end{Thm}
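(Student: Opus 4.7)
The plan is to build the localized model structure in the standard three-stage manner. First I would fix the pieces of data: declare the cofibrations of $L_S(\cC)$ to be the cofibrations of $\cC$; define an object $Z$ to be \emph{$S$-local} if $Z$ is fibrant in $\cC$ and, for every $f:A\to B$ in $S$, the induced map $\Map(B,Z)\to\Map(A,Z)$ is a weak equivalence of simplicial sets; and define a map $f:X\to Y$ to be an \emph{$S$-local equivalence} if for every $S$-local $Z$ the map $\Map(\wt Y,Z)\to\Map(\wt X,Z)$ is a weak equivalence, where $\wt X\to X,\ \wt Y\to Y$ is any cofibrant replacement. One checks, using that $\cC$ is simplicial and left proper, that this does not depend on the choice of cofibrant replacement, that every weak equivalence of $\cC$ is an $S$-local equivalence, and that $S$-local equivalences satisfy two-out-of-three and are stable under retracts.

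Next I would verify the structural axioms that are formal: existence of limits and colimits is inherited from $\cC$; the cofibrations (being those of $\cC$) are closed under retracts and pushouts; and the two-out-of-three property for $S$-local equivalences has already been established. The lifting axiom for (cofibration, trivial fibration of $\cC$) against (trivial cofibration of $\cC$, fibration of $\cC$) holds for free on the half that involves ordinary trivial fibrations, because trivial fibrations are unchanged. The non-trivial half will follow automatically once the factorizations below are in place, by the standard retract argument.

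The heart of the proof, and the main obstacle, is producing the factorization of any morphism as an $S$-local-trivial cofibration followed by a fibration, which requires exhibiting a \emph{set} $J_S$ of generating trivial cofibrations for $L_S(\cC)$. This is where combinatoriality (presentability) enters decisively. I would start with the generating trivial cofibrations $J$ of $\cC$, adjoin the pushout-product horns $S\,\square\,(\partial\Delta^n\hookrightarrow\Delta^n)$ built from $S$ and the simplicial enrichment (using that sources in $S$ are cofibrant so that these are cofibrations, by the cartesian/pushout-product axiom for the simplicial structure), and then apply a bounded cofibrant replacement in a small cardinal $\kappa$ to close up: by Smith's solution-set argument in a combinatorial model category one can cut the would-be class of trivial cofibrations down to a set $J_S$ whose $J_S$-cell complexes are exactly the $S$-local trivial cofibrations. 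The delicate points are (i) left properness, which is needed to ensure the class of $S$-local equivalences is closed under transfinite compositions and pushouts along cofibrations, so that $J_S$-cell complexes really are $S$-local equivalences; (ii) the accessibility of the class of $S$-local equivalences, which one obtains from presentability of $\cC$ and the fact that $S$ is a \emph{set}, so that $S$-locality is a small-injectivity condition. Granting these, the small object argument produces the required factorization, and RLP with respect to $J_S$ characterizes fibrations of $L_S(\cC)$; one then identifies the fibrant objects with those in clause (1) of the statement, and verifies clause (2) is precisely the definition of $S$-local equivalence restricted to cofibrant objects.

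Finally, I would check that the resulting model structure is simplicial: the pushout-product axiom persists because the cofibrations are unchanged and the new fibrations are a subclass of the old ones, while the new trivial cofibrations contain the old ones; the only non-automatic part, that $i\,\square\,(\partial\Delta^n\to\Delta^n)$ is an $S$-local equivalence whenever $i$ is an $S$-local trivial cofibration, reduces by adjunction to $\Map(B,Z)\to\Map(A,Z)\times_{\Map(A,Z)^{\partial\Delta^n}}\Map(B,Z)^{\partial\Delta^n}$ being a trivial Kan fibration for every $S$-local $Z$, which follows from the simplicial model structure on $\cC$ and the characterization of $S$-local objects via mapping spaces. The principal technical obstacle, as noted above, is the accessibility/solution-set step that cuts the generating trivial cofibrations down to a set; everything else is bookkeeping once that ingredient is available.
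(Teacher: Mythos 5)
The paper itself does not prove this statement: it is quoted as Smith's theorem, with the proof deferred to \cite{L.T}, A.3.7, and (for cellular model categories) to \cite{Hir}. Your outline reproduces exactly the route taken in those references -- keep the cofibrations, define $S$-local objects and $S$-local equivalences via mapping spaces out of cofibrant replacements, use left properness to get closure of the $S$-local equivalences under pushout along cofibrations and transfinite composition, use combinatoriality plus the fact that $S$ is a set to extract a set of generating trivial cofibrations, and finally check the pushout--product axiom to see the structure is simplicial. The places where left properness, the simplicial enrichment, and presentability enter are all correctly identified, and your remark that one must adjoin the pushout products of $S$ with $\partial\Delta^n\to\Delta^n$ is exactly what reconciles clause (1) of the statement (plain RLP with respect to $S$) with the mapping-space definition of $S$-locality that you actually work with; it would be worth saying this reconciliation explicitly.

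The one point at which your text is a roadmap rather than a proof is the step you yourself flag: you cut the class of would-be trivial cofibrations down to a set by appealing to ``Smith's solution-set argument in a combinatorial model category.'' That appeal is essentially circular here, since the theorem being proved \emph{is} Smith's theorem; the real content of \cite{L.T}, A.3.7 (and of the cellular argument in \cite{Hir}) is precisely the verification that the class of $S$-local equivalences is accessible (perfect, in Lurie's terminology) -- e.g.\ by building an accessible fibrant-replacement/localization functor from the enlarged set of horns via the small object argument and characterizing $S$-local equivalences as the maps it carries to weak equivalences -- after which the recognition theorem for combinatorial model categories applies. So your proposal is correct in architecture and matches the cited proof, but the accessibility step would have to be carried out, not invoked, for it to stand as a complete argument; everything else is, as you say, bookkeeping.
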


\subsubsection{Two Bousfield localizations}

We start with the category of bisimplicial sets endowed with 
the Reedy model structure.

Choose the set $S$ of maps to consist of the maps
$d(\Spine(n))\to d(\Delta^n)$, $n\geq 2$.
The Bousfield localization of the Reedy model structure
with respect to $S$ will have Segal spaces as fibrant 
objects.  

We will now add one more arrow  to $S$.

Recall the definition of embedding $Z\to\Delta^3$. Here $Z$ 
is the simplicial
subset of $\Delta^3$ spanned by the edges 
$\{0,2\},\{1,2\},\{1,3\}$. Define $\bar Z$ by contracting 
the edges $(0,2)$ and $(1,3)$. Similarly, define 
$\bar\Delta^3$
by contracting the edges $(0,2)$ and $(1,3)$. Then one has  
a trivial cofibration $i:\Delta^1=\bar Z\to\bar\Delta^3$.
The induced map $\Map(d(\bar\Delta^3),X)\to X_1$ is a fibration with the image $X^\eq$, so one has a fibration
$\Map(d(\bar\Delta^3),X)\to X^\eq$. One has
\begin{Lem}
This is a trivial fibration.
\end{Lem}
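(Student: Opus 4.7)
The plan is to show that the fiber of $\Map(d(\bar\Delta^3), X) \to X^\eq$ over any $f \in X^\eq$ is contractible; combined with the fact that the map is already a fibration (from Reedy fibrancy of $X$ applied to the monomorphism $d(\Delta^1) \hookrightarrow d(\bar\Delta^3)$) with image $X^\eq$, this gives the trivial fibration claim.

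I would first present $\bar\Delta^3$ as a pushout in $\sSet$:
\begin{equation*}
\bar\Delta^3 \;\cong\; \Delta^3 \sqcup_{\Delta^1 \sqcup \Delta^1} (\Delta^0 \sqcup \Delta^0),
\end{equation*}
in which the two copies of $\Delta^1$ are the edges $(0,2)$ and $(1,3)$ being collapsed. Since $X$ is Reedy fibrant and the inclusion is a cofibration, applying $\Map(d(-), X)$ converts this pushout into a homotopy pullback
\begin{equation*}
\Map(d(\bar\Delta^3), X) \;\simeq\; X_3 \times_{X_1 \times X_1} (X_0 \times X_0),
\end{equation*}
whose points are $3$-simplices $\sigma$ with $\sigma_{02}$ and $\sigma_{13}$ degenerate; writing $x = \sigma_0 = \sigma_2$ and $y = \sigma_1 = \sigma_3$, the map to $X_1$ extracts $\sigma_{12}$.

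Next, fixing an equivalence $f : y \to x$ and passing to the fiber $F_f$, I would use the Segal equivalence $X_3 \simeq X_1 \times_{X_0} X_1 \times_{X_0} X_1$ (along the spine) to identify $\sigma$, up to contractible choice, with three composable edges $a = \sigma_{01} : x \to y$, $f = \sigma_{12}$, $b = \sigma_{23} : x \to y$. The degeneracy conditions on the faces $(0,1,2)$ and $(1,2,3)$ then force $f \circ a \simeq \id_x$ and $b \circ f \simeq \id_y$, exhibiting $a$ as a right inverse and $b$ as a left inverse of $f$; the remaining faces $(0,1,3)$, $(0,2,3)$ together with the full $3$-simplex data couple $a$ and $b$ via the edge $\sigma_{03}$, supplying a coherent identification. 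Thus $F_f$ is equivalent to the space of coherent two-sided inverse data for $f$.

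The final step, and the main obstacle, is to show this space of inverse data is contractible whenever $f$ is an equivalence. The space of right inverses of $f$ is the homotopy fiber of the post-composition map $\Map_X(x,y) \xrightarrow{f \circ -} \Map_X(x,x)$ over $\id_x$; because $f$ is an equivalence, post-composition with $f$ is a weak homotopy equivalence of mapping spaces (a Segal-space analogue of the standard quasicategorical fact, proved using invertibility of $[f]$ in $\Ho(X)$ together with the Segal composition law), so this homotopy fiber is contractible. Symmetrically, the space of left inverses is contractible, and an iterated application of Segal-condition fiber sequences shows that the coherence data identifying $a$ and $b$ assembles into a contractible space. Homotopy-limit-assembling these factors yields $F_f \simeq *$, completing the proof. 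Intuitively, this is the homotopical manifestation of the classical fact that the two-sided inverse of an isomorphism is unique; rigorously, it expresses the universal property that makes $\bar\Delta^3$ a model for the \emph{walking invertible edge}.
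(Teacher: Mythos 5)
Note first that the paper offers no proof to compare against: it explicitly omits the argument and cites \cite{L.G}, 1.1.13, so your proposal has to stand on its own. Its overall strategy --- reduce to contractibility of the fiber $F_f$ over each vertex $f\in X^\eq$, and extract that contractibility from the fact that composition with an equivalence induces weak equivalences of mapping spaces --- is the right one and can be completed. But as written there is a genuine gap exactly at the step you call ``homotopy-limit-assembling''. The fiber $F_f$ is cut out of $X_3$ by \emph{strict} conditions on the edges $(0,2)$, $(1,3)$ and $(1,2)$, i.e.\ by data that the spine projection $X_3\to X_1\times_{X_0}X_1\times_{X_0}X_1$ discards; once you ``identify $\sigma$ with $(a,f,b)$ up to contractible choice'' those defining conditions are no longer visible, so the assertion that $F_f$ is a homotopy limit of the space of right inverses, the space of left inverses, and a contractible space of coherence data is precisely what needs proof, and ``an iterated application of Segal-condition fiber sequences'' does not yet supply it. (Granted such a decomposition, your conclusion is fine, since a homotopy limit of contractible spaces is contractible; the problem is producing the decomposition.)

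Here is a concrete way to close the gap while keeping your skeleton. Restrict along the chain of subcomplexes $Z\subset D\subset\Delta^3$, where $Z$ is the path on the edges $(0,2),(1,2),(1,3)$ and $D$ is the union of the two $2$-faces $(0,1,2)$ and $(1,2,3)$. The map $X(D)\to X(\Spine(3))$ is a trivial fibration, being the fiber product over $X_1$ of two copies of the Segal trivial fibration $X_2\to X_1\times_{X_0}X_1$; comparing with the Segal trivial fibration $X_3\to X(\Spine(3))$ and using two-out-of-three, the Reedy fibration $X(\Delta^3)\to X(D)$ is a trivial fibration. This is the precise form of ``the coherence data (the faces $(0,1,3)$, $(0,2,3)$, the edge $(0,3)$ and the $3$-cell) contributes a contractible factor''. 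Consequently $F_f$ maps by a trivial fibration onto the fiber $F'_f$ of $X(D)\to X(Z)$ over $(\id_x,f,\id_y)$, and $F'_f$ splits as a product $A_f\times B_f$ of a right-inverse space and a left-inverse space, because the two triangles meet only along the edge $(1,2)$, whose value is pinned to $f$. Each of $A_f$, $B_f$ is the strict fiber of a Reedy fibration (restriction of $X_2$ to the two edges through the relevant vertex, with the $f$-edge fixed), and comparing that fibration with the Segal trivial fibration onto the other pair of edges identifies it, fiberwise over the source object in $X_0$, with ``compose with $f$''; your mapping-space lemma (composition with an equivalence is a weak equivalence, via the $\Ho(\cS)$-enriched homotopy category) then shows these fibrations are trivial, so $A_f\simeq B_f\simeq *$. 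Finally, record the two background facts your sketch uses implicitly: strict fibers compute homotopy fibers here because all restriction maps involved are fibrations ($X$ being Reedy fibrant), and a Kan fibration whose fibers over all vertices are contractible is a trivial fibration. With these additions your outline becomes a complete proof.
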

We omit the proof; it can be found in \cite{L.G}, 1.1.13.
The above lemma implies that  a Segal space $X$ is complete
iff the embedding $\Delta^0=\{1\}\to\bar\Delta^3$ induces
an equivalence $X(\bar\Delta^3)\to X_0$.

Thus, we add to $S$ the map $d([0])\to d(\bar\Delta^3)$
\footnote{Rezk~\cite{R} uses the discrete nerve of the contractible groupoid with two objects instead of $\bar\Delta^3$.}

The CSS model structure is obtained by Bousfield localization along $S$.
 
\subsubsection{Cartesian structure}
The category $\ssSet$ has internal Hom. 
Cartesian property of a model category $M$, having internal Hom,  can be reformulated as follows.

Given $i:A\to B$ and $j:C\to D$ two cofibrations. Then 
the map
$$
(A\times D)\coprod^{A\times C}(B\times C)\to B\times D
$$
is a cofibration; it is trivial of $i$ or $j$ is trivial.

\begin{Exe}Verify the equivalence of this reformulation
with the original definition of cartesian model category.
\end{Exe}

The Reedy model category structure is cartesian as $\sSet$
is cartesian and weak equivalences in the Reedy structure 
are defined componentwise.

The CSS model structure is also compatible with the internal 
Homs. This is not at all obvious, but true --- for the proof
see \cite{R}, 7.2, 9.2, 12.1. An an immediate consequence, 
we have

\begin{Thm}Let $X$ be a CSS. Then for each $K\in\ssSet$
the bisimplicial set $X^K$ is a CSS.
\end{Thm}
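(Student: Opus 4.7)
The plan is to deduce the statement directly from the already-quoted cartesian structure on the CSS model category. By the theorem of Section~\ref{ss:CSSmodels}, the CSS are precisely the fibrant objects of this model structure, so it suffices to show that $X^K\to*$ has the right lifting property with respect to every trivial cofibration in the CSS model structure. I would use the pushout--product reformulation of the cartesian axiom: for cofibrations $i:A\to B$ and $j:C\to D$, the map
\[
(A\times D)\coprod^{A\times C}(B\times C)\to B\times D
\]
is a cofibration, trivial if either $i$ or $j$ is.

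Concretely, first I would note that since cofibrations in the CSS model structure are just the monomorphisms, the map $\emptyset\to K$ is always a cofibration. For any trivial cofibration $i:A\to B$ in the CSS model structure, the pushout--product of $i$ with $\emptyset\to K$ is just the map $A\times K\to B\times K$, which is therefore a trivial cofibration by the cartesian axiom. Next, given any lifting problem
\[
\xymatrix{
A\ar[r]\ar[d]_i & X^K\ar[d] \\
B\ar[r] & *
}
\]
I would transpose it along the adjunction $(-)\times K\dashv (-)^K$ to the equivalent lifting problem of $A\times K\to B\times K$ against $X\to *$. Since $X$ is a CSS, it is fibrant, and since $A\times K\to B\times K$ is a trivial cofibration by the previous step, this transposed problem admits a solution. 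Transposing back shows that $X^K\to *$ is a fibration, hence $X^K$ is fibrant, hence a CSS.

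There is not really a serious obstacle here: the entire weight of the argument has been absorbed into the (highly nontrivial) fact that the CSS model structure is cartesian, which has already been invoked as \cite{R}, 7.2, 9.2, 12.1. The only small point to double-check is that the internal Hom $X^K$ used in the statement agrees with the one on $\ssSet=\mathrm{Fun}(\Delta^{\op}\times\Delta^{\op},\Set)$ participating in the cartesian axiom; this is immediate since the CSS model structure is on the ordinary category $\ssSet$ and shares its internal Hom. Once that is observed, the proof is a one-line application of adjunction.
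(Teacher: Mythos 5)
Your proof is correct and takes essentially the same route as the paper, which simply records the theorem as an immediate consequence of the (cited, nontrivial) cartesian property of the CSS model structure together with the fact that its fibrant objects are exactly the CSS. The only cosmetic difference is that you unfold the adjunction via the pushout--product with $\emptyset\to K$, whereas one may equally apply the axiom in its $\Phi(i,f)$ form directly to the cofibration $\emptyset\to K$ and the fibration $X\to *$.
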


\subsubsection{DK equivalence of Segal spaces}

We know that DK equivalence of CSS coincides with Reedy equivalence. We also know that this is not true for general
Segal spaces: Rezk nerve $B(C)$ of a category is DK equivalent to the discrete nerve $d(N(C))$, but they are not
Reedy equivalent. It turns out that a DK equivalence of
Segal spaces is CSS equivalence. This result follows from an explicit construction of ``completion'' of a Segal space.

One has (see~\cite{R}, Sect. 14)

\begin{Lem}Let $X$ be a Segal space. There is a map
$i:X\to \wh{X}$ such that 
\begin{itemize}
\item[1.] $\wh X$ is a CSS.
\item[2.] $i$ is a weak equivalence in CSS model structure.
\item[3.] $i$ is a DK equivalence.
\end{itemize}
\end{Lem}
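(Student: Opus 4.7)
My plan is to define $i: X \to \widehat{X}$ to be a fibrant replacement of $X$ in the CSS model structure on $\ssSet$ described in Section~\ref{ss:CSSmodels}. Properties (1) and (2) are then immediate: the fibrant objects of the CSS model structure are by definition the complete Segal spaces, and $i$ is by construction a trivial cofibration in that structure, hence in particular a CSS weak equivalence. The substantive content of the lemma is property (3), that $i$ is a DK equivalence.

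To establish (3), I would realize $i$ via the small object argument as a transfinite composition of pushouts along the generating trivial cofibrations of the CSS model structure. These generators split into three classes: (i) the generating Reedy trivial cofibrations, whose pushouts are componentwise Kan weak equivalences and hence trivially DK equivalences; (ii) the Segal maps $d(\Spine(n)) \to d(\Delta^n)$, whose pushouts are Reedy weak equivalences as soon as the source satisfies the Segal property, and hence DK equivalences; and (iii) the completeness generator $d([0]) \to d(\bar\Delta^3)$. The essential step is showing that, starting from a Segal space, a pushout of class (iii) remains Segal (possibly after an intermediate Reedy-fibrant replacement, which is harmless by (i)) and is a DK equivalence.

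The key observation for this step is that $\bar\Delta^3$ presents the ``walking equivalence'', so a pushout $X \to X'$ along $d([0]) \to d(\bar\Delta^3)$ at a vertex $x \in X_{00}$ formally adjoins a loop of self-equivalences at $x$; it does not introduce new equivalence classes of objects in $\Ho(X)$, and by the Segal condition the mapping spaces $\Map_X(y,z)$ change only along the path component of $x$, and only by gluing on a contractible piece. Hence $X'$ is again a Segal space with the same homotopy category and the same mapping spaces up to weak equivalence, so $X \to X'$ is a DK equivalence. Combined with the standard fact that DK equivalences of Segal spaces are closed under transfinite compositions (since homotopy groups of Kan complexes and $\pi_0$ of spaces commute with filtered colimits) and retracts, this shows that $i$ is a DK equivalence.

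The main obstacle is the careful bookkeeping in class~(iii): one must control the effect of the completeness pushout on both $X_0$ and on the fibers of $X_1 \to X_0 \times X_0$, and also verify that the cartesian pushout-products of the completeness generator with boundary inclusions (which must be added to the generators to make the localization cartesian) do not destroy the DK equivalence property. The cleanest way to sidestep this, and what Rezk does in \cite{R}, Section 14, is to bypass the abstract small object argument and write down an explicit functorial completion $\widehat{X}$ --- for instance as the bisimplicial realization of a cosimplicial object built from the discrete nerves of contractible groupoids on finite sets --- and then verify (1)--(3) directly for this explicit model, using the cartesian property of the CSS model structure to control the behavior of mapping spaces.
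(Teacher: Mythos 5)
Your plan has a genuine gap at exactly the point where the lemma is nontrivial. The structural skeleton (fibrant replacement via a relative cell complex built from the Reedy generators together with the pushout-products of the localizing maps with boundary inclusions) is fine and gives (1) and (2), but the per-cell claims you use for (3) are not correct as stated. Take $n=2$ and a Segal space $X$: pushing out $d(\Spine(2))\to d(\Delta^2)$ along a composable pair $(f,g)$ adds to the space $X_1$ a \emph{disjoint} new point (the image of the edge $(0,2)$) together with a new $2$-simplex, so the map $X\to X'$ is not a Reedy (levelwise) equivalence, contrary to your class (ii); the same happens for the completeness generator $d([0])\to d(\bar\Delta^3)$, which adds new components in every level. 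At the level of strict pushouts there are therefore no mapping spaces to compare, and your ``key observation'' that the completeness cell only glues a contractible piece onto $\Map_X(y,z)$ and leaves $\Ho(X)$ unchanged only acquires meaning after a Segal (or CSS) fibrant replacement of the intermediate object --- but controlling how that replacement changes the levelwise homotopy type and the mapping spaces is precisely the content of the lemma, so the argument as written assumes what it must prove. Inserting Reedy fibrant replacements does not help: they are levelwise equivalences and cannot merge the new components, and the Segal condition need not survive such replacements of non-fibrant objects (the paper flags exactly this subtlety for $B(C,W)$). The closure of DK equivalences under transfinite composition is likewise not available in your setting, since the intermediate stages are not Segal spaces.

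Your last paragraph falls back on Rezk's explicit completion, and that is indeed what the paper does: it does not prove the lemma but cites \cite{R}, Section 14, and records the explicit model, namely the Reedy fibrant replacement of $\wt X$ with $\wt X_{m,n}=\Hom(d(\Delta^m)\times d(E^n)\times c(\Delta^n),X)$, where $E^n$ is the discrete nerve of the contractible groupoid on $n+1$ objects (generalizing the Rezk nerve of a category). So your fallback matches the paper's route in outline, but neither your first argument nor your fallback supplies the missing verification that this map is a DK equivalence; if you want a complete proof you must either carry out Rezk's analysis of the completion (using the equivalence space $X^{\eq}$ and the cartesian property to compute what happens to $X_0$, $X_1$ and the mapping spaces), or find a genuinely different argument that controls mapping spaces through the localization --- the cell-by-cell small object argument as you set it up does not do this.
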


The result is nontrivial. The ``completion'' construction
is a Reedy fibrant replacement of an explicitly defined bisimplicial set $\wt X$.  
Here is  the formula
$$ \wt X_{m,n}=\Hom(d(\Delta^m)\times d(E^n)\times c(\Delta^n), X),
$$
where $E^n$ is the (discrete) nerve of the contractible 
groupoid on objects $0,\ldots,n$.

Note that if $X=d(N(C))$ is the discrete nerve of a conventional category $C$, $\wt X_{m,n}=\Hom(d(\Delta^m\times E^n),X)$ coincides with the Rezk nerve of $C$. Thus,
the above construction is a generalization of the construction of Rezk nerve.

\subsection{Equivalence of various models for 
$\infty$-categories}
The category of simplical sets with Joyal model structure
is Quillen equivalent to simplicial categories with Bergner 
model structure, see \ref{thm:joyal-bergner}. Joyal model structure 
is also Quillen equivalent to $\ssSet$ with CSS model 
structure
(see Joyal-Tierney~\cite{JT}): the right Quillen functor
carries a bisimiplicial set $X\in\ssSet$ to $X_{\bullet,0}$.

As we already know, a Quillen equivalence gives rise
to an equivalence of the underlying infinity categories.

This is true regardless of the model we are using. Thus,
different models for infinity categories have the same
underlying infinity category, regardless of the models used.

It is worthwhile to note that the graph of Quillen 
equivalences between different models infinity categories
is very far from being a tree, so that one has more than one 
way to connect two models with a sequence of Quillen 
equivalences. Fortunately, Toen's result \cite{T}
says that different Quillen equivalences define basically 
the same equivalence of infinity categories. 

Here is how it is done. Toen calculates the (homotopy) 
automorphisms of $L^H(\sCat)$ considered as an object of 
$L^H(\sCat)$ (it is worthwhile here to take care of the 
universes). The resulting simplicial group is equivalent to 
$\Z_2$, with the nontrivial automorphism describing 
$\cC\mapsto\cC^\op$.

\subsubsection{Rezk nerve for relative categories}
It turns out that, in general, CSS fibrant replacement 
of $B(C,W)$ is equivalent to $L^H(C,W)$ (composed with a
fibrant replacement to complete Segal spaces).

Unfortunately, we do not know a direct proof of this.

In the series of papers by
Barwick and Kan (see~\cite{BK1,BK2}) a model structure is
introduced on the category of ``relative categories''.
Barwich and Kan also prove that both Rezk's nerve functor
and hammock localization are Quillen equivalences. 

The result then follows once more from   the   Toen's
work~\cite{T}.

\begin{EXE}
\
\begin{itemize}
\item[1.] See \ref{sss:properness}.
\item[2.] See \ref{sss:scat}.
\item[3.] See \ref{sss:Ho}.
\item[4.] See \ref{sss:constr-B}.
\item[5.] Verify that the Rezk nerve  $B(C)$ of a conventional category $C$ is a CSS.
\item[6.] When the ``standard'' nerve $d(NC)$ is a CSS?
\end{itemize}
\end{EXE}

\newpage
 
 \section{Left fibrations. Grothendieck construction.}
\label{sect:LF}

\subsection{Conventional notions}
The notion of fibered category was suggested by Grothendieck in 1959 and developed in 
SGA1. A typical example: assignment of the category of quasicoherent sheaves to 
a scheme.

Morally, one has a functor $QC:Sch^\op\to\Cat$ assigning to any scheme $X$ the category $QC(X)$ of quasicoherent sheaves on $X$ and to any map $f:X\to Y$ the inverse image functor
$f^*:QC(Y)\to QC(X)$. But if one looks carefully, this assignment does not preserve 
composition; there is a natural equivalence $(fg)^*\sim g^*f^*$ but no equality.

This teaches us that one should be careful talking about functors to $\Cat$: the latter is a two-category and even when we are talking about functors from a conventional category $C$ to $\Cat$, we should allow weakened notion of functor, preserving compositions up to a natural equivalence.

Another (but similar) notion formalizes the idea of functor 
to the category of groupoids. This is especially important 
in deformation theory. It can be described as follows.

Assume we want to talk about formal deformations of a scheme $X$ over a field $k$. This means that, for each artinian
local ring $(A,\fm)$ such that $A/\fm=k$ ({\sl Artinian} means
here that $\fm$ is finitely generated and nilpotent) we
have the groupoid of $A$-schemes $\wt{X}$ endowed with an isomorphism $\wt X\otimes_Ak\to X$. This is a covariant
functor from artinian algebras to groupoids, with the same
{\sl caveat} as one had for quasicoherent sheaves: there is no full compatibility with the compositions.

The first example (QC sheaves) leads to the notion of
{\sl cocartesian fibration} (Grothendieck's term: 
cat\'egorie cofibr\'ee), while the deformation functor
is an example of {\sl left fibration} (Grothendieck's
{\sl cat\'egorie cofibr\'ee en groupoides}). 

Left fibrations are a special case of cocartesian fibrations.
We will study first left fibrations, and, later on,
cocartesian fibrations.

\subsubsection{Formal definition}

A functor $f:C\to D$ between categories is a left fibration 
(``category cofibered in groupoids'') if (the corresponding 
nerve) satisfies RLP with respect to $\{0\}\to\Delta^1$
and to $\Lambda^2_0\to\Delta^2$. In other words, if for any
$x\in C$ any arrow $f(x)\to d$ has a lifting to an arrows 
$x\to y$ in $C$, and for any pair $a:x\to y,\ b:x\to z$ of 
arrows in $C$ there is a bijection between $c:y\to z$ such 
that $ca=b$, and $\bar c:f(y)\to f(z)$ such that 
$\bar c f(a)=f(b)$.

\subsubsection{Grothendieck construction in the conventional context}
Given $f:C\to D$ as above, we can construct $F:D\to\Grp$  as follows.
We assign to $d\in D$ the fiber $F(d)=f^{-1}(d)$. Let us verify that the fiber of a left fibration is a groupoid. This follows from definition where we choose $x=z$ and $y$ having the same image in $D$ and $b=\id$. Now, for any arrow $d\to d'$ we have to present a functor from $F(d)$ to $F(d')$. This is done as follows. Given $c\in C$, $d=f(c)$, and
$\phi:d\to d'$, we lift $\phi$ to an arrow $\wt\phi$ and 
define $\phi_*(x)$ as the target of $\wt\phi$. This uniquely extends to a functor $\phi_*:F(d)\to F(d')$ which is unique
up to unique isomorphism. This sort of uniqueness leads to
the minor lack of associativity: instead of equality
$$ \phi_*\circ\psi_*=(\phi\circ\psi)_*$$
we have a canonical isomorphism
\begin{equation}\label{eq:iso-constr}
\theta_{\phi,\psi}:\phi_*\circ\psi_*\to(\phi\circ\psi)_*
\end{equation}
satisfying (as a result of canonicity) some standard 
compatibility.

Grothendieck construction has an inverse. In the opposite direction the construction assigns to each pseudofunctor $F:D\to \Grp$ (consisting of
groupoids $F(d)$ for all $d\in D$, functors $\phi_*:F(d)\to
F(d')$ for all $\phi:d\to d'$ and isomorphisms of functors
$\theta_{\phi,\psi}$ satisfying compatibility), a left fibration $f:C\to D$. We will present this construction 
only in the case $F:D\to\Grp$ is a functor, that is when 
$\theta_{\phi,\psi}=\id$ for all $\phi,\psi$. The respective 
left fibrations are called {\sl split}. 

Here is the construction. The objects of $C$ are the disjoint union  $\sqcup\Ob(F(d))$. Morphisms from 
$x\in F(d)$ to $y\in F(d')$ are pairs $(\phi,\alpha)$
where $\phi:d\to d'$ in $D$ and $\alpha:\phi_*(x)\to y$
an arrow in $F(d')$.
 
\begin{exe}
Let $f:G\to H$ be a surjective homomorphism of groups
which has no splitting. Then $Bf:BG\to BH$ is a left fibration which is not equivalent to a split fibration.
\end{exe}

\subsubsection{Why do we care about left fibrations?}

One of the most important constructions for the development 
of classical category theory
is the functor
\begin{equation}
\label{eq:CCS}
C^\op\times C\to\Set,
\end{equation}
assigning to a pair of objects $(x,y)$ the set 
$\Hom_C(x,y)$.  This functor can be interpreted as Yoneda 
map whose properties (Yoneda lemma) allow one to use 
universal constructions.

Existence of universal constructions is the most important 
feature of category theory. Thus, there cannot be understanding of infinity categories without infinity 
categorical version of Yoneda lemma.

In the infinity categorical version of (\ref{eq:CCS}) 
$\Set$ should be replaced with $\cS$, but also defining a 
functor now means a lot of coherence data; it turns
that one can avoid explicit description of the functor.

It turns out that it is very easy (in any 
model) to construct a left fibration $X\to\cC^\op\times\cC$ 
with fiber $\Map(x,y)$ at $(x,y)\in\cC^\op\times\cC$, for 
any infinity category $\cC$. This means that, if we have a 
way to convert left fibrations into functors to $\cS$,
we can get Yoneda embedding for free. 

Thus, we need left fibrations for two (interconnected) 
reasons:
\begin{itemize}
\item To have an infinity-categorical version of
Grothendieck's notion
of category cofibered in groupoids.
\item To describe Yoneda embedding.
\end{itemize}

There is a notion of right fibration corresponding to Grothendick's categories {\sl fibered}
in groupoids.

\subsection{Left fibrations for CSS}
We will define now left fibrations in the language of CSS and try to formulate the
Grothendieck construction in this setting.

\begin{dfn}\label{dfn:left-CSS}
A Reedy fibration $f:X\to Y$ of bisimplicial sets is a left fibration if the induced map
$$ \Fun(d(\Delta^1),X)\to X\times_Y\Fun(d(\Delta^1),Y)$$ 
is a trivial fibration.
\end{dfn}

Here are some basic properties of left fibrations.

\begin{lem} Base change of a left fibration is a left fibration.
\end{lem}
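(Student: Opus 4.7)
The plan is to unwind Definition~\ref{dfn:left-CSS} and reduce everything to three stability properties: (i) Reedy fibrations are stable under base change, (ii) the functor $\Fun(d(\Delta^1),-)$ preserves limits, and (iii) trivial Reedy fibrations are stable under base change. All three are immediate from the fact that each of these classes is defined by a right lifting property and $\Fun(d(\Delta^1),-)$ is right adjoint to $-\times d(\Delta^1)$.

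Concretely, suppose $f\colon X\to Y$ is a left fibration and $g\colon Y'\to Y$ is arbitrary. Form the pullback
\begin{equation*}
\xymatrix{
X' \ar[r]\ar[d]_{f'} & X\ar[d]^f \\
Y' \ar[r]_g & Y
}
\end{equation*}
First I would observe that $f'$ is a Reedy fibration because Reedy fibrations are characterized by RLP and are closed under pullback. Next, since $\Fun(d(\Delta^1),-)$ is a right adjoint, it commutes with pullbacks, so
\[
\Fun(d(\Delta^1),X')=\Fun(d(\Delta^1),X)\times_{\Fun(d(\Delta^1),Y)}\Fun(d(\Delta^1),Y'),
\]
and similarly
\[
X'\times_{Y'}\Fun(d(\Delta^1),Y')=X\times_Y\Fun(d(\Delta^1),Y')=\bigl(X\times_Y\Fun(d(\Delta^1),Y)\bigr)\times_{\Fun(d(\Delta^1),Y)}\Fun(d(\Delta^1),Y').
\]

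The second step is to identify the canonical map
\[
\phi'\colon\Fun(d(\Delta^1),X')\to X'\times_{Y'}\Fun(d(\Delta^1),Y')
\]
with the base change of the analogous map
\[
\phi\colon\Fun(d(\Delta^1),X)\to X\times_Y\Fun(d(\Delta^1),Y)
\]
along the map $\Fun(d(\Delta^1),Y')\to\Fun(d(\Delta^1),Y)$ induced by $g$. With the identifications above this is a direct verification on the universal property of the pullback. Since $\phi$ is a trivial fibration by hypothesis and trivial fibrations are stable under base change, $\phi'$ is a trivial fibration as well, which is exactly the remaining condition for $f'$ to be a left fibration.

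There is no real obstacle here; the only thing requiring any care is keeping track of the various pullbacks in the formula for $B'=X'\times_{Y'}\Fun(d(\Delta^1),Y')$, so as to recognize $\phi'$ as a literal base change of $\phi$ rather than something only weakly equivalent to it. Once that rewriting is done, the three general stability facts finish the proof.
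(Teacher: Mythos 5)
Your proof is correct and is precisely the routine verification the paper omits (the lemma is stated there without proof): base change preserves Reedy fibrations and trivial fibrations, and since $\Fun(d(\Delta^1),-)$ is a right adjoint it commutes with pullbacks, so the comparison map for the pulled-back fibration is literally a base change of the one for $f$. Nothing more needs to be said.
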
\qed

\begin{lem}If $f:X\to Y$ is a left fibration, $f^Z:X^Z\to Y^Z$ is also a left fibration.
\end{lem}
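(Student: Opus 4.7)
The plan is to derive both requirements in Definition~\ref{dfn:left-CSS} for $f^Z$ from the corresponding requirements for $f$ by applying the internal-Hom functor $(-)^Z$. The key input is that the Reedy model structure on $\ssSet$ is cartesian (recorded in the text just after the construction of the Reedy model structure), combined with the exponential law $(A^B)^C\cong A^{B\times C}\cong (A^C)^B$ and the fact that $(-)^Z$, being a right adjoint, commutes with fibre products.

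First I would check that $f^Z: X^Z\to Y^Z$ is a Reedy fibration. Since cofibrations in the Reedy structure are componentwise monomorphisms, every bisimplicial set is Reedy cofibrant; in particular $\emptyset\to Z$ is a Reedy cofibration. Applying the cartesian property to this cofibration and to the Reedy fibration $f$ yields that the map $\Phi(\emptyset\to Z,\,f)=f^Z$ is a Reedy fibration.

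Second I would verify the auxiliary trivial-fibration condition. By the hypothesis that $f$ is a left fibration, the map
$$ g:\Fun(d(\Delta^1),X)\to X\times_Y\Fun(d(\Delta^1),Y) $$
is a trivial Reedy fibration. A second application of the cartesian property, now to the cofibration $\emptyset\to Z$ and to the trivial fibration $g$, gives that $g^Z$ is still a trivial Reedy fibration. Using the exponential law and the fact that $(-)^Z$ preserves limits, one identifies $g^Z$ with
$$ \Fun(d(\Delta^1),X^Z)\to X^Z\times_{Y^Z}\Fun(d(\Delta^1),Y^Z), $$
which is precisely the map whose being a trivial fibration expresses the left-fibration condition for $f^Z$.

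There is no real obstacle here: once the cartesian property of the Reedy model structure is in hand, the whole argument is purely formal. The only mild care required is to match the two applications of the cartesian axiom (fibration case and weak-equivalence case) with the two clauses of Definition~\ref{dfn:left-CSS}.
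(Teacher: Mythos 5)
Your proof is correct and is essentially the intended argument: the paper states this lemma with no proof (it is marked as immediate), and the standard way to fill it in is exactly your two applications of the cartesian property of the Reedy model structure on $\ssSet$ — once to the cofibration $\emptyset\to Z$ against the Reedy fibration $f$, and once against the trivial fibration $\Fun(d(\Delta^1),X)\to X\times_Y\Fun(d(\Delta^1),Y)$ — combined with the exponential law and the fact that $(-)^Z$ preserves fibre products to identify the resulting map with the one defining the left-fibration condition for $f^Z$. The only cosmetic remark is that the cartesian property of the Reedy structure is recorded in the paper slightly later, in the subsection on the CSS model structure, but it is indeed available before the lemma on left fibrations, so your reliance on it is legitimate.
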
\qed

\begin{lem}
\label{lem:crit-left}
The following conditions on $f:X\to Y$ are equivalent.
\begin{itemize}
\item[1.] $f$ is a left fibration.
\item[2.] for any $n$ the map 
$$ \Fun(d(\Delta^n),X)\to X\times_Y\Fun(d(\Delta^n),Y)$$
is a trivial fibration.
\item[3.] For any $n$ the map $X_n\to X_0\times_{Y_0}Y_n$
induced by the embedding $[0]=\{0\}\in [n]$, is a trivial fibration.
\end{itemize}
\end{lem}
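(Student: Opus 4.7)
The implication (2) $\Rightarrow$ (1) is immediate by setting $n = 1$. For (2) $\Rightarrow$ (3), evaluate at level zero in the outer simplicial direction: since $\Fun(d(\Delta^n), X)_0 = \Map(d(\Delta^n), X) = X_n$ and, analogously, $(X \times_Y \Fun(d(\Delta^n), Y))_0 = X_0 \times_{Y_0} Y_n$, the level-zero component of the map appearing in (2) is precisely the map appearing in (3). A trivial Reedy fibration between bisimplicial sets restricts to a trivial Kan fibration at each level: lifting $\partial\Delta^k \hookrightarrow \Delta^k$ against the $m$-th level of a map is, by adjunction, equivalent to lifting the bisimplicial monomorphism $d(\Delta^m) \times c(\partial\Delta^k) \hookrightarrow d(\Delta^m) \times c(\Delta^k)$ against the original map.

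To derive (2) from (1) or (3), we use that the Reedy model structure on $\ssSet$ is cartesian, so by the $\Fun$/$\times$ adjunction, each condition becomes a right lifting property of $f$ against pushout products of cofibrations. Writing $i_n : d(\{0\}) \hookrightarrow d(\Delta^n)$, $B_m : d(\partial\Delta^m) \hookrightarrow d(\Delta^m)$, and $C_k : c(\partial\Delta^k) \hookrightarrow c(\Delta^k)$, one observes that the generating monomorphisms $\partial\Delta^{m,k} \hookrightarrow \Delta^{m,k}$ of $\ssSet$ decompose as $B_m \Box C_k$. Thus (2) asserts that $f$ has the RLP against all triple pushout products $i_n \Box B_m \Box C_k$; condition (3) is the subclass with $m = 0$, while (1) is the subclass with $n = 1$.

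The heart of the proof is therefore the combinatorial claim that the weakly saturated class generated by the subclass in (3) (resp.\ (1)) already contains the full class in (2). Stripping the orthogonal $C_k$ factor and applying $d(-)$, this reduces to showing that, in $\sSet$, the pushout product $(\{0\} \hookrightarrow \Delta^n) \Box (\partial\Delta^m \hookrightarrow \Delta^m)$, i.e.\ the inclusion $\{0\} \times \Delta^m \cup \Delta^n \times \partial\Delta^m \hookrightarrow \Delta^n \times \Delta^m$, lies in the weakly saturated class generated by the initial-vertex inclusions $\{0\} \hookrightarrow \Delta^{n'}$; and, for (1) $\Rightarrow$ (2), that every such $\{0\} \hookrightarrow \Delta^n$ lies in the weakly saturated class generated by $\{0\} \hookrightarrow \Delta^1$ together with pushout products against arbitrary monomorphisms. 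Both statements are instances of the standard prism decomposition of a product of simplices: one fills $\Delta^n \times \Delta^m$ from the indicated subcomplex by attaching its top-dimensional simplices in an order compatible with the shuffle partial order, each attachment being a pushout along an initial-vertex inclusion of an appropriate $\Delta^{n+m}$. The main obstacle is to exhibit this filtration explicitly and verify that each intermediate attaching map really does have this form; conceptually, the whole claim expresses that $\{0\} \hookrightarrow \Delta^n$ is left anodyne in $\sSet$ and that left anodyne maps are closed under pushout product with arbitrary monomorphisms.
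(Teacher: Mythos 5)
Your implications (2)$\Rightarrow$(1) and (2)$\Rightarrow$(3) are fine (granting the standing hypothesis, implicit in the paper's definition, that $f$ is a Reedy fibration), and (1)$\Rightarrow$(2) can indeed be made to work along the lines you sketch: it amounts to the standard facts that $\{0\}\to\Delta^n$ is left anodyne and that the left anodyne maps are exactly the weak saturation of the pushout products $(\{0\}\to\Delta^1)\Box(\partial\Delta^m\to\Delta^m)$ (Joyal; Lurie, HTT 2.1.2.6--2.1.2.7), transported to $\ssSet$ by the colimit-, mono- and product-preserving functor $d$.

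The step carrying the real weight, however, fails: this is your route to (3)$\Rightarrow$(2), i.e.\ to the implication $3\Rightarrow 1$ which the paper singles out as the only nontrivial one. The claim that $(\{0\}\to\Delta^n)\Box(\partial\Delta^m\to\Delta^m)$ lies in the weak saturation of the initial-vertex inclusions $\{0\}\to\Delta^{n'}$ is false. Indeed, for \emph{any} simplicial set $E$ the map $E\to\Delta^0$ has the RLP against every $\{0\}\to\Delta^{n'}$ (lift by the iterated degeneracy $s_0^{n'}(e)$ of the prescribed vertex), so your claim would force every $E$ to have the extension property along $(\{0\}\times\Delta^m)\cup(\Delta^n\times\partial\Delta^m)\subset\Delta^n\times\Delta^m$. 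Take $n=m=1$ and $E=(\{0\}\times\Delta^1)\cup(\Delta^1\times\partial\Delta^1)$ itself with the identity as partial lift: an extension over the square would send the triangle on $(0,0),(0,1),(1,1)$ to a $2$-simplex of the $1$-dimensional $E$ whose faces $d_0$ and $d_2$ are two distinct nondegenerate edges, and no degenerate $2$-simplex has this form. The underlying error is in the filling argument: in the shuffle filtration of $\Delta^n\times\Delta^m$ each top-dimensional simplex is attached along a horn-type inclusion (all facets but one), never along a single initial vertex, and a pushout along $\{0\}\to\Delta^{n+m}$ glues a simplex meeting the previous stage in only one vertex, so it can never create fillers that must share higher faces with what is already present. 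More structurally, ``stripping the $C_k$ factor'' discards exactly what makes (3) usable: condition (3) is the RLP against $i_n\Box C_k$ for all $k$, not against $i_n$ alone, and the standing Reedy-fibration hypothesis is also needed. This is why the paper's proof of $3\Rightarrow 1$ is homotopy-theoretic rather than a saturation argument: $\Phi(i_n,f)$ is automatically a Reedy fibration, one proves it is a levelwise equivalence first when $Y$ is fibrant, and the general case is then reduced to the fibrant one via the extension-of-fibrations Lemma~\ref{lem:extfib}. As it stands, your proposal does not establish the hard implication.
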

\begin{crl}A map $X\to *$ is a left fibrations iff 
$X$ is fibrant and $X_0\to X_n$ are trivial cofibrations.
In other words, $X$ is equivalent to $c(X_0)$ where $X_0$
is Kan.
\end{crl}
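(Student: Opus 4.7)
The plan is to deduce the corollary directly from Lemma~\ref{lem:crit-left}. Applied with $Y=*$, criterion (3) of that lemma says that $X\to *$ is a left fibration precisely when $X$ is Reedy fibrant and, for every $n$, the face map $f_n:X_n\to X_0$ induced by $\{0\}\hookrightarrow[n]$ is a trivial Kan fibration of simplicial sets. The corollary then follows once this is matched against the stated characterization, and the reformulation $X\simeq c(X_0)$ is read off.

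One direction is essentially automatic. Assume $f_n:X_n\to X_0$ is a trivial Kan fibration. The structure map $s_n:X_0\to X_n$ coming from the unique arrow $[n]\to[0]$ in $\Delta$ is a section of $f_n$; being a composition of elementary degeneracies, it is a monomorphism, hence a cofibration in the Kan--Quillen model structure on $\sSet$. Two-out-of-three applied to $X_0\xrightarrow{s_n} X_n\xrightarrow{f_n} X_0=\id_{X_0}$ shows it is also a weak equivalence, so it is a trivial cofibration, as required.

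For the converse, assume $X$ is Reedy fibrant and each $s_n$ is a trivial cofibration. Two-out-of-three gives immediately that the retraction $f_n$ is a weak equivalence, so what remains is to verify that it is a Kan fibration. I would establish the stronger statement that in a Reedy fibrant simplicial object every face map $X_n\to X_m$ (induced by an injection $[m]\hookrightarrow[n]$) is a Kan fibration, by induction on $n$. The base case is the Reedy fibrancy at level $0$, which makes $X_0$ Kan. For the induction step I would factor $X_n\to X_m$ through the matching object as $X_n\to M_nX\to X_m$; the first map is a Kan fibration by Reedy fibrancy, and the second is a Kan fibration because $M_nX$ is built as a finite limit of lower $X_k$'s along face maps that are Kan fibrations by the inductive hypothesis, combined with stability of Kan fibrations under pullback. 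This inductive manipulation of matching objects is the only nontrivial step; the rest is formal.

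Finally, the equivalent reformulation $X\simeq c(X_0)$ with $X_0$ Kan drops out immediately: the degeneracies $s_n$ assemble into a map of bisimplicial sets $c(X_0)\to X$ whose $n$-th component is the trivial cofibration $s_n$, so the map is a Reedy weak equivalence and therefore an equivalence in the CSS model structure described in~\ref{ss:CSSmodels}. That $X_0$ is Kan is precisely Reedy fibrancy at level $0$.
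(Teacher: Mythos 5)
Your reduction of the statement to Lemma~\ref{lem:crit-left}(3) with $Y=*$, the easy direction (a section of a trivial fibration is a monomorphism and, by two-out-of-three, a trivial cofibration), and the final identification of $X$ with $c(X_0)$ via the map assembled from the degeneracies are all correct, and this is exactly the route the notes intend (the corollary is stated as an immediate consequence of the lemma, so the only substantive point is the converse translation).

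The gap is precisely in that converse, at the one place where something must be proved: that for Reedy fibrant $X$ the restriction $X_n\to X_0$ is a Kan fibration. Your justification --- ``$M_nX\to X_m$ is a fibration because $M_nX$ is a finite limit of lower $X_k$'s along face maps, which are fibrations by the inductive hypothesis, plus pullback stability'' --- does not establish this. First, a projection from a finite limit onto one vertex of the diagram is not a fibration just because the maps in the diagram are fibrations; the shape of the limit matters and one has to exhibit it as an iterated pullback of fibrations along the correct legs. Second, the maps one actually needs when building $X(\partial\Delta^n)$ cell by cell are not the face maps $X_k\to X_j$ but the matching maps $X_k\to X(\partial\Delta^k)$ for $k<n$: already for $n=2$, whichever order you attach the edges, the last edge is glued along its whole boundary, and the relevant pullback leg is $X_1\to X_0\times X_0$, which is Reedy fibrancy at level $1$ and is not a consequence of $X_1\to X_0$ being a fibration (two fibrations $E\to B$ do not make $E\to B\times B$ a fibration: take both equal to $\id_B$, giving the diagonal, which is almost never a Kan fibration). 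So the induction must be reorganized: either induct over subcomplexes $U\subset\partial\Delta^n$, at each attachment writing $X\bigl(U\cup_{\partial\Delta^k}\Delta^k\bigr)\simeq X(U)\times_{X(\partial\Delta^k)}X_k$ and pulling back the matching fibration $X_k\to X(\partial\Delta^k)$; or, simpler and closer to what the notes already provide, invoke the cartesian/simplicial compatibility of the Reedy structure on $\ssSet$ recorded in Section~\ref{ss:CSSmodels}: since $d(\Delta^0)\to d(\Delta^n)$ is a cofibration and $X$ is fibrant, the map $X_n=\Map(d(\Delta^n),X)\to\Map(d(\Delta^0),X)=X_0$ is automatically a Kan fibration, and two-out-of-three then makes it a trivial fibration. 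With either repair your argument is complete.
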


\begin{exe}Using Lemma~\ref{lem:crit-left} (3), prove that
\begin{itemize}
\item If $f:X\to Y$ is a left fibration and $Y$ is a CSS
then $X$ is a CSS.
\item $f:X\to Y$ is a left fibration iff any base change
$X\times_Y\Delta^{m,n}\to \Delta^{m,n}$ is a left fibration.
\end{itemize} 
\end{exe}

\begin{lem}A morphism $f:X\to X'$ of left fibrations
over $Y$ is a weak equivalence iff the respective map of fibers $f_y:(X_y)_0\to (X'_y)_0$ is a weak equivalence
for each $y\in Y$.
\end{lem}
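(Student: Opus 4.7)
The plan is to reduce the fibrewise condition to a weak equivalence of the zeroth simplicial components $X_0\to X'_0$ via Lemma~\ref{lem:crit-left}(3), and then to compare fibres of the resulting map of Kan fibrations.

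For the implication ($\Rightarrow$), a point $y\in Y_{0,0}$ is the same as a map $y:\Delta^{0,0}\to Y$, and the base-change functor $y^*:\ssSet/Y\to\ssSet$ is right Quillen: its left adjoint, postcomposition with $y$, obviously preserves monomorphisms and levelwise weak equivalences. Both $X\to Y$ and $X'\to Y$ are Reedy fibrations, hence fibrant in $\ssSet/Y$, and $f$ is a weak equivalence between fibrant objects, so Ken Brown's lemma yields that $y^*(f):X_y\to X'_y$ is a weak equivalence of bisimplicial sets. Since $X_y\to *$ and $X'_y\to *$ are left fibrations, the corollary to Lemma~\ref{lem:crit-left} identifies them up to equivalence with $c((X_y)_0)$ and $c((X'_y)_0)$ for Kan $(X_y)_0,(X'_y)_0$, and the induced map on these simplicial sets is therefore a weak equivalence.

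For the converse ($\Leftarrow$), assume $(X_y)_0\to(X'_y)_0$ is a weak equivalence for every $y\in Y_{0,0}$. Because $X,X',Y$ are Reedy fibrant, $X_0\to Y_0$ and $X'_0\to Y_0$ are Kan fibrations over the Kan simplicial set $Y_0$, and the fibre of $f_0:X_0\to X'_0$ over each vertex $y\in Y_{0,0}$ coincides with $(X_y)_0\to(X'_y)_0$. The standard fact that a morphism of Kan fibrations over a Kan base is a weak equivalence whenever it induces a weak equivalence on the fibre over a vertex in every connected component of the base --- proved by the five lemma applied to the long exact sequences of homotopy groups --- then gives that $f_0$ is a weak equivalence.

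To propagate this to each $f_n$, I would invoke Lemma~\ref{lem:crit-left}(3), which provides trivial fibrations $X_n\to X_0\times_{Y_0}Y_n$ and $X'_n\to X'_0\times_{Y_0}Y_n$ forming a commutative square with $f_n$ and with the map $g_n:X_0\times_{Y_0}Y_n\to X'_0\times_{Y_0}Y_n$ obtained by pulling back $f_0$ along $Y_n\to Y_0$. By two-out-of-three it suffices to check $g_n$ is a weak equivalence; but $g_n$ is a map of Kan fibrations over $Y_n$ (Kan by Reedy fibrancy of $Y$) whose fibre over any vertex of $Y_n$ equals the fibre of $f_0$ over the corresponding vertex of $Y_0$, so the same fibrewise criterion applies. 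Hence $f$ is a Reedy, and so also a CSS, weak equivalence. The main delicacy is the Kan-fibration fibrewise criterion itself, but it is standard; everything else is a formal consequence of the trivial-fibration description of left fibrations and of Ken Brown's lemma.
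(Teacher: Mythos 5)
Your overall strategy -- reduce to the zeroth level via Lemma~\ref{lem:crit-left}(3) and two-out-of-three, and settle level zero by a fibrewise criterion for maps of Kan fibrations -- is the right one, and the forward direction is correct as written: base change along $y:\Delta^{0,0}\to Y$ is right Quillen for the slice structure on $\ssSet_{/Y}$, the two left fibrations are fibrant objects of the slice, and Ken Brown's lemma gives the equivalence of fibres (one could equally quote right properness of the Reedy structure, proved in 6.4.2). The propagation from $f_0$ to $f_n$ through the trivial fibrations $X_n\to X_0\times_{Y_0}Y_n$ and the pulled-back map $g_n$ is also correct. Since the paper dismisses this lemma as ``easy'' and gives no written proof, there is nothing to compare with beyond this intended reduction, which you have reproduced.

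There is, however, one unjustified step: you write ``Because $X,X',Y$ are Reedy fibrant'' (and later ``$Y_n$ Kan by Reedy fibrancy of $Y$''), but the lemma makes no fibrancy assumption on $Y$, and $X,X'$ are fibrant only as objects over $Y$ -- a left fibration is a Reedy fibration $X\to Y$, and nothing forces $Y\to *$ to be a fibration. So $Y_0$, $Y_n$, $X_0$, $X'_0$ need not be Kan, and your justification of the fibrewise criterion by the five lemma applied to long exact sequences of \emph{simplicial} homotopy groups does not literally apply, since those sequences presuppose Kan base and total space. The statement you need is the standard one valid over an \emph{arbitrary} base: a map of Kan fibrations over a common simplicial set which is a weak equivalence on the fibres over every vertex is a weak equivalence. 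To repair your argument, either quote that statement, or prove it by passing to geometric realization: realization preserves finite limits (Lemma~\ref{lem:realization-lim}) and Kan fibrations (Quillen's theorem quoted in Section 4), so one obtains a map of Serre fibrations over $|Y_0|$ (resp.\ $|Y_n|$) which is a weak equivalence on fibres over all points -- every point lies in the image of some simplex, and pulling back along its characteristic map $\Delta^k\to Y_0$ identifies its fibre, up to homotopy, with the fibre over a vertex -- and then the topological long exact sequences and the five lemma (plus the easy $\pi_0$ bookkeeping) finish the proof. With that correction both your treatment of $f_0$ and of $g_n$ go through, and the proof is complete; the Kanness of the base was never needed.
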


About the proofs: all easy except the implication
$3\Rightarrow 1$ of \ref{lem:crit-left} which is first 
proven for fibrant $Y$; the general case is deduced with 
the help of  the following result on extension of 
fibrations.

\begin{lem}\label{lem:extfib}
Given a commutative diagram
\begin{equation}
\xymatrix{
&{Y_A}\ar[r]^i\ar[d] &{X_B}\ar[d]\\
&A\ar[r] &B
}
\end{equation}
where the vertical arrows are fibrations, the horizontal 
arrows are cofibrations, such that the induced map
$$ Y_A\to X_A=A\times_BX_B$$
is a trivial cofibration.
Then there exists a largest simplicial subspace $Y_B$ of 
$X_B$ such that $i$ factors through $Y_B\to X_B$ and 
induces an isomorphism $Y_A=A\times_BY_B$. Moreover, 
$Y_B\to X_B$
is a strong deformation retract over $B$, so that $Y_B\to B
$ is a fibration and $Y_B\to X_B$ is a trivial cofibration.
\end{lem}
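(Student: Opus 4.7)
The plan is to first exhibit $Y_B$ as an explicit subspace of $X_B$ and then to produce a strong deformation retraction $X_B\to Y_B$ over $B$ by a single lifting argument in the (cartesian) Reedy model structure on $\ssSet$; the remaining assertions --- that $Y_B\hookrightarrow X_B$ is a trivial cofibration and $Y_B\to B$ a fibration --- will then be formal consequences.

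I would define $Y_B\subset X_B$ by declaring a simplex $\sigma\colon\Delta^{m,n}\to X_B$ to lie in $Y_B$ iff every face $\tau\le\sigma$ whose composite to $B$ factors through $A$ already factors through $Y_A$. This is stable under taking further faces, so $Y_B$ is a simplicial subspace; by construction $A\times_B Y_B=Y_A$, and $Y_B$ is visibly the largest subspace of $X_B$ with this property. Now set $I=c(\Delta^1)$. Because $Y_A\to X_A$ is a trivial cofibration between objects fibrant over $A$, the simplicial/cartesian nature of the Reedy structure supplies a retraction $r_A\colon X_A\to Y_A$ with a homotopy $h_A\colon X_A\times I\to X_A$ over $A$ from $\id$ to $r_A$, constant on $Y_A$. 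The key step is to glue this with the identity on $X_B\times\{0\}$ and the constant homotopy on $Y_B\times I$; concretely, I would form
\begin{equation*}
h_0\colon X_B\times\{0\}\;\sqcup^{(X_A\sqcup^{Y_A}Y_B)\times\{0\}}\;(X_A\sqcup^{Y_A}Y_B)\times I\;\longrightarrow\;X_B
\end{equation*}
equal to $\id_{X_B}$ on $X_B\times\{0\}$, to $h_A$ on $X_A\times I$, and to the projection $Y_B\times I\to Y_B\hookrightarrow X_B$ on $Y_B\times I$. Compatibility on overlaps holds because $X_A\cap Y_B=Y_A$ and $h_A$ is constant on $Y_A$, and the map is manifestly over $B$.

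The inclusion $X_A\sqcup^{Y_A}Y_B\hookrightarrow X_B$ is a Reedy cofibration (pushouts of componentwise injections remain injective), so its pushout--product with the trivial cofibration $\{0\}\hookrightarrow I$ is a trivial cofibration. Since $X_B\to B$ is a fibration, $h_0$ extends to $h\colon X_B\times I\to X_B$ over $B$; set $r=h(-,1)$. For any simplex $\sigma$ of $X_B$ and any face $\tau\le\sigma$ mapping to $A\subset B$, one has $\tau\cdot\sigma\in X_A$, so $r(\tau\cdot\sigma)=r_A(\tau\cdot\sigma)\in Y_A$, and hence $r\sigma\in Y_B$; thus $r$ factors through $Y_B$. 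By construction $r|_{Y_B}=\id_{Y_B}$ and $h|_{Y_B\times I}$ is constant, so $h$ exhibits $Y_B\hookrightarrow X_B$ as a strong deformation retract over $B$. From this, $Y_B\to X_B$ is a trivial cofibration and $Y_B\to B$, being a retract over $B$ of the fibration $X_B\to B$, is itself a fibration. The main technical input I expect to have to justify carefully is the cartesianness of the Reedy structure on $\ssSet$, which is precisely what makes the displayed pushout--product a trivial cofibration; with that granted, the only genuinely subtle point in the argument is that $r$ factors through $Y_B$, and it is there that the ``largest subspace'' definition of $Y_B$ is really used.
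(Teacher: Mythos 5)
Your proposal is correct and follows essentially the same route as the paper: you define $Y_B$ exactly as the paper does (the largest simplicial subspace of $X_B$ whose intersection with $X_A$ lies in $Y_A$, equivalently $A\times_B Y_B=Y_A$), and where the paper merely says ``then one verifies the properties,'' you supply that verification by the standard argument --- retract $X_A$ onto $Y_A$ over $A$, glue with the constant homotopy on $Y_B$, and extend over $X_B$ using the pushout--product of the cofibration $X_A\cup Y_B\hookrightarrow X_B$ with $\{0\}\to c(\Delta^1)$ and the cartesianness of the Reedy structure, after which the retraction lands in $Y_B$ precisely because of the maximality of $Y_B$ and the fact that $h$ is over $B$. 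The one point to state a bit more carefully is why $X_A\sqcup^{Y_A}Y_B\to X_B$ is injective: it is because $X_A\cap Y_B=Y_A$, so the pushout is the union of the two subobjects (levelwise a union of subsets), not because pushouts of injections are injective in general.
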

\begin{proof}
One defines $Y_B$ as the biggest simplicial subspace of 
$X_B$ such that its intersection with $X_A$ is in $Y_A$.
Then one verifies the properties of $Y_B$.
\end{proof}

\subsection{Grothendieck construction}

Our aim is to compare totality of left fibrations with a given base $B$ and totality of functors $B\to\cS$, where 
$\cS$ is an infinity category of ``spaces''.

The approach we are going to take (due to \cite{KV,KV1})
is as follows. First of all, we will represent the
functor $B\mapsto\{\textrm{left fibrations over }B\}$
as the functor with the values in sets. This seems very naive, but easy. Then we will study the representing object
(denoting it $\cS$) and will understand that it actually solves a much more meaningful problem.

\

The notion of ``set of left fibrations based on $B$'' sounds 
very weird, because of set-theoretical difficulties 
involved.
The difficulties are not more serious than when we write 
$N(\Set)$ and can be completely avoided working with a pair 
of universes, one an element of the another. 
We will present an approach which shows that one can stay
completely in the framework of naive set theory, simply 
restricting the size of some sets involved.

Fix a set $\cU$ of infinite cardinality $\alpha$.

A  {\sl $\cU$-marking} of a map $f:Z\to X$ in $\ssSet$ is an assignment, for
each $x:\Delta^{m,n}\to X$, of an injective map of the set $f^{-1}(x)\in Z_{m,n}$ into $\cU$.
 
\subsubsection{Construction of $\cS^\alpha$}
\label{sss:S}

This is a bisimplicial set.  Define a set $\cS^\alpha_{m,n}$ as follows.
This is $\pi_0$ of the following groupoid (all components are contractible; the idea
of taking $\pi_0$ is that the objects of the groupoid form a class, but $\pi_0$ is a set.

The objects of the groupoid are $\cU$-marked left fibrations $Z\to \Delta^{m,n}$. Isomorphisms between two such fibrations
are isomorphisms compatible with the $\cU$-marking. 
Functoriality with respect to $(m,n)$ is obvious as the 
$\cU$-marking of a fiber product $Z'=Z\times_{\Delta^{m,n}}
\Delta^{m',n'}$ is inherited from the $\cU$-marking of 
$Z\to\Delta^{m,n}$.

The bisimplicial set $\cS^\alpha$ constructed as above
admits a left fibration $\cE^\alpha\to\cS^\alpha$ glued
from the left fibrations $E\to\Delta^{m,n}$ that are the elements of $\cS^\alpha_{m,n}$. This is a {\sl universal
left fibration} in the sense of the following (obvious)
lemma.

\begin{lem}\label{lem:naivegroth}
For any bisimplicial set $B$ the base change determines a bijection between the set of $\cU$-marked left fibrations
on $B$ and the set $\Hom(B,\cS^\alpha)$.
\end{lem}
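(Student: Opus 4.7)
The plan is to construct the bijection explicitly in both directions and check they are mutually inverse, using the universal left fibration $\cE^\alpha\to\cS^\alpha$ as the main tool. The content is essentially bookkeeping about the $\cU$-marking; the key point is that a $\cU$-marking pins down a canonical representative in each isomorphism class.

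First I would set up the two maps. In one direction, define
$$\Phi\colon \Hom(B,\cS^\alpha)\longrightarrow \{\text{$\cU$-marked left fibrations on }B\}$$
by pulling back the universal marked fibration: $\Phi(g) = B\times_{\cS^\alpha}\cE^\alpha$, with the $\cU$-marking inherited simplex-wise from $\cE^\alpha$. Pullbacks of left fibrations are left fibrations (base change), and $\cU$-markings clearly restrict along pullback, so $\Phi$ is well-defined. In the other direction, define
$$\Psi\colon \{\text{$\cU$-marked left fibrations on }B\}\longrightarrow \Hom(B,\cS^\alpha)$$
by sending a marked left fibration $f\colon Z\to B$ to the map which, on an $(m,n)$-simplex $x\colon\Delta^{m,n}\to B$, takes the isomorphism class $[x^*Z]\in\cS^\alpha_{m,n}$ of the pullback with its induced marking. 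Functoriality of pullback shows that $[\alpha^*(x^*Z)] = [(x\alpha)^*Z]$ for every $\alpha\colon\Delta^{m',n'}\to\Delta^{m,n}$, so $\Psi(Z)$ is genuinely a morphism of bisimplicial sets.

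Next I would verify $\Psi\circ\Phi=\id$. Given $g\colon B\to\cS^\alpha$ and an $(m,n)$-simplex $x$ of $B$, the base change formula gives
$$x^*\Phi(g) \;=\; x^*(B\times_{\cS^\alpha}\cE^\alpha) \;=\; (gx)^*\cE^\alpha,$$
and by the very construction of $\cE^\alpha$ (which was assembled by choosing a representative marked fibration for each element of $\cS^\alpha_{m,n}$ and pulling back), the isomorphism class $[(gx)^*\cE^\alpha]$ equals $g(x)\in\cS^\alpha_{m,n}$. Hence $\Psi(\Phi(g))=g$ simplexwise.

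For $\Phi\circ\Psi=\id$, start with a marked left fibration $Z\to B$ and set $g=\Psi(Z)$, $Z'=\Phi(g)$. For each simplex $x\colon\Delta^{m,n}\to B$, both $x^*Z$ and $x^*Z'=(gx)^*\cE^\alpha$ are marked left fibrations over $\Delta^{m,n}$ lying in the class $g(x)=[x^*Z]$. Here I use the main point about $\cU$-markings: a marking-preserving isomorphism between two $\cU$-marked left fibrations over the same base is uniquely determined by the markings (the markings name the simplices using elements of $\cU$, so there is at most one bijection on each fiber compatible with them). This uniqueness lets the simplexwise canonical isomorphisms $x^*Z\xrightarrow{\sim} x^*Z'$ be glued consistently into a marking-preserving isomorphism $Z\xrightarrow{\sim} Z'$ over $B$. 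The main obstacle, such as it is, is precisely this coherence: one must check that the canonical representatives chosen to construct $\cE^\alpha$ are compatible with all face and degeneracy maps, but this is automatic from the fact that a marked fibration is determined up to unique isomorphism by its underlying data and the marking restricts along pullback.
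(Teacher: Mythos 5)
Your proof is correct. The paper does not actually prove this lemma---it is stated as ``(obvious)'' right after the construction of $\cS^\alpha$ and $\cE^\alpha$---so your explicit construction of the two mutually inverse maps $\Phi$ and $\Psi$ is exactly the bookkeeping the paper leaves to the reader, and the key point you isolate (a marking-compatible isomorphism of $\cU$-marked left fibrations is unique when it exists, because the markings are injections into $\cU$) is precisely what makes the statement ``obvious'': it is the same fact that makes the components of the defining groupoid contractible and makes the gluing of $\cE^\alpha$ from chosen representatives coherent. One small point of precision: your verification of $\Phi\circ\Psi=\mathrm{id}$ produces a canonical marking-preserving isomorphism $Z\xrightarrow{\sim}\Phi(\Psi(Z))$ rather than an equality (the pullback $g^*\cE^\alpha$ has simplices of a specific pair form), so strictly speaking the bijection is with the set of marking-compatible isomorphism classes of $\cU$-marked left fibrations on $B$; this is the intended reading---it is how $\cS^\alpha_{m,n}$ itself is defined, as $\pi_0$ of the groupoid---and since such isomorphisms are unique the distinction is harmless.
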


Of course, we would like to have equivalence of infinity-categories instead of bijection of sets. The right-hand side
is the $(0,0)$-set of a complete Segal space, at least once 
we verify that $\cS_\alpha$ is a CSS (see~\ref{prp:S-CSS}). The left-hand side 
is yet to be defined as an infinity-category.  

\begin{rem}
Note that the bisimplicial set $\cS^\alpha$ defined above
has (formally) nothing to do with the infinity category of 
spaces $\cS$ we gave earlier as an example. We will be able 
to prove they are equivalent once we have the equivalence
between the category of left fibrations on $B$ and the
category of functors $B\to\cS^\alpha$ 
(see~\ref{thm:left-gr}) --- as the special  case of the 
equivalence for $B=*$.
\end{rem}

\subsection{Category of left fibrations}

For $B\in\ssSet$ we will define the CSS $\Lt_\alpha(B)$
as the CSS version of the simplicial subcategory $\fL(B)$
of $\ssSet_{/B}$ spanned by the left fibrations $X\to B$ 
with fibers of cardinality $\leq\alpha$. This means the 
following. Let $L(B)$ be the bisimplicial set corresponding
to $\fL(B)$. Then $L(B)$ is a Segal space and 
$\Lt_\alpha(B)$ is its completion in the sense of 6.9.5.

Theorem~\ref{thm:left-gr} below claims that $\cS^\alpha$ represents the functor $B\mapsto\Lt_\alpha(B)$.

First of all, it is good to know the following.
\begin{prp}\label{prp:S-CSS}
$\cS^\alpha$ is a complete Segal space.
\end{prp}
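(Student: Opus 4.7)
The plan is to verify in turn the three defining conditions of a complete Segal space: Reedy fibrancy, the Segal condition, and completeness (Definition~\ref{dfn:css}). The first two will follow from the structural properties of left fibrations established in Lemmas~\ref{lem:extfib} and~\ref{lem:crit-left}, while completeness is the subtlest part and requires identifying ``equivalences'' in $\cS^\alpha$ with homotopy equivalences of Kan complexes.

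For Reedy fibrancy, I would check directly that for every $n$ the map $\cS^\alpha_n\to\cS^\alpha(\partial\Delta^n)$ is a Kan fibration, and similarly that horn inclusions lift. By Lemma~\ref{lem:naivegroth}, a lifting problem against the inclusion $\Lambda^k_i\hookrightarrow\Delta^k$ in the simplicial direction unwinds to the following extension problem: given a $\cU$-marked left fibration over the bisimplicial subset $A=(d(\partial\Delta^n)\times c(\Delta^k))\cup(d(\Delta^n)\times c(\Lambda^k_i))$ of $B=d(\Delta^n)\times c(\Delta^k)$, produce a compatible extension to $B$. Since the inclusion $A\hookrightarrow B$ is a trivial cofibration in the Reedy model structure, Lemma~\ref{lem:extfib} extends the underlying left fibration; the $\cU$-marking extends freely because $\cU$ is infinite and the marking is only required to be injective on fibers of the specified size.

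For the Segal condition, I would show that the restriction map $\cS^\alpha_n\to\cS^\alpha(\Spine(n))$ is a trivial Kan fibration; together with Reedy fibrancy this gives the desired weak equivalence to the iterated fiber product. Unwinding, this amounts to showing that for any $k$, every $\cU$-marked left fibration over $d(\Spine(n))\times c(\Delta^k)$ (together with an extension of its restriction to $d(\Spine(n))\times c(\partial\Delta^k)$ or similar boundary data) admits an essentially unique extension to $d(\Delta^n)\times c(\Delta^k)$. The key point is Lemma~\ref{lem:crit-left}(3): a left fibration is determined up to contractible choice by its restriction to the initial vertex, so the restrictions both to $d(\Delta^n)\times c(\Delta^k)$ and to $d(\Spine(n))\times c(\Delta^k)$ each reduce homotopically to the same fiber over $\{0\}$, and restriction to the spine becomes a trivial fibration by the same extension argument used for Reedy fibrancy.

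The main obstacle is completeness, namely that the degeneracy $s:\cS^\alpha_0\to(\cS^\alpha)^\eq$ is a weak equivalence. A point of $(\cS^\alpha)^\eq$ is a $\cU$-marked left fibration $X\to d(\Delta^1)$ whose class in $\Ho(\cS^\alpha)$ is invertible. By the (already established) Segal condition, such an $X$ is determined up to equivalence by its two fibers $X_0,X_1$ and a morphism $X_0\to X_1$ in the appropriate homotopy category; invertibility in $\Ho(\cS^\alpha)$ forces this morphism to be a homotopy equivalence of Kan complexes. On the other hand, applying Lemma~\ref{lem:crit-left} to $X\to d(\Delta^1)$ yields that both projections $X\to X_i\times d(\Delta^1)$ are weak equivalences, so $X$ is, up to a canonical homotopy, pulled back from its fiber via $d(\Delta^1)\to d(\Delta^0)$, i.e.\ lies in the image of $s$ up to equivalence. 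To upgrade this pointwise statement into a weak equivalence of spaces $\cS^\alpha_0\to(\cS^\alpha)^\eq$, I would construct for each $k$ a deformation of the universal equivalence in $\cS^\alpha(\Delta^1)\times c(\Delta^k)$ to a constant family, exploiting the extension Lemma~\ref{lem:extfib} applied to the contractible space of sections of the trivial fibration $X\to X_0\times d(\Delta^1)$. The hard part will be organizing these deformations coherently in $k$; this is where one most directly uses that $\cS^\alpha$ was built out of \emph{all} $\cU$-marked left fibrations (not just some chosen class of ``fibrant'' representatives), which makes the relevant spaces of choices contractible.
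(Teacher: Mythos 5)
Your overall skeleton (Reedy fibrancy, then Segal, then completeness) matches the paper's, and your Reedy-fibrancy step is essentially the paper's own argument. Note, though, that Lemma~\ref{lem:extfib} does not by itself ``extend the underlying left fibration'': one must first factor $X\to B$ as a trivial cofibration followed by a Reedy fibration, apply the lemma to cut the result down, and then still check separately (as the paper does, via the square comparing $X_n\to X_0\times_{A_0}A_n$ with $(X_B)_n\to (X_B)_0\times_{B_0}B_n$) that the extension is again a \emph{left} fibration, not merely a fibration.

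The Segal and completeness steps contain genuine gaps. For the Segal condition you claim $\cS^\alpha_n\to\cS^\alpha(\Spine(n))$ is a trivial fibration ``by the same extension argument used for Reedy fibrancy'', justified by reading Lemma~\ref{lem:crit-left}(3) as saying a left fibration is determined up to contractible choice by its fiber over the initial vertex. Neither point holds. Lemma~\ref{lem:crit-left}(3) asserts that $X_n\to X_0\times_{Y_0}Y_n$ is a trivial fibration, a statement about the simplices of the total space; a left fibration over $d(\Delta^n)$ encodes an entire chain $E_0\to\cdots\to E_n$, not just $E_0$ (this is Lemma~\ref{lem:cyl}). And the inclusion $\bigl(d(\Spine(n))\times c(\Delta^k)\bigr)\cup\bigl(d(\Delta^n)\times c(\partial\Delta^k)\bigr)\to d(\Delta^n)\times c(\Delta^k)$ is \emph{not} a trivial cofibration in the Reedy (levelwise) model structure --- $d(\Spine(n))\to d(\Delta^n)$ becomes an equivalence only after the CSS localization, which is exactly what is being proved --- so the extension Lemma~\ref{lem:extfib} simply does not apply there. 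The same misreading recurs in your completeness step (``both projections $X\to X_i\times d(\Delta^1)$ are weak equivalences'' is false for a general left fibration over $d(\Delta^1)$ and does not follow from Lemma~\ref{lem:crit-left}), and the coherence-in-$k$ problem you flag at the end is left unresolved. This missing work is precisely what the paper supplies with the auxiliary spaces $\cS^{(n)}$ classifying chains of left fibrations, the cylinder construction and quasifibrations (Lemma~\ref{lem:cyl}), and the equivalences $\psi^{(n)}:\cS^{(n)}\to\Fun(d(\Delta^n),\cS)$: the Segal map is then compared, through $\psi^{(n)}$ and $\psi^{(1)}$, with the literal bijection $\cS^{(n)}\to\cS^{(1)}\times_\cS\cdots\times_\cS\cS^{(1)}$, and completeness is extracted from the subobject $\cS^{we}\subset\cS^{(1)}$ of weak equivalences via $\psi^{(1)}$. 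Without a substitute for that machinery, your proof of the Segal and completeness conditions does not go through.
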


This will imply that $\Fun(B,\cS^\alpha)$ is a CSS. Finally, one has

\begin{thm}\label{thm:left-gr}
For any $B$ one has an equivalence of CSS
$$\Lt_\alpha(B)\to\Fun(B,\cS^\alpha).$$
\end{thm}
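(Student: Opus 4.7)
The strategy is to build a natural map $\Phi_B: \Lt_\alpha(B) \to \Fun(B, \cS^\alpha)$ between complete Segal spaces ($\Fun(B, \cS^\alpha)$ is a CSS since $\cS^\alpha$ is, by Proposition~\ref{prp:S-CSS}, and the CSS model structure is cartesian closed) and then to verify that $\Phi_B$ is a DK equivalence; since both sides are CSS, this automatically upgrades to a Reedy equivalence.

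To construct $\Phi_B$, I would first introduce an auxiliary Segal space $L^{\mathrm{mk}}(B)$ of $\cU$-marked left fibrations on $B$ together with morphisms preserving the markings, and the forgetful map $L^{\mathrm{mk}}(B) \to L(B)$. This forgetful map is a weak equivalence after completion: every left fibration with fibers of cardinality $\leq \alpha$ admits a $\cU$-marking (by the size assumption), and any two markings differ by a permutation of $\cU$, so the ``space of markings on a given left fibration'' is appropriately contractible when passed through the CSS completion. On the other hand, an $(m,n)$-simplex of $L^{\mathrm{mk}}(B)$, viewed as a chain $E_0 \to \cdots \to E_m$ of morphisms of marked left fibrations on $B$ enriched in the $n$-direction, is sent to the marked left fibration on $B \times \Delta^{m,n}$ obtained from the chain by the classical Grothendieck gluing along cylinders $B \times \Delta^1$; by Lemma~\ref{lem:naivegroth} this defines a map $L^{\mathrm{mk}}(B) \to \Fun(B, \cS^\alpha)$ of bisimplicial sets, which factors through the completion and yields $\Phi_B$.

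For the DK equivalence, essential surjectivity is immediate from Lemma~\ref{lem:naivegroth}: any map $B \to \cS^\alpha$ classifies a marked left fibration on $B$ obtained by pulling back $\cE^\alpha$, and its underlying unmarked left fibration is an object of $\Lt_\alpha(B)$ lying over it. Fully faithfulness requires comparing, for left fibrations $X, Y \to B$, the mapping space $\Map_{\Lt_\alpha(B)}(X, Y) = \Map_{\fL(B)}(X, Y)$ with $\Map_{\Fun(B, \cS^\alpha)}(f_X, f_Y)$, which by Lemma~\ref{lem:naivegroth} is the space of $\cU$-marked left fibrations on $B \times d(\Delta^1)$ whose restrictions to the endpoints are the chosen marked versions of $X$ and $Y$. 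This is where the main obstacle lies: one must show, via the cocartesian transport for left fibrations along $\Delta^1$---established using Lemma~\ref{lem:extfib} on extension of fibrations and the characterization of left fibrations in Lemma~\ref{lem:crit-left}---that any such left fibration on $B \times d(\Delta^1)$ is determined, up to equivalence, by a morphism $X \to Y$ in $\fL(B)$, with the freedom in the interior marking being contractible. Once this Grothendieck correspondence is established in the prototypical cylinder case $B \times d(\Delta^1)$, the identification of the two mapping spaces follows, and a secondary bookkeeping argument (handling higher $\Delta^{m,n}$ by induction on dimension, again via Lemma~\ref{lem:extfib}) extends it to the full Segal space structure, completing the proof.
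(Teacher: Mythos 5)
Your overall architecture (build a comparison map between two CSS, check it is a DK equivalence, use completeness to upgrade to a Reedy equivalence) is the same as the paper's, and your plan for full faithfulness --- reducing to the statement that a left fibration over $B\times d(\Delta^1)$ is determined, up to a contractible ambiguity, by a morphism of left fibrations over $B$ --- is exactly the content of the paper's cylinder lemma (Lemma~\ref{lem:cyl}) and of the equivalence $\psi^{(1)}$. The genuine gap is in the construction of the map itself, which is in fact the main point of the proof. Your simplex-wise recipe assigns to an $(m,n)$-simplex of $L^{\mathrm{mk}}(B)$ the result of ``Grothendieck gluing along cylinders''; but the cylinder construction produces only a left \emph{quasi}fibration over $B\times\Delta^{m,n}$, not a left fibration, while Lemma~\ref{lem:naivegroth} classifies honest ($\cU$-marked) left fibrations that are \emph{strictly} compatible with pullback along the bisimplicial structure maps. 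To repair each simplex you would need a Reedy fibrant replacement, and these cannot be chosen strictly compatibly with all faces and degeneracies (this is precisely why the maps $\psi^{(n)}$ in the paper are only essentially unique and the comparison squares only homotopy commutative). So your assignment does not define a map of bisimplicial sets into $\Fun(B,\cS^\alpha)$, and the theorem's main difficulty remains unaddressed. The paper resolves it globally rather than simplex by simplex: it writes down one explicit left quasifibration $\cE'\to L(B)\times B$, with
$$
\cE'_n=\coprod_{E_0,\ldots,E_n}\Map(E_0,E_1)\times\cdots\times\Map(E_{n-1},E_n)\times (E_0)_n,
$$
takes a single Reedy fibrant replacement $\cE$, observes that $\cE\to L(B)\times B$ is then an actual left fibration, and lets Lemma~\ref{lem:naivegroth} convert this one object into a genuine map $L(B)\to\Fun(B,\cS)$, which factors through the completion $\Lt(B)$ because the target is a CSS; all coherence is packaged in the single choice of $\cE$.

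A secondary problem: your auxiliary category $L^{\mathrm{mk}}(B)$ with \emph{marking-preserving} morphisms is not DK equivalent to $L(B)$ --- requiring compatibility with the injections into $\cU$ cuts the mapping spaces down drastically (often to the empty space), and ``markings differ by a permutation of $\cU$'' does not repair the mapping spaces inside the category. The markings in $\cS^\alpha$ are pure bookkeeping to make the simplices form a set with strictly functorial pullback; they should not be imposed on morphisms. In the paper's route no marked version of $L(B)$ is needed at all.
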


In what follows we will suppress $\alpha$ from the notation.

\

The right-hand side is a bisimplicial set $R$ given explicitly by the formula
$R_{m,n}=\Hom(B\times\Delta^{m,n},\cS)$. According to 
Lemma~\ref{lem:naivegroth}, this is the set
of   left fibrations on $B\times\Delta^{m,n}$. 

We will have to compare this CSS with another one,  
$\Lt(B)$. One cannot expect them to be literally isomorphic 
--- so it is a bad idea to simply compare the sets of 
$(m,n)$-simplices.

Taking this into account, it makes sense to define some more
general bisimplicial spaces $\cS^{(k)}$ whose set of 
$(m,n)$-simplices is the set of diagrams $E_0\to\ldots\to E_k$
of left fibrations over $\Delta^{m,n}$\footnote{We 
understand the diagram as a commutative diagram in 
$\ssSet$.}.

\subsubsection{Reedy fibrantness of $\cS$}
 It is enough to verify that for a generating set of trivial cofibrations $A\to B$ any left fibration on $A$
extends to a left fibration on $B$. We can think 
$B=\Delta^{m,n}$. Let $X\to A$ be a left fibration. 

We can decompose the composition $X\to A\to B$ to a trivial cofibration followed by a fibration $X\to Y\to B$. 
The map $X\to Y_A=X\times_AY$
is injective. It is also weak equivalence since Reedy model structure is right proper. Thus, it is trivial cofibration.
Now, we cannot expect $Y\to B$ to be automatically
a left fibration. Instead, we use Lemma~\ref{lem:extfib}
to find  a subobject $X_B$ of $Y$ satisfying the following properties.
\begin{itemize}
\item $X=A\times_BX_B$.
\item $X\to X_B$ is a trivial cofibration.
\item $X_B\to B$ is a fibration.
\end{itemize}
 It remains to prove $X_B\to B$ is a left fibration.
This can be seen from the following commutative diagram.

\begin{equation}
\xymatrix{
&{X_n} \ar[r]^{ } \ar[d]&{(X_B)_n}  \ar[d]^{ }  \\
& X_0\times_{A_0}A_n \ar[r] &(X_B)_0\times_{B_0}B_n 
}.
\end{equation}
Since $X\to A$ and $X_B\to B$ are fibrations, the upper and the lower horizontal arrows are weak equivalences.
Since the left vertical arrow is a trivial fibration,
so is the right vertical arrow.

\

The following construction is very instrumental in 
studying further properties of $\cS$.

\subsubsection{Cylinder}
Let $\cC$ be a   simplicial  
category such that the simplicial functor
$$y\mapsto\Map(K,\Map_\cC(x,y))$$
is corepresentable for all $K\in\sSet, x\in\cC$; the corepresenting object will be denoted $K\otimes x$.
We will also use the notation $d(K)=K\otimes *$ where $*$
is a terminal object of $\cC$.

We will apply the construction below to the category 
$\ssSet$, with the simplicial structure given by 
$K\otimes X=d(K)\times X$. In this way two meanings of the notation $d$ (as a functor $\sSet\to\cC$ and as the 
already defined functor $\sSet\to\ssSet$) coincide.

\

Given a sequence of maps
$$s:\cE_0\to\ldots\to\cE_n$$
we will construct a new object $\Cyl(s)$ endowed with canonical maps 
$$\Delta^{n-k}\otimes\cE_k\to\Cyl(s)$$. 
Here is the definition.
\begin{itemize}
\item If $n=0$, $\Cyl(s)=\cE_0$.
\item If $n=1$, $\Cyl(s)=(\Delta^1\otimes\cE_0) 
\coprod^{\cE_0}\cE_1$,
where the map $\cE_0\to\Delta^1\otimes\cE_0$ is induced 
by $\{1\}\to[1]$.
\item In general, by induction, 
$$\Cyl(s)=(\Delta^n\otimes\cE_0)\coprod^{\Delta^{n-1}
\otimes\cE_0}\Cyl(s_{\geq 1}),$$
\end{itemize}
where the map $[n-1]\to[n]$ is $d_0$ and the map 
$\Delta^{n-1}\otimes\cE_0\to\Cyl(s_{\geq 1})$ is induced 
by the map $\cE_0\to\cE_1$.

The construction of $\Cyl(s)$ is functorial in $s$ in two 
senses; first of all, for fixed $n$, it is functorial
with respect to maps of sequences. In particular, for
$s_0: *\to\ldots\to *$, we get $\Cyl(s_0)=d(\Delta^n)$,
so we have a canonical map $\Cyl(s)\to d(\Delta^n)$.

The second type of functoriality is with respect to the simplicial operations $a:[m]\to[n]$. If $s:\cE_0\to\ldots\cE_n$ is a sequence of maps, one has $a^*(s):\cE_{a(0)}\to
\ldots\to\cE_{a(m)}$ and this leads to a natural
isomorphism
\begin{equation}
\Cyl(a^*(s))\to d(\Delta^m)\times_{d(\Delta^n)}\Cyl(s).
\end{equation}

\subsubsection{}
The map $\Cyl(s)\to d(\Delta^n)$ defined above is very nice
homotopically, for instance, if $\cE_i$ are spaces,
$\Cyl(s)$ becomes a left fibration after Reedy fibrant replacement. Thus, its only drawback is that it is not
Reedy fibration. Fortunately, it is a quasifibration
in the sense that we will now define, and quasifibrations 
are almost as good as fibrations.

\subsection{Quasifibrations}

Let $\cC$ be a right proper model category (as $\sSet$ or
$\ssSet$ with Reedy model structure). A map $f:X\to Y$
is called {\sl quasifibration} if for any weak equivalence
$Z\to T$ over $Y$ the base change $X_Z\to X_T$ is a weak 
equivalence. Note that this is not the standard notion
of quasifibration; we took it from \cite{KV}.

Since $\cC$ is right proper, fibrations are 
quasifibrations. An example of quasifibration which is not
a fibration: the projection $X=Y\times Z$ with $Z$ not 
fibrant.

One can easily prove
\begin{lem}$f:X\to Y$ in $\ssSet$ is a quasifibration
iff $f_n:X_n\to Y_n$ are quasifibrations in $\sSet$.
\end{lem}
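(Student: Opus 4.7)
Both directions rest on the fact that limits and Reedy weak equivalences in $\ssSet$ are computed level-wise in $\sSet$. The forward direction additionally requires promoting a weak equivalence over $Y_n$ in $\sSet$ to one over $Y$ in $\ssSet$ by tensoring with $d(\Delta^n)$, and then isolating a single summand of a disjoint-union decomposition at level $n$.

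\emph{Direction ($\Leftarrow$).} Suppose each $f_n$ is a quasifibration and let $g:Z\to T$ be a Reedy weak equivalence over $Y$. Then every $g_n:Z_n\to T_n$ is a weak equivalence in $\sSet$ over $Y_n$. Since pullbacks in $\ssSet$ are computed level-wise, $(X\times_Y Z)_n=X_n\times_{Y_n}Z_n$, and the assumption on $f_n$ gives that each $(X\times_Y Z)_n\to (X\times_Y T)_n$ is a weak equivalence, so the total map is a Reedy weak equivalence.

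\emph{Direction ($\Rightarrow$).} Suppose $f$ is a quasifibration. Fix $n$ and let $a:Z\to T$ be a weak equivalence in $\sSet$ with structure maps $\phi:Z\to Y_n$ and $\psi:T\to Y_n$. Using $X_n=\Map(d(\Delta^n),X)$ and the simplicial adjunction $\Hom_\sSet(S,\Map(A,B))=\Hom_\ssSet(A\times c(S),B)$, promote $\phi$ and $\psi$ to morphisms $\tilde\phi:\tilde Z\to Y$ and $\tilde\psi:\tilde T\to Y$, where $\tilde Z:=d(\Delta^n)\times c(Z)$ and $\tilde T:=d(\Delta^n)\times c(T)$. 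At bi-level $(k,l)$ the induced map $\tilde a:\tilde Z\to\tilde T$ is the disjoint union $\bigsqcup_{\alpha\in(\Delta^n)_k}a_l$, so (coproducts preserving weak equivalences in $\sSet$) $\tilde a$ is a Reedy weak equivalence over $Y$. By the quasifibration hypothesis on $f$, the base change $X\times_Y\tilde Z\to X\times_Y\tilde T$ is a Reedy weak equivalence, hence a weak equivalence in $\sSet$ at level $n$.

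\emph{Isolating the summand (the hard part).} The hardest step is to extract the desired conclusion from this level-$n$ equivalence. Since coproducts in $\Set$ commute with pullbacks, and naturally in $l$,
\[(X\times_Y\tilde Z)_n=\bigsqcup_{\alpha\in(\Delta^n)_n}X_n\times_{Y_n,\,\alpha^*\phi}Z,\]
and analogously for $\tilde T$. The summand indexed by $\alpha=\id_{[n]}\in(\Delta^n)_n=\Hom([n],[n])$ is precisely $X_n\times_{Y_n,\phi}Z\to X_n\times_{Y_n,\psi}T$ induced by $a$. Since a disjoint union of simplicial-set maps is a weak equivalence iff each summand is, this identity summand is a weak equivalence, which is what we wanted. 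The real obstacle is only bookkeeping: carrying out the adjunction identification and picking out $\id_{[n]}$ as a distinguished element of the $n$-simplices of $\Delta^n$.
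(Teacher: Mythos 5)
Your proof is correct. The paper does not actually supply an argument for this lemma (it is stated with ``One can easily prove''), so there is nothing to compare against; your write-up fills the gap in the natural way. The easy direction is exactly as you say, since Reedy weak equivalences and pullbacks in $\ssSet$ are computed levelwise. For the harder direction, the two points that needed care are both handled correctly: the adjunction $\Hom_{\sSet}(S,\Map(A,X))\cong\Hom_{\ssSet}(A\times c(S),X)$ promotes a weak equivalence $a:Z\to T$ over $Y_n$ to the levelwise weak equivalence $d(\Delta^n)\times c(a)$ over $Y$, and since $d(\Delta^n)$ is discrete in the second simplicial direction, level $n$ of the pulled-back map decomposes ($\alpha$-componentwise, by extensivity of coproducts in $\Set$) as a disjoint union of the maps $X_n\times_{Y_n,\alpha^*\phi}Z\to X_n\times_{Y_n,\alpha^*\psi}T$, from which the summand at $\alpha=\id_{[n]}$ gives the claim, using that a coproduct of maps of simplicial sets is a weak equivalence iff each summand is.
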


We will now define left quasifibration as a quasifibration
$f:X\to Y$ for which the embedding $[0]=\{0\}\to [n]$ induces, for each $n$, a weak equivalence
$$ X_n\to X_0\times_{Y_0}Y_n.$$
\begin{lem}Given a commutative diagram
\begin{equation}
\xymatrix{
&X \ar[r]^g\ar[d]^f &X'\ar[d]^{f'} \\
&Y \ar[r]^h &Y'
}
\end{equation}
with $g,h$ weak equivalences and $f,f'$ quasifibrations.
Then $f$ is a left quasifibration iff $f'$ is a left quasifibration.
\end{lem}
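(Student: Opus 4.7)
\medskip

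The plan is to work componentwise. Weak equivalences in the Reedy model structure on $\ssSet$ are detected levelwise, so $g_n:X_n\to X'_n$ and $h_n:Y_n\to Y'_n$ are weak equivalences of simplicial sets for every $n$; and by the preceding lemma, $f_0,f'_0$ are quasifibrations in $\sSet$ (which is right proper). For each $n$ consider the canonical commutative square
$$
\xymatrix{
X_n \ar[r]^{g_n} \ar[d] & X'_n \ar[d] \\
X_0\times_{Y_0}Y_n \ar[r] & X'_0\times_{Y'_0}Y'_n.
}
$$
The top arrow is a weak equivalence, so by the two-out-of-three property it suffices to show that the bottom arrow is a weak equivalence. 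Given that, the left vertical arrow is a weak equivalence if and only if the right vertical arrow is, which is exactly the extra condition distinguishing a left quasifibration among quasifibrations.

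To handle the bottom arrow, I would factor it through $X'_0\times_{Y'_0}Y_n$:
$$
X_0\times_{Y_0}Y_n \;\longrightarrow\; X'_0\times_{Y'_0}Y_n \;\longrightarrow\; X'_0\times_{Y'_0}Y'_n.
$$
The second arrow is the base change of the weak equivalence $h_n:Y_n\to Y'_n$ along the projection $X'_0\times_{Y'_0}Y'_n\to Y'_n$, which is a base change of the quasifibration $f'_0$ and hence itself a quasifibration; so this arrow is a weak equivalence.

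For the first arrow, I would identify $X'_0\times_{Y'_0}Y_n=(X'_0\times_{Y'_0}Y_0)\times_{Y_0}Y_n$ and first show that $X_0\to X'_0\times_{Y'_0}Y_0$ is a weak equivalence: indeed the composition with the projection to $X'_0$ equals $g_0$, a weak equivalence, while the projection $X'_0\times_{Y'_0}Y_0\to X'_0$ is the base change of $h_0$ along the quasifibration $f'_0$ and so is a weak equivalence — apply two-out-of-three. This exhibits a weak equivalence $X_0\to X'_0\times_{Y'_0}Y_0$ between two objects that are quasifibered over $Y_0$ (namely via $f_0$ and the base change of $f'_0$). Pulling back along $Y_n\to Y_0$ produces exactly the first arrow above.

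The main obstacle is justifying that this last pullback remains a weak equivalence — a ``gluing lemma'' saying that a weak equivalence between quasifibrations over $Y_0$ stays a weak equivalence after arbitrary base change. I would prove it by replacing each quasifibration $p:A\to Y_0$ by a Kan fibration: factor $p=\bar p\circ j$ with $j$ a trivial cofibration and $\bar p$ a fibration, note that $j$ being a weak equivalence and $p$ a quasifibration force $A\times_{Y_0}Y_n\to \bar A\times_{Y_0}Y_n$ to be a weak equivalence (right properness applied to $\bar A\times_{Y_0}Y_n\to \bar A$ which is a fibration), and so reduce to the case of an honest weak equivalence between Kan fibrations over $Y_0$, where Brown's lemma (factor as trivial cofibration followed by trivial fibration, and use that both are preserved under pullback along fibrations in a right proper model category) finishes the proof.
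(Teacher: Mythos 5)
Your overall strategy is sound --- the paper leaves this lemma as an exercise, and your reduction is the natural one: by two-out-of-three it suffices to show that the comparison map $X_0\times_{Y_0}Y_n\to X'_0\times_{Y'_0}Y'_n$ is a weak equivalence, and your treatment of the second leg $X'_0\times_{Y'_0}Y_n\to X'_0\times_{Y'_0}Y'_n$ (the quasifibration property of $f'_0$ applied to the weak equivalence $Y_n\to Y'_n$ over $Y'_0$) and of the map $X_0\to X'_0\times_{Y'_0}Y_0$ is correct. The gap is in your proof of the ``gluing lemma''. You factor the quasifibration $p\colon A\to Y_0$ as a trivial cofibration $j\colon A\to\bar A$ followed by a fibration $\bar p$, and justify that $A\times_{Y_0}Y_n\to\bar A\times_{Y_0}Y_n$ is a weak equivalence by ``right properness applied to $\bar A\times_{Y_0}Y_n\to\bar A$, which is a fibration''. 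But that map is the base change of the structure map $Y_n\to Y_0$ along $\bar p$; since no fibrancy of $Y$ is assumed (and in the intended applications, e.g.\ cylinders, it fails), $Y_n\to Y_0$ is an arbitrary map and $\bar A\times_{Y_0}Y_n\to\bar A$ need not be a fibration, so right properness does not apply. Worse, the statement this step is supposed to establish --- that a weak equivalence over $Y_0$ from a quasifibration to a fibration remains a weak equivalence after pullback along $Y_n\to Y_0$ --- is itself an instance of the gluing lemma you are proving, so as written the reduction is circular at exactly the crucial point.

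The fix is to factor the base map rather than the quasifibrations. Write $Y_n\to\tilde Y\to Y_0$ with the first map a trivial cofibration and the second a fibration. If $A\to Y_0$ and $C\to Y_0$ are the two quasifibrations and $w\colon A\to C$ is the weak equivalence over $Y_0$, then $A\times_{Y_0}Y_n\to A\times_{Y_0}\tilde Y$ and $C\times_{Y_0}Y_n\to C\times_{Y_0}\tilde Y$ are weak equivalences directly by the definition of quasifibration (since $Y_n\to\tilde Y$ is a weak equivalence over $Y_0$), while $A\times_{Y_0}\tilde Y\to C\times_{Y_0}\tilde Y$ is the pullback of $w$ along $C\times_{Y_0}\tilde Y\to C$, which now genuinely is a fibration (a base change of $\tilde Y\to Y_0$), hence a weak equivalence by right properness of $\sSet$. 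Two-out-of-three then gives that $A\times_{Y_0}Y_n\to C\times_{Y_0}Y_n$ is a weak equivalence; with this lemma in hand the rest of your argument goes through, and the fibration replacement and Brown's lemma become unnecessary.
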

\begin{proof}
Exercise.
\end{proof}
 
\begin{lem}\label{lem:cyl}
\begin{itemize}
\item[1.] Let $s:E_0\to\ldots E_n$ be a sequence of left
fibrations over $B$. Then the cylinder $\Cyl(s)\to B\times d(\Delta^n)$ is a left quasifibration. Thus,
its Reedy fibrant replacement is a left fibration.
\item[2.] Conversely, any left fibration $E\to B\times d(\Delta^n)$, such that  $E_i$ is the fiber of $E$ at $\{i\}\in[n]$, is equivalent to $\Cyl(s)$ for some  sequence of maps $s:E_0\to\ldots\to E_n$ of left fibrations over $B$.
\item[3.] Moreover, the maps $E_i\to E_{i+1}$ are defined uniquely up to homotopy. 
\end{itemize}
\end{lem}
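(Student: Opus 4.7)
The plan is to prove the three parts in order. Part (1) carries most of the technical weight, and parts (2) and (3) then follow by a base-change/classification argument, using (1) in the case $n=1$ as the key input.

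For part (1), I would proceed by induction on $n$. The base case $n=0$ is immediate since $\Cyl(s)=E_0\to B$ is already a left fibration and every left fibration is a left quasifibration. For the inductive step, I exploit the defining pushout
\[
\Cyl(s) = (d(\Delta^n)\times E_0) \coprod^{d(\Delta^{n-1})\times E_0} \Cyl(s_{\geq 1}).
\]
The summand $d(\Delta^n)\times E_0\to d(\Delta^n)\times B$ is obviously a left quasifibration (it is literally a product of a left fibration with $d(\Delta^n)$), and $\Cyl(s_{\geq 1})\to d(\Delta^{n-1})\times B$ is one by induction. Since the map $d(\Delta^{n-1})\times E_0\to d(\Delta^n)\times E_0$ is a cofibration and the Reedy model structure is right proper, the pushout remains a quasifibration level-wise. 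To finish I must verify the defining property of a left quasifibration, namely that for each simplicial degree $k$ the map $\Cyl(s)_k\to \Cyl(s)_0\times_{(B\times d(\Delta^n))_0}(B\times d(\Delta^n))_k$ is a weak equivalence; this is done by decomposing a $k$-simplex into the part living over a single vertex $\{i\}\in[n]$ (contributing $(E_i)_k$) and the combinatorial data recording the transition pattern in $[n]$, and matching these against the right-hand side using that each individual $E_i\to B$ satisfies this property. The Reedy fibrant replacement statement then follows from the lemma stating that a quasifibration whose Reedy fibrant replacement is a fibration satisfying the extra weak-equivalence condition is a left fibration.

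For part (2), given a left fibration $f:E\to B\times d(\Delta^n)$, set $E_i = f^{-1}(B\times\{i\})$; base change of a left fibration is a left fibration, so each $E_i\to B$ is one. To produce the comparison maps, for each edge $\{i,i+1\}\subset [n]$ consider the base change $E^{(i)}\to B\times d(\Delta^1)$ of $E$ along $B\times d(\Delta^1)\hookrightarrow B\times d(\Delta^n)$; this is again a left fibration with fibers $E_i, E_{i+1}$. Applying part (1) in the case $n=1$ (combined with the Reedy fibrant replacement step) yields a map $f_i:E_i\to E_{i+1}$ of left fibrations over $B$ together with a weak equivalence $\Cyl(f_i)\to E^{(i)}$. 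Assembling these gives the sequence $s$, and one obtains a comparison map $\Cyl(s)\to E$ by induction on $n$, using the same pushout decomposition as in (1). It is a weak equivalence because it is so on each fiber over $B\times\{i\}$ and on each edge $B\times d(\Delta^1)$, and the Segal-type property of left fibrations (Lemma \ref{lem:crit-left}(3)) implies that these checks suffice.

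For part (3), the maps $f_i$ are, by construction, encoded by the base change $E^{(i)}$, which is determined by $E$. Uniqueness up to homotopy thus reduces to the statement that a left fibration $E^{(i)}\to B\times d(\Delta^1)$ with prescribed fibers $E_i, E_{i+1}$ determines the map $f_i$ uniquely up to homotopy; this is the $n=1$ instance of part (2), where the classifying space of such extensions is equivalent to the mapping space $\Map_{/B}(E_i, E_{i+1})$. I expect the main obstacle to be part (1), specifically the combinatorial verification that the pushout assembling $\Cyl(s)$ preserves the weak equivalence $X_k\to X_0\times_{Y_0}Y_k$: each $k$-simplex must be tracked through the iterated pushouts, and one has to check that the resulting decomposition maps correctly onto the right-hand side, which is where the care with the Reedy fibrant structure (and, implicitly, Lemma \ref{lem:extfib}) is needed.
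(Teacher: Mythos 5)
There is a genuine gap in your parts (2) and (3). You claim that ``applying part (1) in the case $n=1$ (combined with the Reedy fibrant replacement step) yields a map $f_i:E_i\to E_{i+1}$'', but part (1) goes in the wrong direction: it takes a given sequence of maps and produces a left quasifibration, and it cannot extract a map of fibers from a given left fibration $E^{(i)}\to B\times d(\Delta^1)$. The same problem recurs when you ``obtain a comparison map $\Cyl(s)\to E$ by induction'': to define it on the pushout summand $E_0\times d(\Delta^n)$ you need an extension of the inclusion $E_0\hookrightarrow E$ to a map $E_0\times d(\Delta^n)\to E$ over $B\times d(\Delta^n)$, and nothing in your argument supplies it. The missing ingredient is precisely the observation on which the paper's proof rests: for a left fibration $E\to B\times d(\Delta^1)$ and any $X\to B$, the restriction
$$\Map_{B\times d(\Delta^1)}(X\times d(\Delta^1),E)\to \Map_{B\times d(\Delta^1)}(X,E),$$
induced by $\{0\}\to\Delta^1$, is a trivial fibration (a consequence of the defining property of left fibrations, cf.\ Lemma~\ref{lem:crit-left}). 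Applied to $X=E_i$, this extends $E_i\hookrightarrow E^{(i)}$, essentially uniquely, to $E_i\times d(\Delta^1)\to E^{(i)}$; restricting to the fiber over $1$ produces $f_i:E_i\to E_{i+1}$, the contractibility of the space of such extensions is exactly what gives the uniqueness in (3), and iterating the same extension property is what produces the comparison map $\Cyl(s)\to E$ in (2). Your remark in (3) that ``the classifying space of such extensions is equivalent to the mapping space'' is essentially this statement, but it is asserted rather than proved, and it does not follow from part (1).

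Two lesser points. In part (1), justifying that the pushout ``remains a quasifibration level-wise'' by right properness is not correct as stated: properness concerns base change of weak equivalences along (co)fibrations, not stability of quasifibrations under pushout, so the fiberwise verification over $B\times d(\Delta^n)$ really has to be carried out (the simplex-by-simplex analysis you gesture at is the right kind of argument, and the paper itself leaves this part essentially to the reader). And when you check that $\Cyl(s)\to E$ is an equivalence fiberwise, note that the fiberwise criterion is stated for maps of left fibrations, so you must first replace the quasifibration $\Cyl(s)$ by its Reedy fibrant replacement before invoking it.
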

\begin{proof}
Here is a key observation. Given  a left fibration $E\to B\times d(\Delta^1)$ and a map $X\to B$, the restriction
$$ \Map_{B\times d(\Delta^1)}(X\times d(\Delta^1),E)\to
\Map_{B\times d(\Delta^1)}(X,E)$$
induced by embedding $\{0\}\to\Delta^1$, is a trivial fibration. Thus, if $E_i$ is the fiber of $E$ at $i=0,1$,
the embedding $E_0\to E$ extends, essentially uniquely, 
to $E_0\times d(\Delta^1)\to E$ which yields, in particular,
$E_0\to E_1$.
\end{proof}

\subsection{Highlights of the proof}

\subsubsection{}
Recall that the bisimplicial sets $\cS$ and $\cS^{(n)}$
are defined explicitly by description of the sets of their
$(p,q)$-simplices as sets of left fibrations (resp.,
sequences $E_0\to\ldots\to E_n$ of left fibrations) on $\Delta^{p,q}$.

We will now use the construction of $\Cyl(s)$ to construct
an (essentially unique) map 
$\psi^{(n)}:\cS^{(n)}\to\Fun(d(\Delta^n),\cS)$.

Equivalently, we have to present a map $\cS^{(n)}\times
d(\Delta^n)\to\cS$, or, in other words, a left fibration
on $\cS^{(n)}\times d(\Delta^n)$.

One has a universal sequence of left fibrations on $\cS^{(n)}$,
\begin{equation}
\label{eq:univ-sequence}
\cE_0\to\cE_1\to\ldots\to\cE_n,
\end{equation}
classified by the identity map on $\cS^{(n)}$.
Applying the cylinder construction to the category 
$\ssSet_{/\cS^{(n)}}$ and to the sequence 
(\ref{eq:univ-sequence}) in it, we get a left quasifibration on $\cS^{(n)}\times d(\Delta^n)$.
Its Reedy fibrant replacement $\cE$ can be chosen so that
the restriction of $\cE$ to $\cS^{(n)}\times\{k\}$ is precisely $\cE_k$.

The maps $\psi^{(n)}$ are essentially unique. So, it is natural that any map $a:[m]\to[n]$ gives rise to a homotopy commutative diagram
\begin{equation}
\xymatrix{
&{\cS^{(n)}}\ar[rr]^{\psi^{(n)}}\ar[d]&{}&{\Fun(d(\Delta^n),\cS)}\ar[d] \\
&{\cS^{(m)}}\ar[rr]^{\psi^{(m)}}&{} &{\Fun(d(\Delta^m),\cS)} 
}
\end{equation}

\subsubsection{}Let us explain how to verify that 
$\psi^{(n)}:\cS^{(n)}\to\Fun(d(\Delta^n),\cS)$ is a homotopy
equivalence over $\cS^{n+1}$. It is sufficient to verify that, for each $\eta:K\to\cS^{n+1}$, the map
\begin{equation}\label{eq:pi0s}
\pi_0(\Map_{\cS^{n+1}}(K,\cS^{(n)}))\to
\pi_0(\Map_{\cS^{n+1}}(K,\Fun(d(\Delta^n),\cS))),
\end{equation}
induced by $\psi^{(n)}$, is a bijection.

Now, all objects involved have a modular interpretation.
The map $\eta$ is given by a collection of left fibrations
$H_0,\ldots,H_n$ on $K$. An element of the right-hand side of (\ref{eq:pi0s}) is given by a sequence 
$$ H_0\to\ldots\to H_n$$
of maps between these left fibrations, whereas an element
of the left-hand side corresponds to a left fibration
$H\to K\times d(\Delta^n)$. Bijectivity of (\ref{eq:pi0s})
now follows from Lemma~\ref{lem:cyl}.
\subsubsection{Proof of \ref{prp:S-CSS}}
Let us prove $\cS$ is a Segal space. We have to verify that 
$$\cS_n\to \cS_1\times_{\cS_0}\ldots\times_{\cS_0}\cS_1$$
is a weak equivalence. We will verify an even stronger claim --- that the map 
$$\cS^{d(\Delta^n)}\to\cS^{d(\Delta^1)}\times_{\cS}\ldots
\times_{\cS}\cS^{d(\Delta^1)}$$
is a weak equivalence. We use the homotopy equivalences 
$\psi^{(n)}$. We have a homotopy commutative diagram
 
\begin{equation}
\xymatrix{
&{\cS^{(n)}} \ar[r]\ar[d]&{\cS^{d(\Delta^n)}}  \ar[d]  \\
&{\cS^{(1)}\times_{\cS}\ldots\times_{\cS}\cS^{(1)}}\ar[r] 
&{\cS^{d(\Delta^1)}\times_{\cS}\ldots\times_{\cS}
\cS^{d(\Delta^1)}} 
}.
\end{equation}
Since the horizontal arrows are equivalences, the right vertical map is a fibration, and the left vertical map is a bijection, the claim follows.

In order to verify completeness of $\cS$, one defines
$\cS^{we}\subset\cS^{(1)}$ classifying weak equivalences
$\cE_0\to\cE_1$ of left fibrations. Then completeness follows from the fact that $\psi^{(1)}$ induces an equivalence of $\cS^{we}$ with $\cS^{\Delta^1}$.

\subsection{Proof of \ref{thm:left-gr}}

Recall that $\Lt(B)$ is defined as CSS fibrant replacement 
of the Segal space $L(B)$ defined as the nerve of the simplicial category $\fL(B)$ of left fibrations $X\to B$.

Note that we have a bijection $\Ob(\fL(B))\to\Hom(B,\cS)$.
Moreover, homotopy equivalence $\psi^{(1)}$ defines, for any pair of left fibrations $E_i\to B$, $i=0,1$, represented by $e_i:K\to\cS$, an equivalence $\Map_B(E_0,E_1)\to\Map_{\cS^B}(e_0,e_1)$. 

Thus, we more or less know that \ref{thm:left-gr} has to be true; we only need to construct a canonical map from 
$\Lt(B)$ to $\Fun(B,\cS)$.
To construct this map, it suffices 
to present a left fibration $\cE$ on $L(B)\times B$.

 One has
\begin{eqnarray}
L(B)_0&=&\Ob(\fL(B)),\\
\nonumber L(B)_n&=&\coprod_{E_0,\ldots,E_n\in\Ob(\fL(B))}
\Map(E_0,E_1)\times\ldots\times\Map(E_{n-1},E_n).
\end{eqnarray}

We will define $\cE$ as a fibrant replacement of the left 
quasifibration $\cE'\to L(B)\times B$ explicitly given by
the formula
\begin{equation}
\cE'_n=\coprod_{E_0,\ldots,E_n\in\Ob(\fL(B))}\Map(E_0,E_1)
\times\ldots\times\Map(E_{n-1},E_n)\times (E_0)_n.
\end{equation}
The above formula defines a simplicial space: a map 
$a:[m]\to [n]$ defines $a^*:\cE'_n\to \cE'_m$ induced by
the map
\begin{equation}
\Map(E_0,E_1)\times\ldots\times\Map(E_{a(0)-1},E_{a(0)})
\times E_0\to E_{a(0)}
\end{equation}
(identity of $a(0)=0$).
The projection $\cE'\to L(B)\times B$ is induced by 
$E_0\to B$. It can be easily proven to be a left 
quasifibration.

A fibrant replacement $\cE$ of $\cE'$ is automatically a left fibration, so it defines a map
$\psi:\Lt(B)\to\Fun(B,\cS)$. The maps is easily verified to be DK equivalence.
 
\begin{crl}The CSS $\cS$ defined as in~\ref{sss:S}
is the infinity category of spaces.
\end{crl}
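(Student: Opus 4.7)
The plan is to apply Theorem~\ref{thm:left-gr} with $B=*$ (terminal bisimplicial set). Since $\Fun(*,\cS^\alpha)=\cS^\alpha$, the theorem at once produces an equivalence of CSS
$$\Lt_\alpha(*)\stackrel{\sim}{\to}\cS^\alpha,$$
so the whole task reduces to identifying the left-hand side with the infinity category of spaces introduced in Section~2.

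First I would describe objects and mapping spaces of $\fL(*)$. By the corollary following Lemma~\ref{lem:crit-left}, a bisimplicial set $X$ is a left fibration over $*$ iff it is equivalent to $c(X_0)$ with $X_0$ Kan. Hence (up to equivalence, and restricted to fibers of cardinality $\leq\alpha$) the objects of $\fL(*)$ are exactly Kan simplicial sets of bounded size. For mapping spaces, using that $c$ preserves products,
$$\Map_{\ssSet}(c(K),c(L))_n=\Hom(c(K\times\Delta^n),c(L))=\Hom_\sSet(K\times\Delta^n,L)=\Fun(K,L)_n,$$
and $\Fun(K,L)$ is Kan whenever $L$ is Kan. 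Thus $\fL(*)$ is (equivalent to) precisely the simplicial category of Kan simplicial sets used in Section~2 to define $\cS$ as a homotopy coherent nerve.

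Second I would pass from this simplicial category to the CSS model. As recorded in \ref{sss:scat}, a Reedy fibrant replacement of the bisimplicial set $\wt{\fL(*)}_{m,n}=N_m(\fL(*)_n)$ is a Segal space, and by the result at the end of Section~\ref{sect:SS} its CSS completion $\Lt_\alpha(*)$ is DK equivalent to it. So $\Lt_\alpha(*)$ is nothing but the CSS attached to the simplicial category of Kan simplicial sets.

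The only remaining point, and the main obstacle, is comparing this CSS with the quasicategory $\cS$ from Section~2, which was defined as the homotopy coherent nerve of the same simplicial category. This is exactly the content of the equivalence of models discussed in Section~6.10: the Joyal--Tierney Quillen equivalence between $\ssSet$ (with the CSS model structure) and $\sSet$ (with the Joyal model structure), combined with the Bergner equivalence between $\sSet$ and $\sCat$, sends our simplicial category of Kan complexes to the corresponding CSS, and by Toen's rigidity result \cite{T} the outcome is independent of the chain of Quillen equivalences used. Applying this to the simplicial category of Kan simplicial sets yields the required equivalence $\cS^\alpha\simeq\cS$. \qed
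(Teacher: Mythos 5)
Your proposal is correct and follows exactly the paper's route: the paper's entire proof consists of applying Theorem~\ref{thm:left-gr} with $B=*$, so that $\Lt_\alpha(*)\simeq\Fun(*,\cS^\alpha)=\cS^\alpha$, with the identification of $\Lt_\alpha(*)$ as the category of spaces (essentially constant left fibrations over a point, i.e.\ Kan complexes, via the corollary to Lemma~\ref{lem:crit-left}) left implicit. Your extra steps --- exhibiting $\fL(*)$ as (DK equivalent to) the simplicial category of Kan complexes and invoking the Joyal--Tierney/Bergner/Toen comparison to match the resulting CSS with the quasicategory $\cS$ of Section~2 --- merely spell out details the paper takes for granted, and are harmless.
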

\begin{proof}
Apply Theorem \ref{thm:left-gr} to $B=*$.
\end{proof}

\newpage
\section{Yoneda lemma. Applications}
\label{sect:yoneda}

In the presentation of Yoneda lemma we follow \cite{KV}. 

\subsection{Presheaves and Yoneda lemma}

\subsubsection{The opposite $\infty$-category}

The functor $\op$ on totally ordered finite sets carries
any such set to the same set with the opposite order.
Considered as a functor $\op:\Delta\to\Delta$, it carries 
$[n]$ to itself, but carries $d_i$ to $d_{n-i}$ and $s_i$ to $s_{n-i}$.
  Let us decide that, given $\cC\in\ssSet$, its opposite $\cC^\op$ will be the bisimplicial set obtained by precomposing it with $\op\times\id:\Delta\times\Delta\to\Delta\times\Delta$. It is clear that this operation
carries CSS to CSS.

This means that the ``spaces'' $\cC^\op_n$ and $\cC_n$
coincide; only the faces and the degeneracies between them
reshuffle.

\begin{Rem}This is not an obvious choice. For instance,
it does not commute with the construction of classifying
CSS of a category $C\mapsto B(C)$.
\end{Rem}

\subsubsection{Presheaves}
Given a CSS $\cC$, we define a CSS $P(\cC)$
as $\Fun(\cC^\op,\cS)$. 

Our aim is to construct a fully faithful functor 
$Y:\cC\to P(\cC)$ called Yoneda embedding.  
 
\subsubsection{Twisted arrows}
 
We now combine the functor $\op$ with the identity to get a 
new functor defined below.

Recall that for two conventional categories $C$ and $D$ their join $C\star D$ is defined by the formulas
\begin{eqnarray}
\Ob(C\star D)&=&\Ob(C)\sqcup\Ob(D).\\
\Hom_{C\star D}(c,c')&=&\Hom_C(c,c');\
\Hom_{C\star D}(d,d')=\Hom_D(d,d').\\
\Hom_{C\star D}(c,d)&=&\{*\}; \
\Hom_{C\star D}(d,c)=\emptyset.
\end{eqnarray}

Let $I$ be a finite totally ordered set. We define
$\tau(I)=I^\op\star I$. This defines a functor $\tau:\Delta\to\Delta$ and a pair of natural transformations
$\id\to\tau,\ \op\to\tau$ defined by the obvious embeddings
$I\to\tau(I),\ I^\op\to\tau(I)$.

For a simplicial object $X$ we define a new simplicial object $\Tw(X)$ as the composition
$$\Delta^\op\stackrel{\tau}{\to}\Delta^\op\stackrel{X}{\to}\Set.$$

As a result, we have a simplicial object $\Tw(X)$ endowed with a canonical 
map $p:\Tw(X)\to X\times X^\op$.

\begin{prp}
Let $\cC$ be a Segal space. Then the map 
$p:\Tw(\cC)\to\cC\times\cC^\op$ is a left fibration.
\end{prp}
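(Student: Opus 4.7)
The plan is to verify condition (3) of Lemma~7.2.3: for each $n$, the map
$$ \Tw(\cC)_n \to \Tw(\cC)_0 \times_{(\cC \times \cC^\op)_0} (\cC \times \cC^\op)_n$$
induced by $\{0\} \hookrightarrow [n]$ is a trivial Kan fibration. Since $\tau([n]) = [n]^\op \star [n]$ is an ordered set with $2n+2$ elements, it is canonically isomorphic to $[2n+1]$, giving $\Tw(\cC)_n = \cC_{2n+1}$. Under this identification, the canonical map $[n] \to \tau([n])$ is the embedding onto the last $n+1$ vertices $\{n+1, \ldots, 2n+1\}$; the canonical map $[n]^\op \to \tau([n])$, composed with the iso $[n] \stackrel{\sim}{\to} [n]^\op$, is the embedding onto the first $n+1$ vertices $\{0, \ldots, n\}$; and $\tau(\{0\} \hookrightarrow [n])$ is the middle edge $\{n, n+1\}$.

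The key step is to identify the target with $\cC(K)$, where $K \subset \Delta^{2n+1}$ is the sub-simplicial-set
$$ K = \Delta^n \amalg_{\Delta^0} \Delta^1 \amalg_{\Delta^0} \Delta^n, $$
the union of the three faces above glued at the shared vertices $n$ and $n+1$. Since $d$ preserves colimits and $\Map(-,\cC)$ turns them into limits, $\cC(K) = \cC_n \times_{\cC_0} \cC_1 \times_{\cC_0} \cC_n$. A direct check -- using that, on the $\cC^\op$-side, the map $(\cC^\op)_n \to (\cC^\op)_0$ coming from $\{0\} \hookrightarrow [n]$ in $\Delta^\op$ corresponds via the order-reversing iso to the face map $\cC_n \to \cC_0$ at vertex $n$ -- matches $\cC(K)$ with the pullback $\Tw(\cC)_0 \times_{(\cC \times \cC^\op)_0} (\cC \times \cC^\op)_n$, and identifies our map as the restriction $\cC_{2n+1} \to \cC(K)$ induced by $K \hookrightarrow \Delta^{2n+1}$. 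Correctly tracking the $\op$-twisting here is purely combinatorial and forms the main bookkeeping obstacle.

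Having reduced to showing $\cC_{2n+1} \to \cC(K)$ is a trivial Kan fibration, I would argue in two steps. For the fibration property: the inclusion $d(K) \hookrightarrow d(\Delta^{2n+1})$ is a componentwise (hence Reedy) cofibration of bisimplicial sets, so Reedy fibrancy of $\cC$ implies the induced map on $\Map(-,\cC)$ is a Kan fibration. For the weak equivalence property: the spine of $[2n+1]$ equals the union of the spines of the two $\Delta^n$-faces and the middle edge, so $K$ contains $\Spine(2n+1)$. The Segal condition on $\cC$ gives a trivial Kan fibration $\cC_{2n+1} \to \cC(\Spine(2n+1))$; applied to each $\Delta^n$-factor in the pushout decomposition of $K$, it also yields a trivial Kan fibration $\cC(K) \to \cC(\Spine(2n+1))$. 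Two-out-of-three for weak equivalences on the resulting commutative triangle then shows $\cC_{2n+1} \to \cC(K)$ is a weak equivalence, hence a trivial Kan fibration, completing the proof.
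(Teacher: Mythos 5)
Your proof is correct and follows essentially the same route as the paper: it reduces to condition (3) of Lemma~\ref{lem:crit-left} and identifies the map in question with the restriction $\Map(d(\Delta^{2n+1}),\cC)\to\Map(d(K),\cC)$ for $K=\Delta^n\sqcup^{\Delta^0}\Delta^1\sqcup^{\Delta^0}\Delta^n$, which is exactly the paper's reduction. The only differences are cosmetic: you spell out the final claim (``such a map is a trivial fibration for any Segal space'') via the spine inclusion and two-out-of-three, which the paper leaves terse, while, like the paper, you leave aside the verification that $p$ itself is a Reedy fibration.
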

\begin{proof}
We skip the verification of the fact that $p$ is a Reedy 
fibration.

 We have to check that for any $n$ the map
\begin{equation}\label{eq:tw-left}
\Tw(\cC)_n\to \Tw(\cC)_0\times_{\cC_0\times\cC^\op_0}
(\cC_n\times\cC^\op_n)
\end{equation}
is a trivial fibration.
The map (\ref{eq:tw-left}) can be rewritten as the map
$$
\Map(B,\cC)\to\Map(A,\cC)
$$
where 
$A=d\left((\Delta^0\star\Delta^0)\coprod^{\Delta^0\sqcup\Delta^0}(\Delta^n\sqcup(\Delta^n)^\op)\right)$ and $B=d(\Delta^n\star\Delta^n)$.
In other words, $A=d(\Delta^n\sqcup^{\Delta^0}\Delta^1
\sqcup^{\Delta^0}\Delta^n)$
and $B=d(\Delta^{2n+1})$. Such map is a trivial fibration
for any Segal space $\cC$.
\end{proof}

Now, given $x\in\cC$, we denote $Y(x)$ the left fibration over 
$\cC^\op$ obtained from $\Tw(\cC)$ via the base change with respect to
the map 
$$ \cC^\op=\{x\}\times\cC^\op\to\cC\times\cC^\op.$$

\subsubsection{}

Let fibration $\Tw(\cC)\to\cC\times\cC^\op$ gives rise to
a map
$$\wt Y:\cC\times\cC^\op\to\cS$$
which can be rewritten as a map
$$ Y:\cC\to\Fun(\cC^\op,\cS)=P(\cC).$$
This is Yoneda embedding. The image of
$x\in\cC$ is precisely the functor $\cC^\op\to\cS$ corresponding to the left fibration $Y(x)$.

If $C$ is a conventional category, the left fibration $Y(x)\to C^\op$ has another description
--- this is the category opposite to the overcategory
$C_{/x}$. Here is the definition of the corresponding 
$\infty$-categorical notion.

\begin{dfn}Given a Reedy fibrant $\cC$ and $x\in\cC$, the ``overcategory''
$\cC_{/x}$ is defined as the fiber of the map
$$ \Fun(d(\Delta^1),\cC)\to\cC$$
induced by $\{1\}\to[1]$, at $x$.
\end{dfn}

\begin{exe}\label{exe:overcategory}
The map $\cC_{/x}\to\cC$ induced by the restriction along 
$\{0\}\to [1]$, is a right fibration.
\end{exe}

In particular, if $\cC$ is a CSS, so is $\cC_{/x}$.

It turns out $Y(x)$ is equivalent to the left fibration 
$$ (\cC_{/x})^\op\to\cC^\op.$$
\begin{lem}\label{lem:2-versions}
There is a natural homotopy equivalence
$Y(x)\to (\cC_{/x})^\op$
of left fibrations over $\cC^\op$ carrying $\id_x\in Y(x)$
to $\id_x\in(\cC_{/x})^\op$.
\end{lem}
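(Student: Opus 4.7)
The plan is to verify the lemma in three stages: show both sides are left fibrations over $\cC^\op$, identify their fibers over each $y$, and assemble a canonical equivalence. The main obstacle will be the construction in the last stage: the most direct candidate for a simplicial map fails strict naturality, so I will route through the Grothendieck classification (Theorem~\ref{thm:left-gr}).

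For the first stage, the Proposition preceding the statement gives that $p \colon \Tw(\cC) \to \cC \times \cC^\op$ is a left fibration. Base-changing along $\{x\} \times \cC^\op \hookrightarrow \cC \times \cC^\op$, and using that left fibrations are preserved under base change, $Y(x) \to \cC^\op$ is a left fibration. By Exercise~\ref{exe:overcategory} the map $\cC_{/x} \to \cC$ is a right fibration, so $(\cC_{/x})^\op \to \cC^\op$ is a left fibration. Both carry the $0$-simplex $\id_x$ in the fiber over $x$.

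For the second stage, let $y \in \cC^\op_0 = \cC_0$. The fiber of $Y(x)$ over $y$ equals the fiber of $\Tw(\cC)$ over $(x,y)$; using $\Tw(\cC)_n = \cC_{\tau([n])} = \cC_{2n+1}$ with the first and last $(n+1)$ vertices pinned at $y$ and $x$, Segal decomposition yields a canonical weak equivalence with $\Map_\cC(y,x)$. The fiber of $(\cC_{/x})^\op$ over $y$ equals the fiber of $\cC_{/x} = \Fun(d(\Delta^1), \cC) \times_\cC \{x\}$ over $y$, which is $\Map_\cC(y,x)$ by the pullback definition of the mapping space. Both identifications send $\id_x$ (when $y=x$) to the identity in $\Map_\cC(x,x)$.

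For the third and main stage, the naive candidate for a strict simplicial map---pullback along the order-preserving $\phi_n \colon [1]\times[n] \to \tau([n]) = [n]^\op \star [n]$ with $(0,j)\mapsto v_{n-j}$ and $(1,j)\mapsto u_j$---fails naturality in $[n]$: the discrepancies occur on the $u$-vertices, where different $u$-indices all collapse to $x$ under the constraint but produce $1$-simplices of $\cC$ that agree only up to higher homotopy mediated by the degenerate $u$-edges. To bypass this naturality obstruction, I invoke Theorem~\ref{thm:left-gr}: the CSS $\Lt(\cC^\op)$ is equivalent to $\Fun(\cC^\op, \cS)$. Both $Y(x)$ and $(\cC_{/x})^\op$ classify the presheaf $h_x \colon y \mapsto \Map_\cC(y,x)$: the second stage handles the value on objects, while for each $1$-simplex $\alpha \colon y_2 \to y_1$ in $\cC^\op$ (i.e.\ $y_1 \to y_2$ in $\cC$) the transition map on fibers is computed, in both cases, to be precomposition $\beta \mapsto \beta \circ \alpha$ (for $Y(x)$ this is visible by filling a tetrahedron with top edge $\id_x$; for $(\cC_{/x})^\op$ it is the standard slice-action). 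The compatible pointings $\id_x$, corresponding to the universal element of $h_x$ in its fiber over $x$, then single out a canonical equivalence $Y(x) \to (\cC_{/x})^\op$ in $\Lt(\cC^\op)$ sending $\id_x$ to $\id_x$. That this is a weak equivalence over $\cC^\op$ is immediate from the fiberwise criterion for morphisms of left fibrations stated in Section~\ref{sect:LF}, since by construction the induced fiber maps are identities on $\Map_\cC(y,x)$ up to the canonical identifications from the second stage.
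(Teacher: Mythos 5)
There is a genuine gap in your third stage, and it is exactly the obstruction you yourself identified for the naive map. Saying that $Y(x)$ and $(\cC_{/x})^\op$ ``both classify the presheaf $h_x\colon y\mapsto\Map_\cC(y,x)$'' is not something you can verify by checking the fibers over objects and the transition maps attached to $1$-simplices: a functor $\cC^\op\to\cS$ (equivalently, an equivalence class of left fibrations) is not determined by this data, and an equivalence between two such functors is not produced by it either --- all the higher coherences that broke the strict naturality of $\phi_n$ reappear here, and Theorem~\ref{thm:left-gr} does not manufacture them for you; it only translates the problem from left fibrations to functors. Note also that $h_x$ has no independent definition as a functor in this setting: the only ways to define it are via $Y(x)$ or via $\cC_{/x}$, so the statement ``both classify $h_x$'' is essentially a restatement of the lemma. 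If instead you meant to use the pointings $\id_x$ as universal elements (representability), that would require knowing that $\id_x$ is an initial object of $Y(x)$, i.e.\ a Yoneda-type statement which is not established in your proposal and which, in the paper's logic, is what this lemma is needed to prove.

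The paper resolves the coherence problem by an explicit construction rather than by classification. It writes both simplicial spaces as mapping spaces out of cosimplicial pointed objects, $Y(x)_n=\Map_x(d(A^n),\cC)$ with $A^n=\Delta^0\sqcup^{\Delta^n}\tau(\Delta^n)$ and $(\cC_{/x})^\op_n=\Map_x(d(B^n),\cC)$ with $B^n=\Delta^0\sqcup^{\Delta^n}\Fun(\Delta^1,\Delta^n)$, and then interposes a third cosimplicial object $C^n=\Delta^{n+1}$ (pointed at $0$) with strictly functorial maps $A^n\to C^n\leftarrow B^n$; precomposition gives a levelwise zig-zag
$$\Map_x(d(A^n),\cC)\leftarrow\Map_x(d(C^n),\cC)\to\Map_x(d(B^n),\cC)$$
of equivalences over $\cC^\op$, compatible with $\id_x$. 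This intermediate object is precisely the device that supplies the naturality your $\phi_n$ lacked; your stages one and two are fine and agree with the setup, but the heart of the argument is missing.
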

\begin{proof}
Each one of the functors involved can be represented
by a cosimplicial object in pointed spaces
as follows.

We define $A^n=\Delta^0\sqcup^{\Delta^n}\tau(\Delta^n)$.
This is a pointed space (simplicial set) with the marked 
point $\Delta^0$. We also define
$B^n=\Delta^0\sqcup^{\Delta^n}\Fun(\Delta^1,\Delta^n)$,
also considered as a pointed space.

One has
$$ Y(x)_n=\Map_x(d(A^n),\cC)\textrm{  and  }
(\cC_{/x})^\op_n=\Map_x(d(B^n),\cC)
$$
functorially in $n$, where $\Map_x$ denotes the space of maps carrying the base point to $x$.
One does not have an obvious map between $A^n$ and $B^n$;
instead one has the functorial maps $A^n\to C^n\leftarrow B^n$ inducing homotopy equivalences
$$ \Map_x(d(A^n),\cC)\leftarrow\Map_x(d(C^n),\cC)\to
\Map_x(d(B^n),\cC).
$$
The pointed spaces $C^n$ are defined here as $\Delta^{n+1}$,
with the marked point $\{0\}\in[n+1]$.
\end{proof}

One has
\begin{prp}
Given $F\in P(\cC)$ and $x\in\cC$, the natural map
(evaluation)
$$ \Map_{P(\cC)}(Y(x),F)\to F(x)$$
is an equivalence.
\end{prp}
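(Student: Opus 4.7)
The plan is to use the Grothendieck correspondence of Theorem \ref{thm:left-gr} to translate the statement into one about left fibrations over $\cC^\op$, and then to exploit the ``initial object'' structure of $(\cC_{/x})^\op$ to solve the resulting lifting problem.

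First, under the Grothendieck equivalence $P(\cC) \simeq \Lt(\cC^\op)$, the presheaf $F$ corresponds to a left fibration $E_F \to \cC^\op$ with $(E_F)_y = F(y)$, and by Lemma \ref{lem:2-versions} the Yoneda presheaf $Y(x)$ corresponds to the left fibration $(\cC_{/x})^\op \to \cC^\op$. Consequently,
$$\Map_{P(\cC)}(Y(x), F) \simeq \Map_{/\cC^\op}\bigl((\cC_{/x})^\op, E_F\bigr),$$
and the evaluation map becomes the restriction along the inclusion $\{\id_x\} \hookrightarrow (\cC_{/x})^\op$ (sending the point to $\id_x$, which lies over $x \in \cC^\op$), with target the fiber $(E_F)_x = F(x)$.

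The task then reduces to showing that for any left fibration $p: E \to \cC^\op$, the restriction map $\Map_{/\cC^\op}\bigl((\cC_{/x})^\op, E\bigr) \to E_x$ is an equivalence. To construct a homotopy inverse I would use the explicit cosimplicial presentation $(\cC_{/x})^\op_n = \Map_x(d(B^n), \cC)$ from the proof of Lemma \ref{lem:2-versions}, where $B^n = \Delta^0 \sqcup^{\Delta^n} \Fun(\Delta^1, \Delta^n)$ is pointed with base point corresponding to $\id_x$. Each $B^n$ is a \emph{cone} with apex at the base point: it admits a filtration by successive attachments of simplices along the initial vertex. Via $\Map_x(d(-), \cC)$ this translates into a filtration of $(\cC_{/x})^\op$ by subobjects over $\cC^\op$, starting at $\{\id_x\}$, each stage being an attachment of the form controlled by Lemma \ref{lem:crit-left}. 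Applying the defining lifting property of $E$ at each stage, a point $\xi \in E_x$ extends, uniquely up to contractible choice, to a map $(\cC_{/x})^\op \to E$ over $\cC^\op$, yielding the desired inverse.

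The main obstacle will be organizing this cone structure coherently --- that is, ensuring the attachment maps fit into a genuine filtration of the \emph{bisimplicial} object $(\cC_{/x})^\op$ compatible with all face and degeneracy operators, so that each step becomes a bona fide instance of Lemma \ref{lem:crit-left}. A cleaner route is to reformulate the lifting problem in terms of the cylinder construction $\Cyl(s)$ of Section \ref{sect:LF}, replacing the tower of attachments by a single Reedy-fibrant replacement of a left quasifibration, in the spirit of the construction of $\psi^{(n)}: \cS^{(n)} \to \Fun(d(\Delta^n), \cS)$ which served the analogous role in the proof of Theorem \ref{thm:left-gr}.
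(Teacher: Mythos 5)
Your first reduction is exactly the paper's: via Lemma~\ref{lem:2-versions} and the identification of $\Lt(\cC^\op)$ with a full subcategory of the CSS underlying $\ssSet_{/\cC^\op}$, the statement becomes the claim that for a left fibration $F\to\cC^\op$ the restriction $\Map_{/\cC^\op}((\cC_{/x})^\op,F)\to F(x)$ is an equivalence. But the core of the argument --- why that restriction map is an equivalence --- is where your proposal has a genuine gap. The filtration you describe does not exist in the form you need: the presentation $(\cC_{/x})^\op_n=\Map_x(d(B^n),\cC)$ is \emph{contravariant} in $B^n$ and only describes the object level by level, so attaching simplices to each $B^n$ along the cone point does not produce a filtration of the bisimplicial object $(\cC_{/x})^\op$ by sub-objects over $\cC^\op$; you acknowledge this coherence problem yourself ("the main obstacle"), and neither the stagewise appeal to Lemma~\ref{lem:crit-left} (which concerns extensions along the initial-vertex inclusions $d(\{0\})\to d(\Delta^n)$, not general cone attachments) nor the closing gesture toward the cylinder construction is developed far enough to close it. As it stands, the existence and essential uniqueness of an extension of a point $\xi\in F(x)$ over all of $(\cC_{/x})^\op$ is asserted, not proved.

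The paper closes precisely this point with a short observation you are missing. Since the Reedy structure on $\ssSet$ is cartesian and $\{x\}\to(\cC_{/x})^\op$ is a cofibration, the evaluation map is a fibration, so it suffices to show its fibers are contractible. Given $f\in F(x)$, the defining property of a left fibration (Definition~\ref{dfn:left-CSS}) says that $\Fun(d(\Delta^1),F)\to F\times_{\cC^\op}\Fun(d(\Delta^1),\cC^\op)$ is a trivial fibration; its fiber at $f$ is a trivial fibration $F_{f/}\to(\cC_{/x})^\op$, and the fiber of the evaluation map at $f$ is exactly the space of sections of this trivial fibration, hence contractible. In other words, instead of building $(\cC_{/x})^\op$ cell by cell from the cone point, one identifies the whole lifting problem at once with sections of $F_{f/}\to(\cC_{/x})^\op$ and lets the left-fibration property of $F$ do all the work. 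If you want to salvage your approach, this is the mechanism to aim for; the cell-attachment route is essentially a quasicategory-style "left anodyne" argument and would require redoing that combinatorics in the bisimplicial model, which is considerably more work than the section argument.
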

\begin{proof}
Lemma~\ref{lem:2-versions} allows one to replace $Y(x)$
in the claim with the left fibration $(\cC_{/x})^\op$.

Thus, we have to verify that the evaluation map induces an 
equivalence
$$ \Map_{\Lt(\cC^\op)}((\cC_{/x})^\op,F)\to F(x).$$
We know that $\Lt(\cC^\op)$ is a full subcategory
of the CSS underlying $\ssSet_{/\cC^\op}$. Thus, the map  
space on the right can be calculated in 
$\ssSet_{/\cC^\op}$.

Since $\ssSet$ with Reedy model structure is a cartesian model category, and since the map $\{x\}\to(\cC_{/x})^\op$
is a cofibration, we deduce that the map
$$
\Map_{\cC^\op}((\cC_{/x})^\op,F)\to F(x)
$$
is a fibration.
We will now prove it is a trivial fibration. We only have
to verify that  the fibers of the map are contractible.
Let $f\in F(x)$. $F$ is a left fibration, so the map
$$\Fun(d(\Delta^1),F)\to F\times_{\cC^\op}\Fun(d(\Delta^1),
\cC^\op)$$
is a trivial fibration. Its fiber at $f\in F$ is
$$F_{f/}\to (\cC_{/x})^\op
$$
and it is also a trivial fibration. Our fiber is just the space of sections of this trivial fibration.
\end{proof}

\subsection{Limits and colimits}

We will discuss colimits.  Limits are obtained by 
passing to the opposite categories.

The most basic notion is that of colimit of an empty diagram.

\begin{dfn}
An object $x\in\cC$ is  {\sl initial } if $\Map(x,y)$ is 
contractible for any $y\in\cC$.
\end{dfn}

\begin{prp}The full subcategory of initial objects in $\cC$
is either empty or contractible space.
\end{prp}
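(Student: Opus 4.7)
The plan is to show that the full sub-$\infty$-category $\cC^{\mathrm{init}} \subset \cC$ spanned by initial objects is (a) a Kan complex (i.e.\ an $\infty$-groupoid, so that the word ``space'' makes sense), and then (b) has trivial homotopy groups of all degrees, provided it is nonempty.

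First I will check that any morphism between two initial objects is an equivalence. Given $x, y \in \cC^{\mathrm{init}}$ and $f : x \to y$, contractibility (hence nonemptiness) of $\Map_\cC(y,x)$ yields some $g : y \to x$; then contractibility of $\Map_\cC(x,x)$ and $\Map_\cC(y,y)$ forces $gf \simeq \id_x$ and $fg \simeq \id_y$. Thus $f$ becomes invertible in $\Ho(\cC)$, and a fortiori in $\Ho(\cC^{\mathrm{init}})$. Since every morphism of $\cC^{\mathrm{init}}$ is an equivalence, $\cC^{\mathrm{init}}$ is an $\infty$-groupoid by the analogue of Theorem~\ref{thm:kan} for the chosen model of $\infty$-categories.

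Next I will exploit that $\cC^{\mathrm{init}}$ is \emph{full}: for any $x, y \in \cC^{\mathrm{init}}$ we have $\Map_{\cC^{\mathrm{init}}}(x,y) = \Map_\cC(x,y)$, which is contractible because $x$ is initial. Hence every mapping space in $\cC^{\mathrm{init}}$ is contractible. For an $\infty$-groupoid $X$ one has $\pi_n(X,x) = \pi_{n-1}(\Map_X(x,x),\id_x)$ for $n \geq 1$, and $\pi_0(X)$ is the set of equivalence classes of objects. Contractibility of all mapping spaces in $\cC^{\mathrm{init}}$ therefore kills $\pi_n$ for $n \geq 1$, while nonemptiness of $\Map_{\cC^{\mathrm{init}}}(x,y)$ for all $x,y$ shows $\pi_0(\cC^{\mathrm{init}}) = *$ as soon as $\cC^{\mathrm{init}}$ is nonempty. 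So $\cC^{\mathrm{init}}$ is contractible.

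The only real subtlety is the passage in Step 1 from ``every arrow in $\cC^{\mathrm{init}}$ is an equivalence'' to ``$\cC^{\mathrm{init}}$ is an $\infty$-groupoid'', phrased in a model-independent way. In the quasicategory model this is exactly Theorem~\ref{thm:kan}; in the CSS model the analogue asserts that a complete Segal space in which every point of $X_1$ is an equivalence is Reedy equivalent to a constant simplicial space $c(K)$ for $K$ a Kan complex, which follows from completeness. Everything else in the argument is formal manipulation of mapping spaces and homotopy groups.
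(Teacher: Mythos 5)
Your argument is correct. It uses the same two basic facts as the paper's proof — fullness, so that the mapping spaces of the subcategory are the contractible spaces $\Map_\cC(x,y)$, and invertibility of every arrow between initial objects — but packages them differently. The paper's proof is shorter: it observes that the projection of the subcategory onto its homotopy category is a DK equivalence (fully faithful because each mapping space is contractible, hence equivalent to its one-point $\pi_0$), and that this homotopy category has a unique isomorphism between any two objects, i.e.\ is a contractible groupoid; equivalence with a contractible groupoid gives contractibility at once, with no need for the groupoid-recognition theorem or any homotopy-group computation. You instead first recognize the subcategory as an $\infty$-groupoid via Theorem~\ref{thm:kan} (or its CSS analogue, whose model-dependence you correctly flag as the only delicate point, and which is indeed covered by completeness of a full subcategory of a CSS), and then kill $\pi_0$ and all $\pi_n$, $n\geq 1$, using $\pi_n(X,x)\cong\pi_{n-1}(\Map_X(x,x),\id_x)$. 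Your route is more explicit at the level of the underlying space; the paper's route shows that once the mapping spaces are known to be contractible, passing to the homotopy category already finishes the argument.
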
 
\begin{proof}
First of all, the projection of this category to its homotopy category is DK equivalence. Furthermore, the
respective homotopy category has a unique isomorphism
between any two objects. This implies the claim.
\end{proof}

More general colimits are defined using a general notion
of undercategory.

\subsubsection{Undercategories}

Let $f:K\to \cC$ be a functor. We define $\cC_{f/}$
as the fiber product
\begin{equation}
\{f\}\times_{\Fun(K,\cC)}\Fun(d(\Delta^1)\times K,\cC)
\times_{\Fun(K,\cC)}\cC,
\end{equation} 
where the projections $\Fun(d(\Delta^1)\times K,\cC)\to\Fun(K,\cC)$
are given by embeddings $\{0\}\to[1]$ and $\{1\}\to[1]$, and the map $\cC\to\Fun(K,\cC)$ is given by $K\to *$.

The special case $K=*$ played an important role in the theory of left fibrations and in Yoneda lemma. One has
in general the following.

\begin{lem}For $\cC$ Segal space the map 
$\cC_{f/}\to\cC$ is a left fibration. In particular,
if $\cC$ is CSS, $\cC_{f/}$ is also CSS. 
\end{lem}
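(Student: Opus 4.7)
The plan is to reduce to the case $K=*$, where the statement becomes the left-fibration analogue of Exercise~\ref{exe:overcategory} (the right-fibration property of $\cC_{/x}$). Set $\cC':=\Fun(K,\cC)$; by the cartesian property of the Reedy (and CSS) model structure on $\ssSet$ discussed in Section~\ref{ss:CSSmodels}, $\cC'$ is a Segal space (resp.\ a CSS) whenever $\cC$ is. The internal-Hom adjunction $\Fun(d(\Delta^1)\times K,\cC)\cong\Fun(d(\Delta^1),\cC')$ rewrites the defining triple pullback as
\begin{equation*}
\cC_{f/}\;\cong\;\bigl(\{f\}\times_{\cC'}\Fun(d(\Delta^1),\cC')\bigr)\times_{\cC'}\cC\;=\;(\cC')_{f/}\times_{\cC'}\cC,
\end{equation*}
where $(\cC')_{f/}\to\cC'$ is the projection coming from $\{1\}\hookrightarrow d(\Delta^1)$ and $\cC\to\cC'$ is the inclusion of constant functors. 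Since left fibrations are stable under base change, it thus suffices to establish the case $K=*$.

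The second step is to verify: for every Reedy fibrant Segal space $\cD$ and every $y\in\cD_0$, the map $\cD_{y/}\to\cD$ (induced by $\{1\}^*$) is a left fibration. Reedy fibrancy is formal: the pushout–product map $\Fun(d(\Delta^1),\cD)\to\cD\times\cD$ induced by $\{0\}\sqcup\{1\}\hookrightarrow d(\Delta^1)$ is a Reedy fibration by cartesianness and Reedy fibrancy of $\cD$, and $\cD_{y/}\to\cD$ is its pullback along $\{y\}\times\cD\hookrightarrow\cD\times\cD$. The essential content, using the criterion of Lemma~\ref{lem:crit-left}(3), is the requirement that for every $n$ the comparison
\begin{equation*}
(\cD_{y/})_n\longrightarrow(\cD_{y/})_0\times_{\cD_0}\cD_n
\end{equation*}
be a trivial Kan fibration.

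This last verification is where the real combinatorial work lies, and I expect it to be the main obstacle. Unpacking, $(\cD_{y/})_n$ is the space of maps $d(\Delta^n\times\Delta^1)\to\cD$ whose restriction to $\{0\}\times\Delta^n$ is the degenerate $n$-simplex at $y$, while the target is the space of pairs (an arrow out of $y$, an $n$-simplex of $\cD$) that match at the $\{0\}$-vertex. My plan is to exploit the standard shuffle triangulation of the prism $\Delta^n\times\Delta^1$ into $n+1$ copies of $\Delta^{n+1}$ glued along common $n$-faces: feeding this decomposition into $\Map(-,\cD)$ expresses the left-hand side as an iterated fibred product of $\cD_{n+1}$'s over $\cD_n$'s. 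Using the Segal condition to factor each $\cD_{n+1}$ as an iterated homotopy fibre product of $\cD_1$'s over $\cD_0$, and Reedy fibrancy to pass between strict and homotopy fibre products, the two sides become homotopy limits of essentially the same diagram with one leg pinned at $y$, and the comparison reduces by induction on $n$ to the known facts. Once the $K=*$ case is in hand, the reduction of the first paragraph delivers the general statement, and the ``in particular'' clause follows automatically: a left fibration with CSS base has CSS total space by the exercise immediately following Lemma~\ref{lem:crit-left}.
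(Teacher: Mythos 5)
Your first paragraph is exactly the paper's proof: the paper also writes $\cC_{f/}$ as the base change of $\cD_{f/}\to\cD$, $\cD=\Fun(K,\cC)$, along the constant-diagram inclusion $\cC\to\cD$, and then simply invokes Exercise~\ref{exe:overcategory} (the $K=*$ case) together with stability of left fibrations under base change; your observation that cartesianness of the Reedy structure makes $\Fun(K,\cC)$ again a Segal space is the implicit hypothesis needed there, so that part is fine. The ``in particular'' clause is also handled exactly as in the paper.

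Where your proposal goes beyond the paper is in trying to prove the $K=*$ case, and there the sketch has a real gap at the step you yourself flag as the main obstacle. It is not true that, after the shuffle decomposition and the Segal condition, ``the two sides become homotopy limits of essentially the same diagram with one leg pinned at $y$'': if by pinning you only fix the vertex (or only fix some edge) at $y$, the comparison map is not an equivalence. Concretely, for $n=1$ the relevant restriction is along $U=(\{0\}\times\Delta^1)\cup(\Delta^1\times\{0\})\cup(\{1\}\times\Delta^1)\subset\Delta^1\times\Delta^1$, and $\Map(d(\Delta^1\times\Delta^1),\cD)\to\Map(d(U),\cD)$ fails to be a trivial fibration already for $\cD=d(NC)$ with $C$ the free category on that path (there is no arrow $(0,1)\to(1,1)$ to fill the square). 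What saves the statement is that the restriction to the whole face $\{0\}\times\Delta^n$ is the \emph{degenerate} simplex $s_0^ny$, and after your decomposition the irreducible kernel of the argument is a unitality statement: the space of $(k+1)$-simplices of $\cD$ with prescribed long face and with degenerate initial edge is contractible. This does not follow from the Segal condition plus Reedy fibrancy by mere bookkeeping; it needs its own argument (e.g.\ identifying the collapsed prism $\Delta^0\sqcup^{\Delta^n}(\Delta^1\times\Delta^n)$ with $\Delta^{n+1}$ up to the kind of comparison used in Lemma~\ref{lem:2-versions}, or a homotopy-fiber computation showing that precomposition with $\id_y$ is homotopic to the identity). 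So either carry out that step explicitly, or do as the paper does and quote Exercise~\ref{exe:overcategory} after the base-change reduction.
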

\begin{proof}
 The map
$\cC_{f/}\to\cC$ is obtained by base change from
$\cD_{f/}\to \cD$ where $f\in\cD=\Fun(K,\cC)$.
Thus, the result follows from 
Exercise~\ref{exe:overcategory}.
\end{proof}

\subsubsection{Colimits}

Given a functor $f:K\to\cC$, its colimit is an initial object of the category $\cC_{f/}$.

\begin{exe}
\label{exe:2}
Prove that  $f$ admits a colimit iff the 
presheaf on $\cC^\op$ determined by $\cC_{f/}$ is representable. 
\end{exe}

\subsection{Adjoint functors}

The notion of adjoint pair of functors between conventional categories
does not automatically translate into the language of
$(\infty,1)$, as it includes morphisms
of functors (unit and counit) which  are not isomorphisms, so that adjointness is 
a priori 2-categorical notion. Fortunately, Yoneda lemma
allows one to avoid usage of $2$-categorical notions.

\subsubsection{}
For conventional categories an adjoint pair $F:C\rlarrows D:G$ is uniquely defined
by a bifunctor $C^\op\times D\to\Set$ carrying a pair $(c,d)$ to
the set $\Hom_D(F(c),d)=\Hom_C(c,G(d))$. So, it is natural to expect that
an adjunction between two infinity categories can be defined as a left fibration
on $C^\op\times D$ satifying some representability conditions.
Details are explaned below for CSS.

\subsubsection{}
\label{sss:corr}
Given a pair of CSS $\cC,\cD$, a correspondence
from $\cC$ to $\cD$ is a left fibration $p:\cE\to\cC^\op\times\cD$.
Such correspondence is called left-representable if for each $x\in\cC$
the base change of $p$ with respect to $\cD \to\cC^\op\times\cD$
determined by $x$, defines a representable presheaf on $\cD^\op$.

A correspondence $p:\cE\to\cC^\op\times\cD$ is called 
right-representable if for each $y\in\cD$ the base change of $p$
with respect to morphism $\cC^\op\to\cC^\op\times\cD$, determined by $y$,
corresponds to a representable presheaf on $\cC$.

\begin{dfn}
A left fibration $\cE\to\cC^\op\times\cD$ determines an adjoint pair
between $\cC$ and $\cD$ if $p$ is both left and right representable.
\end{dfn}

A correspondence $p:\cE\to\cC^\op\times\cD$ can be interpreted as a functor
$\cC^\op\times\cD\to\cS$ or as $p_C:\cC^\op\to P(\cD^\op)$ or even as $p_D:\cD\to 
P(\cC)$. A left fibration $p:\cE\to\cC^\op\times\cD$ is left representable if
the functor $p_C$ factors through $\cD^\op$. It is right representable if $p_D$
factors through $\cC$.

\subsubsection{Existence of adjoint}

A functor $F:\cC\to\cD$ gives rise to a composition
$\cC^\op\to\cD^\op\to P(\cD^\op)$, that is, to a left-representable
left fibration $p:\cE\to\cC^\op\times\cD$. We say that 
$F$ admits right adjoint if $p$ is also right-representable.
In this case the functor $\cD\to P(\cC)$ corresponding
to $p$, can be factored (uniquely up to contractible space of choices) through a functor $G:\cD\to\cC$ called
{\sl the functor right adjoint to $F$}.

By definition, in order to verify that $F$ admits right adjoint, it is sufficient to verify that for any $d\in\cD$
the composition $\cC
\to P(\cD)\stackrel{\ev_d}{\to}\cS$,
$\ev_d$ being the evaluation at $d$ functor,
is (co)representable.

\subsubsection{Colimits and adjoint functors}

One has the following interpretation of colimits 
in terms of adjoint functors.

Let $\cC$ be a CSS and let $K\in\ssSet$.
\begin{Prp}The following conditions are equivalent.
\begin{itemize}
\item[1.] Any functor $f:K\to\cC$ has a colimit.
\item[2.] The functor $G:\cC\to\Fun(K,\cC)$, induced by the
map $K\to *$, has a left adjoint.
\end{itemize}
\end{Prp}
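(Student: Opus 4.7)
The strategy is to reduce both conditions to the same representability statement. For $f \in \Fun(K, \cC)$, denote by $\phi_f : \cC \to \cS$ the functor $x \mapsto \Map_{\Fun(K,\cC)}(f, G(x))$, where $G(x)$ is the constant $K$-diagram at $x$.

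The first step is to identify the left fibration $\cC_{f/} \to \cC$ with the one classified by $\phi_f$. Using the exponential equivalence $\Fun(d(\Delta^1) \times K, \cC) \simeq \Fun(d(\Delta^1), \Fun(K, \cC))$, and noting that the second fiber product over $\Fun(K,\cC)$ in the definition of $\cC_{f/}$ is taken along $G$, one sees that $\cC_{f/}$ parametrizes arrows in $\Fun(K,\cC)$ with source $f$ and target $G(x)$ for varying $x \in \cC$. The fiber over $x$ is therefore $\Map_{\Fun(K,\cC)}(f, G(x)) = \phi_f(x)$, so $\cC_{f/} \to \cC$ (which we know is a left fibration from the lemma after the definition of undercategory) classifies $\phi_f$.

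Now the equivalence follows. By Exercise~\ref{exe:2}, the colimit of $f$ exists iff $\phi_f$ is corepresentable by an object of $\cC$; so condition (1) asserts that $\phi_f$ is corepresentable for every $f$. On the other hand, by the adjunction criterion of~\ref{sss:corr} applied to $G$ in the role of a right adjoint, $G$ admits a left adjoint iff the correspondence $\Fun(K,\cC)^\op \times \cC \to \cS$ sending $(f,x) \mapsto \phi_f(x)$ is left-representable, i.e., iff for each $f$ the functor $\phi_f$ is corepresentable in $\cC$; in that case the corepresenting object is the value of the left adjoint at $f$, and simultaneously the colimit of $f$. So (1) and (2) are equivalent.

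The main point to watch is variance bookkeeping: $\cC_{f/} \to \cC$ is a \emph{left} fibration, hence classifies a covariant functor $\cC \to \cS$, so ``representable'' here must mean corepresented by some $c \in \cC$ with $\phi_f \simeq \Map_\cC(c, -)$. The only mildly technical point is the identification of the fiber of $\cC_{f/} \to \cC$ in the first step, but this is a direct pullback computation analogous to the one in the proof of~\ref{lem:2-versions}; once it is set up correctly, the two conditions coincide on the nose.
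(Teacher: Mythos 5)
Your proof is correct and follows essentially the same route as the paper: the paper also checks adjoint existence objectwise, identifies the base change at $f$ of the correspondence attached to $G$ with the left fibration $\cC_{f/}\to\cC$ (citing a version of Lemma~\ref{lem:2-versions}, exactly the comparison you flag as the one technical point), and concludes that left-representability at $f$ amounts to corepresentability of $x\mapsto\Map_{\Fun(K,\cC)}(f,G(x))$, i.e.\ to the existence of an initial object of $\cC_{f/}$. The only cosmetic difference is that the paper phrases the last step via the initial-object definition of colimit rather than via Exercise~\ref{exe:2}.
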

\begin{proof}
Existence of adjoint is verified objectwise. That is, to 
verify the existence of adjoint, one has to verify that
for any $f:K\to\cC$ the respective left fibration on 
$\cC^\op$ is representable.

The functor $G:\cC\to\Fun(K,\cC)$ yields a left
fibration over $\Fun(K,\cC)^\op\times\cC$ which we will 
denote as $\cE$. The base change $\cE_f$  of $\cE$
defined by  $f\in\Fun(K,\cC)$ is a left fibration on $\cC$. 
It is homotopy equivalent to $\cC_{f/}$ (this is a version of~\ref{lem:2-versions}), so
its representability is equivalent to existence of initial
object in $\cC_{f/}$.
\end{proof}

\subsubsection{A source of correspondences}

Let $f:\cC\to d(\Delta^1)$ be a Reedy fibration.  
Denote $\cC_0$ and $\cC_1$ the fibers of $f$ at $0$ and $1$ respectively. Yoneda embedding gives rise to a map
$$
\cC_1\to\cC\stackrel{Y}{\to} P(\cC)\to P(\cC_0),
$$
where the last arrow is the restriction of a presheaf to 
a subcategory. In other words, any functor as above gives rise to a correspondence. Later we will see that the opposite is also true.

\subsection{Quillen adjunction}

Given a model category $\cC$ with a collection of weak 
equivalences $W$, we assigned a CSS as fibrant replacement 
$B^f(\cC,W)$ of Rezk nerve of the relative category $
(\cC,W)$. In what follows we will denote this CSS model
(or any other $\infty$-categorical model) $L(\cC)$.

We will show now that a Quillen adjunction
$$F:\cC\rlarrows\cD:G$$
gives rise to an adjunction of the respective CSS. For 
understandable reasons, we will denote the respective 
functors
$$ \Left F:L(\cC)\rlarrows L(\cD):\Right G.$$ 
 
We proceed as follows. We realize $L(\cC)$ as a simplicial
localization $L^H(\cC^c,W)$ and $L(\cD)$ as $L^H(\cD^f)$
\footnote{Recall that $\cC^c$(resp., $\cC^f$) is the full subcategory of cofibrant (resp., fibrant) objects in $\cC$.}.
Then we construct a simplicial category $\cM$ with a functor $\cM\to[1]$ as follows.

The fiber at $0$ is $L^H(\cC^c,W)^f$ (the functorial fibrant 
replacement in $\sCat$) and the fiber at $1$ is 
$L^H(\cD^f,W)^f$.
Finally, for $c\in\cC$ and $d\in\cD$ we define
$\Map_\cM(c,d)=\Map_{L^H(\cD,W)^f}(F(c),d)$. Compositions
are defined and are strictly associative by functoriality
of construction of hammock localization. Passing to CSS, we 
get a CSS over $d(\Delta^1)$ and we claim that it defines a 
required adjoint pair. To do so, we have to verify left and 
right representability of the respective correspondence.

Details can be found in \cite{H.L}.

\subsection{DK localization as $\infty$-localization}

Using the $\infty$-categorical notion of adjoint functor,
we will be able to define localization of infinity 
categories. Then we will find out that DK localization
calculates this $\infty$-version of localization.

\subsubsection{Spaces and categories}

Recall that $\Cat_\infty$ is the infinity category of 
(small) infinity categories (e.g., of CSS), and $\cS$ is the
full subcategory of spaces (realized, for instance, as essentially constant CSS). We will see that the embedding
$\cS\to\Cat_\infty$ admits both left and right adjoint.

The easiest way to present an adjoint is via Quillen adjunction. One has the Quillen adjunctions

\begin{equation}
\sSet\rlarrows(\ssSet,R)\rlarrows(\ssSet,CSS),
\end{equation}
with the first right adjoint functor carrying $X\in\ssSet$
to $X_0$ and the second Quillen adjunction being Bousfield 
localization.

Thus, the embedding of $\cS$ into $\Cat_\infty$ has a right 
adjoint assigning to an infinity category $\cC$ its maximal 
subspace $\cC^\eq$, the space of its objects 
(or, equivalently, the  infinity category obtained by discarding non-equivalences).

We will now describe the left adjoint functor.
Here is the easiest way.

The standard model structure on the simplicial sets
can be considered as a Bousfield localization of the
Joyal model structure. This yields a Quillen pair
$$ \id:(\sSet,J)\rlarrows(\sSet,Q):\id$$
with the right adjoint functor representing the embedding
$\cS\to\Cat_\infty$. Thus, the left adjoint functor 
is the respective left derived functor of $\id$. In this model it consists of replacing a quasicategory with its 
Kan fibrant replacement.

The above construction is equivalent to Dwyer-Kan ``total localization''. In fact, let $\cC$ be a fibrant simplicial
category. Its total localization is a Kan fibrant replacement of the homotopy coherent nerve $\fN(\cC)$. Total DK localization
can be described as the total localization of a cofibrant
replacement $\wt\cC$ of $\cC$. According to Dwyer-Kan, 
their localization does not alter the homotopy type of the nerve; since total localization of $L(\wt\cC,\wt\cC)$
is a simplicial groupoid, it becomes Kan after the application of $\fN$.

Thus, total $\infty$-localization is represented by
total DK localization.

\subsubsection{$\infty$-localization}

The functor of maximal subspace defined above yields a 
functor $K:\Cat_\infty\to\Fun(\Delta^1,\Cat_\infty)$
carrying $\cC$ to the embedding of the maximal subspace 
$\cC^\eq$ of $\cC$ into $\cC$.

We define the general localization as the functor
$$L:\Fun(\Delta^1,\Cat_\infty)\to\Cat_\infty$$
left adjoint to $K$.
One can easily see that $L$ carries a morphism $\cW\to\cC$
to the pushout $L(\cW)\sqcup^\cW\cC$.

This implies that DK localization represents also this 
more general version of localization.

\subsection{Functor categories}

Let $\cC$ be a combinatorial model category. Let $I$ be a
conventional category. By the universal property, we get a 
canonical map
\begin{equation}
L(\Fun(I,\cC))\to\Fun(I,L(\cC)).
\end{equation}

\begin{thm}
This map is an equivalence. 
\end{thm}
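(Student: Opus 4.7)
The plan is to reduce to a case where both sides admit explicit presentations as homotopy coherent nerves of simplicial categories, and then to compare them directly. The canonical map is constructed as follows: the unit $\cC\to L(\cC)$ induces $\Fun(I,\cC)\to\Fun(I,L(\cC))$, which sends pointwise weak equivalences to equivalences in the target (equivalences of functors into an $\infty$-category are detected objectwise). The universal property of DK-localization then yields the desired map $L(\Fun(I,\cC))\to\Fun(I,L(\cC))$. Since both $L(-)$ and $\Fun(I,-)$ preserve equivalences of combinatorial model categories, we may, by Dugger's theorem that every combinatorial model category is Quillen equivalent to a simplicial combinatorial one, assume without loss of generality that $\cC$ is itself simplicial combinatorial.

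With this assumption, equip $\Fun(I,\cC)$ with its injective model structure: weak equivalences and cofibrations are pointwise, and the simplicial enrichment inherited from $\cC$ turns it into a simplicial combinatorial model category. Theorem~\ref{thm:hammock-weak-simpl} then yields $L(\cC)\simeq \fN(\cC^{cf})$ and $L(\Fun(I,\cC))\simeq \fN((\Fun(I,\cC))^{cf}_{\mathrm{inj}})$, where $\fN$ is the homotopy coherent nerve and the superscript $cf$ denotes the simplicial subcategory of cofibrant-fibrant objects. On the other hand, the adjunction $(\fC,\fN)$ identifies $\Fun(N(I),\fN(\cC^{cf}))$ with $\fN\bigl(\Map_{\sCat}(\fC(N(I)),\cC^{cf})\bigr)$, reducing the claim to a DK equivalence between the simplicial category of injectively cofibrant-fibrant strict diagrams $I\to\cC$ (with simplicial natural transformations) and the simplicial category of simplicial functors $\fC(N(I))\to\cC^{cf}$.

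The rigidification map $\fC(N(I))\to I$ is a cofibrant resolution of $I$ (viewed as a discrete simplicial category) in the Bergner model structure, and restriction along it provides the comparison functor in the direction from strict diagrams to homotopy coherent ones. Essential surjectivity is the classical rectification statement: every homotopy coherent diagram can be strictified, which is established by obstruction-theoretic lifting along the cofibration $\fC(N(I))\to I$ with target the fibrant simplicial category $\cC^{cf}$.

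The main obstacle is fully faithfulness. For $F$ injectively cofibrant and $G$ injectively fibrant, one must show that the strict simplicial mapping space $\Map_{\Fun(I,\cC)}(F,G)$, computed as the end of $i\mapsto\Map_\cC(F(i),G(i))$ over $I$, coincides with the derived (homotopy) end of the same diagram, which is what the mapping space in $\Fun(N(I),L(\cC))$ computes. Injective cofibrancy of $F$ is exactly the condition that the latching morphisms $L_iF\to F(i)$ are cofibrations in $\cC$, and combined with fibrancy of $G$ and the SM7 axiom for $\cC$ this forces the natural map from the strict end to the homotopy end to be an equivalence. This Bousfield-Kan/Reedy convergence argument is the main technical ingredient, and once it is in place, the comparison of simplicial categories above is a DK equivalence, finishing the proof.
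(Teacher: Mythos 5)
Your overall strategy (reduce via Dugger to a well-behaved model category, then compare strict and homotopy-coherent diagrams by a rectification argument) is in the same spirit as the paper, which reduces to Bousfield localizations of simplicial presheaf categories and then invokes the classical rectification statement together with Lurie's A.3.4. The problem is that the two steps where you claim to carry out the rectification yourself contain genuine gaps. First, the ``identification'' of $\Fun(N(I),\fN(\cC^{cf}))$ with $\fN\bigl(\Map_{\sCat}(\fC(N(I)),\cC^{cf})\bigr)$ via the adjunction $(\fC,\fN)$ is not correct as stated: the adjunction only matches the $0$-simplices. An $n$-simplex of $\Fun(N(I),\fN(\cC^{cf}))$ is a simplicial functor out of $\fC(N(I)\times\Delta^n)$, which is not $\fC(N(I))\times\fC(\Delta^n)$, and comparing these two (equivalently, comparing strict simplicial natural transformations with homotopy-coherent ones) is exactly the hard content of Lurie's A.3.4.13/4.2.4.4. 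By treating this as an identification you have assumed the crux of the theorem rather than proved it; everything that follows rests on it.

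Second, the fully-faithfulness argument is built on a false characterization: injective cofibrancy of $F$ is \emph{not} the condition that the latching maps $L_iF\to F(i)$ are cofibrations --- that is Reedy cofibrancy, and for a general index category $I$ there is no Reedy structure and no latching objects at all; in the injective structure, cofibrant simply means pointwise cofibrant, while it is the (inexplicit) injective \emph{fibrancy} of $G$ that carries the weight. Consequently the appeal to a ``Bousfield--Kan/Reedy convergence argument'' has nothing to converge: the statement that the strict end $\Map_{\Fun(I,\cC)}(F,G)$ agrees with the mapping space in $\Fun(N(I),L(\cC))$ (the homotopy end) is essentially equivalent to the fully-faithfulness you are trying to establish, and you supply no argument for it (nor for the end formula for mapping spaces in $\infty$-functor categories, which is itself a theorem). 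A smaller but real issue of the same kind occurs in your essential-surjectivity step: $\fC(N(I))\to I$ is a cofibrant replacement, so a coherent diagram cannot be strictified by ``lifting'' along it; one needs an extension/rectification result, e.g.\ that restriction along a DK equivalence induces a Quillen equivalence of projective structures on simplicial functor categories. In short, the reduction to the simplicial combinatorial case is fine, but the core comparison is asserted, not proved; the honest options are either to cite the rectification theorem (as the paper does) or to actually reprove it, which requires substantially more than what you have written.
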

\begin{proof}
The proof is based on the following very important result of D.~Dugger
\cite{D}. Let $J$ be a small category. Dugger endows
the category of simplicial presheaves 
$U(J):=\Fun(J^\op,\sSet)$ with the projective model 
structure; then he proves that for any combinatorial model 
category $\cC$ there exists a  Quillen pair 
$F:U(J)\rlarrows\cC:G$ and an isomorphism of functors $\Left F
\circ\Right G\to\id_\cC$. This allows him to construct a
Quillen equivalence between a certain Bousfield localization
of $U(J)$ and $\cC$.

\

Now the proof goes as follows.
First of all, a Quillen equivalence $\cC\rlarrows\cD$ gives
rise to an equivalence of $L(\cC)$ and $L(\cD)$.
Thus, we can assume that $\cC$ is a Bousfield localization of $U(J)$. Next, a Bousfield localization 
$M\rlarrows M_\loc$ identifies the underlying infinity category $L(M_\loc)$ with the full subcategory of $L(M)$
spanned by $S$-local objects. Thus, both $\Fun(I,L(\cC))$
and $L(\Fun(I,\cC))$ are full subcategories of 
$\Fun(I,L(U(J)))$ and $L(\Fun(I,U(J)))$ respectively.
This reduces the claim to the case $\cC=\sSet$.

It remains to verify the claim for $\cC=\sSet$.  

In this case $L(\cC)$ can be represented by the simplicial 
category $\Kan_*$ of Kan simplicial sets and the left-hand side is the simplicial category of cofibrant functors 
$I\to\Kan_*$. The right-hand side $\Fun(I,L(\cC))$ 
is an infinity category having as objects the simplicial 
functors $\fC(I)\to\Kan_*$.  The fact that any 
simplicial functor $\fC(I)\to\Kan_*$ is equivalent to 
a genuine functor, is classical. Thus, our functor is
essentially surjective.  The more precise 
statement we need  is proven in \cite{L.T}, A.3.4, for any 
simplicial combinatorial model category $\cC$.

\end{proof}

\subsection{(Co)limits}
Let $I$ be a  category and let $\cC$ be a combinatorial model 
category. We want to explain that $I$-indexed limits 
and colimits in $L(\cC)$ can be expressed in terms of 
derived limits and colimits in $\cC$.

The functor $c:\cC\to\Fun(I,\cC)$ induced by the projection
$I\to *$, gives rise to two Quillen pairs.

The first one,
\begin{equation}
\colim:\Fun(I,\cC)\rlarrows\cC:c,
\end{equation}
identifies $\Left\colim$ with the functor left adjoint to
the constant functor 
$L(\cC)\to L(\Fun(I,\cC))=\Fun(I,L(\cC))$,
that is with the colimit functor between the respective infinity categories.

The second, 
\begin{equation}
c:\cC\rlarrows\Fun(I,\cC):\lim,
\end{equation}
identifies $\Right\lim$ with the limit functor between the respective infinity categories.

\begin{exe}
\begin{itemize}
\item[1.] Prove that an object $x\in\cC$ is initial iff the 
left fibration $\cC_{x/}\to\cC$ is an equivalence.
\item[2.] See \ref{exe:2}.
\end{itemize}
\end{exe}
 
\newpage
\section{Cocartesian fibrations}

Treatment of cocartesian fibrations in our course will be 
different from that of Lurie  \cite{L.T}, Section 3.
In order to get Grothendieck construction for cocartesian 
filtrations, Lurie describes a new model category, that
of marked simplicial sets over a given simplicial set $B$.
In case $B$ is a point, this gives a new simplicial model 
category modeling infinity categories. For general $B$
this models the infinity category of cocartesian fibrations
over $B$.

Our approach is an attempt to deduce Grothendieck construction for cocartesian fibrations from the special
case we studied earlier, see Parts 7, 8, as well as 
\cite{KV}.

\subsection{Cocartesian fibrations for conventional categories}
 
\subsubsection{Classical definitions}

The definitions below are classical and belong to Grothendieck; the terminology is changed to coincide with
the infinity-categorical one.

Let $f:\cC\to\cD$ be a functor between two conventional categories. An arrow
$\alpha:x\to y$ in $\cC$ with image $\bar\alpha=f(\alpha):\bar x\to \bar y$ is called $f$-cocartesian if for any $z\in\cC$
the following commutative diagram 
\begin{equation}
\begin{CD}
\Hom_\cC(y,z) @>>> \Hom_\cC(x,z) \\
@VVV @VVV \\
\Hom_\cD(\bar y,\bar z) @>>> \Hom_\cD(\bar x,\bar z)
\end{CD}
\end{equation}
is cartesian. An arrow $\alpha$ is called locally $f$-cocartesian if it defines a $f'$-cocartesian arrow, where $f'$ is the base change of $f$ with respect to 
$f\circ\alpha:[1]\to\cD$.
 
Any $f$-cocartesian arrow is $f$-locally cocartesian.

Given $x\in\cC$ and $\bar\alpha:\bar x\to \bar y$, $\alpha$
as defined above is called a cocartersian (resp., a locally 
cocartesian) lifting of $\bar\alpha$. A locally cocartesian 
lifting, if exists, is unique up to unique isomorphism in 
the following sense: if both $\alpha:x\to y$ and 
$\alpha':x\to y'$ are locally cocartesian liftings of
$\bar\alpha$, there exists a unique isomorphism 
$\theta:y\to y'$ over $\id_{\bar y}$ such that 
$\alpha'=\theta\circ\alpha$.

A functor $f:\cC\to\cD$ is called a {\sl  cocartesian 
fibration} if for any $x\in\cC$ and any 
$\bar\alpha:\bar x\to\bar y$ in $\cD$ there is a 
cocartesian lifting $\alpha$ of $\bar\alpha$ with the 
source $x$.

A functor $f:\cC\to\cD$ is called a {\sl locally  
cocartesian fibration} if for any $x\in\cC$ and any 
$\bar\alpha:\bar x\to\bar y$ in $\cD$ there is a  locally  
cocartesian lifting $\alpha$ of $\bar\alpha$ with the 
source $x$.

A locally cocartesian fibration is cocartesian iff 
composition of locally cocartesian arrows in $\cC$ is 
locally cocartesian \footnote{
Here are the original Grothendieck's terminology: 
cocartesian fibrations
are called {\sl cat\'egories cofibr\'ees}, cartesian
fibrations are called {\sl cat\'egories fibr\'ees}.
}.

\subsubsection{}\label{sss:lcf-not-cf}
Here is an example of a locally cocartesian fibration
which is not cocartesian. We put $\cD=[2]$, the category with three objects $d_0, d_1, d_2$, and a unique map
from $d_i$ to $d_j$ for $i\leq j$. The category $\cC$ is
$[1]\times[1]$, with the objects $c_{i,j}$, $i,j=0,1$.
The functor $f:\cC\to\cD$ carries $c_{0,0}$ to $d_0$, 
$c_{0,1}$ to $d_1$, $c_{1,0}$ and $c_{1,1}$ to $d_2$.

The arrows $c_{0,0}\to c_{0,1}$, $c_{0,0}\to c_{1,0}$
and $c_{0,1}\to c_{1,1}$ are locally $f$-cocartesian.

\subsubsection{Grothendieck construction}

Let $f:\cC\to\cD$ be a locally cocartesian fibration.
We assign to any $d\in\cD$ the fiber $F(d)=f^{-1}(d)$.
For any $a:d\to d'$ we define a functor $a_!:F(d)\to F(d')$
as follows. For any $c\in F(d)$ let $\alpha:c\to c'$ be a cocartesian lifting of $a$. Then we put $a_!(c)=c'$.
The property of cocartesian lifting allow a unique extension
of $a_!$ to a functor $a_!:F(d)\to F(d')$. Furthermore,
given $b:d'\to d''$, one has a canonical morphism of functors $\theta_{a,b}:(b\circ a)_!\to b_!\circ a_!$
from $F(d)$ to $F(d'')$.

In case $f$ is a cocartesian fibration, the morphisms
$\theta_{a,b}$ are isomorphisms of functors. This means
that cocartesian fibrations over $\cD$ correspond to
``pseudofunctors'' $\cD\to\Cat$.

\subsection{Language of infinity categories}

In this subsection we will introduce a language of infinity
categories which will not explicitly mention any concrete
model. We will cease using any explicit constructions
in a model category, replacing them with notions invariant 
under weak equivalences.

In what follows we will use the following notation.

$\Cat$ will denote the (infinity) category of (infinity) 
categories. This category has products, and has internal
Hom denoted $\Fun(C,D)$ or $D^C$. Spaces form a full 
subcategory  $\cS$ of $\Cat$. 
The embedding $\cS\to\Cat$ has a right adjoint functor 
(of maximal subspace) and a left adjoint functor (of total 
localization). 

For a category 
$\cC$ and $x,y\in\cC$ a space $\Map_\cC(x,y)$ is defined,
canonically ``up to a contractible space of choices''.

The conventional categories form a full subcategory $\Cat^\conv$ of $\Cat$.
The functor $\Ho:\Cat\to\Cat^\conv$ assigns to any category $C$ the conventional category $\Ho(C)$ with the same objects, and with morphisms defined by the formula
$$ \Hom_{\Ho(C)}(x,y)=\pi_0(\Map_C(x,y)).$$

An arrow in $C$ is called equivalence if its image in $\Ho(C)$ is invertible.
An arrow $f:\cC\to\cD$ in $\Cat$ is an equivalence iff it is a DK equivalence, that is
\begin{itemize}
\item For any $x,y\in\cC$ the map $\Map_\cC(x,y)\to\Map_\cD(f(x),f(y))$ is an equivalence of spaces.
\item The induced map $\Ho(\cC)\to\Ho(\cD)$ is an equivalence of conventional categories.
\end{itemize}

\subsubsection{Subspace. Subobject. Subcategory}
The infinity category $\cS$ is cartesian closed: it has
products and internal Hom which is right adjoint to product. A map $Y\to X$ is called injective (we will say 
$Y$ is a subspace of $X$) if the induced map $\pi_0(Y)\to\pi_0(X)$ is injective and the diagram
\begin{equation}
\xymatrix{
&Y\ar[r]\ar[d]&X\ar[d]\\
&{\pi_0(Y)}\ar[r]&{\pi_0(X)}
}
 \end{equation} 
is cartesian. In other words, $Y$ is defined, up to 
equivalence, by a subset of the set of connected components 
of $X$. Note that $Y\to X$ is a subspace iff for any $Z$
the map $\Map_\cS(Z,Y)\to\Map_\cS(Z,X)$ is a subspace.

Let $\cC$ be a category. An arrow $a:x\to y$ is injective
(or defines $x$ as a subobject of $y$) if for each $z$
the map of spaces $\Map_\cC(z,x)\to\Map_\cC(z,y)$ (defined
uniquely up to homotopy) is injective. 

This definition makes a lot of sense. For instance,
a functor $f:X\to Y$ defines
a subcategory if for any $Z\in\Cat$ the map
$\Map_\Cat(Z,X)\to\Map_\Cat(Z,Y)$ is a subspace. This means that, first of all, the map $X^\eq\to Y^\eq$ of maximal subspaces is injective, and for any $x,x'\in X$ the map
$$\Map_X(x,x')\to\Map_Y(f(x),f(x'))$$
is injective as well.

\subsubsection{}
A map $f:\cC\to B$ in $\Cat$ will be called a left fibration
if the   map
$$ \cC^{[1]}\to B^{[1]}\times_B\cC$$
induced by the embedding $[0]=\{0\}\to[1]$ is an equivalence. It is clear that if $f$ is represented by a Reedy fibration in $\ssSet$, this coincides with the definition presented in Section~\ref{sect:LF}.

Given $B\in\Cat$, the full subcategory
of $\Cat_{/B}$ spanned by left fibrations identifies with 
the infinity category $\Lt(B)$ defined in 
Section~\ref{sect:LF}. 
According to Theorem~\ref{thm:left-gr}, Grothendieck 
construction for left fibrations provides an equivalence 
$\Lt(B)\to \Fun(B,\cS)$.

\subsubsection{Subfunctors}

It is sometimes difficult to explicitly define a functor in infinity category
setting. It is not enough, even in the conventional setting, to say what does a functor 
do with the objects (even though we often do precisely this); but it is useless to
add how does a functor act on arrows; there are higher homotopies which should also
be taken into account. Still, sometimes this is possible.

Let $F:B\to\cS$ be a functor to spaces. Let, for each $x\in B$, a subspace
$G_x$ of $F(x)$ be given, so that for each arrow $a:x\to y$ the map $F(a):F(x)\to F(y)$
carries $G_x$ to $G_y$. We claim that in this case the collection of $G_x$ defines
a canonical subfunctor $G:B\to\cS$ of $F$. In fact, let $f:X_F\to B$ be a left 
fibration corresponding to $F$. We define $X_G$ as the full subcategory of $X_F$
spanned by the objects of $G_x$, $x\in B$. It is easy to see that the composition 
$X_G\to X_F\to B$ is a left fibration. This defines a functor $G:B\to \cS$.

One can replace in the above reasoning $\cS$ with arbitrary category. 

\begin{Prp}
Let $F:B\to\cC$ be a functor. Let, for each $x\in B$, a subobject $G_x$ of $F(x)$ be given, so that for each $a:x\to y$ the composition $G_x\to F(x)\to F(y)$ factors through $G_y$. Then the collection of subobjects $G_y$ uniquely glues into a subfunctor
$G:B\to\cC$.
\end{Prp}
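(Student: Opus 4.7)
The plan is to reduce the statement to the already-proven case $\cC=\cS$ via the Yoneda embedding $Y:\cC\to P(\cC)$, and then use full faithfulness of $Y$ to descend the resulting subfunctor back from $P(\cC)$ to $\cC$.

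First I would repackage the data. The composition $Y\circ F:B\to P(\cC)$ is equivalently a functor $\widetilde F:B\times\cC^\op\to\cS$ given by $(x,z)\mapsto\Map_\cC(z,F(x))$. The hypothesis that $G_x$ is a subobject of $F(x)$ means, by definition, that for each $z\in\cC$ the map $\Map_\cC(z,G_x)\to\Map_\cC(z,F(x))$ is a subspace. The hypothesis that for every arrow $a:x\to y$ the composition $G_x\to F(x)\to F(y)$ factors through $G_y$ says, after applying $\Map_\cC(z,-)$, that $\widetilde F(a,\id_z)$ carries the subspace $\Map_\cC(z,G_x)$ into $\Map_\cC(z,G_y)$. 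Naturality in $z$ is automatic: for $b:z'\to z$ composition with $b$ sends $\Map_\cC(z,G_x)$ into $\Map_\cC(z',G_x)$ because $G_x$ is a subobject. Thus the family of subspaces indexed by $B\times\cC^\op$ satisfies exactly the hypotheses of the already-proven proposition for $\cC=\cS$, with base $B\times\cC^\op$.

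Next I would apply that proposition to obtain a subfunctor $\widetilde H:B\times\cC^\op\to\cS$ of $\widetilde F$, equivalently a subfunctor $H:B\to P(\cC)$ of $Y\circ F$, whose value at $x\in B$ is the presheaf $z\mapsto\Map_\cC(z,G_x)$. In particular, $H(x)$ is equivalent, naturally in $z$, to $Y(G_x)$; so $H$ factors pointwise through the essential image of $Y:\cC\to P(\cC)$. By the Yoneda lemma proved in Section~\ref{sect:yoneda}, $Y$ is fully faithful, so this essential image, viewed as a full subcategory of $P(\cC)$, is canonically equivalent to $\cC$. Consequently $H$ lifts (uniquely up to contractible choice) to a functor $G:B\to\cC$ with $Y\circ G\simeq H$, and the inclusion $H\hookrightarrow Y\circ F$ corresponds, by full faithfulness of $Y$ on mapping spaces, to a natural transformation $G\to F$ in $\Fun(B,\cC)$ whose value at each $x$ is the prescribed subobject $G_x\hookrightarrow F(x)$. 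Uniqueness of $G$ follows formally: any other candidate $G'$ would produce the same subfunctor $H$ of $Y\circ F$ (both give the same pointwise subobjects together with the prescribed compatibilities), and the factorization of $H$ through $Y$ is essentially unique.

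The only delicate step is the assertion that a functor $H:B\to P(\cC)$ which pointwise factors through a fully faithful $Y:\cC\to P(\cC)$ factors globally through $Y$, with contractible space of factorizations. I expect this to be the main (minor) obstacle. The cleanest way I would argue it is to form the pullback $B\times_{P(\cC)}\cC$; the projection to $B$ is a right fibration which, because $Y$ is fully faithful, is fiberwise a contractible space (its fiber over $x$ is the space of representing objects for $H(x)$, which is contractible by Yoneda whenever $H(x)$ is representable). A right fibration with contractible fibers is a trivial fibration, hence an equivalence, and any section produces the desired lift $G:B\to\cC$ uniquely up to a contractible space of choices.
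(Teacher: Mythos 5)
Your proof is correct and is essentially the paper's own argument: compose with the Yoneda embedding, apply the already-proven $\cS$-valued case to the induced functor $B\times\cC^\op\to\cS$ with subspaces $\Map_\cC(c,G_x)\subset\Map_\cC(c,F(x))$, and then observe that the resulting subfunctor factors through $\cC$ because representability is checked objectwise. One small repair to your last step: the projection $B\times_{P(\cC)}\cC\to B$ need not be a right fibration a priori, but it is fully faithful (being a pullback of the fully faithful $Y$) and its fibers are nonempty (indeed contractible) by pointwise representability, hence it is essentially surjective and therefore an equivalence, which yields the same essentially unique lift $G:B\to\cC$.
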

\begin{proof}
The functor $F:B\to\cC$ gives rise to $Y\circ F:B\to\cC\to P(\cC)$ which can be rewritten as a functor $F':B\times\cC^\op\to\cS$. We define a subfunctor $G'$ of $F'$
by the subobjects $G'_{(x,c)}=\Map_\cC(c,G_x)$ of $\Map_\cC(c,F(x))$. It remains
to notice that the functor $G'$ factors through $\cC$ --- as this fact is verified objectwise.
\end{proof}

\subsubsection{Yoneda}

An important feature of the assignment $(x,y)\mapsto\Map_\cC(x,y)$ is that it is functorial, that is it gives rise
to a functor $Y:\cC^\op\times\cC\to\cS$. A nice way to
visualize it is to construct a left fibration
$$ \Tw(\cC)\to\cC^\op\times\cC$$
corresponding, via Grothendieck construction, to $Y$.

Here $\Tw(\cC)$ is the category of twisted arrows. In the 
formalism of CSS we defined $\Tw(\cC)$
via the endofunctor $\tau:I\mapsto I^\op\star I$ on 
$\Delta$.

In fact, as it is shown in \ref{sss:YCat}, assuming Yoneda embedding for $\Cat$, 
we can identify $\Cat$ with a full subcategory of 
$\Fun(\Delta^\op,\cS)$ spanned by {\sl complete Segal objects} in the sense of \ref{ss:CS} below, which provides
an infinity-categorical explanation of the CSS model.
This allow to define $\Tw(\cC)$ using the functor $\tau$,
and define Yoneda embedding $\cC\to\Fun(\cC^\op,\cS)$
as the map corresponding to the left fibration 
$\Tw(\cC)\to\cC^\op\times\cC$.

\subsubsection{}
\label{sss:YCat}
One has an embedding $\Delta\to\Cat$; we will denote
$[n]\in\Cat$ the image of the respective ordered set in $\Cat$. This embedding   induces
the composition
$$ \Cat\stackrel{Y}{\to}P(\Cat)\to P(\Delta)=
\Fun(\Delta^\op,\cS).$$

We claim that this functor is fully faithful. This means 
that any infinity category is determined by a simplicial object in $\cS$. Moreover, the
 essential image of this functor consists of 
simplicial spaces satisfying two properties:
they are {\sl complete  and Segal} (see~\ref{lem:CSS=CSS}).

\subsection{Overcategories}

If $C\in\Cat$ and $x\in\cC$, we denote, naturally, 
$C_{/x}=C^{[1]}\times_C\{x\}$. It is worthwhile to compare this to another natural construction, in case $C$ is
the infinity category underlying a model category.

Let $\cC$ be a model category and $x\in \cC$. We will denote $L(\cC)$ the $\infty$-category underlying $\cC$.
One has a localization map $\cC\to L(\cC)$ in $\Cat$;
Therefore, one has a map $\cC_{/x}\to L(\cC)_{/x}$
carrying weak equivalences in $\cC_{/x}$ to equivalences.

This gives us a canonical map 
\begin{equation}\label{eq:locover}
L(\cC_{/x})\to L(\cC)_{/x}.
\end{equation}
The source of this map is the infinity category underlying 
the model category $\cC_{/x}$; the target is the infinity
overcategory. We will now show how one can prove this is an 
equivalence, provided $x$ is fibrant.
\begin{Exe}{\ }
\begin{itemize}
\item Prove that $\Ho(C_{/x})$ is equivalent to 
$\Ho(C)_{/x}$.
\item Deduce that (\ref{eq:locover}) becomes equivalence 
after passage to homotopy categories; in particular, it is 
essentially surjective.
\item Prove that for $a:y\to x$ and $b:z\to x$ two objects
in $C_{/x}$ one has an equivalence
\begin{equation}
\Map_{C_{/x}}(a,b)=\Map_C(y,z)\times_{\Map_C(y,x)}\{a\}.
\end{equation}
\end{itemize}
\end{Exe}
Once we did the exercise, the proof goes as follows.
The map (\ref{eq:locover}) is essentially surjective,
so it remains to prove it is fully faithful. Map spaces in 
Dwyer-Kan localization of a model category were calculated 
by Dwyer-Kan in \cite{DK3}. For a model category $\cC$ and 
for $y,z\in\cC$, so that $z$ is fibrant, the map space  
$\Map_{L(\cC)}(y,z)$ can be calculated, up to homotopy,
using a cosimplicial resolution of $y$ defined as follows.  

A cosimplicial resolution for $y$ is a map of cosimplicial
objects $Y^\bullet\to y$, where $y$ is considered as a
constant cosimplicial object in $C$,  
satisfying the properties (1)--(3) listed below.

For a cosimplicial object $Y^\bullet$ and for a simplicial
set $K$ we will denote as $K\otimes Y^\bullet$ the object of $C$ defined by the following properties:
\begin{itemize}
\item $K\otimes Y^\bullet$ preserves colimits in the first argument.
\item $\Delta^n\otimes Y^\bullet=Y^n$.
\end{itemize}
We require that the map $Y^\bullet\to y$ satisfies the following properties.
\begin{itemize}
\item[1.] $Y_0\to y$ is a weak equivalence and $Y_0$ is cofibrant.
\item[2.] coface maps in $Y^\bullet$ are trivial cofibrations.
\item[3.] for any $n\geq 1$ the map $\partial\Delta^n\otimes Y^\bullet\to Y^n$ is a cofibration.
\end{itemize} 
According to \cite{DK3}, $\Map_{L(C_{/x})}(y,z)$ is calculated by the simplicial
set $\Hom_{C_{/x}}(Y^\bullet,z)$ which is the fiber of the map
\begin{equation}\label{eq:maptomap}
\Hom_C(Y^\bullet,z)\to\Hom_C(Y^\bullet,x)
\end{equation}
at $Y^\bullet\to y\to x$.
Since $\Map_{L(C)}(y,z)$ is calculated by the 
simplicial set $\Hom_C(Y^\bullet,z)$, it remains to verify that the map
(\ref{eq:maptomap}) is a fibration. This follows from the following property of resolutions: if $K\to L$ is a trivial cofibration of simplicial sets, the map
$K\otimes Y^\bullet\to L\otimes Y^\bullet$ is a trivial cofibration (see~\cite{Hir}, 16.4.11).

\subsection{Complete Segal objects}
\label{ss:CS}

Let $\cC$ be an infinity category with finite limits.
A simplicial object in $\cC$ is, by definition, an object
in $\Fun(\Delta^\op,\cC)$. A simplicial object $X$ is called
Segal if the canonical map
$X_n\to X(\Sp(n))$
is an equivalence in $\cC$ for all $n$.
We denote $\Seg(\cS)$ the full subcategory
of $\Fun(\Delta^\op,\cC)$ spanned by Segal objects.

A simplicial object $X$ is called {\sl groupoid object}
if for any presentation $[n]=S\cup T$ such that $S\cap T=\{s\}$, the commutative diagram
\begin{equation}
\xymatrix{
&{X_n} \ar[r]\ar[d] &{X(S)} \ar[d]\\
&{X(T)} \ar[r] &{X(\{s\})}
}
\end{equation}
is cartesian. We denote $\Grp(\cC)$ the full subcategory
of $\Fun(\Delta^\op,\cC)$ spanned by groupoid objects.
It is clear that $\Grp(\cC)\subset\Seg(\cC)$.

Let us first study the case $\cC=\cS$. 

In this case any Segal object $X$ in $\cS$ defines a 
(conventional) homotopy category $\Ho(X)$ in a usual way. Moreover, $X$ is a 
groupoid object iff $\Ho(X)$ is a groupoid.

\begin{prp}
The embedding $\Grp(\cS)\to\Seg(\cS)$ admits a right adjoint
functor $X\mapsto X^\eq$. The space $X^\eq_n$ is defined
as a subspace of $X_n$ consisting of components whose image in the homotopy category is a sequence of isomorphisms.
\end{prp}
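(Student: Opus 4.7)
The plan is to construct $X^{\eq}$ levelwise as a union of connected components of $X_n$, show that these subspaces assemble into a subfunctor $X^{\eq} \subset X$ via the subfunctor construction recalled earlier, verify that $X^{\eq}$ is itself a Segal (hence groupoid) object, and finally check the universal property.

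First I would define $X^{\eq}_n \subset X_n$ as the union of those components whose associated chain of composable arrows in $\Ho(X)$ --- obtained via the Segal equivalence $X_n \simeq X_1 \times_{X_0} \cdots \times_{X_0} X_1$ --- consists entirely of isomorphisms. This is well-posed because the property of being an equivalence depends only on the connected component of $X_1$ (the analogue of Lemma~\ref{lem:eq-ss}). To promote this levelwise choice to a sub-simplicial object, I would verify that for every simplicial operator $\alpha:[m]\to[n]$ the induced map $\alpha^*: X_n \to X_m$ carries $X^{\eq}_n$ into $X^{\eq}_m$. This reduces to the elementary fact that $\alpha$ defines a functor $[m]\to[n]$, and any functor of ordinary categories carries isomorphisms to isomorphisms, so chains of isomorphisms are preserved (identities are inserted by degeneracies, compositions of isomorphisms arise from inner faces, and outer faces merely drop end arrows). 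The subfunctor proposition from the previous section then packages these compatible subobjects into a genuine subfunctor $X^{\eq}\subset X$ in $\Fun(\Delta^{\op},\cS)$.

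Next I would verify that $X^{\eq}$ is a groupoid object. Since passing to connected components commutes with finite limits in $\cS$ over discrete bases, the Segal equivalence restricts to give $X^{\eq}_n \simeq X^{\eq}_1 \times_{X_0} \cdots \times_{X_0} X^{\eq}_1$, so $X^{\eq}$ is Segal. Its homotopy category $\Ho(X^{\eq})$ has the same objects as $\Ho(X)$ and, by construction, its arrows are exactly the isomorphisms of $\Ho(X)$, so $\Ho(X^{\eq})$ is the maximal subgroupoid of $\Ho(X)$. By the criterion stated in the paragraph preceding the proposition (a Segal object is a groupoid object iff its homotopy category is a groupoid), $X^{\eq}$ is a groupoid object.

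Finally I would establish the universal property. Given a groupoid object $Y$ and a map $f:Y\to X$, the induced functor $\Ho(f):\Ho(Y)\to\Ho(X)$ sends every arrow of $\Ho(Y)$ (all of which are isomorphisms, since $\Ho(Y)$ is a groupoid) to an isomorphism in $\Ho(X)$. Consequently, for every $n$ the map $f_n:Y_n\to X_n$ factors through $X^{\eq}_n$ at the level of components, and because $X^{\eq}\subset X$ was produced as a subfunctor, these levelwise factorizations glue uniquely to a map $Y\to X^{\eq}$ in $\Fun(\Delta^{\op},\cS)$. Uniqueness of the factorization is automatic: each inclusion $X^{\eq}_n \hookrightarrow X_n$ is a subspace, so $X^{\eq}\to X$ is a monomorphism in the functor category, forcing the space of lifts to be either empty or contractible.

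The main obstacle is the second step: passing from a compatible family of subspaces $X^{\eq}_n\subset X_n$ to an actual subfunctor of the simplicial space $X$, since in an $\infty$-categorical context one cannot literally choose strictly compatible subspaces. This is precisely the content of the subfunctor proposition invoked above, so once that is in hand, the rest of the argument is essentially formal.
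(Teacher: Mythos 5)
Your proposal is correct and follows essentially the same route as the paper: the paper's proof consists precisely of the observation that the levelwise subspaces $X^\eq_n$ (components mapping to chains of isomorphisms in $\Ho(X)$) assemble into a subfunctor of $X$ via the subfunctor proposition, with the adjointness verification left as an exercise (to be done objectwise), which is what your last two paragraphs carry out. Your additional checks --- that the Segal maps restrict, that $\Ho(X^\eq)$ is the maximal subgroupoid so the groupoid criterion applies, and that uniqueness of the factorization follows from $X^\eq\to X$ being a levelwise monomorphism --- are exactly the intended details.
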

\begin{proof}
This looks almost obvious after all the time we spent working with complete Segal spaces.

The collection $\{X^\eq_n\}$ defines a simplicial object in $\cS$ as this is a subfunctor of $X$. 
\end{proof}
\begin{Exe}Prove that the construction $X\mapsto X^\eq$
defines a functor right adjoint to the embedding
$\Grp(\cS)\to\Seg(\cS)$.
\end{Exe}

{\sl Keep in mind that adjoint functors can be constructed
objectwise.}

\

The result extends to any $\cC$ having finite limits.
Recall the notation $\Delta^1=\bar Z\subset \bar\Delta^3$
we used in Section~\ref{sect:SS}.

\begin{prp}
\begin{itemize}
\item[1.] Let $\cC$ have finite limits. The embedding $\Grp(\cC)\to\Seg(\cC)$ has right adjoint $X\mapsto X^\eq$.
\item[2.]For any $X\in\Seg(\cC)$ one has natural equivalences
$$ X_0^\eq\to X_0,\quad X_1^\eq=X^\eq(\bar Z)\leftarrow
X^\eq(\bar\Delta^3)\to X(\bar\Delta^3).
$$
\end{itemize}
\end{prp}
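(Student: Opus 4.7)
The plan is to reduce both assertions to the already-established case $\cC=\cS$ via Yoneda, exploiting the remark that \emph{adjoint functors can be constructed objectwise}. For any $c\in\cC$ the functor $\Map_\cC(c,-)$ preserves finite limits, so for $X\in\Seg(\cC)$ the simplicial space $\Map_\cC(c,X_\bullet)$ is a Segal space, and the previous proposition provides a canonical groupoid subspace $\Map_\cC(c,X_\bullet)^\eq\subset\Map_\cC(c,X_\bullet)$. This is functorial in $c$: any $c\to c'$ induces a map of Segal spaces whose induced functor on homotopy categories preserves isomorphisms, and hence carries the equivalence locus into the equivalence locus. In this way we obtain a groupoid subfunctor of the Yoneda image of $X$ inside $\Fun(\cC^\op,\Fun(\Delta^\op,\cS))$, and the task is to realize it as the image of some $X^\eq\in\Grp(\cC)$.

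For representability I would set $X^\eq_0:=X_0$ and $X^\eq_1:=X(\bar\Delta^3)$, the latter existing as a finite limit in $\cC$; for $n\geq 2$ I define
$$ X^\eq_n := X(\bar\Delta^3)\times_{X_0}\cdots\times_{X_0}X(\bar\Delta^3) \quad (n\text{ factors}), $$
again a finite limit. That each $X^\eq_n$ represents the pointwise groupoid subobject follows from Rezk's lemma (the trivial fibration $\Map(d(\bar\Delta^3),Y)\to Y^\eq$ recorded in~\ref{ss:CSSmodels}): applied to the Segal space $\Map_\cC(c,X_\bullet)$, it identifies $\Map_\cC(c,X(\bar\Delta^3))$ with $\Map_\cC(c,X_1)^\eq$ up to trivial fibration, and the higher levels are then forced by the Segal condition on the groupoid object $\Map_\cC(c,X_\bullet)^\eq$. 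The simplicial structure on $X^\eq$ is inherited from that of $X$, since face and degeneracy maps restrict from $X$ to $X^\eq$ (pointwise the simplicial operations preserve the equivalence locus), and the simplicial identities together with the groupoid property hold on the nose in $\cC$ because Yoneda is fully faithful and they hold pointwise. The adjunction $\Map_{\Seg(\cC)}(Z,X)\simeq\Map_{\Grp(\cC)}(Z,X^\eq)$ for $Z\in\Grp(\cC)$ is then a direct consequence of the pointwise one.

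For part~2, the equivalences $X^\eq_0\to X_0$ and $X^\eq_1 = X^\eq(\bar Z)$ are tautological by construction and because $\bar Z=\Delta^1$. The map $X^\eq(\bar\Delta^3)\to X^\eq(\bar Z)$ is an equivalence by Rezk's lemma applied to $X^\eq$ itself: pointwise via Yoneda, $\Map_\cC(c,X^\eq_\bullet)$ is already a groupoid Kan simplicial set, so $(X^\eq)^\eq=X^\eq$ and the lemma gives a trivial fibration onto $X^\eq_1$. The inclusion $X^\eq\hookrightarrow X$ induces an equivalence $X^\eq(\bar\Delta^3)\to X(\bar\Delta^3)$ because, pointwise, every $\bar\Delta^3$-simplex in $\Map_\cC(c,X)$ has all its $1$-faces in the equivalence locus --- this is precisely what Rezk's lemma asserts about the image of $\Map(c,X)(\bar\Delta^3)\to\Map(c,X)_1$. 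The main obstacle I anticipate is the representability step: although the pointwise groupoid subobject exists cleanly in $\Fun(\cC^\op,\cS)$, matching it to a simplicial object in $\cC$ requires identifying it with concrete finite limits through Rezk's lemma and checking naturality in $c$ of that identification. Once that bookkeeping is pinned down, every remaining claim reduces mechanically to the space case.
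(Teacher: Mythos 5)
Your argument is correct and follows essentially the same route as the paper: the paper's proof is exactly the reduction to the already-verified case $\cC=\cS$ by passing through $P(\cC)=\Fun(\cC^\op,\cS)$ (i.e.\ working objectwise in $c$, as you do with $\Map_\cC(c,-)$) and then using that the Yoneda embedding is fully faithful and preserves the finite limits $\cC$ has, so that the pointwise groupoid subobject is levelwise representable. Your extra step of exhibiting the representing objects explicitly as $X_0$, $X(\bar\Delta^3)$ and fiber products thereof via Rezk's lemma is precisely the content the paper packages as part 2, so you are filling in the sketch rather than diverging from it.
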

\begin{proof}
First of all, verify for $\cC=\cS$. Then deduce from this 
for $P(\cC)=\Fun(\cC^\op,\cS)$. Finally, use Yoneda
to embed $\cC$ into $P(\cC)$. Here we need to know that
the Yoneda embedding preserves all limits (that $\cC$ has).

\end{proof}

We are now ready to define completeness condition.
\begin{dfn}
A Segal object $X\in\Fun(\Delta^\op,\cC)$ is called complete
if the groupoid $X^\eq$ is essentially constant.
\end{dfn}

The category of complete Segal objects in $\cC$ will be
denoted $\CS(\cC)$.

This notion is infinity-categorical version of complete Segal spaces. One has

\begin{lem}\label{lem:CSS=CSS}
The Yoneda embedding
$$ \Cat\to\Fun(\Delta^\op,\cS)$$
identifies $\Cat$ with $\CS(\cS)$.
\end{lem}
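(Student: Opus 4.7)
The plan is to show $Y(\cC)$ lies in $\CS(\cS)$ for every $\cC\in\Cat$, and then invoke the Quillen equivalence between the Joyal model structure and the CSS model structure to conclude that $Y:\Cat\to\CS(\cS)$ is an equivalence. Recall $Y(\cC)$ is the simplicial space $n\mapsto\Map_\Cat([n],\cC)$, obtained by precomposing the Yoneda embedding of $\Cat$ with the functor $\iota:\Delta\to\Cat,\ [n]\mapsto[n]$.

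For the Segal condition, I would use that the spine inclusion $\Sp(n)\hookrightarrow \Delta^n$ is a categorical weak equivalence in the Joyal model structure (this is standard and was essentially used in Section~2 to characterize nerves of categories). Translated into $\Cat$, this says that $[n]$ is the iterated pushout $[1]\sqcup^{[0]}\ldots\sqcup^{[0]}[1]$. Applying $\Map_\Cat(-,\cC)$ converts this pushout into the pullback $\Map_\Cat([1],\cC)\times_{\Map_\Cat([0],\cC)}\ldots\times_{\Map_\Cat([0],\cC)}\Map_\Cat([1],\cC)$, which is precisely the Segal condition. For completeness, $Y(\cC)_0=\Map_\Cat([0],\cC)$ is the space of objects of $\cC$, i.e.\ $\cC^\eq$. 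It suffices to check the criterion from Section~\ref{sect:SS}: the map $Y(\cC)(\bar\Delta^3)\to Y(\cC)_0$ is an equivalence. But $\bar\Delta^3$, being the nerve of the contractible groupoid with two objects, is equivalent in $\Cat$ to $[0]$, so the map is induced by an equivalence and is itself an equivalence.

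For essential surjectivity and fully faithfulness, I would apply the Joyal--Tierney Quillen equivalence between $(\sSet,\mathrm{Joyal})$ and $(\ssSet,\mathrm{CSS})$ mentioned in Section~6, whose right Quillen functor sends $X\in\ssSet$ to $X_{\bullet,0}\in\sSet$. This produces an equivalence of underlying infinity categories $\Phi:\Cat\xrightarrow{\simeq}\CS(\cS)\subset\Fun(\Delta^\op,\cS)$. To identify $\Phi$ with the restricted Yoneda $Y$, I would check that both functors agree on the generating subcategory $\Delta\subset\Cat$: for each $[n]\in\Delta$, both $\Phi([n])$ and $Y([n])$ compute the Rezk nerve $k\mapsto\Hom_\Delta([k],[n])$. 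Since any two functors $\Cat\to\Fun(\Delta^\op,\cS)$ are determined, up to equivalence, by their restrictions to $\Delta$ when they preserve enough structure (here both $Y$ and $\Phi$ preserve small limits, so they are the right Kan extensions of their restrictions), this identifies $Y\simeq\Phi$ and finishes the proof.

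The main obstacle will be the last identification step: strictly speaking, one has to invoke a rigidity result to compare two a priori different equivalences $\Cat\simeq\CS(\cS)$. The cleanest way is to use Toën's theorem (cited earlier) that the space of self-equivalences of the infinity category of infinity categories is $\Z/2$, generated by $\cC\mapsto\cC^\op$; since $Y$ manifestly sends $[n]$ to the Rezk nerve of $[n]$ (not of $[n]^\op$), the composite $\Phi^{-1}\circ Y$ is equivalent to the identity, and so $Y$ is itself an equivalence $\Cat\to\CS(\cS)$.
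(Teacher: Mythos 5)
Your first step (checking that $n\mapsto\Map_\Cat([n],\cC)$ is Segal and complete) is essentially fine, modulo the slip that $\bar\Delta^3$ is not literally the nerve of the contractible groupoid on two objects --- it is only Joyal-equivalent to it, and hence to $[0]$, which is what you actually use. The genuine gap is in the second half. To\"en's theorem classifies \emph{self-equivalences} of $\Cat$; in order to conclude from it that $\Phi^{-1}\circ Y$ is equivalent to the identity you must already know that $\Phi^{-1}\circ Y$ is an equivalence, i.e.\ that $Y$ is fully faithful with essential image $\CS(\cS)$ --- which is precisely what the lemma asserts, so this appeal is circular. The backup claim that $Y$ and $\Phi$ ``preserve small limits, so they are the right Kan extensions of their restrictions to $\Delta$'' is not a valid principle: a limit-preserving functor is in general not the right Kan extension of its restriction to a subcategory, and nothing in your argument establishes such a Kan-extension description of either functor relative to $\Delta\subset\Cat$.

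What is missing is a Yoneda/density statement, and that is exactly how the paper closes the argument. From the chain of Quillen equivalences you quote, together with $L(\Fun(\Delta^\op,\sSet)_{\mathrm{Reedy}})\simeq\Fun(\Delta^\op,\cS)$, one knows that $\Cat$ is realized as the \emph{full} subcategory of $P(\Delta)=\Fun(\Delta^\op,\cS)$ spanned by the complete Segal spaces, and this subcategory contains the representables $Y([n])$ (the discrete nerves of the $[n]$ are CSS). Now for any full subcategory $D\subset P(C)$ containing the image of the Yoneda embedding, the restricted Yoneda functor $D\to P(C)$, $d\mapsto\bigl(c\mapsto\Map_D(Y(c),d)\bigr)$, is equivalent to the inclusion $D\subset P(C)$: mapping spaces in a full subcategory agree with those in $P(C)$, so this is an immediate consequence of the Yoneda lemma of Section~\ref{sect:yoneda} (it is part 3 of Exercise~\ref{ex:1}). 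Applying this with $C=\Delta$ and $D=\Cat$ identifies your $Y$ with the inclusion of the CSS into $P(\Delta)$, which gives fully faithfulness and the identification of the essential image with $\CS(\cS)$ in one stroke, with no rigidity input; equivalently, $Y$ is the right adjoint of the localization $P(\Delta)\to\Cat$, i.e.\ the inclusion of the local objects. Your verification that each $Y(\cC)$ is a complete Segal object then becomes superfluous, since it follows from this identification.
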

\begin{proof}
The claim results from the following general observation.
Let $Y:C\to P(C)$ the a Yoneda embedding and let $D$ be a full subcategory of
$P(C)$ containing the essential image of $Y$. Then the composition
$C^\op\to D^\op\to P(D^\op)$ and the embedding $D\to P(C)$ define equivalent
functors $C^\op\times D\to\cS$ (see \ref{ex:1} below).
We apply this observation to $C=\Delta$. Then $P(C)=\Fun(\Delta^\op,\cS)$
can be realized as the simplicial category of Reedy fibrant bisimplicial sets, and $D=\Cat$ is realized as the full subcategory of $P(C)$ spanned by the CSS.
According to the observation above, the embedding of $\Cat$ into $P(\Delta)$
is equivalent to the one defined by the Yoneda embedding. 
\end{proof}

We will now present a relative version of~\ref{lem:CSS=CSS}.

\begin{lem}Let $\cC$ have small limits and let $B\in\Cat$.
The equivalence
$$\Fun(B,\Fun(\Delta^\op,\cC))=\Fun(\Delta^\op,\Fun(B,\cC))
$$
identifies $\Fun(B,\CS(\cC))$ with $\CS(\Fun(B,\cC))$.
In particular, $\Fun(B,\Cat)$ identifies with
$\CS(\Fun(B,\cS))=\CS(\Lt(B))$.
\end{lem}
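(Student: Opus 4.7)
The plan is to exploit the fact that both the Segal condition and the completeness condition on a simplicial object are phrased entirely through finite limits and detection of equivalences, operations which in $\Fun(B,\cC)$ are computed pointwise over $B$. The equivalence appearing in the statement is simply the canonical one $\Fun(B,\Fun(\Delta^\op,\cC)) \simeq \Fun(B\times\Delta^\op,\cC) \simeq \Fun(\Delta^\op,\Fun(B,\cC))$; under it a functor $F\colon B\to\Fun(\Delta^\op,\cC)$ corresponds to a simplicial object $Y\in\Fun(\Delta^\op,\Fun(B,\cC))$ with $Y(b)=F(b)$ for $b\in B$, and we must show this equivalence restricts to an identification of complete Segal objects.

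For the Segal condition, recall that finite limits in $\Fun(B,\cC)$ are computed pointwise (each evaluation functor $\ev_b$ preserves limits and the family $\{\ev_b\}_{b\in B}$ jointly detects them), and that equivalences in $\Fun(B,\cC)$ are pointwise equivalences. Hence the canonical map $Y_n\to Y(\Spine(n))$ in $\Fun(B,\cC)$ is an equivalence iff for every $b\in B$ the map $Y(b)_n\to Y(b)(\Spine(n))$ is an equivalence in $\cC$. This says precisely that $Y\in\Seg(\Fun(B,\cC))$ iff $F$ factors through $\Seg(\cC)\subset\Fun(\Delta^\op,\cC)$, so $\Fun(B,\Seg(\cC))\simeq\Seg(\Fun(B,\cC))$.

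For completeness one argues similarly, reducing to the Segal case. The construction $X\mapsto X^\eq$ is right adjoint to $\Grp(\cD)\hookrightarrow\Seg(\cD)$ and is built entirely from finite limits: the formulas recalled earlier give $X_0^\eq\simeq X_0$ and $X_1^\eq\simeq X(\bar\Delta^3)$, with the higher components forced by the groupoid-object cartesian squares. Since each $\ev_b$ preserves finite limits, we have $Y^\eq(b)\simeq Y(b)^\eq$ for any Segal object $Y$ in $\Fun(B,\cC)$. The requirement that $Y^\eq$ be essentially constant asks that the structure maps $Y_0^\eq\to Y_n^\eq$ be equivalences, which is again a pointwise condition. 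Hence $Y$ is complete in $\Fun(B,\cC)$ iff each $Y(b)$ is complete in $\cC$, giving $\Fun(B,\CS(\cC))\simeq\CS(\Fun(B,\cC))$.

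The special case $\cC=\cS$ follows by combining Lemma~\ref{lem:CSS=CSS} (identifying $\Cat$ with $\CS(\cS)$) with the Grothendieck construction of Theorem~\ref{thm:left-gr}, which gives $\Fun(B,\cS)\simeq\Lt(B)$. The main thing to verify carefully is that the right adjoint $(-)^\eq$, being built from finite limits, commutes with the pointwise evaluation functors $\ev_b$; this is a formal consequence of the fact that each $\ev_b$ is itself a right adjoint (its left adjoint is left Kan extension along $\{b\}\hookrightarrow B$) and therefore preserves all limits.
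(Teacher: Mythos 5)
Your argument is correct and is essentially the paper's own proof: both rest on the observation that limits in $\Fun(B,\cC)$ are computed pointwise and that the Segal and completeness conditions are expressed purely through finite limits of the $E_n$, hence hold iff they hold after each evaluation $\ev_b$. The only cosmetic difference is that the paper checks completeness directly via the single criterion that $E(\bar\Delta^3)\to E(\Delta^0)$ be an equivalence, whereas you route through the right adjoint $(-)^\eq$ commuting with $\ev_b$ — the same finite-limit argument in slightly longer form.
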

\begin{proof}
First of all, the category $\Fun(B,\cC)$ has limits which 
are calculated pointwise (see Exercise~\ref{ex:2}). Thus, $\CS(\Fun(B,\cC))$ makes sense.

A simplicial object $E:\Delta^\op\to \Fun(B,\cC)$ is a
complete Segal object if the maps $E(\Delta^n)\to E(\Sp(n))$
and $E(\bar{\Delta^3})\to E(\Delta^0)$ are equivalences.
All objects of $\Fun(B,\cS)$ involved are presented as finite limits of $E_n$. Since the limits are calculated
componentwise, the claim follows. 
\end{proof}

In this section we will identify the category $\CS(\Lt(B))$ 
with a certain subcategory of $\Cat_{/B}$ which will be 
called {\sl the category of cocartesian fibrations} over 
$B$.

\subsection{Cocartesian fibrations}
\label{ss:coc}

\subsubsection{}
Let $f:X\to B$ be a morphism in $\Cat$. The construction
$X\mapsto \Tw(X)$ is functorial, so that $f$ induces a
commutative diagram
\begin{equation}
\label{eq:Tws}
\xymatrix{
&{\Tw(X)}\ar[r]\ar[d]&{\Tw(B)}\ar[d]\\
&{X^\op\times X}\ar[r]&{B^\op\times B}
}
\end{equation}
of left fibrations. Choose now $a:[1]\to X$ with
$a(0)=x$ and $a(1)=y$. Base change of $\Tw(X)$ with respect 
to $(a^\op,\id_X):[1]\times X\to X^\op\times X$
gives a left fibration over $[1]\times X$ which, by 
Grothendieck construction (or by the results of 
Section~\ref{sect:yoneda}), can be replaced with a map of 
left fibrations $X_{y/}\to X_{x/}$ over $X$. 

Applying the same base change to the whole diagram 
(\ref{eq:Tws}), we get a commutative diagram
\begin{equation}
\label{eq:square-of-under}
\xymatrix{
&{X_{y/}}\ar[r]\ar[d]&{X_{x/}}\ar[d]\\
&{X\times_BB_{f(y)/}}\ar[r]&{X\times_BB_{f(x)/}} 
}
\end{equation}
of left fibrations over $X$.

\begin{dfn}An arrow $a:x\to y$ in $X$ is called $f$-cocartesian if the diagram (\ref{eq:square-of-under})
is cartesian.
\end{dfn}
Note that (\ref{eq:square-of-under}) is cartesian if and only if the (slightly simpler) diagram
\begin{equation}
\label{eq:square-of-under-0}
\xymatrix{
&{X_{y/}}\ar[r]\ar[d]&{X_{x/}}\ar[d]\\
&{B_{f(y)/}}\ar[r]&{B_{f(x)/}}
}
\end{equation}
is cartesian. 

Equivalences in $X$ are obviously $f$-cocatesian. If $a,a'$
lie in the same connected component of $\Map(x,y)$, $a$ is
$f$-cocartesian if and only if $a'$ is $f$-cocartesian.

\begin{dfn}An arrow $\bar a:b\to c$ in $B$ is said to admit
a cocartesian lifting if for any $x\in X$ such that 
$f(x)=b$ there exists a cocartesian arrow $a:x\to y$ in $X$
such that $f(a)=\bar a$.
\end{dfn}

\subsubsection{Uniqueness}
Let us show that a cocartesian lifting, if exists, is unique up to equivalence. In fact, if the diagram
(\ref{eq:square-of-under}) is cartesian, 
$X_{y/}=B_{f(y)/}\times_{B_{f(x)/}}X_{x/}$ has an initial
object whose image in $X_{x/}$ reconstructs $a:x\to y$.

\begin{lem}
Let $f:X\to B$ be a map. The collection of $f$-cocartesian 
arrows form a subcategory in $X$.
\end{lem}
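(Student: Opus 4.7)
The plan is to reduce the statement to closure under composition: the collection of $f$-cocartesian arrows contains all equivalences (as observed just before the lemma), in particular all identities, so only stability under composition is nontrivial.

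Let $a:x\to y$ and $b:y\to z$ be $f$-cocartesian, and set $c:=b\circ a$. The first step is to realize the three arrows $c$, $b$, $a$ as faces of a $2$-simplex $s:[2]\to X$, and to pull the left fibration $\Tw(X)\to X^{\op}\times X$ back along $(s^{\op},\id_X):[2]\times X\to X^{\op}\times X$. Grothendieck construction turns the resulting left fibration over $[2]\times X$ into a functor $[2]\to\Lt(X)$, i.e., into a composable pair
\[
X_{z/}\longrightarrow X_{y/}\longrightarrow X_{x/}
\]
of left fibrations over $X$. Restricting along the face $d_1:[1]\to[2]$, whose image in $X$ is $c$, recovers the single map $X_{z/}\to X_{x/}$ associated to $c$. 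Performing the same construction in $B$ and using naturality with respect to $f$, one obtains a commutative diagram
\begin{equation*}
\xymatrix{X_{z/}\ar[r]\ar[d]&X_{y/}\ar[r]\ar[d]&X_{x/}\ar[d]\\ B_{f(z)/}\ar[r]&B_{f(y)/}\ar[r]&B_{f(x)/}}
\end{equation*}
whose outer rectangle is the square~\eqref{eq:square-of-under-0} associated to $c$. The hypothesis that $a$ and $b$ are $f$-cocartesian means that each of the two inner squares is cartesian, and the pasting lemma for pullbacks (valid in any $\infty$-category with finite limits, in particular in $\Lt(X)$) then yields that the outer rectangle is cartesian, so $c$ is $f$-cocartesian.

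The main obstacle is the first step: one must make precise, in the model-independent language of Section~\ref{sect:yoneda}, the functoriality of the assignment $v\mapsto X_{v/}$ with respect to higher simplices of $X$. Concretely, one has to verify that base-changing $\Tw(X)\to X^{\op}\times X$ along $(s^{\op},\id_X)$ for $s:[n]\to X$ yields, after Grothendieck straightening, an $n$-simplex in $\Lt(X)$ compatible with faces and degeneracies; the key case $n=2$, applied to $d_1s=c$, gives exactly the identification of the pasted composite with the arrow induced by $c$. Once this compatibility is in place, the remainder of the argument is a formal application of the pasting lemma for cartesian squares, together with the fact, used throughout Sections~\ref{sect:LF}--\ref{sect:yoneda}, that pullbacks in $\Lt(X)$ are reflected by pullbacks in $\Cat_{/X}$.
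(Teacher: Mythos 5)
Your proposal is correct and follows essentially the same route as the paper: reduce to closure under composition (equivalences being handled already), base change the twisted-arrow diagram~(\ref{eq:Tws}) along a $2$-simplex witnessing the composite to get the composable pair $X_{z/}\to X_{y/}\to X_{x/}$ over $B_{f(z)/}\to B_{f(y)/}\to B_{f(x)/}$, and paste the two cartesian squares. The functoriality issue you flag is exactly what the paper handles implicitly by pulling back the whole diagram~(\ref{eq:Tws}), so there is no real divergence.
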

\begin{proof}
We already know that if $\alpha\sim\alpha'$ and $\alpha$ is 
cocartesian, then so is $\alpha'$. Thus, it remains to 
prove that composition of cocartesian arrows is 
cocartesian.  
We are back to the commutative diagram (\ref{eq:Tws}).
 
We now have a map 
$c:[2]\to X$ and its composition with $f$. We make base change
of (\ref{eq:Tws}) with respect to $c$. As a result we have
a commutative diagram
\begin{equation}
\xymatrix{
&{X_{z/}}\ar[r]\ar[d]&{X_{y/}}\ar[r]\ar[d]
&{X_{x/}}\ar[d]\\
&{B_{f(z)/}}\ar[r]&{B_{f(y)/}}\ar[r]&{B_{f(x)/}}
}
\end{equation}
with two cartesian squares. Composition of two cartesian squares is cartesian, so we are done.
\end{proof}

\begin{dfn}A map $f:X\to B$ is called a cocartesian fibration if for any $x\in X$ and any $a:f(x)\to b'$ it admits a cocartesian lifting of $a$. 
\end{dfn}

For fixed $B$ we define $\Coc(B)$ as the subcategory
of $\Cat_{/B}$ spanned by the cocartesian fibrations
$X\to B$, with the arrows being the arrows
$X\to X'$ over $B$ preserving cocartesian liftings.

\begin{lem}\label{lem:left-is-coc}
Let $f:X\to B$ be a left fibration. Then all arrows in $X$
are cocartesian and $f$ is a cocartesian fibration.
\end{lem}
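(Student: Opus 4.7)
The plan is to reduce both assertions to the characterization of left fibrations given in Lemma~\ref{lem:crit-left}, specifically the statement that for a left fibration $f:X\to B$ the map $X_n\to X_0\times_{B_0}B_n$ induced by $\{0\}\hookrightarrow[n]$ is a trivial fibration for every $n$, or equivalently (in the model-independent language of this section), that $X^{[n]}\to X\times_B B^{[n]}$ is an equivalence via the source projection $\{0\}\to[n]$.

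The key preliminary step I would establish is that for any $x\in X$, the induced map $X_{x/}\to B_{f(x)/}$ is an equivalence. Indeed, $X_{x/}$ is by definition the fiber of $X^{[1]}\to X$ (via $\{0\}\to[1]$) at $x$, so applying the base change $\{x\}\to X$ to the equivalence $X^{[1]}\simeq X\times_B B^{[1]}$ yields
$$X_{x/}=\{x\}\times_X X^{[1]}\simeq \{x\}\times_X(X\times_B B^{[1]})\simeq\{f(x)\}\times_B B^{[1]}=B_{f(x)/}.$$
Naturality in $x$ ensures this identification is compatible with the precomposition maps induced by an arrow $a:x\to y$, which is exactly what makes the square (\ref{eq:square-of-under-0}) well defined.

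With this in hand, the first claim is immediate: in the square (\ref{eq:square-of-under-0}), both vertical arrows $X_{y/}\to B_{f(y)/}$ and $X_{x/}\to B_{f(x)/}$ are equivalences by the preceding paragraph, so the square is automatically cartesian, and every arrow $a:x\to y$ in $X$ is $f$-cocartesian. For the existence of cocartesian liftings, given $x\in X$ and $\bar a:f(x)\to b'$ in $B$, the pair $(x,\bar a)$ defines an object of $X\times_B B^{[1]}$; since $X^{[1]}\to X\times_B B^{[1]}$ is an equivalence and in particular essentially surjective, there is an arrow $a:x\to y$ in $X$ with $f(a)\simeq\bar a$. By the first part, $a$ is automatically $f$-cocartesian, hence is the desired lift, and so $f$ is a cocartesian fibration.

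I do not expect any serious obstacle. The argument is essentially an unpacking of definitions on top of Lemma~\ref{lem:crit-left}. The only point requiring a bit of care is verifying that the horizontal arrow $X_{y/}\to X_{x/}$ in (\ref{eq:square-of-under-0}) corresponds, under the equivalences of the key step, to the horizontal arrow $B_{f(y)/}\to B_{f(x)/}$ induced by $f(a)$; but this follows from the naturality of the base change with respect to the Twisted-arrow construction used to define these maps in \S\ref{ss:coc}.
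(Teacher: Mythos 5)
Your proposal is correct and follows essentially the same route as the paper: deduce from the defining equivalence $X^{[1]}\to X\times_B B^{[1]}$ that each $X_{x/}\to B_{f(x)/}$ is an equivalence, conclude that every square (\ref{eq:square-of-under-0}) is cartesian (so every arrow is cocartesian), and get existence of liftings from essential surjectivity. The only cosmetic difference is that you invoke essential surjectivity of $X^{[1]}\to X\times_B B^{[1]}$ directly, while the paper phrases it via essential surjectivity of $X_{x/}\to B_{f(x)/}$; these amount to the same observation.
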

\begin{proof}
By definition, the map $X^{[1]}\to X\times_BB^{[1]}$
is an equivalence. This implies that for any $x\in X$ the map $X_{x/}\to B_{f(x)/}$ induced by $f$, is an equivalence.
This implies that any commutative square defined by an arrow in $X$, is cartesian. Equivalence of $X_{x/}$ with $B_{f(x)/}$ also implies essential surjectivity, which means that any arrow $f(x)\to b$ is equivalent to the image of an arrow $x\to x'$ in $X$.
\end{proof}

The converse is also true, see Corollary~\ref{crl:CF-spaces-LF} below.

\subsection{Properties of cocartesian fibrations}

\begin{lem}$ $
\begin{itemize}
\item[1.]Let
$$
\xymatrix{
&Y \ar[r]^v\ar^g[d] & X\ar^f[d] \\
&C\ar^u[r] &B
}
$$
be a cartesian diagram. If the image of $\alpha:[1]\to Y$
in $X$ is $f$-cocartesian, then $\alpha$ is 
$g$-cocartesian.
\item[2.] In particular, a base change of a cocartesian fibration is a cocartesian fibration.
\end{itemize}
\end{lem}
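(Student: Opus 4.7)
The plan is to use the characterization of cocartesian arrows as those for which the square (\ref{eq:square-of-under-0}) is cartesian, combined with the fact that undercategories commute with cartesian squares in $\Cat$.

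First I would establish the following compatibility: if $Y = C\times_B X$ is a cartesian square in $\Cat$ and $z \in Y$ corresponds to $(c, x_0)$ with an equivalence $u(c) \simeq f(x_0)$ in $B$, then
\[
Y_{z/} \;\simeq\; C_{c/}\times_{B_{u(c)/}} X_{x_0/}.
\]
This follows because $(-)^{[1]} = \Fun([1], -)$ is a right adjoint, hence preserves limits, so $Y^{[1]} \simeq C^{[1]}\times_{B^{[1]}} X^{[1]}$; taking the fiber over $z \simeq (c, x_0)$ and rearranging fiber products gives the stated formula.

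For part 1, let $\alpha : x\to y$ be an arrow in $Y$ and assume $v(\alpha)$ is $f$-cocartesian, so that by definition $X_{v(y)/} \simeq B_{fv(y)/}\times_{B_{fv(x)/}} X_{v(x)/}$. Using the formula above,
\[
Y_{y/} \simeq C_{g(y)/}\times_{B_{ug(y)/}} X_{v(y)/},\qquad
Y_{x/} \simeq C_{g(x)/}\times_{B_{ug(x)/}} X_{v(x)/},
\]
and substituting the cocartesianness identity for $v(\alpha)$ into the first expression yields
\[
Y_{y/}\;\simeq\; C_{g(y)/}\times_{C_{g(x)/}} \bigl(C_{g(x)/}\times_{B_{ug(x)/}} X_{v(x)/}\bigr)\;\simeq\; C_{g(y)/}\times_{C_{g(x)/}} Y_{x/},
\]
which is exactly the assertion that the square (\ref{eq:square-of-under-0}) for $g$ and $\alpha$ is cartesian; hence $\alpha$ is $g$-cocartesian.

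For part 2, given a cocartesian fibration $f:X\to B$, $y \in Y$ presented as $(c, x_0)$, and an arrow $\bar\alpha: g(y) = c \to c'$ in $C$, push forward to $u(\bar\alpha): u(c) \simeq f(x_0) \to u(c')$ in $B$ and choose an $f$-cocartesian lifting $\beta: x_0 \to x_0'$ in $X$ covering it. The pair $(\bar\alpha, \beta)$ defines an arrow $\alpha$ in $Y = C\times_B X$ over $\bar\alpha$ whose image in $X$ is the cocartesian arrow $\beta$, and part 1 shows $\alpha$ is $g$-cocartesian. The main thing to verify carefully is the undercategory formula at the start; once that is in place everything else is formal chasing of cartesian squares.
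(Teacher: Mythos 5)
Your proof is correct and follows essentially the same route as the paper: the paper also deduces $Y_{y/}\simeq C_{g(y)/}\times_{C_{g(x)/}}Y_{x/}$ by base-changing the cartesian square expressing $f$-cocartesianness of $v(\alpha)$ along $C_{g(y)/}\to B_{fv(y)/}$, implicitly using the compatibility $Y_{z/}\simeq C_{c/}\times_{B_{b/}}X_{x_0/}$ that you spell out via limit-preservation of $\Fun([1],-)$. Your explicit treatment of part 2 (choosing an $f$-cocartesian lift upstairs and pairing it with the given arrow of $C$) is exactly the content behind the paper's remark that the second claim follows immediately from the first.
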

\begin{proof}The second claim immediately follows from the first one. Let $\alpha:y\to y'$ have images $g(\alpha):c\to c'$, $v(\alpha):x\to x'$ and $fv(\alpha):b\to b'$.

Since $v(\alpha)$ is $f$-cocartesian, it gives an equivalence
 \begin{equation}\label{eq:XandB}
X_{x'/}\to B_{b'/}\times_{B_{b/}}X_{x/}.
\end{equation}
To deduce from this the equivalence
$$
Y_{y'/}\to C_{c'/}\times_{C_{c/}}Y_{y/},
$$
it is enough to make base change of (\ref{eq:XandB}) with respect to 
$C_{c'/}\to B_{b'/}$.
\end{proof}

\begin{lem}\label{lem:comp}
A composition of cocartesian fibrations is a cocartesian 
fibration.
\end{lem}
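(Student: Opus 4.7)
The plan is to establish the two requirements of the definition in \S\ref{ss:coc}: that every arrow downstairs admits a cocartesian lifting, and (implicitly, once the lifting is constructed) that the lifting we produce really is $fg$-cocartesian in the sense of the cartesian square (\ref{eq:square-of-under-0}). So let $f: X \to B$ and $g: Y \to X$ be cocartesian fibrations, fix $y \in Y$, and fix an arrow $\bar{a}: fg(y) \to b$ in $B$. The strategy is to lift twice: first use the cocartesian property of $f$ to lift $\bar{a}$ to an $f$-cocartesian arrow $\alpha: g(y) \to x$ in $X$, and then use the cocartesian property of $g$ to lift $\alpha$ to a $g$-cocartesian arrow $\beta: y \to y'$ in $Y$. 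By construction $fg(\beta) = \bar a$, so the only remaining content is to check that $\beta$ is $fg$-cocartesian.

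For this I would verify that the square
\begin{equation*}
\xymatrix{
{Y_{y'/}}\ar[r]\ar[d]&{Y_{y/}}\ar[d]\\
{B_{fg(y')/}}\ar[r]&{B_{fg(y)/}}
}
\end{equation*}
is cartesian by pasting the two squares that we already know to be cartesian. Explicitly, $\beta$ being $g$-cocartesian gives an equivalence
$$ Y_{y'/} \xrightarrow{\sim} X_{g(y')/} \times_{X_{g(y)/}} Y_{y/}, $$
and $\alpha = g(\beta)$ being $f$-cocartesian gives an equivalence
$$ X_{g(y')/} \xrightarrow{\sim} B_{fg(y')/} \times_{B_{fg(y)/}} X_{g(y)/}. $$
Substituting the second into the first and using that pullback is associative, the composite equivalence
$$ Y_{y'/} \xrightarrow{\sim} B_{fg(y')/} \times_{B_{fg(y)/}} X_{g(y)/} \times_{X_{g(y)/}} Y_{y/} \xrightarrow{\sim} B_{fg(y')/} \times_{B_{fg(y)/}} Y_{y/} $$
is exactly the assertion that the square above is cartesian, which is the definition of $\beta$ being $fg$-cocartesian.

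The only real subtlety is the horizontal pasting-of-cartesian-squares step: one needs that the maps involved are genuinely the structure maps appearing in the definition of $fg$-cocartesian, i.e.\ that the map $X_{g(y')/} \to X_{g(y)/}$ induced by $\alpha$ agrees (after base change along $fg(y) \to fg(y')$) with what appears in the square for $f$, and similarly for the map $Y_{y'/} \to Y_{y/}$. This is a naturality statement for the twisted-arrow construction $X \mapsto \mathrm{Tw}(X)$ used to produce the square (\ref{eq:square-of-under}), and is the genuine (though routine) obstacle; everything else is formal pullback pasting. Once this naturality is in hand, the argument above shows both that cocartesian liftings exist and that the collection of $fg$-cocartesian arrows contains enough composites, so $fg \colon Y \to B$ is a cocartesian fibration.
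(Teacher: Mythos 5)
Your proposal is correct and follows exactly the route the paper sketches: construct the lifting in two steps (a cocartesian lift of the base arrow followed by a cocartesian lift of that lift), then check the result is cocartesian for the composite by pasting the two cartesian squares of the form (\ref{eq:square-of-under-0}). The paper leaves these details as an exercise, and your pullback-pasting argument (together with the naturality of the $\Tw$ construction you flag) is precisely the intended way to supply them.
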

\begin{proof}
Given a pair of maps $X\stackrel{f}{\to}Y\stackrel{g}{\to}B$
such that $f$ and $g$ are cocartesian fibrations, we will verify that $g\circ f$-cocartesian lifting of $a:b\to b'$
can be found in two steps, as $f$-cocartesian lifting of a $g$-cocartesian lifting. It is an exercise to provide the necessary details.
\end{proof}

\begin{lem}\label{lem:fun}
Let $K\in\Cat$. Let $f:X\to B$ be a cocartesian fibration.
Then $f^K:X^K\to B^K$ is also a cocartesian fibration.
\end{lem}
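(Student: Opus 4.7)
My plan is to verify the two defining properties of a cocartesian fibration for $f^K$ separately: first, that every arrow in $B^K$ over a given object of $X^K$ admits a lifting whose components over $K$ are all $f$-cocartesian; second, that any such pointwise-cocartesian natural transformation is in fact $f^K$-cocartesian. Throughout I will exploit the fact that $\Fun(K,-)$ commutes with limits, which is what lets pointwise data over $K$ assemble and disassemble cleanly.

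For the existence of liftings, let $X^{[1],\coc}\subset X^{[1]}$ denote the full subcategory spanned by the $f$-cocartesian arrows. The cocartesian-fibration hypothesis on $f$ together with the uniqueness-up-to-contractible-choice of cocartesian liftings (the latter deducible from the representable/initial description used to define cocartesian arrows, cf.~the remark following the definition in \ref{ss:coc}) implies that the functor
$$(\ev_0,f^{[1]})\colon X^{[1],\coc}\longrightarrow X\times_B B^{[1]}$$
is an equivalence: essential surjectivity is existence of liftings, and fully-faithfulness is their essential uniqueness. Given a pair $(g,\bar\alpha)$, i.e.\ a functor $K\to X\times_B B^{[1]}$, I compose with a quasi-inverse and the inclusion $X^{[1],\coc}\hookrightarrow X^{[1]}$ to obtain a functor $K\to X^{[1]}$, that is an arrow $\alpha\colon g\to g'$ in $X^K$ lifting $\bar\alpha$ whose restriction to every $\{k\}\subset K$ is $f$-cocartesian.

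For the cocartesian property, I must show that when every component $\alpha_k$ is $f$-cocartesian, the square
$$\xymatrix{
(X^K)_{g'/}\ar[r]\ar[d] & (X^K)_{g/}\ar[d]\\
(B^K)_{fg'/}\ar[r] & (B^K)_{fg/}
}$$
is cartesian. By the Yoneda lemma it suffices to check that applying $\Map(Z,-)$ for an arbitrary test category $Z$ yields a cartesian square of spaces. Unfolding the overcategory as a fiber of $\ev_0$ on $(X^K)^{[1]}=X^{K\times[1]}$ and applying the exponential adjunction $\Map(Z,X^K)=\Map(Z\times K,X)$, the desired cartesianness reduces via the pointwise nature of limits in $\Fun(K,-)$ to the cartesian squares that witness the $f$-cocartesianness of each $\alpha_k$, and which hold by hypothesis.

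The main obstacle I anticipate is the bookkeeping in the second step: the overcategories mix $\ev_0$ and $\ev_1$ of the $[1]$-direction with the $K$-direction, and writing the interaction with $\Fun(K,-)$ cleanly requires some care. Once the Yoneda reduction and exponential adjunction are set up, however, the statement is essentially that a $K$-indexed cone of squares is cartesian iff each square is cartesian, which is immediate from limits being computed pointwise.
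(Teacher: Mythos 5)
Your first step is a genuinely different route from the paper's: instead of reducing to $B=[1]$ and extending cell by cell over $[n]\times[1]$ (with the general $K$ bootstrapped through Proposition~\ref{prp:cocsec}), you produce pointwise-cocartesian lifts from the asserted equivalence $(\ev_0,f^{[1]})\colon X^{[1],\coc}\to X\times_BB^{[1]}$. That equivalence is true and this is an attractive shortcut, but your justification of full faithfulness is not right as stated: ``essential uniqueness of liftings'' only controls the fibres of this functor over its essential image, whereas full faithfulness concerns $\Map_{X^{[1]}}(\alpha,\beta)$ for an arbitrary pair of cocartesian arrows $\alpha\colon x_0\to x_1$, $\beta\colon y_0\to y_1$. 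It does hold, but one has to compute mapping spaces in the arrow category, $\Map_{X^{[1]}}(\alpha,\beta)\simeq\Map_X(x_0,y_0)\times_{\Map_X(x_0,y_1)}\Map_X(x_1,y_1)$, and then feed in the defining cartesian square of $\alpha$; this is fixable with tools close to Corollary~\ref{crl:sec-n}, and the usual bookkeeping that the quasi-inverse only recovers the source $g$ up to equivalence is also manageable.

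The genuine gap is in your second step. The square~(\ref{eq:square-of-under-0}) for $f^K$ is \emph{not} a $K$-indexed limit of the pointwise squares, so ``limits in $\Fun(K,-)$ are computed pointwise'' does not apply. After applying $\Map(Z,-)$ and the exponential law you are comparing $\Map_{X^K}(g',z)$ with $\Map_{X^K}(g,z)\times_{\Map_{B^K}(fg,fz)}\Map_{B^K}(fg',fz)$ for test objects $z$; spaces of natural transformations are not products over the objects of $K$ of the pointwise mapping spaces, but limits over the twisted arrow category $\Tw(K)$ (an end), mixing $g(k)$ with $z(k')$ along arrows $k\to k'$. With such an end formula in hand the pointwise cocartesianness squares do assemble (limits commute with fibre products), but that formula is a substantive ingredient, not established in these notes and not supplied by the adjunction $\Map(Z,X^K)=\Map(Z\times K,X)$ alone. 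This ``pointwise cocartesian implies $f^K$-cocartesian'' implication is exactly the delicate content of the lemma --- it is what the paper's explicit analysis over $[n]\times[1]$ together with Proposition~\ref{prp:cocsec} and Corollary~\ref{crl:sec-n} is doing work to secure --- so as written your argument for the second half does not go through; you need either to prove the end/twisted-arrow formula for mapping spaces in functor categories, or to fall back on a reduction of the kind the paper carries out.
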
 
\begin{proof}
An arrow in $X^K$ is given by a functor $K\times[1]\to X$.
First of all, we will describe $f^K$-cocartesian arrows in 
$X^K$. These are functors $A:K\times[1]\to X$ such that 
for any $k\in K$ the functor $A_k:[1]\to X$ is 
$f$-cocartesian. We have to verify this, as well to prove that any arrow in $B^K$ admits a cocartesian lifting.
Let us explain this second property. Given a commutative
diagram
\begin{equation}
\xymatrix{
&K \ar[d]\ar[r] &X\ar[d] \\
&{K\times[1]}\ar^a[r] &B
}
\end{equation}
one has to find a lifting $A:K\times[1]\to X$ such that
for any $k\in K$ $A_k:[1]\to X$ is $f$-cocartesian. 
First of all, we make a base change of $f$ with respect to $a$, so that from now on $B=K\times[1]$. The composition
$X\to K\times[1]\to[1]$ is a cocartesian fibration, 
so we reduce the problem to the following special case.

{\bf Claim.}
{\sl Given a cocartesian fibration $f:X\to[1]$,
any map $a:K\to X_0=f^{-1}(0)$ extends to a map 
$A:K\times[1]\to X$ of cocartesian fibrations over $[1]$.
}

We will now verify the claim for the special case $K=[n]$.
The general case will be deduced in 
Proposition~\ref{prp:cocsec}.

So, we have a cocartresian fibration $p:X\to[1]$. Given a map $f:[n]\to X_0$, we have to prove the existence of
$F:[1]\times[n]\to X$ satisfying the conditions
\begin{itemize}
\item $F_{0,*}:[n]\to X$ is equivalent to a composition
$[n]\stackrel{f}{\to}X_0\to X$.
\item For any $k\in[n]$ the arrows $F_{*,k}:[1]\to X$
is cocartesian. 
\end{itemize}
The product $[1]\times[n]$ is a colimit
$$ [1]\times[n]=s_0\sqcup^{d_1}s_1\sqcup^{d_2}\ldots
\sqcup^{d_n}s_n,$$
with $s_i$ isomorphic to $[n+1]$ and $d_i$ isomorphic 
to $[n]$, see below a presentation of the category
$[1]\times[n]$
\begin{equation}
\xymatrix{
&(1,0)\ar[r]&(1,1)\ar[r]&(1,2) &\cdots 
&(1,n-1)\ar[r]&(1,n)\\
&(0,0)\ar[r]\ar[u]\ar[ur]&(0,1)\ar[r]\ar[u]\ar[ur]&(0,2) 
&\cdots &(0,n-1)\ar[r]\ar[u]\ar[ur]&(0,n),\ar[u] 
}
\end{equation}
with $s_k$ corresponding to the sequence
$$ (0,0)\to\ldots\to(0,k)\to(1,k)\to\ldots\to(1,n),$$
and $d_k$ being the $n$-simplex containing the diagonal 
arrow $(0,k-1)\to(1,k)$.
We will construct a map $F:[1]\times[n]\to X$ gluing
$(n+1)$-simplices one by one, starting with $s_n$.
To construct a map $s_n\to X$ we need to choose a cocartesian lifting for $(0,n)\to(1,n)$ and use that 
$[n+1]=[n]\sqcup^{[0]}[1]$. Assuming the map $F$ is constructed on 
$$s_{k+1}\sqcup^{d_{k+2}}\ldots
\sqcup^{d_n}s_n,$$
we  have to verify we can find a map $s_k\to X$
compatible with the given map on $d_{k+1}$ and such that
it carries $(0,k)\to(1,k)$ to a cocartesian arrow of $X$.
To do so we have to just choose a cocartesian
image for $(0,k)\to(1,k)$ and then to decompose the given
image of $(0,k)\to(1,k+1)$ into $(0,k)\to(1,k)\to(1,k+1)$.
\end{proof}
\begin{crl}
\label{crl:fun-coc}
In the notation of Lemma~\ref{lem:fun}, 
the map $X^K_B\to B$, where 
$X_B^K=B\times_{B^K}X^K$, the base change of $X^K$ with respect to the diagonal embedding, 
is a cocartesian fibration.
\end{crl}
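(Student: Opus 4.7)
The plan is to observe that this corollary is an immediate combination of two results already in hand, together with the identification of the relevant pullback. By Lemma~\ref{lem:fun}, the map $f^K \colon X^K \to B^K$ is a cocartesian fibration. The diagonal $\delta \colon B \to B^K$ is the functor induced by $K \to \ast$, so by construction $X^K_B = B \times_{B^K} X^K$ is precisely the base change of $f^K$ along $\delta$. The previous lemma in the section (the one whose second clause asserts that a base change of a cocartesian fibration is a cocartesian fibration) then delivers the conclusion directly: the projection $X^K_B \to B$ is a cocartesian fibration.

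If one wants a more hands-on description of the $f^K_B$-cocartesian arrows, it can be read off the same two inputs. Part~(1) of the base-change lemma says that an arrow $\alpha \colon [1] \to X^K_B$ is $f^K_B$-cocartesian as soon as its image in $X^K$ is $f^K$-cocartesian. From the analysis carried out inside the proof of Lemma~\ref{lem:fun}, an arrow $A \colon K \times [1] \to X$ in $X^K$ is $f^K$-cocartesian precisely when, for every $k \in K$, the restricted arrow $A_k \colon [1] \to X$ is $f$-cocartesian in $X$. Thus the $f^K_B$-cocartesian arrows in $X^K_B$ are exactly the natural transformations $g \to g'$ between two $K$-indexed families $g, g' \colon K \to X$ lying over a single arrow $b \to b'$ in $B$, whose components $g(k) \to g'(k)$ are each $f$-cocartesian in $X$. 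No further verification is required, since the existence of cocartesian liftings for arbitrary $b \to b'$ in $B$ is inherited from the base-change formalism.

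There is essentially no obstacle here; the only conceptual point worth flagging is the identification of $X^K_B$ as a pullback along the diagonal (as opposed to some other base change of $f^K$), which is immediate from the definitions. Consequently the corollary is a formal consequence of Lemma~\ref{lem:fun} and the stability of cocartesian fibrations under base change, and no new combinatorial input is needed.
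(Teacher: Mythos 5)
Your proof is correct and is exactly the argument the paper intends: the corollary is left as an immediate consequence (the paper gives only a \qed), namely Lemma~\ref{lem:fun} plus stability of cocartesian fibrations under base change, applied to the pullback along the diagonal $B\to B^K$ induced by $K\to *$. The extra description of the cocartesian arrows is fine but not needed for the statement.
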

\qed

Given a map $f:X\to Y$ of cocartesian fibrations over $B$,
we can verify whether it is an equivalence looking at the fibers. One has

\begin{prp}\label{prp:coc-fiberwise}
A map $f:X\to Y$ of cocartesian fibrations over
$B$ is an equivalence iff for any $b\in B$ the respective map of fibers $f_b:X_b\to Y_b$ is an equivalence in $\Cat$.
\end{prp}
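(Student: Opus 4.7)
The ``only if'' direction will follow immediately because base change along $\{b\}\to B$ preserves equivalences in $\Cat$. So I would focus on the converse. The strategy is the standard one: I will verify that $f$ is essentially surjective and fully faithful. Essential surjectivity is easy: given $y\in Y$ with image $b\in B$ under the structural map $q:Y\to B$, the object $y$ lies in the fiber $Y_b$, and the hypothesis that $f_b:X_b\to Y_b$ is essentially surjective produces an $x\in X_b$ with $f(x)\simeq y$ in $Y_b$, hence in $Y$.

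For fully faithfulness, I fix $x,x'\in X$ with $p(x)=b$, $p(x')=b'$. Both $\Map_X(x,x')$ and $\Map_Y(f(x),f(x'))$ project to $\Map_B(b,b')$, and the map induced by $f$ is a map of spaces over $\Map_B(b,b')$. To show it is an equivalence, I would check it on each homotopy fiber $\Map^{\bar\alpha}_X(x,x')$ over $\bar\alpha:b\to b'$, since a fiberwise equivalence of spaces over a common base is an equivalence. The key input is the following reformulation of the defining cartesian square~(\ref{eq:square-of-under-0}): if $\alpha:x\to\alpha_!(x)$ is a cocartesian lift of $\bar\alpha$, taking fibers of that square first over $x'\in X$ and then over $\id_{b'}\in\Map_B(b',b')$ yields an equivalence
\begin{equation}
\Map_{X_{b'}}(\alpha_!(x),x')\xrightarrow{\ \sim\ }\Map^{\bar\alpha}_X(x,x'),
\end{equation}
and likewise in $Y$.

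Since $f$ is a morphism in $\Coc(B)$, it preserves cocartesian lifts, so $f(\alpha)$ is a cocartesian lift of $\bar\alpha$ at $f(x)$ and thus $\bar\alpha_!(f(x))\simeq f(\alpha_!(x))$. Under the above fiber identifications the map induced by $f$ on the fibers over $\bar\alpha$ is identified with
\begin{equation}
\Map_{X_{b'}}(\alpha_!(x),x')\longrightarrow \Map_{Y_{b'}}(f(\alpha_!(x)),f(x')),
\end{equation}
which is an equivalence because $f_{b'}$ is. Running this over all $\bar\alpha$ shows that $\Map_X(x,x')\to\Map_Y(f(x),f(x'))$ is fiberwise an equivalence over $\Map_B(b,b')$, hence an equivalence.

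The main obstacle will be pinning down cleanly the fiber identification $\Map_{X_{b'}}(\alpha_!(x),x')\simeq\Map^{\bar\alpha}_X(x,x')$ extracted from the cartesian square~(\ref{eq:square-of-under-0}); equivalently, one needs the mapping-space reformulation of cocartesianness, namely that $\Map_X(\alpha_!(x),z)\simeq\Map_X(x,z)\times_{\Map_B(b,p(z))}\Map_B(b',p(z))$ for all $z\in X$. Once this and the preservation of cocartesian lifts by morphisms in $\Coc(B)$ are in hand, essential surjectivity and fiberwise fully faithfulness assemble into the result without further subtleties.
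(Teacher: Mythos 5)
Your proposal is correct, but it follows a genuinely different route from the paper. The paper's own proof stays inside the identification of $\Cat$ with complete Segal objects in $\cS$: it reduces the claim to showing that $(X^{[n]})^\eq\to(Y^{[n]})^\eq$ is an equivalence of spaces over $(B^{[n]})^\eq$ for every $n$, checks this fiberwise, and identifies the fiber over $u:[n]\to B$ with the section category $\Fun_{[n]}([n],X_u)$, which is then computed by the formula (\ref{eq:cocsec2}) of Corollary~\ref{crl:sec-n} purely in terms of the fibers $X_i$ and their arrow categories (a forward reference, since that corollary rests on Proposition~\ref{prp:cocsec} and the special case of Lemma~\ref{lem:fun}). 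You instead verify directly that $f$ is a DK equivalence: essential surjectivity is immediate from essential surjectivity of $f_b$, and full faithfulness is checked fiberwise over $\Map_B(b,b')$, using the mapping-space form of the cocartesian condition --- obtained by taking fibers of the cartesian square (\ref{eq:square-of-under-0}) first at $x'$ and then at $\id_{b'}$, together with the fact that mapping spaces in the fiber $X_{b'}=X\times_B\{b'\}$ are the corresponding fiber products of mapping spaces --- and the fact that morphisms in $\Coc(B)$ preserve cocartesian lifts, so that $\bar\alpha_!(f(x))\simeq f(\alpha_!(x))$. What each approach buys: yours is more elementary and logically self-contained (no appeal to the section-category formula or to functoriality of $(-)^{[n]}$ on cocartesian fibrations), and it makes the role of cocartesian lifts completely transparent; the paper's argument, while relying on machinery developed only afterwards, reuses exactly the computation of $\Fun_{[n]}([n],X_u)$ that it needs anyway for the Grothendieck construction, and it operates directly at the level of the simplicial spaces underlying $X$ and $Y$, which fits the ambient CSS formalism. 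The only points you should still nail down are the ones you flag yourself: the identification $\Map_{X_{b'}}(\alpha_!(x),x')\simeq\Map_X^{\bar\alpha}(x,x')$, its compatibility with $f$ (which holds because $f(\alpha)$ is again a cocartesian lift and composition is natural), and the standard fact that a map of spaces over a base inducing equivalences on all homotopy fibers is an equivalence; all are routine in the sense of the rest of the section.
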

\begin{proof} We will have in mind that $\Cat$ is a full
subcategory of $\Fun(\Delta^\op,\cS)$. To prove $f$ is an 
equivalence, we have to verify that for any $n$ the map
$$ f_n: (X^{[n]})^\eq\to(Y^{[n]})^\eq$$
is an equivalence.
One has a map of cocartesian fibrations $X^{[n]}\to Y^{[n]}$
over $B^{[n]}$ and, therefore, a map of spaces
$$f_n:(X^{[n]})^\eq\to(Y^{[n]})^\eq$$
over $(B^{[n]})^\eq$. To prove it is an equivalence, it is 
sufficient to verify that all its fibers are equivalences.
Some of the fibers of $f_n$ are known to be equivalences --- these are the fibers at functors 
$[n]\to[0]\stackrel{b}{\to}B$, as they coincide with the 
$n$-th component of $f_b$. In general, the fiber
of $f_n:X^{[n]}\to B^{[n]}$ at $u:[n]\to B$ is the category 
of the sections $\Fun_{[n]}([n],X_u)$ of $X_u\to[n]$ obtained from $f$ by base change along $u$. In 
Corollary~\ref{crl:sec-n} below we give
a formula (\ref{eq:cocsec2}) calculating this category.
In particular, the formula implies that, if the fibers of $f$ at all $b\in B$ are equivalences, the fibers of $f_n$ are also equivalences.
\end{proof}

\subsubsection{Cocartesian sections}
\label{sss:coc-sec}

If $K$ is a category, the projection $K\times B\to B$ 
is a cocartesian fibration. This yields a functor
$\Cat\to \Coc(B)$. This functor has a right adjoint
which we will now describe.

Given a cocartesian fibration $f:X\to B$, we define
the category of its cocartesian sections, 
$\Fun^\coc_B(B,X)$, as follows. The category of all 
sections is defined as the fiber,
$$\Fun_B(B,X)=\{\id\}\times_{\Fun(B,B)}\Fun(B,X).$$
Cocartesian sections form a full subcategory in the above,
spanned by the functors $s:B\to X$ carrying all arrows of 
$B$ to cocartesian arrows. 

One has a canonical equivalence of spaces
\begin{equation}
\Map(K,\Fun_B^\coc(B,X))\stackrel{\sim}{\to}\Map_{\Coc(B)}
(K\times B,X).
\end{equation}

\subsection{Locally cocartesian fibrations}
 
Let $f:X\to B$ be a functor in $\Cat$.
Fix an object $x\in X$ and an arrow $a:f(x)=b\to b'$ in $B$.
Put $X_0=f^{-1}(b)$ and $X_1=f^{-1}(b')$.

 We will define a locally cocartesian lifting of $a$ as an object of $X_1$
(co)representing the functor $\psi_{x,a}:X_1\to\cS$ defined by the formula
\begin{equation}
\psi_{x,a}(y)=\Map_X(x,y)\times_{\Map_B(b,b')}\{a\}
\end{equation}
(compare to the definition of cocartesian lifting in~\ref{ss:coc}).

\begin{dfn}A functor $f:X\to B$ is called a locally 
cocartesian fibration if any pair $(x,a)$ as above admits a
locally cocartesian lifting.
\end{dfn} 

It is clear that any cocartesian lifting is a locally 
cocartesian lifting, so, in particular, any cocartesian 
fibration is a locally cocartersian fibration. The converse 
is wrong already for conventional categories, 
see~\ref{sss:lcf-not-cf}. However, one has the following.

\begin{prp}Let $f:X\to B$ be a locally cocartesian fibration. The following is equivalent.
\begin{itemize}
\item $f$ is a cocartesian fibration.
\item Locally $f$-cocartesian arrows in $X$ are closed under composition.
\end{itemize}
\end{prp}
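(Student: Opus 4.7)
The approach is to prove the two implications separately, and the key unifying principle is that in a cocartesian fibration, locally cocartesian and cocartesian arrows coincide, so that closure under composition, once established for cocartesian arrows, transfers automatically.

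For $(1)\Rightarrow(2)$, given a locally cocartesian $a:x\to y$, I would take a cocartesian lift $a':x\to y'$ of $f(a)$ (provided by hypothesis (1)). Both $a$ and $a'$ are locally cocartesian liftings of $f(a)$ at $x$, so by essential uniqueness of locally cocartesian liftings they are related by an equivalence $e:y\to y'$ in the fiber $X_{f(y)}$ with $e\circ a\simeq a'$. Equivalences in a fiber are cocartesian, and cocartesian arrows form a subcategory (by the closure lemma proved just before the definition of cocartesian fibration), so $a\simeq e^{-1}\circ a'$ is cocartesian. Hence locally cocartesian and cocartesian arrows coincide in this situation, and (2) follows from the analogous closure for cocartesian arrows.

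For $(2)\Rightarrow(1)$, it suffices to show that every locally cocartesian $a:x\to y$ is cocartesian, since locally cocartesian liftings of arrows in $B$ already exist by the standing hypothesis. One must verify that the natural map
$$\Phi_a:X_{y/}\to X_{x/}\times_{B_{f(x)/}}B_{f(y)/}$$
is an equivalence. Both sides are left fibrations over $X$ (this is built into the construction via the twisted arrow diagram~\eqref{eq:Tws}), so by the fiberwise criterion for equivalences of left fibrations from Section~\ref{sect:LF} it is enough to check the induced map on the fiber over each $z\in X$, and then fiber-by-fiber over each $c\in\Map_B(f(y),f(z))$. Given such a $c$, pick a locally cocartesian lift $a'':y\to w$ of $c$; by (2), $a''\circ a$ is locally cocartesian, hence the (essentially unique) locally cocartesian lift of $c\circ f(a)$ at $x$. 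The corepresentability formulation of locally cocartesian arrow (via $\psi_{y,c}$ and $\psi_{x,c\circ f(a)}$) then identifies both fibers with $\Map_{X_{f(z)}}(w,z)$ compatibly with precomposition by $a$, yielding the required fiberwise equivalence.

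The main subtlety is that the argument must be carried out at the level of mapping spaces rather than $\pi_0$-level hom-sets, so it relies on the representability form of the locally cocartesian property (coherent as a statement about spaces, not just as a 1-categorical uniqueness of morphisms). Fortunately this is exactly the form in which local cocartesianness has been defined here, so the classical Grothendieck argument lifts smoothly into the $\infty$-categorical setting.
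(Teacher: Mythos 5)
Your proposal is correct and takes essentially the same approach as the paper: for the implication from closure under composition to cocartesianness, the paper likewise checks the mapping-space condition fiberwise over $\Map_B(f(y),f(z))$, chooses a locally cocartesian lift of the second arrow, and uses the hypothesis that its composite with $a$ is again locally cocartesian so that corepresentability identifies both fibers. Your argument for the forward implication merely spells out the identification of locally cocartesian with cocartesian arrows (via uniqueness of locally cocartesian lifts and closure of cocartesian arrows under composition) that the paper's one-line proof leaves implicit.
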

\begin{proof}If $f$ is a cocartesian fibration, cocartesian arrows are locally cocartesian arrows and they are closed under the composition. We will now prove the converse.
Thus, $f$ is a locally cocartesian fibration such that
composition of locally cocartesian arrows is locally cocartesian. We will prove that any locally cocartesian arrow is in fact cocartesian. Let $\alpha:x\to x'$
be locally cocartesian and let $z\in X$ over $b''\in B$. We have to verify that the map of spaces
$$ \Map_X(x',z)\to\Map_B(b',b'')\times_{\Map_B(b,b'')}
\Map_X(x,z)$$
is an equivalence. This is enough to verify fiberwise, over
any $a'\in\Map_B(b',b'')$. Fix such $a'$ and let $\alpha':
x'\to x''$ be a locally cocartesian lifting of $a'$. 
The arrow $\alpha'$ yields an equivalence
$$ \Map_{f^{-1}(b'')}(x'',z)\to\Map_X(x',z)\times_{\Map_B(b',b'')}\{a'\}.$$

On the other side, thhe composition $\alpha'\circ\alpha$,
which is also locally cocartesian lifting by the assumption, 
yields an equivalence
$$ \Map_{f^{-1}(b'')}(x'',z)\to\Map_X(x,z)\times_{\Map_B(b,b'')}\{a'\circ a\}.$$
This proves the assertion.
\end{proof}
\begin{Rem}
Originally Grothendieck defined cocartesian fibration as
a locally cocartesian fibration for which composition of
locally cocartesian arrows (he called them cocartesian)
compose.
\end{Rem}

\begin{crl}Let $f:X\to B$ be a functor in $\Cat$.
\begin{itemize}  
\item[1.] $f$ is a locally cocartesian fibration iff for any
$a:[1]\to B$ the base change $X\times_B[1]\to [1]$ is a locally cocartesian fibration.
\item[2.] $f$ is a cocartesian fibration iff for any
$a:[2]\to B$ the base change $X\times_B[2]\to [2]$ is a locally cocartesian fibration.
\item[3.] For $B=[1]$ the notions of cocartesian and locally cocartesian fibration coincide.
\end{itemize}
\end{crl}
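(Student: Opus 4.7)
My plan is to deduce all three parts from the universal property defining locally cocartesian lifts together with the earlier proposition that a locally cocartesian fibration is cocartesian iff locally cocartesian arrows are closed under composition. The only substantive technical input is that the corepresentability condition defining locally cocartesian lifts depends only on the edge $a:[1]\to B$, making it invariant under base change; everything else is formal.

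For part (1), writing $\tilde X=X\times_B[1]$ for the base change along $a:[1]\to B$ and $\tilde x$ for a chosen preimage of $x$, a straightforward fiber-product calculation gives
\begin{equation*}
\Map_{\tilde X}(\tilde x,\tilde y)\simeq\Map_X(x,y)\times_{\Map_B(a(0),a(1))}\{a\}
\end{equation*}
for $\tilde y\in\tilde X_1=X_{a(1)}$; this identifies $\psi^{\tilde X}_{\tilde x,\,0\to 1}$ with $\psi^X_{x,a}$. A corepresentative of one functor is therefore a corepresentative of the other, which gives both directions of (1); moreover, locally cocartesian arrows in $X$ lying over $a$ correspond bijectively with locally cocartesian arrows in $\tilde X$ lying over $0\to 1$, a correspondence I will reuse.

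For part (3), by the cited proposition it suffices to show that a composition of locally cocartesian arrows in a locally cocartesian fibration $f:X\to[1]$ is again locally cocartesian. Since $[1]$ has no composable pair of non-identity arrows, in any composable pair $\alpha,\beta$ in $X$ at least one of the two arrows lies over an identity $\id_b$ of $[1]$. The functor corepresented by a locally cocartesian lift of $\id_b$ starting at $y\in X_b$ reduces to $\Map_{X_b}(y,-)$, so any such lift is an equivalence in the fiber $X_b$. Precomposition or postcomposition by an equivalence preserves a corepresentability statement, so the composition $\beta\circ\alpha$ is still locally cocartesian, and $f$ is cocartesian.

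I read part (2) as asserting ``$f$ is cocartesian iff $X\times_B[2]\to[2]$ is a cocartesian fibration for every $[2]\to B$'': the example in \ref{sss:lcf-not-cf} is a locally cocartesian, non-cocartesian fibration on $[2]$, so a literal reading with ``locally cocartesian'' on the right would fail. The forward direction is immediate from the fact that base change of a cocartesian fibration is cocartesian. Conversely, assume every $[2]$-base change is cocartesian; restricting further along the injections $[1]\hookrightarrow[2]$, every $[1]$-base change is cocartesian, hence locally cocartesian, so part (1) yields that $f$ is locally cocartesian. For closure under composition, let $\alpha,\alpha'$ be composable locally cocartesian arrows in $X$ and let $\sigma:[2]\to B$ be the 2-simplex they span. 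By the correspondence from (1), $\alpha,\alpha'$ lift to locally cocartesian arrows $\tilde\alpha,\tilde\alpha'$ in the cocartesian fibration $X\times_B[2]\to[2]$; in any cocartesian fibration a locally cocartesian arrow is automatically cocartesian (by uniqueness of locally cocartesian lifts applied against a cocartesian lift guaranteed by the fibration), so $\tilde\alpha,\tilde\alpha'$ are cocartesian, and hence so is their composite by the earlier lemma that $f$-cocartesian arrows form a subcategory. Transporting back through (1), $\alpha'\circ\alpha$ is locally cocartesian, and the cited proposition concludes that $f$ is cocartesian.
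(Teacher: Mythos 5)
Your argument is correct, and since the paper states this corollary with no written proof, what you supply is precisely the intended deduction from the surrounding material: the fiber-product computation of mapping spaces in $X\times_B[1]$ identifying $\psi^{\tilde X}_{\tilde x,0\to 1}$ with $\psi^X_{x,a}$ (which, as you note, also shows that locally cocartesian fibrations are stable under base change), the observation that a locally cocartesian lift of an identity is an equivalence in the fiber, and the preceding proposition reducing cocartesianness of a locally cocartesian fibration to closure of locally cocartesian arrows under composition. Your emendation of part (2) is not only legitimate but necessary: taking $B=[2]$ and $a=\id$ in the paper's own example \ref{sss:lcf-not-cf} gives a locally cocartesian, non-cocartesian $f$ all of whose base changes along maps $[2]\to[2]$ are again locally cocartesian (by the base-change stability just mentioned), so the statement with ``locally cocartesian fibration'' on the right-hand side is false as written; with ``cocartesian fibration'' there, your proof of both directions is complete, the key steps being that every arrow $a:[1]\to B$ factors through some $[2]\to B$, that in a cocartesian fibration a locally cocartesian arrow is cocartesian by uniqueness of locally cocartesian lifts, and that cocartesian arrows form a subcategory. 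The only cosmetic caveat is that the ``bijective'' correspondences you invoke are really equivalences of spaces of arrows respecting the locally cocartesian condition, but this matches the level of rigor of the surrounding text.
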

\qed

Left fibrations in $\Cat$ can be similarly characterized.
The following easily follows from the characterization of 
left fibrations in Section~\ref{sect:LF}.
\begin{lem}\label{lem:LF-over1}
Let $f:X\to B$ be a map in $\Cat$. It is a left fibration
if and only if for any $a:[1]\to B$ the base change 
$X_a=[1]\times_BX\to[1]$ is a left fibration.
\end{lem}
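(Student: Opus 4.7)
My plan is as follows. The forward implication is immediate from stability of left fibrations in $\Cat$ under base change. The functor $(-)^{[1]}$ preserves pullbacks, so given the defining equivalence $\psi: X^{[1]} \stackrel{\sim}{\to} X \times_B B^{[1]}$ and any $a: [1] \to B$, the equivalence for $X_a \to [1]$ is obtained by pulling $\psi$ back along $a^{[1]}: [1]^{[1]} \to B^{[1]}$: indeed $(X_a)^{[1]} = [1]^{[1]} \times_{B^{[1]}} X^{[1]}$ while $X_a \times_{[1]} [1]^{[1]} = [1]^{[1]} \times_{B^{[1]}} (X \times_B B^{[1]})$.

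For the converse I will identify $\Cat$ with $\CS(\cS)$ via Lemma~\ref{lem:CSS=CSS} and invoke Lemma~\ref{lem:crit-left}: $f$ is a left fibration iff for every $n \ge 0$ the source-evaluation $\phi_n: X_n \to X_0 \times_{B_0} B_n$ is an equivalence of spaces, where $X_n = \Map_\Cat([n], X)$. Under this translation, the hypothesis that $X_a \to [1]$ is a left fibration for every $a: [1] \to B$ becomes: for every $a$ and every $n$, the pullback of $\phi_n$ along the induced map $a^{(n)}: [1]_n \to B_n$ is an equivalence. I will show $\phi_n$ is an equivalence by checking fibers over $B_n$. At $n = 1$, the space $[1]_1$ has three components, and $a^{(1)}$ hits the identities at the two degenerate ones and hits $a$ itself at the nondegenerate component $\iota$; so the pullback equivalence at $\iota$ gives, for each $a \in B_1$, that the fiber $f_1^{-1}(a)$ source-maps equivalently to $f^{-1}(a(0))$. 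Since this is precisely the fiber of $\phi_1$ over $a \in B_1$, the map $\phi_1$ is a fiberwise equivalence over $B_1$, hence an equivalence.

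For $n \ge 2$ I would decompose $b \in B_n$ into its composable arrows $b_i: b(i{-}1) \to b(i)$. The Segal condition in $\Cat$ identifies the fiber $X_n|_b$ with the iterated fiber product $f_1^{-1}(b_1) \times_{X_0} \cdots \times_{X_0} f_1^{-1}(b_n)$, where adjacent factors are glued target-to-source. The $n=1$ step provides, functorially in each $b_i$, a source-equivalence $f_1^{-1}(b_i) \simeq f^{-1}(b(i{-}1))$; substituting these, each adjacent glueing $f_1^{-1}(b_i) \times_{f^{-1}(b(i))} f_1^{-1}(b_{i+1})$ becomes a pullback in which one leg is an identity and so collapses. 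Iterating, $X_n|_b \simeq f^{-1}(b(0))$, which matches the fiber of $X_0 \times_{B_0} B_n$ at $b$, and $\phi_n$ is an equivalence.

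The main obstacle is the bookkeeping in the second step: under the source-equivalence, the target map $f_1^{-1}(b_i) \to f^{-1}(b(i))$ transports to a specific "transport along $b_i$" map $(b_i)_*: f^{-1}(b(i{-}1)) \to f^{-1}(b(i))$, and one must verify that the resulting chain of pullbacks, of shape $A_0 \to A_1 \leftarrow A_1 \to A_2 \leftarrow A_2 \to \cdots$ with identities on the backward legs, inductively reduces to $A_0 = f^{-1}(b(0))$. Everything else is a formal consequence of the Segal and complete Segal machinery of Section~\ref{sect:SS} together with the fact that an equivalence of spaces is detectable fiberwise over any auxiliary target.
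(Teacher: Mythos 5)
Your proposal is correct and follows essentially the same route as the paper's proof: forward direction by stability of left fibrations under base change, and for the converse, working in the complete Segal model and verifying condition 3 of Lemma~\ref{lem:crit-left}, where the Segal conditions on $X$ and $B$ reduce everything to the single map $X_1\to X_0\times_{B_0}B_1$, which is an equivalence precisely because $X_a\to[1]$ is a left fibration for each $a:[1]\to B$ (restricting over the nondegenerate arrow of $[1]$). The only cosmetic difference is that you carry out the reduction from $n\ge 2$ to $n=1$ fiberwise over $B_n$, collapsing an iterated fiber product of fibers, whereas the paper does it in one stroke via a commutative square whose vertical arrows are the Segal equivalences.
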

\begin{proof}
The claim is about $\infty$-category $\Cat$ which can be realized as the subcategory of the $\infty$-category underlying $\ssSet$ with the Reedy model structure. The
map $f$ can be presented by a Reedy fibration. It is 
a left fibration in $\Cat$ if and only if it is a left fibration in $\ssSet$. We know that base change of a left fibration is a left fibration. It remains to prove the opposite direction. We will verify the condition 3 of Lemma 7.2.4.

First of all, since $X$ and $B$ are CSS, in the commutative diagram
\begin{equation}
\xymatrix{
&{X_n}\ar[r]\ar[d] &{X_0\times_{B_0}B_n}\ar[d] \\
&{X_1\times_{X_0}\ldots\times_{X_0}X_1}
\ar[r]&{X_0\times_{B_0}B_1\times_{B_0}\ldots\times_{B_0}B_1}
}
\end{equation}
the vertical arrows are equivalences. Thus, it is enough to verify that the natural map $X_1\to X_0\times_{B_0}B_1$
is an equivalence. This is a fibration of spaces, so it
is sufficient to verify that the fibers are contractible.
The latter property is satisfied if $X_a\to[1]$ is a left fibration for all $a:[1]\to B$.
\end{proof}

\subsection{Cocartesian fibrations over $[1]$}
\label{ss:coc-1}

In this subsection we do the following. In the case 
$B=[1]$, we establish an equivalence between cocartesian 
fibrations over $B$ an functors $B\to\Cat$. Later on (see 
\ref{ss:Gconstruction}) we will establish this 
equivalence for all $B$. Finally, we prove the equivalence 
between the functors $X\to[1]$ and the correspondences
between the fibers $X_0$ and $X_1$.

\subsubsection{Categories over $[1]$ and correspondences} 
In \ref{sss:corr} we defined a correspondence from $\cC$ to 
$\cD$ as a left fibration over $\cC\times\cD^\op$. In this 
subsection we will show that correspondences can 
equivalently be defined as functors  $f:X\to[1]$,  so that 
$\cC$ and $\cD$ are the fibers of $f$ at $0$ and $1$ 
respectively.

Let us recall how a functor $f:X\to[1]$ defines a 
correspondence. 
 
In what follows we will denote $X_0$ and $X_1$ the 
fibers of $f$ at $0$ and $1$ respectively.

Given a functor $f:X\to[1]$, one defines a functor
$\tilde f:X_1\to P(X_0)$ as the composition
$$X_1\to X\stackrel{Y}{\to}P(X)\to P(X_0),$$
the last map being the restriction of a presheaf to a 
subcategory.

The map $\tilde f$ can be interpreted as a left fibration 
$\bar f:E\to X_0^\op\times X_1$. It can also be interpreted
as a functor
\begin{equation}\label{eq:left-repr}
X_0^\op\to P(X_1^\op).
\end{equation}

\begin{prp}\label{prp:gamma}
The map $f:X\to[1]$ is a cocartesian fibration iff the left fibration
$\bar{f}:E\to X_0^\op\times X_1$ is left-representable, that is, if for any  
$x\in X_0$ the base change $\{x\}\times_{X_0^\op}E$ is a representable presheaf on $X_1^\op$.
\end{prp}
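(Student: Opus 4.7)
The plan is to show both sides are characterized by representability of the functor $\tilde f(x): X_1\to\cS$, $y\mapsto \Map_X(x,y)$, for each $x\in X_0$. The key observation is that since $B=[1]$ has no non-trivial compositions, the only substantive cocartesian-lifting problem concerns the unique non-identity arrow $a:0\to 1$: identity arrows always admit cocartesian liftings (by identities of $X$), and there are no other arrows in $[1]$ to lift.

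First I would unwind the definition of $\bar f$. The construction $X_1\to X\xrightarrow{Y}P(X)\to P(X_0)$ yields a left fibration $\bar f:E\to X_0^\op\times X_1$ whose fiber over $x\in X_0$, viewed as a left fibration over $X_1$, corresponds under the Grothendieck equivalence $\Lt(X_1)\simeq\Fun(X_1,\cS)$ to $y\mapsto \Map_X(x,y)$. Thus left-representability of $\bar f$ at $x$ is exactly the assertion that this functor is equivalent to $\Map_{X_1}(y_0,-)$ for some $y_0\in X_1$; by Yoneda, such an equivalence is encoded (up to a contractible space of choices) by a single arrow $\alpha:x\to y_0$ in $X$, namely the one classified by $\id_{y_0}$.

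Next I would translate the cocartesian-arrow condition for $\alpha:x\to y_0$ into the same representability. Spelling out the cartesian square (\ref{eq:square-of-under}) via mapping spaces, $\alpha$ is $f$-cocartesian iff for every $z\in X$ the square with arrows $\Map_X(y_0,z)\to\Map_X(x,z)$ and $\Map_{[1]}(f(y_0),f(z))\to\Map_{[1]}(f(x),f(z))$ is cartesian. When $z\in X_0$ both $\Map_X$-groups vanish (there is no arrow in $X$ from $X_1$ to $X_0$, since $f$ is a functor to $[1]$) and the condition is vacuous; when $z\in X_1$ the two $\Map_{[1]}$ spaces are singletons, and the condition collapses to the requirement that $\Map_{X_1}(y_0,z)\to\Map_X(x,z)$, induced by composition with $\alpha$, is an equivalence. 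This is precisely the representability of $y\mapsto\Map_X(x,y)$ by $y_0$ with universal element $\alpha$.

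Combining the two steps, $\alpha:x\to y_0$ is a cocartesian lifting of $a:0\to 1$ iff $y_0$ represents $\tilde f(x)$ via $\alpha$; since $f$ is a cocartesian fibration exactly when such liftings exist for every $x\in X_0$, this is equivalent to left-representability of $\bar f$. I do not anticipate a real obstacle here beyond bookkeeping the identifications: the content is entirely in passing between the three equivalent viewpoints (a left fibration $E_x\to X_1$, a functor $X_1\to\cS$, and a representing object $y_0$ with a universal arrow from $x$), which is supplied by Theorem~\ref{thm:left-gr} and Yoneda in $\Cat$.
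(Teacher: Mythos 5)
Your argument is correct. It differs slightly in route from the paper's: the paper disposes of the proposition in one line by appealing to the machinery of locally cocartesian fibrations — by definition, a locally cocartesian lifting of $a:0\to 1$ at $x\in X_0$ is an object of $X_1$ corepresenting $\psi_{x,a}(y)=\Map_X(x,y)\times_{\Map_{[1]}(0,1)}\{a\}\simeq\Map_X(x,y)$ (since $\Map_{[1]}(0,1)$ is a point), and over $B=[1]$ cocartesian and locally cocartesian fibrations coincide because there are no nontrivial compositions in the base — so the statement is literally a reformulation of that definition. You instead work directly from the defining cartesian square (\ref{eq:square-of-under}), evaluating it fiberwise at $z\in X_0$ (where both left-hand mapping spaces are empty, so the condition is vacuous) and at $z\in X_1$ (where it collapses to the equivalence $\Map_{X_1}(y_0,z)\to\Map_X(x,z)$ given by composition with $\alpha$), which is exactly corepresentability of $\Map_X(x,-)|_{X_1}$ with universal element $\alpha$; the identification of the fiber $\{x\}\times_{X_0^\op}E$ with this functor is the same Yoneda bookkeeping in both treatments. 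In effect you reprove, in this special case, the coincidence of cocartesian and locally cocartesian over $[1]$ rather than citing it: the paper's proof is shorter given the earlier section, while yours is more self-contained and makes explicit why only the fibers over $X_1$ matter and why $f(\alpha)\simeq a$ comes for free. Both are sound.
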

\begin{proof}
This is just a reformulation of the definition of locally cocartesian lifting.
\end{proof}
Thus, a map $f:X\to[1]$ is a cocartesian fibration if and only if the map~(\ref{eq:left-repr}) factors through
the Yoneda embedding $X_1^\op\to P(X_1^\op)$.

We continue studying cocartesian fibrations over $[1]$.

\begin{prp}\label{prp:cocsec}
Let $f:X\to [1]$ be a cocartesian fibration,
$X_0=f^{-1}(0)$. The ``evaluation at $0$'' functor
\begin{equation}\label{eq:cocsec1}
\Fun^\coc_{[1]}([1],X)\to X_0
\end{equation}
is an equivalence.
\end{prp}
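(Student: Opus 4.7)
The plan is to verify that $\ev_0 \colon \Fun^\coc_{[1]}([1], X) \to X_0$ is both essentially surjective and fully faithful, both of which flow from the universal property of cocartesian arrows expressed by the cartesian square (\ref{eq:square-of-under-0}).

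Essential surjectivity is immediate from the definition of cocartesian fibration: for $x \in X_0$ I choose a cocartesian lift $\alpha \colon x \to y$ of the unique nontrivial arrow $0 \to 1$ in $[1]$, and assemble it into a functor $s_\alpha \colon [1] \to X$ over $[1]$. Since identities are always cocartesian, $s_\alpha$ is a cocartesian section with $\ev_0(s_\alpha) = x$.

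For fully faithfulness, I fix cocartesian sections $s$ and $t$ with underlying arrows $\alpha \colon x \to y$ and $\beta \colon x' \to y'$ and analyze $\Map(s, t) \to \Map_{X_0}(x, x')$ fiberwise over $\Map_{X_0}(x, x')$. Unwinding $\Fun_{[1]}([1], X) = \{\id_{[1]}\} \times_{[1]^{[1]}} X^{[1]}$, a morphism $s \to t$ consists of arrows $u \colon x \to x'$ and $v \colon y \to y'$ landing in the fibers $X_0$ and $X_1$ respectively (the ``over $[1]$'' condition forces $f(u) = \id_0$ and $f(v) = \id_1$), together with a witness for $v\alpha \simeq \beta u$. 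Given $u$, the space of such $v$ is the homotopy fiber at $\beta u$ of precomposition with $\alpha$, namely $\alpha^{*} \colon \Map_X(y, y') \to \Map_X(x, y')$. Instantiating (\ref{eq:square-of-under-0}) with $z = y'$ exhibits $\alpha^{*}$ inside a cartesian square whose two $\Map_{[1]}$-corners are contractible (as $f(y) = f(y') = 1$), so $\alpha^{*}$ is an equivalence and every one of its homotopy fibers is contractible. Combined with the full inclusion $X_0 \hookrightarrow X$, which yields $\Map_{X_0}(x, x') = \Map_X(x, x')$, this delivers the required equivalence $\Map(s, t) \xrightarrow{\sim} \Map_{X_0}(x, x')$.

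The main obstacle is the bookkeeping step of unwinding the fiber product defining $\Fun^\coc_{[1]}([1], X)$, specifically verifying that the projection to $[1]^{[1]}$ being the identity natural transformation of $\id_{[1]}$ forces the components of any morphism to lie in the fibers of $f$. Once that is pinned down, the cocartesian universal property reduces the rest to a direct computation.
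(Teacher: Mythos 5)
Your proof is correct, and it takes a genuinely different route from the one in the text. You verify the DK conditions directly: essential surjectivity comes from the existence of cocartesian lifts of $0\to 1$ (plus the fact that equivalences are cocartesian, so the resulting section is cocartesian), and fully faithfulness comes from computing $\Map_{\Fun_{[1]}([1],X)}(s,t)$ as the pullback $\Map_X(x,x')\times_{\Map_X(x,y')}\Map_X(y,y')$ and observing that the cartesian square (\ref{eq:square-of-under-0}), evaluated at $z=y'$ with both $\Map_{[1]}$-corners contractible, makes $\alpha^*$ an equivalence, so the projection to $\Map_X(x,x')\simeq\Map_{X_0}(x,x')$ has contractible fibers. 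The text instead never touches mapping spaces of sections: it identifies the fiber of $\Fun_{[1]}([1],X)\to X_0$ at $x$ with $(X_1)_{x/}$, notes that the cocartesian sections over $x$ form the contractible space of initial objects there, upgrades this to the categories $X^{[n]}_{[1]}$ using the already-verified $K=[n]$ case of Lemma~\ref{lem:fun}, and then invokes the criterion (left as an exercise) that a functor is an equivalence iff all fibers of $f^{[n]}$ are contractible spaces. Your approach buys logical economy: it does not depend on Lemma~\ref{lem:fun} (whose general case is in fact deduced from this proposition) nor on that exercise, so there is no risk of circularity; the price is that you must know the end/pullback formula for mapping spaces in $\Fun([1],X)$ and in a fiber product of categories, which the notes use implicitly but never state. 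The paper's fiberwise-contractibility route, on the other hand, sets up exactly the machinery reused immediately afterwards for Corollary~\ref{crl:sec-n} and Proposition~\ref{prp:coc-fiberwise}. Both arguments are sound at the level of rigor of these notes.
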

\begin{proof} The fiber of $\Fun_{[1]}([1],X)\to X_0$
at $x$ is $(X_1)_{x/}$, the category having an initial 
object. The fiber of (\ref{eq:cocsec1}) at $x$ is, 
therefore, a contractible space. 
By the verified special case of lemma~\ref{lem:fun}, 
the same claim will remain true if we replace $X$ and 
$X_0$ with $X^{[n]}_{[1]}$ (notation of \ref{crl:fun-coc}) and $X^{[n]}_0)$.

The claim now follows from
the following result which we leave as an exercise.
\end{proof}
\begin{exe}A map $f:X\to Y$ in $\Cat$ is an equivalence 
iff all  fibers of the maps $f^{[n]}:X^{[n]}\to
Y^{[n]}$ are contractible spaces.
\end{exe}

\begin{crl}\label{crl:sec-n}
In the notation of Proposition \ref{prp:cocsec}
one has  equivalences
\begin{equation}\label{eq:cocsec2}
\Fun_{[1]}([1],X)\leftarrow 
\Fun_{[1]}^\coc([1],X) \times_{X_1}\Fun([1],X_1)
\to X_0\times_{X_1}\Fun([1],X_1),
\end{equation} 
where the map $f:X_0\to X_1$ is defined by 
Proposition~\ref{prp:gamma}.
\end{crl}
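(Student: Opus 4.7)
I will treat the two maps separately.

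The right-hand map is essentially immediate. The evaluation-at-$1$ map $\Fun_{[1]}^\coc([1],X)\to X_1$ agrees with the composite $\Fun_{[1]}^\coc([1],X)\xrightarrow{\ev_0}X_0\xrightarrow{f}X_1$: this is precisely the defining property of $f$ from Proposition~\ref{prp:gamma}, which sends $x_0\in X_0$ to the codomain of the cocartesian lift of $0\to 1$ with source $x_0$. Consequently the right-hand map is the base change of the equivalence $\Fun_{[1]}^\coc([1],X)\xrightarrow{\sim}X_0$ of Proposition~\ref{prp:cocsec} along $\Fun([1],X_1)\xrightarrow{\ev_0}X_1$, and is therefore itself an equivalence.

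For the left-hand map I would first construct it as a composition map. Since $\Cat$ identifies with $\CS(\cS)\subset\Fun(\Delta^\op,\cS)$ by Lemma~\ref{lem:CSS=CSS}, the Segal condition gives an equivalence $X^{[2]}\simeq X^{[1]}\times_XX^{[1]}$ for any $X\in\Cat$. Given $(\sigma,\alpha)$ with $\sigma$ a cocartesian section of $X\to[1]$ over $0\to 1$ and $\alpha:[1]\to X_1$ an arrow satisfying $\alpha(0)=\sigma(1)$, the Segal equivalence assembles them into a functor $[2]\to X$ lying over the unique map $[2]\to[1]$ sending $0\mapsto 0$ and $1,2\mapsto 1$; restricting along $\{0,2\}\hookrightarrow[2]$ yields an object of $\Fun_{[1]}([1],X)$. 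Naturality in the middle category gives the desired map.

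Using the already-established right equivalence, it suffices to show that the composite
\[
X_0\times_{X_1}\Fun([1],X_1)\xleftarrow{\sim}\Fun_{[1]}^\coc([1],X)\times_{X_1}\Fun([1],X_1)\to\Fun_{[1]}([1],X)
\]
is an equivalence. I would verify this by comparing both sides, viewed as left fibrations over $X_0^\op\times X_1$ via the twisted arrow construction from Section~\ref{sect:yoneda}: $\Fun_{[1]}([1],X)$ corresponds to the bifunctor $(x_0,x_1)\mapsto\Map_X(x_0,x_1)$ (the restriction of the left fibration $\Tw(X)\to X^\op\times X$ along $X_0^\op\times X_1\hookrightarrow X^\op\times X$), while $X_0\times_{X_1}\Fun([1],X_1)$ corresponds to $(x_0,x_1)\mapsto\Map_{X_1}(f(x_0),x_1)$ (the pullback of $\Tw(X_1)\to X_1^\op\times X_1$ along $f^\op\times\id$). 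The required equivalence between these two bifunctors is precisely the defining universal property of the cocartesian lift $x_0\to f(x_0)$ given in Subsection~\ref{ss:coc}: applied to $B=[1]$, the cartesian square~(\ref{eq:square-of-under-0}) yields exactly $\Map_{X_1}(f(x_0),x_1)\simeq\Map_X(x_0,x_1)$ for every $x_1\in X_1$, and one checks that the composition map constructed above realizes this comparison.

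\textbf{Main obstacle.} The principal technical point is the passage from the ``untwisted'' objects appearing in the statement (categories sitting over $X_0\times X_1$ via endpoint evaluations, whose individual projections to $X_0$ or $X_1$ are \emph{not} left fibrations, since the fibers $X_{x_0/}\times_XX_1$ are categories rather than spaces) to the ``twisted'' left-fibration picture over $X_0^\op\times X_1$ in which both sides genuinely classify $\cS$-valued bifunctors. The cleanest route is to realize the composition map directly as a map of left fibrations over $X_0^\op\times X_1$, and then to invoke the fiberwise criterion for equivalences of left fibrations (a special case of Proposition~\ref{prp:coc-fiberwise} via Lemma~\ref{lem:left-is-coc}) together with the coherent verification that on fibers this map becomes exactly the equivalence guaranteed by the cocartesian property.
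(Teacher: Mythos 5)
Your treatment of the right-hand map is correct and is surely what the (omitted) proof in the paper has in mind: it is the base change along $\ev_0:\Fun([1],X_1)\to X_1$ of the equivalence of Proposition~\ref{prp:cocsec}, once one knows $\ev_1\simeq f\circ\ev_0$ on cocartesian sections, which is indeed how $f$ is pinned down (compare the proof of \ref{lem:EisCSS}). The construction of the left-hand map by assembling a pair $(\sigma,\alpha)$ into a functor $[2]\to X$ over $[2]\to[1]$ via the Segal condition and restricting along $\{0,2\}$ is also fine, and you correctly identify the key input: cocartesianness of $x_0\to f(x_0)$ gives, for $B=[1]$, equivalences $\Map_{X_1}(f(x_0),x_1)\simeq\Map_X(x_0,x_1)$ from (\ref{eq:square-of-under-0}).

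The gap is in how you deduce the left-hand equivalence from that input. Neither $\Fun_{[1]}([1],X)$ nor $X_0\times_{X_1}\Fun([1],X_1)$ is a left fibration over $X_0^\op\times X_1$, and the composition map cannot be ``realized as a map of left fibrations over $X_0^\op\times X_1$'': these section categories only share their objects (and fibers over pairs of objects) with the restriction of $\Tw(X)$ to $X_0^\op\times X_1$, resp.\ the pullback of $\Tw(X_1)$ along $f^\op\times\id$, while their morphisms are covariant in both endpoints. An equivalence of the two bifunctors $(x_0,x_1)\mapsto\Map_X(x_0,x_1)$ and $(x_0,x_1)\mapsto\Map_{X_1}(f(x_0),x_1)$ --- which is all the twisted-arrow comparison produces --- does not by itself give an equivalence of the section categories; passing back would require a two-sided (un)straightening that the paper does not have. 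Moreover, citing Proposition~\ref{prp:coc-fiberwise} here is circular, since its proof in the paper invokes formula (\ref{eq:cocsec2}); the fiberwise criterion you may use is the one for maps of left fibrations over a fixed base from Section~\ref{sect:LF}, but it again applies only to the twisted avatars, not to the map in the statement. To close the gap in the spirit of the paper, argue directly about the composition functor: it is a map over $X_0$ (evaluation at $0$), its fiber over $x_0$ is the composition $(X_1)_{f(x_0)/}\to\{x_0\}\times_{X_0}\Fun_{[1]}([1],X)$, which is an equivalence precisely by (\ref{eq:square-of-under-0}); then upgrade this to an equivalence of categories exactly as in the proof of Proposition~\ref{prp:cocsec}, i.e.\ by the special case of Lemma~\ref{lem:fun} for $K=[n]$ together with the exercise that a functor all of whose $[n]$-fiber spaces are contractible is an equivalence (alternatively, check essential surjectivity and full faithfulness of the composition functor by hand, the latter being a statement about mapping spaces between sections, not about the fibers of $\Tw$).
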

\qed

\begin{crl}Let $f:X\to [n]$ be a cocartesian fibration,
$X_i=f^{-1}(i)$, and let $s_i:X_i\to X_{i+1}$ be defined by the restriction of $f$ to the embedding $[1]\to[n]$ carrying $0$ to $i$ and $1$ to $i+1$. Then there exists
an equivalence
\begin{equation}
\Fun_{[n]}([n],X)\stackrel{\sim}{\to}
X_0\times_{X_1}X_1^{[1]}\times_{X_2}X_2^{[1]}\ldots
\times_{X_n}X_n^{[1]},
\end{equation}
where the maps $X_i^{[1]}\to X_i$ are induced by the embedding $\{0\}\to[1]$, and $X_i^{[1]}\to X_{i+1}$
are compositions 
$X_i^{[1]}\to X_i\stackrel{s_i}{\to} X_{i+1}$.
\end{crl}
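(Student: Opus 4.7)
The idea is to reduce to the $n=1$ case, which is Corollary~\ref{crl:sec-n}, by decomposing $[n]$ along its spine.

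First, I would observe that in $\Cat$ the object $[n]$ is canonically the iterated pushout
\[
[n]\;\simeq\;[1]_{0,1}\sqcup_{\{1\}}[1]_{1,2}\sqcup_{\{2\}}\cdots\sqcup_{\{n-1\}}[1]_{n-1,n},
\]
where $[1]_{i,i+1}$ denotes the $1$-simplex sitting over the edge $\{i,i+1\}\subset[n]$. Under the embedding of Lemma~\ref{lem:CSS=CSS} this is precisely the Segal condition applied to $[n]$; at the simplicial-set level it is the standard fact that the spine inclusion $\Sp(n)\to\Delta^n$ is a Joyal weak equivalence, and the relevant pushouts along monomorphisms are homotopy pushouts in the Joyal model structure.

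Second, the functor $\Fun_{[n]}(-,X):\Cat_{/[n]}^{\op}\to\Cat$ sends this colimit to a limit. Using $\Fun_{[n]}(\{i\},X)=X_i$ and $\Fun_{[n]}([1]_{i,i+1},X)=\Fun_{[1]}([1],Y_i)$ where $Y_i:=X\times_{[n]}[1]_{i,i+1}$ is the base change, I would write
\[
\Fun_{[n]}([n],X)\;\simeq\;\Fun_{[1]}([1],Y_0)\times_{X_1}\Fun_{[1]}([1],Y_1)\times_{X_2}\cdots\times_{X_{n-1}}\Fun_{[1]}([1],Y_{n-1}).
\]
Since the base change of a cocartesian fibration is a cocartesian fibration, each $Y_i\to[1]$ is a cocartesian fibration with fibers $X_i,\,X_{i+1}$ and pushforward $s_i:X_i\to X_{i+1}$.

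Third, I would apply Corollary~\ref{crl:sec-n} to each edge $Y_i\to[1]$, which gives
\[
\Fun_{[1]}([1],Y_i)\;\simeq\;X_i\times_{X_{i+1}}X_{i+1}^{[1]},
\]
with explicit structure maps: the map from $X_i$ is $s_i$, while the two evaluation maps of $X_{i+1}^{[1]}$ play the roles of source (matching $s_i(x_i)$) and target (recording $x_{i+1}$). Substituting into the previous display and telescoping each intermediate fiber product of the shape $(\,\cdot\,\times_{X_i}X_i)\times_{X_i}(X_i\times_{X_{i+1}}\,\cdot\,)=\,\cdot\,\times_{X_{i+1}}\,\cdot\,$ yields the claimed formula
\[
X_0\times_{X_1}X_1^{[1]}\times_{X_2}X_2^{[1]}\times\cdots\times_{X_n}X_n^{[1]}.
\]

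The main obstacle is the bookkeeping in the telescoping step: one must carefully track, at each $X_i$, which evaluation map of $X_i^{[1]}$ and which pushforward $s_{i-1}$ are being matched, so that after the cancellations the surviving transition maps $X_{i}^{[1]}\to X_{i+1}$ are indeed the ones specified. Secondary to this is the rigorous justification of the first step — that $[n]$ is genuinely the spine colimit in the $\infty$-category $\Cat$, not merely in conventional categories — but this is precisely what the complete Segal description of $\Cat$ supplies.
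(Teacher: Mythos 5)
Your argument is correct and coincides with the paper's (implicit) one: the corollary is stated as a consequence of Corollary~\ref{crl:sec-n}, and the intended derivation is exactly your reduction along the spine decomposition $[1]\sqcup_{[0]}[1]\sqcup_{[0]}\cdots\sqcup_{[0]}[1]\simeq[n]$, using that $\Fun_{[n]}(-,X)$ carries this colimit in $\Cat_{/[n]}$ to an iterated fiber product and that base change along each edge preserves cocartesian fibrations. Your bookkeeping also correctly identifies the surviving transition map $X_i^{[1]}\to X_{i+1}$ as evaluation at $1$ (the target) followed by $s_i$, which is the right reading of the statement: the clause defining $X_i^{[1]}\to X_i$ via $\{0\}\to[1]$ concerns only the gluing over $X_i$ with the preceding factor.
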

The above calculation is used in the proof of 
Proposition~\ref{prp:coc-fiberwise}
saying that equivalence of cocartesian fibrations can be verified fiberwise.

\subsubsection{Grothendieck construction for $B=[1]$}

We have just seen that any cocartesian fibration $X\to[1]$
gives rise to an arrow  $X_0\to X_1$ in $\Cat$.

We claim this construction is functorial, that is defines
a functor $\Gamma:\Coc([1])\to\Fun([1],\Cat)$ compatibile
with the equivalence 
$\Coc(\partial[1])=\Fun(\partial[1],\Cat)$.
This is a routine thing. The main step is to make
Yoneda embedding $Y_X:X\to P(X)$ functorial in $X$, that is,
to present it as a functor $Y:\Cat\to\Fun([1],\CAT)$ carrying
$X$ to $Y_X:X\to P(X)$ (we write $\CAT$ for the category of
categories containing $P(X)$). This is done as follows.
We use the identification $\Cat=\CS(\cS)$. The assignment
of the left fibration $\Tw(X)\to X\times X^\op$, as 
presented in 8.1.3, is obviously functorial as it is defined purely in terms of the pair of natural transformations
$\id\to\tau,\ \op\to\tau$ from $\Delta$ to itself.  
Then one should verify that the Grothendieck construction
for spaces $\Lt(B)\to\Fun(B,\cS)$ discussed in Section
\ref{sect:LF} is 
functorial in base in the sense that is yields a morphism of 
functors $\Cat^\op\to\Cat$.

We will not do it here.

Let us define the functor in the opposite direction. Given a map $f:X_0\to X_1$, we will construct a cocartesian fibration over $[1]$ with the fibers
$X_0$ and $X_1$. 

We define the category $X$ by the formula
\begin{equation}\label{eq:cyl}
X=(X_0\times[1])\coprod^{X_0}X_1,
\end{equation}
where the map $X_0\to X_0\times[1]$ is given by the embedding $\{1\}\to[1]$.
The map $p:X\to [1]$ is given by the projection on 
$X_0\times[1]$ and carrying $X_1$ to $\{1\}\in[1]$.

\begin{lem}The cylinder construction (\ref{eq:cyl}) provides 
a cocartesian fibration $X\to[1]$.
\end{lem}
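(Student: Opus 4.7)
The plan is to identify the candidate cocartesian lifts explicitly and then verify the cocartesian condition fiberwise by computing mapping spaces. First I would check that the fibers of $p:X\to[1]$ are the expected ones. Since $X_1$ is sent entirely to $\{1\}$ and the projection $X_0\times[1]\to[1]$ has fibers $X_0\times\{0\}$ and $X_0\times\{1\}$ respectively, the fiber $p^{-1}(0)$ is $X_0\times\{0\}\simeq X_0$, while $p^{-1}(1)$ is the pushout $(X_0\times\{1\})\sqcup^{X_0}X_1\simeq X_1$ (the gluing along $\id_{X_0}$ on one side and $f$ on the other identifies $(x,1)$ with $f(x)$).

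Next, for each $x\in X_0$ the image in $X$ of the arrow $(x,0)\to(x,1)$ in $X_0\times[1]$ yields, after the gluing, an arrow $\alpha_x:(x,0)\to f(x)$ lying over the unique non-identity arrow $0\to 1$ in $[1]$. I claim that $\alpha_x$ is a cocartesian lift. By the definition in \ref{ss:coc}, this reduces to showing that for every $z\in X$ the map
\[
\Map_X(f(x),z)\to\Map_X((x,0),z)\times_{\Map_{[1]}(0,p(z))}\Map_{[1]}(1,p(z))
\]
is an equivalence. For $z\in X_0$ both sides are empty (no arrows of $X$ go from $X_1$ to $X_0$, and $\Map_{[1]}(1,0)=\emptyset$). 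For $z\in X_1$ the fibered product collapses to $\Map_X((x,0),z)$, and the required equivalence becomes $\Map_X(f(x),z)\xrightarrow{\sim}\Map_X((x,0),z)$.

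The main obstacle is to establish this mapping-space equivalence, which expresses the defining property of the $\infty$-categorical mapping cylinder: $\Map_X((x,0),y)\simeq\Map_{X_1}(f(x),y)$ for $x\in X_0$ and $y\in X_1$. I would derive this from the universal property of the pushout (\ref{eq:cyl}) computed in $\Cat$: the pushout along $X_0\times\{1\}\xrightarrow{f}X_1$ identifies $(x,1)$ with $f(x)$ and creates no new morphisms among objects of $X_1$, while the only morphisms from $(x,0)$ to $y\in X_1$ are compositions through the object $(x,1)=f(x)$. Combined with the fact that $(x,0)\to(x,1)$ is cocartesian in the product fibration $X_0\times[1]\to[1]$, every such morphism is uniquely determined by its component $f(x)\to y$ in $X_1$, which gives the desired equivalence.

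Finally, since every non-identity arrow in $[1]$ admits a cocartesian lift, $p$ is at least a locally cocartesian fibration. Because $[1]$ has no non-trivial composable pair of arrows, closure of locally cocartesian arrows under composition is automatic, and the corollary in \ref{ss:coc-1} that over $[1]$ the notions of locally cocartesian and cocartesian fibration coincide yields the conclusion. Alternatively, the mapping space calculation above shows that the left fibration $E\to X_0^\op\times X_1$ associated to $p$ is left-representable via $x\mapsto f(x)$, and Proposition~\ref{prp:gamma} then identifies $p$ as a cocartesian fibration.
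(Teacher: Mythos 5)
Your overall strategy (identify the arrows $\{x\}\times[1]$ as the candidate cocartesian lifts and verify cocartesianness by a mapping-space computation) is the right one, and your reductions for $z$ over $0$ and over $1$ are fine. But the heart of the argument — the equivalence $\Map_X((x,0),y)\simeq\Map_{X_1}(f(x),y)$ for $x\in X_0$, $y\in X_1$ — is exactly where your proof has a genuine gap. You justify it by saying the pushout (\ref{eq:cyl}) ``identifies $(x,1)$ with $f(x)$ and creates no new morphisms among objects of $X_1$, while the only morphisms from $(x,0)$ to $y$ are compositions through $(x,1)$.'' The universal property of a pushout in $\Cat$ describes functors \emph{out of} $X$, not mapping spaces \emph{inside} $X$; colimits of ($\infty$-)categories do not admit such an objectwise/arrowwise description in general (already for ordinary categories a pushout is built from generators and relations on morphisms, and computing hom-sets needs an argument). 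The same issue silently affects your identification of the fiber $p^{-1}(1)$, since fibers are limits and need not commute with the pushout. So as written, the crucial step is asserted rather than proved.

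The paper flags precisely this difficulty (``this is not obvious as $X$ is defined as colimit, so maps to $X$ are difficult to describe'') and resolves it by a different device: using functoriality of the cylinder construction it exhibits $X=\Cyl(X_0\to X_1)$ as a \emph{fiber product} of simpler cylinders via the cartesian square (\ref{eq:4cylinders}) (whose cartesianness is Exercise~\ref{ex:0}, checked in the CSS model by verifying the vertical maps are quasifibrations). Applying $\Fun_{[1]}([1],-)$ to that cartesian square yields $\Fun_{[1]}([1],X)=X_0\times_{X_1}X_1^{[1]}$, the analogue of (\ref{eq:cocsec2}), and this formula is what gives your mapping-space equivalence and hence the cocartesianness of the arrows $\{x\}\times[1]$. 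To repair your proof you would need to supply an argument of this kind (or an equivalent computation of mapping spaces in the pushout); once that is in place, your concluding step via locally cocartesian fibrations over $[1]$, or via left-representability and Proposition~\ref{prp:gamma}, goes through.
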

\begin{proof}
We have to describe the map spaces $\Map_X(x,y)$
for $x\in X_0$ and $y\in X_1$. Together they form a 
category $\Fun_{[1]}([1],X)$, so we would like to have an
analog of formula (\ref{eq:cocsec2}) for $X$. This is not 
obvious as $X$ is defined as colimit, so maps  to $X$  
are difficult to describe. Fortunately, we can present $X$
as a fiber product of categories (they are all special 
cases of the cylinder construction).
The cylinder construction is functorial in the morphism
$X_0\to X_1$. This allows us to construct a commutative diagram
\begin{equation}
\label{eq:4cylinders}
\xymatrix{
&{\Cyl(X_0\to X_1)}\ar[r]\ar[d]&{\Cyl(X_1\to X_1)}\ar[d]\\
&{\Cyl(X_0\to [0])}\ar[r]&{\Cyl(X_1\to[0])}
}
\end{equation}
which can be easily verified to be cartesian
(see Exercise~\ref{ex:0}).
 
Now we apply the functor $\Fun_{[1]}([1],\_)$ to the cartesian diagram. We get the formula
\begin{equation}
\Fun_{[1]}([1],X)=X_0\times_{X_1}X_1^{[1]}.
\end{equation}
This formula immediately implies that for any $x\in X_0$
the image of the horizontal arrow $\{x\}\times[1]\to X_0\times[1]$ in $X$ is cocartesian. This proves the claim.
\end{proof}
\begin{prp}\label{lem:EisCSS}
The cylinder construction above defines a functor inverse
to  $\Gamma:\Coc([1])\to\Fun([1],\Cat)$.
\end{prp}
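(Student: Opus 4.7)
The plan is to exhibit the cylinder construction $\Cyl:\Fun([1],\Cat)\to\Coc([1])$ of (\ref{eq:cyl}) as a two-sided inverse to $\Gamma$, checking one composite by computing the cocartesian transport of $\Cyl(f)$ directly, and the other composite by using Proposition~\ref{prp:coc-fiberwise} to reduce verification of equivalence to the (obvious) check on fibers.

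First I would make functoriality of the cylinder precise. A morphism in $\Fun([1],\Cat)$ is a commutative square $X_0\to X_1,\ Y_0\to Y_1$ with compatible functors $X_i\to Y_i$. The pushout presentation $\Cyl(f)=(X_0\times[1])\sqcup^{X_0}X_1$ is natural in $f$ (both $(-\times[1])$ and the pushout are functorial), so we obtain a functor $\Cyl(f)\to\Cyl(g)$ over $[1]$; the previous lemma has already verified that it sends the distinguished horizontal arrows to cocartesian arrows, so this is an arrow in $\Coc([1])$. Together this upgrades $\Cyl$ to a functor $\Fun([1],\Cat)\to\Coc([1])$, and the fibers $\Cyl(f)_0\simeq X_0$, $\Cyl(f)_1\simeq X_1$ are natural in $f$.

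Next I would show $\Gamma\circ\Cyl\simeq\id$. Given $f:X_0\to X_1$, the horizontal arrow $\{x\}\times[1]\hookrightarrow X_0\times[1]\to\Cyl(f)$ is cocartesian over $[1]$ by construction, and its target, via the pushout along $X_0\xrightarrow{f} X_1$, is identified with $f(x)\in X_1=\Cyl(f)_1$. Hence the cocartesian transport functor computed by $\Gamma$ is (canonically equivalent to) $f$ itself. Functoriality of this identification in $f$ will follow from the naturality of the cocartesian lifts in the pushout description, giving a natural equivalence $\Gamma\circ\Cyl\simeq\id_{\Fun([1],\Cat)}$.

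Now I would construct the comparison map $\Cyl\circ\Gamma\to\id_{\Coc([1])}$. Given $p:X\to[1]$ cocartesian with fibers $X_0,X_1$ and $\Gamma(p)=\bar f:X_0\to X_1$, Proposition~\ref{prp:cocsec} gives an equivalence $\Fun^\coc_{[1]}([1],X)\simeq X_0$, whose inverse is, by adjunction, a functor $X_0\times[1]\to X$ over $[1]$ sending all ``horizontal'' arrows to cocartesian ones. Its restriction to $X_0\times\{1\}\to X_1\subset X$ agrees, by construction of $\Gamma$, with $\bar f$. Gluing with the inclusion $X_1\hookrightarrow X$ along $X_0\xrightarrow{\bar f}X_1$ yields a functor $\Cyl(\bar f)=\Cyl(\Gamma(p))\to X$ over $[1]$. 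This functor preserves cocartesian arrows because the horizontal arrows of $\Cyl(\bar f)$ are sent to cocartesian arrows of $X$ by construction, so it defines a morphism in $\Coc([1])$, natural in $p$.

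Finally, to conclude that this natural map is an equivalence I would invoke Proposition~\ref{prp:coc-fiberwise}: both source and target are cocartesian fibrations over $[1]$, and by construction the induced maps on fibers are the identities $X_0\to X_0$ and $X_1\to X_1$; hence the map $\Cyl(\Gamma(p))\to X$ is an equivalence in $\Coc([1])$. Together with the previous step this establishes that $\Cyl$ and $\Gamma$ are mutually inverse equivalences. The main obstacle is not conceptual but technical: it is the careful, coherent packaging of the Yoneda-type inverse to $\Fun^\coc_{[1]}([1],X)\simeq X_0$ as a functor of $p$ and the verification that the pushout defining $\Cyl$ is preserved by the relevant mapping-space computations in $\Cat$ (as was done for individual objects in diagram (\ref{eq:4cylinders})), so that the comparison really is natural in $p$ rather than merely objectwise.
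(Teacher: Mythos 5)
Your proposal is correct and follows essentially the same route as the paper: the composite $\Gamma\circ\Cyl\simeq\id$ is handled directly from the cocartesian horizontal arrows of the cylinder, and the comparison $\Cyl(\Gamma(p))\to X$ is built from the equivalence $X_0\simeq\Fun^\coc_{[1]}([1],X)$ of Proposition~\ref{prp:cocsec}, rewritten as a map $X_0\times[1]\to X$ and glued along $X_0\to X_1$, with Proposition~\ref{prp:coc-fiberwise} then reducing the equivalence check to the fibers. Your closing remarks on functoriality and coherence simply spell out details the paper leaves implicit.
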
 
\begin{proof}
We have to construct an equivalence of two compositions,
$\Gamma\circ\Cyl$ and $\Cyl\circ\Gamma$, with identity.
This is clear for the first composition; for the second we
have to present an equivalence $\Cyl(X_0\stackrel{s}{\to} 
X_1)\to X$ where $X$ is a cocartesian fibration over $[1]$ 
and $s:X_0\to X_1$ is $\Gamma(X)$. By Proposition~\ref{prp:cocsec} one has an equivalence $X_0\to\Fun^\coc_{[1]}([1],X)$ which can be rewritten as a map
$i:X_0\times[1]\to X$ of cocartesian fibrations over $[1]$.
The map of fibers at $1$, $X_0\to X_1$, is precisely $s$.
Thus, the map $i$ canonically extends to a map 
$\Cyl(s)\to X$ that induces an equivalence of the fibers at
$0$ and at $1$. According to  \ref{prp:coc-fiberwise}, this implies that the map is an equivalence.
 
\end{proof}
The adjoint pair of equivalences $(\Gamma,\Cyl)$ is what we call Grothendieck construction for $B=[1]$.
 
\begin{crl}\label{crl:CF-spaces-LF}
Let $f:X\to B$ be a cocartesian fibration. The following properties are equivalent.
\begin{itemize}
\item[1.] All arrows of $X$ are cocartesian.
\item[2.] All fibers of $f$ are spaces.
\item[3.] $f$ is a left fibration.
\end{itemize}
\end{crl}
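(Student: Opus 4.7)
Proof plan. The implication $3\Rightarrow 1$ is already recorded as Lemma~\ref{lem:left-is-coc}, so the work lies in $1\Rightarrow 2$ and $2\Rightarrow 3$.

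For $1\Rightarrow 2$, fix $b\in B$ and an arrow $\alpha\colon x\to y$ in the fiber $X_b$. By hypothesis $\alpha$ is $f$-cocartesian, and $f(\alpha)=\id_b$. Plugging into the defining cartesian square (\ref{eq:square-of-under-0}), the bottom row $B_{b/}\to B_{b/}$ is the identity, hence the top row $X_{y/}\to X_{x/}$ is an equivalence. By Yoneda this forces $\alpha$ itself to be an equivalence in $X$. Since every arrow of $X_b$ becomes an equivalence, $X_b$ is an $\infty$-groupoid, i.e.\ a space.

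For $2\Rightarrow 3$ I would first reduce to the case $B=[1]$ via Lemma~\ref{lem:LF-over1}: it suffices to show that for every $a\colon[1]\to B$ the base change $X_a=[1]\times_B X\to[1]$ is a left fibration. Since base change of a cocartesian fibration is cocartesian and the fibers of $X_a$ are among the fibers of $X$, they remain spaces. Thus we may assume $B=[1]$ and $X_0,X_1\in\cS$.

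With $B=[1]$ I would invoke the Grothendieck construction for cocartesian fibrations already established in Section~\ref{ss:coc-1} (Proposition~\ref{lem:EisCSS}): $X$ corresponds to a morphism $s\colon X_0\to X_1$ in $\Cat$, and since $X_0,X_1\in\cS$ this morphism lies in $\Fun([1],\cS)\subset\Fun([1],\Cat)$. On the other hand, Theorem~\ref{thm:left-gr} gives an equivalence $\Lt([1])\simeq\Fun([1],\cS)$; let $Y\to[1]$ be the left fibration associated with the same $s$. By Lemma~\ref{lem:left-is-coc}, $Y\to[1]$ is in particular a cocartesian fibration; its $\Gamma$-image is also $s\colon X_0\to X_1$ (this is a compatibility between the two Grothendieck constructions, essentially tautological since both recover $s$ as the transition functor $X_0\to X_1$ obtained from the cocartesian lift of the unique nontrivial arrow in $[1]$). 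Hence by the equivalence $\Coc([1])\simeq\Fun([1],\Cat)$ the cocartesian fibrations $X\to[1]$ and $Y\to[1]$ are equivalent over $[1]$; alternatively, the canonical comparison map $Y\to X$ is an equivalence on fibers and both are cocartesian, so Proposition~\ref{prp:coc-fiberwise} applies. Either way, $X\to[1]$ is equivalent to the left fibration $Y\to[1]$, and therefore is itself a left fibration.

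The main obstacle is the compatibility step in $2\Rightarrow 3$ between the two Grothendieck constructions (for left and for cocartesian fibrations over $[1]$); the cleanest way to dispatch it is to verify that the functor $\Gamma\colon\Coc([1])\to\Fun([1],\Cat)$ restricted to left fibrations recovers, via the embedding $\cS\hookrightarrow\Cat$, the equivalence of Theorem~\ref{thm:left-gr}. This is a bookkeeping argument using the cylinder description of $\Cyl(s)$ together with the observation that when $s$ is a map in $\cS$ the cylinder is already a left fibration (its only non-invertible arrows are the cocartesian lifts $\{x\}\times[1]$, which are equivalent, via Lemma~\ref{lem:cyl}, to the universal cylinder of a map of spaces over $[1]$).
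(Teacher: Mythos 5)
Your argument is correct and follows essentially the same route as the paper: $3\Rightarrow 1$ by Lemma~\ref{lem:left-is-coc}, $1\Rightarrow 2$ by noting that a cocartesian arrow lying over an equivalence must itself be an equivalence, and $2\Rightarrow 3$ by reducing to $B=[1]$ via Lemma~\ref{lem:LF-over1} and identifying $X$ with the cylinder of a map of spaces $s:X_0\to X_1$, which is a left fibration. The paper dispenses with your detour through Theorem~\ref{thm:left-gr} and the compatibility of the two Grothendieck constructions, going directly to $X\simeq\Cyl(s)$ via Proposition~\ref{lem:EisCSS} --- exactly the ``cleanest way'' you identify at the end.
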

\begin{proof}
The implication $(3)\Rightarrow (1)$ is already known
and $(1)\Rightarrow(2)$ is obvious as an arrow $a$ in $X$
whose image in $B$ is an equivalence, is cocartesian if and 
only if it is an equivalence. The implication $(2)
\Rightarrow(1)$ is also easy: as $f$ is a cocartesian 
fibration, $f(a)$ has a cocartersian lifting for any arrow 
$a$ in $X$, so there is a commutative triangle in $X$ 
presenting $a$ as a composition of a cocartesian lifting
with a (vertical) equivalence.
It remains to prove $(2)\Rightarrow(3)$. By 
Lemma~\ref{lem:LF-over1}, the claim reduces to the case 
$B=[1]$. In this case $f:X\to[1]$ corresponds to 
a map $s:X_0\to X_1$ where $X_0$ and $X_1$ are spaces. 
Then $X$ is equivalent to the cylinder of $s$ which is
a left fibration. 
\end{proof}

\subsection{Grothendieck construction: general case}
\label{ss:Gconstruction}

We construct a functor $G:\Coc(B)\to\CS(\Lt(B))$
as follows. To any cocartesian fibration $f:X\to B$ we assign a simplicial object $G(X)$ in $\Lt(B)$ defined by the formula
$$ G(X)_n=\Fun_B([n],X)^\coc.$$
Here, for a cocartesian fibration $X$ over $B$, we denote 
$X^\coc$ the left fibration over $B$ defined by the cocartesian arrows in $X$.

Note that even when working with conventional categories,
one should be careful to define a functor both on objects and on arrows. This is even more so with infinity categories. So let us be more careful. The functor $G$ is
the composition of two functors  
\footnote{Here it makes sense to talk about "thhe composition" ---
following Drinfeld's suggestion for a ``homotopy definite article''.} . The first one carries $X$ to the simplicial object $n\mapsto \Fun([n],X)$. The second one
is the functor $\Coc(B)\to\Lt(B)$, carrying $X$ to $X^\coc$;
it is right adjoint to the embedding.

The functor $X\mapsto X^\coc$ preserves the limits.
The simplicial object $n\mapsto\Fun_B([n],X)$ is a complete Segal object in $\Coc(B)$, therefore, $G(X)$ is a complete Segal object in $\Lt(B)$.
 
We will prove that $G$ is an equivalence.  
This is done as follows. First of all, we construct a left adjoint functor $F:\CS(\Lt(B))\to\Coc(B)$. We will verify 
that the adjoint pair of functors $(F,G)$ is functorial in $B$. We prove $F$ and $G$ are mutually inverse equivalences
verifying this for $B=*$ and using 
Proposition~\ref{prp:coc-fiberwise}.

\subsubsection{The functor $F:\CS(\Lt(B))\to\Coc(B)$}
We construct the functor $F$ left adjoint to $G$ as (a sort 
of) geometric realization functor.

Given a simplicial object $X_\bullet$ in $\Cat_{/B}$,
we define $F(X_\bullet)$ as the colimit of the functor
$\fF:\Tw(\Delta)\to\Cat_{/B}$ given by the formula
\begin{equation}
\fF(a:[m]\to[n])=X_m\times[n].
\end{equation}

We will now verify that $F$ carries complete Segal objects in $\Lt(B)$ to cocartesian fibrations over $B$. Let 
$X_\bullet$ be a complete Segal object in $\Lt(B)$.
One has a canonical map $X_0\to F(X_\bullet)$ over $B$. We claim 
that the image of this map consists of locally cocartesian 
arrows. Since the image is closed under compositions, this will immediately imply the claim. 

Our claim is quite easy in case $B=[0]$. The functor $F$ in this case is the composition of the rightwards arrows
in the diagram below.
\begin{equation}
\xymatrix{
&{\CS(\cS)}\ar^i[r]&{\cS^{\Delta^\op}}\ar^j[r]&{\Cat^{\Delta^\op}}\ar^\bF[r]\ar^K@/^1pc/[l]&{\Cat}\ar^G@/^1pc/[l]
},
\end{equation}
where $i$ and $j$ are  the obvious embeddings and $\bF$ is 
the colimit functor described above. The functors $G$ and 
$K$ are right adjoint to $\bF$ and $j$ respectively, $K$ 
being the maximal subspace functor and $G$ carrying $X$
to $\{n\mapsto\Fun([n],X)\}$. The composition $K\circ G$
is known to identify $\Cat$ with  $\CS(\cS)$: therefore 
the functor $F$ is an equivalence.

For $B$ varying, the functor $F:\Fun(\Delta^\op,\Lt(B))
\to \Cat_{/B}$
commutes with the base change $B'\to B$ 
\footnote{Since base change $\Cat_{/B}\to\Cat_{/B'}$
has right adjoint functor $Y\mapsto\Fun_{B'}(B,Y)$.}. 
Thus, in order to prove that the image of the map 
$X_0\to F(X_\bullet)$
consists of locally cocartesian arrows, it is sufficient 
to assume $B=[1]$.

One can easily see that, after identification of 
$\Fun([1],\Cat)$ with $\CS(\Lt([1]))$,
the functor $F:\CS(\Lt([1]))\to\Cat_{/[1]}$ identifies
with $\Cyl:\Fun([1],\Cat)\to\Cat_{/[1]}$. For, if 
$s_\bullet:X_\bullet\to Y_\bullet$ is a map of categories
presented as complete Segal objects in $\cS$, we convert $s_\bullet$ into a complete Segal object in $\Lt([1])$,
and get $\{n\mapsto\Cyl(s_n)\}$. Then $F$ carries this to
\begin{equation}
\colim_{a:m\to n\in\Tw(\Delta)}\left((X_m\times[1])\sqcup^{X_m}Y_m\right)\times [n].
\end{equation}
Alternatively, one has $X=\colim (X_m\times[n])$,
$Y=\colim (Y_m\times[n])$, so that $\Cyl(s)=X\times[1]\sqcup^XY$ can be also expressed in terms of colimits
and products with $[n]$. 
Two expressions are canonically equivalent as colimits in 
$\Cat$ commute with products (with $[n]$). 

Identification of $F(X_\bullet)$ with the cylinder
yields, in particular, our description of cocartesian arrows. 

Finally, we have an adjoint pair 
$$F:\CS(\Lt(B))\rlarrows \Coc(B):G.$$
This pair commutes with the base change. Since we know
that for $B=[0]$ this is an equivalence of categories,
the unit and the counit of the adjunction are pointwise
equivalences, and so, by \ref{prp:coc-fiberwise}, they are equivalences.

\subsection{Categories over $[1]$}
In the beginning of the previous subsection we presented, 
for a category $X$ over $[1]$, 
a correspondence between the fibers $X_0$ and $X_1$, 
that is, a functor $X_1\to P(X_0)$. We will now present 
a construction in the opposite direction.
Let $f:X_1\to P(X_0)$ be given. According 
to~\ref{ss:coc-1}, this yields a cartesian 
fibration
$f:\cP\to[1]$ such that $\cP_0=P(X_0)$ and $\cP_1=X_1$. 
We define $X$ as the full subcategory of $\cP$
spanned by the objects:
\begin{itemize}
\item representable presheaves on $X_0$ over
$\{0\}\in[1]$.
\item all objects of $\cP$ over $\{1\}\in[1]$.
\end{itemize}
We claim that the category $X$ over $[1]$ so defined  has the required fibers
and gives rise to the required functor $X_1\to P(X_0)$.
\begin{prp}
The construction above establishes an equivalence between 
the categories $X\to[1]$ over $[1]$
and the correspondences $X_1\to P(X_0)$.
\end{prp}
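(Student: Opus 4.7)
The plan is to construct mutually inverse functors between the $\infty$-category of functors $X\to[1]$ and that of correspondences $f:X_1\to P(X_0)$, and check they are inverse via Yoneda. Write $\Phi$ for the forward construction, which assigns to $p:X\to[1]$ with fibers $X_i=p^{-1}(\{i\})$ the functor
\[
\Phi(X):X_1\to P(X_0),\qquad x_1\mapsto \Map_X(-,x_1)\big|_{X_0},
\]
obtained as the composition $X_1\hookrightarrow X\xrightarrow{Y} P(X)\to P(X_0)$ (this is how the forward map was defined at the opening of the subsection). Write $\Psi$ for the backward construction: form the cartesian fibration $\cP\to[1]$ of \ref{ss:coc-1} (applied to cartesian rather than cocartesian fibrations by passing to opposites, or directly by the dual of Proposition~\ref{lem:EisCSS}) whose associated $[1]^{\op}\to\Cat$ functor is $f$, so $\cP_0=P(X_0)$ and $\cP_1=X_1$; then let $\Psi(f)\subset\cP$ be the full subcategory spanned by representables over $\{0\}$ and by all objects over $\{1\}$.

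First I would verify that $\Psi(f)\to[1]$ has the correct fibers and that $\Phi(\Psi(f))\simeq f$. By construction the fiber at $\{1\}$ is $X_1$. The fiber at $\{0\}$ is the essential image of the Yoneda embedding $X_0\hookrightarrow P(X_0)$, which is equivalent to $X_0$ since Yoneda is fully faithful. For $x_0\in X_0$ and $x_1\in X_1$ the defining property of the cartesian fibration $\cP$ associated to $f$ gives
\[
\Map_{\Psi(f)}(x_0,x_1)\;=\;\Map_{\cP}(Y(x_0),x_1)\;\simeq\;\Map_{P(X_0)}(Y(x_0),f(x_1))\;\simeq\; f(x_1)(x_0),
\]
the last equivalence by Yoneda. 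Thus $\Phi(\Psi(f))(x_1)$ and $f(x_1)$ represent the same presheaf, naturally in $x_1$, giving $\Phi\Psi\simeq\id$.

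Next I would verify $\Psi(\Phi(X))\simeq X$ for $X\to[1]$. Set $f=\Phi(X)$ and form the cartesian fibration $\cP$. The Yoneda embedding on $X$, composed with the restriction $P(X)\to P(X_0)$, assembles with the identity on $X_1$ into a canonical map $X\to\cP$ over $[1]$, using the universal description of $\cP$ as the cartesian fibration corresponding to $f$. I would then check full faithfulness by computing mapping spaces in all four cases: the $(X_0,X_0)$ case is Yoneda fully faithful in $P(X_0)$; the $(X_1,X_1)$ case is the identity; the $(X_0,X_1)$ case is precisely the definition $\Map_X(x_0,x_1)=f(x_1)(x_0)\simeq\Map_{\cP}(Y(x_0),x_1)$; and the $(X_1,X_0)$ case is empty on both sides since in each category there are no arrows going from a fiber over $1$ to one over $0$. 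By construction the image of this fully faithful embedding is exactly the full subcategory $\Psi(f)$.

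The hard part will be promoting the two pointwise inverse assignments to an equivalence of $\infty$-categories, i.e.\ upgrading $\Phi$ and $\Psi$ to functors of $\infty$-categories and showing naturality of the unit and counit. For $\Phi$ this amounts to functorial dependence of the Yoneda embedding on $X$, which was already invoked in \ref{ss:Gconstruction} and follows from the identification $\Cat=\CS(\cS)$ and functoriality of the twisted arrow construction (see \ref{sss:YCat}). For $\Psi$ one needs the Grothendieck construction for cartesian fibrations over $[1]$ of \ref{ss:coc-1}, applied in a family, together with the fact that ``take the full subcategory on representables over $\{0\}$ and all objects over $\{1\}$'' is itself functorial in the correspondence. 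Both packaging steps are of the same flavour as the proof of Proposition~\ref{lem:EisCSS}, and granting them, the pointwise inverse equivalences above give the result by Proposition~\ref{prp:coc-fiberwise} applied after base change to the two objects of $[1]$.
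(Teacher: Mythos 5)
Your overall strategy (check that the two constructions are mutually inverse, with the counit $\Phi\Psi\simeq\id$ coming from Yoneda and the cartesian lift of $0\to 1$ in $\cP$) is the natural one, and the counit computation $\Map_{\cP}(Y(x_0),x_1)\simeq\Map_{P(X_0)}(Y(x_0),f(x_1))\simeq f(x_1)(x_0)$ is fine; the paper itself states the proposition without proof, so there is no argument to compare against. The genuine gap is the unit. You assert a ``canonical map $X\to\cP$ over $[1]$'' assembled from the Yoneda embedding on $X_0$ and the identity on $X_1$, ``using the universal description of $\cP$ as the cartesian fibration corresponding to $f$''. But $X\to[1]$ is an arbitrary functor, not a (co)cartesian fibration, and the Grothendieck construction developed in \ref{ss:coc-1} and \ref{ss:Gconstruction} has a universal property only inside $\Coc(B)$ (morphisms preserving cocartesian edges); it gives no handle on $\Fun_{[1]}(X,\cP)$ for a general $X$ over $[1]$, hence no comparison functor. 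This is not a packaging issue: recovering an arbitrary $X$ over $[1]$ from its fibers together with the correspondence is precisely the content of the proposition, and it is the step that needs an actual construction --- e.g.\ an explicit ``cograph'' model of $\cP$ through which the map $X\to P(X_0)$ given by restricting the Yoneda embedding of $X$ visibly factors, or a computation of $\Fun_{[1]}(X,\cP)$ extending \ref{crl:sec-n} beyond cocartesian $X$, or a comparison of both $X$ and $\Psi(\Phi(X))$ via the simplicial spaces $n\mapsto\Map_{/[1]}(\,\cdot\,,X)$. Moreover, until the functor is constructed, your full-faithfulness check is incomplete in principle: one must know that the map it induces on $\Map_X(x_0,x_1)\to\Map_{\cP}(Y(x_0),x_1)$ is the canonical identification, not merely that both sides are abstractly equivalent.

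A secondary error: the closing appeal to Proposition~\ref{prp:coc-fiberwise} is invalid here. That proposition is about maps of cocartesian fibrations; for arbitrary categories over $[1]$ a functor over $[1]$ inducing equivalences on both fibers need not be an equivalence (the inclusion $X_0\sqcup X_1\to X$ is fiberwise an equivalence but forgets the correspondence --- which is exactly the extra datum the proposition is about). Your direct check of all four mapping-space cases plus essential surjectivity is the correct substitute and should replace that appeal; likewise, the functoriality of $\Phi$ and $\Psi$ and naturality of unit and counit, which you defer with ``same flavour as \ref{lem:EisCSS}'', do require an argument since the statement is an equivalence of categories, but the primary missing idea is the construction of the unit map.
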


\subsection{Limits and colimits in terms of cocartesian 
fibrations}

\subsubsection{Limits}

In \ref{sss:coc-sec} we presented an adjoint pair 
$$ F:\Cat\rlarrows\Coc(B):G,$$
with $F(X)$ defined as the constant family $X\times B\to B$, and $G(Y)=\Fun^\coc_B(B,Y)$ being the {\sl category of cocartesian sections} of $Y$. Identifying  
$\Coc(B)$ with $\Fun(B,\Cat)$, we can describe $F$ as assigning to $X\in \Cat$ the constant functor from $B$ to 
$\Cat$ with value $X$. Thus, the functor of cocartesian sections identifies with the limit of a functor to $\Cat$.

This fact was first described by Grothendieck (SGA1)
where he defines LIMIT of a (pseudo) functor to categories,
given by a cocartesian fibration $p:X\to B$ as the category 
of cocartesian sections of $p$. In our version this definition has become a theorem.

\subsubsection{Colimits} Let $p:X\to B$ be a cocartesian fibration. We claim that the colimit functor has also 
a nice description of terms of cocartesian fibrations.

The result is left as an exercise.

\begin{Exe}Colimit of a functor from $B$ to $\Cat$ given 
by a cocartesian fibration $p:X\to B$ is the localization
of $X$ with respect to the collection of $p$-cocartesian 
arrows.
\end{Exe} 

Needless to say that this also appeared as a definition
in SGA1. 

\subsection{Exercises}
\begin{exe}
\label{ex:0}
Prove that the diagram~(\ref{eq:4cylinders}) is cartesian.
{\sl Hint.} Use CSS model structure; verify that the vertical arrows are quasifibrations.
\end{exe}
\begin{exe}
\label{ex:1}
\begin{itemize}
\item[1.] Let $f:C\to D$ be  fully faithful. Then the diagram
\begin{equation}\nonumber
\xymatrix{
&{\Tw(C)}\ar[r]\ar[d] &{\Tw(D)}\ar[d] \\
&{C\times C^\op}\ar^{f\times f^\op}[r]&{D\times D^\op}
}
\end{equation}
is cartesian.
\item[2.]Let $f:C\to P(D)$ be a functor. Then the corresponding left
fibration over $C\times D^\op$ is described by the cartesian diagram
\begin{equation}\nonumber
\xymatrix{
&E\ar[r]\ar[d] &{\Tw(P(D))}\ar[d] \\
&{C\times D^\op}\ar^{f\times Y^\op}[r]&{P(D)\times P(D)^\op}
}
\end{equation}.
\item[3.] Let $D$ be a full subcategory of $P(C)$ containing $C$. Then the
functors $C\to D$ and $D\to P(C)$ define equivalent functors $D\times C^\op\to\cS$.
\end{itemize}
\end{exe}
\begin{exe}
\label{ex:2}
\begin{itemize}
\item[1.] Given an adjoint pair of functors $\cC\rlarrows\cD$ and a category $B$, construct
an adjoint pair $\cC^B\rlarrows\cD^B$.
\item[2.] Deduce from the above that if a category $\cC$ has limits / colimits, the 
category $\cC^B$ has also limits / colimits which are calculated pointwise.
\end{itemize}
\end{exe}

\newpage
\section{Stable categories}
\label{sec:stable}

\subsection{Introduction: stable homotopy theory, spectra}

\subsubsection{Pointed spaces}
We will say a few words about pointed topological spaces. 
The category $\Top_*$ has coproducts called {\sl wedge sum}. One has a natural
embedding $X\vee Y\to X\times Y$.

Smash product of pointed spaces is defined as $(X,x)\wedge (Y,y)=(X\times Y)/(X\vee Y)$.

In a good category of pointed spaces smash product is left adjoint to the internal Hom functor.

Smash product with (pointed) circle is called (reduced) suspension 
functor. For good topological spaces the suspension functor has right adjoint, the loop space functor.

\subsubsection{} A functor $h:(\Top_*)^\op\to\Ab^{\Z}$ endowed
with a natural isomorphism $h^{n+1}(\Sigma(X))=h^n(X)$, is a (reduced) cohomology theory, if 
\begin{itemize}
\item It carries weak homotopy equivalence of pointed spaces into isomorphism.
\item Let $i:X\to Y$ be a cofibration (e.g., embedding of a subcomplex into a complex)
and let $j:Y\to Z$ be the embedding of $Y$ into the cone $Z=*\sqcup^X(X\times[0,1])\sqcup^XY$. Then the sequence of graded abelian groups
$$ h^*(Z)\to h^*(Y)\to h^*(X)$$
is exact.
\item For any set of pointed spaces $X_i$ the natural map
$$ h^*(\vee X_i)\to\prod h^*(X_i)$$
is an isomorphism.
\end{itemize}

As a result, for each $n$ the functor $h^n:\Top_*\to\Ab$ is represented
by some $E_n$ in the sense that 
$h^n(X)=[X,E_n],$
so that the abelian group structure on $h^n(X)$ comes from H-space structure on $E_n$. Now the isomorphism $h^{n+1}(\Sigma X)=h^n(X)$
can be rewritten as
$$ [\Sigma X,E_{n+1}]=[X,E_n],$$
that is, as a weak equivalence $E_n=\Omega(E_{n+1})$.

A collection of pointed spaces $E_n$ and equivalences $E_n\to\Omega(E_{n+1})$ is called $\Omega$-spectrum. 

We do not intend to define here the category of spectra
(see the book of Adams~\cite{A}). In any case, the theory
of infinity categories says that the classical definition
is imprecise and should be replaced with an infinity notion
which we present below.

The passage from $\Top_*$ to spectra will be formalized
as a stabilization procedure assigning to any infinity
category with finite limits a stable infinity category.

Homotopy category of any stable category will have a canonical structure of 
triangulated category. Stable categories are higher analogs of abelian categories --- 
and the stabilization is a higher analog of the  procedure described by Quillen in 
60-ies, illustrated below by an exercise.

\begin{exe}
Let $\cA$ be the category of associative (or commutative,
or Lie) algebras. Let $A\in\cA$. Prove that the category of 
bimodules (or modules) over $A$ can be equivalently described as the category of abelian group objects in 
$\cA_{/A}$. 
\end{exe}

\subsection{Definitions}

\subsubsection{Pointed category}A (infinity) category $\cC$ is pointed if its initial object is terminal. Examples of pointed categories include  complexes, pointed spaces.
If $\cC$ has a terminal object $*$, the category $\cC_{*/}$ is pointed.

The object that is initial and terminal, is denoted $0$.

\begin{Exe}If $\cC$ has initial object $0$ and terminal object $1$,
it is pointed iff $\Map(1,0)\ne\emptyset$.
\end{Exe}

\subsubsection{Stable category}
A triangle in a pointed category $\cC$ is a commutative diagram
\begin{equation}\label{eq:tri}
\begin{CD}
X @>>> Y \\
@VVV @VVV \\
0 @>>> Z
\end{CD}.
\end{equation}
Recall that we live in infinity world and commutative diagram actually
means a functor from $[1]\times[1]$ to $\cC$.
A triangle is fiber sequence if the diagram is cartesian; it is cofiber sequence if the diagram is cocartesian.

We now define stable categories.
\begin{Dfn}
A category $\cC$ is stable if it is pointed, there exist fiber and cofiber for each arrow, and a triangle is fiber sequence iff it is a cofiber sequence.
\end{Dfn}

Examples (details later): spectra, derived category.

\subsection{Example: derived category of an abelian category}

Let $A$ be an abelian category. Since Grothendieck and Verdier
homological algebra has been developed in derived category of $A$, denoted
$D(A)$, defined as follows.

First, one constructs $C(A)$, the category of complexes of objects in $A$. A map $f:X\to Y$ in $C(A)$ is called quasiisomrphism (quism) if it
induces an isomorphism of homology. The derived category $D(A)$ is the localization (in a ``naive'' sense) of $C(A)$ with respect to quasiisomorphisms.

$D(A)$ is a triangulated category. This means it is additive, admits
a translation autoequivalence $T:D(A)\to D(A)$ and a collection of
``exact triangles'' $X\to Y\to Z\to T(A)$  satisfying a list of properties which we will list later.

We now have a better replacement for $D(A)$ --- this is the infinity version $D_\infty(A)$ defined as a DK localization of $C(A)$ with respect to quasiisomorphisms. We will see that $D_\infty(A)$ is an example of stable category.

A triangle in $D_\infty(A)$ is a commutative diagram~(\ref{eq:tri}); to construct
a fiber sequence for $f:Y\to Z$, we can present $f$  with a  surjective map of complexes and choose $X$ to be the kernel of $f$. To construct a cofiber sequence, we can take
$g:X\to Y$ to be injective, and choose $Z$ to be the cokernel of $g$.

This proves the existence of fiber and cofiber sequences.
Looking at short exact sequences $X\to Y\to Z$, we see that fiber and cofiber
sequences actually coincide --- and they correspond to the distinguished triangles in $D(A)$.
 
\subsection{Homotopy category of a stable infinity-category}

We will now prove that $\Ho(\cC)$ has a structure of 
triangulated category if $\cC$ is stable.

\subsubsection{Suspension functor}
Let us first assume that $\cC$ is pointed and admits cofibers. Under these assumptions we can define suspension functor $\Sigma:\cC\to\cC$ as follows.
Look at the category $\cQ\subset\Fun(\Delta^1\times\Delta^1,\cC)$, the full subcategory 
consisting of cocartesian diagrams having zeros outside of the diagonal. One has two
projections $p_X,p_Y:\cQ\to\cC$, and $p_X$ is an equivalence. This yields a functor 
$\Sigma=p_Y\circ p_X^{-1}:\cC\to\cC$. In case $\cC$ is stable, $\Sigma$  is obviously an equivalence since the composition $\Omega=p_X\circ p_Y^{-1}$ is its inverse. We write $X[n]$ instead of $\Sigma^n(X)$. Let us 
add a few words. By definition, $X[1]$ can be calculated as a colimit  of the diagram 
$0\leftarrow X\to 0$. Colimits
are defined up to contractible space of choices. The choice of the functor 
$\Sigma:\cC\to\cC$ gives a specific choice of the colimit for each $X$. This means that 
any cocartesian diagram
$$
\begin{CD}
X @>>> 0 \\
@VVV @VVV \\
0 @>>> Y
\end{CD}
$$
yields an equivalence $X[1]\to Y$, unique in the usual sense. This diagram is symmetric. What happens to the equivalence when we 
transpose the diagram? The equivalence belongs to $\Map(X[1],Y)$, which is, by definition, the fiber product $\Map(0,Y)\times_{\Map(X,Y)}\Map(0,Y)$, that is, the loop space of $\Map(X,Y)$. In particular, $\pi_0(\Map(X[1],Y)$ is a group.

\begin{Lem}
The equivalences corresponding to transposed diagrams are invert to each other.  
\end{Lem}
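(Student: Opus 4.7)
The plan is to identify the group $\pi_0\Map(X[1],Y)$ with the fundamental group $\pi_1(\Map(X,Y),0)$ via the pushout description of $X[1]$, and then verify that the two equivalences arising from a square and its transpose correspond to mutually inverse loops. The main obstacle is making precise the claim that transposing the pushout diagram reverses the associated loop; this reduces to a formal fact about the $\Sigma_2$-action on the pullback $*\times_Z*$ for any pointed space $Z$.

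First, since $0$ is a zero object, $\Map(0,Y)$ is contractible and the identification
\begin{equation*}
\Map(X[1],Y) \;=\; \Map(0,Y)\times_{\Map(X,Y)}\Map(0,Y) \;\simeq\; \Omega_0 \Map(X,Y)
\end{equation*}
of the lemma's discussion is realized canonically. Under this identification, the group structure on $\pi_0\Map(X[1],Y)$ alluded to in the statement becomes the usual fundamental group structure on $\pi_1(\Map(X,Y),0)$, with product given by concatenation of loops. What I would therefore do is trace, for a fixed cocartesian square $F\colon[1]\times[1]\to\cC$ with corners $X,0,0,Y$, exactly which loop $\gamma_F\in\Omega_0\Map(X,Y)$ the corresponding equivalence $X[1]\to Y$ produces: it is the loop obtained by running the ``upper'' path $X\to 0\to Y$ (through the top-right corner) to the ``lower'' path $X\to 0\to Y$ (through the bottom-left corner), both of which are automatically the zero map, via the homotopy furnished by the filled square.

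Next I would verify that transposing $F$ along the diagonal swaps the roles of the two $0$-corners, hence swaps the two endpoints of the homotopy; under the identification above this is precisely the canonical $\Sigma_2$-action on $*\times_Z*$ which reverses loops. Concretely, the $\Sigma_2$-action on the indexing square $(\Delta^1)^2$ by the flip $\sigma\colon(i,j)\mapsto(j,i)$ induces, after passing through the contractible spaces $\Map(0,Y)$, the loop-reversal map on $\Omega_0\Map(X,Y)$; this is a completely formal statement about pointed spaces and mapping spaces and does not use anything about $\cC$ beyond the pushout definition of $X[1]$. Consequently, the equivalence $\beta\colon X[1]\to Y$ extracted from the transposed square $F^T$ corresponds to the reversed loop $\gamma_F^{-1}$.

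Finally, combining the two preceding points, the elements $[\alpha],[\beta]\in\pi_1(\Map(X,Y),0)$ are represented by $\gamma_F$ and $\gamma_F^{-1}$ respectively, so $[\alpha]\cdot[\beta] = [\gamma_F\cdot\gamma_F^{-1}] = e$, which is exactly the assertion that the two equivalences are mutually inverse. As indicated, the only nontrivial step is the identification of diagram-transposition with loop-reversal; I would handle this either by the general $\Sigma_2$-equivariant recognition of $\Omega_0 Z$ as $*\times_Z *$, or, alternatively, by constructing an explicit $[2]\times[1]$-diagram in $\cC$ with cocartesian constituent squares that exhibits the concatenation $\gamma_F\cdot\gamma_F^{-1}$ as the constant loop at $0\colon X\to Y$.
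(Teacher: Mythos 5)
Your proposal is correct and follows essentially the same route as the paper's own (very brief) argument: identify $\Map(X[1],Y)\simeq \Map(0,Y)\times_{\Map(X,Y)}\Map(0,Y)\simeq\Omega_0\Map(X,Y)$ and observe that transposing the square swaps the two factors of the fiber product, which under this identification is exactly loop reversal. Your write-up merely makes explicit (via the $\Sigma_2$-action on $*\times_Z *$) the step the paper leaves as ``carefully check the identification,'' so there is nothing to correct.
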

\begin{proof}If we carefully check the identification of the loop space with the (homotopy) fiber product, we will see that transposition of the factors inverses the loop.
\end{proof}

\

From now on we assume $\cC$ is pointed, admits cofibers and 
such that $\Sigma:\cC\to\cC$ is an equivalence. These conditions are clearly fulfilled if $\cC$ is stable.
\footnote{Actually, stability of $\cC$ follows from 
these conditions, see~\ref{lem:3defs-stable}. }

\subsubsection{Additivity: coproducts}
We will prove now that $\cC$ admits finite products and finite coproducts, and the natural map from coproducts to products is an equivalence.

Let us prove the existence of coproducts of two objects $X$ and $Y$.
A priori we only know about the existence of cofibers --- this is why 
the claim is not completely obvious. But we also know that $\Sigma$ is
an equivalence. Thus, $X$ is cofiber of $f:X[-1]\to 0$ and $Y$ is cofiber of $g:0\to Y$. Coproduct of these two diagrams exists ---
this is just the zero map $h:X[-1]\to Y$. 
Let us prove that cofiber preserves (existing) colimits. In fact, it is
left adjoint to the functor $\cC\to\Fun([1],\cC)$ 
carrying $X$ to $0\to X$.

\begin{Rem}
Since the opposite of a stable category is also stable (the definition is self-dual), stable categories admit finite products as well.

We, however, prefer not to use this as we intend to deduce additivity 
for any $\cC$ which is pointed, has cofibers and such that
$\Sigma$ is equivalence.
\end{Rem}

\subsubsection{$\Z$-enrichment}

One has a canonical equivalence $\Map(\Sigma X,Y)=\Omega\Map(X,Y)$.
This endows $\pi_0(\Map(X,Y))=\pi_2(\Map(\Sigma^{-2}X,Y))$ with the structure of abelian group.

\begin{Exe}
Prove the composition is bilinear, that is that $\Ho(\cC)$ is $\Z$-enriched.
\end{Exe}

Now, if a conventional category has  $\Z$-enrichment 
and if it admits finite coproducts, then it is additive.
 
This proves $\Ho(\cC)$ is additive.

\subsubsection{$\Ho(\cC)$ is triangulated}

Let us identify exact triangles in $\Ho(\cC)$. These are sequences
of arrows
\begin{equation}\label{eq:h-dist}
 X\stackrel{f}{\to} Y\stackrel{g}{\to}Z\stackrel{h}{\to} X[1]
\end{equation}
in $\Ho(\cC)$ which have representatives $\wt f,\wt g,\wt h$ coming from a diagram
\begin{equation}\label{eq:distinguished}
\xymatrix{
&{X} \ar[r]^{\wt f} \ar[d]&{Y} \ar[r] \ar[d]^{\wt g} &0 \ar[d]\\
& 0 \ar[r] &Z \ar[r]^{\wt h} &{W}
}
\end{equation}
whose both squares are cocartesian, and equivalence $X[1]\to W$ is given by the rectangle diagram.

\begin{Rem}
Note that one could have put two squares one upon the other, instead of putting them aside. This would give a difference in sign. 
\end{Rem}

\subsubsection{Axiom (Tr1)}
\begin{itemize}
\item Any arrow embeds into a distinguished triangle.
\item Any triangle isomorphic to a distinguished triange in distinguished.
\item $X\stackrel{\id}{\to} X\to 0\to X[1]$ is a distinguished triangle.
\end{itemize}
This is quite obvious as the category of diagrams  
(\ref{eq:distinguished}) consisting of two cofiber diagrams, 
is equivalent to the category of arrows in $\cC$.

\subsubsection{Axiom (Tr2)}

A diagram (\ref{eq:h-dist}) is a distinguished triangle iff 
\begin{equation}\label{eq:h-dist-2}
 Y\stackrel{g}{\to}Z\stackrel{h}{\to} X[1]\stackrel{-f[1]}{\to}Y[1]
\end{equation}
is distinguished.

Let (\ref{eq:distinguished}) represent the distinguished triangle
(\ref{eq:h-dist}). Let us construct a cofiber diagram for $\wt h$
as follows.
\begin{equation}
\xymatrix{
&{X} \ar[r]^{\wt f} \ar[d]&{Y} \ar[r] \ar[d]^{\wt g} &0 \ar[d]\\
& 0 \ar[r] &Z \ar[r]^{\wt h}\ar[d] &{W} \ar[d]^{\wt u}\\
& & 0 \ar[r] & V
}
\end{equation}
There is a map from the upper rectangle of the diagram to the right rectangle (these are the rectangles consisting of two cells).
One of them induces an equivalence $X[1]\to W$, whereas the other one
induces $Y[1]\to V$. Thus, we have a commutative diagram 
$$
\begin{CD}
X[1]@>>> Y[1] \\
@VVV @VVV \\
W @>>> V
\end{CD}
$$
This proves that the vertical rectangle yields what is needed (remember the sign!)

It remains to prove the ``if'' part of the axiom. We leave this as an exercise.

{\sl Hint: use that $\Sigma$ is an equivalence}.

\subsubsection{Axiom (Tr3)}
Any commutative square (thought of as a morphism of arrows) extends
to a morphism of respective distinguished triangles.
{\sl In  particular, distinguished triangle corresponding to an arrow,
is unique up to (noncanonical) isomorphism.  
}

Any commutative square in $\Ho(\cC)$
can be lifted to a commutative square in $\cC$. Since the category of diagrams 
(\ref{eq:distinguished}) is equivalent to the category of arrows, this yields a morphism of distinguished triangle.

 Noncanonicity in this axiom of triangulated categories is 
now explained by the fact that
a commutative diagram in $\Ho(\cC)$ can be presented by 
essentially different commutative diagrams in $\cC$.

\subsubsection{Axiom (Tr4) (Octahedron Axiom)}
Given three distinguished triangles constructed on the sides of a commutative triangle as shown below,
\begin{equation}
\label{eq:oct1}
\xymatrix{
&X \ar[r]^f  &Y\ar[r]^u &{Y/X} \ar[r]^d &{X[1]} \\
&Y \ar[r]^g  &Z\ar[r]^v &{Z/Y} \ar[r]^{d'} &{Y[1]} \\
&X \ar[r]^{g\circ f} &Z\ar[r]^w  &{Z/X} \ar[r]^{d''} &{X[1]}
}, 
\end{equation}
there exists one more distinguished triangle 
$$ Y/X\stackrel{\phi}{\to}Z/X\stackrel{\psi}{\to}Z/Y
\stackrel{\theta}{\to} Y/X[1],$$
such that the following diagram is commutative.
\begin{equation}
\xymatrix{
&{X}\ar[rr]^{g\circ f}\ar[dr]_f&{} &{Z}\ar[rr]^v\ar[dr]_w&{} &{Z/Y} \ar[rr]^\theta\ar[dr]_{d'}&{}&{Y/X[1]} \\
&{}&{Y}\ar[ru]^g\ar[rd]_u&{} &{Z/X} \ar[ru]^\psi\ar[rd]_{d''}&{} &{Y[1]} \ar[ru]_{u[1]}\\
&{}&{} &{Y/X}\ar[ur]^\phi\ar[rr]_d&{} &{X[1]}\ar[ru]^{f[1]}
}
\end{equation}

Here is a more symmetric presentation of the axiom. The distinguished triangles
(\ref{eq:oct1}) can be equivalently presented by the solid arrows in the pair
of diagrams presented below.\footnote{The diagrams consists of commutative triangles
and of exact triangles.}
\begin{equation}
\xymatrix{
&{Z/Y}\ar[dd]_{\theta=u\circ d'}^{+1}\ar[rd]^{+1}_{d'}& &{Z}\ar[ll]^v 
&&{Z/Y}\ar[dd]_{\theta=u\circ d'}^{+1}& &{Z}\ar[ll]^{v}\ar[ld]_{w}\\
& &Y\ar[ru]_g\ar[ld]_u& 
&& &Z/X\ar@{.>}[lu]^\psi\ar[rd]^{d^{\prime\prime}}_{+1}&\\
&{Y/X}\ar[rr]^{+1}_d&&X\ar[uu]_{g\circ f}\ar[ul]_f
&&{Y/X}\ar@{.>}[ur]^\phi\ar[rr]^{+1}_d&&X\ar[uu]_{g\circ f}
}
\end{equation}
The axiom claims existence of $\phi$ and $\psi$ completing the picture.

\

In order to verify the octahedron axiom, we construct the following diagram, cell by cell, making sure that each cell is cocartesian.
\begin{equation}
\xymatrix{
&X \ar[r]\ar[d] &Y\ar[r]\ar[d] &Z \ar[r]\ar[d] &0\ar[d] \\
&0\ar[r] &{Y/X}\ar[d]\ar[r] &{Z/X}\ar[d]\ar[r]&X'\ar[d]\ar[r] &0\ar[d]\\
& &0\ar[r] &{Z/Y}\ar[r]&{Y'}\ar[r] &Y'/X' 
}.
\end{equation}
Looking at the respective rectangles, we deduce canonical isomorphisms
$X[1]\to X'$, $Y[1]\to Y'$, $(Y/X)[1]\to Y'/X'$. One can also find in this picture  four rectangles determining four distinguished triangles in the homotopy category (find them!)

\subsubsection{}

The stable categories are enriched over spectra in the same 
sense
as the general infinity categories are enriched over spaces. 
In fact,
given $X,Y\in\cC$ one can define $\Map^\st(X,Y)$ as the  
collection of spaces
$n\mapsto\Map(\Sigma^n X,Y)$. One can also define 
$Ext^n(X,Y)$
as $\Hom_{\Ho(\cC)}(X,Y[n])$.

\subsection{Elementary properties}

\begin{lem}
Let $\cC$ be stable. Then $\Fun(K,\cC)$ is stable for any $K$.
\end{lem}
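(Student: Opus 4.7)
The plan is to verify the three defining properties of a stable infinity category for $\Fun(K,\cC)$ by reducing each of them pointwise to the corresponding property of $\cC$. The main tool is Exercise~\ref{ex:2}: limits and colimits in a functor category $\Fun(K,\cD)$ exist whenever they exist in $\cD$, and are computed pointwise (objectwise on $K$).

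First I would check that $\Fun(K,\cC)$ is pointed. The constant functor at $0 \in \cC$ is both initial and terminal, because the adjunction from Exercise~\ref{ex:2} identifies initial/terminal objects in $\Fun(K,\cC)$ with the pointwise initial/terminal objects in $\cC$, and $0$ is both in $\cC$. Next, since $\cC$ admits pullbacks and pushouts (being stable), the functor category $\Fun(K,\cC)$ admits them as well, computed pointwise; in particular every arrow in $\Fun(K,\cC)$ has a fiber and a cofiber.

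The heart of the argument is the matching of fiber sequences with cofiber sequences. A commutative square in $\Fun(K,\cC)$ is a functor $[1] \times [1] \to \Fun(K,\cC)$, equivalently a functor $K \to \Fun([1]\times[1],\cC)$. Because limits and colimits in $\Fun(K,\cC)$ are pointwise, such a square is cartesian (resp. cocartesian) if and only if, for every object $k \in K$, the evaluated square in $\cC$ is cartesian (resp. cocartesian). Applying this to triangles of the form \eqref{eq:tri} with one vertex at $0$: a triangle in $\Fun(K,\cC)$ is a fiber sequence iff each evaluation is a fiber sequence in $\cC$ iff (by stability of $\cC$) each evaluation is a cofiber sequence in $\cC$ iff the triangle is a cofiber sequence in $\Fun(K,\cC)$.

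The only step that requires any real care is justifying that limits and colimits in $\Fun(K,\cC)$ are computed pointwise in the infinity-categorical sense; this is precisely Exercise~\ref{ex:2} of Section~\ref{sect:yoneda}, obtained from the adjunctions $\colim \dashv \const \dashv \lim$ composed with evaluation at $k \in K$. Once this is in hand, everything else is a formal translation along the objectwise equivalences above, and no further ingredient (such as the triangulated structure on $\Ho(\cC)$) is needed.
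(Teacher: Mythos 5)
Your proof is correct and is exactly the argument the paper intends: the lemma is stated there without proof (it is marked as immediate), and the pointwise computation of limits and colimits in $\Fun(K,\cC)$ — together with the fact that equivalences, hence cartesian and cocartesian squares, are detected objectwise — is the standard way to fill it in. One cosmetic slip: the exercise you invoke on pointwise (co)limits appears in the section on cocartesian fibrations rather than in Section~\ref{sect:yoneda}, so adjust the reference accordingly.
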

\qed

A full subcategory $\cC_0$ of a stable category $\cC$ is called stable subcategory if 
$\cC$ contains $0$ and is closed under formation of fibers and cofibers.

\begin{lem}Let $\cC_0\subset\cC$ be a subcategory of a stable category containing $0$ and closed under  cofibers and translations. Then $\cC_0$ is a stable subcategory.
\end{lem}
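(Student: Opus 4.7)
The plan is to verify the remaining condition, namely closure of $\cC_0$ under fibers, by expressing the fiber of any morphism as a shifted cofiber. I interpret ``closed under translations'' in the natural sense that $\cC_0$ is preserved by both $\Sigma$ and its inverse $\Omega$ (which exists since $\Sigma$ is an equivalence on the stable $\cC$); this is forced by the statement, since otherwise closure under fibers cannot be derived from closure under cofibers and one-sided translations alone.

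First, I would fix a morphism $f:X\to Y$ in $\cC_0$ and let $F$ denote its fiber computed in $\cC$. I would then construct the rectangular diagram
\begin{equation*}
\xymatrix{
F \ar[r]\ar[d] & X \ar[d]^f \\
0 \ar[r]\ar[d] & Y \ar[d] \\
0 \ar[r] & C
}
\end{equation*}
in which the upper square is cartesian (defining $F$) and the lower square is cocartesian (defining $C=\mathrm{cof}(f)$). Since $\cC$ is stable, the upper square is also cocartesian; hence the outer rectangle is cocartesian, identifying $C$ with the cofiber of $F\to 0$, i.e.\ $C\simeq F[1]$, or equivalently $F\simeq C[-1]=\Omega C$.

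Now $Y\in\cC_0$ because $f\in\cC_0$ is a morphism of $\cC_0$; by closure of $\cC_0$ under cofibers, $C\in\cC_0$; and by closure of $\cC_0$ under translations, $C[-1]\in\cC_0$. Since $\cC_0$ is a full subcategory closed under equivalence (being full), the fiber $F$ is then an object of $\cC_0$. This verifies that $\cC_0$ contains $0$ and is closed under both fibers and cofibers, so $\cC_0$ is a stable subcategory.

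The only substantive point is the identification $F\simeq\Omega C$, which is the standard ``pasting of (co)cartesian squares'' argument and is available directly from the definition of stability; the rest is a bookkeeping check. The one interpretive subtlety, as noted, is the meaning of ``translations''; with the reading above the proof is immediate, and this reading is the one consistent with $\Sigma$ being an equivalence on $\cC$.
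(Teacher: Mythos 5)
Your route is essentially the right one, and it differs mildly from the paper's: the paper disposes of the lemma in one line by appealing to the already-established triangulated structure on $\Ho(\cC)$ (rotation of distinguished triangles shows that the fiber of $f$ is the cofiber shifted by $-1$), whereas you reprove that rotation directly in $\cC$ by pasting (co)cartesian squares. That is a perfectly good, more self-contained argument, and your reading of ``closed under translations'' as closure under both $\Sigma$ and $\Omega$ is the intended one.

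However, the diagram you display does not establish the key identification. In your vertical pasting the lower square has corners $0$, $Y$, $0$, $C$; if that square is cocartesian it says $C\simeq\mathrm{cof}(0\to Y)\simeq Y$, not $C\simeq\mathrm{cof}(f)$, and the outer rectangle of your diagram (corners $F$, $X$, $0$, $C$) only recovers $\mathrm{cof}(F\to X)\simeq Y$. The two squares you actually need share the edge $f\colon X\to Y$ and paste \emph{horizontally}:
\begin{equation*}
\xymatrix{
F \ar[r]\ar[d] & X \ar[d]^f \ar[r] & 0 \ar[d]\\
0 \ar[r] & Y \ar[r] & C
}
\end{equation*}
Here the left square is cartesian, hence cocartesian by stability of $\cC$, and the right square is the cocartesian square defining $C=\mathrm{cof}(f)$; the outer rectangle, whose corners are $F$, $0$, $0$, $C$, is then cocartesian and gives $C\simeq F[1]$, i.e.\ $F\simeq C[-1]$. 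With this correction the rest of your argument is fine: $C\in\cC_0$ by closure under cofibers, $C[-1]\in\cC_0$ by closure under translations, and hence the fiber of any arrow of $\cC_0$ lies in $\cC_0$, so $\cC_0$ is a stable subcategory.
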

\begin{proof}Since $\cC$ is stable, $\Ho(\cC)$ is triangulated, which implies that fibers are cofibers shifted by $-1$.
\end{proof}

\begin{dfn}
A functor $f:\cC\to\cD$ between two stable categories is exact if it
carries $0$ to $0$ and preserves (co)fiber sequences.
\end{dfn}
\begin{lem}
\label{lem:3defs-stable}
The following properties of a functor between stable categories
are equivalent.
\begin{itemize}
\item[1] $f$ is exact.
\item[2] $f$ is right exact (that is, it preserves finite colimits).
\item[3] $f$ is left exact.
\end{itemize} 
\end{lem}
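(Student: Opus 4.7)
The plan is to establish (1) $\Leftrightarrow$ (2) first, and then to deduce (1) $\Leftrightarrow$ (3) by appealing to the self-duality of the axioms for a stable category. Stability is self-dual because a triangle is a fiber sequence iff it is a cofiber sequence, so $\cC^\op$ is stable whenever $\cC$ is, and the definition of exactness is a self-dual condition on $f$. Under this symmetry right and left exactness are interchanged, so once (1) $\Leftrightarrow$ (2) is established, (1) $\Leftrightarrow$ (3) follows by applying the equivalence to $f^\op:\cC^\op\to\cD^\op$.

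The direction (2) $\Rightarrow$ (1) is essentially tautological. A right exact $f$ preserves the initial object, which equals $0$ by pointedness, and preserves pushouts; in particular it preserves cofiber sequences, since these are pushouts of the form $0\leftarrow A\to B$. Since cofiber sequences coincide with fiber sequences in the stable setting, $f$ also preserves fiber sequences and is therefore exact.

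The substantive direction is (1) $\Rightarrow$ (2). The first step is to show that an exact $f$ preserves finite biproducts. In a stable category $A\oplus B$ sits in a cofiber sequence $A\to A\oplus B\to B$ — the square with arrows $A\to A\oplus B$ and $0\to B$ is bicartesian — and this cofiber sequence is split by the evident section $B\to A\oplus B$. Applying $f$ yields a split cofiber sequence $f(A)\to f(A\oplus B)\to f(B)$ in $\cD$, and any split cofiber sequence in a stable category is a biproduct diagram, so $f(A\oplus B)\simeq f(A)\oplus f(B)$. The second step is the key structural fact: in a stable category a square
\begin{equation*}
\xymatrix{B \ar[r]^u\ar[d]^v & A\ar[d] \\ C\ar[r] & D}
\end{equation*}
is bicartesian iff the induced map $(u,-v):B\to A\oplus C$ (using the $\Z$-enrichment on mapping spaces) has cofiber equivalent to $D$. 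Granting this, $f$ preserves pushouts because it preserves $0$, biproducts, and cofibers. Since every finite colimit in a stable category can be built from the initial object $0$ and iterated pushouts, $f$ preserves all finite colimits, i.e., $f$ is right exact.

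The main obstacle is the identification of bicartesian squares in a stable category with cofiber sequences on the induced map into the biproduct. This is a standard consequence of stability together with the existence of biproducts; in the conventional triangulated setting it is encoded in the octahedron axiom (compare the axiom (Tr4) argument given above). I would either cite this fact directly or spell it out by comparing the universal properties: the cofiber of $B\to A\oplus C$ is the pushout of $0\leftarrow B\to A\oplus C$, and a diagram chase using the two inclusions $A\to A\oplus C$, $C\to A\oplus C$ produces a canonical equivalence with the pushout of $A\leftarrow B\to C$.
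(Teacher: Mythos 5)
Your proposal is correct in outline, but it takes a different route from the paper's, and the comparison is instructive. The paper's proof of $(1)\Rightarrow(2)$ decomposes finite colimits into finite coproducts and coequalizers: coproducts are preserved because in a stable category $A\sqcup B$ is the cofiber of the zero map $A[-1]\to B$ (exactly the construction of coproducts given earlier in the section), and coequalizers are preserved because the coequalizer of a pair $f,g$ is the cofiber of $f-g$. You instead decompose finite colimits into the initial object and pushouts, and your key lemma is the characterization of bicartesian squares by the cofiber sequence $B\to A\oplus C\to D$, together with preservation of biproducts via split cofiber sequences. Both routes work, and neither is fully self-contained: you must cite (or prove) the square criterion, which the paper never states, while the paper's two ingredients are already on the page (the coproduct description in the additivity paragraph, and the coequalizer-as-cofiber-of-a-difference trick, reused in the proof of the proposition that follows). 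Note that if you do want to prove your key fact rather than cite it, the quickest argument is precisely the paper's pair of observations: the pushout $A\sqcup_B C$ is the coequalizer of the two composites $B\rightrightarrows A\sqcup C$, and that coequalizer is the cofiber of their difference $(u,-v):B\to A\oplus C$ --- so when spelled out, the two proofs essentially collapse into one another (and the sign is harmless, since $\mathrm{cofib}(u,v)\simeq\mathrm{cofib}(u,-v)$ via the automorphism $-\id$ of one summand). A genuine merit of your write-up is that you make $(1)\Leftrightarrow(3)$ explicit via self-duality of the stability axioms and spell out $(2)\Rightarrow(1)$, both of which the paper leaves implicit or dismisses as obvious.
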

\begin{proof}
Obviously, $(2)$ implies $(1)$. Conversely, an exact functor
preserves coproducts (see description of coproducts in stable categories)
and coequalizers (can be expressed via cofibered sequences and direct sums). 
\end{proof}

\begin{prp}Let $\cC$ be a pointed category. It is stable iff
\begin{itemize}
\item[1.] $\cC$ has finite limits and colimits.
\item[2.] A commutative square is cartesian iff it is cocartesian.
\end{itemize}
\end{prp}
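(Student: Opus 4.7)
The implication (1) + (2) $\Rightarrow$ stability is essentially immediate. Given (1), every morphism in $\cC$ admits a fiber and a cofiber. A cofiber sequence for $f : X \to Y$ is by definition a cocartesian square whose opposite corner is $0$; by (2) this is the same as a cartesian square with $0$ in that corner, i.e.\ a fiber sequence. Hence the defining condition of stability holds.

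For the forward direction, assume $\cC$ is stable. I first establish (1). We have already observed that $\cC$ admits a zero object and binary coproducts $X \sqcup Y$, constructed as the cofiber of the zero arrow $X[-1] \to Y$; dually $\cC$ admits binary products. To build arbitrary pushouts of a span $Y \xleftarrow{f} X \xrightarrow{g} Z$, form the arrow $(f, -g) : X \to Y \sqcup Z$, where the sign refers to the additive structure on $\Map(X, Y \sqcup Z)$ coming from the fact that $\Sigma$ is an equivalence (this provides the $\Z$-enrichment already discussed). The cofiber of $(f, -g)$ realizes the desired pushout: indeed, computing a pushout as a coequalizer and using that, on the level of homotopy categories, the coequalizer of two parallel arrows into an additive object is the cokernel of their difference. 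Having an initial object, binary coproducts, and pushouts, $\cC$ admits all finite colimits, and dually all finite limits.

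To prove (2), I will establish the following key lemma: a square
$$\sigma \;=\; \begin{CD} X @>f>> Y \\ @VgVV @VVhV \\ Z @>>k> W \end{CD}$$
is cocartesian if and only if the sequence $X \xrightarrow{(f,g)} Y \sqcup Z \xrightarrow{(h,-k)} W$ is a cofiber sequence, and cartesian if and only if the same sequence is a fiber sequence. The cocartesian half follows from the pushout-as-cofiber-of-the-difference construction of the previous paragraph (applied to the span $Y \xleftarrow{f} X \xrightarrow{g} Z$); the cartesian half is dual. Since stability asserts that a triangle with zeros on the diagonal is a cofiber sequence iff a fiber sequence, this immediately yields $\sigma$ cocartesian iff $\sigma$ cartesian, which is (2).

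The main obstacle is the rigorous verification of the ``pushout equals cofiber of the difference'' identification in the $\infty$-categorical setting. This demands a careful manipulation of the relevant $3 \times 3$ diagrams and a precise use of the canonical null-homotopies supplied by the biproduct structure---only available once we know $\Sigma$ is an equivalence, so that mapping spaces upgrade to infinite loop spaces carrying a well-defined additive structure. Once this identification is in place, both the construction of arbitrary pushouts in Step~(1) and the key lemma in Step~(2) follow cleanly.
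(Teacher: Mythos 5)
Your plan is correct in outline, and for the existence of finite (co)limits it runs essentially parallel to the paper: both arguments manufacture the missing colimits from cofibers and finite coproducts by a ``difference'' trick, available because $\Sigma$ being an equivalence makes the mapping spaces infinite loop spaces (the paper builds coequalizers as the cofiber of $f-g$ and quotes the reduction of finite colimits to coproducts and coequalizers; you build pushouts directly as the cofiber of $(f,-g):X\to Y\sqcup Z$). Where you genuinely diverge is the equivalence ``cartesian $\Leftrightarrow$ cocartesian''. You reduce a square $\sigma$ to the Mayer--Vietoris sequence $X\to Y\oplus Z\to W$ and invoke the stability axiom once; this makes the symmetry manifest and gives both implications simultaneously, but it concentrates all the difficulty in the step you defer: extracting the sign map and the coherent null-homotopy from the commutativity datum of $\sigma$ and verifying the universal property at the level of mapping spaces --- your remark that the identification holds ``on the level of homotopy categories'' is not by itself enough, since homotopy-category computations never identify $\infty$-categorical colimits; the infinite-loop structure must be used, as your closing paragraph concedes. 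The paper instead avoids signs and explicit homotopies for this step: it notes that the pushout functor $\sqcup:\Fun(\Lambda,\cC)\to\Fun([1]\times[1],\cC)$ preserves colimits, hence is exact between stable functor categories, that pullback squares form a stable subcategory, and then checks only the three spans $Z\leftarrow Z\to Z$, $0\leftarrow 0\to Z$, $Z\leftarrow 0\to 0$, of which every span is a finite colimit, with duality supplying the converse. So the paper trades your pointwise computation for a formal ``stable subcategory generated by special spans'' argument, cleaner on coherences but relying on stability of the functor categories and the generation claim; note also that your key lemma is asserted rather than proved, though the paper leaves the analogous identification (coequalizer $=$ cofiber of the difference) equally implicit, so the two are at a comparable level of rigor.
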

\begin{proof}
``If'' direction is clear. Let us prove that a stable 
category has finite colimits. We already know it has 
coproducts. A standard fact then reduces everything to
existence of coequalizers.\footnote{More precisely, this is an infinty categorical version of the standard fact.} But thhe coequalizer of a pair of 
maps $f,g$ coincides with thhe cofiber of thhe difference 
$f-g$. The notion of stable category is self-dual,
so stable categories have also finite limits.

It remains to prove that any pushout square is also a 
pullback. We already know that $\cC$ has finite colimits. 
Thus, one can present pushout as a functor
\begin{equation}\label{eq:colimit}
\sqcup:\Fun(\Lambda,\cC)\to\Fun([1]\times[1],\cC),
\end{equation}
where $\Lambda$ is the category 
$\bullet\leftarrow\bullet\to\bullet$. The functor $\sqcup$
preserves colimits, so is exact.

Look at the category of pullback squares
$\cD\subset\Fun([1]\times[1],\cC)$. $\cD$ is stable under 
limits and shifts, so it is stable subcategory. Therefore,
$\cD'=\sqcup^{-1}(\cD)$ is an exact subcategory of 
$\Fun(\Lambda,\cC)$. We will show $\cD'$ is the whole thing.
One can easily see that the diagram of a special type
$$ Z\leftarrow Z\to Z,\quad 0\leftarrow 0\to Z,\quad
Z\leftarrow 0\to 0$$
are all in $\cD'$, and that any diagram is a colimit of such.

\end{proof}

\subsection{Exact functors}

We define a subcategory $\Cat^\ex$ of $\Cat$ consisting of stable categories and exact functors between them.

\begin{thm}
The category $\Cat^\ex$ has small limits and the forgetful functor $\Cat^\ex\to\Cat$
preserves them.
\end{thm}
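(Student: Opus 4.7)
The plan is to show that the forgetful functor $U:\Cat^\ex\to\Cat$ creates small limits. Given a small diagram $F:I\to\Cat^\ex$, form $\cC=\lim_I F$ in $\Cat$ with canonical projections $p_i:\cC\to F(i)$. I would verify in turn: (a) $\cC$ is stable, (b) each $p_i$ is exact, and (c) $(\cC,\{p_i\})$ has the universal property of the limit in $\Cat^\ex$. The stability criterion proved just above will be the workhorse: a pointed category with finite limits and colimits is stable iff every commutative square in it is cartesian exactly when it is cocartesian.

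The main technical point — and the only real obstacle — is that finite limits and finite colimits in $\cC$ exist and are computed ``componentwise'' through the $p_i$. Given a finite diagram $d:K\to\cC$, its image $p_i\circ d$ admits a limit (resp.\ colimit) in each $F(i)$; because every transition functor $F(\alpha):F(i)\to F(j)$ is exact, these limits (resp.\ colimits) are compatible along $I$ and assemble into an object $L\in\cC$ (resp.\ $C\in\cC$). That $L$ represents $\lim_K d$ follows from the general formula $\Map_\cC(x,y)=\lim_i\Map_{F(i)}(p_ix,p_iy)$ together with the interchange of two limits; the dual calculation uses $\Map_\cC(C,x)=\lim_i\Map_{F(i)}(C_i,p_ix)$, rewriting each $\Map_{F(i)}(\colim_K p_id,p_ix)=\lim_{K^\op}\Map_{F(i)}(p_id,p_ix)$, and then exchanging limits. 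The same style of argument handles pointedness: the family of zero objects $(0_i)$ glues to an object $0\in\cC$, and $\Map_\cC(0,x),\ \Map_\cC(x,0)$ are limits of contractible spaces, hence contractible.

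Given this componentwise description, stability of $\cC$ is immediate. A square $\sigma$ in $\cC$ is cartesian iff $p_i(\sigma)$ is cartesian for every $i$, and similarly for cocartesian; the stability of each $F(i)$ identifies the two conditions in each coordinate, hence in $\cC$. Each projection $p_i$ preserves finite limits and colimits by construction, hence is exact by Lemma \ref{lem:3defs-stable}.

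For the universal property, given $\cD\in\Cat^\ex$ and a compatible family of exact functors $g_i:\cD\to F(i)$, the universal property of $\cC$ in $\Cat$ yields a unique (up to contractible space of choices) functor $g:\cD\to\cC$ with $p_i\circ g\simeq g_i$. Exactness of $g$ is tested componentwise: if $\Sigma$ is a finite limit diagram in $\cD$ then each $p_i(g(\Sigma))=g_i(\Sigma)$ is a limit diagram in $F(i)$, and by the componentwise description $g(\Sigma)$ is a limit diagram in $\cC$; the dual argument handles colimits. Hence $g$ lifts to a morphism in $\Cat^\ex$, and uniqueness is inherited from $\Cat$. Everything rests on the componentwise formula for finite (co)limits in $\cC$; once that is in place the remainder is formal manipulation with the stability criterion.
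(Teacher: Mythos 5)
Your proposal is correct, and it rests on the same underlying facts as the paper's argument, but it is organized differently. The paper's one-sentence proof reduces the statement to showing that products and fiber products of stable categories are stable (using that all small limits are generated by products and pullbacks) and leaves even that assertion unproved; you instead treat an arbitrary small diagram $F:I\to\Cat^\ex$ at once and show that the forgetful functor creates its limit, via the componentwise description of finite limits and colimits in $\lim_I F$, the criterion that a pointed category with finite (co)limits is stable iff cartesian and cocartesian squares coincide, exactness of the projections (using Lemma \ref{lem:3defs-stable}), and the universal property. Your route is more uniform and makes explicit precisely what the paper's sentence is appealing to; the paper's reduction buys brevity, since for products and fiber products the componentwise computation of finite (co)limits is especially transparent. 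The one point you compress is the claim that the objectwise limits ``assemble into an object $L\in\cC$'': in the $\infty$-categorical setting this is not automatic from objectwise compatibility and needs a coherent construction, e.g.\ passing to the full subcategories of limit cones inside $\Fun(K^{\triangleleft},F(i))$, which are preserved by the exact transition functors and restrict by equivalences onto the corresponding diagram categories; your subsequent verification via $\Map_\cC(x,y)=\lim_i\Map_{F(i)}(p_ix,p_iy)$ and interchange of limits is then the right way to see that the resulting cone is a limit cone. With that standard step spelled out or cited, the argument is complete and, if anything, more detailed than the paper's.
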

\begin{proof}
It is enough to prove that products and fiber products of stable categories are stable.
\end{proof}

Later on we will construct a functor $\Cat\to\Cat^\ex$ left 
adjoint to the embedding. This implies that the embedding
preserves small limits (however, we know this from the construction).

\subsection{Stabilization}

Toward the end we become more and more sketchy.

Stable categories admit finite limits; the forgetful functor
$$\Cat^\st\to\Cat^{fl}$$
to the category of categories having finite limits (and 
functors preserving them), has a right adjoint; it is called 
stabilization.

We will now present the construction of stabilization functor denoted $\St$ in the sequel.

\subsubsection{Finite pointed spaces}
We denote $\cS^\fin_*$ the category of finite pointed spaces.
This is a full subcategory of the category of pointed spaces 
$\cS_*:=\cS_{*/}$ spanned by the {\sl finite spaces} ---
these are spaces presentable by a finite simplicial set.

An equivalent definition: a finite space is a 
finite colimit of a constant functor with the value $*$.

\begin{dfn}
Let $\cC$ have finite colimits and a terminal object.
A functor $f:\cC\to\cD$ is called {\sl excisive} if it 
carries cocartesian diagrams in $\cC$ to cartesian diagrams 
in $\cD$. It is called {\sl reduced} if it carries a 
terminal object of $\cC$ to a terminal object of $\cD$.
The full subcategory of $\Fun(\cC,\cD)$ of excisive reduced 
functors is denoted $\Exc_*(\cC,\cD)$.
\end{dfn}

Note that if $\cC$ is stable, reduced excisive functors are 
those preserving fiber products and terminal objects, that 
is, left exact functors.

Vice versa, if $\cD$ is stable, $f$ is reduced excisive iff 
it is right exact.

If both $\cC$ and $\cD$ are stable, $f$ is reduced excisive
iff it is exact.

\subsubsection{The functor $\St$}
For $\cC$ having finite limits we define $\St(\cC)=\Exc_*(\cS^\fin_*,\cC)$.

The main properties of the functor $\St$ are presented
in the sequence of claims below. The proofs (see \cite{L.HA}, 
1.4)
require more effort than we are ready to make now.
\begin{itemize}
\item Let $\cC$ be pointed with finite colimits and let
$\cD$ have finite limits. Then $\Exc_*(\cC,\cD)$ is stable.
This implies that $\St(\cC)=\Exc_*(\cS^\fin_*,\cC)$ is 
stable.
\item Let $\cC$ have finite limits and let $\cC_*=\cC_{*/}$
be the respective pointed category. Then the forgetful 
functor $\cC_*\to\cC$ induces an equivalence 
$\St(\cC_*)\to\St(\cC)$.
\item Let $\cC$ be a pointed category having finite limits.
Then $\St(\cC)$ can be indentified with the limit
$$ \ldots\stackrel{\Omega}{\to}\cC\stackrel{\Omega}{\to}\cC.$$
\item A category $\cC$ is stable iff the functor $\Omega^\infty:\St(\cC)\to\cC$ defined as evaluation at $S^0\in\cS^\fin_*$, is an equivalence.
\item Let $\cC$ be pointed with finite colimits, $\cD$ have finite limits. Then composition with $\Omega^\infty$ defines an equivalence
$$
\Exc_*(\cC,\St(\cD))=\Exc_*(\cC,\cD).
$$
In particular, applying this to $\cC$ stable, we deduce that
$\St$ is right adjoint to the embedding $\Cat^\st\to
\Cat^{fl}$.
\item In good cases \footnote{If $\cC$ is presentable} (for instance, $\cC=\cS)$, the functor
$\Omega^\infty$ has a left adjoint denoted 
$\Sigma^\infty_+$.
\end{itemize}

\subsection{Presentable categories}

We have to say a few words about presentable categories. 
Without giving a precise definition, we will say only that 
presentable categories have colimits and are 
generated by filtered colimits from a small subcategory.

A typical example of a presentable category is $P(\cC)$ where $\cC$ is small.

In $\cC$ has finite colimits, the category $\Ind(\cC)$ defined, as for conventional categories, as the full subcategory of $P(\cC)$ spanned by the filtered colimits of
representable presheaves, is presentable.

The most important property of presentable categories is
the following.
\begin{prp}Let $\cC$ be presentable. A functor $F:\cC^\op\to\cS$ is representable iff it preserves small limits.
\end{prp}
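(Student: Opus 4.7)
\medskip

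\noindent\textbf{Plan.} The \emph{only if} direction is a quick application of the universal property of colimits. If $F\simeq\Map_\cC(-,X)$ and $\{y_i\}$ is a diagram in $\cC$ with colimit $y=\colim y_i$, then by the $\infty$-categorical Yoneda embedding $Y:\cC\to P(\cC)$ and the fact (from Section~\ref{sect:yoneda}) that $\Map_\cC(y,X)$ represents the functor $Z\mapsto\Hom_{\cS}(Z,\Map_\cC(y,X))=\Map_{\cC}(y,X)^Z$, one has $F(y)=\Map_\cC(\colim y_i,X)\simeq\lim\Map_\cC(y_i,X)=\lim F(y_i)$; since a limit in $\cC^\op$ is a colimit in $\cC$, this says precisely that $F$ preserves small limits.

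\medskip

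\noindent For the converse, I would first settle the case of a presheaf $\infty$-category $\cC=P(\cC_0)$ with $\cC_0$ small. Given $F:P(\cC_0)^\op\to\cS$ preserving small limits, restrict along the Yoneda embedding $\cC_0\hookrightarrow P(\cC_0)$ to obtain a presheaf $\wt F:=F|_{\cC_0}\in P(\cC_0)$. Using that every object $Z\in P(\cC_0)$ is a canonical colimit of representables (density of the Yoneda embedding), write $Z\simeq\colim_i Y(c_i)$. Then by hypothesis $F(Z)\simeq\lim_i F(Y(c_i))=\lim_i\wt F(c_i)$, and by Yoneda lemma in $P(\cC_0)$ the latter equals $\lim_i\Map_{P(\cC_0)}(Y(c_i),\wt F)\simeq\Map_{P(\cC_0)}(\colim Y(c_i),\wt F)\simeq\Map_{P(\cC_0)}(Z,\wt F)$. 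Hence $F$ is represented by $\wt F$.

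\medskip

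\noindent For a general presentable $\cC$, I would invoke the structural fact that every such category arises as an accessible reflective subcategory of a presheaf category, i.e.\ there is a small $\cC_0$ and an adjoint pair $L:P(\cC_0)\rlarrows\cC:i$ with $i$ fully faithful. Define $\wt F:=F\circ L^\op:P(\cC_0)^\op\to\cS$. Since $L$ is a left adjoint it preserves colimits, hence $L^\op$ preserves limits, and so $\wt F$ preserves small limits. By the presheaf case there is $Y\in P(\cC_0)$ with $\wt F(-)\simeq\Map_{P(\cC_0)}(-,Y)$. The remaining step is to verify that $Y$ lies in the essential image of $i$: for any morphism $f:A\to B$ in $P(\cC_0)$ such that $L(f)$ is an equivalence in $\cC$, the induced map $\Map_{P(\cC_0)}(B,Y)\to\Map_{P(\cC_0)}(A,Y)$ identifies with $F(L(B))\to F(L(A))$, which is an equivalence; hence $Y$ is local for the class of $L$-equivalences and therefore (by the standard description of reflective localizations) equivalent to $i(X)$ for some $X\in\cC$. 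Full faithfulness of $i$ then gives $F(c)\simeq\wt F(i(c))\simeq\Map_{P(\cC_0)}(i(c),i(X))\simeq\Map_\cC(c,X)$, proving representability.

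\medskip

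\noindent\textbf{Expected obstacle.} The computational core of the proof is entirely Yoneda-theoretic and poses no real difficulty once the $\infty$-categorical Yoneda lemma of Section~\ref{sect:yoneda} is in hand. The genuinely non-trivial input is the structural characterization of presentable $\infty$-categories as accessible reflective localizations of presheaf $\infty$-categories, together with the matching description of the essential image of the inclusion in terms of locality; the paper only sketches the notion of presentability, so this step must either be cited (e.g.\ from \cite{L.T}) or verified separately, and it is there that all the real work hides.
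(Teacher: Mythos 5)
The paper states this proposition without proof --- indeed it does not even fix a precise definition of presentability --- so there is no argument in the text to compare yours against; what you give is the standard proof, essentially that of \cite{L.T}, 5.5.2.2, and it is correct in outline. Your decomposition (Yoneda for the presheaf case $P(\cC_0)$, then transfer along an accessible reflective localization $L:P(\cC_0)\rlarrows\cC:i$ and identification of the representing presheaf as an object local with respect to the $L$-equivalences) is the right one, and your closing remark is accurate: the genuine content is the structural fact that a presentable $\cC$ is such a localization, which has to be quoted from \cite{L.T} since the paper develops none of it. One step you should tighten in the presheaf case: you produce equivalences $F(Z)\simeq\Map_{P(\cC_0)}(Z,\wt F)$ object by object, but in the $\infty$-categorical setting a family of objectwise equivalences does not by itself assemble into an equivalence of functors. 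The clean repair, staying inside your own argument, is to observe that both $F$ and $\Map_{P(\cC_0)}(-,\wt F)$ carry colimits of $P(\cC_0)$ to limits in $\cS$ and restrict to the same presheaf $\wt F$ on $\cC_0$, and then invoke the universal property of $P(\cC_0)$ as the free cocompletion of $\cC_0$ (\cite{L.T}, 5.1.5.6); alternatively, first construct the comparison map $\Map_{P(\cC_0)}(-,\wt F)\to F$ (by the Yoneda lemma in the large presheaf category it corresponds to a point of $F(\wt F)\simeq\lim_{(c,x)}\wt F(c)$, where the tautological family supplies one) and only then check it is an equivalence on representables, hence everywhere by limit-preservation. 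The same mild remark applies at the end of the localization step, but there the equivalences do come from the counit $L\circ i\simeq\id$ and the full faithfulness of $i$, so naturality is automatic; with the presheaf step repaired as above, the proof is complete.
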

This implies the following.
\begin{prp}
Let $F:\cC\to\cD$ be a functor between presentable categories. $F$ admits a right adjoint iff it preserves colimits.
\end{prp}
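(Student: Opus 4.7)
The plan is to derive this from the preceding proposition on representability of limit-preserving presheaves, combined with the machinery of correspondences and adjunctions developed in Section~\ref{sect:yoneda}.

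The forward direction is immediate from material already in the notes: any left adjoint preserves all colimits that exist in its source (the adjoint-functor formulation of colimits from Section~1, upgraded to the $\infty$-categorical setting in Section~8). So I would dispose of this in one sentence and concentrate on the converse.

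For the converse, assume $F:\cC\to\cD$ preserves small colimits. By Yoneda (composing $F$ with the Yoneda embedding of $\cD$), $F$ gives rise to a functor $\cC^\op\times\cD\to\cS$, namely $(c,d)\mapsto\Map_\cD(F(c),d)$, or equivalently to a left-representable correspondence $p:\cE\to\cC^\op\times\cD$ in the sense of \ref{sss:corr}. By the definition of adjunction given in Section~\ref{sect:yoneda}, it is enough to show that this correspondence is also right-representable: that for each $d\in\cD$ the functor
\[
\Phi_d:\cC^\op\to\cS,\qquad \Phi_d(c)=\Map_\cD(F(c),d)
\]
is representable. The key point is that $\Phi_d$ preserves small limits: a limit in $\cC^\op$ is a colimit in $\cC$, and since $F$ preserves colimits and $\Map_\cD(-,d)$ carries colimits in its first variable to limits in $\cS$, one has $\Phi_d(\colim c_i)=\Map_\cD(\colim F(c_i),d)=\lim\Map_\cD(F(c_i),d)$. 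The hypothesis that $\cC$ is presentable now lets us invoke the preceding proposition: a limit-preserving functor $\cC^\op\to\cS$ is representable. Hence $\Phi_d$ is represented by some object, which we denote $G(d)$.

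The pointwise representability above upgrades to a functor $G:\cD\to\cC$ because right-representability of a correspondence automatically produces the functor (uniquely up to contractible choice), as explained in Section~\ref{sect:yoneda}: the functor $\cD\to P(\cC)$ classifying $p$ factors through $\cC\hookrightarrow P(\cC)$ precisely when each $\Phi_d$ is representable. This factorization is the desired right adjoint, and the resulting left/right representable correspondence is, by definition, an adjunction $F\dashv G$. The main (only) nontrivial ingredient is the previous proposition on representability; everything else is bookkeeping once one grants the categorical framework of Sections~7--8. Presentability of $\cD$ is not needed for the argument—only presentability of $\cC$, which is exactly where limit-preserving presheaves on $\cC^\op$ become representable.
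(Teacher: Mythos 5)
Your argument is correct and is exactly the deduction the paper intends: the notes give no separate proof beyond ``this implies the following,'' meaning precisely that one applies the representability criterion for limit-preserving presheaves on the presentable category $\cC$ to the functors $c\mapsto\Map_\cD(F(c),d)$, and then uses the correspondence/adjunction formalism of Section~\ref{sect:yoneda} (where existence of a right adjoint is checked objectwise) to assemble $G$. Your side remark that only presentability of $\cC$ (plus local smallness of $\cD$, automatic for presentable $\cD$) is needed for the nontrivial direction is also accurate.
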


For a pair of presentable categories $\cC,\cD$ one defines
$\Fun^L(\cC,\cD)$ as the full subcategory of colimit-preserving functors, and $\Fun^R(\cC,\cD)$ the full subcategory of functors admitting left adjoint (these are precisely limit-preserving functors which also preserve
$\kappa$-filtered colimits for some cardinal $\kappa$).

One obviously has $\Fun^L(\cC,\cD)=\Fun^R(\cD,\cC)$.

We are now back to stable categories and the stabilization 
functor. 

\begin{prp}
\begin{itemize}
\item Let $\cC$ be a presentable category. Then $\St(\cC)$
is also presentable. In particular, $\Sp=\St(\cS)$ is presentable.
\item The functor $\Omega^\infty:\St(\cC)\to\cC$ has left adjoint $\Sigma^\infty_+:\cC\to\St(\cC)$.
\end{itemize}
\end{prp}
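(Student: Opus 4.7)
The plan is to deduce both claims from the general machinery of presentable $\infty$-categories and the adjoint functor theorem, using the model $\St(\cC)=\Exc_*(\cS^\fin_*,\cC)$.

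First, I would establish presentability of $\St(\cC)$ in three stages. Since $\cS^\fin_*$ is essentially small and $\cC$ is presentable, the functor category $\Fun(\cS^\fin_*,\cC)$ is presentable (this is a standard fact: $\Fun(\cA,\cB)$ is presentable for $\cA$ small and $\cB$ presentable, with limits and colimits computed pointwise). Next, the full subcategory $\Fun_*(\cS^\fin_*,\cC)$ of reduced functors is cut out by a single limit condition, namely that $F(*)$ is a terminal object of $\cC$; this is an accessible localization, with reflector obtained by fiberwise cofibering off $F(*)$, so reduced functors form a presentable $\infty$-category. Finally, inside the reduced functors the excisive ones are cut out by the family of conditions asserting that $F$ sends each of a small generating collection of pushout squares in $\cS^\fin_*$ to pullback squares in $\cC$; again this is a limit condition on $F$, indexed by a small set of diagrams, so it carves out an accessible reflective subcategory. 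Composing the two localizations gives $\St(\cC)$ as an accessible localization of a presentable $\infty$-category, hence presentable. Specializing to $\cC=\cS$ yields presentability of $\Sp$.

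For the second claim I would invoke the adjoint functor theorem for presentable $\infty$-categories: a functor between presentable $\infty$-categories has a left adjoint iff it preserves small limits and is accessible (preserves $\kappa$-filtered colimits for some regular cardinal $\kappa$). The functor $\Omega^\infty:\St(\cC)\to\cC$ is evaluation at $S^0\in\cS^\fin_*$. Limits and $\kappa$-filtered colimits in $\Fun(\cS^\fin_*,\cC)$ are computed pointwise, so evaluation at any object preserves both. Since the inclusion of $\St(\cC)$ into $\Fun(\cS^\fin_*,\cC)$ preserves limits (as a right adjoint to the localization) and preserves $\kappa$-filtered colimits for sufficiently large $\kappa$ (because the localization is accessible, so filtered colimits in $\St(\cC)$ eventually agree with those in $\Fun(\cS^\fin_*,\cC)$), the composition $\Omega^\infty$ preserves both. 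The adjoint functor theorem then produces $\Sigma^\infty_+:\cC\to\St(\cC)$.

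The main obstacle is the assertion that excision is an accessible localization condition; the rest is bookkeeping. One must verify that there is a small set of pushout squares in $\cS^\fin_*$ such that a reduced functor is excisive iff it sends that set to pullbacks, and that the corresponding reflector exists. The set can be taken to consist of the pushouts computing cofibers of maps between representatives of a skeleton of $\cS^\fin_*$, which is essentially small; the existence of the reflector then follows from the standard small-object/Bousfield-localization formalism in the presentable setting. Once this is in hand, both claims follow immediately, and moreover the same argument shows $\Omega^\infty$ is itself part of a reflective adjunction when $\cC$ is already stable, recovering the equivalence $\St(\cC)\simeq\cC$ noted earlier.
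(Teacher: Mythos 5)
The paper itself gives no proof of this proposition: it is stated as one of the facts about $\St$ whose proofs are deferred to \cite{L.HA}, 1.4. So there is nothing to compare line by line; what I can say is that your route is a correct and standard one, but it is not the route suggested by the paper's own setup. The paper has just described $\St(\cC)$ (for pointed $\cC$) as the limit of the tower $\ldots\stackrel{\Omega}{\to}\cC\stackrel{\Omega}{\to}\cC$, and the quickest argument runs through that description: each $\Omega$ is a right adjoint between presentable categories, hence accessible, the limit of such a tower is again presentable (closure of presentable categories under limits along accessible right adjoints), and $\Omega^\infty$ is the projection to the last term, which preserves limits and suitably filtered colimits, so the adjoint functor theorem produces $\Sigma^\infty_+$. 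Your argument instead exhibits $\St(\cC)=\Exc_*(\cS^\fin_*,\cC)$ directly as an accessible (Bousfield) localization of the presentable functor category $\Fun(\cS^\fin_*,\cC)$; this buys a concrete description of the reflector and stays inside the functor-category model, at the cost of having to justify the localization statement, which is exactly where the real work sits.

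On that point your write-up is a little too quick. Saying that excision is ``a limit condition indexed by a small set of diagrams'' only gives closure of the excisive functors under limits; it does not by itself give accessibility of the subcategory, and you cannot get it from closure under filtered colimits either, since in a general presentable $\cC$ filtered colimits need not commute with pullbacks. What one should do is encode both conditions as $S$-locality for a small set $S$ of morphisms: using that evaluation at $X\in\cS^\fin_*$ has a left adjoint $c\mapsto c\otimes\Map(X,-)$, the requirement that $F$ carry a pushout square $W\to Y,\ W\to Z,\ Y\to X,\ Z\to X$ to a pullback is equivalent to $F$ being local with respect to the maps $(c\otimes\Map(Y,-))\sqcup^{c\otimes\Map(X,-)}(c\otimes\Map(Z,-))\to c\otimes\Map(W,-)$, with $c$ running over a small set of generators of $\cC$, and reducedness is handled similarly with constant functors; then the general theory of localization at a small set of maps in a presentable category gives the reflector and accessibility, and your adjoint-functor-theorem argument for $\Sigma^\infty_+$ (limit preservation plus accessibility of the inclusion composed with evaluation at $S^0$) goes through as you state. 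Also, the small test family should consist of representatives of all pushout squares with vertices in a skeleton of $\cS^\fin_*$, not only the cofiber squares; restricting to cofiber squares is an unjustified (and unnecessary) reduction. With these points made explicit, your proof is complete.
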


In particular, $S=\Sigma^\infty_+(*)\in\Sp$ is {\sl the sphere spectrum}. 

Let $\cC$ be a small category. The stabilization
of $P(\cC)=\Fun(\cC^\op,\cS)$ identifies with 
$\Fun(\cC^\op,\Sp)$. This is a presentable stable category
and one has a functor 
$\Sigma^\infty_+:P(\cC)\to\Fun(\cC^\op,\Sp)$
left adjoint to $\Omega^\infty$.

\begin{prp}\label{prp:rep-pre}
Let $\cC$ be small and $\cD$ be a presentable stable category.
 
The composition with $\Sigma^\infty_+$ induces an 
equivalence 
\begin{equation}
\Fun^L(\Fun(\cC^\op,\Sp),\cD)\to\Fun^L(P(\cC),\cD)=\Fun(\cC,\cD).
\end{equation}

\end{prp}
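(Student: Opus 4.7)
The plan is to decompose the statement into two universal properties that can be treated separately. The right-hand equality $\Fun^L(P(\cC),\cD) = \Fun(\cC,\cD)$ is the standard universal property of the presheaf category as the free cocompletion of a small category: since $\cD$ is cocomplete, restriction along the Yoneda embedding $\cC \to P(\cC)$ identifies colimit-preserving functors $P(\cC) \to \cD$ with arbitrary functors $\cC \to \cD$, with inverse given by left Kan extension.

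The substantive content of the proposition is therefore the first equivalence. Since $\Fun(\cC^\op, \Sp)$ is $\St(P(\cC))$, I would prove it as a special case of the following universal property of stabilization in the presentable setting: for any presentable category $\cE$ and any presentable stable category $\cD$, composition with $\Sigma^\infty_+ \colon \cE \to \St(\cE)$ induces an equivalence $\Fun^L(\St(\cE), \cD) \stackrel{\sim}{\to} \Fun^L(\cE, \cD)$. The desired statement then follows by taking $\cE = P(\cC)$ and combining with the first step.

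To prove this universal property I would use the description $\St(\cE) = \lim(\cdots \stackrel{\Omega}{\to} \cE_* \stackrel{\Omega}{\to} \cE_*)$ recalled in the excerpt. For presentable $\cE$ each $\Omega$ is right adjoint to $\Sigma$, and by the standard duality between limits of presentable categories along right adjoints and colimits of the same diagram read with left adjoints, $\St(\cE)$ is equivalently the colimit (in the category of presentable categories and colimit-preserving functors) of the tower $\cE_* \stackrel{\Sigma}{\to} \cE_* \stackrel{\Sigma}{\to} \cdots$, with the structure map from the first term being $\Sigma^\infty_+$. Applying $\Fun^L(-,\cD)$, which converts such colimits to limits of $\infty$-categories, yields
\[
\Fun^L(\St(\cE),\cD) \simeq \lim\bigl(\cdots \to \Fun^L(\cE_*,\cD) \to \Fun^L(\cE_*,\cD)\bigr),
\]
with transition maps induced by postcomposition with $\Sigma \colon \cD \to \cD$. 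Since $\cD$ is stable, $\Sigma$ is an equivalence on $\cD$, so every transition is an equivalence and the limit collapses to $\Fun^L(\cE_*,\cD)$. Finally, $\Fun^L(\cE_*,\cD) \simeq \Fun^L(\cE,\cD)$ because $\cD$, being stable, is already pointed, so the forgetful functor from pointed to unpointed presentable-and-colimit-preserving functors into $\cD$ is an equivalence.

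The main obstacle will be justifying the duality between limits and colimits of presentable categories via passage to adjoints, and verifying that the $\Sigma^\infty_+$ produced by the colimit description agrees with the left adjoint to $\Omega^\infty$ asserted in the excerpt. Both are standard in the Lurie-style theory of presentable $\infty$-categories (see \cite{L.HA}, \S 1.4) but lie beyond what is developed in these notes. Once this machinery is accepted, the proof is essentially formal.
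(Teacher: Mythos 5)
The notes do not actually prove this proposition --- it belongs to the block of statements about $\St$ whose proofs are deferred to \cite{L.HA}, 1.4 --- so there is no in-text argument to compare with; what you have written is, in outline, the standard Lurie argument that the notes implicitly invoke, and your decomposition is the right one. The equality $\Fun^L(P(\cC),\cD)=\Fun(\cC,\cD)$ is the free-cocompletion property of presheaves, and the substantive point is the universal property $\Fun^L(\St(\cE),\cD)\simeq\Fun^L(\cE,\cD)$ for $\cE$ presentable and $\cD$ presentable stable, obtained by rewriting the limit $\St(\cE)=\lim(\cdots\stackrel{\Omega}{\to}\cE_*\stackrel{\Omega}{\to}\cE_*)$ as a colimit in $\Pr^L$ along $\Sigma$ and applying $\Fun^L(-,\cD)$.

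One step should be made precise, because it is where the whole mechanism turns: after applying $\Fun^L(-,\cD)$, the transition maps of the resulting tower are \emph{pre}composition with $\Sigma_{\cE_*}$, not literally postcomposition with $\Sigma_\cD$ as you state. The identification of the two uses that any colimit-preserving functor $F:\cE_*\to\cD$ between pointed categories satisfies $F\circ\Sigma_{\cE_*}\simeq\Sigma_\cD\circ F$, since suspension is a pushout of $0\leftarrow X\to 0$ and $F$ preserves pushouts and zero objects; only after this identification does stability of $\cD$ (i.e.\ invertibility of $\Sigma_\cD$) make every transition map an equivalence and collapse the limit onto its zeroth term, which is precomposition with $\Sigma^\infty$. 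Similarly, the reduction $\Fun^L(\cE_*,\cD)\simeq\Fun^L(\cE,\cD)$ is realized by precomposition with $X\mapsto X_+=X\sqcup *$ (the left adjoint of the forgetful functor, not the forgetful functor itself), which is what makes the total composite precomposition with $\Sigma^\infty_+$, as required by the statement. The remaining ingredients you flag --- the $\Pr^L\simeq(\Pr^R)^\op$ duality converting the $\Omega$-limit into a $\Sigma$-colimit, the fact that $\Fun^L$ out of a colimit in $\Pr^L$ is the limit of the $\Fun^L$'s, and the identification of the corner map with the left adjoint $\Sigma^\infty_+$ of $\Omega^\infty$ --- are exactly the presentability machinery of \cite{L.HA} that these notes choose not to develop, so deferring them is consistent with the level of the text.
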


\subsection{Representations, finite spectra}

A representation of $\cC$ is,
by definition, a functor $\cC^\op\to\Sp$. The category of representations is denoted $\Rep(\cC)=\Fun(\cC^\op,\Sp)$.
It can be otherwise defined as the stabilization of
$P(\cC)=\Fun(\cC^\op,\cS)$. The composition
$y=\Sigma^\infty_+\circ Y$,
$$\cC\stackrel{Y}{\to}P(\cC)\stackrel{\Sigma^\infty_+}\to\Rep(\cC),$$
is a stable version of Yoneda embedding.

We define $\Sp^\fin$ as the smallest full subcategory
of $\Sp$ containing the sphere spectrum and closed under
cofiber sequences and $\Sigma^\infty_+$. It is stable.

The category $\Rep^\fin(\cC)$ is defined as 
$\Fun(\cC,\Sp^\fin)$. This is the smallest stable 
subcategory of $\Rep(\cC)$ containing the image of the 
stable Yoneda embedding. Proposition~\ref{prp:rep-pre}
implies the following.

\begin{prp}
Let $\cC$ be a small category and $\cD$ a stable category.
Composition with $y:\cC\to \Rep^\fin(\cC)$ induces an equivalence
\begin{equation}
\Fun^\ex(\Rep^\fin(\cC),\cD)\to\Fun(\cC,\cD).
\end{equation}
\end{prp}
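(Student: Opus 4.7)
The plan is to reduce the claim to Proposition~\ref{prp:rep-pre} by enlarging the target $\cD$ to a presentable stable category. Concretely, I would use the Ind-completion $\widehat{\cD} := \Ind(\cD)$. Since $\cD$ is stable, $\widehat{\cD}$ is a presentable stable category: the Yoneda-type embedding $j : \cD \to \widehat{\cD}$ is fully faithful and exact, and its essential image is closed under finite limits and colimits in $\widehat{\cD}$ (the full subcategory of $\widehat{\cD}$ on which both fiber and cofiber lie in $j(\cD)$ is stable by Lemma~\ref{lem:3defs-stable} and contains $j(\cD)$).

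Given a functor $F : \cC \to \cD$, form $jF : \cC \to \widehat{\cD}$. By Proposition~\ref{prp:rep-pre} applied to the presentable stable category $\widehat{\cD}$, there is an essentially unique colimit-preserving functor $G : \Rep(\cC) \to \widehat{\cD}$ with $G \circ y \simeq jF$. Restricting $G$ to the full subcategory $\Rep^\fin(\cC) \subset \Rep(\cC)$ yields a functor preserving finite colimits, hence exact by Lemma~\ref{lem:3defs-stable}. To see that this restriction factors through $\cD$, consider the full subcategory
\begin{equation}
\cE := \{X \in \Rep^\fin(\cC) : G(X) \in j(\cD)\} \subset \Rep^\fin(\cC).
\end{equation}
Since $j(\cD)$ is closed under fibers and cofibers in $\widehat{\cD}$ and $G$ is exact, $\cE$ is a stable subcategory of $\Rep^\fin(\cC)$; and $\cE$ contains $y(\cC)$ by construction. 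Because $\Rep^\fin(\cC)$ is by definition the \emph{smallest} stable subcategory of $\Rep(\cC)$ containing $y(\cC)$, we conclude $\cE = \Rep^\fin(\cC)$. This produces the desired exact extension $\tilde F : \Rep^\fin(\cC) \to \cD$, with $\tilde F \circ y \simeq F$.

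For the inverse direction (full faithfulness), given any exact $H : \Rep^\fin(\cC) \to \cD$, the composition $jH : \Rep^\fin(\cC) \to \widehat{\cD}$ extends to a colimit-preserving functor $\tilde H : \Rep(\cC) \to \widehat{\cD}$ by left Kan extension along the inclusion $\Rep^\fin(\cC) \hookrightarrow \Rep(\cC)$ — using that $\Rep(\cC)$ is generated under filtered colimits by $\Rep^\fin(\cC)$ (equivalently, $\Rep(\cC) \simeq \Ind(\Rep^\fin(\cC))$ up to idempotent completion). The uniqueness clause of Proposition~\ref{prp:rep-pre} identifies $\tilde H$ with the functor $G$ built above from $F := H \circ y$, so $H \simeq \tilde{F}$ canonically. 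Thus the two assignments $F \mapsto \tilde F$ and $H \mapsto H \circ y$ are mutually inverse.

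The main obstacle is the verification that $\Ind(\cD)$ is genuinely stable with $j$ exact and fully faithful with image closed under finite (co)fiber sequences, together with the identification $\Rep(\cC) \simeq \Ind(\Rep^\fin(\cC))$ needed to left Kan extend uniquely. Both facts are standard in the theory of presentable stable categories (and are proved, e.g., in \cite{L.HA}), but they require a careful handling of compact generation: one must check that every object of $y(\cC)$ is compact in $\Rep(\cC)$ (which follows since $\Sigma^\infty_+$ is left adjoint to $\Omega^\infty$ and the latter preserves filtered colimits), and that $\Rep^\fin(\cC)$ — built from the compact generators under finite colimits — exhausts the compact objects up to retracts. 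Once this bookkeeping is in place, the argument above goes through cleanly.
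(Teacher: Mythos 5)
Your proposal is correct and follows essentially the same route as the paper's (sketched) proof: pass to the presentable stable category $\Ind(\cD)$, apply Proposition~\ref{prp:rep-pre} there, and then match the subcategories on both sides — your minimality argument for $\Rep^\fin(\cC)$ and the use of $\Rep(\cC)\simeq\Ind(\Rep^\fin(\cC))$ are precisely the details the paper leaves implicit. The only caveat, which you share with the paper, is the size hedge: presentability of $\Ind(\cD)$ requires $\cD$ to be (essentially) small.
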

\begin{proof}
Here is the idea of the proof: if $\cD$ is stable, it has finite colimits, so, assuming it is small, $\Ind(\cD)$ is stable presentable. Applying \ref{prp:rep-pre} to 
$\Ind(\cD)$, we get an equivalence of two categories containing those we need. It remains to see that the subcategories correspond to each other.
\end{proof}
Thus, $\Rep^\fin$ is left adjoint to the embedding $\Cat^\st\to\Cat$.

\begin{rem}
One can prove that the objects of $\Rep^\fin(\cC)$ 
are compact in $\Rep(\cC)$ and that the latter identifies
with $\Ind(\Rep^\fin(\cC))$.
\end{rem}

\subsection{Waldhausen construction}

\subsubsection{}
Let $\cC$ be a category. The assignment 
$n\mapsto\Map([n],\cC)$ yields a simplicial space $\cC_n$
which is the complete Segal space corresponding to $\cC$.

\subsubsection{}
In case $\cC$ is pointed and has cofiber sequences, one has more symmetries which allow one to have a structure of simplicial space on the collection of spaces
$$S_n(\cC):=\cC_{n-1}\quad (S_0(\cC)=0).$$
Here is the Waldhausen's construction.

For each $n$ we define $S_n(\cC)$ as the space of functors
$F:\Fun([1],[n])\to\cC$ satisfying the following properties.
\begin{itemize}
\item[(W1)] $F(\id_k)=0$ for all $k$.
\item[(W2)] For $p\leq q\leq r$ the sequence
$$ F(p\to q)\to F(p\to r)\to  F(q\to r)$$
is a cofiber sequence.
\end{itemize} 
The collection of spaces $S_n(\cC)$ is obviously a simplicial space. On the other hand, the map $S_\bullet(\cC)\to\cC_\bullet$ induced by the embedding 
$[n-1]\to\Fun([1],[n])$ carrying $k$ to $0\to k+1$, is an equivalence.
\subsubsection{}In the case $\cC$ is stable, the space 
$\cC_{n-1}=S_n(\cC)$ has even more symmetries. One can see this studying the simplicial object 
$n\mapsto \Rep^\fin([n])$ representing the functor
$\cC\mapsto\cC_\bullet$, see \cite{L.R}.

\newpage
\section{Multiplicative structures}
 
\subsection{Algebras in a category with finite products}

\subsubsection{}

Following a Segal's idea, an associative monoid  in 
a category $\cC$ with products can
be defined as a "special $\Delta$-space", that is a functor
$$ A:\Delta^\op\to\cC$$
satisfying the following properties
\begin{itemize}
\item (special) $A_0$ is a terminal object.
\item(Segal) the map $p_n:A_n\to (A_1)^n$ defined by $n$   
embeddings of $[1]$ to $[n]$
carrying $0,1$ to $i,i+1$ ($i=0,\ldots,n-1$), is an equivalence.
\end{itemize}
This definition should be understood as follows. The algebra 
object is $A_1$. The multiplication $A_1\times A_1\to A_1$ 
is given (uniquely up to homotopy) by the composition of 
$d_1:A_2\to A_1$ with the quasi-inverse of 
$p_2:A_2\to (A_1)^2$.

The above definition can be generalized to include algebras
over any {\sl planar operad} $\cO$ in $\cC$. We will only 
present a concrete example of such notion, when the operad 
$\cO=\LM$ describes pairs $(A,M)$ where $A$ is an associative 
algebra and $M$ is a left $A$-module.
\footnote{The category $\Delta^\op\times[1]$ appearing in 
the definition below is not, properly speaking, the whole 
planar operad $\LM$ ; but it is enough to define the notion
of left module.}
 
\begin{dfn}\label{}
A left module over a monoid in a category $\cC$ with 
products is a functor
$$ F:\Delta^\op\times [1]\to \cC$$
satisfying the following properties.
\begin{itemize}
\item the  simplicial object in $\cC$ given by the restriction of $F$ to $\{1\}\in[1]$, is an algebra object.
\item The pair of maps $([n],0)\to([n],1)$ and 
$(\{n\},0)\to([n],0)$ are carried by $F$ to a product diagram.
\end{itemize}
\end{dfn}
Equivalently, the functor $F$ can be described as map
between  two simplicial objects in $\cC$, obtained by restriction of $F$ to $\{0\}$ and to $\{1\}$ respectively.
Pretending everything is strict, one should have 
$F([n],1)=A^n$ for an certain algebra object $A$; and
$F([n],0)=A^n\times M$ for certain $M$. Multiplication
in $A$ is given by the map $F([2],1)\to F([1],1)$ induced
by $d_1$. Multiplication $A\times M\to M$ is given
by the map $F([1],0)\to F([0],0)$ induced by $d_1$.

\subsubsection{Example: spaces}

In case $\cC=\cS$ we get an almost classical notion of Segal monoid acting on a space. The simplicial space presenting 
a monoid $A$ is actually the {\sl classifying space} $B(A)$.
The action of $A$ on $M$ is described by a map from the 
simplicial space $F(\_,0)$ to $B(A)=F(\_,1)$, with $M$ appearing as the fiber.

\subsubsection{Example: categories}

The category $\Cat$ of infinity categories is also 
cartesian. So everything said above applies to it.

An algebra in $\Cat$ is called a monoidal (infinity) 
category. A left module is a category left tensored over
a monoidal category. 

Basic conventional examples: vector spaces (or finite 
dimensional vector spaces) form a monoidal category.
Any additive $k$-linear category is left-tensored over
the finite dimensional vector spaces.

Note that the $\infty$-categorical definition of monoidal 
category is simpler than the conventional definition.
Once we use the $\infty$-categorical notion of associative 
algebra, all associativity constraints disappear.

\subsection{Microcosm principle}

The above definition of algebra is not the most general;
it does not even include conventional algebras over a 
field.

A solution is given by the idea nowadays known as 
Baez-Dolan microcosm principle. It says:

\fbox{\begin{minipage}{30em} 
{\sl The most general  notion of algebraic structure should 
be defined in a category having a categorified version of 
the same algebraic structure.}
\end{minipage}
} 
For instance, associative algebras should be defined 
in monoidal categories; a pair (algebra, module) should be defined in a pair of categories, one monoidal and the other left-tensored over it.  

Here is how the definition goes.

Let $\cC$ be a monoidal category. By definition, this is
a functor $\cC:\Delta^\op\to\Cat$ satisfying some 
properties. We can reformulate this definition using 
Grothendieck construction. We get a cocartesian fibration
$$ p:\cC^\otimes\to\Delta^\op.$$
What is the meaning of Segal condition in this language?
Once more, we have to look at the arrows $[n]\to[1]$ in
$\Delta^\op$ corresponding to the embeddings 
$\rho_i:[1]\to[n]$, $\rho_i(0)=i, \rho_i(1)=i+1$, and we 
should require that the cocartesian liftings of $\rho_i$
present $\cC_n=p^{-1}([n])$ as a product of $n$ copies of 
$\cC_1$.

The arrows $\phi_i$ as above are examples of {\sl inert}
arrows. These correspond to maps $\rho:[m]\to[n]$ in 
$\Delta$ given by the formula $\rho(k)=k+i$ for certain
$i\in\{0,\ldots,n-m\}$. Cocartesian liftings of inert arrows are called {\sl inert arrows in } $\cC^\otimes$.
Assuming for a moment that everything is strict, $\cC_n$
consists of collections $(c_1,\ldots,c_n)$ with $c_i\in\cC_1$, inert arrows throw out some of the components, and the monoidal structure is given by the cocartesian lifting of $d_1:[2]\to[1]$. 

Now we are ready to give a general definition of
associative algebra in a monoidal category.
\begin{dfn}An algebra in a monoidal category $\cC$ is a section $A:\Delta^\op\to\cC^\otimes$ preserving inert arrows.
\end{dfn}

Similarly, a left-tensored category can be described by
a cocartesian fibration 
$$ p:\cC^\otimes\to\Delta^\op\times[1],$$
and a left module is defined by a section $(A,M):\Delta^\op\times[1]\to\cC^\otimes$ preserving the inert arrows.
Here is the description of inerts in $\Delta^\op\times[1]$. 
\begin{itemize}
\item $(\alpha,\id_1)$ when $\alpha$ is inert in 
$\Delta^\op$.
\item $(\alpha,\{0\}\to\{1\})$, $\alpha$ inert.
\item $(\alpha,\id_0)$ when $\alpha:[m]\to[n]$ is inert
with $\alpha(m)=n$.
\end{itemize}

\subsection{Where do they come from?}

\subsubsection{Cartesian structure}

We have two definitions of an algebra: one for categories
with products, and another for monoidal categories. No doubt, the former is a special case of the latter.

More precisely, given a category $\cC$ with products, there
is an explicit construction of a monoidal category
\footnote{Actually, a symmetric monoidal category.}
$p:\cC^\times\to\Delta^\op$ leading to equivalence of the two notions of associative algebra. This construction also satisfies a certain universal property, but we won't present it here.

The fiber of $\cC^\times$ over $[n]\in\Delta^\op$ is equivalent to $\cC^n$. Let $a:[m]\to [n]$ in $\Delta$,
$x=(x_1,\ldots,x_n)\in\cC^\times_n$ and
$y=(x_1,\ldots,y_m)\in\cC^\times_m$.  
Then the space $\Map_{\cC^\times}^a(x,y)$ of maps from $x$ to $y$ over $a^\op$ is equivalent to the product
$$
\prod_{j=1}^m\Map_\cC(\prod_{i=a(j-1)+1}^{a(j)}x_i,y_j).
$$
\subsubsection{Monoidal subcategory}

Another way to define a monoidal category is to present it
as a sucategory of another monoidal category.  

Given a monoidal category presented by a cocartesian
fibration $p:\cC^\otimes\to\Delta^\op$, a subcategory
$\cD^\otimes\subset\cC^\otimes$ will define
a monoidal subcategory if
\begin{itemize}
\item the composition $\cD^\otimes\to\Delta^\op$ is a
monoidal category.
\item the map 
$\cD^\otimes\subset\cC^\otimes$ preserves cocartesian liftings.
\end{itemize}
A monoidal subcategory gives rise to a monoidal functor
(that is, a map of algebras) $\cD\to\cC$.

If one weakens the second condition, requiring preservation
of cocartesian liftings for inerts only, one gets a lax monoidal functor $\cD\to\cC$. Here is an interesting example.

\begin{exm} 
We define $\Cat^{L,\otimes}\subset\Cat^\times$
as the subcategory whose objects over $[n]$ consist of 
collections of categories {\sl with colimits}. A morphism
$\prod X_i\to Y$ is in  $\Cat^{L,\otimes}$ if it preserves colimits in each argument separately. This defines
a monoidal category $\Cat^{L,\otimes}$ of categories with colimits, together with a forgetful functor to $\Cat^\times$ which is lax monoidal.
\end{exm}

\subsubsection{From a monoidal model category}

Model categories are an important source of infinity categories. Given a model category $\cC$, its underlying 
$\infty$-category $L(\cC)$ is the (infinity-) localization 
of $\cC$ with respect to the weak equivalences. It can be
explicitly presented by, say, Dwyer-Kan localization
followed by a fibrant replacement. 

A model category $\cC$ endowed with a structure of
monoidal category, is called {\sl monoidal model category}
if it satisfies the following conditions.
\begin{itemize}
\item Given a pair of cofibrations 
$i:A\to B$ and $j:C\to D$, 
the induced map
$$ 
(A\otimes D)\sqcup^{A\otimes B}(B\otimes C)\to B\otimes D
$$
is a cofibration, trivial, if $i$ or $j$ is a trivial cofibration.
\item For any cofibrant replacement $Q\to\one$ and any cofibrant $A\in\cC$, the maps $A\otimes Q\to A$ and
$Q\otimes A\to A$ induced by $Q\to\one$, are equivalences. 
\end{itemize}\footnote{this is not a standard definition.}

Given a monoidal model category $\cC$, once can easily define a monoidal structure on $L(\cC)$, realizing the latter as the localization of the subcategory $\cC^c$
of cofibrant objects with respect to weak equivalences.

It was recently proven  \cite{NS} that the monoidal 
category $L(\cC)$ so defined
can be described by a universal property
(the universal lax monoidal functor $\cC\to L(\cC)$ carrying 
weak equivalences to equivalences).

\subsubsection{Endomorphisms}
As we already hinted, the only reasonable way to get a monoidal category (or any algebra in a monoidal category)
is by a certain universal property. Here is a very important
construction suggested by Lurie.

Let $\cC$ be a monoidal category and let $\cA$ be left-tensored over $\cC$. Fixing an object $A\in\cA$, one can ask
what is the universal algebra object $E$ in $\cC$ acting on $A$. Lurie's construction goes as follows. He constructs
a monoidal category $\cC[A]$ together with a monoidal functor to $\cC$, equivalent to the category of pairs
$(E,A)$ endowed with an arrow $E\otimes A\to A$.
If the category $\cC[A]$ has a terminal object, it acquires
automatically an algebra structure whose image in $\cC$ is what we need. This is the endomorphism object of $A$ in 
$\cC$.
\begin{exm}Let $\cC=\Pr^L$ be the category of presentable
categories and colimit preserving functors. We consider
it as left-tensored over itself. Let $A$ be the category of left $R$-modules over an associative algebra $A$ (with values in spectra or in complexes of vector spaces).
Then $E=End(A)$ will be the category of $R$-bimodules. 
\end{exm}

\subsubsection{Limits}Let $\cC$ be a monoidal category
having limits (for instance, $\cC=\Cat$). The forgetful functor $\Alg(\cC)\to\cC$ preserves limits. Thus, if we have a functor $F:K\to\Alg(\Cat)$ to monoidal categories, 
the limit will also have a monoidal structure. This is the way one defines the monoidal category of quasi-coherent sheaves on a scheme (or derived scheme, or derived stack):
this is the limit of the corresponding categories 
assigned to affine schemes.

\newpage

\end{document}